\tikzset{%
    symbol/.style={%
        draw=none,
        every to/.append style={%
            edge node={node [sloped, allow upside down, auto=false]{$#1$}}}
    }
}
\newtheorem{theorem}{Theorem}[section]
\theoremstyle{proposition}
\newtheorem{proposition}[theorem]{Proposition}
\theoremstyle{corollary}
\newtheorem{corollary}[theorem]{Corollary}
\theoremstyle{definition}
\newtheorem{definition}[theorem]{Definition}
\newtheorem{example}[theorem]{Example}
\newtheorem{warning}[theorem]{Warning}
\newtheorem{observation}[theorem]{Observation}
\newtheorem{construction}[theorem]{Construction}
\theoremstyle{remark}
\newtheorem{remark}[theorem]{Remark}
\theoremstyle{question}
\numberwithin{equation}{section}
\tikzset{%
    symbol/.style={%
        draw=none,
        every to/.append style={%
            edge node={node [sloped, allow upside down, auto=false]{$#1$}}}
    }
}
\newcommand\scalemath[2]{\scalebox{#1}{\mbox{\ensuremath{\displaystyle #2}}}}
\newcommand{\verteq}{\rotatebox{90}{$\,=$}}
\newcommand{\equalto}[2]{\underset{\scriptstyle\overset{\mkern4mu\verteq}{#2}}{#1}}
\newcommand{\vertin}{\rotatebox{90}{$\,\in$}}
\newcommand{\invert}[2]{\underset{\scriptstyle\overset{\mkern4mu\vertin}{#2}}{#1}}
\numberwithin{equation}{section}
\tikzset{%
    symbol/.style={%
        draw=none,
        every to/.append style={%
            edge node={node [sloped, allow upside down, auto=false]{$#1$}}}
    }
}
\newcommand{\Map}{\underline{\mathsf{Map}}}
\newcommand{\Sect}{\mathbb{R}\underline{\mathsf{Sec}}}
\newcommand{\RS}{\mathbb{R}\underline{\mathrm{Sol}}_X}
\newcommand{\RSol}{\mathbb{R}\underline{\mathrm{Sol}}}
\newcommand{\Res}{\underline{\mathsf{Res}}}
\newcommand{\Spec}{\underline{\mathsf{Spec}}}
\newcommand{\Sp}{\underline{\mathrm{Spec}}_{\mathscr{D}}}
\newcommand{\DR}{\mathrm{DR}}
\newcommand{\Hod}{\mathrm{Hod}}
\newcommand{\po}{/\!/}
\newcommand{\AG}{[\mathbb{A}^1/\mathbb{G}_m]}
\newcommand{\Q}{\mathsf{QCoh}}
\newcommand{\IC}{\mathsf{IndCoh}}
\newcommand{\PC}{\mathsf{ProCoh}}
\newcommand{\C}{\mathsf{Crys}}
\newcommand{\cdga}{\mathbf{cdga}}
\newcommand{\DG}{\mathbf{dgMod}_{\D}}
\newcommand{\XG}{[X/\mathbb{G}_m]}
\newcommand{\PS}{\mathsf{PStk}}
\newcommand{\A}{\EuScript{A}}
\newcommand{\M}{\mathcal{M}}
\newcommand{\D}{\mathscr{D}}
\newcommand{\Char}{\mathrm{Char}}
\newcommand{\Jets}{\underline{\mathsf{Jet}}^{\infty}}
\newcommand{\JetX}{\mathrm{Jet}_X^{\infty}}
\newcommand{\J}{\mathsf{Jet}_{\mathrm{DR}}^{\infty}}
\newcommand{\JetY}{\mathrm{Jet}_Y^{\infty}}
\newcommand{\Rmu}{\mathbb{R}\mu}
\newcommand{\EQ}{Z}
\newcommand{\Free}{\mathsf{Free}}
\newcommand{\IAB}{0\to \mathcal{I}_{Z^{\infty}}\to \mathcal{O}_{J_X^{\infty}E}\to \mathcal{O}_{Z^{\infty}}\to 0}
\title{Derived Moduli Spaces of Non-linear PDEs: Singular Propagations}
\author{J. Kryczka}
\address{School of Mathematics, Harbin Institute of Technology (HIT), Harbin 150001, China \&
Beijing Institute of Mathematical Sciences and Applications (BIMSA), Huairou District, Beijing 101408, China.
}
\email{jkryczka@hit.edu.cn}
\author{A. Sheshmani}
\address{
Beijing Institute of Mathematical Sciences and Applications (BIMSA), Huairou District, Beijing 101408, China.
}
\email{artan@mit.edu}
\author{S.--T. Yau}
\address{Yau Mathematical Sciences Center, Tsinghua University, Haidian District, Beijing,
China\\
}
\email{styau@tsinghua.edu.cn}
\date{Submitted version on October 02, 2024, Revised: June 08, 2026}
\begin{document}

\begin{abstract}
We construct a sheaf theoretic and derived geometric machinery to study nonlinear partial differential equations and their singular supports. We establish a notion of derived microlocalization for solution spaces of non-linear equations and develop a formalism to pose and solve singular non-linear Cauchy problems globally. Using this approach we estimate the domains of propagation for the solutions of non-linear systems. It is achieved by exploiting the fact that one may greatly enrich and simplify the study of derived non-linear PDEs over a space $X$ by studying its derived linearization which is a module over the sheaf of functions on the $S^1$-equivariant derived loop stack $\mathcal{L}X$.
\end{abstract}
\maketitle
\tableofcontents

\section{Introduction}
This is the first in a series of papers devoted to studying various aspects of the theory of non-linear partial differential equations (NPDEs) within the framework of derived algebraic and analytic geometry. Here our attention is on developing a formalism that can be used to study singularities arising due to non-linear phenomenon. We study the general question of having well-posed \emph{global non-linear} Cauchy problem and various aspects  related to their characteristic Cauchy data.

Roughly speaking, such data can be understood as a Cauchy surface with
initial data on it for which the Cauchy problem is \emph{not} well-posed (see \S.\ref{ssec: Cauchy Problems}). This happens, for instance, if the problem fails to have existence or uniqueness. The existence of characteristic
Cauchy data is a general feature of many non-linear (field) equations. In particular, characteristic surfaces encode relevant information about wave-front propagation\footnote{In classical
field theory, the boundary of a disturbance in the field, which might be understood as a
wave-front, is a characteristic surface and thus carries
relevant information about field wave propagation.}. 

Within this framework, the `non-propagation' of a solution refers to the obstruction of its analytic continuation across a smooth boundary $\Sigma$ of its domain $\Omega$. In other words, a solution defined on $\Omega$ fails to propagate if it cannot be extended as a solution to any neighborhood beyond $\Sigma$. This typically indicates the presence of singularities e.g. in the solution, the initial datum or both. Thus the characteristic Cauchy problem and the presence of singularities and their propagation are intimately linked.

Microlocal sheaf theory \cite{KashiwaraSchapira1990} and $\D$-module theory (following, for example \cite{kashiwara1970algebraic,KashiwaraMicro,KashiwaraOshima1977}) make this link precise. The \emph{microsupport} (alias, \emph{singular support}) $\mathrm{SS}(\mathcal{F})$ of a sheaf $\mathcal{F},$ introduced in \cite{KS82,KS85}, describes the codirections of non propagation of sections $\mathcal{F}.$

Linear PDEs have well-known algebraic descriptions as modules over the sheaf $\D_X$ of differential operators, and the important application of microlocal sheaf theory to PDEs arise by taking 
$\mathcal{F}$ to be the (derived) sheaf of holomorphic solutions $R\mathcal{S}\mathrm{ol}_X(P):=R\mathcal{H}om_{\D_X}(\M_P,\mathcal{O}_X)$ associated to a linear differential operator of order $P(x,\partial)=\sum_{|\sigma|\leq m}a_{\sigma}(x)\partial_{\sigma}$ of order $\leq m$ with holomorphic coefficients, whose corresponding $\D$-module is $\mathcal{M}_P:=\mathscr{D}_X/\mathscr{D}_X\cdot P.$ 

An important theorem due to Kashiwara--Schapira \cite[Thm.~11.3.3]{KashiwaraSchapira1990} establishes the following relation between characteristic codirections and those of non-propagation:
\begin{equation}
    \label{eqn: LPDE}
\mathrm{SS}\big(R\mathcal{S}\mathrm{ol}_{X}(P)\big)=\mathrm{Char}(P)\subset T^*X,
\end{equation}
where $\mathrm{Char}(P)$ is a closed $\mathbb{C}^{\times}$-conic $\mathbb{C}$-analytic subset of $T^*X$ called the characteristic variety of $P$. Note the inclusion $\mathrm{SS}(R\mathcal{S}ol_X(P))\subset \mathrm{Char}(P)$ implies the Cauchy--Kowaleskaya--Kashiwara theorem (recalled in Sect.\ref{ssec: Cauchy Problems}).

Recall, if $\sigma(P)$ is the principal symbol of $P$ given by $\sigma(P)(x;\xi)=\sum_{|\sigma|=m}a_{\sigma}(x)\xi^{\sigma},$ for $(x;\xi)\in T^*X,$ the characteristic variety is $\mathrm{Char}(P)=\{(x;\xi)\in T^*X|\sigma(P)(x;\xi)=0\}$. It is well-known to be integrable \cite{Sato1973},\cite{Gabber1981} (also called coisotropic/involutive e.g, \cite[Def.~6.5.1]{KashiwaraSchapira1990}).

\subsubsection{}
In this paper we study \emph{non-linear} analogs of (\ref{eqn: LPDE}) and aim to develop a sheaf-theoretic microlocal framework to provide  coordinate-free and homotopy-invariant meanings to non-linear (derived) solution spaces and their characteristic varieties. 

Unlike the linear setting of $\D$-modules, which systematically employs homological techniques to obtain propagation estimates, no such unified framework exists in the classical literature for non-linear equations, although the importance of cohomological analysis for nonlinear PDEs was certainly anticipated \cite{Vinogradov2001}. 

It should be emphasized that singularities of solutions of \emph{non-linear} PDE might occur along characteristic surfaces and for large classes of nonlinear PDEs, after imposing certain boundedness and regularities of solutions (see §§ \ref{sssec: On the non-linear generalizations}), one has propagation of solution singularities in the non-characteristic codirections. However, it is not necessarily the case that arbitrary non-linear singularities travel along bicharacteristic curves in the classical sense.

\begin{remark}
\label{important remark} 
There are examples of the formation of additional singularities in solutions to nonlinear wave
equations (e.g. semilinear ones) arising from nonlinear interactions rather than propagated by a Hamiltonian flow (e.g. non-linear waves might interact producing new singular waves). 
The mechanism behind the formation
of extra singularities in the solutions to the certain PDEs e.g. 
$$\textbf{(a)}\begin{cases}
    \partial_tu-\partial_xu=0
    \\
    \partial_tv+\partial_xv=0
    \\
    \partial_tw=uv,
\end{cases}\hspace{2mm} or\hspace{2mm}\textbf{(b)}\begin{cases}
  \partial_t^2u_j-(\partial_1^2 +\partial_2^2) u_j=0,j=1,2,3,
  \\
  \partial_t^2v-(\partial_1^2 +\partial_2^2) v=u_1u_2u_3,
\end{cases}$$
is the creation
of extra singularities in taking nonlinear functions $F_P,$ associated to a non-linear operator $P$. A simple explanation for this fact is that wavefront set\footnote{Locus where the solution fails to be smooth.} $\mathrm{SS}\big(F(u)\big)$ can be strictly bigger than $\mathrm{SS}(u).$ Typically, one expects that elements of $\mathrm{SS}\big(F(u)\big)\backslash \mathrm{SS}(u)\cap \mathrm{Char}(F_P),$
where $\mathrm{Char}(F_P)$ is the `non-linear' characteristic variety, are the ones which are propagated.
\end{remark}

A known example of singularities not present in the linear case arise from the crossing of two or
more singularity-bearing characteristics. In this case, from each crossing point of singularities for the corresponding linear problem, there are solutions to nonlinear
problems with singularities concentrated along forward characteristics. This phenomena is known to appear in the context of shock waves; if two shock waves interact, the interaction may produce not a third weaker singularity, but rather a third shock.

These types of situations and those in
Remark \ref{important remark}, show why non-linear phenomena is more complicated than its linear counterpart. Moreover, it is perhaps surprising that there even exists an analog of (\ref{eqn: LPDE}) for non-linear PDEs, due to the following.

\begin{example}
\label{ex: Pierre}
Consider $\partial_zf-f^2=0$ on $X=\mathbb{C},$ with solution $f(z)=1/(z-c)$ for, $c\in \mathbb{C}.$ Its symbol is $\partial_z$ with characteristic variety $\{\xi=0\}$, where $(z;\xi)\in T^*X.$ Thus, if \emph{(\ref{eqn: LPDE})} was true, it would imply $f(z)$ is regular holomorphic on all $X$ since $Char$ is the zero-section $T_X^*X,$ which is false. 
\end{example}
What remedies the situation, and where derived geometry enters naturally in the non-linear setting, is that the usual characteristic variety $\mathrm{Char}$ is insufficient to detect non-linear singularities e.g. blow-up phenomenon of solutions. This is captured by cohomologies of the tangent complex and in the non-linear setting the derived characteristic variety i.e. $\D$-geometric characteristic variety is defined as the support of the tangent complex (cf. Def.\ref{definition: 1-Microchar}). In particular, it depends on linearization around a background solution, which as above might be singular.

\begin{example}
\label{obs: Pierre}
     Continuing with Ex.~\ref{ex: Pierre}, the linearized operator around $f(\EQ)=1/(z-c)$ is $P_f:=\partial_z-2/(z-c)id.$ The tangent complex is $\mathcal{O}\xrightarrow{P_f}\mathcal{O}$ in degrees $0,1.$ Since the singular support $SS(f)$ (wavefront, see Def.~\ref{defn: An WaveFront}) is $\{z=c\},$ one may compute $H^i(\mathcal{O}\xrightarrow{P_f}\mathcal{O})$ for $i=0,1,$ as $ker(P_f)$ and $coker(P_f)$, respectively. The modified (derived) characteristic variety is the union of the supports of these cohomologies. Simple computation shows it is $\{\xi=0\}\cup \{(c,\infty)\}$ where $\infty$ refers to a blow-up in the solution (specifically, we obtain the sky-scraper sheaf supported at $c$). Thus, while the $H^0$-term sees the classical contribution, it is a non-vanishing $H^1\neq 0$ capturing derived phenomena - it is non-zero and supported at $z=c$. Thus
    $\mathrm{SS}(f)\subset T_X^*X\cup \{(z=c\},$
    as required.
\end{example}
To consider more general non-linear PDEs, we work within a geometric framework of jet-space geometry \cite{Krasilshchik1986}.

It turns out that abstract properties of jet-bundles (e.g, \cite{Kock1979,Kock2010,Saunders1989}) permit a translation of this approach to the derived setting, where we may study global moduli spaces of solutions associated to systems of PDEs
\begin{equation}
    \label{eqn:NLPDE system}
    Z:\bigg\{\mathsf{F}_{A}\big(x,u,\frac{\partial u}{\partial x},\ldots, \frac{\partial^k u}{\partial x^k},\ldots,\frac{\partial^{|\sigma|}u}{\partial x_{\sigma}}\big)=0,\hspace{2mm} A=1,\ldots, N 
    \end{equation} 
determined by functions $\mathsf{F}_A$ of three distinct types: algebraic, analytic and `$\D$-geometric', with prescribed non-linearities (e.g. quasi-linearity, semi-linearity or fully non-linear etc.) imposed on vector valued functions $u$ with $n$-independent variables $x=(x_1,\ldots,x_n)$ and $m$-dependent variables $u=(u^1,\ldots,u^m).$ Standard multi-index notation $\sigma=(\sigma_1,\ldots\sigma_n)\in \mathbb{N}^n$ with \emph{order} $|\sigma|:=\sigma_1+\cdots+\sigma_n,$ is used and for simplicity the orders of each operator are taken to be the same.
\begin{remark}
\label{term: Settings}
We say that we are in the algebraic (resp. analytic) setting when the functions $\mathsf{F}_i$ are algebraic (resp. analytic). When the defining functions are smooth in variables $x,u$ but only polynomial in derivatives of orders $\geq 1,$ we say we are in the $\mathscr{D}$-geometric or almost-algebraic setting.\footnote{This was referred to as \emph{relatively--algebraic} in \cite{Paugam2022}.}
\end{remark}

The classical geometric approach \cite{Krasilshchik1986}studies (\ref{eqn:NLPDE system}) in terms of an associated infinite-dimensional space -- an appropriate sub-manifold $Z^{\infty}$ of the space of infinite jets -- obtained via the formal theory of PDEs (E. Cartan's tool of prolongation \cite{Kuranishi1957}) by successively differentiating the ideal of relations. Equivalently, it means the ideal $\mathcal{I}$ is stable under an action by a sheaf of differential operators $\D_X \bullet \hspace{.5mm} \mathcal{I}\subset \mathcal{I}$, defined through a canonical involutive distribution $\mathcal{C}$ on jets.

Function algebras $\mathcal{O}_{Z^{\infty}}$ may then be modeled by quotient algebras $\mathcal{B}$ of functions on jet bundles i.e. jet algebras $\mathcal{A}:=\mathcal{O}(\mathrm{Jets}^{\infty})$ by $\D_X$-ideal sheaves $\mathcal{I}$\footnote{Wherever necessary, we make the assumption they are prime in the differential sense.},
\begin{equation}
    \label{eqn:NLPDE SES}0\to \mathcal{I}_{Z^{\infty}}\rightarrow \mathcal{O}_{J_X^{\infty}E}\xrightarrow{\mathsf{p}}\mathcal{O}_{Z_{\infty}}\to 0.
\end{equation}
A ubiquitous scenario is when $\mathcal{B}$ is differentially generated (Def. \ref{defn: RelAlgNLPDE} (2) below) or when it is given by non-linear operators locally of Cauchy-Kovalevskaya type (\ref{eqn: D-CK Ideals}). From the point of view of moduli theory, in §\ref{sec: Non-linear Algebraic Analysis} we view sequences (\ref{eqn: SES}) up to $\D$-isomorphism.

Sequences (\ref{eqn:NLPDE SES}) give solution functors $\mathrm{Sol}_{\D}(\mathcal{I})$ and a $\D$-subscheme of the ambient $\D$-scheme $Spec_{\D}(\mathcal{A}_X)$ of algebraic $\infty$-jets. 
 A consequence of the Cartan--K\"ahler \cite{Cartan1945}\cite{Kahler}, and Cartan--Kuranishi \cite{Kuranishi1957} theorems, as well as integrability results due to Goldschmidt \cite{Gold1967,Goldschmidt1967}, Spencer \cite{Sp,Sp2} and Quillen \cite{Quillen1964}, for analytic systems\footnote{This is a non-trivial fact due to the formal theory of integrability in the analytic category. We do not treat work in the $C^{\infty}$-setting \cite{Lewy1957}.}, $\mathrm{Sol}_{\D}(\mathcal{I})\neq Spec_{\D}(\mathcal{A}_X) \neq \emptyset,$ and we obtain an embedding of $\D$-schemes
$\iota:Spec_{\D}(\mathcal{B})\hookrightarrow Spec_{\D}(\mathcal{A}).$

Derived enhancements $\mathrm{Sol}_{\D}(\mathcal{I})\hookrightarrow \mathbb{R}\mathrm{Sol}_{\D}(\mathcal{I})$ may be attributed to sequences (\ref{eqn:NLPDE SES}) which provide an appropriate setting to study systems (\ref{eqn:NLPDE system}) modulo symmetries. This occurs when studying moduli spaces of solutions in gauge theories of the form $[\![\mathbb{R}\mathrm{Sol}(\mathcal{B})/\mathcal{G}]\!]$ i.e. \emph{after} taking the quotient by the gauge group $\mathcal{G}$ of the theory (see \ref{BV-Example}). 

\begin{remark}
Any derived $\D$-scheme underlies a classical differential ideal. However, not every differential
ideal arises this way. It seems without integrability conditions, there is no reason to guarantee the existence of a derived $\D_X$-scheme enhancing a given classical differential ideal. It is known that holomorphic integrability is restricted by topological obstructions. Such obstructions lead to sources of  
examples of (analytic germs of)
differential ideals which do not admit derived enhancements. 
\end{remark}

The derived linearization is, roughly speaking, encoded by the tangent complex $\mathbb{T}_{\mathbb{R}\mathrm{Sol}(\mathcal{B})}$ and it turns out we can simultaneously enrich and simplify our study of non-linear PDEs (\ref{eqn:NLPDE SES}) by viewing $\mathbb{T}_{\mathsf{Sol}(\mathcal{B})}$ as a module not over a  non-commutative ring such as $\D_X$, but a commutative one - the ring of holomorphic functions $\mathcal{O}(\mathcal{L}X)$ on the $S^1$-equivariant derived loop stack $\mathcal{L}X$. From the perspective of derived geometry $\mathbb{T}_{\RS(\mathcal{B})}$ is computed as either an appropriate $\D$-module dual of the cotangent complex $\mathbb{L}_{\mathcal{Q}^{\bullet}}$ of a cofibrant resolution $\mathcal{Q}$ of $\mathcal{A}/\mathcal{I}=\mathcal{B}$ as a differential graded $\mathcal{A}-\D$-algebra, or via a canonical anti self-duality for sheaves over $X_{\DR}$ \cite{GR14}. 

This approach permits the study of the global virtual geometry of derived enhancements of well-known moduli spaces of solutions and their deformations from the perspective of derived PDE theory.
\begin{example}
Let $G$ be a reductive algebraic group. Put $\EQ:=\J([\mathrm{pt}/G]\times X])$. Then, $\RS\big(\J([pt/G]\times X)\big)$ coincides with the stack of $G$-bundles on $X$. If $\lambda \in\mathbb{A}^1$ is a (Hodge) deformation parameter corresponding \cite{Sim09},\cite{SimpsonHodgeFiltrationNonabelian}, we have a Hodge family $\RS^{\lambda}(\EQ)\to\mathbb{A}^1$ of deformations of solutions:
 \begin{equation}
 \label{eqn: Hodge Deformation}
    \begin{tikzcd} 
\RS\big([\mathrm{pt}/G]\times X_{\DR}\big)\simeq \mathrm{LocSys}_G(X) & \RS^{\lambda}(\EQ)\arrow[r,"\lambda=0",dashed]
    \arrow[l,"\lambda=1",dashed] & \mathrm{Higgs}_G(X).
    \end{tikzcd}
 \end{equation}
Our main construction \ref{cons: Solutions} describes derived moduli of solutions to $\EQ_1$ with coefficients $\mathbb{R}\mathrm{Sol}(\EQ_1,\EQ_2)$ which arise, for instance, in relation to various super-symmetric twistings \cite{ElliotYoo2018} e.g. Kapustin-Witten \cite{KapustinWitten2007} twists of $\mathcal{N}=4$ super Yang-Mills theory compactified along a Riemann surface $\Sigma$ whose moduli of solutions is a family of $1$-shifted symplectic stacks \cite{PTVV13} with two-parameter family of deformations over $\mathbb{C}[\lambda_1,\lambda_2],$
    $$\mathbb{R}\mathrm{Sol}_{\mathbb{C}}(\Sigma_{\mathrm{Hod}},[pt/G]\times X))_{\mathrm{Hod}}\simeq\mathrm{Maps}(\Sigma_{\mathrm{Hod}},BG)_{\mathrm{Hod}}.$$
For a fixed $\lambda_1\neq 0,\lambda_2=0,$ one recovers Hitchin's moduli space.
\end{example}
We prove singular support theorems 
for compact objects in the $\infty$-category $\mathsf{CAlg}(\D_X)$ of \emph{differential-graded commutative $\D$-algebras}, 
 in two related settings; via homotopical algebraic geometry over the sheaf $\mathscr{D}_X$  and via derived stacks fibered over the de Rham stack $X_{\DR}$ \cite{Sim09}.

To do so, we work in the homotopical setting of (stable) $\infty$-categories of commutative monoids in quasi-coherent and ind-coherent crystals on $X$ \cite{GR17b} \cite{GR14}, simply called \emph{derived $\D$-algebras}. Thus, a left (resp. right) derived $\D$-algebra is an object of
$$\mathsf{QCAlg}(X_{\DR}):=\mathsf{CAlg}\big(\mathsf{QCoh}(X_{\DR})\big)$$ (resp. $\mathsf{ICAlg}(X_{\DR}):=\mathsf{CAlg}\big(\mathsf{IndCoh}(X_{\DR})\big)$).

We now state the main results establishing a derived geometric jet-bundle formalism.

\subsection{Main results in Part I}
\label{ssec: Main results in Part I}
Let $X$ be a smooth projective scheme over $\mathbb{C}$. We will study the derived algebraic and analytic geometry of closed-subschemes of derived $\infty$-jet spaces over $X$, and their derived moduli stacks of solutions.

To state the main results, we now fix some notation and conventions, elaborated further in \S\S.\ref{ssec: Notations and Conventions}.
\begin{remark}[Notation]
\label{TerminologyConventions}
Unless otherwise explicitly stated, $X$ is always a smooth projective $\mathbb{C}$-scheme and all functors are implicitly derived, so e.g. $f^*,f_*,\otimes_{\mathcal{O}_X},\otimes_{\mathscr{D}_X}$ means $Lf^*,Rf_*,\otimes_{\mathcal{O}_X}^L,\otimes_{\mathscr{D}_X}^L$, and so on. All $\infty$-categories are denoted by $\mathsf{C}$ with mapping spaces $\mathrm{Maps}_{\mathsf{C}}.$ Inner-hom complexes are denoted by either of $Maps=\underline{Hom}$ so by Dold--Kan, the connective truncation $\tau^{\leq 0}$ of them agree with $\mathrm{Maps}_{\mathsf{C}}$. $\mathsf{PStk}_{\mathbb{C}}$ denotes the $\infty$-category of derived prestacks over $\mathbb{C}.$ Model categories are denoted $\mathbf{C}$ and their homotopy categories $Ho(\mathbf{C}).$ 
Given a symmetric monoidal model category $\mathbf{C}$ satisfying some assumptions \cite[Subsect~1.1]{CPTVV17}, the associated 
symmetric monoidal $\infty$-category $\mathsf{C}$ is obtained via homotopy coherent
nerve of the Dwyer--Kan localization $L(\mathbf{C})$ along weak equivalences. 

We always assume (local) geometricity for derived stacks, and by a 'derived affine $\mathbb{C}$-scheme $S=\mathsf{Spec}_{\mathbb{C}}(A)$', we mean one of finite-type, so $A$ is a connective commutative differential-graded algebra of finite presentation over $\mathbb{C}$. We denote by $\mathsf{dSt}_{\mathbb{C}}$ the (locally) geometric derived stacks over $\mathbb{C}.$ All \emph{derived} mapping stacks (inner hom) are denoted $\Map$. Let
 $Z\to S$ and $E\to Z$ be morphisms of prestacks. Weil--restrictions are denoted $\underline{\mathsf{Res}}_{Z/S}(E)$ or by the derived stack of sections \cite{KMMP}, denoted $\Sect_{S}(E/Z)$. Given a moduli functor $\mathscr{M}$, we reserve notation $\mathbb{R}\mathscr{M}$ to indicate derived enhancement--the main example being derived solutions in \eqref{RSolIntro} below.
\end{remark}

Via Weil restriction along the canonical map $q:=q_{\DR}^X:X\rightarrow X_{\DR},$
we study moduli stacks of flat sections which for $q^*\mathsf{Res}_{X/X_{\DR}}(E),$ give solutions of the `universal' (co-free) PDE, $\mathsf{Jets}_{X}^{\infty}(E):=q^*q_*E,$. 
We call the composite $q^*\circ q_*$ the \emph{jet-comonad}.

Derived Weil-restriction is the $\infty$-functor $q_{\DR,*}$ right adjoint to the base-change $\infty$-functor $q_{\DR}^*,$
\begin{equation}
\label{eqn: Derived Weil Adjunction}
q_{\DR}^*:\mathsf{PStk}_{X_{\DR}}\rightleftarrows\mathsf{PStk}_{X}:q_{\DR,*}=:\J,
\end{equation} 
where $\PS_{X}:=\PS_{/X}$ denotes the slice $\infty$-category, whose objects are called $X$-prestacks.

The main objects of study in this paper are as follows; see Def. \ref{definition: Derived NLPDE System Definition}.
\begin{definition}
Let $E\to X$ be an $X$-prestack. A \emph{generalized PDE} imposed on sections of $E\rightarrow X$ is a closed $X_{\DR}$-substack\footnote{In other words, a subcrystal of derived prestacks \cite{Kry2026}.} $Z\rightarrow \J(E)$. 
\end{definition}
Given a generalized PDE $\EQ\subset q_{\DR*}E,$ denote by $\EQ_X:=q^*\EQ\subset \mathsf{Jets}_X^{\infty}(E)$, the corresponding $X$-prestack. 
The moduli stack of solutions $\mathbb{R}\mathrm{Sol}(\EQ)$, is given by Weil-restriction (see (\ref{eqn:Derived solution stack})), along the canonical map $X_{\DR}\rightarrow \mathrm{pt}=\mathrm{Spec}(\mathbb{C}),$
\begin{equation}
\label{RSolIntro}
\RS(\EQ):=\Res_{X_{\DR}/\mathrm{pt}}(\EQ)\simeq \Map_{/X_{\DR}}(X_{\DR},\EQ).
\end{equation}

The following representability and descent results are central to establishing a derived jet-bundle formalism. They are used frequently in computations e.g. of higher homotopy groups $\pi_i$ of derived jet sheaves, and computing the Postnikov truncations of derived PDEs.
\begin{theorem}[cf. Propositions \ref{prop: Geometry of Derived Jets1}, \ref{prop: Geometry of Derived Jets2} and \ref{prop: Geometry of Derived Jets3}]
\label{kJetPropertiesIntro}
Fix $k\in \mathbb{Z}_{\geq 0}$ and let $S=\mathsf{Spec}_{\mathbb{C}}(A)$ be an affine derived $\mathbb{C}$-scheme.
\begin{enumerate}
    \item[(i.)] 
The functor $\underline{\mathsf{Jet}}^{k}(S):\mathsf{dAff}_{\mathbb{C}}^{\mathrm{op}}\rightarrow \mathsf{Spc},$ is corepresented by an object $\mathbb{L}\mathcal{J}^k(A)$. Similarly, $\underline{\mathsf{Jet}}^{\infty}$ is corepresentable by the homotopy-colimit of $\mathbb{L}\mathcal{J}^k(A)$.

    \item[(ii.)] The derived jet-functors $\underline{\mathsf{Jet}}^k(-),\underline{\mathsf{Jet}}^{\infty}(-)$ are compatible with Zariski localization and \'etale morphisms.    
\end{enumerate}
\end{theorem}
We establish functorialities of jet-comonads in this setting for a suitable sheaf-theory $\mathsf{Shv}(X)$ on $X$, which in applications we take to be $\mathsf{QCoh},\mathsf{IndCoh}$ \cite{QCoh,IndCoh}. In the sequel \cite[Prop.~5.3,5.5, and 5.7]{KSY2}, a more general statement is proven via the Barr--Beck--Lurie theorem \cite[Thm.4.7.3.5]{Lur17}.
\begin{proposition}[cf. Proposition \ref{prop: CoalgebraBody}]
\label{prop: CoalgebraIntro}
    Let $X,Y$ be smooth projective $\mathbb{C}$-schemes and put $q_{\DR}^X:X\to X_{\DR},q_{\DR}^Y:Y\to Y_{\DR}.$ :
\begin{enumerate}
\item There is an equivalence $\mathsf{Mod}_{q_{\DR,!}q_{\DR^!}}(\mathsf{Shv}(X))\simeq \mathsf{CoMod}_{q_{\DR}^*q_{\DR,*}}(\mathsf{Shv}(X)).$
\item For any morphism  $f:X\to Y$ there is functor 
$$f_{\mathrm{coAlg}}^*(-):\mathsf{CoAlg}_{q_{\DR}^{Y,*}q_{\DR,*}^Y}\big(\mathsf{Shv}(Y)\big)\to \mathsf{CoAlg}_{q_{\DR}^{X,*}q_{\DR,*}^X}\big(\mathsf{Shv}(X)\big).$$
\end{enumerate}
\end{proposition}
While we will not need greater levels of generality, let us mention that the conditions on $X,Y$ being smooth can be relaxed; the result holds for more eventually coconnective derived schemes locally of finite type over $\mathbb{C}.$

In Sect.~\ref{sec: Derived dR-NLPDES} we prove several important results about derived jet functors in derived algebraic geometry.
\begin{theorem}[cf. Propositions \ref{prop: AffXsoAffXDR} and \ref{prop: Def but not laft proposition}]
\label{thm: Jet Properties Intro}
Let $X$ be a smooth projective
$\mathbb{C}$-scheme.
\begin{enumerate}
    \item[(i.)] 
    Let $Z_X$ be a derived $\mathbb{C}$-scheme affine over $X$. Then the de Rham $\infty$-jet stack $\J(Z_X):=q_{\DR,*}(Z_X)$ is affine over $X_{\DR}.$

    \item[(ii.)] Let $E\to X$ be a laft prestack. Suppose $E$ admits deformation theory relative to $X$. Then $\J(E) \to X_{\DR}$ admits deformation theory.
\end{enumerate}
\end{theorem}

In Thm. \ref{thm: Jet Properties Intro} (ii.), $\J(E)$ is generally not laft. Moreover, we could have stated $\J(E)\to X_{\DR}$ admits deformation theory \emph{relative} to $X_{\DR}$ since the deformation theory of $X_{\DR}$ is trivial \cite[Lem.~2.1.10]{CPTVV17}.
\begin{remark}[Notation]
Nevertheless, we will write\footnote{The same conventions are implemented in the sequel work \cite{KSY2}.} $\mathbb{L}_{Z'/X_{\DR}}$ explicitly to indicate an object $Z'$ lives over $X_{\DR}.$ This is because the associated crystal $Z_X:=q^*Z'\to X,q:X\to X_{\DR}$ will live over $X$ (see diagram \eqref{eqn: Jet pb}).
\end{remark}

We prove representability and establish finiteness hypothesis for derived stacks of solutions \eqref{RSolIntro} 
of a derived differential system $\EQ\to X_{\DR}.$
In Prop.\ref{Derived Sections are Derived Flat Sections of Jet Space} we prove an equivalence between the derived stack of flat sections $\RS$ of  $q_{\DR,*}(\EQ_X)$ over $X_{\DR}.$ and the derived stack of all sections $\Sect(\EQ_X/X)$ of $\EQ_X$ over $X.$

Following \cite[Def.2.1, Prop. 2.2]{toenvaquie2007}, we introduce the notion of being \emph{homotopically finitely presented over} $\mathscr{D}_X$ (or relative to $X_{\DR})$ via categorical compactness conditions. We prove a series of technical results concerning this property. 

Most importantly, 
we obtain a relation between $\D$-afp algebras $\A$ and corresponding finiteness hypothesis on spaces of flat sections $\RS(\EQ)$ associated with the corresponding $X_{\DR}$-spaces given by the relative spectrum $Z=\Spec_{X_{\DR}}(\A)$ (see \cite[Prop. 2.5.12, Cons. 2.5.13]{Lurie2018}). 

The property of being $\D$-afp is preserved under pullback by smooth morphisms of projective $\mathbb{C}$-schemes, which plays an important role in establishing the Cauchy-type theorem for $\D$-afp algebras (Thm. \ref{NonLinCKKThmIntro} below).

\begin{theorem}[cf. Propositions \ref{prop: RSol is affine}, \ref{Laft Descent}, \ref{prop: QCAlgDRpb} and \ref{prop: D-afp means RSolfinite}]
\label{AFPTheoremIntro}
Let $X$ be a smooth projective $\mathbb{C}$-scheme.
    \begin{enumerate}
\item[1.]
Consider $\EQ_X\to X$ as in Thm.\ref{thm: Jet Properties Intro} (i.).
Then $\RS(q_{\DR,*}Z_X)$, given by \eqref{RSolIntro} is affine.

\item[2.] 
Let $S$ be an eventually coconnective derived $\mathbb{C}$-scheme and $Z\rightarrow S_{\DR}$ a derived Artin stack. Then $\RS(\EQ)$ is a derived stack locally almost of finite type over $\mathbb{C}$.

\item[3.] Let $f:Y\to X$ be smooth morphism, with $\EuScript{A} \in \mathsf{CAlg}(\mathsf{Mod}_{\D_X})$ be a $\D_X$-afp commutative algebra. Then the induced $\D$-algebra by pullback $\EuScript{A}_Y:=f_{\DR}^{\mathsf{QCAlg},*}(\EuScript{A})\in \mathsf{CAlg}(\mathsf{Mod}_{\D_Y})$ is $\D_Y$-afp.

\item[4.]
    Suppose $\EuScript{A}\in \mathsf{QCAlg}(X_{\DR})$ is $\D_X$-afp and bounded. Then $\mathbb{R}\mathrm{Sol}_{\D}(\EuScript{A})$ is a derived stack locally of finite presentation over $\mathbb{C}$. Moreover, if $\A\simeq \mathrm{Sym}(\M)$ with $\M$ a compact generator of $\mathsf{QCoh}(\D_X),$ then $\A$ is compact and $\mathbb{R}\mathrm{Sol}_{\D}(\A)$ is representable by a derived affine $\mathbb{C}$-scheme.
    \end{enumerate}
\end{theorem}
Roughly speaking, we will then have existence of global (and dualizable) cotangent complexes $\mathbb{L}_{\mathbb{R}\mathrm{Sol}(\EQ)}$ and $\mathbb{L}_{Z}.$ 
With finiteness hypothesis and geometricity assumptions there exists global cotangent complexes which may be computed solution wise and which play the role of \emph{derived linearization/deformation complexes} of the derived differential system.

\subsubsection{Linearization}
We present a construction \ref{cons: Solutions} allowing for derived moduli spaces of solutions for arbitrary $\EQ\rightarrow X_{\DR}$ as well as more general spaces $\mathbb{R}\mathrm{Sol}(\EQ_1,\EQ_2)$ of solutions with coefficients. They come with natural universal families i.e. given a test affine scheme $T$ consider a $T$-point $u_T:T\to \RS(\EQ),$ corresponding to a solution, let $ev:\RS(\EQ)\times X_{\DR}\to Z$, be natural evaluation. Then there is a canonical morphism,
\begin{equation}
    \label{evu_T}
ev_{u_T}:T\times X_{\DR}\xrightarrow{u_T\times id}\RS(\EQ)\times X_{\DR}\xrightarrow{ev}Z.
\end{equation}

Following the computation of cotangent complexes of Weil restrictions given in \cite[\S.~19.1.4]{Lurie2018}, we compute the $\mathscr{D}_X$-(co)tangent complexes and their related linearizations along infinite jets of solutions.

\begin{theorem}[cf. Theorem \ref{thm: RSol is laft-def}, Proposition \ref{T and D-T equivalence} and Proposition \ref{Linearization D-Module Proposition}]
\label{theorem: TRSolIntro}
Let $X$ be a smooth proper $\mathbb{C}$-scheme of dimension $d$ with canonical map $q_{\DR}:X\to X_{\DR}.$ Then, the following statements hold:
\begin{itemize}
    \item 
Let $T$ be a test affine scheme and consider a $T$-point $u_T:T\to \RS(\EQ),$ i.e. a solution. Let $ev:\RS(\EQ)\times X_{\DR}\to Z$ be the canonical evaluation map.
There exists an object $\Theta(\EQ/X_{\DR})_{u_T}\simeq ev_{u_T}^*T_Z\otimes\omega_X[-2d]$ in sheaves on $T\times X_{\DR},$ where $ev_{u_T}$ is \eqref{evu_T}, such that
$$T_{\RS(\EQ)}\simeq (\mathrm{Id}\times p_{\DR}^X)_{*}(\Theta(\EQ/X_{\DR})).$$ 

\item Let $E$ be a laft-def prestack over $X,$ with $X_{\DR}$-space $Z:=q_{\DR,*}E.$ Let $u_T:T\times X\to E,$ be the $T$-parameterized section corresponding to $j_{\infty}(u_T):T\times X_{\DR}\to q_{\DR,*}E$ and put $u_T:T\times X\to q_{\DR,*}^XE\times_{X_{\DR}}X\xrightarrow{ev_{u_T}}E.$
Then, $$\mathbb{L}_{q_{\DR,*}E/X_{\DR},j_{\infty}(u_T)}\simeq (\mathrm{Id}_T\times q_{\DR}^X)_{\sharp}^{\mathrm{Pro}}\circ u_T^{\sharp}\mathbb{L}_{E/X}.$$
\end{itemize}
\end{theorem}

In $\D$-geometry it is necessary to impose supplementary finiteness conditions and for this one may work point-wise i.e with infinitesimally cartesian/cohesive derived stacks $E$ whose tangent complex $\mathbb{T}_xE$ exists and is perfect over $R$, for any point $x\in E(R)$, where $R$ is any $k$-field. Following \cite[Def. 2.1.5,2.1.6]{CPTVV17}, we call them \emph{formally good}. 
\begin{example}
An example to keep in mind concerns $(\lambda=1)$ deformations of $\RS^{\lambda}(\EQ)$ as in (\ref{eqn: Hodge Deformation}).
On the one hand, given a solution $\varphi=(P,A)$, with $P$ a principle $G$-bundle with flat connection $A,$  
$\mathbb{T}_{\varphi}\RS^{\lambda=1}(\EQ)\simeq \big(\Omega^{\bullet}(X,\mathfrak{g}_P)[1],d_A\big),$ is the de Rham complex with coefficients in $\mathfrak{g}_P.$ On the other hand, its singular support encodes infinitesimal symmetries $\sigma$ of the connection Example \ref{ex: Sing of LocSys}.
  
The tangent complex is the underlying cochain complex of a deformed dg Lie
algebra, whose cohomology depends on $A$. The connection $d+A$ is irreducible (resp. reducible) if $H^{-1}=0,$
as the stabilizer is trivial or discrete (resp. if there is nontrivial $H^{-1}$). Note that the cohomology 
group $H^{-1}$
is the Lie algebra of the centralizer of the holonomy group $\mathrm{Hol}(A)$ of $A.$ Thus $H^{-1}$
is trivial when the centralizer is discrete.
\end{example}
Finiteness conditions in $\mathscr{D}$-geometry are imposed solution-wise.
\begin{remark}
Looking to future works \cite{KSY2}, and applications to physical theories e.g. gauge theories, possibly coupled to fermions or a scalar theory, we note
the underlying graded vector space of fields is unchanged, no matter the choice of
solution when we linearize. What changes is the shifted $L_{\infty}$--brackets (see e.g. \cite{CG16,CG16b}) ). Hence the tangent complex at any solution
consists of the fixed graded vector space of fields but with a differential depending on the solution. 
\end{remark}

\subsection{Main results in Part II}
\label{ssec: Main results in Part II}
We now state the main results of this paper. They rely on the general formalism of (formal) derived algebraic geometry over $X_{\DR}$, when $X$ is a smooth projective $\mathbb{C}$-scheme developed in Part I.
Using this language, we may properly formulate and prove the abstract propagation of solution singularities result (cf. (\ref{eqn: LPDE})) and the related CKK-theorem.

The first result treats propagation and Cauchy problems for nonlinear PDEs with holomorphic coefficients in the complex domain where the initial data may have singularities along a regular hypersurface on the initial surface.\footnote{In the linear setting Y. Hamada \cite{Hamada} shows that by assuming the characteristic
surfaces issuing from the points of singularity in the initial data are simple, the singularity in the initial data is propagated along these surfaces. If the initial data have at most poles (resp.
essential singularities), the solution in general has at most poles (resp.
essential singularities) on these surfaces.} To reach it, we overcome the challenges of: (a) introducing $\D$-module microlocalization in the non-linear setting § \ref{sec: Algebraic D-Geometry}, (b) developing a geometric theory of solution spaces for generalized nonlinear PDEs in the presence of symmetries via their functor of points §\ref{sec: Derived D-NLPDES} and §\ref{sec: Derived dR-NLPDES} and (c) defining their cohomological singular supports, here the $1$-microcharacteristic variety $\mathsf{Ch}_{\D}^1(\mathcal{A})$ (§§\ref{sssec: D-Geometric Microcharacteristics}).

\begin{theorem}[cf. Theorem \ref{MainTheoremPropagationLinearizedBody}, Corollary \ref{NonlinPropCorollaryBody}]
\label{MainTheoremPropagationLinearizedIntro}
Let $M$ be a real analytic manifold with complexification $X$. Let $\mathcal{B}$ be a derived $\D$-algebra on $X$ which is $\D_X$-afp and bounded, with perfect tangent complex $\mathbb{T}_{\mathcal{B}}.$ 
Let $V\subset T^*X$ be a closed subset and $T_VT^*X$ the normal bundle. 
\begin{itemize}
    \item[1.] Then the $1$-micro-characteristic variety $\mathsf{Ch}_{\D,V}^1(\mathcal{B})$ is a conic subset of $\mathsf{Sol}(\mathcal{A})\times_X T_VT^*X.$
    \item[2.] If $V=T_X^*X,$ then for all (infinite jets) of solutions $\varphi$ of $\mathcal{S}ol_X(\mathcal{B}_X)$, by linearization (Prop. \ref{Linearization Lemma} and § \ref{sec: Derived Linearization and the Equivariant Loop Stack}) and considering base-change of microfunctions from $\mathcal{O}_X[\D_X]$-modules to $\mathcal{B}_X[\mathcal{E}_X]$-modules, one has that
\begin{equation}
    \label{eqn: NLPDE Propagation}
\mathrm{supp}\big(R\mathcal{S}ol_{\mathcal{B}_X[\mathcal{E}_X]}\big(_{\mathcal{B}}\mu(\mathbb{T}_{\mathcal{B}}^{\ell}),\mathrm{ind}_{\mathcal{B}_X[\mathcal{E}_X]}(\mathscr{C}_M)\big)\subset \mathrm{Char}\big(_{\mathcal{B}}\mu \mathbb{T}_{\mathcal{B},\varphi}^{\ell}\big)\cap T_M^*X.
\end{equation}
    \item[3.] In particular, for the linearization $\D$-module of a fixed $n$-coconnective truncation,
assume $\psi$ a $C^1$ function on $X$ is such that $d\psi(x_0)\notin \mathrm{Char}(\varphi^*\mathbb{T}_{\mathsf{Sol}(\mathcal{B})}).$ Then
$$\mathcal{E}xt_{\D_X}^j\big(\varphi^*\mathbb{T}_{\mathsf{Sol}(\mathcal{B})},R\Gamma_{\{\varphi\geq 0\}}(\mathcal{O}_X)\big)_{x_0}=0.$$
\end{itemize} 
\end{theorem}

It is necessary that $\mathbb{T}_{\mathcal{B}}$ may be written as a complex of free $\mathcal{B}[\D_X]$-modules of finite rank (Proposition \ref{HofpDAlgTangentComplex}), or that $\mathcal{B}$ is compact. In some cases, suffices to consider free algebras $\Free(\mathcal{M})$ on compact $\D$-modules for which pull-back along $\varphi$ presents $\mathbb{T}_{\mathsf{Sol}_X
(\mathcal{B})}$ as a compact $\D$-module. In particular from Proposition \ref{Tangent Good Filtration}, we have an induced good filtration (over its classical locus).

One then has a $\D$-algebraic analog \cite{Paugam2022} of Nirenberg's formulation of the classical real analytic non-linear Cauchy problem \cite{Nirenberg}.
Given a smooth projective $\mathbb{C}$-scheme or complex manifold $Y$, let $C_V(S)$ denote the normal cone to $S\subset T^*Y$ along a subset $V$ (see §§ \ref{ssec: Notations and Conventions}).
\begin{theorem}[Theorem \ref{theorem: DNLCKK}]
\label{MainTheoremCKKLinearizedIntro}
    Let $f:X\rightarrow Y$ be a morphism of complex analytic manifolds and $\mathcal{B}_Y$ a derived $\D$-algebra which is $\D_Y$-afp and bounded.
Suppose that its solution-wise characteristic variety satisfies
\begin{equation}
    \label{NCIntroTheoremCondition}
\mathring{\pi}_X^{-1}(p)\cap C_{T_Y^*Y}\big(\mathrm{Char}_{T_Y^*Y}^1(\varphi^*\mathbb{T}_{\mathcal{B}})\big)=\emptyset,
\end{equation}
for all $p\in T_Y^*Y$ with $\mathring{\pi}_X:\mathring{T}_X^*Y\times_Y T_Y^*Y\rightarrow T_Y^*Y$, where $\mathring{T}_X^*Y$ denotes the removal of the zero-section. Then the natural morphism
$$f^{-1}\mathbb{R}\underline{\mathrm{Sol}}_{Y}(\mathcal{B}_Y)\rightarrow \mathbb{R}\underline{\mathrm{Sol}}_X(Lf^*\mathcal{B}_Y),$$
is an equivalence of spaces.
\end{theorem}

Let $P(z,D)f=g,$ be an arbitrary system of linear differential equations with analytic coefficients. Theorems \ref{MainTheoremPropagationLinearizedIntro} and \ref{MainTheoremCKKLinearizedIntro} vastly extend the fact that there holds a theorem of existence and uniqueness similar to the usual Cauchy–Kovalevskaya theorem, concerning the existence and uniqueness of the solution in the space of convergent power series, satisfying certain initial conditions and the differential compatibility condition $Q(z,D)g=0.$ 
This is a \emph{solvability} condition (cf, \cite{kashiwara1970algebraic},\cite{Quillen1964}) and is given in terms of algebraic conditions on the $\mathscr{D}_X$-module $\mathcal{M}_P$ determined by $P(z,D).$

We prove a non-linear version in the context of geometry relative to de Rham stacks. 
Let $f:X\to Y$ be a morphism of smooth proper schemes over $\mathbb{C}$. We let $\mathsf{PStk}_{/Y_{\DR}}^{\D-\text{fin}}\subset \mathsf{PStk}_{/Y_{\DR}}$ denote $\D$-finitary $Y_{\DR}$-prestacks and $\mathsf{PStk}_{/Y_{\DR},f}^{\D-\text{fin}}\subset \mathsf{PStk}_{/Y_{\DR}}^{\D-\text{fin}},$
those for which $f$ is non-characteristic, in the sense of condition \eqref{NCIntroTheoremCondition} of Theorem \ref{MainTheoremCKKLinearizedIntro}.
\begin{theorem}[cf. Theorem \ref{thm: RSol is laft-def}, Theorem \ref{theorem: DAnCKK}]
\label{NonLinCKKThmIntro}

Let $f:X\rightarrow Y$ be a morphism of complex manifolds and consider the restriction of the induced morphism $f_{\DR}^*:\PS_{/Y_{\DR}}\rightarrow \PS_{/X_{\DR}}$ to $\PS_{/Y_{\DR},f}^{\D-\text{fin}}$ (denoted the same). Then the diagram
\begin{equation}
\label{eqn: CKK Commutes}
\begin{tikzcd}[column sep=6em]
\PS_{/Y_{\DR},f}^{\D-\text{fin}} \arrow[d,"f_{\DR}^{\star}"] \arrow[r,"\mathbb{R}\underline{\mathrm{Sol}}_Y(-)"] & \PS_{/Y_{\DR}}    \arrow[d,"f_{\DR}^{\star}"]
\\
\PS_{/X_{\DR}}\arrow[r,"\RS(-)"] & \PS_{/X_{\DR}}.
\end{tikzcd}
\end{equation}
is commutative.

\end{theorem}

The tools developed in this work are necessary to formulate similar results in wider contexts that treat singular non-linear Cauchy problems:
\begin{equation}
    \label{eqn: NLCauchyProblem}
\begin{cases}
    \frac{\partial^L}{\partial t^L}u^{\alpha}=\mathsf{F}\big(t,x,u^{\alpha},\partial_x^{\sigma}\partial_t^ju^{\alpha}\big),\hspace{2mm} |\sigma|+j\leq L,j<L
    \\
\frac{\partial^k}{\partial t^k}u^{\alpha}\big|_{t=0}=\Phi_k(x),\hspace{2mm} k=0,1,\ldots,m-1,
\end{cases}
\end{equation}
imposed on complex analytic functions $u^{\alpha}(t,x),\alpha=1,\ldots,m$ where  
for $x\in U\subset \mathbb{C}^n$ where $U$ is an open subset, and $t\in \mathbb{R}$, where $\mathsf{F}$ depends on the derivatives of $u$ with respect to $x,t$ of order $\leq L,$ and where we allow $\Phi_k(x)$ to be singular functions, for instance coming from sheaves of ramified
holomorphic functions, or as ramified sections of logarithmic type.  We refer to \S.~\ref{ssec: Cauchy Problems} for further motivations and \S.~\ref{sssec: D-Geometric Microcharacteristics} for details.  

\subsubsection{Microlocalization}
Using Theorem \ref{theorem: TRSolIntro}, we endow our complexes with certain
(homotopy-coherent) lift via the deformation to the normal bundle construction \cite[\S.~9.2.]{GR17b} of the
formal moduli problem $X\rightarrow X_{\DR}.$ From the perspective of non-abelian Hodge theory
\cite{Sim09,SiNonabelian}, this is given by a lift to the Hodge stack $X_{\Hod}$.

Linearizing around a solution gives a complex of sheaves over $\mathcal{L}X$ by Koszul duality \cite{BenZviNad}. Lifting to $X_{\Hod}$ and passing to the special fiber gives a complex of sheaves on the cotangent stack $\mathsf{T}^*X$.
The combination of these steps will be called (derived) \emph{micro-linearization}; the resulting sheaf can be thought of as the associated graded object to a good filtration \cite[Def.1.2.3]{kashiwara1970algebraic} on the linearization sheaf, denoted by 
$$\Rmu(\EuScript{T}
_{Z,\varphi}):=\Rmu\big(j_{\infty}(\varphi_T)_{\mathrm{IndCoh}}^!(\mathbb{T}_{\EQ/X_{\DR}})\big).$$
\begin{remark}
In this setting, $X$ is allowed to be more general than a smooth proejctive $\mathbb{C}$-scheme. For instance, it can be: a QCA stack in the sense of \cite{DriGai2013}, a quasi-smooth derived $\mathbb{C}$-scheme \cite{KhanRydh2025}, or an ind-inf-scheme \cite[Ch. 3]{GR17b}. In particular, $X$ may be a derived scheme of finite type or an ind-scheme built by gluing laft schemes along closed embeddings or quotients of laft spaces by Lie algebroid actions.
\end{remark}
It is convenient to use equivariant sheaves on $\mathcal{L}X$, which by the HKR theorem identify with ind-coherent sheaves on $\mathsf{T}_X[-1]$ \cite{BenZviNad}.

In this setting, ind-coherent (filtered) Koszul duality for stacks \cite{ChenFiltered} provides an equivalence
\begin{equation}
    \label{KoszulEquivIntro}
\kappa:\mathsf{Mod}^{fil}(\D_X)\rightarrow \big(\mathsf{IndCoh}(\mathbb{T}_X[-1])^{\mathbb{G}_m}\simeq \mathsf{QCoh}(\mathbb{T}_X^*)^{\mathbb{G}_m}\big).\end{equation}
Via \eqref{KoszulEquivIntro}, we view $\mathbb{T}_{\mathcal{B},\varphi}$ in terms of $\mathbb{G}_m$-equivariant sheaves $\widetilde{\mathbb{T}}_{\mathcal{B},\varphi}$ over $\mathbb{T}_X^*$. Roughly, it is the analog of taking the associated graded object to a good filtration (microlocalization).

Singular support in this context, a measure of local constancy, is understood more earnestly as a statement on the perfectness of the linearization complex an ind-coherent sheaf \cite{AriGai2015}. 
Then, “non-characteristic directions” refer to points $(z,\xi)$ not contained within the singular support. 
\begin{theorem}[cf. Theorem \ref{MainProposition}, Theorem \ref{thm: Main Theorem} and Corollary \ref{Main Theorem Corollary}]
Consider a non-linear PDE $\EQ$ over a (possibly derived and eventually coconnective) scheme $X$ locally almost of finite type. The microlocalization of its linearization $\Rmu(\mathsf{T}
_{\EQ,\varphi})$ along a solution $\varphi$ defines an object of $\IC(\mathsf{T}^*X),$  
with Koszul dual $\IC(\mathcal{L}X)^{S^1}$. Its singular support defines the characteristic variety $Ch_{\varphi}(\EQ)$. If $\EQ$ is $\D$-finitary and admits deformation theory relative to quasi-smooth $X$, so does $\RS(\EQ)$ and its corresponding derived linearization sheaf is locally constant in non-characteristic directions.
\end{theorem}

\subsection{Overview of the paper}
\label{ssec: Overview}

The paper is divided into two parts. The first establishes foundations for homotopical geometry of jet-spaces, PDEs and their stacks of solutions. The second studies applications to propagation of singularity and initial-value problems. The main results are summarized as follows: 

\begin{enumerate}
\item[\hypertarget{(\textit{Part I.})}{\textit{Part I.}}] (\S.\ref{sec: Algebraic D-Geometry}--\ref{sec: Derived dR-NLPDES}): We study non-linear PDEs in a coordinate free and algebro-geometric language by encoding them as differentially generated ideals in $\D_X$-algebras of $\infty$-jets of functions. We study their associated prestacks from the functor of points perspective, which naturally leads to considering derived enhancements inspired by works of To\"en–Vezzosi \cite{TV2} and Lurie \cite{Lur09,Lurie2018}, put on solid ground in \cite{dBPP18}. We systematically study derived PDEs imposed in derived jet-bundles $\Jets(S)$ (Def.\ref{DerivedInfiniteJets}), where $S$ is a derived $\mathbb{C}$-scheme of finite presentation or a proper geometric derived stack locally almost finitely presented over $\mathbb{C}$ (see Rmk.\ref{TerminologyConventions} for conventions).

We define derived analogs of formally-integrable derived non-linear PDEs in terms of their infinite-prolongations (Def.\ref{definition: Derived NLPDE System Definition}), and introduce their derived stacks of solutions (Def.\ref{Definition: Derived Stack of Solutions Definition}). We briefly discuss finite-order analogs (Def.\ref{def: S-derPDEk}).
\begin{remark}
Throughout the paper, we make a continued effort to discuss both relations to classical facts in PDE geometry \cite{Krasilshchik1986} as well as the derived \emph{analytic} analogs of many of the foundational constructions, following \cite{HolsteinPorta2025} (see also, \cite{LurieDAGV}, \cite{PortaYuDerivedGAGA2019,PortaYuPRIMS}). A systematic treatment will appear elsewhere (see e.g. \cite{KryQS2026}).
\end{remark}

\begin{itemize}
\item We prove the representability of finite and infinite jet functors $\J(E)$ for derived schemes and prestacks $E$ (Thm.\ref{kJetPropertiesIntro}). We discuss their (co)-monadic description in terms of de Rham prestacks $X_{\DR}$ and prove a Barr--Beck--Lurie theorem for their $\infty$-functorialities as coalgebras (Prop.\ref{prop: CoalgebraIntro}).

\item We study Hodge deformations $X_{\Hod},$ construct derived moduli stacks of solutions $\RS(\EQ)$ and establish their finiteness and representability (Thm.\ref{thm: Jet Properties Intro}). The main class of derived PDEs admitting good finiteness hypothesis and deformation theory are called \emph{$\D$-almost finitely presented} (hereafter, $\mathscr{D}$-afp, see Def. \ref{DAfpDefinition}).
We prove a series of results characterizing these objects $\EQ\to X_{\DR}$ over smooth projective $\mathbb{C}$-schemes $X$, and their derived solution stacks $\RS(\EQ)$ (Thm.\ref{AFPTheoremIntro}).
\end{itemize}

\item[\hypertarget{(\textit{Part II.})}{\textit{Part II.}}]
(\S.\ref{sec: Derived Non-linear Microlocal Analysis}--\ref{sec: Cauchy Problems and Propagation of Solution Singularities}): We develop a theory of singular support and propagation for solutions to non-linear systems within derived geometry. This extends microlocal techniques of algebraic analysis \cite{kashiwara1970algebraic} to the non-linear setting. In this context, polynomial-type non-linearities become tractable, and many classical constructions admit microlocal reinterpretations via bicharacteristics \cite{BonyLocal,BonySymbol}:
\begin{itemize}
    \item We prove that upon derived linearization of a $\D_X$-afp PDE, its derived solution stack satisfies a propagation of singularity theorem (Thm. \ref{MainTheoremPropagationLinearizedIntro}) and thus a non--characteristic Cauchy--Kowaleskaya--Kashiwara theorem (Thm. \ref{MainTheoremCKKLinearizedIntro}).
    
   \item The Cauchy problem is a local question, so we decompose $f:X\to Y$ into a closed-immersion and a smooth morphism. Treating both cases separately and using a universal GAGA property of de Rham stacks $X_{\DR}$ and analytification of derived mapping spaces \cite{HolsteinPorta2025}, we prove the main result of this paper which establishes an \emph{analytic} non-linear derived CKK-theorem (Thm. \ref{NonLinCKKThmIntro}).

\end{itemize}

\end{enumerate}
In the final part of the article (§\ref{sec: Derived Linearization and the Equivariant Loop Stack}--\ref{sec: Derived Microlocalization and the Hodge Stack}), we introduce derived microlocalization using the deformation to the normal cone construction and the Hodge stack $X_{\Hod}$ of the space of independent variables $X$. We apply these tools to $\D_X$-prestacks with $\D_X$-deformation theory and prove several results establishing formal properties.
We make a comparison of the Kashiwara--Schapira singular support $\mathrm{SS}$ and the singular support $\mathsf{SingSupp}(\EuScript{F})$ of an ind-coherent sheaf $\EuScript{F}\in \IC(S)$ on a quasi--smooth derived scheme $S$ defined by Arinkin--Gaitsgory \cite{AriGai2015}.

Even though our methods are abstract and technical, questions about the propagation of solutions (or (analytic) continuation or extendability) can help to argue non-existence of solutions of a given class e.g. holomorphic, distributional etc. in a given domain, due to possible singularity formation in the behaviour of the solution along the boundary of the domain (see §§\ref{sssec: On the non-linear generalizations}). These types of questions have clear and immediate applications to important long-standing problems in various branches of mathematics, physics and engineering which have resisted approaches through classical means and may benefit from a new circle of ideas. This paper can therefore be seen as laying the mathematical foundations for novel applications using homotopical algebro-geometric methods. 
\medskip

\subsubsection{Relation to other works.}
\label{sssec: Relation to other works}
Many constructions directly apply to the setting of Clausen-Sholze's non-archimedean derived analytic geometry Clausen-Sholze \cite{CS3}. This applies to sub-analytic topoi and to rigid analytic spaces over non-archimedean fields $K$, via an `analytic de Rham stack' introduced recently \cite{Cam}. Thus a flexible framework for both derived analytic and derived algebraic geometry, inclusive of Archimedean and non-Archimedean contexts may be found.
Crucially, one may use overconvergent analogs of the usual nilradical consisting of ideals of uniformly nilpotent or overconvergently close to zero elements together with naturally arising analytic de Rham functors.  

This assists in circumventing some technical issues while also permitting the use of spaces of generalized functions e.g. tempered distributions. Note we can not do this with usual sheaf-theoretic constructions, as these spaces do not form sheaves in the usual sense, but only a suitable (sub-)analytic sense.

From the perspective of PDEs this leads one to, roughly speaking, replace the formal neighbourhood of the diagonal, rings of formal power series and spaces of algebraic jets with overconvergent neighbourhoods, overconvergent series and overconvergent jets. 
These new objects play an important role in studying existence of solutions that is both an algebraic and analytic problem, consisting of first finding formal power series solutions and then proving their convergence \cite{KryQS2026}.

\subsection*{Acknowledgments}
J. Kryczka would like to thank Chris Brav, Juan Esteban Rodríguez Camargo, Harrison Chen and Frédéric Paugam for helpful discussions and explanations on specialized topics. The idea to formulate non-linear CKK-theorem via derived geometry originated with F. Paugam, we thank him for discussions. The first author is supported by the Postdoctoral International Exchange Program of Beijing Municipal Human Resources and Social Security Bureau. The second author is supported by grants from Beijing Institute of Mathematical Sciences and Applications (BIMSA), the Beijing NSF BJNSF-IS24005, and the China National Science Foundation (NSFC) NSFC-RFIS program W2432008. He would also like to thank China's National Program of Overseas High Level Talent for generous support. Finally, he would like to thank NSF AI Institute for Artificial Intelligence and Fundamental Interactions at Massachusetts Institute of Technology (MIT) which is funded by the US NSF grant under Cooperative Agreement PHY-2019786.

\subsection{Notation and background}
\label{ssec: Notations and Conventions}
We briefly collect the basic features of $\D$-module theory and derived algebraic geometry we will use frequently throughout the work.
\subsubsection*{$\D$-module theory}
By a `manifold' $M$ we mean a real analytic manifold with complexification $X.$ 
If $p:E\rightarrow M$ is a finite-dimensional vector bundle, denote the removal of its zero-section by $\mathring{p}:\mathring{E}\rightarrow M.$ For a symplectic manifold we always denote the Hamiltonian isomorphism by $H:TX\simeq T^*X.$ 
Our conventions on analytic $\D$-modules follows \cite{schneiders1994,kashiwara1970algebraic,schapira19941,Schapira19942}.
\begin{remark}[Notation]
\label{notate: D-Module Conventions}
Let $\mathrm{Mod}(\D_X)$ the abelian category of left $\D_X$-modules and $\mathrm{Mod}(\D_X^{op})$ that of right $\D_X$-modules. They are equivalent by the usual twist by the invertible sheaf of top-forms $\Omega_X^n.$ Given a (left)
$\D$-module $\mathcal{M}$ its sheaf of solutions is $R\mathcal{S}ol_X(\mathcal{M}):=R\mathcal{H}om_{\D_X}(\mathcal{M},\mathcal{O}_X),$ 
where
$R\mathcal{H}om_{\D_X}(-,-):\mathrm{D}^b(\D_X)^{op}\times \mathrm{D}^b(\D_X)\rightarrow \mathrm{D}^b(\mathbb{C}_X).$

Other sheaves we will make use of are: Sato hyperfunctions $\mathscr{B}_M$ \cite{Sato1973}, complex-valued real analytic functions $\mathscr{A}_M$, microfunctions $\mathscr{C}_M$ and microdifferential operators $\mathcal{E}_X.$ 
For a coherent $\D_X$-module $\mathcal{M}$ its microlocalization is
$\mu\mathcal{M}:=\mathcal{E}_X\otimes_{\pi^{-1}\D_X}\pi^{-1}\mathcal{M},$ for $\pi:T^*X\rightarrow X$, which is coherent as an $\mathcal{E}_X$-module.
The characteristic variety of $\mathcal{M}$ is the support of its microlocalization $\mathrm{Char}(\mathcal{M}):=supp\big(\mu \mathcal{M}\big).$
Details can be found in \cite{KashiwaraSchapira1990}.
\end{remark}

We treat non-linear PDEs modulo their symmetries in a coordinate free way and by mainly working with $(\infty,1)$-categories to avoid model-dependent arguments.

\subsubsection*{Derived algebraic geometry}

\begin{remark}[Notation]
\begin{itemize}
    \item $\mathsf{Spc}$ is the symmetric monoidal $(\infty,1)$-category of spaces and for a field $k$ of characteristic zero $\mathsf{Vect}_k$ is the stable symmetric monoidal $(\infty,1)$-category of unbounded cochain complexes.
    \item  $\PS_k$ is the category of prestacks i.e. functors $\mathsf{Fun}\big(\mathsf{CAlg}_k,\mathsf{Spc}),$ where $\mathsf{CAlg}_k:=\mathsf{CAlg}(\mathsf{Vect}_k)$ is the $(\infty,1)$-category of commutative algebras in $\mathsf{Vect}_k.$

    \item For any $(\infty,1)$-category $\mathsf{C}$ and an operad $\mathcal{O},$ we denote by 
$\mathcal{O}-\mathsf{Alg}[\mathsf{C}],$
the category of $\mathcal{O}$-algebras in $\mathsf{C},$ e.g. $\mathsf{CAlg}_{k}:=\mathcal{C}omm-\mathsf{Alg}[\mathsf{Vect}_k].$
\end{itemize}
\end{remark}
\begin{remark}[Notation]
Here and throughout for some finite index set $I$, we let $\{T_i:i\in I\}$ denote a set of indeterminates. Thus, $\mathbb{C}[T_i:i\in I]$ is the polynomial ring in these variables.
\end{remark}
A sequence $(f_1,\ldots,f_N)$ of elements of $A$ determines a homomorphism $\mathbb{C}[T_1,\ldots,T_N]\to A,$ given by $T_i\mapsto f_i,i=1,\ldots,N.$
The \emph{Koszul complex} 
$K_A(f_1,\ldots,f_N)=\otimes_i[A\xrightarrow{f_i}A],$ is quasi-isomorphic to a derived tensor product,
$$K_A(\big<f\big>)\xrightarrow{\simeq} A\otimes_{\mathbb{C}[T_1,\ldots,T_n]}^{\mathbb{L}}\mathbb{C}[T_1,\ldots,T_n]/\big<T_1,\ldots,T_n\big>.$$
\begin{remark} Derived geometrically, the Koszul complex can be seen as the dg-ring of functions on a certain derived sub-scheme of $\mathrm{Spec}_{\mathbb{C}}(A).$
\end{remark}
Let $A_{\bullet}$ be a simplicial commutative ring over $\mathbb{C}$, or equivalently, a connective commutative differential-graded $\mathbb{C}$-algebra.
Fix $\big<f\big>:=(f_1,\ldots,f_n)\in A_{\bullet},$ i.e. a collection of points of the underlying space of $A_{\bullet},$ and consider the homotopy-pushout square in $\mathsf{Ring}_{\mathbb{C}}^{\Delta},$
\begin{equation}
    \label{HoPushOut1}
\begin{tikzcd}
    \mathbb{C}[T_1,\ldots,T_n]\arrow[d]\arrow[r,"\mathsf{f}"] & A_{\bullet}\arrow[d]
    \\
    \mathbb{C}[T_1,\ldots,T_n]/\big<T\big>\arrow[r] & A_{\bullet}\po \big<f\big>.
\end{tikzcd}
\end{equation}
Note that 
\begin{equation}
    \label{Pi0}
\pi_0\big(A_{\bullet}\po \big<f\big>\big)\simeq \pi_0(A_{\bullet})/\big<f\big>,
\end{equation}
where we denote again by $f_i$ the connected component in $\pi_0(A_{\bullet}).$

The definitions of quasi-smooth derived $\mathbb{C}$-schemes, quasi-smooth morphism of derived $\mathbb{C}$-schemes, and their properties can be found in \cite[Sect.2]{AriGai2015}.
We recall the definition in a manner rendering the local descriptions explicit. See also \cite{KhanRydh2025}.
\begin{definition}
\label{QuasiSmoothClImm}
    Let $i:\mathsf{Z}\to \mathsf{X}$ be a closed-immersion in $\mathsf{dSch}_{\mathbb{C}}$ (i.e. $i^{\mathrm{cl}}:\mathsf{Z}^{\mathrm{cl}}\to \mathsf{X}^{\mathrm{cl}},$ is an ordinary closed-immersion of $\mathbb{C}$-schemes). It is said to be a \textit{quasi-smooth closed-immersion} if Zariski locally on $\mathsf{X}$, there exists a function $f:\mathsf{X}\to \mathbf{A}_{\mathbb{C}}^n,$ and a homotopy Cartesian square,
\begin{equation}
    \label{QSDiagram}
    \begin{tikzcd}
\mathsf{Z}\arrow[d]\arrow[r,"i"] & \mathsf{X}\arrow[d,"f"]
    \\
    \{0\}\arrow[r] & \mathbb{A}_{\mathbb{C}}^n,\end{tikzcd}\end{equation}
    where $\{0\}:=\mathrm{Spec}\big(\mathbb{C}[T_1,\ldots,T_n]/\big<T\big>\big)$ is the inclusion of the origin the $n$--dimensional affine space $\mathbb{A}_{\mathbb{C}}^n:=\mathrm{Spec}(\mathbb{C}[T_1,\ldots,T_n])$ over $\mathbb{C}.$
\end{definition}
This means that, locally on $i:\mathsf{Z}\to \mathsf{X}$ is of the form 
\begin{equation}
    \label{LocalQSIMModel}
\mathsf{Spec}_{\mathbb{C}}\big(A_{\bullet}\po\big<f\big>\big)\hookrightarrow \mathsf{Spec}_{\mathbb{C}}(A_{\bullet}),
\end{equation}
for some $f_1,\ldots,f_n\in A_{\bullet}.$

We study pre-stacks in derived algebraic geometry admitting deformation theory i.e. we work within the full subcategory
$$\mathrm{PreStk}_{\mathrm{laft}\text{-}\mathrm{def}}\subseteq \mathrm{PreStk}_{\mathrm{laft}},$$
of laft prestacks admitting deformation theory \cite{GR17b}. We do not repeat the full definition here, but instead mention that $Z$ is said to admit deformation theory if it is convergent and sends all push-out squares
\[
\begin{tikzcd}
S_1\arrow[d]\arrow[r] & S_2 \arrow[d]
\\
S_1'\arrow[r] & S_2'
\end{tikzcd}\hspace{2mm}\text{ to pull-back squares }\hspace{1mm} \begin{tikzcd}
Z(S_2')\simeq\underline{\mathsf{Map}}(S_2',Z)\arrow[d] \arrow[r] & Z(S_2)\simeq \underline{\mathsf{Map}}(S_2,Z)\arrow[d]
\\
Z(S_1')\simeq \underline{\mathsf{Map}}(S_1',Z)\arrow[r] & Z(S_1)\simeq\underline{\mathsf{Map}}(S_1,Z),
\end{tikzcd}
\]
where $S_1\rightarrow S_1'$ is a nilipotent embedding and where the pull-back is taken in $\mathrm{Spc}.$

\subsubsection{Pro-cotangents}
We study infinitesimal behaviour of $Z$ in the form of properties of maps $\underline{\mathsf{Map}}(S',Z)\rightarrow \underline{\mathsf{Map}}(S,Z)$ for nilpotent embeddings $S\rightarrow S'$ using a particular linear object- the \emph{pro-cotangent complex}.
Let $S=\mathsf{Spec}(A)$ and consider an $S$-point $g:S\rightarrow Z$ and a nilpotent embedding
$$S\rightarrow S[\EuScript{M}]:=\mathsf{Spec}(A\oplus \EuScript{M}),\hspace{2mm} \EuScript{M}:=\Gamma(S,\mathcal{I}),\hspace{2mm}\mathcal{I}\in \mathsf{QCoh}(S)^{\leq 0}.$$
One has that the functor
$$\mathsf{QCoh}(S)^{\leq 0}\rightarrow \mathrm{Spc},\hspace{2mm}\mathcal{I}\mapsto \underline{\mathsf{Map}}_{S/}(S[\EuScript{M}],Z),$$
is representable by $T_gZ$ in $\mathsf{Pro}\big(\mathsf{QCoh}(S)^-\big).$ More precisely,
$$T^*(\EQ)\in \underset{S\in\mathsf{dAff}_{/Z}}{\mathrm{lim}}\hspace{1mm}\mathrm{Pro}\big(\mathsf{QCoh}(S)^-\big),$$
where for all $g:S\rightarrow Z$ one has $g^{\sharp}\big(T^*(\EQ)\big)\in\mathrm{Pro}\big(\mathsf{QCoh}(S)^-\big)$ with values on $\EuScript{M}\in\mathsf{QCoh}(S)^{\leq 0}$ given by
$$g^{\sharp}\big(T^*(\EQ)\big)\simeq \underline{\mathsf{Map}}_{S/}\big(S[\EuScript{M}],Z\big).$$
\section{Microlocal analysis and sheaf propagation}
\label{sec: Microlocal Analysis and Sheaf Propagation}

Microlocal analysis \cite{Sato1973,KashiwaraMicro,KashiwaraSchapira1990} studies `local' phenomena occurring on manifolds $X$ via `global' phenomena in
their cotangent bundle. Most notably for us, the study of the singularities of solutions of a PDE on $X$ in terms of their wavefront set in $T^*X.$ In this direction the central object of study is the microsupport $\mathrm{SS}(\mathcal{F}^{\bullet})$ of a complex of sheaves that measures the codirections in which $\mathcal{F}^{\bullet}$ fails to be locally constant (the so-called `microlocal' directions). 
Local constancy of $\mathcal{F}^{\bullet}$ means $\mathrm{SS}(\mathcal{F}^{\bullet})$ is contained in the zero-section. Said differently, microsupports measure the directions where the propagation of sections of $\mathcal{F}^{\bullet}$ is obstructed.

There is an analogous notion of singular support $\mathsf{SS}(\mathcal{F}^{\bullet})$ of a sheaf in derived algebraic geometry, introduced by Arinkin-Gaitsgory \cite{AriGai2015}, that we study in §§ \ref{sssec: Singular Support Conditions}. It measures the failure of a sheaf to be a compact object, which may then be interpreted as a (non-)propagation statement. 
A comparison with Kashiwara-Schapira's microsupport is given in \S.\ref{sssec: Interpreting SS via Prop}.
 \subsection{Microlocal geometry}
 We follow \cite{KashiwaraSchapira1990} and recall basic definitions.
 
Let $f:X\rightarrow Y$ be a morphism of complex analytic manifolds, denote by $\pi:T^*X\rightarrow X$ the cotangent bundle with coordinates $(x;\xi)$ and consider the `microlocal' correspondence of $f$:
\begin{equation}
    \label{eqn: Microlocal correspondence}
    T^*X\xleftarrow{f_d}X\times_Y T^*Y\xrightarrow{f_{\pi}}T^*Y.
\end{equation}
Explicitly, $f_d\big(x;(f(x);\xi)\big)=(x;f^*\xi)$ and $f_{\pi}\big(x;(f(x);\xi)\big)=\big(f(x);\xi\big).$

\begin{definition}
    \label{Definition: NC}
    Let $f:X\rightarrow Y$ be a morphism of complex analytic manifolds and $\Lambda\subset T^*Y$ a closed conic subset. The map $f$ is \emph{non-characteristic for $\Lambda$} if 
    $f_{\pi}^{-1}(\Lambda)\cap T_X^*Y\subset X\times_Y T_Y^*Y,$ where $T_Y^*Y$ denotes the zero-section of $T^*Y.$
\end{definition}
Consider closed conic subsets $\Lambda_X\subset T^*X$ and $\Lambda_Y\subset T^*Y.$ If $f$ is proper on $\Lambda_X\cap T_X^*X$ then $f_{\pi}$ is proper on $f_d^{-1}(\Lambda_X).$ If $f_d$ is proper on $f_{\pi}^{-1}(\Lambda_Y),$ then $f$ is non-characteristic for $\Lambda_Y.$

Let $Y$ be a closed sub-manifold of $X$ with normal bundle $T_YX$, and conormal bundle $T_Y^*X.$ 
Here and throughout we denote by $C_{T_M^*X}(\Lambda)$ the Whitney cone along $T_M^*X$ to a close conic subset $\Lambda\subset T^*X$ (see \cite[Def. 4.1.1, Prop. 4.1.2]{KashiwaraSchapira1990}). It is the closed conic of $T_{T_M^*X}T^*X\simeq T^*T_M^*X$ describing the normal directions to $T_M^*X$ contained in $\Lambda.$

\subsubsection{Local description}
To be explicit, we now describe various bundles arising in microlocal geometry in local coordinates.

Let $f:N\rightarrow M$ be a morphism of real analytic manifolds for which $Y=N_{\mathbb{C}},X=M_{\mathbb{C}}$ and $f_{\mathbb{C}}:Y\rightarrow X$ are complexifications. Let $\pi_M:T_M^*X\rightarrow M$ be the conormal bundle to $M$ in $X$. The microlocal correspondence (\ref{eqn: Microlocal correspondence}) for $f$ is then
\begin{equation}
    \label{eqn: Microlocal correspondence complexified}
    T_N^*Y\xleftarrow{f_d}N\times_M T_M^*X\xrightarrow{f_{\pi}}T_M^*X.
\end{equation}

\begin{example}
Let $(x,y)=(x_1,\ldots,x_p,x_{p+1},\ldots,x_n)$ be coordinates in $M,$ with $(x;\xi)=(x,y,;\xi dx,\eta dy)\in T^*M$ and consider $N=\{y=0\}\hookrightarrow M.$
Then
$T_N^*M=\{(x,y;\xi,\eta)\in T^*M| y=\xi=0\}.$
Moreover, $$N\times_M T^*M=\{(x,y;\xi,\eta)\in T^*M| y=0\}.$$ There is an embedding $T^*N\hookrightarrow T_{T_N^*M}T^*M$ give in local coordinates $(x,v\partial_y,w\partial_{\xi},\eta dy)\in T_{T_N^*M}T^*M,$ by $(x,y;\xi,\eta)\mapsto (x,0;\xi,0).$
\end{example}

Let $Y$ be a closed submanifold of $X.$ Let $(x)=(x',x'')$ be local coordinates on $X$ such that $Y=\{x''=0\}.$
Then there are isomorphisms:
\begin{equation}
\label{eqn: Useful Hamiltonian Isomorphism}
T^*T_YX\xrightarrow{\simeq}T^*T_Y^*X\xrightarrow{\simeq}T_{T_Y^*X}T^*X\xrightarrow{\simeq}T^*X,
\end{equation}
given by
$(x',x'';\xi',\xi'')\leftrightarrow (x',\xi'';\xi',-x'')\leftrightarrow(x',\xi'';x'',\xi')\leftrightarrow (x',x'';\xi',\xi'').$

Locally, if $(z;\zeta)=(x+iy;\xi+i\eta)\in T^*X$ one has $T_M^*X=\{y=0,\xi=0\}$ if $M=\{y=0\}$ and isomorphisms
\begin{equation}
\label{eqn: Vertical Equalities}
   \invert{T^*T_MX} { \equalto{{(x,iy;\xi,i\eta)}}{(x,y;\xi,-\eta)}} \xrightarrow{\simeq}\invert{T^*T_M^*X} { \equalto{{(x,i\eta;\xi,-iy)}}{(x,\eta;\xi,-y)}} \xrightarrow{\simeq} \invert{T_{T_M^*X}T^*X} { \equalto{{(x,i\eta;iy,\xi)}}{(x,-\eta;y,\xi)}} \xrightarrow{\simeq}\invert{T^*X} { \equalto{{(z,;\zeta)}}{(z,y;\xi,-\eta)}} 
\end{equation}
where the bottom row are elements of the identified manifolds
$$T^*(T_MX^{\mathbb{R}})\xrightarrow{\simeq} T^*(T_M^*X^{\mathbb{R}})\xrightarrow{\simeq}T_{T_M^*X^{\mathbb{R}}}T^*X^{\mathbb{R}}\xrightarrow{\simeq}T^*X^{\mathbb{R}}.$$

\subsubsection{Microsupports}
Let $X$ be a complex analytic manifold. The microsupport is a tool to obtain global propagation results for (sections of) sheaves on $X.$
\begin{definition}
\label{Propagation Definition}
 A complex of coherent sheaves $\mathcal{F}^{\bullet}$ on $X$
\emph{propagates at $x_0\in X$ in the direction $p$} if for all $C^1$-functions $\varphi$ with $\varphi(x_0)=0$ and $d\varphi(x_0)=\xi_0$ the natural map
\begin{equation}
    \label{PropAtx}
\rho_j(x_0):\underset{U\ni x_0}{\varinjlim}\hspace{.5mm} H^j(U;\mathcal{F})\rightarrow \underset{U\ni x_0}{\varinjlim}\hspace{.5mm} H^j\big(U\cap \{\varphi<0\};\mathcal{F}\big),\end{equation}
is an isomorphism for all $j.$
\end{definition}

In other words, given a covector at a point understood as a hyperplane in the tangent space, we are studying whether a sheaf behaves locally constantly moving off this hypersurface, in the conormal direction. This interpretation generalizes the Cauchy-Kovalevskaya theorem for PDEs (see Theorem \ref{Thm: LinProp}). The isomorphism \eqref{PropAtx} stated propagation at $x_0\in X$ is equivalent to the condition
\begin{equation}
\label{eqn: ShvProp@x}
\big(R\Gamma_{\{\varphi\geq 0\}}(\mathcal{F}^{\bullet})\big)_{x_0}\simeq0.
\end{equation}

The microsupport is the collection of all points where there is no propagation i.e. the set of codirections of non-propagation, corresponding to points $p=(x_0;\xi_0)\in T^*X,$ where $\rho_j(x_0)$ is \emph{not} an isomorphism. Define the set of propagation of $\mathcal{F}^{\bullet},$
\begin{eqnarray*}
    P(\mathcal{F}^{\bullet}):=\{(x;\xi)\in T^*X |  &\forall& \text{open neighbourhoods } U \text{ of } x \text{ and } \varphi\in C^{\infty}(U)
    \\
    &\text{with}& \varphi(x)=0,d\varphi(x)=\xi,
    \text{ one has } (\ref{eqn: ShvProp@x})\}.
\end{eqnarray*}

\begin{definition}
\label{eqn: Sheaf Microsupport}
The \emph{microsupport of $\mathcal{F}^{\bullet}$} is the closure of the set of codirections of non-propagation,
$\mathrm{SS}(\mathcal{F}^{\bullet}):=\overline{T^*X\hspace{1mm}\backslash\hspace{1mm} P(\mathcal{F}^{\bullet})}.$
\end{definition}
The microsupport (\ref{eqn: Sheaf Microsupport}) is a coisotropic closed $\mathbb{C}^{\times}$-conic subset of $T^*M$
such that $$T_M^*M\cap \mathrm{SS}(F)=\pi_M\big(\mathrm{SS}(F)\big)=\mathrm{supp}(F),$$ and $\mathrm{SS}(F[i])=\mathrm{SS}(F)$ for $i\in \mathbb{Z}.$ Moreover, it satisfies the triangular inequality: for a triangle $F_1\rightarrow F_2\rightarrow F_3\xrightarrow{+1}$ in $\mathrm{D}^{\mathrm{b}}(\mathbf{k}_M)$ then $\mathrm{SS}(F_i)\subset \mathrm{SS}(F_j)\cup \mathrm{SS}(F_k)$ for $i,j,k\in \{1,2,3\}$ with $j\neq k.$

\begin{proposition}
\label{MicrosupportCriterionProp}
Fix $x_0\in M.$ Then $\xi \in P_{x_0}(\mathcal{F}^{\bullet})$ if and only if for every $\varphi\in C^{\infty}(U,\mathbb{C})$ for an open neighbourhood $U$ of $x_0$ such that $\varphi(x_0)=0$ and $d\varphi(x_0)=\xi$, the morphism 
$\mathcal{F}_{x_0}^{\bullet}\rightarrow \big(R\Gamma_{\{\varphi <0\}}(\mathcal{F}^{\bullet})\big)_{x_0},$
induced by the inclusion $\big(i_V:V\cap \{\varphi <0\}\hookrightarrow V\big)_{x_0\in V}$ is an isomorphism.
\end{proposition}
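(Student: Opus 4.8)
The plan is to read the criterion off from the open--closed excision triangle attached to the hypersurface $\{\varphi=0\}$, together with the identification $(\ref{eqn: ShvProp@x})$ of propagation at $x_0$ with the vanishing of a stalk of local cohomology. Fix an open neighbourhood $U$ of $x_0$ and a function $\varphi\in C^{\infty}(U,\mathbb{C})$ with $\varphi(x_0)=0$ and $d\varphi(x_0)=\xi$; write $Z:=\{\varphi\geq0\}$ for the closed subset of $U$ and $j\colon\Omega:=\{\varphi<0\}\hookrightarrow U$ for the inclusion of its open complement. First I would invoke the standard distinguished triangle in $\mathrm{D}^{\mathrm{b}}(\mathbf{k}_U)$
\[
R\Gamma_{Z}(\mathcal{F}^{\bullet})\longrightarrow \mathcal{F}^{\bullet}\longrightarrow Rj_{*}j^{*}\mathcal{F}^{\bullet}\xrightarrow{\ +1\ },
\]
in which $Rj_{*}j^{*}\mathcal{F}^{\bullet}$ is, by definition, the object denoted $R\Gamma_{\{\varphi<0\}}(\mathcal{F}^{\bullet})$ in the statement and the second arrow is the restriction-of-sections morphism underlying the maps $i_{V}$.

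Next I would pass to stalks at $x_0$. Since the stalk functor is a filtered colimit over the opens $V\ni x_0$ it is exact, and it carries the triangle above to
\[
\big(R\Gamma_{\{\varphi\geq0\}}(\mathcal{F}^{\bullet})\big)_{x_0}\longrightarrow \mathcal{F}^{\bullet}_{x_0}\xrightarrow{\ \rho\ }\big(R\Gamma_{\{\varphi<0\}}(\mathcal{F}^{\bullet})\big)_{x_0}\xrightarrow{\ +1\ },
\]
with $\rho$ the morphism of the Proposition and $\big(R\Gamma_{\{\varphi<0\}}(\mathcal{F}^{\bullet})\big)_{x_0}\simeq\varinjlim_{V\ni x_0}R\Gamma(V\cap\{\varphi<0\};\mathcal{F}^{\bullet})$. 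Taking cohomology in degree $j$ then identifies $H^{j}(\rho)$ with the natural map $\rho_{j}(x_0)$ of Definition \ref{Propagation Definition}. Rotating the triangle, $\rho$ is an isomorphism precisely when $\big(R\Gamma_{\{\varphi\geq0\}}(\mathcal{F}^{\bullet})\big)_{x_0}=0$, i.e.\ exactly when condition $(\ref{eqn: ShvProp@x})$ holds for this $\varphi$; equivalently, when $\rho_{j}(x_0)$ is an isomorphism for all $j$. Since $\xi\in P_{x_0}(\mathcal{F}^{\bullet})$ means precisely that $(\ref{eqn: ShvProp@x})$ holds for \emph{every} admissible $\varphi$, quantifying this equivalence over all such $\varphi$ gives the Proposition.

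The argument is essentially formal, and the step I would isolate as a short lemma — and which I expect to be the only real point of care — is the stalk identification in the second paragraph: that $Rj_{*}j^{*}\mathcal{F}^{\bullet}$ has stalk $\varinjlim_{V}R\Gamma(V\cap\{\varphi<0\};\mathcal{F}^{\bullet})$ and that the connecting morphism of the resulting stalk triangle matches, degreewise and without spurious shift or sign, the maps $\rho_{j}(x_0)$ of Definition \ref{Propagation Definition}. This follows from exactness of filtered colimits and the computation $R\Gamma\big(V;Rj_{*}j^{*}\mathcal{F}^{\bullet}\big)\simeq R\Gamma\big(V\cap\{\varphi<0\};\mathcal{F}^{\bullet}\big)$, but it is worth spelling out so that one may, if desired, shortcut the remainder of the proof by simply invoking the equivalence ``$(\ref{eqn: ShvProp@x})\ \Leftrightarrow\ \rho_{j}(x_0)$ an isomorphism for all $j$'' recorded just above the statement, since $H^{j}(\rho)=\rho_{j}(x_0)$. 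No use is made here of the deeper structural facts about $\mu supp$ such as its coisotropy or the triangular inequality; those enter only later.
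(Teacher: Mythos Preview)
Your proof is correct and follows the same approach as the paper: apply the stalk functor at $x_0$ to the open--closed distinguished triangle $R\Gamma_{\{\varphi \geq 0\}}(\mathcal{F}^{\bullet})\rightarrow \mathcal{F}^{\bullet}\rightarrow R\Gamma_{\{\varphi <0\}}(\mathcal{F}^{\bullet})\xrightarrow{+1}$ and read off the equivalence. The paper states this in a single line, whereas you spell out the stalk identifications and the matching with $\rho_j(x_0)$, which is a welcome elaboration but not a different argument.
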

\begin{proof}
Apply stalk functor to 
$R\Gamma_{\{\varphi \geq 0\}}(F)\rightarrow R\Gamma_U(F)\rightarrow R\Gamma_{\{\varphi <0\}}(F)\xrightarrow {+1}.$
\end{proof}
Proposition \ref{MicrosupportCriterionProp} characterizes when a point belongs to $P(\mathcal{F}^{\bullet}).$ 

\begin{proposition}
\label{MicrosupportsClosedEmbeddings}
Let $F\in \mathrm{D}^{\mathrm{b}}(\mathbf{k}_M),G\in \mathrm{D}^b(\mathbf{k}_N).$ Then 
$\mathrm{SS}(F\boxtimes^L G)\subset \mathrm{SS}(F)\times \mathrm{SS}(G).$ If
$f:N\rightarrow M$ is proper on $\mathrm{supp}(F),$ then $Rf_!F\xrightarrow{\simeq} Rf_*F$ and 
$\mathrm{SS}(Rf_*F)\subset f_{\pi}f_d^{-1}\mathrm{SS}(F).$
Moreover, if $f$ is a closed embedding, $\mathrm{SS}(Rf_*F)=f_{\pi}f_d^{-1}\mathrm{SS}(F).$
\end{proposition}
Using Proposition \ref{MicrosupportsClosedEmbeddings}, we give an example showing that if a sheaf $\mathcal{F}$ propagates at $\big(x,p\circ df(x)\big)$ for each $x$ in $f^{-1}(y)$, then $Rf_*\mathcal{F}$ propagates at $(y;p).$
\begin{example}
\label{Lagrangian Relation Example}
Consider a smooth morphism $f:X\rightarrow Y$ and a Lagrangian sub-manifold $\Lambda\subset \overline{T^*X}\times T^*Y.$ For a subset $S\subset T^*X$ define $\Lambda(S)$ to be the reduction of $S\times \Lambda$ by the cositropic submanifold $D\times T^*Y$, with $D$ the diagonal in $T^*X\times\overline{T^*X}.$
Define
$\Lambda_f:=\{(x,\xi,y;p)\in \overline{T^*X}\times T^*Y| f(x)=y,\xi=p\circ df(x)\}.$ 
For $S\subset T^*X,$ we have $(y;p)\in \Lambda_f(S)$ if and only if there exists $x\in f^{-1}(y)$ such that $(x,p\circ df(x))\in S.$ If $f$ moreover proper on $\mathrm{supp}(\mathcal{F})$, then $\mathrm{SS}(Rf_*\mathcal{F})\subset \Lambda_f(\mathrm{SS}(\mathcal{F})\big).$ 
\end{example}
The next result is sometimes called the Petrowsky theorem for sheaves.
\begin{corollary}
\label{cor: Petrowsky}
Assume $G$ is cohomologically constructible, and $\mathrm{SS}(G)\cap \mathrm{SS}(F)\subset T_M^*M.$ Then the natural morphism $R\mathcal{H}om(G,k_M)\otimes F\to R\mathcal{H}om(G,F)$ is an isomorphism.
\end{corollary}

\subsection{$\D$-modules and propagation of solutions}
\label{ssec: Propagation of Solutions}

We now recall how microlocal sheaf theory applies to PDEs.
Microfunctions outside of the zero section of the conormal bundle $\pi_M:T_M^*X\rightarrow M$ describe the singular part of the flabby sheaf hyperfunctions i.e. hyperfunctions up to (non-singular) analytic functions as shown by the exact sequence of sheaves, exact at the level of global sections \cite[Prop. 11.5.2]{KashiwaraSchapira1990},
\begin{equation}
    \label{eqn: SESHyperfunction}
0\rightarrow \mathscr{A}_M\rightarrow \mathscr{B}_M\rightarrow \mathring{\pi}_{M*}\mathscr{C}_M\rightarrow 0.
\end{equation} 
The sheaf $\mathscr{C}_M$ is concentrated in degree $0$, thus is a conic sheaf on $T_M^*X$. By construction, there is an isomorphism $\mathscr{B}_M\simeq \pi_{M,*}\mathscr{C}_M\simeq\mathscr{C}_M|_{T_M^*M},$ and a natural \emph{specialization} map on global sections 
$\mathrm{sp}:\Gamma(M,\mathscr{B}_M)\to \Gamma(T_M^*X,\mathscr{C}_M).$ 
\begin{definition}
\label{defn: An WaveFront}
Let $u\in \mathscr{B}(M):=\Gamma(M,\mathscr{B}_M$) be a hyperfunction. The support of $\mathrm{sp}(u)$ in $T_M^*X,$ is
called the \emph{analytic wave front set} of $u$ and is denoted by $\mathrm{WF}(u),$ or $\mathrm{SS}(u)$ when no confusion arises.
\end{definition}

To a coherent $\D_X$-module $\mathcal{M}$ associate its characteristic variety $\mathrm{Char}(\mathcal{M}).$ It is a closed $\mathbb{C}^{\times}$-conic coisotropic $\mathbb{C}$-analytic subset of $T^*X.$ 
\begin{remark}
    Later we will use a variant called the $1$-microcharacteristic variety \cite{Laurent1985} (recalled in §\ref{ssec: Characteristic Varieties}).
    \end{remark}
Given a section $P(x,\partial_x)\in \D_X,$ its principal symbol $\sigma(P)$ is a holomorphic function on $T^*X$. If $P$ is of order $\leq m$ then $\sigma(P)$ is homogeneous of degree $m.$ The characteristic variety $\mathrm{Char}(\mathcal{M})$ is related to, and in many cases coincides with, the zero locus of this function.

\begin{example}
\label{Char Zero Locus}
 If $\mathcal{M}$ is generated by a single variable $u$, i.e. $\mathcal{M}\simeq\D_X/\mathcal{I}$ with $\mathcal{I}=\{P\in \D_X|P(z,\partial_z)u=0\},$
 then 
    $\mathrm{Char}(\mathcal{M})=\{p=(x;\xi)\in T^*X|\sigma(P)(x;\xi)=0\}.$ 
\end{example}

\begin{remark}
    If there exists a family $\{P_0,\ldots,P_N\}$ generating our ideal $\mathcal{I}$ with $P_j$ of order $\leq j$ then one has
    $\mathrm{Char}(\mathcal{M})\subset \bigcap_j\sigma_{j}(P_j)^{-1}(0),$ but in general \emph{not} an equality.
\end{remark}
Propagation of solutions for linear PDEs refers to sheaf propagation in the case when the sheaf $\mathcal{F}^{\bullet}$ is $R\mathcal{S}ol_{X}(\mathcal{M})$ for $\mathcal{M}\in \mathrm{Coh}(\D_X).$ 

 The complex of \emph{microfunction-solutions} associated to $\mathcal{M}$ is the object of the derived category of sheaves given by
 $$R\mathcal{S}ol_{\mathcal{E}_X}(\mathcal{M}):=R\mathcal{H}om_{\mathcal{E}_X}(\mu\mathcal{M},\mathscr{C}_M).$$

The following theorem is known as the \emph{propagation of solution singularity theorem} for microfunction solutions \cite[Prop.~11.5.4]{KashiwaraSchapira1990}.

\begin{theorem}
\label{Thm: LinProp}
Let $\mathcal{M}$ be a coherent $\mathcal{E}_X$-module. Then, 
$$\mathrm{SS}\big(RSol_{\mathcal{E}_X}(\mathcal{M})\big)\subset C_{T_M^*X}\big(\mathrm{Char}(\mathcal{M})\big)\subset T^*T^*X.$$
In particular $supp\big(RSol_{\mathcal{E}_X}(\mathcal{M})\big)\subset \mathrm{Char}(\mathcal{M})\cap T_M^*X$.
\end{theorem}
If $\mathcal{M}$ is a coherent $\D_X$-module, Theorem \ref{Thm: LinProp} recovers (\ref{eqn: LPDE}).

\begin{example}

Suppose that $P=P(x,\partial_x)\in \D_X^{\leq m}\subset \D_X$ is a differential operator of order $\leq m.$ By Theorem \ref{Thm: LinProp}, $P$ defines an isomorphism of the sheaf $\mathscr{C}_M$ on $T_M^*X\backslash \big\{\sigma_m(P)^{-1}(0)\big\}$ and if $u$ is a hyperfunction solution, its support is contained in the union of the support of $Pu$ and $\big\{\sigma_m(P)^{-1}(0)\big\}.$
\end{example}
The abstract propagation result for linear systems in Theorem \ref{Thm: LinProp} is related to local-constancy of the sheaf of solutions and the condition for propagation (\ref{eqn: ShvProp@x})
as follows.
\begin{proposition}
\label{ClassicalPropagation}
Let $U$ be an open subset of $T^*X$ and suppose $g\in \mathcal{O}(U)$ is such that $Im(g)$ vanishes on $T_M^*X.$
Then for all $p\in U\cap T_M^*X$ the bicharacteristic curve $\beta_p$ passing through $p$ of the involutive submanifold $\{Im(g)=0\}$ of $(T^*X){^\mathbb{R}}$ is contained in a neighbourhood of $p$ in $T_M^*X.$
Suppose $\mathcal{M}\in \mathsf{Coh}(\D_X)$ with $Im(g)\big|_{T_M^*X\cap U}=0.$
Assume that 
$\mathrm{Char}(\mathcal{M})\cap U\subset \{g=0\}.$
Then for all bicharacteristics $\beta$ of the manifold $\{Im(g)=0\}$, the sheaves
$R^j\mathcal{S}ol(\mathcal{M},\mathscr{C}_M)\big|_{\beta}=\mathcal{E}xt_{\D_X}^j\big(\mathcal{M},\mathscr{C}_M\big)\big|_{\beta},j\in\mathbb{N}$
are locally constant.
\end{proposition}
\begin{proof}
   See \cite[Prop.11.5.6]{KashiwaraSchapira1990}.
\end{proof}

In other words, for $p_0=(x_0,\xi_0)$ and the bicharacteristic curve $\beta_{p_0}$ as in Proposition \ref{ClassicalPropagation}, we have an inclusion $\mathrm{SS}\big(R\mathcal{S}ol_{\mathcal{E}_X}(\mathcal{M})\big)\subset C_{T_M^*X}\big(\{g^{-1}(0)\}\big),$ which implies the propagation statement \eqref{eqn: ShvProp@x} at $x_0$:
$$R\Gamma_{\{\beta_{p_0}\}}\big(R^j\mathcal{S}ol(\mathcal{M},\mathscr{C}_M)\big|_{\beta}\big)_{x_0}\simeq 0.$$
Results in this subsection apply to PDEs given by elliptic or hyperbolic operators, defining $\D$-modules of the corresponding type. By modifying characteristic varieties, one may encode the directions of (non)-propagation for their solutions. In particular, hyperbolicity appears as a geometric condition on $Char,$ as we now explain.
\subsubsection{Ellipticty and (micro)-hyperbolicity}
Let $M$ be a real analytic manifold with $X$ a complexification. A coherent $\D$-module $\M$ on $X$ is said to be \emph{elliptic} if
$\mathring{T}_M^*X\cap \mathrm{Char}(\mathcal{M})=\emptyset$, or equivalently,
\begin{equation}
\label{eqn: Elliptic D Module}
T_M^*X\cap \mathrm{Char}(\mathcal{M})\subset T_X^*X,
\end{equation}
Theorem \ref{Thm: LinProp} implies elliptic regularity, stating that the inclusion of analytic solutions in hyperfunction solutions is an isomorphism.
\begin{proposition}
Let $M$ be a real-analytic manifold with a complexification $X$ and consider a coherent $\D_X$-module $\M$ which is elliptic. Then the canonical map 
$R\mathcal{H}om_{\D_X}(\M,\mathscr{A}_M)\xrightarrow{\simeq} R\mathcal{H}om_{\D_X}(\M,\mathscr{B}_M),$ 
induced from \eqref{eqn: SESHyperfunction} is an isomorphism.
\end{proposition}
We introduce some terminology.
\begin{remark}
Say $\theta \in T_{T_M^*X}T^*X$ is microhyperbolic for a $\D_X$-module $\mathcal{M}$ if $\theta \notin C_{T_M^*X}\big(\mathrm{Char}(\mathcal{M})\big).$ Under $\theta \in T^*M\hookrightarrow T_{T_M^*X}T^*X$, one says $\theta$ is hyperbolic. If $\theta$ is microhyperbolic (resp. hyperbolic)
$\theta\notin \mathrm{SS}\big(R\mathcal{S}ol(\mathcal{M},\mathscr{C}_M)\big)$ 
(resp. $\theta\notin \mathrm{SS}\big(R\mathcal{S}ol_X(\mathcal{M},\mathcal{B})\big)$).
\end{remark}
The \emph{hyperbolic characteristic variety} of $\mathcal{M}$ along $M$, is
\begin{equation}
\label{eqn:Hyperbolic Char}
\mathrm{Char}_M^{hyp}(\mathcal{M}):=T^*M\cap C_{T_M^*X}\big(\mathrm{Char}(\mathcal{M})\big).
\end{equation}
Note $\theta \in T^*M$ is hyperbolic for $\mathcal{M}$ when $\theta  \notin \mathrm{Char}_M^{hyp}(\mathcal{M}).$

\begin{example}
\label{Hyperbolic example 1}
 If $z=x+iy\in X$ so that $M=\{z\in X|y=0\},$ and $\zeta=\xi+i\eta\in T^*X$ we get $T_M^*X=\{y=\xi=0\}.$ A direction
$p_0=(x_0;\theta_0)\in T^*M$ with $\theta_0\neq 0$ is hyperbolic if and only if there exist an open neighbourhood $U$ of $x_0$ in $M$ and open conic neighbourhood $\gamma$ of $\theta_0\in \mathbb{R}^n$ such that
$\sigma(P)(x;\theta+i\eta)\neq 0,$
for all $\eta \in \mathbb{R}^n,x\in U,\theta\in \gamma.$
\end{example}

\begin{remark}
\label{HyperbolicEllipticSubManRemark}

A submanifold $N\subset M$ is hyperbolic for $\mathcal{M}$ if 
$T_N^*M\cap \mathrm{Char}_M^{hyp}(\mathcal{M})\subset T_M^*M$. Obviously, any nonzero covector of $T_N^*M$ is hyperbolic for $\mathcal{M}.$
Similarly, a real sub-manifold $L\subset X$ is elliptic for $\mathcal{M}$ if $\mathrm{Char}(\mathcal{M})\cap T_L^*X\subset T_X^*X.$
\end{remark}
By Example \ref{Hyperbolic example 1} and Remark \ref{HyperbolicEllipticSubManRemark} it is clear what geometric hyperbolicity means.
\begin{example}
\label{Hyperbolic example 2}
Let $N\subset M$ be a real submanifold with complexification $Y\subset X$ and $\mathcal{M}=\D_X/\D_X\cdot P$ a coherent $\D_X$-module determined by $P$ of order $\leq k.$ Assume $(x)=(x_1,x')\in M$ such that $N=\{x_1=0\}$ and suppose that $\mathrm{Char}(\mathcal{M})\cap \mathring{T}^*X \cap \mathring{T}_M^*X$ is a regular involutive submanifold of $\mathring{T}_M^*X$ and $N$ is hyperbolic for $\mathcal{M}$. These assumptions translate to $\sigma(P)$ being a hyperbolic polynomial with respect to $N$ with real-constant multiplicities. In other words, $\sigma(P)(x;\tau,\eta')$ has $k$ real roots $\tau$ with constant multiplicites where $x,\eta'$ are real.
\end{example}

Hyperfunction and real-analytic solutions 
propagate in hyperbolic directions \cite{KashiwaraSchapira1990}.

\begin{proposition}
\label{Propagation for hyperbolic directions}
Suppose that $P\in\D_X$ with $\mathcal{M}:=\D_X/\D_X\cdot P\in\mathrm{Coh}(\D_X).$ Then
$$ \mathrm{SS}\big(R\mathcal{S}ol_X(\mathcal{M},\mathscr{A}_M)\big),\mathrm{SS}\big(R\mathcal{S}ol_X(\mathcal{M},\mathscr{B}_M)\big)\subset \mathrm{Char}_M^{hyp}(\mathcal{M}).$$
\end{proposition}
\begin{proof}
Follows from the isomorphisms,
$$R\Gamma_MR\mathcal{H}om_{\D_X}(\M,\mathcal{O}_X)\simeq R\mathcal{H}om_{\D_X}(\M,R\Gamma_M\mathcal{O}_X),$$
and $R\mathcal{H}om_{\D_X}(\M,\mathcal{O}_X)|_{M}\simeq R\mathcal{H}om_{\D_X}(\M,\mathcal{O}_X|_M),$ by applying \cite[Cor. 6.4.4]{KashiwaraSchapira1990}, i.e.
$$\mathrm{SS}\big(R\Gamma_M R\mathcal{H}om_{\D_X}(\M,\mathcal{O}_X)\big)\subset T^*M\cap C_{T_M^*X}\big(\mathrm{SS}(R\mathcal{H}om_{\D_X}(\M,\mathcal{O}_X)\big),$$
which agrees with (\ref{eqn:Hyperbolic Char}).
\end{proof}
We now elaborate how the inclusion in Theorem \ref{Thm: LinProp}, which is actually an equality c.f. (\ref{eqn: LPDE}), implies the well-posedness of the abstract Cauchy problem.  
\subsection{Cauchy problems}
\label{ssec: Cauchy Problems}
A
Cauchy problem is said to be \emph{well-posed} for a system $\mathcal{M}$ with initial value $\varphi$ in a space of functions $\mathscr{F}$ 
if on one hand there exists a solution to $\mathcal{M}$ around $\varphi$ which is unique and depends continuously on $\varphi$ in a prescribed manner, while on the other hand 
a solution of the system should remain in the same functional space $\mathscr{F}$ as the initial/boundary value.
Functional analysts may try to construct a sufficiently large space $\mathscr{F}$ in which the existence of a solution for a given type of equation (e.g. elliptic, hyperbolic, $\ldots$) is guaranteed, and to find a subspace in which uniqueness holds. In the case of distribution solutions, typical subspaces are Sobolev spaces. For example, Proposition \ref{Propagation for hyperbolic directions} implies the well-posedness of Cauchy problems for
hyperbolic systems in hyperfunctions.

The Cauchy-Kovalevskaya theorem for initial-value problems for (analytic) PDEs is formulated as follows. Let $X$ be an analytic subset of $\mathbb{C}^n$ with coordinates $\{x_i,\partial_i\}$ and suppose $\Sigma=\{x_1=0\}$ is a hypersurface, with $P\in \D_X^{\leq k},$ locally of the form $P(x,\partial_x)=\sum_{|\sigma|\leq k}a_{\sigma}(x)\partial_x^{\sigma}.$ 
One says $\Sigma$ is non-characteristic if the coefficient $a_{(k,0,\ldots,0)}$ of $\partial_{x_1}^k$ does not vanish.
\begin{definition}
Let $f:\Sigma\rightarrow X$ be a morphism and $\mathcal{M}\in \mathrm{Coh}(\D_X).$ Then $f$ is said to be \emph{non-characteristic} for $\mathcal{M}$ if it is so with respect to $\mathrm{Char}(\mathcal{M})$ i.e.
$f_{\pi}^{-1}\big(\mathrm{Char}(\mathcal{M})\big)\cap T_{\Sigma}^*X\subset \Sigma\times_X T_X^*X.$
\end{definition}
If $\sigma_k(P)$ is the principal symbol of a section $P\in \D_X$ with $\D$-module $\mathcal{M}_P=\D_X/\D_X\cdot P,$ the non-characteristic condition means
$\sigma_k(P)$ does not vanish on the conormal bundle to $\Sigma$ outside the zero-section i.e. it defines a non-vanishing holomorphic function $\sigma(P)(x;dx_1)\neq 0$ at the point $(x;dx_1).$ Thus for any $f\in\mathcal{O}_X$ defined on an open neighbourhood of $\Sigma$, and any $k$-tuple $(u_0,\ldots,u_{k-1})\in \mathcal{O}_{\Sigma}^{\oplus k},$ there exists a unique holomorphic solution $u\in \mathcal{O}_X,$ to the Cauchy problem (c.f equation \ref{eqn: NLCauchyProblem})
\begin{equation*}
\begin{cases}
    P(x,\partial_x)u=f,
    \\
     \partial_1^ju\big|_{\Sigma}=u_j,\hspace{2mm}j=0,1,\ldots,k-1.
\end{cases}
\end{equation*}
The classical Cauchy problem is solved if we can replace $u$ by $\gamma_{\Sigma}(u):=\big(u|_{\Sigma},\partial_1u|_{\Sigma},\ldots,\partial_1^{k-1}u|_{\Sigma}\big)$, the so-called first $(k-1)$-traces of $u.$ In its abstract form, this is a derived equivalence of solution sheaves \cite[Sect.2]{kashiwara1970algebraic}. 

\begin{theorem}
\label{Thm: LinCKK}
Let $f:\Sigma\rightarrow X$ be morphism of complex analytic spaces with $\mathcal{M}$ a coherent $\D_X$-module. If $f$ is non-characteristic for $\mathcal{M},$ its (derived) pullback $f^*\M$ is coherent and concentrated in degree zero with $\mathrm{Char}(f^*\M)=f_df_{\pi}^{-1}\mathrm{Char}(\M)$, and the canonical map
$$\textsc{ckk}_{f}(\mathcal{M},\mathcal{O}_Y):f^{-1}R\mathcal{S}ol_X(\mathcal{M})\xrightarrow{\simeq}R\mathcal{S}ol_{\Sigma}(f^*\mathcal{M}),$$
is an isomorphism of sheaves on $\Sigma$.
\end{theorem}
Theorem \ref{Thm: LinCKK} states it is possible to reconstruct solutions from boundary/initial conditions, globally and any reasonable real-analytic PDE has a unique solution depending
analytically on the given analytic initial condition. 

 It is convenient to introduce some terminology, following \cite[Def.2]{Paugam2022}. As we will see, this generalizes to provide a proper formulation of the derived non-linear Cauchy problem.
\begin{definition}
\label{definition: LinearCKKTrue}
Let $f:X \to Y$ be a morphism of complex manifolds and let $\M,\mathcal{N}$ be  coherent $\D_Y$-modules. The Cauchy-Kowaleskaya-Kashiwara theorem said to be true for the triple $(\M,\mathcal{N},f)$ if the natural morphism 
\begin{equation}
    \label{eqn: CKK-true}
\textsc{ckk}_f(\mathcal{M},\mathcal{N}):f^{-1}R\mathcal{H}om_{\D_Y}(\mathcal{M},\mathcal{N})\rightarrow R\mathcal{H}om_{\D_X}(f^*\mathcal{M},f^*\mathcal{N}),
\end{equation}
is an isomorphism.
\end{definition}

Morphism (\ref{eqn: CKK-true}) is an isomorphism, for example, when $f$ is non-characteristic for 
$$\mathrm{Char}(\mathcal{M})+_Y\mathrm{Char}(\mathcal{N}):=\big\{(x;\xi_1+\xi_2)|(x;\xi_1)\in \mathrm{Char}(\mathcal{M}),(x;\xi_2)\in \mathrm{Char}(\mathcal{N})\big\},$$ and if  $(\mathcal{M},\mathcal{N})$ satisfy 
\begin{equation}
\label{eqn: NC-pair}
\mathrm{Char}(\mathcal{M})\cap \mathrm{Char}(\mathcal{N})\subset T_Y^*Y.
\end{equation}
When (\ref{eqn: NC-pair}) holds one says $(\mathcal{M},\mathcal{N})$ form a \emph{non-characteristic pair}.

\subsection{On the non-linear generalizations}
\label{sssec: On the non-linear generalizations}

Propagation of solutions to nonlinear PDEs with holomorphic coefficients, and generalizations of Theorem \ref{Thm: LinProp} and Theorem \ref{Thm: LinCKK} need to be formulated in such a way that treats \emph{singular} situations and solutions with boundedness conditions.

Solutions may be singular along a non-characteristic hypersurface and the non-existence of singularities will imply propagation of solutions in the non-characteristic directions. In turn, the latter implies well-posedeness of Cauchy problems with data on the non-characteristic surface, which therefore may be singular data.
\begin{example}
Consider the KdV equation $u_{ttt}-6uu_x+u_x=0,$ in a variable $u$. The initial surface $\Sigma=\{t=0\}$ is non-characteristic while solutions are known to be of the form $u=t^{-2}2+g(x)t^2+h(x)t^4+\cdots,$ for functions $g,h.$
\end{example}
In this subsection we discuss through basic examples what must be considered in the non-linear setting to hope for a sheaf-theoretic non-linear analog of Zerner's propagation theorem (recalled in Prop.\ref{Zerner} below).

\subsubsection{Towards a nonlinear Zerner propagation theorem}

Consider $x=(x_1,\ldots,x_n)\in \mathbb{C}^n,t\in \mathbb{C}$ and for a fixed non-negative integer $m$, the set of multi-indices 
$I_m:=\{(j,\sigma)\in\mathbb{N}\times\mathbb{N}^n|j+|\sigma|\leq m,j<m\},$
with cardinality $N:=|I_m|.$
Represent the collection of dependent variables and their derivatives by $u_{[j,\sigma]}:=\{(u_{j,\sigma})\}_{(j,\sigma\in I_m})\in \mathbb{C}^N.$ 
Consider a holomorphic function $\mathsf{F}=\mathsf{F}(t,x,u_{[j,\sigma]})$ in
$$\big\{(t,x)\in \mathbb{C}\times\mathbb{C}^n| |t|<r_0,|x|<R_0\big\}\times \mathbb{C}^N,$$
with $r_0,R_0$ some positive constants. 

We consider
$D_{r_0}:=\{t\in \mathbb{C}: |t|<r_0\},B_{R_0}^n:=\{x\in \mathbb{C}^n: |x|<R_0\}.$
For $\epsilon>0,$ put $D_{r_0}(\epsilon):=\{t\in \mathring{D}_{r_0}: |arg t|<\epsilon\},$ with $\mathring{D}_{r_0}:=\{t\in \mathbb{C}\backslash \{0\}: |t|<r_0\},$ and then our PDEs are defined by holomorphic functions $\mathsf{F}$ defined in 
$D_{r_0}\times B_{R_0}^n\times \mathbb{C}^{N}.$
The norms we consider, on some open neighbourhood $\Omega$ of the origin i.e. of $x=0$ in $\mathbb{C}_{[x]}^n$, are always $|\!|u(t)|\!|_{\Omega}:=\underset{x \in \Omega}{\mathrm{sup}}\hspace{1mm} |u(t,x)|.$
\begin{remark}
    Kobayashi 
    \cite{Kobayashi} studied $\mathsf{F}$ which are holomorphic in $(t,x)$ but polynomial in $u_{[j,\sigma]}.$ This situation is encoded in the $\D$-algebraic formulation.
\end{remark}
The propagation problem is formulated as follows:
suppose $u(t,x)$ is a known solution which is holomorphic in $D_{r_0}(\theta>0)\times \{x\in \mathbb{C}^n| |x|<R_0\}.$ Then under what conditions can we we extend $u(t,x)$ as a holomorphic solution in a neighbourhood of the origin (i.e. $t\rightarrow 0$).

First, consider
\begin{equation}
    \label{eqn: First Order NLPDE}
    \frac{\partial u}{\partial t}=\mathsf{F}\big(t,x,u,\frac{\partial u}{\partial x}\big),
\end{equation}
in the complex domain $\mathbb{C}\times \mathbb{C}^n$ with coordinates $(t,x_1,\ldots,x_n).$
The structure of holomorphic solutions can be completely understood by the CK-theorem: any holomorphic function $\varphi(x)$ in a neighbourhood of a point $x_0$ has a holomorphic solution $u(t,x)$ in a neighbourhood of the origin $(0,0)\in \mathbb{C}\times\mathbb{C}^n$ such that $u(0,x)=\varphi(x).$ In this sense $u(t,x)$ is completely determined by $\varphi(x).$
Now, let us ask the question: what if there exists a singularity of the solution along the initial surface $\{t=0\}?$

One answer is to argue the non-existence of solutions via analytic continuation. Defining $U_+:=\{(t,x)|Re(t)>0\}$, one uses Zerner's Propagation Theorem \cite{Zerner} which states that any holomorphic solution on $U_+$ can be analytically extended to some neighbourhood of the origin. In other words, any holomorphic solution propagates analytically over noncharacteristic hypersurfaces. This implies there does not exist a solution with singularities only on the initial surface $\{t=0\}.$

Kashiwara-Schapira \cite{KashiwaraSchapira1990} give Zerner's theorem in the following form.

\begin{proposition}
\label{Zerner}
Let $\varphi$ be a real-valued $C^{\infty}$-function on a complex analytic manifold $X$ such that
$X_0:=\{x\in X|\varphi(x)=0\}$ is a smooth real hypersurface. Consider $X_+:=\{x\in X|\varphi(x)\geq 0\}\xrightarrow{j^+}X,$ and $X_{-}:=\{x\in X|\varphi(x)<0\}\xrightarrow{i_-}X.$
Let $\mathcal{M}$ be a coherent $\D_X$-module such that $\mathrm{Char}(\mathcal{M})\cap T_{X_0}^*(X)\subset T_X^*X.$
Then
$$\big[R\Gamma_{X_+}R\mathcal{S}ol_X(\mathcal{M})\big]_{X_0}\simeq 0,$$
if and only if the the morphism $$R\mathcal{S}ol_{X}(\mathcal{M})_{X_-^c}\xrightarrow{\simeq} R\Gamma_{X_-}(R\mathcal{S}ol_{X}(\mathcal{M})),$$
is an isomorphism.
\end{proposition}
Proposition \ref{Zerner} states any holomorphic solution to $\mathcal{M}$ on $X_-$ extends across $X_0$ as a holomorphic solution to $\mathcal{M}.$ 

\begin{remark}
A principal motivation for this paper was to establish a non-linear analog of Zerner's theorem in a suitable sheaf-theoretic framework for non-linear PDE. However, adapting Zerner's result to the non-linear setting is not so straightforward for several reasons e.g. growth conditions.\end{remark}

\begin{example}
\label{ex: Boundedness}
   Let $(t,x)\in \mathbb{C}^2$ and consider 
    $\frac{\partial u}{\partial t}=u\big(\frac{\partial u}{\partial x}\big)^m,$
    for some $m\in \mathbb{N}.$ The family of solutions $u(t,x)=(-1/m)^{1/m}(x+c)/t^{1/m}$ with arbitrary constants $c$ are holomorphic in $U_+$ by has a singularity on $\{t=0\}.$
    Notice the singularities on the initial surface of order $O(1)$ as $t\rightarrow 0,$ do not appear in the solutions, but do appear at $O(|t|^{-1/m})$ as $t\rightarrow 0.$ 
\end{example}
It is not enough, however, to simply study solutions $u=u(t,x)$ holomorphic in a neighbourhood of the origin
with the above prescribed growth conditions, that is being of order $O(|t|^{\lambda})$ for non-negative real numbers $\lambda,$ as $t\rightarrow 0,$ denoted $\mathcal{O}_{+}\subset \mathcal{O}_X.$
Indeed, they are insufficient to detect solutions to some singular non-linear PDEs e.g. those of the form,
$$(t\partial_t )^mu=F(t,x,(t\partial_t)^j\partial_x^{\sigma}u).$$
Instead, we can replace the condition $u(t,x)=O(|t|^a)$ as $t\rightarrow 0,$ for some $a>0,$ by the weaker condition:
$$u(t,x)\sim O(1/|\text{log } t|^a),\hspace{1mm} as \hspace{1mm} t\rightarrow 0,a>0.$$
This is necessary due to the following example concerning $\mathrm{Sol}(P,\mathcal{O}_+).$

\begin{example}
Consider $(t,x)\in \mathbb{C}^2,$ and the non-linear PDE,
$$t\cdot \frac{\partial u}{\partial t}=u\cdot \big(\frac{\partial u}{\partial x}\big)^m,m>0.$$
The family of singular solutions is given by
$$u(t,x)=(1/m)^{1/m}\frac{(x+\alpha)}{(c-\text{log } t)^{1/m}},\hspace{1mm} c,\alpha\in\mathbb{C}.$$
In this case $u \notin \mathrm{Sol}(P,\mathcal{O}_+).$ Instead, consider $$\mathcal{O}_{log}:=\{u(t,x)\in \mathcal{O}_X| \exists\hspace{1mm} a>0 \text{ so } \forall \hspace{1mm} \theta>0 \text{ max}_{|x|\leq R_0}|u(t,x)|=O(1/|\text{log }t|^a)\},$$
as $t\rightarrow 0$ in $D_{r_0}(\theta).$ Since $\mathcal{O}_+\subset \mathcal{O}_{log}$ is an inclusion of algebras, we have an inclusion of \emph{sets} of solutions,
\begin{equation}
\label{eqn: Temperate into logarithmic solutions}
\mathrm{Sol}(P,\mathcal{O}_+)\hookrightarrow \mathrm{Sol}(P,\mathcal{O}_{log}).
\end{equation}
These correspond to the sets of solutions $u=u(t,x)$ possessing temperate and logarithmic singularities, respectively.
Existence of formation of singularities i.e. of new singular solutions means that the inclusion (\ref{eqn: Temperate into logarithmic solutions}) is never an equality, and that there are no new singular solutions if it is an equality.\end{example}
Summarizing this discussion, a refined statement of the non-linear propagation problem asks: when solutions $u(t,x)$ holomorphic (or with other prescribed behaviour) are solutions in a neighbourhood of the origin with prescribed boundedness and growth conditions (as $t\rightarrow 0$)?

Treating these functional subtleties is outside the scope of this paper, but will be considered in a sequel work which studies, for example, the well-posedeness of singular initial-value problems of the type (\ref{eqn: NLCauchyProblem}) with growth conditions, and special cases of (a) non-linear elliptic equations (c.f. §§\ref{sssec: A Remark on Ellipticity}) and (b) quasilinear hyperbolic Cauchy systems. Note that linear hyperbolic Cauchy problems and propagation of solution singularities plays an important role in mathematical general relativity, and has been studied in $\D$-module approach in \cite{JubinSchapira2016}. The extension to quasi-linear systems would treat those of the form
\begin{equation}
    \label{eqn: QuasiLinHypCauchy}
\begin{cases}
    P(t,x,\partial_t,\partial_x,\partial_{t,x}^{\sigma}u)u(t,x)=\mathsf{F}(t,x,\partial_{t,x}^{\sigma}u),\hspace{1mm} |\sigma|\leq m-1,
    \\
\partial_t^ju(0,x)=\psi_j(x),\hspace{2mm} j=0,1,\ldots,m-1,
\end{cases}
\end{equation}
with $u(t,x)\in I\times \mathbb{R}^n$ for some interval $[0,T]$ with 
$$P=\partial_t^m+\sum_{j=0}^{m-1}\sum_{|\alpha|=m-j}Q_{\alpha}^{(j)}(t,x,\partial_{t,x}^{\sigma}u)\cdot\partial_x^{\alpha}\cdot \partial_t^j.$$
Notice that $\partial_{t,x}^{\sigma}u$ appearing in the above for $|\sigma|\leq m-1$ is a vector in $\mathbb{R}^{N}$ with $N$ given by the number of such $\sigma$ with length $\leq m-1.$ The functions $Q_{\alpha}^{(j)}$ and $\mathsf{F}$ should have specificed functional behaviour (e.g. Gevrey behaviour as in Construction \ref{cons: Gevrey}) with respect to $x$ and $y:=\partial_{t,x}^{\sigma}u.$ Systems (\ref{eqn: QuasiLinHypCauchy}) generalize the cases of weakly hyperbolic linear Cauchy problems ($m\geq 2$), as discussed in §§ \ref{ssec: Propagation of Solutions}.

We now study the differential-algebraic treatment of non-linear PDEs via methods of algebraic microlocal analysis.

\section{Algebraic $\D$-geometry of PDEs}
\label{sec: Algebraic D-Geometry}
Before introducing a derived enhancement of the non-linear analog of $\mathscr{D}$-module theory, treated in a coordinate free manner via the language of algebraic $\mathscr{D}$-geometry \cite{BD2004},\cite{Paugam2014}, we elaborate the classical approach in a scheme-theoretic language \cite{Malgrange2005}. 
\begin{remark}
    We do not go into detail about the \emph{local/functional} differential calculus on these spaces (so-called \emph{Secondary calculus} \cite{Vinogradov2001}), leaving this for the sequel \cite{KSY2}.\end{remark}

The category of affine $\mathscr{D}_X$-schemes over $X$ is the opposite category $\mathrm{Aff}_X(\mathscr{D}_X):=\mathrm{CAlg}(\mathscr{D}_X)^{\mathrm{op}},$ to commutative $\mathscr{D}_X$-algebras and a generic affine $\mathscr{D}_X$-scheme is denoted  $\mathrm{Spec}_{\mathscr{D}_X}(\mathcal{A}).$ 
We have the standard left/right equivalences and write them as $\mathcal{A}^r:=\omega_X\otimes_{\mathcal{O}_X}\mathcal{A}$ and $\mathcal{A}^{\ell}:=\omega_X^{\otimes -1}\otimes_{\mathcal{O}_X}\mathcal{A}$.
We briefly set up the classical theory of $\D_X$-schemes following ideas of \cite{SGA4I}.
\subsection{Non-linear algebraic analysis}
\label{sec: Non-linear Algebraic Analysis}
Let $X$ be a smooth proper $\mathbb{C}$-scheme of $dim_{\mathbb{C}}(X)=n,$ with sheaf of differential operators $\D_X.$ Fix a bundle $E\to X$ with sheaf of sections $\mathcal{E}.$ There is a space\footnote{Corresponding to a representable moduli functor \cite{KS1},\cite{KS2}.},
\begin{equation}
    \label{eqn: ModuliSpace}
\mathfrak{M}_{\D}(m,N,k),
\end{equation}
parameterizing systems
\begin{equation}
    \label{eqn: System}
    \mathsf{F}_1\big(x,u,u_{\sigma}^{\alpha}\big)=0,\ldots,\mathsf{F}_A\big(x,u,u_{\sigma}^{\alpha}\big)=0,\ldots,\mathsf{F}_N(x,u,u_{\sigma}^{\alpha})=0,
    \end{equation}
of $N$-non-linear differential operators $\mathsf{F}_i$ of orders 
$\mathrm{ord}(\mathsf{F}_i)=k_i,$ where $k:=(k_1,\ldots,k_N)\in \mathbb{N}^N,$ in $n$-independent variables $(x_1,\ldots,x_n)$, and $m$-dependent variables $u=(u^1,\ldots,u^m).$ 
The \emph{order} of (\ref{eqn: System}) is 
    $\mathrm{Ord}_{\D}(\mathcal{I}_X):=max\big\{\mathrm{ord}(\mathsf{F}_i)=k_i:i=1,\ldots,N\big\},$
but for simplicity we consider systems of \emph{pure order $k$} i.e. $k=\mathrm{ord}(\mathsf{F}_i)$ for all $i=1,\ldots,N.$ The integer $N$ is the $\D$-\emph{codimension}.

A point $[\mathcal{I}_X]$ in (\ref{eqn: ModuliSpace}) corresponds to a $\D_X$-ideal with $\mathrm{codim}_{\D_X}(\mathcal{I}_X)=N$ defined by a sequence\footnote{Up to $\D$-isomorphism: $[\mathcal{I}_X]$ denotes isomorphism classes of sequences viewed as equivalent if there are $\D_X$-isomorphism $\varphi:\mathcal{B}_X\rightarrow \mathcal{B}_X'
,$ for which $\mathcal{I}_X=ker(q)\simeq \mathcal{I}_X'=ker(q'),$ by restriction.}
\begin{equation}
    \label{eqn: SES}
   0\to  \mathcal{I}_X\rightarrow \mathcal{A}_X\xrightarrow{q}\mathcal{B}_X\to 0,
\end{equation}
where $\mathcal{A}_X$ is the commutative unital left $\D_X$-algebra of functions on an infinite jet scheme
$\mathcal{A}_X^{\ell}:=\mathcal{O}\big(\mathrm{Jet}^{\infty}(\mathcal{E})\big)$. Algebraic jet functors are characterized by the following universal property (see e.g, \cite{BD2004},\cite{Paugam2014}).
\begin{proposition}
\label{Jet-adjunction}
The functor forgetting the $\D$-action admits a left-adjoint, 
$$\mathrm{Jet}^{\infty}:\mathrm{CAlg}(\mathcal{O}_X)\rightleftarrows\mathrm{CAlg}(\D_X):\mathrm{For}_{\D_X}.$$
If $E$ is a formally smooth $X$-scheme $\mathrm{Jets}_X^{\infty}(E)$ is $\D_X$-formally smooth i.e. if $\mathcal{A}$ is formally smooth as a $\D$-algebra $For(\mathcal{A})$ is formally smooth as a $\mathcal{O}_X$-algebra.
\end{proposition}

If $\pi:E\rightarrow X$ is a locally ringed space $\pi_*\mathcal{O}_E$ is a $\mathcal{O}_X$-algebra and there is an isomorphism 
\begin{equation}
    \label{eqn: Jet-adjunction isomorphism} \mathrm{Hom}_{\mathrm{CAlg}(\D_X)}\big(\mathrm{Jet}^{\infty}(\mathcal{O}_E),\mathcal{A}\big)\cong \mathrm{Hom}_{\mathrm{CAlg}(\mathcal{O}_X)}\big(\mathcal{O}_E,\mathrm{For}_{\D}(\mathcal{A})\big).
    \end{equation}
For any section $s\in \Gamma(X,E)$ there exists a unique $\D_X$-algebra map 
$s_{\infty}^*:\mathrm{Jet}^{\infty}(\mathcal{O}_E)\rightarrow \mathcal{O}_X,$ with $s_{\infty}^*|_{\mathcal{O}_E}=s^*.$
The image of $s_{\infty}^*$ is generated by elements $\mathsf{P}\big(s^*(f)\big),$ for $\mathsf{P}\in \D_X.$ On generators
$\mathsf{P}\otimes f\in \D_X\otimes \mathcal{O}_E\subset \mathrm{Jets}^{\infty}(\mathcal{O}_E)$ we have $s_{\infty}^*\big(\mathsf{P}\otimes f\big)=\mathsf{P}\big(s^*(f)\big).$

The $\D_X$-algebra structure is provided by the lift of a differential operator $\mathsf{P}(x,\partial_x)=\sum_{\sigma}a_{\sigma}(x)\partial_x^{\sigma}$ to the jet bundle by the Cartan distribution
\begin{equation}
    \label{eqn: D-Action}
    \mathsf{P}\bullet f(x,u_{\sigma}^{\alpha}):=\mathcal{C}(\mathsf{P})(f)=\sum a^{\sigma}\mathcal{C}(\partial_x^{\sigma})(f),\hspace{1mm} \text{where }\hspace{1mm} \mathcal{C}(\partial_i):=D_i=\partial_i+\sum u_{\sigma+1_j}^{\alpha}\partial_{\sigma}^{\alpha}.
    \end{equation}

\begin{example}
\label{Nilpotent D-Algebras}
A \emph{split} $\D_X$-algebra is a commutative $\D_X$-algebra $\mathcal{A}$, together with an isomorphism of $\D$-modules
$\mathcal{A}\simeq\mathcal{I}\oplus \mathcal{O}_X,$ for some $\D_X$-module $\mathcal{I}.$
A split $\D_X$-algebra is \emph{nilpotent} if there exists some $n\in \mathbb{Z}_{>0}$ such that $\mu_{\mathcal{A}}^{\otimes n}|_{\mathcal{I}_{\mathcal{A}}}\equiv 0.$ That is, the $n$-fold product vanishes on $\mathcal{I}_{\mathcal{A}}.$
In this case, $\mathcal{I}_{\mathcal{A}}$ is the unique maximal $\D_X$-ideal in $\mathcal{A}.$ The category of nilpotent $\D_X$-algebras is $\mathrm{CAlg}_{X}^{nil}(\D_X).$
\end{example}
\subsubsection{Functor of points}
Sequences (\ref{eqn: SES}) have associated solution functors
$$\underline{Sol}_{\D}(\mathcal{I})\in Fun\big(\mathrm{CAlg}_X(\D_X)_{\mathcal{A}_X/},\mathrm{Set}\big).$$
The latter, also written referencing the quotient algebra $\mathcal{B}_X^{\ell}$ as opposed to $\mathcal{I}$ is
the representable functor defined on the category $\mathrm{CAlg}(\D_X)_{\mathcal{A}/}$ of commutative $\D_X-\mathcal{A}$-algebras with $\mathcal{R}$-points
\begin{equation}
    \label{eqn: Local solution D space}
    \underline{\mathrm{Sol}}_{\D}(\mathcal{B}):\mathrm{CAlg}(\D_X)_{\mathcal{A}/}\rightarrow \mathrm{Set},\hspace{5mm} \underline{\mathrm{Sol}}_{\D}(\mathcal{B})(\mathcal{R}):=\big\{\psi\in \mathcal{R}| \mathsf{F}(\psi)=0,\forall \mathsf{F}\in\mathcal{I}\big\}.
\end{equation}
Call (\ref{eqn: Local solution D space}) the \emph{$\D$-geometric (local) space of solutions}.
The functor of points philosophy gives a $\D$-Yoneda embedding given by
\begin{eqnarray}
    \label{eqn: D-Yoneda}
h_{\mathcal{A}}^{\D}:\mathrm{CAlg}(\D)\rightarrow \mathrm{Set},\hspace{3mm}\mathcal{B}\mapsto \mathrm{Hom}_{\mathcal{D}\mathrm{-Alg}}(\mathcal{A},\mathcal{B}),
\end{eqnarray}
where $\mathrm{Hom}_{\mathcal{D}\mathrm{-Alg}}$ is the set of commutative $\D_X$-algebra morphisms.
In particular, the space of holomorphic solutions is
\begin{equation}
    \label{eqn: Classical D-Solutions}
\mathrm{Sol}_{X}^{\D}(\mathcal{A}):=h_{\mathcal{A}}^{\D}(\mathcal{O}_X)=\mathrm{Hom}_{\mathcal{D}\mathrm{-Alg}}(\mathcal{A},\mathcal{O}_X),
\end{equation}
Here is an example in the real setting.
\begin{example}
Consider $\pi:\mathbb{R}\times\mathbb{R}\rightarrow \mathbb{R}$, with base coordinate $t$. A non-linear PDE, 
$F\big(t,\partial_t^i u\big)=0,$
on sections $u:\mathbb{R}\rightarrow \mathbb{R},$ with $F\in \mathbb{R}[t,\{u_i\}_{i\geq 0}]$ gives rise to 
$\mathrm{Sol}_{\D}(\mathcal{A}):=\{a\in \mathcal{A}|F(t,\partial_t^ia)=0\}.$
\end{example}

If $\mathscr{F}$ is another function space with a $\D_X$-algebra structure, write $\mathrm{Sol}_{X}^{\D}(\mathcal{A},\mathscr{F}),$ for $\mathscr{F}$-valued solutions. Note
we may not simply substitute spaces like hyperfunctions (\ref{eqn: SESHyperfunction}) into $\mathrm{Sol}_X^{\D}(\mathcal{A},\mathscr{B}_M)$, as they do not possess an algebra structure. However, one can modify their definition by imposing wavefront conditions \cite{Hormander}. 

\subsubsection{Microfunction solutions} The following result plays an important role the sheaf-theoretic microlocal analysis of singularity propagation for non-linear PDEs.
\begin{proposition}
\label{prop: HyperSol}
Let $M$ be a real-analytic manifold with complexification $X$.
Let $\gamma$ be a closed convex proper cone of $T_M^*X$. Then, for each open subset $U$,
$$\mathscr{B}_M^{\gamma}(U):=\{u\in \mathscr{B}_M(U)|\mathrm{SS}(u_1)\subset \gamma\},$$
defines a sheaf of $\mathbb{C}$-algebras with a compatible $\D$-algebra action.
Thus, for a $\D$-algebra the functor of hyperfunction solutions with $\gamma$-wavefront set $\mathrm{Sol}^{\D}(\mathcal{A},\mathscr{B}_M^{\gamma})$ is well-defined. In particular, there is a set of $\D$-algebra morphisms
$\mathrm{Hom}_{\mathcal{D}-\mathrm{Alg}}(\mathcal{A},\mathscr{B}_M^{\gamma}).$
\end{proposition}
\begin{proof}
Recall that the sheaf $\mathcal{D}b_M$ of distributions on $M$ is a subsheaf of $\mathscr{B}_M.$ Our hypothesis are a condition on the wavefront set of hyperfunction sections. Then, for $u_1,u_2 \in \mathscr{B}_M^{\gamma}(U)$ since $\mathrm{SS}(u_i)\subset \gamma,i=1,2$ we have a well defined product $u_1\cdot u_2\in \mathscr{B}_M^{\gamma}(U)$ since
$$\mathrm{SS}(u_1\cdot u_2)\subset\mathrm{SS}(u_1)+\mathrm{SS}(u_2)\subset \gamma+\gamma\subset \gamma,$$
but $u_1\cdot u_2$ is defined in the distribution sense if and only if\footnote{This condition ensures that no “singular directions” oppose, so the product is meaningful in the distributional sense.} $\mathrm{SS}(u_1)\cap \mathrm{SS}(u_2)^a=\emptyset.$ By our assumptions on $u_i\in \mathscr{B}_M^{\gamma}(U)$ this means $\gamma\cap \gamma^a=\emptyset,$ but $\gamma^a=-\gamma$ and since $\gamma$ is proper closed convex cone we have $\gamma\cap -\gamma=\emptyset,$ as required. 
The $\D$-action is well-posed since $\mathrm{SS}(Pu)\subset \mathrm{SS}(u)\cap\mathrm{Char}(P)\subset \gamma\cap \mathrm{Char}(P).$ This fact implies that $\mathscr{B}_M^{\gamma}$ is a $\D$-submodule of $\mathscr{B}_M.$
\end{proof}

\begin{definition}
    \normalfont 
    Let $M$ be a real analytic manifold, $X$ a complexification of $M$ and $\Omega$ an open subset of $X.$ One says that $\Omega$ is a \emph{tuboid with edge $M$,} if the Whitney normal cone $C_M(\Omega)\subset T_MX$ is a closed convex proper cone with non-empty interior at each $x\in M.$
\end{definition}
Thus Proposition \ref{prop: HyperSol} states that sections of $\mathscr{B}_M^{\gamma}$ are boundary values of sections of tuboids with edge $M$ with profile given by its polar cone.

\subsubsection{Gevrey solutions.}
We now construct another class of solution functors with functional-analytic behavior which is of practical use. For simplicity, consider $\mathbb{C}^2,$ with higher dimensional generalizations obtained in the obvious way. 

\begin{construction}
\label{cons: Gevrey}
Consider a polydisc $D_{\rho_1}\times D_{\rho_2}$ centered at $(0,0)\in\mathbb{C}^2$, where $D_{\rho_j}$ is the disc with center $0$ and radius $\rho_j>0.$ 
    Consider 
    $$u(t,x)=\sum_{j\geq 0}u_{j,*}(x)\frac{t^j}{j!}\in\mathcal{O}(D_{\rho_2})[\![t]\!].$$
    It is $s$-Gevrey if there exists $0<r_2<\rho_2$ and $C>0,K>0,$ such that 
    $$\underset{|x|\leq r_2}{\mathrm{sup}}\hspace{1mm} |u_{j,*}(x)|\leq C\cdot K^j\Gamma(1+(s+1)j),\hspace{1mm}\text{for each } j\geq 0.$$
The space of $s$-Gevrey functions is denoted $\mathcal{G}_{\rho_2}[\![t]\!]_s,$ which is naturally a presheaf of $\mathbb{C}$-algebras.
\end{construction}
The set of germs of analytic functions at the origin in $\mathbb{C}^2$ is then given by  $\bigcup_{\rho>0}\mathcal{G}_{\rho}[\![t]\!]_0.$ Any $0$-Gevrey function is convergent and for any $0<s<s'<\infty$ there is a filtration
    $$\mathcal{G}_{\rho_2}[\![t]\!]_0\subset \mathcal{G}_{\rho_2}[\![t]\!]_s\subset \mathcal{G}_{\rho_2}[\![t]\!]_{s'}\subset \mathcal{G}_{\rho_2}[\![t]\!].$$
    \begin{proposition}
    \label{prop: Gevrey solutions}
    For any $s>0$ the space of $s$-Gevrey functions is naturally a $\D_{\mathbb{C}^2}$-algebra and if $\mathcal{A}$ is a commutative $\D_{\mathbb{C}^2}$-algebra corresponding to a non-linear PDE, the space of $s$-Gevrey solutions is 
    $\mathrm{Sol}_{\D_{\mathbb{C}^2}}(\mathcal{A})^{s}:=\mathcal{H}\mathrm{om}_{\mathcal{D}-alg}(\mathcal{A},\mathcal{G}[\![t]\!]_s).$
    \end{proposition}
\begin{proof}
  Easily check that it is stable under the obvious action by $\partial_x,\partial_t$.
\end{proof}

    Let $\Sigma$ be an open sector with vertex $0\in\mathbb{C}.$ 
    \begin{definition}
    \label{definition: s-Gevrey}
    A holomorphic function $u(t,x)\in\mathcal{O}(\Sigma\times D_{\rho})$ is said to be \emph{s-Gevrey asymptotic to a formal series $\mathcal{O}(D_{\rho_2})[\![t]\!]$ on $\Sigma$} if:
    there exists an integer $0<r_2<min(\rho,\rho_2)$ such that for all proper $\Sigma'\subset \Sigma$ (compact) there exists constants $C>0$ and $K>0$ such that for every integer $J\geq 1$ and $t\in \Sigma':$
    $$\underset{|x|\leq r_2}{\mathrm{sup}}\hspace{1mm} \big|u(t,x)-\sum_{j=0}^{J-1}u_{j,*}(x)\frac{t^j}{j!}\big|\leq C\cdot K^J\cdot \Gamma(1+sJ)|t|^J.$$
    \end{definition}
For each $s>0,$ from Definition \ref{definition: s-Gevrey}, set
$$A_s(\Sigma,D_{\rho_2}):=\{u(t,x)| u \text{ is } s-\text{Gevrey asymptotic on }\Sigma \text{ to } D_{\rho_2}\}.$$
Spaces of $s$-Gevrey solutions are related by taking asymptotic series.
 \begin{proposition}
 Consider Proposition \ref{prop: Gevrey solutions} and the space of solutions $\mathrm{Sol}_{\D_{\mathbb{C}^2}}(\mathcal{A})^s.$ Then, there is a canonical morphism of algebraic $\D$-spaces sending $u=u(t,x)$ to its asymptotic series.
 \end{proposition}
 \begin{proof}
By construction, the map of interest is the canonically defined morphism $T_{s,\Sigma,D_{\rho_2}}:A_s(\Sigma,D_{\rho_2})\rightarrow \mathcal{O}(D_{\rho_2})[\![t]\!]_s,$
    that sends $u(t,x)$ to its asymptotic series. It is easy to verify it is a morphism of $\D_{\mathbb{C}^2}$-algebras and thus by functoriality, there is an induced map of solution spaces
    $\mathrm{Sol}_{\D}(\mathcal{A},T_{s,\Sigma,\rho_2}):\mathrm{Sol}_{\D_{\mathbb{C}^2}}(\mathcal{A},A_s)\rightarrow \mathrm{Sol}_{\D_{\mathbb{C}^2}}(\mathcal{A})^{s}.$
    \end{proof}

\subsubsection{Choosing a topology.}
To develop functional-analytic scheme theory for $\D$-spaces in what follows, we require a suitable $\D$-topology.
\begin{definition}
\label{definition: D-open immersion}
A \emph{$\D$-open immersion} $\mathcal{A}\hookrightarrow \mathcal{B}$ is a morphisms of $\D$-algebras such that:
\begin{itemize}
    \item[(i)] For every $\mathcal{C}\in \mathrm{CAlg}(\mathcal{D})$, the induced map $\mathrm{Sol}_{X}^{\D}(\mathcal{B},\mathcal{C})\hookrightarrow \mathrm{Sol}_X^{\D}(\mathcal{A},\mathcal{C}),$ is an injective map of sets;

    \item[(ii)] The base-change functor e.g. $-\otimes_{\mathcal{A}}\mathcal{B}:\mathrm{Mod}_{\D_X}(\mathcal{A})\rightarrow \mathrm{Mod}_{\D_X}(\mathcal{B}),$ commutes with finite limits;

    \item[(iii)] The functor $\mathrm{CAlg}(\D_X)_{\mathcal{A}/}\rightarrow \mathrm{Sets},$ given by $\mathcal{C}\mapsto \mathrm{Hom}_{\mathcal{D}-\mathrm{Alg}_{\mathcal{A}/}}(\mathcal{B},\mathcal{C})$ commutes with filtered colimits.
\end{itemize}
\end{definition}
These conditions will be elaborated later in (\ref{ssec: D-Schemes}).
Standard results about representability of functors by quotient algebras of polynomials, adapted to the $\D$-geometric setting give the following.
\begin{proposition}
\label{Jet Representability}
    Let $\mathcal{I}\rightarrow\mathcal{A}\rightarrow\mathcal{B}$ be a $\D$-smooth algebraic non-linear \textsc{pde} with $\D$-space of solutions $\mathsf{Sol}_{\D}(\mathcal{B}).$
    Then there is an isomorphism of affine $\D$-schemes $$\mathrm{Sol}_{\D}(\mathcal{I})\simeq Spec_{\D}(\mathcal{O}(\mathrm{Jets}^{\infty}(\mathcal{O}_E^{alg}))/\mathcal{I}\big).$$
\end{proposition}

We use other notation for the functor of points solution space,
$$\mathrm{Sol}_{\D}(\mathcal{I})=\mathrm{Sol}_{\D}(\mathsf{F}=0)\simeq\big\{(x,u_{\sigma}^{\alpha})|\D_X\bullet \mathsf{F}^A(x,u_{\sigma}^{\alpha})=0\big\}.$$
For example, if $\mathcal{A}=\mathrm{Sym}_{\mathcal{O}_X}(\D_X^k),$ consider the defining sequences for $\mathcal{B}$,
\begin{equation}
    \label{eqn: SES Example}
span\{\mathsf{P}_1,\ldots,\mathsf{P}_N\}\rightarrow \mathrm{Sym}_{\mathcal{O}_X}(\D_X^k)\rightarrow \mathcal{B},
\end{equation}
whose differential polynomials $\mathsf{P}_i(x,\partial_x),i=1,\ldots,N,$ are of fixed $\mathrm{ord}(\mathsf{P}_i):=d_i$ for $i=1,\ldots,N.$
The $\D$-space of solutions functor $\mathrm{Sol}_{\D_X}(\mathcal{I})$ is given by $$\mathrm{Sol}_{\D_X}(\mathcal{I})\simeq \{(f_1,\ldots,f_n)|\mathsf{P}_i(f_1,\ldots,f_n)=0\},$$ for some collection of functions $f_i\in\mathcal{O}_X.$ In this example, $\mathcal{A}$ corresponds to $k$-jets on the trivial bundle and in the more general situation, is the free $\D_X$-algebra on $\D_X^k\otimes_{\mathcal{O}_X}\mathcal{O}_E^{alg},$ whose elements are of the form 
$f=f(x,u,\partial_{\sigma}u^{\alpha}),|\sigma|\leq k.$

\begin{example}
\label{Differential Algebra Example}
For a (possibly infinite) family of variables $X_0,X_1,\ldots,X_i,\ldots$ consider $\mathbb{A}^1$ with coordinate $z$ and $\mathcal{V}=\mathbb{C}(\!(\EQ)\!)\cdot [X_0,X_1,\ldots,].$ It is acted upon by $\D_{\mathbb{A}^1}$ via the rule $\partial\bullet X_i:=X_{i+1}$ where $\partial_z$ acts in the usual way on $\mathbb{C}(\!(\EQ)\!).$ The $\D$-space determined by $\mathsf{P}\in \mathcal{V}$ is  $Spec_{\D_{\mathbb{A}^1}}(\mathcal{B})$ via
$\mathcal{I}_{\partial}<\mathsf{P}>\rightarrow \mathcal{V}\rightarrow \mathcal{B}.$ 
Allowing for coefficients in $\mathbb{C}$-algebras $A,$ put $A(\!(\EQ)\!)=A\otimes_{\mathbb{C}} \mathbb{C}(\!(\EQ)\!),$ so $\mathcal{V}_A:=A(\!(\EQ)\!)[X_0,X_1,\ldots],$ is a $\mathcal{V}$-algebra with compatible $\D_{\mathbb{A}^1}$-action.
The corresponding solution functor (c.f. (\ref{eqn: D-Yoneda}) and (\ref{eqn: Local solution D space})) in this setting is given by 
$\underline{\mathrm{Sol}}_{\D_{\mathbb{A}^1}}(A)=Hom_{\mathrm{CAlg}_{A(\!(\EQ)\!)}(\D_{\mathbb{A}^1})}(\mathcal{B}_A,A(\!(\EQ)\!)\big).$
In particular, a $\mathbb{C}$-point $s\in \underline{\mathrm{Sol}}_{\D_{\mathbb{A}^1}}(\mathsf{P})(\mathbb{C})$ corresponds to a map $\mathcal{B}\rightarrow \mathbb{C}(\!(\EQ)\!).$ 
\end{example}

The system $\mathsf{F}=0$ or equivalently the $\D_X$-ideal $\mathcal{I}_X$ is said to be non-trivial (resp. consistent) if 
$\mathrm{Sol}_{\D}(\mathcal{I})\neq Spec_{\D}(\mathcal{A}_X),$ (resp. $\mathrm{Sol}_{\D}(\mathcal{I})\neq \emptyset.$) The condition of consistency is related to the formal integrability and involutivity (see Definition \ref{defn: RelAlgNLPDE}).

A tuple of functions $\varphi=(\varphi^1,\ldots,\varphi^m)$ is a solution if $$\mathsf{F}^A(x,u_{\sigma}^{\alpha})|_{u=j_{\infty}\varphi(x)}=0\Leftrightarrow\big\{(x,u_{\sigma}^{\alpha})|u_{\sigma}^{\alpha}=j_{\infty}\varphi(x)=\varphi_{\sigma}^{\alpha}\big\}\subset \mathrm{Sol}_{\D}(\mathcal{I}).$$
In terms of morphisms of $\D_X$-algebras, it means (the dual of) its jet-extension factors through the structure map $q:\mathcal{A}_X^{\ell}\rightarrow \mathcal{B}_X^{\ell}$ i.e. we have a commutative diagram
\begin{equation}
\label{eqn: Point of non-local solution space factorization}
\begin{tikzcd}
   \mathcal{A}_X^{\ell}\arrow[dr,"j_{\infty}(\varphi)^*"] \arrow[r,"q"] & \mathcal{B}_X^{\ell}
   \\
   & \mathcal{O}_X\arrow[u]
\end{tikzcd}.
\end{equation}
This notion defines a sub-functor
\begin{equation}
    \label{eqn: Non-local solution space}
Sol_{\mathcal{I}}(X,E)=\{s\in \mathrm{Sect}(X,E)|j_{\infty}(s)^*L=0,\forall L\in \mathcal{I}\}\subset \mathrm{Sect}(X,E).
\end{equation}
Space (\ref{eqn: Non-local solution space}) consists of
$s=s(t,u)\in \underline{Sect}(X,E)(A)\subset \mathrm{Hom}(X\times \mathrm{Spec}(A),E),$ for which $L\circ \big(j_{\infty}s\big)(t,u)=0.$
So $s\in\underline{Sol}_{\mathcal{I}}(X,E)$ if and only if there is a natural factorization through the dual jet map $(j_{\infty}^*):\mathcal{A}\rightarrow \mathcal{O}_X$ as in (\ref{eqn: Point of non-local solution space factorization}).
\begin{remark}
A point in (\ref{eqn: Non-local solution space}) is called a solution section and if $\mathcal{B}$ is defined by $\Delta_{\mathrm{F}}=F\circ j_k(-)=0,$ they are those $s\in \mathrm{Sect}(X,E)$ for which $j^k(s):X\rightarrow \mathrm{Jet}^k(E)$ satisfies $\mathrm{Im}(j^k(s))\subset \mathcal{E}$ i.e. $j^k(s)\in Sect\big(X,\mathcal{E}\subset \mathrm{Jet}^k(E)\big).$
\end{remark}

\subsubsection{$\D_X$-Ideal Sheaves.}
\label{sssec: D-Ideal Sheaves}
An ideal $\mathcal{I}_X$ is said to be differentially generated if it is algebraically generated by
$\D_X\bullet (\mathsf{F}^A),A=1,\ldots,N,$
defined by
$\sum_{|\sigma|\geq 0}D_{\sigma}(\mathsf{F}),D_{\sigma}:=D_1^{\sigma_1}\circ\ldots D_n^{\sigma_n}.$

Differentially generated $\D_X$-ideals contain all differential consequences of their generators. It is possible to restrict to finite dimensional filtered subspaces $F^k\mathcal{A}$ for $k\geq 0$ (finite jets) but then the ideal will only be algebraic.
It is defined, for all $k\geq 0$ by
\begin{equation}
    \label{eqn: FiniteIdeal}
F^k\mathcal{I}:=\mathcal{I}\cap F^k\mathcal{A}^{\ell}\simeq Span\big\{D^{\sigma_i}(\mathsf{F}_i)|\mathrm{ord}(\mathsf{F}_i)+|\sigma_i|\leq k\big\}\subseteq F^k\mathcal{A}^{\ell}.
\end{equation}
Algebraic ideal (\ref{eqn: FiniteIdeal}) is understood as automatically containing all
integrability conditions up to order $k$.
\begin{definition}
\label{defn: RelAlgNLPDE}
A \emph{$\D$-algebraic non-linear PDE} is a sequence of commutative $\D_X$-algebras (\ref{eqn: SES}).
When $\mathcal{A}=\mathrm{Jet}^{\infty}(\mathcal{O}_E),$ for some bundle (or sheaf) $E$ over $X$, we say the $\D$-PDE is imposed on sections $\mathrm{Sect}(X,E),$ and further say that it is:
\begin{enumerate}
    \item \emph{Differentially generated}: if it is free as a $\D$-module and whose generic ideal
is of the form 
$$\mathcal{I}=\big\{\psi \in \mathcal{A}| \psi=\sum_{i=1,|\sigma|\geq 0}^N\psi_{i,\sigma}(x,u,\partial_x^{\tau}u) D_{\sigma}(\mathsf{F}_i),\psi_{i,\sigma}\in \mathcal{A}\big\},$$ via the action  (\ref{eqn: D-Action});

    \item \emph{Formally integrable} if for every $k$, we have $\mathcal{O}_X$-module isomorphisms 
$$(F^k\mathcal{B}^{\ell}\otimes\D_X^{\leq k}\otimes E^*)\cap \big(F^{k-1}\mathcal{A}^{\ell}\otimes\D_X^{\leq k-1}\otimes E^*)=F^{k-1}\mathcal{B}^{\ell}\otimes\D_X^{\leq k-1}\otimes E^*.$$
\end{enumerate}
\end{definition} 
We give one standard example.
\begin{example}
\label{ex: EvolutionaryExample}
Evolutionary equation $\{F:=u_t-f(x,t,u_i)=0\}$ with independent variables $(x,t)$ and dependent one $u$, with $u_i:=\partial_x^iu$ correspond to differentially generated $\D$-PDEs. In this case, the $\D$-action is generated by 
$D_x=\partial_x+\sum_i u_{i+1}\partial_{u_i},$ and $D_t=\partial_t+\sum_iD_x^i(f)\partial_{u_i}.$ For instance, considering a trivial bundle with fiber coordinate $u$ over a two dimensional base $(t,x)\in \mathbb{R}^2,$ a prime $\D_X$-ideal 
    $\mathcal{I}_{KdV}=Span\{\partial_t\bullet u-6u\partial_t\bullet u+\partial_x^3\bullet u\},$ via the usual $\D_X$-action gives the $\D_X$-algebra corresponding to the KdV equation.
\end{example}
Example \ref{ex: EvolutionaryExample} admit obvious generalizations e.g. 
$$\big\{\mathsf{F}_j:=u_t^j-f^j(x,t,\ldots,u^{\alpha},\ldots,u_x^{\alpha},\ldots)=0\big\},\hspace{1mm}j=1,\ldots,N,$$
with $\D$-action via operators
$$D_i=\partial_i+\sum_{j=1}^n\sum_{|\sigma|\geq 0}u_{\sigma i}^j\partial_{u_{\sigma}^j},\hspace{2mm} D_t=\partial_t+\sum_{j=1}^n\sum_{|\sigma|\geq 0}D_{\sigma}(f^j)\partial_{u_{\sigma}^j},i=1,\ldots,n.$$

 Integrability condition (2) coincides with the usual one. The following remark elaborates its meaning for linear systems.
\begin{remark}
Consider $\mathsf{P}\in \D_X^{N_0\times N_1}$ with $R=\mathrm{ord}(\mathsf{P}):=max\{\mathrm{ord}(\mathsf{P}_{ij})|i=1,\ldots,N_0,j=1,\ldots,N_1\}$. Formal integrability is the condition that for every integer $\ell\in \mathbb{Z},$
$$\big(\mathcal{D}^{1\times N_0}\cdot \mathsf{P})\cap F^{R+\ell}\mathcal{D}^{1\times N_1}=F^{\ell}\mathcal{D}^{1\times N_0}\cdot\mathsf{P}.$$
This means for every $\ell$ the system of order $R+\ell$ given by the left-hand side is the $\ell$'th prolongation $\mathsf{P}^{(\ell)}:=F^{\ell}\mathcal{D}^{1\times N_0}\cdot \mathsf{P}$ of the original system. This gives an infinite amount of conditions to check, but fundamental results of Goldschmidt and Spencer tell us that it can be replaced by a single condition
$$(\D_X^{1\times N_0}\cdot \mathsf{P})\cap F^{R+1}\D_X^{1\times N_1}=F^1\D_X^{1\times N_0}\cdot \mathsf{P},$$ if its symbol satisfies an additional condition\footnote{It is $2$-acyclic \cite{Krasilshchik1986}. In these terms, a PDE involutive if both formally integrable and its symbol is $n$-acyclic, for some $n.$}.
\end{remark}
A differential corollary of a differentially generated ideal $\mathcal{I}$ corresponding to a system $\{\mathsf{F}=0\}$ of order $k$ is a $\D_X$-ideal $\mathcal{I}_0\subset \mathcal{I}$ such that it is not a subset of any differentially generated ideal $\mathcal{I}'$ corresponding to some $\mathsf{F}'$ whose zero locus is a proper closed subset of that for $\mathsf{F}.$  

\subsection{Global theory: $\D$-Schemes}
\label{ssec: D-Schemes}
Arbitrary (not necessarily affine) $\D_X$-schemes $(Z,\psi)$ are built by gluing coverings by affine $X$-schemes $U_i$ for which the families $(U_i,\psi|_{U_i}:\mathcal{O}_{U_i}\rightarrow\mathcal{O}_{U_i}\otimes \pi_i^*\Omega_X)$ are affine $\D_X$-schemes. Introducing topologies, one may globalize further do define algebraic $\D$-spaces.

We recall properties of morphisms of $\D$-schemes defined in \cite{BD2004}.
A map $F:\mathcal{X}:=\mathrm{Spec}_{\D_X}(\mathcal{A})\rightarrow\EQ:=\mathrm{Spec}_{\D_X}(\mathcal{B})$ is \emph{formally} $\D$-\emph{smooth} if for all $\D_X$-algebras $\mathcal{C}$ and any nilpotent $\D_X$-ideal $\mathcal{J}\subset \mathcal{C},$ as well as every map
$\mathrm{Spec}_{\D_X}(\mathcal{C})\rightarrow \EQ,$ the canonical map
$$\mathrm{Hom}_{\D_X}(\mathrm{Spec}_{\D_X}(\mathcal{C}),\mathcal{X})\rightarrow \mathrm{Hom}\big(\mathrm{Spec}_{\D_X}(\mathcal{C}/\mathcal{J}),\mathcal{X}),$$
is surjective. 
If it is injective we say it is $\D_X$-\emph{unramified} and if bijective, it is $\D_X$-\emph{étale}. 
In particular,
$\mathcal{A}^{\ell}\in \textsc{CAlg}_X(\D_X)$ is $\D_X$-\emph{formally smooth} if: for any $\mathcal{C}^{\ell}$ and $\D$-ideal $\mathcal{I}\subset\mathcal{C}^{\ell}$ with $\mathcal{I}\cdot \mathcal{I}=0,$ one solves the lifting problem: 

\[
\begin{tikzcd}
& \mathcal{C}_X^{\ell}\arrow[d,"q"]
\\
\mathcal{A}_X^{\ell}\arrow[ur,"\exists",dashed] \arrow[r] & \mathcal{C}_X^{\ell}/\mathcal{I}
\end{tikzcd}
\]
For a $\D$-module $\mathcal{M}$, the $\D$-algebra
    $\mathcal{A}^{\ell}=\mathrm{Sym}_{\mathcal{O}_X}(\mathcal{M})$ is $\D$-formally smooth if and only if $\mathcal{M}$ is projective.

Fix a suitable topology $\tau$ on affine objects (it will be the Zariski $\D$-topology, introduced below).
\begin{definition}
\label{defn: (Formal) Algebraic D-Spaces}
\normalfont
An \emph{algebraic $\D_X$-space} is a presheaf $\mathcal{X}:\big(\mathrm{CAlg}_{\D_X}^{\mathrm{op}},\tau\big)\rightarrow \mathrm{Set},$ that is a sheaf for $\tau.$ An algebraic $\D_X$-space is said to be \emph{formal} if it is of the form $\mathcal{X}:\mathrm{CAlg}_X^{nilp}(\D_X)^{op}\rightarrow \mathrm{Set}.$
\end{definition}

Denote the corresponding category as $\mathrm{Space}_{\D_X,\tau}.$ The topology $\tau$ will be a $\D$-analog of the usual Zariski topology. Once equipped with it we can define $\D$-sheaves and $\D$-schemes. We do this now.

\subsubsection{} Any affine $\D$-space is viewed as a functor which is representable, $\mathrm{Spec}_{\D}(\mathcal{A})$ for some $\mathcal{A}\in \mathrm{CAlg}(\D_X).$ This defines a covariant assignment
$$\underline{\mathrm{Spec}}_{\mathscr{D}}(-):\mathrm{CAlg}_{X}(\D_X)^{op}\rightarrow \mathrm{Space}_{\D_X}.$$

To identify a category of $\D$-schemes, we follow the ideas of Grothendieck (e.g. \cite{SGA4I}) and first supply  $\mathrm{Space}_{\D_X}$ with a (pre)-topology and then define schemes in terms of functors which are sheaves for this topology and which can be covered by affine objects. These so-called $\D$-Zariski local sheaves,
$$\mathcal{X}\in\mathrm{Space}_{\D_X}^{Zar}:=\mathrm{Shv}\big(\mathrm{CAlg}_{X}(\D_X)^{op},\tau_{Zar}\big),$$
are such that: there exists a finite $\D$-covering 
$\{\mathcal{U}_i\rightarrow \mathcal{X}\}_{i\in I},$
where each $g_i:\mathcal{U}_i\rightarrow \mathcal{X}$ are $\D$-open immersions (c.f. (\ref{eqn: Classical D-Solutions}) items (i) -- (iii)) and the induced map
$$\coprod_{i\in I}\mathcal{U}_i\rightarrow \mathcal{X},$$
is a $\D_X$-epimorphism. The \emph{$\D$-Zariski pre-topology} on $\mathrm{CAlg}(\D)^{op}$, is the covering family defined for each $\D$-algebra $\mathcal{A}$ by:
\begin{equation}
\label{ZariskiCoverings}
\mathrm{Cov}_{Zar}(\mathcal{A}_X^{\ell}):=
\bigg\{
\text{Finite families }(\mathcal{A}\xrightarrow{g_i}\mathcal{B}_i)_{i\in I}\bigg| \begin{array}{c}

\text{there exists }(a_i)_{i\in I}\in \mathcal{A}^I,\text{with } (a_i)= \mathcal{A}
\\
\text{and an isomorphism of families}
\\
\{\mathcal{A}\xrightarrow{g_i}\mathcal{B}_i\}_{i\in I}\simeq \{\mathcal{A}\rightarrow\mathcal{A}_{a_i}\}_{i\in I},
\end{array}\bigg\}.
\end{equation}
We denote by $\mathcal{A}_{a_i}$ the localization at the element $a_i.$ Conditions \eqref{ZariskiCoverings} are well-defined by results of Ritt \cite{Ri}, which tell us the localization of a differential algebra at a multplicative subset is again a differential algebra.

The closed Zariski subsets of $\mathrm{Spec}_{\D}(\mathcal{A})$ are identified using the $\D$-ideals of $\mathcal{A}.$ 
Given a $\D$-ideal $\mathcal{I}$ of $\mathcal{A},$ denote by 
$$\underline{\mathrm{Spec}}_{\mathscr{D}}(\mathcal{I}\subset\mathcal{A}):\mathrm{CAlg}(\mathcal{D})\rightarrow \mathrm{Sets},$$
the algebraic $\D$-space defined by
$\underline{\mathrm{Spec}}_{\mathscr{D}}(\mathcal{I}\subset \mathcal{A})(\mathcal{R}_X^{\ell}):=\{\varphi\in \mathrm{Sol}_{\D}(\mathcal{A},\mathcal{R}_{X}^{\ell})|\varphi(\mathcal{I})=\mathcal{R}_{X}^{\ell}\}.$
It is a subset of $\mathcal{R}_X^{\ell}$-points of $\mathrm{Spec}_{\D}(\mathcal{A})(\mathcal{R}_X^{\ell}).$
It is straightforward to check if $\mathcal{I},\mathcal{I}'$ are $\D$-ideals of $\mathcal{A},$ then 
$$\underline{\mathrm{Spec}}_{\mathscr{D}}(\mathcal{I}\subset\mathcal{A})=\underline{\mathrm{Spec}}_{\mathscr{D}}(\mathcal{I}'\subset\mathcal{A}),$$
if and only if $\sqrt{\mathcal{I}}=\sqrt{\mathcal{I}'}.$
\begin{proposition}
    The functor $\underline{\mathrm{Spec}}_{\mathscr{D}}(\mathcal{I}\subset\mathcal{A})$ depends only on the radical of the $\D$-ideal $\mathcal{I}.$
\end{proposition}
We use the following notion of open immersion in the category of $\D$-spaces.
\begin{definition}
Let $\mathcal{F}\rightarrow \underline{\mathrm{Spec}}_{\mathscr{D}}(\mathcal{A})$ be a morphism of $\D$-spaces. It is an \emph{$\D$-open immersion} if it is isomorphic to a map of the form 
$\underline{\mathrm{Spec}}_{\mathscr{D}}(\mathcal{I}\subset \mathcal{A})\rightarrow \underline{\mathrm{Spec}}_{\mathscr{D}}(\mathcal{A}),$
for some $\D_X$-ideal $\mathcal{I}$.
\end{definition}
Such morphisms have the following characterization.
\begin{proposition}
    Consider a morphism $F:\mathcal{X}\rightarrow \mathcal{Y}$ of algebraic $\D$-spaes. It is a $\D$-open immersion if it is a monomorphism of functors and if: for every $\mathcal{A}\in \mathrm{CAlg}(\D_X),$ and morphism $\mathrm{Spec}_{\D}(\mathcal{A})\rightarrow \EQ,$ there exists a $\D$-ideal $\mathcal{I}$ of $\mathcal{A}$, such that
    \[\begin{tikzcd}
        \underline{\mathrm{Spec}}_{\mathscr{D}}(\mathcal{I}\subset\mathcal{A})\arrow[d]\arrow[r] & \mathrm{Spec}_{\D}(\mathcal{A})\arrow[d]
        \\
        \mathcal{X}\arrow[r,"F"] & \mathcal{Y}.
    \end{tikzcd}
    \]
    is a Cartesian diagram in $\mathrm{Space}_{\D_X}.$ Moreover,  $\D$-open immersions are stable under base change.
   \end{proposition}

\begin{proof}
 Consider a Cartesian diagram of $\D$-spaces
    \[
    \begin{tikzcd}
\mathcal{X}'\arrow[r,"F'"] \arrow[d] & \mathcal{Y}'\arrow[d]
\\
\mathcal{X}\arrow[r,"F"] & \mathcal{Y}.
    \end{tikzcd}
    \]
    Suppose that $F$ is a $\D$-open immersion. Then we must show $F'$ is $\D$-open as well. To this end, let $\mathcal{A}$ be a $\D$-algebra with a map $\Sp(\mathcal{A})\rightarrow \mathcal{Y}',$ and consider the pull--back
    \[
    \begin{tikzcd}
        \mathcal{X}'\times_{\mathcal{Y}'}\Sp(\mathcal{A})\arrow[d]\arrow[r] & \Sp(\mathcal{A})\arrow[d]
        \\
        \mathcal{X}'\arrow[r] & \mathcal{Y}'
    \end{tikzcd}.
    \]
By adjoining this to the diagram given, by our assumptions there exists $\mathcal{I}$ such that $\mathcal{X}'\times_{\mathcal{Y}'}\Sp(\mathcal{A})\simeq \Sp(\mathcal{I}\subset \mathcal{A}).$
\end{proof}

By definition $\mathcal{X}:\mathrm{CAlg}(\mathcal{D})\rightarrow \mathrm{Sets},$ is a Zariski $\D$-space, which is to say a sheaf for the $\D$-Zariski pre-topology \eqref{ZariskiCoverings} if for any $\mathcal{A}$ and finite family $(a_i)\in \mathcal{A}^I$ generating $\mathcal{A},$ one has the usual equalizer diagram 
$$\mathcal{X}(\mathcal{A})\rightarrow \prod_{i}\mathcal{X}(\mathcal{A}_{a_i})\rightrightarrows\prod_{i,j}\mathcal{X}(\mathcal{A}_{a_ia_j}).$$

\begin{proposition}
    Suppose that $\mathcal{X}\in \mathrm{Sch}_{\D_X}.$ Then, there exists a family $(\mathcal{A}_i)_{i\in I}$ of commutative $\D_X$-algebras such that $\{h_{\mathcal{A}_i}^{\D}\xrightarrow{u_i}\mathcal{X}\}$ is a $\D$-Zariski covering family.
\end{proposition}
\begin{proof}
We must show the Zariski pretopology is subcanonical, that each $u_i$ is a $\D$-open immersion and the induced map 
$$\coprod_{i\in I}h_{\mathcal{A}_i}^{\D}\rightarrow \mathcal{X},$$
is a $\D$-epimorphism.
The first claim is clear via standard arguments by noticing for any map $\mathcal{A}\rightarrow \mathcal{B}$ where $(b_i)$ is a finite family generating $\mathcal{B},$ one has 
$$h_{\mathcal{A}}^{\D}(\mathcal{B})\rightarrow \prod_{i\in I}h_{\mathcal{A}}^{\D}(\mathcal{B}_{b_i})\rightrightarrows \prod_{i,j}h_{\mathcal{A}}^{\D}(\mathcal{B}_{b_ib_j}),$$
which can of course be written as
$$\mathrm{Sol}_{\D}(\mathcal{A},\mathcal{B})\rightarrow \mathrm{Sol}_{\D}\big(\mathcal{A},\prod_{i}\mathcal{B}_{b_i}\big)\rightrightarrows \mathrm{Sol}_{\D}\big(\mathcal{A},\prod_{i,j}\mathcal{B}_{b_ib_j}\big).$$
Now proceed via standard arguments (e.g.\cite{SGA4I}).
For the second claim, it suffices to notice that there exists a canonical map 
$$\coprod_{i,j}\mathrm{Sol}_{\D}(\mathcal{A}_i)\times_{\mathcal{X}}\mathrm{Sol}_{\D}(\mathcal{A}_j)\rightrightarrows \coprod_{i}\mathrm{Sol}_{\D}(\mathcal{A}_i),$$
from the pull-back,
\[
\begin{tikzcd}
    & \arrow[dl] \mathrm{Sol}_{\D}(\mathcal{A}_i)\times_{\mathcal{X}}\mathrm{Sol}_{\D}(\mathcal{A}_j)\arrow[dr] & 
    \\
    \mathrm{Sol}_{\D}(\mathcal{A}_i)& & \mathrm{Sol}_{\D}(\mathcal{A}_j).
\end{tikzcd}
\]
Then by base change and using the $\D$-open immersions of $\D$-spaces, $\mathrm{Sol}_{\D}(\mathcal{A}_i)\rightarrow \mathcal{X}$ and $\mathrm{Sol}_{\D}(\mathcal{A}_j)\rightarrow \mathcal{X},$ one can show that natural morphism
$$\mathrm{colim}\hspace{1mm}\big(\coprod_{i,j}\mathrm{Sol}_{\D}(\mathcal{A}_i)\times_{\mathcal{X}}\mathrm{Sol}_{\D}(\mathcal{A}_j)\rightrightarrows \coprod_{i}\mathrm{Sol}_{\D}(\mathcal{A}_i)\big)\rightarrow \mathcal{X},$$
is an isomorphism.
\end{proof}

\begin{remark}
    \label{rmk: LocRingDSch}
A $\D$-scheme may be thought of as a ringed space $(\mathsf{X},\mathcal{O}_{\mathsf{X}}),$ together with an open covering $\cup_iV_i$ such that each $(V_i,\mathcal{O}_{\mathsf{X}}|_{V_i})$ is isomorphic to an affine $\D$-scheme. Thus, a morphism $F:(\mathsf{X},\mathcal{O}_{\mathsf{X}})\rightarrow (\mathsf{Y},\mathcal{O}_{\mathsf{Y}})$ is a morphism of topological spaces $|F|:|\mathsf{X}|\rightarrow |\mathsf{Y}|,$ with a morphism of sheaves of $\D$-algebras $F^{\sharp}:F^*\mathcal{O}_{\mathsf{Y}}\rightarrow \mathcal{O}_{\mathsf{X}}.$
\end{remark}
By Remark \ref{rmk: LocRingDSch}, by a closed $\D$-subscheme $\mathsf{U}\xrightarrow{i}\mathsf{X}$, we mean a $\D$-scheme such that $|i|:|\mathsf{U}|\subseteq |\mathsf{X}|$ is an inclusion of topological spaces whose image is a closed subset and whose sheaf map $i^{\sharp}:i^*\mathcal{O}_{\mathsf{X}}\rightarrow \mathcal{O}_{\mathsf{U}}$ is a surjective morphism of sheaves of $\D$-algebras.
Similarly, $\mathsf{U}$ is an open $\D$-subscheme if $|\mathsf{U}|\subseteq |\mathsf{X}|$ is an open subset such that for every open subset $\mathsf{V}$ of $\mathsf{U}$, one has $\mathcal{O}_{\mathsf{U}}(V)=\mathcal{O}_{\mathsf{X}}(\mathsf{W}\cap\mathsf{U}).$

Using this interpretation set the following.
\begin{remark}[Notation]
To each $\mathcal{A}_{X}^{\ell}\in \mathrm{CAlg}(\D_X),$ one has a $\D$-scheme viewed as a locally ringed space, $\big(\mathrm{Spec}_X(\mathcal{A}),\mathcal{O}_{\mathrm{Spec}}(\mathcal{A})\big),$ where we consider the underlying topological space with structure sheaf carrying a $\D$-action induced from that on $\mathcal{A}.$ Denote the object viewed this way by $\underline{\mathrm{Sp}}_{\D}(\mathcal{A}).$ 
\end{remark}
The following clarifies how this point of view relates to the previous one defined in terms of $\mathrm{Spec}_{\D}(\mathcal{A}),$ by their behaviour as objects of $\mathrm{Space}_{\D_X}$ viewed in terms of their $\D$-functor of points.
\begin{proposition}
\label{prop: D-Yoneda FF}
    Functor \emph{(\ref{eqn: D-Yoneda})} is fully-faithful and admits a left-adjoint $|-|$ characterized as follows. For each
 $\mathcal{A}\in \mathrm{CAlg}(\D_X),$ there is an isomorphism    $|h_{\mathcal{A}_X}^{\D}|\simeq \underline{\mathrm{Sp}}_{\D}(\mathcal{A}_X).$ Moreover, $h_{\mathcal{A}}^{\D}\simeq h_{\mathrm{Sp}_{\D}(\mathcal{A})},$ is an isomorphism of $\D$-spaces. Given a finite covering family of open immersions $\{\underline{\mathrm{Sp}}_{\D}(\mathcal{A}_i)\rightarrow \mathcal{X}\}$ with $\mathcal{X}$ a $\D$-scheme, then the induced map 
    $$\coprod_{i}\underline{\mathrm{Sp}}_{\D}(\mathcal{A}_i)\rightarrow \mathcal{X},$$
    is a $\D$-epimorphism of $\D$-spaces.
\end{proposition}
\begin{proof}
    Let $h:=h^{\D}|_{\mathrm{Sch}_{\D}}.$ Then, there are equivalences 
    \begin{equation*}
h_{\underline{\mathrm{Sp}}_{\D}(\mathcal{A})}(\mathcal{R}_X^{\ell})\simeq \mathrm{Hom}\big(\mathrm{Sp}_{\D}(\mathcal{R}_X^{\ell}),\mathrm{Sp}_{\D}(\mathcal{A})\big)
\simeq\mathrm{Hom}_{\mathcal{D}-\mathrm{Alg}}(\mathcal{A},\mathcal{O}(\mathrm{Sp}_{\D}(\mathcal{R})\big)
\simeq\mathrm{Hom}_{\mathcal{D}-\mathrm{Alg}}(\mathcal{A},\mathcal{R}).
\end{equation*}
    Secondly, it is enough to note by Proposition \ref{prop: D-Yoneda FF}, one has 
    $$|h_{\mathcal{A}}^{\D}|(\mathcal{R})=|\mathrm{Sol}_{\D}(\mathcal{A})|(\mathcal{R})\simeq \underset{\mathrm{Hom}_{\mathcal{D}-\mathrm{Alg}}(\mathcal{A},\mathcal{R})}{\mathrm{colim}}\hspace{1mm}\mathrm{Sp}_{\D}(\mathcal{R}).$$
To prove the final claim note that applying the $\D$-Yoneda functor (\ref{eqn: D-Yoneda}), one has an induced map 
$$\coprod_i h_{\mathrm{Sp}_{\D}(\mathcal{A}_i)}\rightarrow h_{\mathcal{X}}^{\D},$$
which follows via the isomorphism 
$h_{\coprod_{i}\mathrm{Sp}_{\D}(\mathcal{A}_i)}\simeq \coprod_{i}h_{\mathrm{Sp}_{\D}(\mathcal{A}_i)}.$
\end{proof}

\begin{remark}
A $\D$-algebraic variety is a $\D$-scheme which is compact, separated and reduced which moreover admits a covering by open sets, each isomorphic
to some $\D$-affine space.
It is possible to prove their characterization as follows: $\mathcal{V}$ is a $\D$-variety if it is a reduced $\D$-scheme of the form $(\mathcal{V},\mathcal{O}_{\mathcal{V}})$ where $\mathcal{V}$ is representable by some reduced $\D$-algebra $\mathcal{A}$ of finite type that is a quoient by a radical $\D$-ideal $\mathcal{I}.$
\end{remark}

\subsubsection{Cauchy-Kovalevskaya Equations}
Consider a locally closed embedding $Y\xrightarrow{\iota}X$ and let $(x,t):=(x_1,\ldots,x_n,t)$ be coordinates on $X$ such that $Y=\{t=0\},$ and let $P\in\D_X^{\leq k},$ of order $\leq k.$ If $Y$ is non-characteristic for $\mathcal{N}:=\D_X/\D_X\cdot P$ in a neighbourhood of $(x_0,0)$ in $Y$ using the Weierstrass preparation theorem, we may write it locally as
$P\big(x,t;\partial_x,\partial_t\big):=\sum_{0\leq j\leq k}P_j(x,t,\partial_x)\cdot \partial_t^j,$
where $P_j(x,t,\partial_x)$ are differential operators not depending on $\partial_t$ of orders $\leq k-j$ and $P_k(x,t)$ is a holomorphic function on $X$ satisfying $P_k(x_0,0)\neq 0.$
 Such operators are said to be of \emph{Cauchy-Kovalevskaya type} (CK-type).

For a given closed sub-manifold $i:Y\hookrightarrow X$, we have a sub-category
\begin{equation}
\label{eqn: NC-Cat}
\mathrm{Coh}_{f-\mathsf{NC}}(\mathcal{D}_Y):=\{\mathcal{N}\in \mathrm{Coh}(\mathcal{D}_Y)| f\text{ is }\mathsf{NC} \text{ for }\mathcal{N}\}\subset \mathrm{Coh}(\mathcal{D}_Y).
\end{equation}

\begin{proposition}
\label{CK-type and NC}
If $\mathcal{N}=\mathcal{D}_Y/\mathcal{D}_Y\cdot P$ is a coherent $\mathcal{D}_Y$-module with $P$ a Cauchy-Kovalevskaya type operator, then $\mathcal{N}\in \mathrm{Coh}_{f-\mathsf{NC}}(\mathcal{D}_Y).$ Conversely, every $\mathcal{N}\in \mathrm{Coh}_{f-\mathsf{NC}}(\mathcal{D}_Y)$ is locally of the form $\bigoplus_i \mathcal{D}/\mathcal{D}\cdot P_i$ of a quotient of a finite direct sum of modules $\mathcal{D}_Y/\mathcal{D}_Y\cdot P_i$ for $P_i$ of CK-type.
\end{proposition}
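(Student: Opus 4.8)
The plan is to treat the two implications separately; both reduce to a computation of principal symbols together with the description of the characteristic variety of a coherent module as the zero locus of its symbol ideal for the order filtration.

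\emph{The direct implication.} Write $P=\sum_{j\le k}P_j(x,t,\partial_x)\,\partial_t^j$ in the coordinates $(x,t)$ with $Y=\{t=0\}$, so $\mathrm{ord}(P_j)\le k-j$ and $P_k$ is a holomorphic function with $P_k(x_0,0)\ne 0$. Since $\sigma_k(P)$ lies in the symbol ideal of $\mathcal{D}\cdot P$, one has $Char(\mathcal{N})\subseteq\{\sigma_k(P)=0\}$. Restricting $\sigma_k(P)$ to the conormal bundle $T_Y^*X=\{(x,0;0,\tau)\}$: every contribution $\sigma(P_j)\tau^j$ with $j<k$ carries a symbol homogeneous of positive degree in $\xi$ and so vanishes on $\{\xi=0\}$, leaving $\sigma_k(P)(x,0;0,\tau)=P_k(x,0)\,\tau^k$. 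On the neighbourhood of $x_0$ on which $P_k\ne 0$ this is nonzero whenever $\tau\ne 0$, so $Char(\mathcal{N})\cap T_Y^*X$ is contained in the zero section there; this is exactly the non-characteristic condition of Definition~\ref{Definition: NC} for the embedding $f:Y\hookrightarrow X$ (applied to $f_\pi^{-1}$). Hence $\mathcal{N}\in\mathrm{Coh}_{f-\mathsf{NC}}(\mathcal{D}_Y)$, globally if $P_k$ is nowhere-vanishing on $Y$.

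\emph{The converse.} Fix $(x_0,0)\in Y$ and work on a neighbourhood. By coherence, $\mathcal{N}$ is generated by finitely many sections $e_1,\dots,e_b$, so $\mathcal{N}$ is a quotient of $\bigoplus_i\mathcal{D}/\mathrm{Ann}(e_i)$, and each $\mathcal{D}\cdot e_i\cong\mathcal{D}/\mathrm{Ann}(e_i)$ is a \emph{sub}module of $\mathcal{N}$, hence coherent and, by the behaviour of characteristic varieties in short exact sequences, satisfies $Char(\mathcal{D}/\mathrm{Ann}(e_i))\subseteq Char(\mathcal{N})$; in particular $f$ remains non-characteristic for it. The decisive point is that $Y$ has codimension one, so the fibre of $T_Y^*X\setminus 0$ over $x_0$ is a single $\mathbb{C}^\times$-line $(x_0,0;0,\tau)$. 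Since this line avoids $Char(\mathcal{D}/\mathrm{Ann}(e_i))$ and the latter is the zero locus of the symbol ideal $\sigma(\mathrm{Ann}(e_i))$, some $P_i\in\mathrm{Ann}(e_i)$ has $\sigma(P_i)(x_0,0;0,1)\ne 0$. But an operator of order $m_i:=\mathrm{ord}(P_i)$ whose full symbol is nonzero at $(x_0,0;\xi=0,\tau=1)$ must, when expanded in powers of $\partial_t$, have its $\partial_t^{m_i}$-coefficient a holomorphic function nonvanishing at $(x_0,0)$, while the lower coefficients automatically obey the bound $\le m_i-j$ (this is forced by $\mathrm{ord}(P_i)=m_i$); i.e. $P_i$ is already of Cauchy--Kovalevskaya type near $(x_0,0)$. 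Since $\mathcal{D}\cdot P_i\subseteq\mathrm{Ann}(e_i)$, we get surjections $\mathcal{D}/\mathcal{D}P_i\to\mathcal{D}/\mathrm{Ann}(e_i)$, and assembling over $i$ presents $\mathcal{N}$ locally as a quotient of $\bigoplus_i\mathcal{D}/\mathcal{D}P_i$ with all $P_i$ of CK-type.

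I do not expect a serious obstacle here; the care needed is concentrated in two places. First, everything is local around $(x_0,0)$, which is intrinsic to the notion of CK-type (the leading coefficient is only required invertible near the base point), so one must be careful to state the converse locally and to track the shrinking of neighbourhoods. Second, one must invoke cleanly the identification of $Char(\mathcal{D}/\mathrm{Ann}(e_i))$ with the zero locus of the symbol ideal of $\mathrm{Ann}(e_i)$ relative to the order filtration — this is what converts "the conormal line over $x_0$ is non-characteristic" into "there exists an operator in the annihilator whose leading $\partial_t$-coefficient is invertible near $(x_0,0)$" — together with the elementary observation that $\mathrm{ord}(P_i)=m_i$ forces the CK order bounds on the remaining $\partial_t$-coefficients. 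The remainder is bookkeeping with the exact sequences $0\to\mathcal{D}\cdot e_i\to\mathcal{N}$ and $\bigoplus_i\mathcal{D}/\mathcal{D}P_i\to\mathcal{N}$.
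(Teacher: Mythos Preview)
The paper states this proposition without proof; it is a classical fact in analytic $\mathcal{D}$-module theory (essentially Kashiwara's characterization of modules non-characteristic for a hypersurface, cf.\ \cite{KashAlgStudy}). Your argument is correct and is the standard one: for the direct implication you compute $\sigma_k(P)|_{T_Y^*X}=P_k(x,0)\tau^k$ and conclude via $Char(\mathcal{N})\subset\sigma_k(P)^{-1}(0)$; for the converse you pick local generators, use $Char(\mathcal{D}\!\cdot e_i)\subset Char(\mathcal{N})$ to transfer the NC condition to each cyclic piece, and then extract a CK-type operator from the annihilator by evaluating the symbol ideal at the conormal direction $(0,1)$. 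The only point worth tightening is the sentence ``some $P_i\in\mathrm{Ann}(e_i)$ has $\sigma(P_i)(x_0,0;0,1)\ne 0$'': what you are using is that every \emph{homogeneous} element of the graded ideal $\mathrm{gr}\,\mathrm{Ann}(e_i)$ is the principal symbol of some element of $\mathrm{Ann}(e_i)$, so non-membership in the zero locus produces such a $P_i$; this is straightforward but deserves one line. Otherwise the proof is complete and matches the classical route the paper is implicitly invoking.
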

\begin{proof}
It is enough to work locally. Assume $x_1,\ldots,x_n$ are chosen local coordinates in $X$ with $Y=\{x_n=0\}.$ Then let $\D_X'$ be the sub-ring of sections not containing $\partial_{x_n}.$ Since $T_Y^*X=\{x_n=\xi'=0\},$ any cyclic $\D
$-module with $P$ of CK-type belongs to (\ref{eqn: NC-Cat}). Conversely, if $y=(x',0)\in Y,$ the non-characteristic hypothesis on $\M$ imlies each element of the stalk $\M_y$ is annihilated by a germ of a CK-operator. This implies the result since $\M$ is coherent,.
\end{proof}

We may reformulate this notion for $\D$-algebraic nonlinear PDEs. To this end, consider $\IAB,$ over a complex analytic manifold $X$ and let $p:Spec_{\D_X}(\mathcal{A})\rightarrow X$ be the corresponding $\D$-space projection (see Proposition \ref{Jet Representability} below). Suppose there is a coordinate system $x=(x_1,\ldots,x_n)$ on $X$ among which we have a distinguished coordinate $\tau$ defining a sub-manifold $Y=\{\tau=0\}$ of $X$ for which we can find coordinates $u^{\alpha}=(u^1,\ldots,u^m)$ in $E$ (rather the fibers over $x$) such that our defining $\D_X$-ideal is given by
\begin{equation}
    \label{eqn: D-CK Ideals}
\mathcal{I}=span\big\{\psi^1,\cdots\psi^m\big| \psi^{\alpha}=\mathcal{C}(\partial_{\tau})^{\sigma_{\alpha}}u^{\alpha}+\mathsf{Q}_{\alpha},\hspace{2mm}\alpha=1,\ldots,m \big\}.
\end{equation}
where the terms $\mathsf{Q}$ are independent of jets of $u^{\alpha}$ of order $\geq \sigma_{\alpha}$ in the direction $\partial_{\tau}.$
\begin{definition}
A $\D_X$-ideal $\mathcal{I}$ which is generated around its pre-image $p^{-1}(x)$ of the form (\ref{eqn: D-CK Ideals}) is said to be $\D_X$-\emph{Cauchy-Kovaleskaya}. A $\D$-algebraic non-linear \textsc{pde}, $\IAB$, over $X$ is $\D_X$-Cauchy-Kovalevskaya if $\mathcal{I}$ is so.
\end{definition}
In Proposition \ref{D-CK-NC} of §\ref{sssec: D-Geometric Microcharacteristics} we will provide an analog of Proposition \ref{CK-type and NC} for $\D$-CK operators.

\begin{proposition}
If $\eta:X\rightarrow C$ is fibered over a one-dimensional manifold $C,$ and $\mathcal{I}$ is a $\D_{X}$-ideal which is $\D$-Cauchy-Kovalevskaya, then $\mathcal{O}_{Z^{\infty}}$ is a commutative $\D_{X/C}$-algebra, where
$\D_{X/C}:=\{P\in\D_X | \big[P,\eta^{-1}\mathcal{O}_c\big]=0\big\}.$
\end{proposition}

\subsection{Non-linear Microlocal Analysis}
\label{ssec: Non-linear Microlocal Analysis}
In this subsection we develop the necessary ingredients to discuss microlocalization as it pertains to $\D$-algebraic ideal sheaves. These tools are necessary to formulate and prove the affine non-linear propagation theorem (with polynomial non-linearities).

\subsubsection{Module categories and tangent spaces}
\label{ModulCatsTangents} We study sheaves (more generally, complexes) on $\D$-spaces with natural differential structure, encoded by the action of a sheaf of relative differential operators.

\begin{definition}
\label{definition: Total DOS}
Consider an $X$-flat $\D$-space $\EQ$, with structure map $\pi_{\infty}:Z\rightarrow X.$ The sheaf of \emph{total differential operators} on $Z$ is
$\D_{\EQ}:=\mathcal{O}_{\EQ}\otimes_{\pi_{\infty}^{-1}\mathcal{O}_X}\pi_{\infty}^{-1}\D_X.$
\end{definition}
When $Z=\mathrm{Spec}_{\D_X}(\A)$ is an affine $\D$-space e.g. $\A=\mathcal{O}_{J_X^{\infty}E}$, we may identify this sheaf with $\A[\D_X]:=\A \otimes_{\mathcal{O}_X}\D_X.$
There are natural symmetric monoidal closed categories of sheaves of left/right $\mathcal{A}[\D_X]$-modules,
$$\big(\mathrm{Mod}(\mathcal{A}[\D_X]),\otimes_{\mathcal{A}},\mathcal{H}\mathrm{om}_{\mathcal{A}}\big),\hspace{1mm} \big(\mathrm{Mod}(\mathcal{A}[\mathcal{D}]^{op}),\otimes_{\mathcal{A}^r},\mathcal{H}om_{\mathcal{A}^r}\big),$$
with monoidal units $\mathcal{A}$ and $\mathcal{A}^r,$ respectively. 
The appropriate duality operation coming from $\mathcal{A}[\D_X]$-linearity\footnote{Note this is \emph{not} $\mathcal{H}\mathrm{om}_{\mathcal{A}}(\mathcal{M},\mathcal{A}).$}; \emph{local/inner duality} is given by the functor
\begin{equation}
    \label{eqn: Local duality}
    (-)^{\circ}:\mathrm{Mod}(\mathcal{A}[\D_X])\rightarrow \mathrm{Mod}(\mathcal{A}[\D_X])^{\mathrm{op}},\hspace{3mm}
    \mathcal{M}^{\circ}:=\mathcal{H}\mathrm{om}_{\mathcal{A}[\D_X]}\big(\mathcal{M},\mathcal{A}[\D_X]\big),
    \end{equation}
but notice this produces a right $\mathcal{A}[\D_X]$-module.
\begin{definition}
\label{Defin: Inner Verdier Duality}
The functor of \emph{inner Verdier duality} is $D_{\mathcal{A}}^{ver}:=(-)^{\ell}\circ (-)^{\circ},$ i.e.
\begin{equation}
    \label{eqn: Inner duality}
    D_{\mathcal{A}}^{ver}:\mathrm{Mod}\big(\mathcal{A}[\D_X]\big)^{op}\rightarrow \mathrm{Mod}\big(\mathcal{A}[\D_X]\big),\hspace{2mm} (\mathcal{M}^{\circ})^{\ell}:=\mathcal{H}\mathrm{om}_{\mathcal{A}[\D_X]}(\mathcal{M},\mathcal{A}[\D_X])^{\ell}.
\end{equation}
\end{definition}

The canonical projection $p:\mathrm{Spec}_{\D_X}(\mathcal{A})\rightarrow X,$
gives a natural sequence
\begin{equation}
    \label{eqn: Kahler Sequence}
    0\rightarrow p^*\Omega_{E/X}^1\rightarrow \Omega_{\EQ/X}^1\rightarrow \Omega_{\EQ/E}^1\rightarrow 0,
\end{equation}
where $\Omega_{\mathcal{A}}^1$ is obtained by the usual K\"ahler construction, adapted to the $\D$-geometric setting \cite{BD2004}.

It represents the functor of $\D$-linear derivations whose corresponding tangent sheaf is denoted $\Theta_{\mathcal{A}}.$ 
From universal constructions, there is a universal de Rham differential,
$d:\mathcal{A}\rightarrow \Omega_{\mathcal{A}}^1,$ often called the `vertical derivative.' The corresponding de Rham algebra $\Omega_{A}^*:=\mathrm{Sym}\big(\Omega_{\mathcal{A}}^1[1]\big),$ and its $\D$-module de Rham complex give a sheafified version of the variational bi-complex of PDEs \cite{KSY2}.

When $\mathcal{A}=\mathcal{O}(\mathrm{Jets}^{\infty}E),$ then there we have an isomorphism
$\Omega_{\mathcal{A}/X}^1\simeq \mathcal{A}[\D_X]\otimes p_{\infty}^*\Omega_{E/X}^1,$ and an isomorphism of 
$\mathcal{A}^r[\D_X^{\mathrm{op}}]$-modules,
$\Theta_{\mathcal{A}}\rightarrow \big(\pi_{\infty,0}^*\Theta_{E/X}\big)\otimes\mathcal{A}^r[\D_X^{\mathrm{op}}].$
In particular, when $E=X\times M$ then $\mathcal{D}er_{\D_X}(\mathcal{A})\simeq \Theta_M\otimes_{\mathcal{O}_M}\mathcal{A}.$

\begin{definition}
\label{D smooth definition}
A $\D$-algebra is said to be \emph{$\D$-smooth} if $\Omega_{\mathcal{A}}^1$ is a projective $\mathcal{A}$-module of finite $\mathcal{A}[\mathcal{D}]$-presentation and $\mathcal{A}$ is such that there exits a $\mathcal{O}_X$-module of finite type $\mathcal{M}$ and an ideal $\mathcal{K}\subset \mathrm{Sym}_{\mathcal{O}_X}(\mathcal{M})$ such that there exists a surjection $\mathrm{Jet}^{\infty}\big(\mathrm{Sym}_{\mathcal{O}_X}(\mathcal{M})/\mathcal{K}\big)\rightarrow \mathcal{A}$ whose kernel is $\D_X$-finitely generated. A $\D$-scheme is of \emph{finite-type} if of the form $Spec_{\D}\big(\mathrm{Sym}(\mathcal{D}\otimes \mathcal{E})/\mathcal{I}\big),$ for a coherent sheaf $\mathcal{F}$ and $\D$-ideal $\mathcal{I}.$
\end{definition}

\begin{proposition}
\label{CModules in D geometry lemma}
Let $\mathcal{A}=\mathcal{O}(\mathrm{Jet}^{\infty}(E)\big)$ with $\pi_{\infty}:Spec_{\D}(\mathcal{A})\rightarrow X.$ Then
$\mathcal{A}\subseteq \mathcal{A}[\D_X]$ is a sub-algebra and $\Theta_X$ is a Lie sub-algebra.
Moreover, we have an associative unital algebra morphism $\D_X\rightarrow \mathcal{A}[\D_X] P \mapsto 1_{\mathcal{A}}\otimes P.$
If $\mathcal{L}_1,\mathcal{L}_2$ are vector bundles we have an $\mathcal{A}$-module isomorphism identifying $\mathcal{A}\otimes_{\mathcal{O}_X}\mathrm{Diff}(\mathcal{L}_1,\mathcal{L}_2)$ with total differential operators acting from $\pi_{\infty}^*\mathcal{L}_1\rightarrow \pi_{\infty}^*\mathcal{L}_2.$ 
\end{proposition}
There are obvious embeddings $\mathcal{A}\hookrightarrow \mathcal{A}[\D_X], a\mapsto a\otimes 1_{\mathcal{O}_X}$ and $\Theta_X\hookrightarrow \mathcal{A}[\D_X],\theta \mapsto 1_{\mathcal{A}}\otimes \theta$ which are morphisms of associative unital algebras and Lie algebras, respectively.

\subsubsection{Filtrations.}
Consider the sheaf of rings given by Definition \ref{definition: Total DOS}, in the case of the jet-scheme i.e. $\A[\D].$ There is a canonical double filtration combining the standard order filtrations on $\mathcal{A}$ and on $\D_X.$ The total filtration, defined for every open subset $V$ of $X$ is denoted by 
$$\mathrm{Fl}_{\mathrm{Tot}}^p\big(\mathcal{A}[\D_X](V)\big):=\bigcup_{p=k+\ell}\mathrm{Fl}^k\mathcal{A}(V)\otimes_{\mathcal{O}_X(V)}\mathrm{Fl}^{\ell}\D_{X}(V).$$
Put $\mathrm{Fl}_{\mathrm{Tot}}\mathcal{A}[\D_X]:=\lbrace{\mathrm{Fl}_{k,\ell}\mathcal{A}[\D_X]\rbrace}_{k,\ell\in\mathbb{Z}\times\mathbb{Z}},$ and interpret 
sections of $\mathrm{Fl}^{k,\ell}\mathcal{A}[\D_X]$ as differential operators of order $\leq \ell$ with coefficients in $k$-jets.

For a section $P\in\Gamma(U,\D_X),$ we may consider $\widehat{P}\in \Gamma(U,\mathcal{A}[\D_X]),$ and call it the \emph{lift}. It is obtained in coordinates, roughly speaking, by replacing partial derivatives by total derivatives. Operators between bundles or coherent sheaves also can be lifted in the following sense (multi-differential operators are defined similarly). That is, for bundles $\mathcal{E},\mathcal{F}$ there is an isomorphism
$$\gamma:\mathcal{A}_X^{\ell}\otimes_{\mathcal{O}_X}\D_X(\mathcal{E},\mathcal{F})\simeq \mathcal{A}^{\ell}[\mathcal{D}](p_{\infty}^*\mathcal{E},p_{\infty}^*\mathcal{F}),$$
defined by 
$(f\otimes\mathsf{Q})(e):=f\cdot \big(\mathsf{Q}(e)\circ p_{\infty}\big),e\in \mathcal{E},\text{ with }\gamma:\big(f(x,u_{\sigma}^{\alpha})\otimes\mathsf{Q}\big)\mapsto (f(x,u_{\sigma}^{\alpha})\cdot_{\mathcal{A}} \hat{\mathcal{Q}}),$ where the lift $\mathsf{Q}\in \D_X(\mathcal{E},\mathcal{F})$ is defined by
$$\hat{\mathsf{Q}}:p_{\infty}^*(\mathcal{E})\rightarrow p_{\infty}^*(\mathcal{F}),\hspace{1mm} (j_{\infty}\varphi)^*\big(\hat{\mathsf{Q}}s_{\infty}\big)=\mathsf{Q}\big((j_{\infty}\varphi)^*s_{\infty}\big),$$
where $s_{\infty}\in \mathrm{Sect}(Jet_X^{\infty}E,p_{\infty}^*\mathcal{E})$ and $\varphi\in \mathrm{Sect}(X,E).$ See Proposition \ref{Lifting-type lemma} below.  

We use the lift of the Hamiltonian isomorphism induced by the symplectic structure on $T^*X.$

\begin{construction}[Lifted Hamiltonian Isomorphism]
\label{cons: Hamiltonian Lift}
    \normalfont For $j:U\hookrightarrow X$ an open subset recall that
$\xi_i=\sigma_1(\partial_i)=\partial_i\text{ mod }\mathcal{O}_U$
so $\mathrm{Gr}\D_U\simeq \mathcal{O}_U[\xi_1,\ldots,\xi_n].$ By Proposition \ref{Embedding of T^*X} below, the associated graded $gr^{k}\mathcal{A}^{\ell}[\D_X]|_U\simeq \bigoplus_{|\alpha|=k}(j^{-1}\mathcal{A})^{\ell}\Pi^{\alpha},$ (i.e. total symbols) is a polynomial ring $\mathcal{A}^{\ell}\big[\Pi_1,\ldots,\Pi_n],\hspace{1mm}\Pi_i=\sigma_i^{\mathcal{C}}(\mathcal{C}(\partial_i)),$ with $\sigma^{\mathcal{C}}$ the symbol map. 
The induced Poisson bracket is
$\{F,G\}_{\mathcal{C}}:=\partial_{\Pi_i}FD_i(G)-\partial_{\Pi_i}GD_i(F).$

If $P$ is a microdifferential operator on $U\subset T^*X,$ with characteristic variety $\mathrm{Char}(P)$, and $V$ is a closed conic of $T^*X,$ the bicharacteristic curves of $P$ are the integral curves of the vector field 
$H_{\sigma(P)}=\partial_{\xi_j}(\sigma(P))\partial_{x_j}-\partial_{x_j}\sigma(P)\partial_{\xi_j}.$

Denote the horizontal lift of this vector field as
$\widehat{H}_{\sigma(P)}.$ For a function $\varphi$ on an open subset $U$ of $T^*X$, we denote the image of the vector field $H(\varphi)$ on $T^*X$ into the bundle $T_VT^*X$ by $\overline{H}(\varphi),$ and the
horizontal lift as $\widehat{\overline{H}(\varphi)}.$ 
\end{construction}

\subsubsection{$\D$-Algebraic Microlocalization.}
\label{ssec: D-Algebraic Micro}
Consider a $\D$-geometric PDE, $\EQ=\mathsf{Spec}_{\D_X}(\mathcal{B})$, induced by $\mathcal{I}\rightarrow \mathcal{A}\rightarrow\mathcal{B}.$ There is an induced filtration on $\mathcal{B}$, determined by $\mathcal{A}$. Consider the projection $p_{\infty}:\EQ\rightarrow X$ and the space $T^*(X,\EQ):=\pi_{\infty}^*(T^*X)$,  defined via the pull-back diagram
\[
\begin{tikzcd}
T^*(X,\EQ)\arrow[d] \arrow[r] & T^*X\arrow[d,"\pi^X"]
\\
\EQ\arrow[r,"p_{\infty}"] & X
\end{tikzcd}
\]

The induced projection $\pi_{\infty}^{\EQ}:T^*(X,\EQ)\rightarrow \EQ,$ will play the role of $\pi^{X}:T^*X\rightarrow X$ and we set 
$\pi_{\infty}^X:=p_{\infty}\circ \pi_{\infty}^{\EQ}:T^*(X,\EQ)\rightarrow X.$ Observe that 
$T^*(X,\EQ)\simeq p_{\infty}^*T^*X\simeq \EQ\times_X T^*X,$
and
\begin{equation}
\label{eqn: OT*(X,Z)}
\mathcal{O}_{T^*(X,\EQ)}:=\mathcal{O}_{p_{\infty}^*T^*X}\simeq \mathcal{O}_{\EQ}\otimes_{\mathcal{O}_X}\mathcal{O}_{T^*X}.
\end{equation}
Ignoring the filtration on $\EQ$, the sheaf of commutative algebras (\ref{eqn: OT*(X,Z)}) has the induced filtration 
$\mathrm{Fl}^k\mathcal{O}_{T^*(X,\EQ)}:=\mathcal{O}_{\EQ}\otimes_{\mathcal{O}_X}\mathrm{Fl}^k\mathcal{O}_{T^*X}.$ 
\begin{remark}
Note the filtration on $Z$ is the one coming from the $\D$-space inclusion $i:\EQ\hookrightarrow \mathrm{Jet}^{\infty}(\mathcal{O}_E^{\mathrm{alg}}),$ by putting $\mathrm{Fl}^k\mathcal{O}_{\EQ}:=\mathrm{Fl}^k\big(\mathcal{O}_{\mathrm{Jet}^{\infty}(\mathcal{O}_E^{\mathrm{alg}})}|_{\EQ}\big).$
\end{remark}
The following is clear.
\begin{proposition}
\label{Embedding of T^*X}
There is an identification
$\mathrm{Spec}_X\big(gr\mathcal{A}[\D_X]\big)\simeq T^*Spec_{\D_X}(\mathcal{A}),$ and there exists an embedding 
$T^*X\hookrightarrow T^*Spec_{\D_X}(\mathcal{A}),$ which at the level of algebras is a morphism of Poisson algebras.
\end{proposition}
The following characterizes and serves as the definition of the characteristic variety in the non-linear setting. The proof of the following and the integrability of the non-linear characeristic variety can be found in \cite[\S.~3]{KS1}.
\begin{proposition}
\label{prop: Char inclusions}
Let $\EQ$ be an $X$-flat algebraic $\D_X$-space. Then:
\begin{itemize}
 \item 
There is an induced good filtration $F_{ord}^k(\D_{\EQ}):=\{\D_{\EQ}^{\leq k}|k\geq 0\},$ and an isomorphism of graded algebras 
$Gr^{F_{ord}}(\D_{\EQ})\simeq\mathcal{O}_{T^*(X,\EQ)};$
\item The module $\Omega_{\EQ/X}^1$ is canonically filtered via 
$\mathrm{Fl}^k\Omega_{\EQ/X}^1=\big\{\mathcal{O}_{\EQ}\otimes_{\mathcal{O}(F^k\EQ)}\Omega_{F^k\EQ/X}^1\big\}_{k\geq 0},$
and generated by $\D_{\EQ}$ and $\mathcal{O}_{\EQ}\otimes_{\mathcal{O}_{E}^{alg}}\Omega_{E/X}^1.$
\end{itemize}
There is a well-defined characteristic variety 
$$\mathcal{C}har_{\D}^1(\EQ):=supp\big(Gr(\Omega_{\EQ/X}^1)\big),$$ given by the support of a $Gr(\D_{\EQ})$-module.
Moreover, if $f:\EQ\rightarrow \mathcal{Z}$ is a $\D_X$-isomorphism between $\D_X$-smooth schemes there is an isomorphism in $f^*\mathcal{O}_{\mathcal{Z}}[\D_X]$-modules, 
    $f^*\Omega_{\mathcal{Z}/X}^1\simeq\Omega_{\EQ[k]/X}^1.$ In particular, the characteristic varieties are invariant under $\D$-isomorphism
    $\mathcal{C}har_{\D}(\mathcal{Z})\simeq \mathcal{C}har_{\D}(\EQ).$
\end{proposition}

For a vector bundle 
$E\rightarrow X$ and $\mathcal{F}$ a coherent sheaf on $X$, denote by $p_{\infty}:\mathrm{Jet}^{\infty}(\mathcal{O}_E^{\mathrm{alg}})\rightarrow X,$ the canonical projection from the algebraic infinite-jet $\D_X$-scheme. 
For any $U\subset X$, with canonical open chart $p_{\infty}^{-1}(U)$ of $\mathrm{Jet}^{\infty}\big(\mathcal{O}_E^{\mathrm{alg}}\big),$ then 
$$\Gamma\big(p_{\infty}^{-1}(U),p_{\infty}^*\mathcal{F}\big)\simeq \Gamma(U,\mathcal{F})\otimes_{\mathcal{O}_X|_U}\mathcal{O}\big(\mathrm{Jet}^{\infty}(\mathcal{O}_E^{\mathrm{alg}})|_U.$$

Denote the pull-back by $p_{\infty}^*\mathcal{F}\cong \mathcal{F}\times_X\mathrm{Jet}^{\infty}(\mathcal{O}_E^{\mathrm{alg}})\rightarrow \mathrm{Jet}^{\infty}(\mathcal{O}_E^{\mathrm{alg}}).$ 
\begin{proposition}
Let $\eta$ be the bundle map for $\mathcal{F}$ such that $\eta_*\mathcal{F}$ is a coherent $\mathcal{O}_X$-module. Then the interpretation of $\mathrm{Jet}^{\infty}(\eta_*\mathcal{F})\times_X\mathrm{Jet}^{\infty}(\mathcal{O}_E^{\mathrm{alg}})$ as the bundle of algebraic infinite horizontal jets of $p_{\infty}^*\mathcal{F}$ is valid, denoted by $\overline{\mathrm{Jet}}^{\infty}\big(p_{\infty}^*\mathcal{F}\big).$
\end{proposition}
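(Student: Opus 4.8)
\end{proposition}

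The plan is to prove this locally on $X$ and to recognise both sides as the object corepresenting the same functor of \emph{total} (equivalently, $\mathcal{C}$-)differential operators out of $p_{\infty}^*\mathcal{F}$. First I would fix an open $U\subseteq X$ over which $E$ and $F$ trivialise and over which the algebraic infinite jet $\mathcal{D}_X$-schemes are the polynomial $\mathcal{D}_X$-algebras with the fibre coordinates $u_{\sigma}^{\alpha}$ and $v_J^B$ used above, writing $\mathcal{A}:=\mathcal{O}(\mathrm{Jet}^{\infty}(\mathcal{O}_E^{\mathrm{alg}}))$ equipped with its total derivatives $D_i$ from (\ref{eqn: D-Action}). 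By construction the horizontal jet bundle $\overline{\mathrm{Jet}}^{\infty}(p_{\infty}^*\mathcal{F})$ over $\mathrm{Jet}^{\infty}(\mathcal{O}_E^{\mathrm{alg}})=\mathrm{Spec}_{\mathcal{D}_X}(\mathcal{A})$ is the object corepresenting $\mathcal{G}\mapsto\overline{\mathrm{Diff}}(p_{\infty}^*\mathcal{F},\mathcal{G})$, the functor of $\mathcal{C}$-differential operators out of $p_{\infty}^*\mathcal{F}$; so the task is to identify this functor with $\mathcal{H}\mathrm{om}_{\mathcal{A}}\big(\mathcal{A}\otimes_{\mathcal{O}_X}\mathrm{Jet}^{\infty}(\eta_*\mathcal{F}),-\big)$ and then to observe that $\mathcal{A}\otimes_{\mathcal{O}_X}\mathrm{Jet}^{\infty}(\eta_*\mathcal{F})$ is the algebra of functions on $\mathrm{Jet}^{\infty}(\eta_*\mathcal{F})\times_X\mathrm{Jet}^{\infty}(\mathcal{O}_E^{\mathrm{alg}})$ over the second projection.

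To build the identification I would begin from the jet--differential-operator adjunction of Proposition \ref{Jet-adjunction} on $X$, giving $\mathrm{Diff}_X(\eta_*\mathcal{F},\mathcal{G})\simeq\mathcal{H}\mathrm{om}_{\mathcal{O}_X}\big(\mathrm{Jet}^{\infty}(\eta_*\mathcal{F}),\mathcal{G}\big)$ for coherent $\mathcal{G}$, then tensor with $\mathcal{A}$ over $\mathcal{O}_X$ and invoke Proposition \ref{CModules in D geometry lemma}, which identifies $\mathcal{A}\otimes_{\mathcal{O}_X}\mathrm{Diff}_X(\mathcal{L}_1,\mathcal{L}_2)$ with total differential operators $p_{\infty}^*\mathcal{L}_1\to p_{\infty}^*\mathcal{L}_2$. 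Commuting $\mathcal{A}\otimes_{\mathcal{O}_X}(-)$ past the internal hom out of a coherent module — legitimate by coherence of $\mathcal{F}$ and the section formula $\Gamma(p_{\infty}^{-1}(U),p_{\infty}^*\mathcal{F})\simeq\Gamma(U,\mathcal{F})\otimes_{\mathcal{O}_X|_U}\mathcal{O}(\mathrm{Jet}^{\infty}(\mathcal{O}_E^{\mathrm{alg}})|_U)$ from the previous proposition, which also lets the local isomorphisms be glued — produces $\overline{\mathrm{Diff}}(p_{\infty}^*\mathcal{F},p_{\infty}^*\mathcal{G})\simeq\mathcal{H}\mathrm{om}_{\mathcal{A}}\big(\mathcal{A}\otimes_{\mathcal{O}_X}\mathrm{Jet}^{\infty}(\eta_*\mathcal{F}),\mathcal{A}\otimes_{\mathcal{O}_X}\mathcal{G}\big)$, with universal horizontal prolongation the lift (along the operators $D_{\sigma}$) of the ordinary jet prolongation $\eta_*\mathcal{F}\to\mathrm{Jet}^{\infty}(\eta_*\mathcal{F})$. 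Since $\mathcal{O}(\mathrm{Jet}^{\infty}(F))\simeq\mathrm{Jet}^{\infty}(\mathcal{O}_F)$ and fibre product over $X$ corresponds to $\otimes_{\mathcal{O}_X}$ at the level of (co)algebras, the corepresenting object $\mathcal{A}\otimes_{\mathcal{O}_X}\mathrm{Jet}^{\infty}(\eta_*\mathcal{F})$ is exactly $\mathcal{O}\big(\mathrm{Jet}^{\infty}(\eta_*\mathcal{F})\times_X\mathrm{Jet}^{\infty}(\mathcal{O}_E^{\mathrm{alg}})\big)$ as an $\mathcal{A}$-scheme via the second projection, which is the underlying identification asserted.

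The hard part — and the step I expect to be the main obstacle — is checking that this isomorphism respects the \emph{horizontal jet structure} and not merely the underlying $\mathcal{A}$-schemes: $\overline{\mathrm{Jet}}^{\infty}(p_{\infty}^*\mathcal{F})$ carries a canonical Cartan connection, equivalently the structure of a relative $\JetX$-coalgebra over $X$ whose comultiplication is the horizontal analogue of $\Delta_{\EuScript{E}}$ in (\ref{eqn: Comultiplication}), and I must show the identification intertwines it with the coalgebra structure transported from $\mathrm{Jet}^{\infty}(\eta_*\mathcal{F})$ along $\mathcal{A}\otimes_{\mathcal{O}_X}(-)$. On the chart this reduces to the identities $D_i\circ D_{\sigma}=D_{\sigma+1_i}$ together with the claim that the $\mathcal{D}_X$-algebra structure on $\mathcal{A}\otimes_{\mathcal{O}_X}\mathrm{Jet}^{\infty}(\eta_*\mathcal{F})$ induced termwise from the two factors agrees with the one defined intrinsically by horizontal prolongation — i.e. the chain rule for total derivatives applied to pullbacks. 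This is the module-valued, non-linear avatar of the fact, already used in the proof of Proposition \ref{Embedding of T^*X}, that lifting a partial-derivative operator to a total-derivative one is a homomorphism of graded Lie algebras $\mathrm{Gr}(\mathcal{D}_X)\hookrightarrow\mathrm{Gr}(\mathcal{A}[\mathcal{D}_X])$; once this comonad-homomorphism property is in place the local isomorphisms become coalgebra isomorphisms automatically and glue to the desired identification of $\overline{\mathrm{Jet}}^{\infty}(p_{\infty}^*\mathcal{F})$ with $\mathrm{Jet}^{\infty}(\eta_*\mathcal{F})\times_X\mathrm{Jet}^{\infty}(\mathcal{O}_E^{\mathrm{alg}})$.
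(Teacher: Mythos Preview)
The paper states this proposition without proof, so there is no argument to compare against. Your approach via the universal property of horizontal jets is sound and is the natural way to justify the identification: both sides corepresent the functor of $\mathcal{C}$-differential operators out of $p_{\infty}^*\mathcal{F}$, and Proposition~\ref{CModules in D geometry lemma} is exactly the bridge between ordinary and total differential operators that you need.

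Two minor remarks. First, Proposition~\ref{Jet-adjunction} as stated in the paper is an adjunction at the level of commutative \emph{algebras}, not the module-level duality $\mathrm{Diff}_X(\eta_*\mathcal{F},\mathcal{G})\simeq\mathcal{H}\mathrm{om}_{\mathcal{O}_X}(\mathrm{Jet}^{\infty}(\eta_*\mathcal{F}),\mathcal{G})$ you invoke; the latter is of course standard (it is the defining property of the jet module), but you should cite it on its own rather than attribute it to Proposition~\ref{Jet-adjunction}. Second, the statement is partly definitional --- it \emph{introduces} the notation $\overline{\mathrm{Jet}}^{\infty}(p_{\infty}^*\mathcal{F})$ for this fibre product --- so the ``hard part'' you flag (compatibility with the Cartan connection, equivalently the $\JetX$-comonad structure) is somewhat more than the bare statement demands. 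The paper only uses this object through the symbol isomorphism of Proposition~\ref{Lifting-type lemma}, for which the underlying $\mathcal{A}$-module identification already suffices. That said, your extra verification is exactly the right justification for the name ``horizontal jet bundle'', and the reduction to the chain rule for the total derivatives $D_i$ is correct.
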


For $P\in \Gamma(U,\D_X),$ take its lift $\widehat{P}\in \Gamma\big(p_{\infty}^{-1}(U),\mathcal{A}[\D_X]\big)$ denote by
$$\widetilde{\mathfrak{s}}_k(\widehat{P}):=(\pi_{\infty}^X)^{-1}\circ \mathcal{C}\mathfrak{s}_k(\widehat{P})\in\Gamma\big((\pi_{\infty}^X)^{-1}(U),\mathcal{O}_{T^*(X,\EQ)}(k)\big).$$
Coordinates on $T^*(X,\EQ)$, say over $\pi_{\infty}^{X,^{-1}}(U),$ are of the form $(u_{\sigma}^{\alpha},x,\xi).$  We can consider $\Omega\subset \pi_X^{-1}(U)$ in $T^*X$ and pullback
\[
\begin{tikzcd}
T_{\Omega}^*(X,\EQ)\arrow[d]\arrow[r]& \Omega\subset T^*X
\arrow[d]
\\
\EQ\arrow[r,"p_{\infty}"] & U\subset X
\end{tikzcd}
\]
Set
$\Omega_{\infty}:=\EQ\times_X \Omega\subset 
\big(\pi_{\infty}^X\big)^{-1}(U)\subset T^*(X,\EQ),$
and
$$\Gamma\big(\Omega_{\infty},\mathcal{O}_{T^*(X,\EQ)}(\star)\big):=\prod_{p\in\mathbb{Z}}\Gamma\big(\Omega_{\infty},\mathcal{O}_{T^*(X,\EQ)}(p)\big).$$
Functions on $\Omega_{\infty}$ are of the form $g=g(x_0,u_{\sigma}^{\alpha}(x_0),\xi_0)$ for a canonical coordinate chart $(x_0,\xi_0)$ of $\Omega\subset \pi_X^{-1}(U)$ in $T^*X.$

Consider coherent sheaves $\mathcal{F},\mathcal{G}$ of $\mathcal{O}_X$-modules. Let $\pi_{\infty}:\EQ\hookrightarrow \mathrm{Spec}_{\D_X}(\mathcal{A})\rightarrow X$ be the projection, with $\mathcal{A}=\mathcal{O}\big(\mathrm{Jet}^{\infty}(\mathcal{O}_E)\big),$ and consider the pullback diagrams
\[
\begin{tikzcd}
\pi_{\infty}^{\EQ,*}\mathcal{F}\arrow[d]\arrow[r]& \mathcal{F}\arrow[d]
\\
T^*(X,\EQ)\arrow[r,"\pi_{\infty}^{\EQ}"] & X
\end{tikzcd}\hspace{3mm}\text{ and }\hspace{3mm}
\begin{tikzcd}
\pi_{\infty}^{\EQ,*}\mathcal{G}\arrow[d]\arrow[r]& \mathcal{G}\arrow[d]
\\
T^*(X,\EQ)\arrow[r,"\pi_{\infty}^{\EQ}"] & X
\end{tikzcd}
\]

\begin{proposition}
\label{Lifting-type lemma}
If $\mathcal{A}$ is $\D$-smooth, there is a canonical isomorphism
$$
\mathcal{H}\mathrm{om}_{\mathcal{A}[\D_X]}\big(\mathrm{ind}_{\mathcal{A}[\D_X]}p_{\infty}^*(\mathcal{F}),\mathrm{ind}_{\mathcal{A}[\D_X]}p_{\infty}^*(\mathcal{G})\big)\rightarrow \mathcal{H}\mathrm{om}_{\mathcal{A}[\D_X]}\big(\mathrm{ind}_{\mathcal{A}[\D_X]}p_{\infty}^*(\mathcal{F}),\mathcal{A}[\D_X]\big)\otimes_{\mathcal{A}[\D_X]}\mathrm{ind}_{\mathcal{A}[\D_X]}^rp_{\infty}^*(\mathcal{G}),$$
of $\mathcal{A}[\D_X]$-modules compatible with filtrations.
\end{proposition}
Objects $F$ in Proposition \ref{Lifting-type lemma} are total differential operators from $p_{\infty}^*(\mathcal{F})$ to $p_{\infty}^*(\mathcal{G})$ and if $S:=\sigma(F)$ is its corresponding symbol, it is understood as a homomorphism
\begin{equation}
    \label{eqn: Microlocal pullback operator}   
F_S:\big(\pi_{\infty}^{\EQ}\big)^*\mathcal{F}\rightarrow \big(\pi_{\infty}^{\EQ}\big)^*\mathcal{G},
\end{equation}
of $\mathcal{O}_{T^*(X,\EQ)}$-modules. Then one might call an operator, understood as a morphism (\ref{eqn: Microlocal pullback operator}) \emph{$\mathcal{C}$-elliptic} if this map is an isomorphism. The operator of universal linearization of a non-linear PDE is a $\mathcal{C}$-differential operator, thus $\mathcal{C}$-ellipticity characterizes ellipticity of infinite-prolongations of non-linear PDEs.

\begin{remark}
We will provide a geometric characterization of ellipticity for nonlinear PDEs in §§ \ref{sssec: A Remark on Ellipticity}, below which is phrased in terms of the microlocal geometry of the derived characteristic varieties.
\end{remark}

\section{Derived non-linear PDEs via $\D$-geometry}
\label{sec: Derived D-NLPDES}
Let $X$ be a smooth projective $\mathbb{C}$-scheme. In this section we define derived (pre)stacks of solutions $\mathbb{R}\mathrm{Sol}_X(-)$ for generalized systems of PDEs $Z\subset q_{\DR,*}E$ via their homotopical functor of points, directly generalizing the functors (\ref{eqn: Classical D-Solutions}). 

Roughly speaking, they will be given by accessible $\infty$-functors (Definition \ref{Definition: D-PreStack})
$$\underline{\mathbb{R}\mathrm{Sol}}_{\D}:\mathsf{CAlg}_X(\D_X)\rightarrow \mathsf{Spc},$$
from an $(\infty,1)$-category of $X$-local sheaves of derived $\D$-algebras to that of spaces, defined as the homotopy localization of simplicial sets $L(\mathbf{sSet}).$ One can define $\mathsf{CAlg}_X(\D_X)$ as a homotopy localization of an associated model structure along its weak equivalences. This model structure was proven in 
\cite{dBPP18}, as well as \cite[Cor.~8.5]{PavSch}, which establishes the global model categorical approach in terms of algebras over an operad in $\D$-modules (left $\D$-modules in presheaves valued in complexes). In particular, the model structure is inherited from the local projective model structure of dg-$\D$-modules. A related stable model category of filtered $\D$-modules was proven in \cite{GwPa18}.
\begin{remark}
Alternatively, 
apply \cite[Prop. 7.1.4.11]{Lur17} to establish the cofibrantly generated combinatorial model structure on the category of commutative connective dg-algebras over $\mathscr{D}_X$, denoted $\cdga_{\D_X}$, for which we have an equivalence
$N_{dg}\big((\cdga_{\D_X})^{ cof}\big)[W^{-1}]\simeq \mathsf{CAlg}_X(\D_X),$
between the dg nerve of the sub-category of their cofibrant objects localized with respect to weak equivalences.
\end{remark}

We can model pre-stacks structured over these sites of homotopical algebras with differential structure via the local projective model structure on the category of simplicial presheaves on a Grothendieck site  \cite{Bla01}, further generalized in \cite[Thm. 3.4.1]{TV02}.

In the current paper and its sequel (see  \cite[Sect. 5-7]{KSY2}),  we prefer to work $\infty$-categorically to avoid model dependent arguments. Since $X$ is smooth and projective, we make use of \cite[Prop. 2.4.4, 4.4.2 and 4.7.3]{GR14} identifying $\mathsf{QCoh}(X_{\DR})$ with the derived category of the heart of its $t$-structure, as well as \cite[Lem. 1.1.6]{Gai11} which provides an equivalence of ind-coherent and quasi-coherent sheaves.
 
\subsubsection{}
We establish global geometric objects in derived geometry by gluing. We isolate a class with good deformation theory properties. This becomes essential in the study of graded mixed de Rham algebras, in the sense of \cite{CPTVV17}, which is the content of the sequel \cite{KSY2}.

Following \cite[Def.~2.1.1]{CPTVV17}, let $F\in \mathrm{FdStk}_k$ be any \emph{formal derived stack}. Recall that an object $F\in \mathrm{dSt}_k$ is \emph{nilcomplete} if for any $B\in\mathsf{CAlg}_k^{\leq 0},$ the natural map $F(B)\to \underset{k}{lim} F(B_{\leq k})$ is an equivalence, where $B_{\leq k}$ is the $k$-th Postnikov truncation of $B$. It is \emph{infinitesimally cohesive} if it sends pushouts of affine derived schemes to pull-backs, along square-zero extensions.
Moreover, $F\in \mathrm{FdSt}_k$ is \emph{almost affine} (resp. \emph{affine}) if $F_{red}:=i_!i^*(F)$ is an affine derived $k$-scheme (resp. almost affine) and there exists a cotangent complex $\mathbb{L}_F$ such that for each $\mathsf{Spec}_k(R)$-point, the $R$-dgmodule $\mathbb{L}_{F,u}$ is coherent and cohomologcally bounded.

In particular, if $F\in \mathrm{FdSt}_k^{aff}$, is an affine formal derived stack, there exists a global quasi-coherent cotangent complex \cite[Def.~1.4.1.5]{TV2}, such that for each $u:\mathsf{Spec}_k(R)\to F$, there is an equivalence of $R$-dgmodules,
\begin{equation}
    \label{eqn: Almostaffinecotangent}
u_{\mathsf{QCoh}}^*(\mathbb{L}_F)\simeq\mathbb{L}_{F,u}.
\end{equation}

\subsection{Functor of points derived $\D$-spaces}
In this subsection, we suppose that $X$ is a smooth projective variety over $\mathbb{C}.$ More generally, it is assumed to be $\D$-affine \cite{HT}.
\begin{remark}
    A smooth affine algebraic variety is $\D$-affine. Some non-affine varieties are $\D$-affine for example, projective spaces (see e.g. \cite[Thm.~1.6.5]{HT}).
\end{remark}

Let $\cdga_{\D_X}:=Comm\big(\mathsf{QCoh}(X_{\DR})^{\leq 0}\big)\simeq\mathrm{Comm}\big(\mathsf{Mod}_{\D_X}^{\leq 0}),$ be the category of commutative monoids in connective $\D_X$-modules, via the well-known equivalence between quasi-coherent sheaves on $X_{\DR}$ and complexes of left $\D_X$-modules on $X$ (see e.g, \cite{GR14}).

We call them \emph{derived $\D_X$-algebras}, considered with the model structure established in \cite{dBPP18}. Namely, let $W_{\cdga_{\D}}$ denote the distinguished class of morphisms provided by the weak-equivalences consisting of quasi-isomorphisms. We assume moreover that such derived $\D$-algebras are quasi-coherent over $X$ as $\mathcal{O}_X$-complexes by forgetting the connection. 

Set
$$\mathbf{dAff}_{\D_X}:=\big(\cdga\big)^{op},$$
as the \emph{affine derived $\mathcal{D}$-schemes over $X.$}
Objects $\mathsf{X}\in \mathbf{dAff}_{\D_X}$ may be intuitively understood as triples $(X,\mathsf{X},\mathcal{A}^{\bullet})$ where $X$ is a smooth complex analytic manifold, $\mathcal{A}^{\bullet}:=(\mathcal{A}^{\bullet},d_{\mathcal{A}})$ is a sheaf of differential graded commutative $\D_X$-algebras on $X$ with cohomologies $H_{\D}^i(\mathcal{A})$ for $i\leq 0,$ and $\mathsf{X}=\mathsf{Spec}_{\D_X}(\mathcal{A}^{\bullet})$ is an affine derived $\D_X$-scheme over $X$, such that  $H_{\D}^0(\mathcal{A})\in \mathrm{CAlg}_X(\D_X)$ (it is classical) and $H_{\D}^i(\mathcal{A}^{\bullet})\in \DG\big(H_{\D}^0(\mathcal{A}^{\bullet})\big).$
In particular we have a $\D_X$-linear multiplication 
$$m_0:H_{\D}^0(\mathcal{A}^{\bullet})\otimes_{\mathcal{O}_X}H_{\D}^0(\mathcal{A}^{\bullet})\rightarrow H_{\D}^0(\mathcal{A}^{\bullet}),$$ where $H_{\D}^i(\mathcal{A}^{\bullet})\in \DG\big(H_{\D}^0(\mathcal{A})\big).$ More generally, for $\mathcal{M}^{\bullet}\in \DG\big(\mathcal{A}^{\bullet}\big)$ with module multiplication $\eta,$ one has induced $\D_X$-linear morphisms
$$\psi^k:H_{\D}^k(\mathcal{A}^{\bullet})\otimes H_{\D}^0(\mathcal{M}^{\bullet})\rightarrow H_{\D}^k(\mathcal{M}^{\bullet}), $$
which are $H_{\D}^0(\mathcal{A}^{\bullet})$-bilinear.
Indeed, the $\D_X$-action commutes with taking cohomology classes since the differentials defining $H_{\D}^{\bullet}$ are already $\D_X$-linear.
If $X$ is smooth of dimension $n,$ the multiplication $m_0$ defines a commutative $\mathbb{C}$-algebra structure on de Rham cohomology sheaves $h\big(H_{\D}^0(\mathcal{A}^{\bullet})\big):=\Omega_X^n\otimes_{\D_X}H_{\D}^0(\mathcal{A}^{\bullet}),$ with $\Omega_X^n$ the sheaf of top forms.

This is a consequence of a more general statement relating algebraic structures in the $\mathcal{D}$-module setting to those in the $\mathbb{C}_X$-module setting via de Rham functors. In particular, local or $*$-operations denoted $P^*(-,-)$ on solution spaces of \textsc{nlpdes} corresponding to $\D$-algebras induce ordinary operations on pre-stacks of solutions e.g. de Rham cohomologies.

\begin{proposition}[\cite{BD2004},\cite{Paugam2014}]
\label{Lemma for * operations giving those in de Rham cohomologies}
The $*$-operations $\mathcal{P}_I^*$ induce usual operations in de Rham cohomology. In particular, a
$\mathsf{cdga}_{\D_X}$-resolution  $\epsilon:\mathcal{B}^{\bullet}\rightarrow \mathcal{O}_X,$ homotopically $\D_X$-flat, induces and equivalence
$$P_I^*\big(\{\mathcal{L}_i\},\mathcal{M}\big)\rightarrow \mathbb{R}\mathcal{H}om\big(\bigotimes_{i\in I}^{L}DR^r(\mathcal{L}_i),DR^r(\mathcal{M})\big).$$
\end{proposition}
Proposition \ref{Lemma for * operations giving those in de Rham cohomologies} extends to configuration spaces of points $\mathrm{Ran}(X)$ and is useful in practice, for instance, in describing for some complex of sheaves $\EuScript{F}^{\bullet}\in \mathsf{Shv}^!(\mathrm{Ran}(X))$ a canonical resolution $\widetilde{\EuScript{F}}.$ 

This is relevant for us since for any $\EuScript{M}\in \D_{Ran_X}^{\mathrm{op}}\mathrm{-mod},$ $DR(\EuScript{M})$ constitutes such a sheaf by the Riemann-Hilbert correspondence \cite{Kashiwara1984} and therefore admits such a resolution\footnote{Used in practical computations, for instance for chiral homology Section 3.1 \emph{op.cit.}} $\widetilde{DR}(\EuScript{M}).$

In particular, consider $\epsilon_{\EuScript{P}}:\EuScript{P}^{\bullet}\rightarrow \mathcal{O}_X$ a chosen $\D_X$-resolution of $\mathcal{O}_X$ with $\EuScript{P}^{>0}=0$ and with each $\EuScript{P}^i$ a module which is $\D_X$-flat. Then, $\EuScript{P}^{\otimes |I|}$ is $\D_{X^I}$-flat.

\begin{example}
\label{example: Resolutions for Curve}
Let $X$ be a projective curve. Then $\EuScript{P}^{\bullet}$ can be taken as the $2$-term resolution of $\mathcal{O}_X$ with 
$\EuScript{P}^0:=\mathrm{Sym}_{\mathcal{O}_X}^{>0}(\D_X),\EuScript{P}^{-1}:=\mathrm{Ker}(\epsilon_{\EuScript{P}}),$
where $\epsilon_{\EuScript{P}}(1_{\D_X})\mapsto 1_{\mathcal{O}_X}$ on the generator $1_{\D_X}\in\D_X\subset \EuScript{P}^0.$ Generally,
for $k>0$ one has
$$\epsilon_{\EuScript{P}}\big(\prod_{i=1}^kP_i(\partial_{z,i})\big)=\prod_{i=1}^k\big(P_i(\partial_{z,i})\cdot 1\big)\in\mathcal{O}_X, \prod_{i=1}^kP_i(\partial_{z,i})\in \mathrm{Sym}_{\mathcal{O}_X}^k(\D_X).$$
\end{example}
Example \ref{example: Resolutions for Curve} is adapted to higher dimensions.

\begin{example}
    \normalfont
Let $X$ be of dimension $n>1$ and let $\EuScript{P}^{\bullet}$ be a $\D_X$-resolution of $\mathcal{O}_X$ such that 
$\EuScript{P}^{\bullet}\xrightarrow{\simeq}\mathrm{Sym}^{>0}(\EuScript{T}^{\bullet}),$
as graded objects of $\mathrm{CAlg}_{\D_X},$ where $\EuScript{T}^{\bullet}$ is an $X$-locally projective graded $\D_X$-module in degrees $\leq 0.$
If $X$ is moreover quasi-projective, $\EuScript{T}^{\bullet}$ can be chosen as a direct sum
$\EuScript{T}^{\bullet}\simeq\bigoplus_{\alpha}\EuScript{N}_{\alpha}^{\bullet},$ where each $\D_X$-module $\EuScript{N}_{\alpha}$ is isomorphic to one of the form $\D_X\otimes\EuScript{L}_{\alpha}$ for a line bundle $\EuScript{L}_{\alpha}$ e.g.  $\EuScript{L}_{\alpha}$ is a  negative power of some ample line bundle.
\end{example}

The tautological embedding $i:\mathrm{Aff}_X^{cl}(\D_X)\hookrightarrow \mathbf{dAff}_{\D_X},$ admits an adjoint $(-)^{cl}$, given by taking $H_{\D}^0$.
We say $\mathcal{A}^{\bullet}$ is eventually co-connective if $H_{\D}^i\equiv 0$ for all $i\ll n$ for \emph{some} $n<0$ and denote their totality by $^{\ll}\mathbf{dAff}_{\D_X}.$ 
\subsubsection{Solutions with coefficients}
\label{sssec: Solutions with Coefficients}
We now describe the \emph{local solution functor}, extending the $\D_X$-module one $R\mathcal{H}om_{\D_X}(-,\mathcal{O}_X)$ to PDEs in total derivatives \cite{Krasilshchik1986}. This sheafifies the important class of PDEs appearing in jet-space geometry called $\mathcal{C}$-differential equations.\footnote{As pointed out to us by J. Krasil'shchik, most of the $\mathcal{C}$-differential equations one encounters are linear, and it may be interesting to try and find non-linear examples. We thank him for this interesting remark.}

Given a $\D_X$-algebra $\mathcal{A}$, we want to define solution functors with coefficients in $\mathcal{A}$.
Namely, we may introduce the $\mathbb{C}_X$-vector space of $\mathcal{A}^{\ell}[\mathcal{D}_X]$-solutions with values in an $\mathcal{A}[\mathcal{D}_X]$-module function space $\mathcal{F}$, as
\begin{equation}
\label{eqn: Local Solutions}
Sol_{\mathcal{A}}(-,\mathcal{F}):=\mathcal{H}\mathrm{om}_{\mathcal{A}[\mathcal{D}_X]}(-,\mathcal{F}).
\end{equation}
Objects $Sol_{\mathcal{A}}(\mathcal{N},\mathcal{F})$ have the structure of $\mathcal{A}^{\ell}$-vector spaces. There
is a rather direct analogy with usual $\D$-module solution functors (c.f. \ref{notate: D-Module Conventions}) by putting 
$\mathsf{D}:=\mathcal{A}[\mathcal{D}_X].$ Consider $\mathsf{P}\in \mathsf{D}^{q\times p}.$ Then $\mathcal{M}_{\mathsf{P}}:=\mathsf{D}^p/\mathsf{D}^q\cdot \mathsf{P}$ is a finitely presented left $\mathsf{D}$-module. If $\mathcal{F}$ is another left $\mathcal{A}[\mathcal{D}_X]$-module, then there is an isomorphism 
$$ker(\mathsf{P}-)=\{g=(g_1,\ldots,g_p)\in \mathcal{F}^p|\mathsf{P}(g)=0\}\simeq \mathcal{H}om_{\mathsf{D}}(\mathcal{M}_{\mathsf{P}},\mathcal{F}).$$

Functors (\ref{eqn: Local Solutions}) also describe solution spaces to equations for unknown variables $\varphi^1,\ldots,\varphi^{N_1}$ determined by $N_0$ operators in total derivatives
\begin{equation}
    \label{eqn: Linear CDiff Eqn}
\sum_{\ell=1}^{N_1}\Box_{ij}\varphi^{j}=0,\hspace{2mm} i=1,\ldots,N_0,\hspace{2mm} \Box_{ij}=\sum_{|\sigma|\geq 0}A_{\sigma}^{ij}(x,u_{\sigma})D^{\sigma},
\end{equation}
where $D^{\sigma}=\big(D_{x_1}\big)^{\sigma_1}\circ\cdots\circ \big(D_{x_n}\big)^{\sigma_n}$ and $D_{x_i}$ as in equation (\ref{eqn: D-Action}).
\begin{remark}
More generally, consider a space of variables $(x_1,\ldots,x_n,u_{\sigma}^{\alpha},v_{J}^B)$ for $\alpha=1,\ldots,m$ and $B=1,\ldots,r$ and the geometric object cut-out by the following complex analytic function in these arguments
$F(x_1,\ldots,x_n,u_{\sigma}^{\alpha},\ldots,v_{J}^B,\ldots)=0.$
They are understood in a more geometric language as sub-manifolds 
$\mathcal{E}_h^{\infty}\subset \mathrm{Jets}_h^{\infty}(n,m;r),$
in spaces of \emph{horizontal jets} in $n$ independent, $m$ dependent and $r$ horizontal dependent variables \cite{Krasilshchik1986},\cite{Vinogradov2001}; equations \eqref{eqn: Linear CDiff Eqn} encompass many classical objects e.g. 
horizontal vector fields, the horizontal de Rham differential $\overline{d}$,
the Spencer differential $\overline{S}$ and the universal linearizations. 
\end{remark}

The notions in §§ \ref{sssec: Solutions with Coefficients} have the following analogs in derived $\D_X$-geometry. Denote by $\DG(\mathcal{A})$ the category of $\mathcal{A}-$modules in complexes of $\D_X$-modules and recall $\mathcal{A}[\mathcal{D}_X]$ denotes $\mathcal{A}\otimes\mathcal{D}_X$.
\begin{definition}
\label{D Finite Type}
An eventually co-connective derived affine $\D_X$-scheme $\mathsf{X}=\mathsf{Spec}_{\D}(\mathcal{A}^{\bullet})$ is said to be:
\begin{itemize}
    \item \emph{$\D$-finite type}: if $H_{\D}^0(\mathcal{A})$ is a commutative $\D_X$-algebra of finite-type as in Definition \ref{D smooth definition} and a $\D$-algebraic PDE, with $H_{\D}^{-i}(\mathcal{A})\in\DG\big(H_{\D}^0(\mathcal{A})\big)$  finitely-generated $H_{\D}^0(\mathcal{A})[\mathcal{D}_X]$-modules for all $i$;
    
    \item \emph{Finitely $\D$-presented}: if $H_{\D}^0(\mathcal{A})$ is a finitely generated $\D_X$-algebra that is a $\D$-algebraic PDE with $H_{\D}^i(\mathcal{A})$ are finitely presented $H_{\D}^0(\mathcal{A})[\mathcal{D}_X]$-modules, for each $i.$ 
\end{itemize}

\end{definition}
\begin{remark}
Being $\D$-finitely generated, we may consider a local resolution of $H_{\D}^{-i}(\mathcal{A})$ by
$0\leftarrow H_{\D}^{-i}(\mathcal{A})\leftarrow \big(H_{\D}^0(\mathcal{A})\otimes^!\mathcal{D}_X\big)^{\oplus r_i},r_i\in\mathbb{Z}.$
Similarly, for those of finite $H_{\D}^0(\mathcal{A})[\mathcal{D}_x]$-presentation,
$$0\leftarrow H_{\D}^{-i}(\mathcal{A})\leftarrow \big(H_{\D}^0(\mathcal{A})\otimes^!\mathcal{D}_X\big)^{\oplus r_i}\leftarrow \big(H_{\D}^0(\mathcal{A})\otimes^!\mathcal{D}_X\big)^{\oplus s_i}.$$
\end{remark}

The derived functors of \eqref{eqn: Local Solutions} give higher $\mathcal{A}^{\bullet}[\mathcal{D}_X]$-solution functors 
$$R\mathcal{S}ol_{\mathcal{A}[\D]}(-,-):Ho\big(\DG(\mathcal{A})\big)^{op}\times Ho\big(\DG(\mathcal{A})\big)\rightarrow Ho\big(\mathcal{A}^{\ell}-\mathbf{Vect}_{\mathbf{k}_X}\big),$$
such that $H^0\big(R\mathcal{S}ol_{\mathcal{A}[\D]}\big)=Sol_{\mathcal{A}}.$

\subsubsection{Free-forgetful adjunctions}
\label{FreeForgetfulAdjunctions}
Consider free-algebra functor from complexes of $\D$-modules to derived $\D$-algebras, possessing an adjoint $U$ forgetting the algebra structure,
\begin{equation}
\label{eqn: Free-forgetful algebra}
\mathsf{Free}_{\D}:\DG\rightleftarrows \cdga_{\D_X}:U.
\end{equation}
Let $\mathcal{A}\in \cdga_{\D_X}.$ There is an analogous adjunction
\begin{equation}
    \begin{tikzcd}
    \label{eqn: Free-forget AD-Mod/Alg Adjunction}
\mathsf{Free}_{\mathcal{A}}:\DG(\mathcal{A})\arrow[r,shift left=.5ex] & Comm\big(\DG(\mathcal{A})\big)\simeq \cdga_{\D_X,\mathcal{A}/}:\arrow[l,shift left=.5ex]\mathrm{For}.
\end{tikzcd}
\end{equation}

\begin{example}
\label{Free Example}
    \normalfont Let $\mathcal{M}\in \DG$ be a compact object and consider its image under (\ref{eqn: Free-forgetful algebra}). One has that $\mathsf{Free}_{\D}(\mathcal{M})^r=\mathsf{Free}_{\D}(\mathcal{M})\otimes \omega_X$ is a compact right $\D$-algebra. 
\end{example}
Adjunction (\ref{eqn: Free-forget AD-Mod/Alg Adjunction}) induces a similar adjunction of $\infty$-categories, denoted
$$\mathsf{Free}_{\mathcal{A}}:\mathsf{Mod}_{\D_X}(\mathcal{A})\rightleftarrows \mathsf{CAlg}_{\mathcal{A}/}(\D_X):\mathsf{For}.$$

\subsubsection{Local Verdier duality}
A compact $\D_X$-algebra $\mathcal{A}$ that is written as a filtered colimit of strict finite $I$-cell objects is equivalently viewed as a filtered colimit of compact $\D_X$-modules via the forgetful functors \eqref{eqn: Free-forgetful algebra}, \eqref{eqn: Free-forget AD-Mod/Alg Adjunction}. 
Compact objects in $\DG$ correspond with perfect $\D_X$-modules. In what follows, we fix $\mathcal{A}$ and understand it to be given by such retracts with underlying perfect $\D$-module. A more detailed discussion of compactness is given in Subsect.\ref{ssec: Compactness}.
 
\begin{proposition}
\label{Biduality}
Suppose that $X$ is a affine. 
For perfect $\mathcal{A}[\mathcal{D}_X]$-modules, the  bi-duality isomorphism hold $\mathcal{M}\xrightarrow{\sim}(\mathcal{M}^{\circ})^{\circ}$ in $\mathrm{Ho}\big(\DG(\mathcal{A})\big).$
\end{proposition}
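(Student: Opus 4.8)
The plan is to reduce the biduality statement to the classical fact that a perfect module over an associative ring is reflexive, applied to the (non-commutative) ring $\mathcal{A}[\mathcal{D}_X] = \mathcal{A}\otimes_{\mathcal{O}_X}\mathcal{D}_X$. First I would unwind the definition of $(-)^{\circ}$ from \eqref{eqn: Local duality}: it is $\mathcal{M}^{\circ} = \mathcal{H}\mathrm{om}_{\mathcal{A}[\mathcal{D}_X]}(\mathcal{M},\mathcal{A}[\mathcal{D}_X])$, which produces a right $\mathcal{A}[\mathcal{D}_X]$-module, and iterating produces a left module again; so the composite $(-)^{\circ\circ}$ is an endofunctor of $\mathrm{Ho}\big(\DG(\mathcal{A})\big)$ for which there is an evident evaluation natural transformation $\mathrm{ev}_{\mathcal{M}}:\mathcal{M}\to (\mathcal{M}^{\circ})^{\circ}$, $m\mapsto (\phi\mapsto \phi(m))$. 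The claim is that $\mathrm{ev}_{\mathcal{M}}$ is an isomorphism in the homotopy category when $\mathcal{M}$ is perfect and $X$ is affine.

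The key steps, in order, are: (1) Observe that $\mathrm{ev}$ is an isomorphism for the unit: $\mathcal{A}[\mathcal{D}_X]^{\circ}\simeq \mathcal{A}[\mathcal{D}_X]$ as the other-handed free module of rank one, and a second dual returns to $\mathcal{A}[\mathcal{D}_X]$ with $\mathrm{ev}$ the identity; this is a direct computation using that $\mathcal{H}\mathrm{om}_{\mathcal{A}[\mathcal{D}_X]}(\mathcal{A}[\mathcal{D}_X],-)\simeq \mathrm{id}$. (2) By additivity of $\mathcal{H}\mathrm{om}$ in the first variable (it turns finite direct sums into finite direct sums, now of the opposite handedness), $\mathrm{ev}$ is an isomorphism for every finite free module $\mathcal{A}[\mathcal{D}_X]^{\oplus r}$. (3) Since $X$ is affine, a perfect $\mathcal{A}[\mathcal{D}_X]$-module is a retract of a finite complex of finite free modules (this is exactly the characterization invoked around Proposition~\ref{CModules in D geometry lemma} and in the discussion preceding Proposition~\ref{Biduality}, where $\mathcal{A}$ is taken to have underlying perfect $\mathcal{D}$-module and is a retract of strict finite cell objects). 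Both $(-)^{\circ}$ functors are additive and exact on the relevant triangulated subcategory, hence $(-)^{\circ\circ}$ preserves cofiber sequences and retracts; the natural transformation $\mathrm{ev}$ is compatible with these, so the class of objects on which $\mathrm{ev}$ is an isomorphism is closed under shifts, cones, and retracts. (4) Combining (2) and (3): this class contains the finite frees and is thick, hence contains all perfect $\mathcal{A}[\mathcal{D}_X]$-modules, which is the assertion. One subtlety to address carefully is that $(-)^{\circ}$ switches left and right modules, so the "thick subcategory" argument must be phrased symmetrically — one really tracks the pair $\big(\DG(\mathcal{A}),\ \DG(\mathcal{A}^{\mathrm{op}})\big)$ with $(-)^{\circ}$ an anti-equivalence on perfect objects and $\mathrm{ev}$ the unit of the resulting biduality; using the left/right equivalence $(-)^{\ell}, (-)^{r}$ recorded in Subsection~\ref{ssec: Algebraic D-Geometry} lets one bring everything back to left modules if desired, though it is cleaner to keep the handedness bookkeeping explicit.

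The main obstacle I anticipate is purely homotopical rather than structural: one must ensure that $\mathcal{H}\mathrm{om}_{\mathcal{A}[\mathcal{D}_X]}(-,\mathcal{A}[\mathcal{D}_X])$ is computed in the derived sense and that, on perfect objects, the derived and naive $\mathcal{H}\mathrm{om}$ agree so that the evaluation map is genuinely an equivalence and not merely a quasi-isomorphism up to higher coherence — i.e. one needs a cofibrant (e.g. semi-free, finite cell) model of $\mathcal{M}$ over $\mathcal{A}[\mathcal{D}_X]$ so that $\mathcal{H}\mathrm{om}$ has the homotopically correct value, and then check that $\mathrm{ev}$ at the point-set level is already an isomorphism of complexes for finite free modules. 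The affineness hypothesis on $X$ is what guarantees enough such models exist globally (no sheafy obstructions, and perfectness is equivalent to being a retract of finite free complexes of $\mathcal{A}[\mathcal{D}_X]$-modules); once the model-categorical setup is pinned down, steps (1)--(4) are routine, and the proof concludes by thick-subcategory closure.
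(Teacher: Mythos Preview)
Your proposal is correct and follows essentially the same approach as the paper: construct the evaluation map $\mathcal{M}\to(\mathcal{M}^{\circ})^{\circ}$, unwind the double dual as an iterated derived $\mathcal{H}\mathrm{om}$ with the left/right handedness tracked, and then use perfectness (the paper says ``homotopically finitely $\mathcal{D}$-presentation assumption'') to reduce to the free rank-one case $\mathcal{M}=\mathcal{A}[\mathcal{D}_X]$. Your write-up is in fact more explicit than the paper's sketch about the thick-subcategory closure under shifts, cones, and retracts, and about why affineness guarantees global finite-free models.
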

\begin{proof}
It suffices to construct the map $\mathcal{M}\rightarrow \big(\mathcal{M}^{\circ}\big)^{\circ},$ where $(-)^{\circ}:=\mathbb{R}\mathcal{H}\mathrm{om}_{\mathcal{A}[\mathcal{D}_X]}\big(-,\mathcal{A}[\mathcal{D}_X]\big),$ and noting that
$$\big(\mathcal{M}^{\circ}\big)^{\circ}\cong \mathbb{R}\mathcal{H}\mathrm{om}_{\mathcal{A}[\mathcal{D}_X]^{\mathrm{op}}}\big(\mathbb{R}\mathcal{H}\mathrm{om}_{\mathcal{A}[\mathcal{D}_X]}(\mathcal{M},\mathcal{A}[\mathcal{D}_X]),\mathcal{A}[\mathcal{D}_X]\big).$$
This is clear by viewing $\mathbb{R}\mathcal{H}\mathrm{om}_{\mathcal{A}[\mathcal{D}_X]}(\mathcal{M},\mathcal{A}[\mathcal{D}_X])$ and $\mathcal{A}\otimes\mathcal{D}_X$ as complexes of $\mathcal{A}[\mathcal{D}_X^{\mathrm{op}}]$-modules via the right multiplication of $\mathcal{A}[\mathcal{D}_X]$ on $\mathcal{A}[\mathcal{D}_X]$ and by the left $\mathcal{A}[\mathcal{D}_X]$-action on the right hand side arising from the tautological left $\mathcal{A}^{\ell}[\mathcal{D}_X]$-action on itself. 
One uses the homotopically finitely $\D$-presentation assumption on $\mathcal{A}^{\bullet}$ to reduce the statement to that of coherent $\D$-module complexes $\mathcal{M}$.
\end{proof}

\begin{corollary}
If $\mathcal{A}$ is $\D_X$-smooth (ungraded) algebra, it is in particular a perfect $\mathcal{A}$-module. Then, natural bi-duality isomorphisms hold for $\Theta_{\mathcal{A}}$, therefore $\Omega_{\mathcal{A}}^1$.
\end{corollary}
\begin{proof}
By definition $\Theta\cong \mathcal{H}\mathrm{om}_{\mathcal{A}[\mathcal{D}_X]}\big(\Omega_{\mathcal{A}}^1,\mathcal{A}[\mathcal{D}_X]\big),$ thus there are isomorphisms,
$$\Omega_{\mathcal{A}}^1\simeq \mathcal{H}om_{\mathcal{A}^r\otimes_{\omega_X}\mathcal{D}^{op}}(\Theta_{\mathcal{A}},(\mathcal{A}\otimes_{\mathcal{O}_X}\omega_X)\otimes_{\omega_X}\mathcal{D}^{op})\simeq \mathcal{H}om_{\mathcal{A}^r[\mathcal{D}^{op}]}(\Theta_{\mathcal{A}},\mathcal{A}^r[\mathcal{D}^{op}]).$$
\end{proof}
\begin{remark}
Local duality operation $(-)^{\circ}$ is defined via the functor isomorphism,
$$\mathcal{H}\mathrm{om}_{\mathcal{A}}\big(-,(\mathcal{M}^{\circ})^{\ell}\big)\simeq\mathcal{H}\mathrm{om}_{\IC(RanX_{\DR})}(-\otimes^{\star}\mathcal{M},\mathcal{A}),$$
for the $\otimes^{\star}$-tensor product (c.f. (\ref{ssec: Formal D-Moduli from Chiral Koszul Duality})) for sheaves over $Ran(X_{\DR})$, but we do not use this full theory here.
\end{remark}

\subsubsection{Derived $\D$-prestacks}
Categories of $\D$-algebras of finite type and almost finite presentation define full $\infty$-subcategories 
$$\mathsf{CAlg}_{\mathrm{f.t}}(\D_X),\mathsf{CAlg}_{\mathrm{afp}}(\D_X)\hookrightarrow \mathsf{CAlg}_X(\D_X).$$
An example of a locally $\D$-free $\D$-scheme is one of the form $Spec_{\mathcal{D}}(\mathsf{Free}(\mathcal{D}_X\otimes\mathcal{E}))$ for some locally free sheaf $\mathcal{E}.$
\begin{definition}
\label{Definition: D-PreStack}
A \emph{derived $\D$-prestack} is a functor of $\infty$-categories
$\mathsf{X}:\mathsf{CAlg}_{\D_X}^{\leq 0}\rightarrow\mathsf{Spc}.$ The derived $\D$-prestack assocated to a derived affine $\D$-scheme is denoted $\underline{\mathsf{Spec}}_{\D_X}(\mathcal{A}^{\bullet}):\mathsf{CAlg}_{\D_X}^{\leq 0}\rightarrow \mathsf{Spc}.$
\end{definition}
The following remark establishes this notion via a diffrent route, which is of independent interest, for example, in relation to a notion of `derived chiral deformation theory.'
\begin{remark}
In §\ref{ssec: Formal D-Moduli from Chiral Koszul Duality} consider the restriction of functors $\mathsf{Fun}\big(\mathsf{Chiral}(X)^{op},\mathsf{Spc}),$ to the full-subcategory of commutative chiral Lie algebras. By the equivalence of $(\infty,1)$-categories (\ref{eqn: CAlg and CommChiral}) we arrive at the definition. In particular, it states that $\D_X$-prestacks $\mathsf{X}$ are those chiral functors $Y$ which are equivalent to their own restriction along the inclusion 
$\mathsf{Chiral}(X)_{comm}\hookrightarrow \mathsf{Chiral}(X).$
Alternatively, from Proposition \ref{prop: D-Prestack Proposition} define them in terms of factorization algebras situated on the main diagonal $X\subset \mathrm{Ran}(X).$
\end{remark}
The $\infty$-category of $\D$-prestacks will be denoted by $\PS_X(\D_X).$ By the $\infty$-categorical $\D_X$-Yoneda embedding (c.f. the proof of Proposition \ref{prop: Chiral Moduli Proposition}) we write a derived affine $\D$-scheme via its $\D$-prestack of points by $\underline{h}_{\mathcal{A}^{\bullet}}:=\underline{\mathsf{Spec}}_{\D_X}(\mathcal{A}).$

The adjunction $\big(i,(-)^{\mathrm{cl}}\big)$
extends to $\D_X$-pre stacks and their homotopy categories; given
$\mathcal{X}:\mathsf{CAlg}(\D_X)\rightarrow \mathsf{Spc}$, a higher $\D_X$-pre stack (or an étale stack in the suitable topology), one says that a \emph{derived enhancement} or a \emph{homotopical thickening}, for $\mathcal{X}$ is a derived $\D_X$-pre stack $\mathbb{R}\mathcal{X}$ with an open embedding 
\begin{equation}
    \label{eqn: DerivedEnhancement}
j:\mathcal{X}\rightarrow \mathbb{R}\mathcal{X},
\end{equation}
such that  $\big(\mathbb{R}\mathcal{X}\big)^{\mathrm{cl}}\simeq \mathcal{X}$ is an equivalence. The functor of classical truncation $(-)^{\mathrm{cl}}$ on representable derived $\D$-pre-stacks is given by $\mathbb{R}\underline{\mathrm{\mathsf{Spec}}}_{\mathcal{D}}(\mathcal{A})^{\mathrm{cl}}\cong Spec_{\D_X}\big(H_{\D}^0(\mathcal{A})\big).$

A derived $(-1)$-Artin $\D$-stack will be understood as an affine derived $\D$-scheme i.e. an object in the essential image of the evident $\D$-Yoneda embedding (c.f. \ref{eqn: D-Yoneda})
$$h_{\mathbf{dAff}_{\D_X}}^{\mathcal{D}}:\mathbf{dAff}_{\D_X} \hookrightarrow \PS_{\D_X}.$$
Thus, there exists a derived $\D_X$-algebra $\mathcal{A}$ for which $(h_{\mathbf{dAff}_{\D_X}}^{\mathcal{D}})^{op}(\mathcal{A})\simeq\underline{\mathsf{Spec}}_{\D_X}(\mathcal{A}).$
A derived $0$-Artin $\D$-stack might be defined as a colimit of a groupoid object:
$$ \cdots \rightarrow \mathsf{X}_1\rightrightarrows \mathsf{X}_0\rightarrow \big[\mathsf{X}_0/\mathsf{X}_1\big],$$
where $\mathsf{X}_0$ and $\mathsf{X}_1$ are $(-1)$-geometric and the top map in the double arrow is a submersion. We do not elaborate more here, as we do not require this specificity in this work.

Other classes of derived affine $\D$ schemes can be naturally defined by implementing finiteness hypothesis. For example, an object $\mathsf{X}\in \mathbf{dAff}_{\D_X}$ is holonomic if the underlying $\D$-algebra is holonomic as a $\D$-module. A holonomic prestack $\mathcal{X}$ is one determined by its values on holonomic derived $\D$-schemes via Kan extension along the embedding $\mathbf{dAff}_{\D_X}^{hol}\hookrightarrow \mathbf{dAff}_{\D_X}$ i.e. 
$$\PS_X^{hol}(\D_X)\xrightarrow{\simeq}\mathsf{Fun}\big(Comm(\DG^{hol})^{op},\mathsf{Spc}\big).$$
A more useful finiteness condition for us is given in §§ \ref{ssec: D-Fredholm}.
\subsubsection{Model categorical interludes}
\label{sssec: Model Categorical Interlude}
Let us provide a description in terms of model categories and their homotopy categories \cite{Hirschhorn2003}. Specifically, we describe a combinatorial left proper model structure localized along a proper set of moprhisms $S$, such that an $S$-local (fibrant) presheaf is exactly a presheaf that sees homotopy equivalences between test-affines as equivalences. We introduce derived $\D$-prestacks as simplicial presheaves which are homotopy invariant with respect to weak-equivalences of test affines e.g. derived affine $\D$-schemes. 
Let $Q$ denote the canonical cofibrant replacement functor on $\cdga_{\D_X},$ established in \cite[Thm.34, Thm.38]{Diss}.
\begin{proposition}[\cite{Kry}]
Let $S_0\subset W[\D]$ be a set of representatives of weak-equivalences and put $S:=h^{\D}[S_0].$ The left-Bousfield localisation 
$$\mathbf{dPSt}_{\D_X}:=L_{h^{\D}}[W_{\D}]\big(\mathbf{sPShv}\big(\mathbf{CAlg}(\D_X-\mathbf{Mod})^{\leq 0}\big)^{proj}\big),$$
is a combinatorial left proper simplicial model category whose cofibrations are level-wise and whose fibrant objects are those simplicial presheaves $\EuScript{F}$ such that for every $f\in S_0$, we have $\EuScript{F}(f)$ is a weak-equivalence of simplicial sets.
\end{proposition}
We do not give the full details, but mention the core aspects instead.

We form Bousfield localisation of simplicial presheaves with respect to the class of morphisms 
$h^{\D}[W_{\mathcal{D}}]=\{ h_f^{\mathcal{D}} | f \text{ is a weak equivalence} \},$
with $h^{\D}$ the Yoneda embedding and $W_{\mathcal{D}}$ the weak-equivalences.
More precisely, there is a local projective model structure on the category of simplicial presheaves on a Grothendieck site \cite{Bla01}, further generalized in \cite[Theorem 3.4.1]{TV02}.

The natural $\D$-geometric homotopy Yoneda embedding
\begin{equation}
\label{eqn: Model D Yoneda embedding}
    \mathbb{R}\underline{h}^{\mathcal{D}}:=Ho(h^{\D}):Ho\big(\mathbf{dAff}_{\D_X}\big)\rightarrow Ho\big(\mathbf{dPSt}_{\D_X}\big).
    \end{equation}
Since our objects $\EuScript{F}$ as $S$-local, i.e. fibrant with respect to original projective model structure, and satisfy for each $s\in S$, (e.g. $s=h_f^{\D},f\in S_0)$, the induced map on representatives $\mathrm{Maps}(h^{\D}(c),\EuScript{F})\simeq \mathrm{Maps}(h^{\D}(c'),\EuScript{F})$ is a weak-equivalence. These spaces are $\EuScript{F}(c),$ then $\EuScript{F}(c')\simeq \EuScript{F}(c)$ when $f:c\to c'\in S_0;$ in other words, we take the fibrant replacement and set
$\EuScript{F}(C)\simeq\underset{f:C\to C':f\in W_{\D}}{\mathrm{holim}}\hspace{.5mm} \EuScript{F}(C').$

Use the simpicial fibrant replacement
$\mathrm{R}(\mathcal{A})_{\bullet}$ given by sending $[n]\mapsto \mathrm{R}(\mathcal{A})_{[n]}:=\mathcal{A}\otimes\Omega_{\mathrm{poly}}^*(\Delta^{n})\in \cdga_{\D_X},$
with $\mathcal{A}\in \cdga_{\D_X},$ we have, by denoting the canonical cofibrant replacement functor $Q(\mathcal{A})\rightarrow \mathcal{A},$ an assignment
\begin{equation}
    \underline{\mathbf{Sol}}_{\D_X}:\cdga_{\D_X}\rightarrow \mathbf{dPSt}_{\D_X},\hspace{2mm}\mathcal{A}\mapsto \underline{\mathbf{Sol}}_{\D_X}(\mathcal{A}):=Maps_{\cdga_{\D_X}}(\mathcal{A},-).
\end{equation}
Precisely, $\mathrm{R}(-)_{\bullet}$ is the fibrant replacement in the Reedy model structure on $\mathrm{Funct}(\Delta^{op},\cdga_{\D_X})$ \cite{Hirschhorn2003}.

In conformity with equation (\ref{eqn: Classical D-Solutions}), we have
\begin{equation}
    \label{eqn: Pre-Derived non-linear solution functor}
    \mathbf{Sol}_{\D_X}(-):=\underline{\mathbf{Sol}}_{\D_X}(-,\mathcal{O}_X):\cdga_{\D_X}\rightarrow SSets,
\end{equation}
with $ \mathbf{Sol}_{\D_X}(\mathcal{A}):=Maps_{\cdga_{\D_X}}\big(\mathcal{A},\mathcal{O}_X\big)_{\bullet}.$
By general properties of homotopy categories of model categories we have derived enhancements of the functors (\ref{eqn: Pre-Derived non-linear solution functor}), 
\begin{equation}
    \label{eqn: Derived non-linear solution functor}
    \mathbb{R}\mathbf{Sol}_{\D_X}(-):Ho\big(\cdga_{\D_X}\big)\rightarrow Ho\big(SSets\big),
    \end{equation}
    where $\mathbb{R}\mathbf{Sol}_{\D_X}(\mathcal{A}):=\mathbb{R}\underline{\mathcal{H}\mathrm{om}}_{\cdga_{\D_X}}\big(\mathcal{A},\mathrm{R}(\mathcal{O}_X)\big)\simeq Maps\big(Q(\mathcal{A}),\mathrm{R}(\mathcal{O}_X)\big).$
    More generally, in the slice category $\big(\mathbf{cdga}_{\D_X})_{\mathcal{A}/},$
     compute $\mathrm{Maps}_{\big(\mathbf{cdga}_{\D_X})_{\mathcal{A}/}}\big(\mathcal{B},\mathcal{C}\big)$ as the space of maps $Q\mathcal{B}\to \mathrm{R}_{\bullet}(\mathcal{C}).$

     More generally, following \cite{TV02} we have 
     $$\mathbb{R}\underline{\mathbf{Spec}}_{\D}(\mathcal{M}^{\bullet}):Ho\big(\mathbf{dgMod}_{\D}(\mathcal{A})\big)^{op}\to Ho\big(Fun\big(\mathbf{dgMod}_{\D}(\mathcal{A}),\mathbf{SSet})\big)\big).$$
\begin{remark}
    There is a natural inclusion $\mathrm{Sol}_{\mathcal{D}}(\mathcal{A})\rightarrow \mathbb{R}\mathbf{Sol}_{\mathcal{D}}(\mathcal{A}),$ which might be thought of as the inclusion of the critical space into its homotopy tubular neighborhood as a derived enhancement (\ref{eqn: DerivedEnhancement}). 
\end{remark}
In the Appendix, we discuss two examples coming from physics. See \cite{KSY2} for further details. 

\begin{example}
\label{Derived Differential Algebra NLPDE Example}
Consider Example \ref{Differential Algebra Example}. One way to produce a derived enhancement of $Sol_{\D_{\mathbb{A}^1}}(\mathsf{P})$ is by observing that $\mathbb{R}\mathbf{Sol}_{\D_{\mathbb{A}^1}}(\mathsf{P})$ can be defined by sending an object $A^{\bullet}\in \mathbf{cdga}_k$ to 
the simplicial set of maps of differential graded commutative $A^{\bullet}(\!(\EQ)\!)$-algebras with compatible action by derivations. 
\end{example}
\subsection{$\D$-Geometric tangent and cotangent complexes}
\label{sssec: D-Geom Tangent and Cotangent Complexes}
 The cotangent complex $\mathbb{L}_{\mathcal{A}}$ as well as a relative cotangent $\D$-complex $\mathbb{L}_f$ to a morphism of $\D$-schemes is constructed as follows.
Let $\mathcal{A}$ be a derived $\D$-algebra. Set $\mathbb{L}_{\mathcal{A}}$ to be the left-derived object of the $\D$-geometric K\"ahler construction (cf.  \ref{ModulCatsTangents}). Namely, $\mathbb{L}_{\mathcal{A}}:=\Omega_{Q\mathcal{A}}\in Ho(\DG(\mathcal{A})).$
It has the obvious properties: there is a canonical map $\mathbb{L}_{\mathcal{A}}\rightarrow \Omega_{\mathcal{A}}$ which is an equivalence if $\mathcal{A}$ is cofibrant, and if $f:\mathcal{A}\rightarrow \mathcal{B}$ is a weak-equivalence of cofibrant algebras $df:Lf^*\Omega_{\mathcal{A}}\rightarrow \Omega_{\mathcal{B}}$ is a quasi-isomorphism. The canonical replacement map $c:Q\mathcal{A}\rightarrow \mathcal{A}$ induces a functor (denoted the same) $c:\DG(Q\mathcal{A})\rightarrow \DG(\mathcal{A})$ that provides an equivalence $$Ho(c):Ho(\DG(Q\mathcal{A}))\rightarrow Ho(\DG(\mathcal{A})).$$ Then $\mathbb{L}_{\mathcal{A}}:=Ho(c)\big(\Omega_{Q\mathcal{A}}\big).$ 

In the relative setting, consider $f:\mathcal{A}\to \mathcal{B}$ and a $\mathcal{B}[\D_X]$-dg module $\mathcal{M}.$ There is an object, the $\D$-square zero extension $\mathcal{B}[\mathcal{M}]$. Its natural $\D$-algebra structure is described in \cite{BD2004}. 
In particular, 
$\mathcal{B}[\mathcal{M}]$ is naturally an $\mathcal{A}[\D_X]$-dg algebra under $\mathcal{B}$ with repsect to the $\D$-action 
$$P\bullet (\beta,m):=\big(P\bullet \beta,P\bullet m\big),P\in \D_X,\beta\in \mathcal{B},m\in\mathcal{M}.$$
We use the standard multiplicative structure $(\beta_1,m_1)\cdot (\beta_2,m_2):=(\beta_1\beta_2,\beta_1\cdot m_2+\beta_2\cdot m_1),\beta_i\in \mathcal{B},m_i\in \mathcal{M},i=1,2.$

Following \cite[Def.~1.4.1.4]{TV2}, the space of $\D$-derivations is
\begin{equation}
    \label{D_X-Derivations}
\underline{\mathbf{Der}}_{\D}(\mathcal{B},\mathcal{M}):=\mathrm{Maps}_{\mathrm{Comm}\big(\mathbf{Mod}_{\D_X}(\mathcal{A})_{/\mathcal{B}}\big)}(\mathcal{B},\mathcal{B}[\mathcal{M}]).
\end{equation}
We make one brief remark concerning   \eqref{D_X-Derivations}.
\begin{remark}
\label{HoDDer}
    Let $\mathcal{A}\to\mathcal{B}$ be a morphism of derived $\D$-algebras. The functor of $\D$-derivations acts by 
    $\underline{\mathbf{Der}}_{\D}(\mathcal{B},-):Ho(\mathbf{Mod}_{\D}(\mathcal{B})\big)\to Ho(\mathbf{SSet}).$ It preserves the weak-equivalences e.g. between cofibrant objects and therefore defines an element of the model category of functors
    $\mathrm{Funct}\big(\mathbf{Mod}_{\D}(\mathcal{B}),\mathbf{SSet}\big),$ so that for each $\mathcal{M}$, the object \eqref{D_X-Derivations} is of $\mathbf{SSet},$ not merely $Ho(\mathbf{SSet}).$
\end{remark}
We have the following representability result.
\begin{proposition}
\label{Prop: RelCot}
    Let $f:\mathcal{A}\to\mathcal{B}$ be a morphism of derived $\D$-algebras as before. There is an object $\mathbb{L}_{f}$ with the structure of a $\mathcal{B}[\D_X]$-dg-module and $\mathcal{A}\otimes_{\mathcal{O}_X}\D_X$-linear derivation $d\in \underline{\mathbf{Der}}_{\mathcal{A}\otimes \D_X}(\mathcal{B},\mathbb{L}_f)$ such that 
    $$\mathrm{Maps}_{\mathcal{B}\otimes \D_X-\mathbf{dgMod}}(\mathbb{L}_f,\mathcal{M})\simeq \underline{\mathbf{Der}}_{\mathcal{A}\otimes\D_X}(\mathcal{B},\mathcal{M}),$$
   is an equivalence of spaces.
\end{proposition}
\begin{proof}
    Let $\mathcal{Q}:=Q(\mathcal{B})$ be a cofibrant replacement of $\mathcal{B}$ and put $\mathbb{L}_F:=\Omega_{\mathcal{Q}/\mathcal{A}}^1\otimes_{\mathcal{Q}}^L\mathcal{B}.$
    Since $\mathcal{Q}$ is cofibrant, $\Omega^1_{\mathcal{Q}/\mathcal{A}}$ is a $\mathcal{Q}[\D_X]$-dg module and there exists $d_{\mathcal{Q}}:\mathcal{Q}\to \Omega_{\mathcal{Q}/\mathcal{A}}^1$ which is both $\mathcal{A}$-linear and $\D$-linear. Then
    $\mathcal{B}\simeq Q\mathcal{B}\otimes_{Q\mathcal{B}}\mathcal{B}\xrightarrow{d\otimes id}\Omega_{\mathcal{Q}/\mathcal{A}}\otimes_{Q\mathcal{B}}\mathcal{B}=\mathbb{L}_f.$
    Moreover, the natural map
    $$\underline{\mathbf{Der}}_{\mathcal{A}\otimes_{\mathcal{O}_X}\D_X}\big(\mathcal{B},\mathcal{M}\big)\xrightarrow{\simeq}\underline{\mathbf{Der}}_{\mathcal{A}\otimes_{\mathcal{O}_X}\D_X}\big(\mathcal{Q},\mathcal{M}\big),$$
    is an isomorphism in $Ho(\mathbf{SSet}).$ By remark \ref{HoDDer}, it lifts to an equivalence in $\mathbf{SSet},$ and we have equivalences of simplicial sets $$\underline{\mathbf{Der}}_{\mathcal{A}\otimes_{\mathcal{O}_X}\D_X}\big(\mathcal{B},\mathcal{M}\big)\simeq \mathrm{Map}_{\mathcal{Q}\otimes_{\mathcal{O}_X}\D_X}(\Omega_{\mathcal{Q}/\mathcal{A}}^1,\mathcal{M})\simeq \mathrm{Map}_{\mathcal{B}\otimes \D_X}\big(\Omega_{\mathcal{Q}/\mathcal{A}}^1\otimes_{Q\mathcal{B}}^L\mathcal{B},\mathcal{M}\big).$$
    Thus
$\mathrm{Map}_{\mathcal{B}\otimes\D_X}\big(\mathbb{L}_f,\mathcal{M}\big)\simeq \mathrm{Map}_{\mathcal{Q}\otimes\D_X}\big(\Omega_{\mathcal{Q}/\mathcal{A}}^1,\mathcal{M}\big).$
\end{proof}
\begin{remark}
Alternatively, one might like to work with stabilizations of $\infty$-categories and the suspension-loop space adjunctions $(\Sigma_+^{\infty},\Omega^{\infty}).$ In the latter case, the essential image of $\Omega^{\infty}$ is interpreted as forming square-zero extensions, with $\Sigma_+^{\infty}:\mathsf{CAlg}_{X}(\D_X)_{/\mathcal{A}}\rightarrow \mathsf{Mod}_{\D_X}(\mathcal{A}),$ by identifying $\mathcal{A}$-module complexes with suspension.
For instance if $f:\mathsf{X}\rightarrow \mathsf{Y}$ is a morphism of affine derived $\D_X$-spaces over $X$, the relative cotangent complex of $f$ is determined by the equivalence $\mathbb{L}_f\simeq \Sigma_+^{\infty}\big(\mathcal{O}_{\mathsf{X}}\bigsqcup_{f^*\mathcal{O}_{\mathsf{Y}}}\mathcal{O}_{\mathsf{X}}\big),$
of $\mathcal{O}_{\mathsf{X}}[\mathcal{D}_X]$-modules.
\end{remark}

\begin{proposition}
\label{Relative D-cotangent Complex}
Let $f:\mathcal{X}\rightarrow \mathsf{Y}$ be a morphism of affine derived $\D_X$-spaces over $X.$
The relative cotangent complex of $f$ is given by the equivalence $\mathbb{L}_f\simeq cofib\big(f^*\mathbb{L}_{\mathcal{X}}\rightarrow \mathbb{L}_{\mathcal{Y}}\big)$ of $\mathcal{O}_{\mathcal{X}}[\mathcal{D}_X]$-modules.
Moreover, given a diagram $\mathcal{X}\xrightarrow{f}\mathcal{Y}\xrightarrow{g}\mathcal{Z}$ we have a cofiber sequence
$$f^*\mathbb{L}_g\rightarrow \mathbb{L}_{g\circ f}\rightarrow \mathbb{L}_f.$$
\end{proposition}
\begin{proof}
    Let $\mathcal{O}_{\mathcal{Y}}=\mathcal{B}$ and $\mathcal{O}_{\mathcal{X}}=\mathcal{A}.$
    The cofiber sequence in $\mathcal{A}[\D_X]$-dg modules,
    $\mathcal{A}\otimes_{\mathcal{B}}^L\mathbb{L}_{\mathcal{B}}\to \mathbb{L}_{\mathcal{A}}\to \mathbb{L}_f,$
    is clear, where $\mathbb{L}_f$ is given by Proposition \ref{Prop: RelCot}. 
For the second claim, set $\mathcal{C}:=\mathcal{O}_{\mathcal{Z}}$. We use the above cofiber sequence, and the analogous cofiber sequences
$$\mathcal{B}\otimes_{\mathcal{C}}^L\mathbb{L}_{\mathcal{C}}\to\mathbb{L}_{\mathcal{B}}\to \mathbb{L}_{\mathcal{B}/\mathcal{C}},\hspace{2mm}\mathcal{A}\otimes_{\mathcal{C}}^L\mathbb{L}_{\mathcal{C}}\to\mathbb{L}_{\mathcal{A}}\to \mathbb{L}_{\mathcal{A}/\mathcal{C}},$$
induced from $g$ and from $g\circ f,$ respectively. Then, since $f^*\mathbb{L}_g\simeq \mathcal{A}\otimes_{\mathcal{B}}^L\mathbb{L}_{\mathcal{B}/\mathcal{C}}$ and $\mathbb{L}_{g\circ f}\simeq \mathbb{L}_{\mathcal{A}/\mathcal{C}},$ the result follows from the sequence
$\mathcal{A}\otimes_{\mathcal{B}}^L\mathbb{L}_{\mathcal{B}/\mathcal{C}}\to\mathbb{L}_{\mathcal{A}/\mathcal{C}}\to\mathbb{L}_{\mathcal{A}/\mathcal{B}}.$
\end{proof}

\subsubsection{Covering families}
In this subsection we introduce descent along $\D$-étale covers. Starting with the notion of $\D$-prestacks, this defines a a suitable notion of stack in $\D$-geometry. 

Consider a finite family $\{\mathcal{A}_{\alpha}\}$ of derived $\D$-algebras. 
Then, note the equivalences
$$\bigsqcup_{\alpha}\mathbf{Spec}_{\D}(\mathcal{A}_{\alpha})\cong \mathbf{Spec}_{\D}\big(\prod_{\alpha}\mathcal{A}_{\alpha}\big),$$
and for a family of morphisms $f_{\alpha}:\mathcal{A}_{\alpha}\rightarrow \mathcal{A},$ in $\cdga_{\D_X}$ to a fixed $\mathcal{A}\in \cdga_{\D_X},$ one has that
$$\bigsqcup_{\alpha} f_{\alpha}:\bigsqcup_{\alpha}\mathbf{Spec}_{\D}(\mathcal{A}_{\alpha})\rightarrow \mathbf{Spec}_{\D}(\mathcal{A}).$$
A family $\big\{f_i:\mathbf{Spec}_{\D}(\mathcal{A}_i)\rightarrow \mathbf{Spec}_{\D}(\mathcal{A})\big\}_{i\in I},$ is said to be a $\D$-\emph{étale-covering} if $f_i:\mathcal{A}\rightarrow \mathcal{A}_i$ is $\D$-étale for each $i\in I$, which is to say that:
\begin{enumerate}
    \item for each morphism we have a fibre sequence 
$$\mathbb{L}_{\mathcal{A}}\otimes_{\mathcal{A}}^{\mathbb{L}}\mathcal{A}_i\rightarrow \mathbb{L}_{\mathcal{A}_i}\rightarrow \mathbb{L}_{\mathcal{A}_i/ \mathcal{A}},$$
and subsequent isomorphisms 
$\mathbb{L}_{\mathcal{A}}\otimes_{\mathcal{A}}^{\mathbb{L}}\mathcal{A}_i\xrightarrow{\simeq}\mathbb{L}_{\mathcal{A}_i},$
in $Ho\big(\mathsf{Mod}_{\mathcal{D}}(\mathcal{A}_i)\big)$. In other words, $\mathbb{L}_{\mathcal{A}_i/\mathcal{A}}\simeq 0$ for every $i\in I;$

\item 

there exists some finite subset $J\subset I$ such that $\{f_j: j\in J\}$ determine conservative functors
$\mathcal{A}_i\otimes_{\mathcal{A}}^{\mathrm{L}}-:Ho\big(\DG(\mathcal{A})\big)\rightarrow Ho\big(\DG(\mathcal{A}_i)\big).$
\end{enumerate}
Note that (2) is more explicitly stated: for every $n<0,$ we have isomorphism $$H_{\D}^n(\mathcal{A})\otimes H_{\D}^0(\mathcal{A}_i)\rightarrow H_{\D}^n(\mathcal{A}_i),$$ with $H_{\D}^0(f_i):H_{\D}^0(\mathcal{A})\rightarrow H_{\D}^0(\mathcal{A}_i)$ is $\D$-étale as in §\ref{sec: Algebraic D-Geometry}. We remark this is well-define by the convergence of the Tor-spectral sequence in $\D$-geometry \cite{Tor}.
\begin{proposition}
Given a $\D$-étale morphism $\mathcal{A}\rightarrow \mathcal{B},$ for every $\mathcal{R}\in \mathrm{CAlg}_X(\D_X)$, there is a pull-back diagram
\[
\begin{tikzcd}
\mathrm{Maps}(\mathcal{B},\mathcal{R})\arrow[d]\arrow[r] & \mathrm{Maps}\big(H_{\D}^0(\mathcal{B}),H_{\D}^0(\mathcal{R})\big)\arrow[d]
\\
\mathrm{Maps}(\mathcal{A},\mathcal{R})\arrow[r] & \mathrm{Maps}\big(H_{\D}^0(\mathcal{A}),H
_{\mathcal{D}}^0(\mathcal{R})\big).
\end{tikzcd}
\]
The $\D$-étale topology on $\mathsf{CAlg}_{X}(\D_X)^{c,op}$ is subcanonical.
\end{proposition}
\begin{proof}
One just shows that the Cech co-nerve of a surjective $\D$-étale map is a limit diagram in $\mathsf{CAlg}_X(\D_X).$
\end{proof}
The derived functor of points is therefore written as
$$\underline{\mathsf{Spec}}_{\mathcal{D}}(\mathcal{A})(-):=Maps_{\mathsf{CAlg}(\D_X)^c}(\mathcal{A},-):\mathsf{CAlg}_X(\D_X)^{c}\rightarrow \mathsf{Spc},$$
which is a derived $\D_X$-pre stack.

Restriction along this functor is given by a right Kan-extension thus for any $\mathcal{Y}\in \mathsf{dStk}_X(\D_X),$ one has
\begin{equation}
    \label{MapsDStkDX}
\mathrm{Maps}_{\mathsf{dStk}_X(\D_X)}(-,\mathcal{Y}):\mathsf{dStk}_X(\D_X)^{op}\rightarrow \mathsf{Spc}.\end{equation}
The functor \eqref{MapsDStkDX}
preserves limits, and is extended from $\mathsf{dAff}_X(\D_X)^{op}.$ Write
$$\mathrm{Maps}_{\mathsf{dStk}_X(\D_X)}(\mathcal{X},\mathcal{Y})\simeq \underset{\mathsf{Spec}_{\D}(\mathcal{A})\rightarrow \mathcal{X}}{\mathrm{holim}} \hspace{1mm} Maps(\mathsf{Spec}_{\D}(\mathcal{A}),\mathcal{Y}).$$

 Proposition \ref{Relative D-cotangent Complex} can be understood dually. In particular, the tangent complex is the derived local Verdier dual,
\begin{equation}
    \label{eqn:Tangent D complex}
\mathbb{T}_{\mathcal{X}}:=\mathsf{Maps}_{/\mathcal{X}}\big(\mathbb{L}_{\mathcal{X}},\mathcal{O}_{\mathcal{X}}\otimes_{\mathcal{O}_X}\mathcal{D}_X\big).
\end{equation}
Notice in (\ref{eqn:Tangent D complex}) we consider the tensor product over $X.$
Suppose that the cotangent complex $\mathbb{L}_{\mathcal{A}}$ is a perfect, therefore dualizable $\mathcal{A}[\mathcal{D}_X]$-module, defined as usual by $\mathbb{L}_{\mathcal{A}}:=\Omega_{Q\mathcal{A}}^1\otimes_{Q\mathcal{A}}\mathcal{A},$ we then put $\mathbb{T}_{\mathcal{A}}:=\mathbb{L}_{\mathcal{A}}^{\circ},$ as the derived local Verdier dual with 
$$\mathbb{L}_{\mathcal{A}}^r=\mathbb{L}_{\mathcal{A}}\otimes \omega_X,\hspace{5mm} \mathbb{L}_{\mathcal{A}}^{\ell}=\mathbb{L}_{\mathcal{A}}\otimes \omega_X^{-1}.$$
A $\D$-geometric version of quasi--smoothness arises when the cotangent complex is a two-term complex of $\mathcal{A}[\D_X]$-modules. See Definition \ref{QuasiSmoothDerivedPDE} for a related class of derived PDEs of finite jet-order (in particular, they are not $\D$-geometric).
\begin{definition}
    \label{Quasi-smooth}
   A derived $\D$-algebra is said to be $\D$\emph{-quasi-smooth} if $\mathbb{L}_{\mathcal{A}}$ is quasi-isomorphic to a $2$-term perfect complex (in $\DG(\mathcal{A})$) of degrees $[-1,0].$ It is quasi-smooth if pull-back along a solution $s:X\rightarrow Spec_{\D_X}(\mathcal{A})$ presents $s^*\mathbb{L}_{\mathcal{A}}$ as a $2$-term perfect complex in $\DG.$
\end{definition}
Both $\D$-quasi-smooth and quasi-smooth objects arise in practice, for instance in differential geometry and the theory of Fredholm operators. We return to this topic in a wider context in § \ref{ssec: D-Fredholm}.
\begin{example}
\label{Non-lin elliptic Example}
\normalfont

Let $\mathsf{P}:\Gamma(M,F)\rightarrow \Gamma(M,E)$ be a non-linear elliptic operator on sections of vector bundles (or coherent sheaves $\mathcal{F},\mathcal{E}$) over a compact manifold $M.$ Equivalently, it is a morphism of sheaves $\mathsf{F}_{\mathsf{P}}:Jet^k(\mathcal{F})\rightarrow \mathcal{E},$ and $\mathsf{Sol}_{\mathsf{P}}\simeq Ker(\mathsf{F}_{\mathsf{P}})$, is presented as a zero-locus. Let $s\in \mathsf{Sol}_{\mathsf{P}}$ i.e. $s\in \mathsf{P}^{-1}(0)$ be a solution giving a $2$-term complex $lin_s(\mathsf{P}):\Gamma(s^*TM)\rightarrow \Gamma(E).$ Often, this $2$-term tangent complex is Fredholm but if $lin_s(\mathsf{P})$ is not surjective one uses a reduction by local Kuranishi models and inverse function theorems to produce a quasi-smooth complex that is quasi-isomorphic to $lin_s(\mathsf{P}).$
\end{example}
Treating Example \ref{Non-lin elliptic Example} and similar situations arising in differential geometry are outside the scope of the current paper, as they pose their own set of unique challenges to overcome.\footnote{One should adapt elliptic bootstrapping
methods, Sobolev completions, regularity theory and temperate growth conditions to the derived differential and symplectic setting.}
\begin{example}
\label{Free Tangent Cotangent Example}
Considering \ref{Free Example}  yields $\mathbb{L}_{\mathsf{Free}_{\D}(\mathcal{M})}\simeq \mathsf{Free}_{\D}(\mathcal{M})\otimes\mathcal{M}$ which is naturally an object of $\mathsf{Free}_{\D}(\mathcal{M})-\DG.$ Consider $\mathsf{Free}_{\D}^0(\mathcal{M})$ by omiting the $0$-th component. Then the (derived) de Rham differential is a morphism $d_{\DR}:\mathsf{Free}_{\D}^0(\mathcal{M})\rightarrow \mathsf{Free}_{\D}(\mathcal{M})\otimes\mathcal{M}.$
\end{example}

\subsubsection{Classical tangent complex and linearizations}
Let $f:Spec_{\D_X}(\mathcal{A})\rightarrow Spec_{\D_X}(\mathcal{B})$ be a morphism of $(X,\mathcal{D}_X)$-affine schemes corresponding to jet-schemes of vector bundles $\pi:E\rightarrow X$ and $\eta:F\rightarrow X$ with $\mathcal{A}=\mathrm{Jet}^{\infty}(\mathcal{O}_E), \mathcal{B}=\mathrm{Jet}^{\infty}(\mathcal{O}_F)$ i.e. 
\begin{equation}
\begin{tikzcd}
Spec_{\mathcal{D}}(\mathcal{A})\arrow[d,"\pi_{\infty}"] \arrow[rr,"f"] & &  Spec_{\mathcal{D}}(\mathcal{B})\arrow[d,"\eta_{\infty}"]
\\
E\arrow[r,"\pi"] & X &F\arrow[l,"\eta"]
\end{tikzcd}.
\end{equation}
The tangent map to $f$ is naturally interpreted as the operator of universal linearization \cite{Krasilshchik1986}, associated to the system determined by $f.$ It determines an operator in total derivatives $\Box_f:\mathrm{Jet}^{\infty}(\mathcal{O}_E)\rightarrow \mathrm{Jet}^{\infty}(\mathcal{O}_F),$
whose tangent map is $df\in\mathrm{Hom}_{\mathcal{A}[\mathcal{D}_X]}\big(\Theta_{\mathcal{A}},f^*(\Theta_{\mathcal{B}})\big).$
\begin{proposition}
\label{Linearization Lemma}
The central de Rham cohomology $h(df),$ coincides with the operator of universal linearization associated to $\Box.$ That is,
$\ell_{\Box_f}=h(df):\pi_{\infty}^*\Theta_{E/X}\rightarrow \Box^*(\tilde{\pi}_{\infty})^*\Theta_{F/X}=\pi_{\infty}^*\Theta_{E/X},$
where $\tilde{\pi}:F\rightarrow X$ is the bundle for $F.$
\end{proposition}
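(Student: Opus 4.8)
The plan is to reduce the statement to a local computation in jet coordinates, where both sides become explicit differential operators in total derivatives, and then to check that the central de Rham cohomology functor $h(-)$ identifies the $\mathcal{A}[\mathcal{D}_X]$-linear map $df$ with the $\mathcal{C}$-differential operator $\ell_{\Box_f}$. First I would recall, from Proposition \ref{CModules in D geometry lemma}, the identification of $\mathcal{A}\otimes_{\mathcal{O}_X}\mathrm{Diff}(\mathcal{L}_1,\mathcal{L}_2)$ with total (i.e. $\mathcal{C}$-) differential operators $\pi_\infty^*\mathcal{L}_1\to \pi_\infty^*\mathcal{L}_2$; applied with $\mathcal{L}_1=\Theta_{E/X}$ and $\mathcal{L}_2=\Theta_{F/X}$ this says precisely that an element of $\mathrm{Hom}_{\mathcal{A}[\mathcal{D}_X]}(\Theta_{\mathcal{A}},f^*\Theta_{\mathcal{B}})$ is the same datum as an operator in total derivatives between the corresponding pullback bundles. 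So the content is to verify that $h$, applied to the canonical tangent map $df$ coming from the Kähler construction $\Omega^1_{Q\mathcal{A}}$, produces the specific such operator that the classical theory calls $\ell_{\Box_f}$.

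Next I would set up coordinates: choose fiber coordinates $u^\alpha$ on $E$ and $v^\beta$ on $F$, so that $\mathcal{A}=\mathrm{Jet}^\infty(\mathcal{O}_E)$ has generators $u^\alpha_\sigma$ with the $\mathcal{D}_X$-action $\partial_i\bullet u^\alpha_\sigma = u^\alpha_{\sigma+1_i}$ (equation (\ref{eqn: D-Action})), and similarly for $\mathcal{B}$. The morphism $f$ is then encoded by a $\mathcal{D}$-algebra map sending $v^\beta\mapsto \mathsf{F}^\beta(x,u^\alpha_\tau)$, i.e. by the operator $\Box_f$ of order $k$. Differentiating, $df$ sends the generator $dv^\beta$ of $\Omega^1_{\mathcal{B}}$ to $\sum_{\alpha,\tau}\tfrac{\partial \mathsf{F}^\beta}{\partial u^\alpha_\tau}\, d(u^\alpha_\tau)$, and $\mathcal{D}_X$-linearity forces $d(u^\alpha_\tau)=D_\tau(du^\alpha)$ where $D_\tau=D_{x_1}^{\tau_1}\cdots D_{x_n}^{\tau_n}$ is the total derivative; hence $df(du^\alpha\mapsto \cdot)$ is the operator $\sum_{\tau}\tfrac{\partial \mathsf{F}^\beta}{\partial u^\alpha_\tau} D_\tau$, which is by definition the universal linearization $\ell_{\Box_f}$ of the classical theory \cite{KraVinLych, SymmetriesandConservationLaws}. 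The step I would spell out with a little care is that passing to central de Rham cohomology $h(-)$ — i.e. applying $\Omega_X^n\otimes_{\mathcal{D}_X}(-)$, as in the Proposition preceding this one and in Proposition \ref{Lemma for * operations giving those in de Rham cohomologies} — does not alter this operator, because the expression above is already manifestly $\mathcal{D}_X$-linear (built from total derivatives) and $h$ is exact on the relevant complexes; this is where one invokes that the $\mathcal{D}_X$-action commutes with taking cohomology, exactly as noted in the discussion of the $\psi^k$ maps.

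The main obstacle, I expect, is bookkeeping rather than conceptual: one must be careful that the target is genuinely $f^*\Theta_{\mathcal{B}}$ and that the chain of identifications — $\Omega^1_{Q\mathcal{A}}\otimes_{Q\mathcal{A}}\mathcal{A}\simeq \Omega^1_{\mathcal{A}}$ for $\mathcal{A}$ a jet algebra (so the derived and underived Kähler differentials agree, $\mathcal{A}$ being $\mathcal{D}$-smooth), then dualizing via (\ref{eqn: Local duality}), then taking $h$ — is compatible with the classical identification $\Theta_{\mathcal{A}}\simeq \pi_\infty^*\Theta_{E/X}$ as an $\mathcal{A}[\mathcal{D}_X]$-module, and that the $f^*$ on the target is read off correctly so that in coordinates $f^*(\tilde\pi_\infty)^*\Theta_{F/X}$ is identified with $\pi_\infty^*\Theta_{E/X}$ as asserted in the statement (this last identification being the one forced by $\Box_f$ mapping jets of $E$-sections to jets of $F$-sections). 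Once these identifications are fixed, the equality $\ell_{\Box_f}=h(df)$ is the coordinate computation above, valid locally and glued by naturality of the cotangent complex (Proposition \ref{Relative D-cotangent Complex}).
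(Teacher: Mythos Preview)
Your argument is correct, but it takes a genuinely different route from the paper. The paper's proof is deliberately geometric rather than algebraic: it fixes a $k$-th order operator $P:\Gamma(E)\to\Gamma(F)$, chooses a one-parameter family of sections $s_t$ with $s_0=s$, produces the vertical vector field $V_x=\frac{d}{dt}\big|_{t=0}s_t(x)\in\Gamma(X,s^*VE)$, and then \emph{defines} $\ell_{P,s}(V_x):=\frac{d}{dt}\big|_{t=0}P(s_t)(x)\in\Gamma(X,P(s)^*VF)$, observing that this is a $k$-th order linear differential operator. The paper explicitly flags its intent: it wants a geometric picture that will guide the later derived linearization along a solution.

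Your approach instead unpacks $df$ in jet coordinates, reads off the total-derivative operator $\sum_\tau \tfrac{\partial\mathsf{F}^\beta}{\partial u^\alpha_\tau}D_\tau$, and matches it against the standard coordinate formula for $\ell_{\Box_f}$ (which the paper in fact displays immediately \emph{after} its proof). This buys you two things the paper's argument does not: you engage directly with the functor $h(-)$ and explain why passing to central de Rham cohomology leaves the operator unchanged, and you make the identification with Proposition~\ref{CModules in D geometry lemma} explicit. Conversely, the paper's variational construction makes transparent why the linearization deserves its name and why the same recipe will survive in the derived setting (linearizing along a point of $\RS$), which your coordinate computation leaves implicit. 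Both are valid; yours is closer to a verification, the paper's closer to a motivation.
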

This description agrees with the analytic notion of linearization of a $k$-th order operator $\mathsf{F}(x,\partial^k u)$ at a solution $\varphi$, given formally by $lin_{\varphi}(\mathsf{F})=\frac{d}{dt}\mathsf{F}\big(x,\partial^k(u+t\varphi)\big)|_{t=0}.$

\begin{remark}
For $\EQ\hookrightarrow \mathsf{Jets}_X^k(E)$, defined by $\mathsf{F}_{\mathsf{P}}$, the situation is summarized by the diagram:
\begin{equation}
    \label{eqn: Linearization Diagram}
    \adjustbox{scale=.85}{
\begin{tikzcd}
    \EQ\arrow[d] & \arrow[l] T_{\EQ/X}\arrow[d] & \arrow[l] j_k(s)^*T_{\EQ/X}\arrow[d] \simeq lin_s(\EQ)\arrow[d]
    \\
    \mathrm{Jets}_X^k(E)\arrow[d,"\pi_{k,0}"]& \arrow[l] T_{\mathrm{Jets}_X^k(E)/X}\arrow[d]& \arrow[l] j_k(s)^*T_{\mathrm{Jets}_X^k(E)/X} \arrow[d,"\pi_{k,0}'"]
    \\
    E\arrow[d,"\pi"]& \arrow[l] T_{E/X} &  \arrow[l] s^*T_{E/X}\arrow[d,"\pi'"]
    \\
  \arrow[bend left=65]{uuu}{j_k(s)} X \arrow[bend right=45,swap]{u}{s} & & \arrow[bend left=45]{u}{\eta} X \arrow[bend right=65, swap]{uuu}{j_k(\eta)} 
\end{tikzcd}}
\end{equation}
Solutions of $lins_s(\EQ)$ are vertical vector fields e.g. sections $\eta:X\rightarrow s^*T_{E/X}$
such that $j_k(\eta)=v\circ j_k(s)$ for some vertical symmetry $v$ of $\EQ.$ 
If $(x^i,u^{\alpha})$ are fibered coordinates of $E,$ $\eta=\eta^{\alpha}\partial_{u^{\alpha}}$ is a solution of $lin_s(\EQ)$ if and only if diagram (\ref{eqn: Linearization Diagram}) commutes. Thus, $j_k(\eta)(\EQ)$ gives a point in $T_{\EQ/X}$ for every $z\in X$ around $q=j_k(s)(\EQ)\in j_k(s)(X)\subset \EQ.$ 
Flows of vertical vector fields transform
solutions e.g $j_k(s)(X)$ of $\EQ$ into new solutions of $\EQ$, while solutions of $lin_s(\EQ)$ provide initial conditions for the flows.
\end{remark}
    
In the algebraic setting one makes sense of formal linearizations as well.

\begin{example}
\label{Formal Linearization Example}
    \normalfont 
Let $\mathcal{V}$ be the freely generated $\D$-algebra as in Example \ref{Differential Algebra Example}, and consider the algebraic PDE corresponding to $\mathsf{F}\in \mathcal{V}$.
If $s$ is a formal solution i.e $s=s(z)\in k(\!(z)\!),$ the linearisation of the equation $\mathsf{P}$ at $s$ is a linear differential operator on $k(\!(z)\!)$. 
In practice, one can introduce some linearization parameter $\epsilon$ and then put
$lin_{s}(\mathsf{P}):D_1^{\circ}\rightarrow D_1^{\circ},$ as the operator defined by
$lin_s(\mathsf{P})\big(f(z)\big):=\mathsf{F}_{\mathsf{P}}(z,s(z)+\epsilon f(z), \partial s(z)+\epsilon \partial f(z),\ldots \big)=0\hspace{2mm} mod\hspace{1mm} \epsilon^2.$

Recalling the derived $\D_{\mathbb{A}^1}$-space of solutions $\mathbb{R}\mathbf{Sol}_{\D_{\mathbb{A}^1}}(\mathsf{P})$ as in Example \ref{Derived Differential Algebra NLPDE Example}, it is $\D$-quasi-smooth since in degrees $[0,1],$ we have
$$\mathbb{T}_{\mathbb{R}\mathbf{Sol}_{\D_{\mathbb{A}^1}}(\mathsf{P}),s}\simeq D_1^{\circ}\xrightarrow{lin_s(\mathsf{P})}D_1^{\circ},$$
over the punctured disk. 
\end{example}

The following Subsection discusses an example of independent interest providing further evidence for the necessity of derived techniques in PDE theory (c.f. Example \ref{ex: Pierre} and \ref{obs: Pierre}). It can be skipped on a first reading.

\subsubsection{Cohomology of tangent $\D$-Complexes and singular integrals}
Let $D_1$ be the disc in $\mathbb{C}$ with coordinate variable $z$. Consider the non-linear equation $$\mathsf{P}_{\mathsf{F}}:=\mathsf{G}(z,u(z),\mathsf{F})=0,\hspace{1mm}\text{ with } \mathsf{G}(z,u(z),\mathsf{F}):=f(z)-z\partial_z u(z)+\mathsf{F}(z,\partial u),$$ with $\mathsf{F}$ a polynomial in $z.$ 
A family of solutions is given by $t\mapsto s_t(z):=tz+\mathsf{F}(t).$ One has

$$\mathbb{T}_{\mathbb{R}\mathbf{Sol}_{\D_{\mathbb{A}^1}},s_t}\simeq \mathcal{O}(\!(z)\!)\xrightarrow{lin_{s_t}(\mathsf{G})}\mathcal{O}(\!(z)\!),$$ where $lin_{s_t}(\mathsf{G}):=u(z)-\big(z+\partial_z\mathsf{F}(t)\big)\partial_zu(z).$ Then $H^i(\mathbb{T}_{\mathbb{R}\mathbf{Sol}_{\D_{\mathbb{A}^1}},s_t}\big)\neq 0,$ for $i>0$ if and only if $t$ is a root of $\partial_z\mathsf{F}.$

The presence of higher tangent cohomology has important meaning for the \emph{classical} geometry of the underlying equation e.g. in relation to singular integrals; the envelope of the generic solution is a singular integral given parametrically by
$$x(\tau)=-f'(\tau),\hspace{2mm} u(\tau)=-\tau f'(\tau)+f(\tau).$$
The parameter $\tau$ is the value of the derivative $u'$ and the singular integral 
arises as the solution of
the (overdetermined) system 
$\big\{\mathsf{P}_{\mathsf{F}}=0\big\}\cap \big\{Sep_{\mathsf{P}_{\mathsf{F}}}=f'(u')+x=0\big\},$
where $Sep$ is the separant. 
Note all points on the singular integral are irregular singularities.

Another example appears over the disc as well, by considering 
$\mathsf{P}_{\mathsf{F}}=\{(u')^2-Cu^n=0\},$ for a constant $C$ and for some value $n\geq 0.$ For example, if $n=3,$ 
all points $(x, 0, 0)$ are algebraic singularities, but all other points on the
corresponding algebraic differential equation in $Jets^1$ are regular. The singular
integral is given by the solution $u_0(x)=0$ and algebraic singularities form the graph of the first prolongation
of $u_0$. Such singular integrals $u_0(x)=0$ can be seen as coming from isolated points for which tangent cohomology of linearizations at general solutions $s_t(z)$ do not vanish. Let us describe these points for a simple case of $\mathsf{P}_{\mathsf{F}}=(u')^2-4u=0$ and note a generic solution ($\mathbb{A}^1$-family) is $s_t(z)=(z+t)^2.$ Then, the linearization is 
$lin_{s_t}(\mathsf{P}_{\mathsf{F}})=\{(z+t)u'-u=0\},$ and one verifies that 
$$H_{\D}^{i}(\mathbb{T}_{\mathbb{R}Sol(\mathsf{P}_{\mathsf{F}},s_t})\neq 0,i>0,$$
precisely at the points $t=0.$
\subsubsection{Derived Solutions for $\D$-algebraic PDEs}
\label{sssec: Derived Solutions for RelAlgNLPDEs}
We may now describe the derived analogs of solution functors $\mathsf{Sol}_{\mathcal{D}}(-)$ to a given $\D$-algebraic PDE $\mathcal{I}\rightarrow\mathcal{A}\rightarrow \mathcal{B},$
viewed as an object of $\mathsf{CAlg}(\D_X)_{\mathcal{A}/},$ by reflecting we are in a slice model topos. That is, we must describe $\tau$-local homotopy classes of functors
\begin{equation}
    \label{eqn:Slice D pre stacks}
    \widetilde{\mathsf{dAff}}_{\tau}(\D_X)_{/\underline{h}_{\mathcal{X}}^{\mathcal{D}_X}}:=Loc_{H_{\tau}}\big(\PS_X(\D_X)\big)_{/\underline{h}_{\mathcal{X}}^{\mathcal{D}_X}},
    \end{equation}
for some fibrant $\mathcal{X}\in\mathsf{dAff}(\D_X).$ The notation $Loc_{H_{\tau}}$ in equation (\ref{eqn:Slice D pre stacks}) means we are localizing along $\tau$-hypercovers, for $\tau$ the class of $\D$-étale covering families introduced above in §\ref{sssec: D-Geom Tangent and Cotangent Complexes}, and $\underline{h}_{\mathcal{X}}^{\mathcal{D}}$ is the homotopy Yoneda embedding (\ref{eqn: Model D Yoneda embedding}) for derived $\D$-algebras.

The discussion is simplified without loss of generality by assuming $\mathcal{A}^{\bullet}\in\cdga_{\D_X}$ is cofibrant and $\mathcal{X}=\mathsf{Spec}_{\D_X}(\mathcal{A}^{\bullet}).$ Consider now the slice model category of commutative $\mathcal{A}$-algebras,
$\big(\mathbf{dAff}_{\D_X,/\mathcal{X}} \big)^{\mathrm{op}}=(\cdga_{\D_X})_{\mathcal{A}/}.$ Recall, for instance the adjunctions (\ref{eqn: Free-forget AD-Mod/Alg Adjunction}), where we identified this latter object with the model category $Comm\big((\DG(\mathcal{A})\big).$ The model pre-topology $\tau$ on $\mathbf{dAff}_{\D_X}$ induces in a natural way, a topology denoted again by $\tau$, on $(\mathbf{dAff}_{\D_X})_{/\mathcal{X}}.$ The category of derived pre-stacks on this slice category is denoted 
$\mathbf{dPSt}(\mathcal{X}).$

There is a natural equivalence of categories
\begin{equation}
\label{eqn: Equivalence of categories for H localizations in slice}
\big(\widetilde{\mathbf{dAff}_{\D_X}}(\mathcal{X})_{\tau}\big):=Loc_{H_{\tau}}\big(\mathbf{dPSt}(\mathcal{X})\big)
\simeq Loc_{H_{\tau}}\big(\mathbf{dPSt}\big)_{/h_{\mathcal{X}}^{\mathcal{D}}}.
\end{equation}
The category on the right-hand side of \eqref{eqn: Equivalence of categories for H localizations in slice} is exactly $(\widetilde{\mathbf{dAff}_{\D_X}}_{\tau})_{/h_{\mathcal{X}}^{\mathcal{D}_X}}.$
We have a natural map
$h_{\mathcal{X}}^{\mathcal{D}_X}\rightarrow \underline{h}_{\mathcal{X}}^{\mathcal{D}_X},$ and this yields a Quillen adjunction
\begin{equation}
    \label{eqn: Quillen adjunction for slice D stacks}
    L\colon \widetilde{\mathbf{dAff}}_{\tau}(\D_X)_{h_{\mathcal{X}}^{\mathcal{D}_X}}\rightleftarrows \widetilde{\mathbf{dAff}}_{\tau}(\D_X)_{\underline{h}_{\mathcal{X}}^{\mathcal{D}_X}}\colon R
    \end{equation}
with the functor $L(F\rightarrow h_{\mathcal{X}}^{\mathcal{D}}):=F\rightarrow \underline{h}_{\mathcal{X}}^{\mathcal{D}}$ via the above natural map. One can verify the right-adjoint functor $R$ in (\ref{eqn: Quillen adjunction for slice D stacks}) is simply given by forming the pull-back of functors via the canonical diagram
\[
\begin{tikzcd}
 F\times_{\underline{h}_{\mathbf{X}}^{\mathcal{D}}}h_{\mathcal{X}}^{\mathcal{D}}\arrow[d]\arrow[r] & F\arrow[d]
 \\
 h_{\mathcal{X}}^{\mathcal{D}}\arrow[r] & \underline{h}_{\mathcal{X}}^{\mathcal{D}}
 \end{tikzcd}
\]

\begin{proposition}
\label{The result for interpreting DCPS}
The Quillen adjunction (\ref{eqn: Quillen adjunction for slice D stacks}) induces a Quillen equivalence $$
\widetilde{\mathbf{dAff}_{\D_X}}(\mathcal{X})_{\tau}\simeq \widetilde{\mathbf{dAff}_{\D_X}}_{/\underline{h}_{\mathbf{X}}^{\mathcal{D}_X}}.$$
\end{proposition}
\begin{proof}
Let $\mathcal{F}\rightarrow \underline{h}_{\mathbf{X}}^{\mathcal{D}}$ be a fibrant object and consider $\mathbf{Y}\in\mathbf{dAff}(\D_X)_{/\mathbf{X}}.$
Since there are obvious maps $h_{\mathbf{X}}^{\mathcal{D}}\rightarrow \underline{h}_{\mathbf{X}}^{\mathcal{D}}$, we may form the homotopy pull-back $\big(\mathcal{F}\times_{\underline{h}_{\mathbf{X}}^{\mathcal{D}}}^h h_{\mathbf{X}}^{\mathcal{D}}\big)(\mathbf{Y}).$
Furthermore, by base-change there is a canonical functor
$\mathrm{Ho}\big(\widetilde{\mathbf{dAff}}_{\tau}(\D_X)_{/\underline{h}_{\mathbf{X}}^{\mathcal{D}}}\big)\rightarrow  \mathrm{Ho}\big(\widetilde{\mathbf{dAff}}_{\tau}(\D_X)_{/h_{\mathbf{X}}^{\mathcal{D}}}\big).$
This is conservative since if $\mathbf{Y}$ is fibrant, the morphism $h_{\mathbf{X}}^{\mathcal{D}}(\mathbf{Y})\rightarrow \underline{h}_{\mathbf{X}}^{\mathcal{D}}(\mathbf{Y})$ is always surjective (up to homotopy).
We must show that the forgetful functor,
$\mathrm{Ho}\big(\widetilde{\mathbf{dAff}}_{\mathcal{D}_X,\tau}(\mathbf{X})\big)\rightarrow \mathrm{Ho} \big(\widetilde{\mathbf{dAff}}_{\tau}(\D_X)_{/\underline{h}_{\mathbf{X}}^{\mathcal{D}}}\big)$
is fully faithful.
To this end, we employ the Yoneda lemma for the slice site $\mathbf{dAff}(\D_X)_{/\mathbf{Y}}.$ Namely, for $\mathbf{Y},\mathbf{Z}$ there are equivalences
$\mathrm{Maps}_{\widetilde{\mathbf{dAff}}_{\mathcal{D}_X,\tau}(\mathbf{X})}\big(\mathbf{Spec}_{\D}(\mathcal{B}),\mathbf{Spec}_{\D}(\mathcal{R})\big)\cong \mathrm{Maps}_{\mathbf{dAff}(\D_X)_{/\mathbf{X}}}\big(\mathbf{Y},\mathbf{Z}\big),$
affine (derived) $\D$-schemes $\mathbf{Y}\cong\textbf{Spec}_{\mathcal{D}}(\mathcal{B}),$ and $\mathbf{Z}\cong \textbf{Spec}_{\mathcal{D}}(\mathcal{R}).$
Yoneda lemma for $\mathbf{dAff}(\D_X)$ provides an equivalence

$$\mathrm{Maps}_{\widetilde{\mathbf{dAff}}_{\tau}(\D_X)_{/\underline{h}_{\mathbf{X}}^{\mathcal{D}}}}\big(\mathbf{Spec}_{\D}(\mathcal{B}),\mathbf{Spec}_{\D}(\mathcal{R})\big)\cong \mathrm{Maps}_{\mathbf{dAff}(\D_X)}\big(\mathbf{Y},\mathbf{Z}\big).$$
Moreover, both the sequence
\begin{equation}
\label{eqn:fibration seq 1}
   \mathrm{Maps}_{\widetilde{\mathbf{dAff}}_{\mathcal{D}_X,\tau}(\mathbf{X})}\big(\mathbf{Spec}_{\D}(\mathcal{B}),\mathbf{Spec}_{\D}(\mathcal{R})\big)\rightarrow \mathrm{Maps}_{\mathbf{dAff}(\D_X)}\big(\mathbf{Y},\mathbf{Z}\big)\rightarrow \mathrm{Maps}_{\mathbf{dAff}(\D_X)}\big(\mathbf{Z},\mathbf{X}\big), 
\end{equation}
as well as
\begin{equation}
\label{eqn:fibration seq 2}
    \text{Maps}_{\widetilde{\mathbf{dAff}}_{\tau}(\D_X)_{/\underline{h}_{\mathbf{X}}^{\mathcal{D}}}}\big(\mathbf{Spec}_{\D}(\mathcal{B}),\mathbf{Spec}_{\D}(\mathcal{R})\big)\rightarrow \mathrm{Maps}_{\mathbf{dAff}(\D_X)}\big(\mathbf{Y},\mathbf{Z}\big)\rightarrow \mathrm{Maps}_{\mathbf{dAff}(\D_X)}\big(\mathbf{Z},\mathbf{X}\big),
\end{equation}
are fibration sequences.
Together, they imply the weak-equivalence of simplicial sets
\begin{equation}
     \mathrm{Maps}_{\widetilde{\mathbf{dAff}}_{\mathcal{D}_X,\tau}(\mathbf{X})}\big(\mathbf{Spec}_{\D}(\mathcal{B}),\mathbf{Spec}_{\D}(\mathcal{R})\big)\cong  \mathrm{Maps}_{\widetilde{\mathbf{dAff}}_{\tau}(\D_X)_{/\underline{h}_{\mathbf{X}}^{\mathcal{D}}}}\big(\mathbf{Spec}_{\D}(\mathcal{B}),\mathbf{Spec}_{\D}(\mathcal{R})\big).
\end{equation}
In other words, one has that the functor
$\mathrm{Ho}\big(\widetilde{\mathbf{dAff}}_{\mathcal{D}_X,\tau}(\mathbf{X})\big)\rightarrow \mathrm{Ho} \big(\widetilde{\mathbf{dAff}}_{\tau}(\D_X)_{/\underline{h}_{\mathbf{X}}^{\mathcal{D}}}\big)$
is fully faithful when restricted to representable derived $\D_X$-stacks. However, since an object on the left hand side is a colimit of representable stacks and the derived pull-back functor $\mathrm{Ho}\big(\widetilde{\mathbf{dAff}}_{\tau}(\D_X)_{/\underline{h}_{\mathbf{X}}^{\mathcal{D}}}\big)\rightarrow  \mathrm{Ho}\big(\widetilde{\mathbf{dAff}}_{\tau}(\D_X)_{/h_{\mathbf{X}}^{\mathcal{D}}}\big)$ commutes with homotopy colimits, since homotopy pullbacks of simplicial sets do so, we obtain that the required forgetful functor is actually fully faithful on the entire category, not just on the representable derived stacks.
\end{proof}
Proposition \ref{The result for interpreting DCPS} characterizes the homotopy category
$Ho\big(\widetilde{\mathbf{dAff}_{\D_X}}_{/\underline{h}_{\mathcal{X}}^{\mathcal{D}}}\big),$
as functors
\begin{equation}
    \label{eqn:Slice functors}
    \mathcal{X}:(\cdga_{\D_X})_{\mathcal{A}/}\rightarrow SSets.
    \end{equation}
In particular, construction of (\ref{eqn:Slice functors}) applies to any object $\mathcal{I}\rightarrow \mathcal{A}\rightarrow \mathcal{B}.$
For instance, $\mathbb{R}\mathbf{Sol}_{\mathcal{D}}(\mathcal{A}_{EL})$ as in
\ref{Derived EL PDE Example} is an object of
$Ho\big(\widetilde{\mathbf{dAff}_{\D_X}}/\underline{h}_{\mathsf{Spec}_{\D}(\mathcal{A})}^{\mathcal{D}}\big),$ by Proposition \ref{The result for interpreting DCPS}.
\subsection{Compactness}
\label{ssec: Compactness}
The appropriate non-linear algebraic theory of coherent $\D$-modules are those which are homotopically finitely $\D$-presented. They will be those compact objects in the $\infty$-category $\mathsf{CAlg}_X(\D_X)$, or equivalently in $Ho(\cdga_{\D_X}).$ We will use the characterization of such objects to obtain the appropriate derived version of coherent $\mathcal{A}^{\bullet}[\mathcal{D}_X]$-modules when $\mathcal{A}^{\bullet}$ is such a homotopically finitely presented algebra. 

Compact differential graded $\D_X$-modules are perfect $\D_X$-modules, and in some constructions, it is necessary to fix an algebra $\mathcal{A}$ and understand it to be given by retracts of such objects with underlying perfect $\D$-module. The details are spelled out in the following two subsections.

\subsubsection{Cellular Objects}
\label{sssec: Cellular Objects}
A finite-cell object in $\mathsf{CAlg}_X(\D_X)$ is 
a sequence 
$$\mathcal{B}_0=\iota\mathcal{O}_X\rightarrow \mathcal{B}_1^{\bullet}\rightarrow\cdots\rightarrow\mathcal{B}_{n-1}^{\bullet}\rightarrow \mathcal{B}_n^{\bullet}=\mathcal{B},$$
for some $0\leq n<\infty.$ Let us say that a cellular object is \emph{quasi-finite} if there exists a countable sequence
\begin{equation}
    \label{eqn: quasi-finite cell}
\mathcal{B}_0\rightarrow\cdots\rightarrow \mathcal{B}_{n-1}\rightarrow \mathcal{B}_n\rightarrow\ldots\hspace{2mm}\text{ such that } \mathcal{B}\simeq \underset{i\in I}{\mathrm{colim}}\hspace{1mm}\mathcal{B}_i.
\end{equation}

A derived $\D_X$-algebra $\mathcal{A}^{\bullet}$ is finitely $\D$-presented if it is a retract of a finite cell object, such that there are
homotopy coCartesian squares 
\begin{equation}
    \label{eqn: Hopushout squares}
\begin{tikzcd}
\Free(\mathcal{M}_i^{\bullet})\arrow[r,"c_i"] \arrow[d,"\alpha_i"] & \Free(\mathcal{M}_{i+1}^{\bullet})\arrow[d,"\alpha_{i+1}"]
    \\
    \mathcal{B}_i^{\bullet}\arrow[r,"\beta_i"] & \mathcal{B}_{i+1}^{\bullet}
\end{tikzcd},
\end{equation}
with $\mathcal{M}_i^{\bullet}$ a compact object in the $\infty$-category $\mathsf{Mod}(\D_X)$ i.e. a complex with bounded coherent cohomology. The morphism $c_i:\Free(\mathcal{M}_i^{\bullet})\rightarrow \Free(\mathcal{M}_{i+1}^{\bullet})$ is a cofibration.

Given a pushout \eqref{HoPushOut1}, by Proposition \ref{Relative D-cotangent Complex}, we have 
$$\mathbb{L}_{\mathcal{B}_{i+1}/\mathcal{B}_i} \;\simeq\; \mathcal{B}_{i+1} \otimes_{\Free(\mathcal{M}_i)} \mathbb{L}_{\Free(\mathcal{M}_{i+1})/\Free(\mathcal{M}_i)}.$$
Since $c_i$ is a cofibration, $\mathbb{L}_{\Free(\mathcal{M}_{i+1})/\Free(\mathcal{M}_i)} \;\simeq\; \Free(\mathcal{M}_{i+1}) \otimes_{\mathcal{O}_X} \operatorname{cofib}(\mathcal{M}_i\to \mathcal{M}_{i+1}).$ The description of cotangent complex for a $\D$-finitely presented algebra is given in Proposition \ref{HofpDAlgTangentComplex}.
\begin{remark}[Terminology]
\normalfont 
For later use, we also say that a finitely $\D$-presented algebra is \emph{pair-wise transversal} if each $\mathcal{M}_i$ in (\ref{eqn: Hopushout squares}) are a non-characteristic pair (\ref{eqn: NC-pair}).
\end{remark}
A cofibration is a retract of a (transfinite) composition of pushouts of generating cofibrations $\Free(S^{n-1})\rightarrow \Free({D^n})$ with $S^{n-1}$ and $D^n$ the sphere and disk complexes with object $\mathcal{D}_X.$ For instance, $S^n$ has $\D_X$ in degree $n$, and is zero everywhere else.

\begin{proposition}
\label{Retract proposition}
For every homotopy push-out diagram
\[
\begin{tikzcd}
    \mathcal{A}\arrow[r] & \mathcal{B}
    \\
    \mathcal{A}'\arrow[u]\arrow[r] & \mathcal{B}'\arrow[u]
\end{tikzcd}
\]
in $\mathsf{CAlg}_X(\D_X),$ we have a corresponding pull-pack diagram 
\[
\begin{tikzcd}
    Maps(\mathcal{A},-)\arrow[d]&\arrow[l] Maps(\mathcal{B},-)\arrow[d]
    \\
    Maps(\mathcal{A}',-)& \arrow[l] Maps(\mathcal{B}',-)
\end{tikzcd}
\]
Given a ceullar retract \eqref{eqn: quasi-finite cell} $Maps(\mathcal{B},-)\rightarrow\ldots \rightarrow Maps(\mathcal{B}_{k+1},-)\rightarrow Maps(\mathcal{B}_{k},-)\rightarrow \mathcal{O}_X,$ is a retract as well.
\end{proposition}
We will also characterize $\D$-algebras which are almost finitely $\D$-presented. 
\begin{proposition}
\label{Ho-D-fp Proposition}
An object $\mathcal{A}$ is almost finitely $\D$-presented if it is a retract of a (quasi-finite) cell object \emph{(\ref{eqn: quasi-finite cell})} such that $\mathsf{Spec}_{\D_X}(\mathcal{A})\big(\underset{i\in I}{\mathrm{colim}} \mathcal{A}_i^{\bullet}\big)\simeq \underset{i\in I}{\mathrm{colim}}\hspace{1mm}\mathsf{Spec}_{\D_X}(\mathcal{A}^{\bullet})(\mathcal{A}_i^{\bullet}),$ for any filtered diagram $\{\mathcal{A}_i\}$ of eventually coconnective $\D$-algebras.
\end{proposition}

Cofibrations in $\mathsf{Mod}(\mathcal{A}[\mathcal{D}_X])$ are themselves retracts of induced $I$-cells. In particular, they are retracts of transfinite compositions of pushouts of generating cofibrations, under the adjunction (\ref{eqn: Free-forget AD-Mod/Alg Adjunction}). So, they are pushouts of
$$id_A\otimes i_n:\mathsf{Free}_{\mathcal{A}}(S^{n-1})\rightarrow \mathsf{Free}_{\mathcal{A}}(D^n),$$
along a morphism $f:\mathsf{Free}_{\mathcal{A}}(S^{n-1})\rightarrow \mathcal{B},$ by
\[
\begin{tikzcd}
    \mathsf{Free}_{\mathcal{A}}(S^{n-1})\arrow[d,"id\otimes i_n"] \arrow[r,"f"] & \mathcal{B}\arrow[d,"incl"]
    \\
    \mathsf{Free}_{\mathcal{A}}(D^n)\arrow[r] &\mathcal{B}\oplus \mathsf{Free}_{\mathcal{A}}(S^n).
\end{tikzcd}
\]

\begin{proposition}
\label{Compact D algebras}
A derived $\D$-algebra is compact if it is a retract of a strict finite cellular object in $Ho\big(\cdga_{\D_X}\big)$ whose underlying $\D$-module is compact.
\end{proposition}
\begin{proof}
This follows from the fact that the forgetful functor $U$ in the adjunction (\ref{eqn: Free-forgetful algebra}) commutes with filtered colimits. That is, for a $\D$-module $\mathcal{M}$,
$$\mathrm{Hom}_{\cdga_{\D_X}}\big(\mathsf{Free}_{\D}\mathcal{M},\underset{i}{\mathrm{colim}}\hspace{1mm} \mathcal{A}(i)\big)\cong \underset{i}{\mathrm{colim}}\hspace{1mm} \mathrm{Hom}_{\cdga_{\D_X}}\big(\mathsf{Free}_{\D}(\mathcal{M}),\mathcal{A}(i)\big),$$
for a filtered diagram of $\mathcal{A}=\{\mathcal{A}(i)\}$ of $\D$-algebras. 
Indeed,
\begin{eqnarray*}
\mathrm{Hom}_{\cdga_{\D_X}}\big(\mathsf{Free}_{\D}(\mathcal{M}),\underset{i}{\mathrm{colim}}\hspace{1mm} \mathcal{A}(i)\big)&\cong&\mathrm{Hom}_{\DG}\big(\mathcal{M},U(\underset{i}{\mathrm{colim}}\hspace{1mm}(\mathcal{A}(i))\big)
\\
&=& \mathrm{Hom}_{\DG}\big(\mathcal{M},\underset{i}{\mathrm{colim}}\hspace{1mm} U(\mathcal{A}(i))\big)
\\
&\cong& \underset{i}{\mathrm{colim}}\hspace{1mm}\mathrm{Hom}_{\DG}\big(\mathcal{M},U(\mathcal{A}(i))\big)
\\
&\cong&
\underset{i}{\mathrm{colim}}\hspace{1mm} \mathrm{Hom}_{\cdga_{\D_X}}\big(\mathsf{Free}_{\D}\mathcal{M},\mathcal{A}(i)\big).
\end{eqnarray*}
Then if $R_{\bullet}(\mathcal{A})$ is the 
the Reedy fibrant replacement of the constant simplicial object with values in such a filtered diagram $\mathcal{A}$ in the model category structure on simplicial objects, it follows since $\mathrm{colim}_{q\in Q}R_{\bullet}\big(\underline{\mathcal{A}}(q)\big)$ is Reedy fibrant object in simplicial objects in $\mathbf{cdga}_{\D_X}$ and we obtain, for any ($\omega$-small)
$\mathcal{B}^{\bullet}\in\mathsf{cdga}_{\D_X}^{\leq 0},$ that
\begin{eqnarray*}
\mathrm{Hocolim}_{q\in Q}\mathsf{Maps}\big(\mathcal{B},\underline{\mathcal{A}}(q)\big) &\cong& \mathrm{colim}\mathsf{Maps}\big(\mathcal{B},\underline{\mathcal{A}}(q)\big)
\\
&\cong& \mathcal{H}\mathrm{om}_{\mathbf{cdga}_{\D_X}}\big(Q_{\mathcal{D}}\mathcal{B},\mathrm{colim}R_{\bullet}\underline{\mathcal{A}}(q)\big)
\\
&\cong& \mathsf{Maps}\big(\mathcal{B},\mathrm{colim}\underline{\mathcal{A}}(q)\big).
\end{eqnarray*}
As filtered colimits of simplicial sets preserve homotopy pull-backs so any finite cell objects is also homotopically finitely presented, as they are constructed from domains and codomains of $I$ by iterated homotopy push-outs. This implies that any retract of a finite cell object is homotopically finitely presented. 

Conversely, let $\mathcal{A}$ be finitely presented in $\mathrm{Ho}\big(\mathbf{cdga}_{\D_X}^{\leq 0}\big)$. The small object argument tells us that $\mathcal{A}$ is equivalent to an $I$-cell complex and the compactness of the domains and codomains of $I$ tell us this complex is the filtered colimit of its finite sub $I$-cell complexes. Consequently, $\mathcal{A}$ is equivalent to a filtered colimit of strict finite $I$-cell objects and we write it as $\mathrm{Colim}_i\mathcal{A}_i$, with $\mathcal{A}_i$ a filtered diagram of finite cell objects. Maps in $\mathrm{Ho}\big(\mathbf{cdga}_{\D_X}^{\leq 0}\big)$ from $\mathcal{A}$ are equivalent to a  colimit over maps from $\mathcal{A}\rightarrow \mathcal{A}_i$. Thus $1_{\mathcal{A}}$, the identity map, factors through some $\mathcal{A}_i$. In other words, $\mathcal{A}$ is a retract of some $\mathcal{A}_i$ in $\mathrm{Ho}\big(\mathbf{cdga}_{\D_X}^{\leq 0}\big).$

The underlying $\D_X$-module is compact since the forgetful functor commutes with filtered colimits, so a compact algebra that is written as a filtered colimit of strict finite $I$-cell objects is equivalently viewed as a filtered colimit of compact $\D_X$-modules.

\end{proof}
\begin{proposition}
\label{HofpDAlgTangentComplex}
 A homotopically finite $\D$-algebra $\mathcal{B}^{\bullet}$ has a $\D$-finite-presentation tangent complex. Moreover, over its classical locus it is a complex of $H_{\D}^0(\mathcal{B})[\D]$-modules of finite rank.
\end{proposition}

\begin{proof}
Let us start by explicitly describing all relative cotangent $\D_X$-modules $\mathbb{L}_{\mathcal{B}_k/\mathcal{B}_{k-1}}.$
It is easy to see from the inductive definitions of each level, that on one hand we have
$$\mathbb{L}_{\mathcal{B}_{k-1}/\mathrm{Sym}_{\mathcal{B}_{k-1}}(\mathcal{M}_{-k}[k-1])}\simeq \mathcal{M}_{-k}^{\bullet}[k],$$
while by base-change we have that
\begin{equation}
\label{eqn: Relative Cotangent k-1}
\mathbb{L}_{\mathcal{B}_k/\mathcal{B}_{k-1}}\simeq \mathcal{B}_k^{\bullet}\otimes_{\mathcal{B}_{k-1}}^L\mathcal{M}_{-k}^{\bullet}[k].
\end{equation}

In this case, one has
\begin{equation}
\label{eqn: CotComplex}
    \mathbb{L}_{\mathcal{B}^{\bullet}}|_{\mathsf{Spec}(H^0(\mathcal{B}^{\bullet}))}\simeq 0\rightarrow \mathcal{L}^{-n}\rightarrow \mathcal{L}^{1-n}\rightarrow\ldots\rightarrow\mathcal{L}^{-1}\rightarrow \mathcal{L}^0\rightarrow 0,
\end{equation}
represented as a bounded (above by $0$) complex of free $H_{\D}^0(\mathcal{B}^{\bullet})[\mathcal{D}_X]$-modules, with terms
$$\mathcal{L}^{-k}=H_{\D}^{-k}\big(\mathbb{L}_{\mathcal{B}_k/\mathcal{B}_{k-1}}\big),$$
in degree $-k.$
By dualizing, namely applying $\mathbb{R}\mathcal{H}\mathrm{om}_{\mathcal{B}[\mathcal{D}_X]}(-,\mathcal{B}[\mathcal{D}_X])\otimes\omega_X^{-1},$ the local Verdier duality operation (\ref{Defin: Inner Verdier Duality})
we have
$$\mathbb{T}_{\mathcal{B}^{\bullet}}^{\ell}\simeq \mathbb{R}\mathcal{H}om_{\mathcal{B}[\mathcal{D}]}\big(\mathbb{L}_{\mathcal{B}},\mathcal{B}[\mathcal{D}_X]\big)|_{\mathsf{Spec}(H^0\mathcal{B})},$$
is has amplitude $[0,n],$ and whose terms are again free $H_{\D}^0(\mathcal{B}^{\bullet})[\mathcal{D}_X]$-modules.
\end{proof}

\begin{proposition}
 \label{Tangent Good Filtration}
Consider $\mathcal{B}^{\bullet}$ as above. Then its tangent complex admits a good filtration as a $\D$-module, compatible with the natural $H_{\D}^0(\mathcal{B}^{\bullet})$-action.
\end{proposition}
\begin{proof}
In particular, for free algebras (see \autoref{Free Tangent Cotangent Example}), $\mathbb{L}\simeq \mathrm{Sym}^*(\mathcal{M}^{\bullet})\otimes\mathcal{M},$ and one uses the fact that locally on $X$ coherent $\D$-modules $\mathcal{M}^{\bullet}$ admit good filtrations. Thus, we have the obvious one on $\mathbb{L}$ induced from $\mathcal{M}.$
Generally, since  
$\mathbb{L}_{\mathcal{B}_{k-1}/\mathrm{Sym}_{\mathcal{B}_{k-1}}(\mathcal{M}_{-k}[k-1])}\simeq \mathcal{M}_{-k}^{\bullet}[k],$
where $\mathcal{M}_{-k}^{\bullet}$ is coherent, thus carries a good filtration, so do the relative cotangents. Similar reasoning endows terms (\ref{eqn: Relative Cotangent k-1}) with a good filtration, and similarly for (\ref{eqn: CotComplex}).
For $\mathbb{T}_{\mathcal{B}}^{\ell}$, the induced filtration is the obvious dual one,
$$Fl^m(\mathcal{L}_{-k}^{\circ}):=\{\psi \in \mathbb{R}\mathcal{H}om(\mathcal{L}^{-j},\mathcal{B}[\mathcal{D}])|\psi(Fl^r\mathcal{L}^{-j})\subset Fl^{r+m}(\mathcal{B}[\mathcal{D}])\big\}.$$
\end{proof}

\subsection{Connectivity and finiteness for $\D$-spaces}
In this subsection, we follow \cite{GR17a}, and collect the $\D$-geometric analogs of standard notions e.g. $n$-coconnectivity, arising in derived algebraic geometry.

\subsubsection{Truncation}
For each $n\geq 0$, there is a 
$$\mathsf{CAlg}^{-n}(\D_X):=\{\mathcal{A}^{\bullet}\in\mathsf{cdga}_{\D_X}^{\leq 0}| H_{\D}^i(\mathcal{A}^{\bullet})=0, \forall i\leq -n\},$$ 
and a canonical embedding of this category into arbitrary derived $\D$-algebras admitting a left-adjoint, referred to as  (cohomological) truncation, 
\begin{equation}
\label{eqn:Cohomological truncation}
    \tau^{-n }:\mathsf{CAlg}_X(\D_X)\rightarrow \mathsf{CAlg}_X^{ -n}(\D_X).
    \end{equation}
An object $\mathcal{X}\in\mathsf{dAff}(\D_X)$ is $n$-\emph{co-connective} if $\mathcal{X}$ is representable, with corresponding derived $\D$-algebra in the essential image of (\ref{eqn:Cohomological truncation}) i.e. $\mathcal{X}=\mathbf{Spec}_{\D_X}(\mathcal{A}^{\bullet})$ with $\mathcal{A}^{\bullet}\in \mathsf{CAlg}_X^{-n}(\D_X)$. At the level of opposite categories, we have a full-subcategory
$\mathsf{dAff}^{n}(\D_X)\subset \mathsf{dAff}(\D_X),$
and natural embeddings
\begin{equation}
\label{eqn:Embedding for D affines} 
i_{\leq n}:\mathsf{dAff}^{n}(\D_X) \rightarrow \mathsf{dAff}(\D_X)
\end{equation}
admitting right-adjoint functors
\begin{equation}
\label{eqn:Restriction on affines functor}
(-)^{\leq n}:\mathsf{dAff}(\D_X) \rightarrow \mathsf{dAff}^{ n}(\D_X),\hspace{2mm} \mathbf{Spec}_{\D_X}(\mathcal{A})\mapsto\mathbf{Spec}_{\D_X}\big(\tau^{ -n}\mathcal{A}\big).
\end{equation}
Note that for $n=0$ the category $\mathsf{dAff}^{0}(\D_X)$ coincides with $\mathrm{Aff}_X(\D_X)$, the $1$-category of classical $\D_X$-affine schemes. In particular, algebras are concentrated in cohomological degree $0,$ supplied with the trivial differential.

Natural extensions of various free-forgetful adjunctions e.g. (\ref{eqn: Free-forgetful algebra}), which can be shown to naturally preserve $\infty$-subcategories of $n$-coconnective objects, give rise to a natural functor
$$\mathsf{CAlg}_X^{-n}(\D_X)\rightarrow \mathsf{Mod}(\D_X)\rightarrow \mathsf{QCoh}(X),$$
by forgetting the $\D$-algebra structure followed by forgetting the $\D$-module structure.
The essential image of such a functor are $n$-co connective quasi-coherent sheaves on $X.$

Moreover, one has a natural diagram
\[
\begin{tikzcd}
    \mathsf{CAlg}_X(\D_X)\arrow[d] \arrow[r,"(\ref{eqn:Cohomological truncation})"]& \mathsf{CAlg}_X^{-n}(\D_X)\arrow[d]
    \\
    \mathsf{Mod}(\D_X)\arrow[r] & \mathsf{Mod}^{-n}(\D_X)
\end{tikzcd}
\]
whose bottom functor is a similar truncation.
\begin{remark}
\label{remark: k-truncated}
    There are other useful notions e.g. of $k$-truncated $\D$-prestacks $$\PS(\D_X)_{\leq k}:=\mathsf{Fun}\big(\mathsf{Aff}(\D_X)^{op},\mathsf{Spc}_{\leq k}\big),$$ where $\mathsf{Spc}_{\leq k}\subset \mathsf{Spc}$ is a full subcategory of $k$-truncated spaces i.e. $S$ such that each connected component $S'$ of $S$ satisfies $\pi_i(S')=0,i>k,$ but we do not make explicit use of them here. 
\end{remark}
The functor (\ref{eqn:Restriction on affines functor}) extends to
$res_{n}:\PS(\D_X)\rightarrow \PS^{ n}(\D_X),\hspace{1mm} \mathcal{X}\mapsto \mathcal{X}^{\leq n},$
and such a restriction admits a fully faithful left adjoint given by left Kan extension along (\ref{eqn:Embedding for D affines}):
\begin{equation}
    \label{eqn: LeftKan for n-coconn D Pre stacks}
Kan_{i_{\leq n}}^{L}:\PS^{ n}(\D_X)\rightarrow \PS(\D_X).
\end{equation}
A $\D_X$-pre stack $\mathcal{X}$ is said to be \emph{$n$-co-connective} if and only if it lies in the essential image of (\ref{eqn: LeftKan for n-coconn D Pre stacks}). We may unambiguously write
$\PS^n(\D_X):=\mathsf{Fun}\big(\mathsf{dAff}^n(\D_X)^{op},\mathsf{Spc}\big).$

By Remark \ref{remark: k-truncated}, one may define subcategories of $n$-coconnective $k$-truncated prestacks for various $n,k:$ $\mathcal{X}\in \PS^n(\D_X)$ is $k$-truncated if the corresponding $\infty$-hom groupid it determines is an $(n+k)$-groupoid. In particular, for $n=0,k=0$ we recover pre-sheaves of sets on classical affine $\D$-schemes i.e. ordinary classical $\D$-spaces.

Consider the category of $\D$-finite type schemes (\ref{D Finite Type}) which are additionally $n$-coconnective. There is a corresponding embedding, 
$$\mathrm{incl}_{f.t.}^n:\mathsf{dAff}_{\mathrm{f.t}}^{ n}(\D_X):=\mathsf{dAff}_{\mathrm{f.t}}^{<\!<}(\D_X)\cap \mathsf{dAff}^{n}(\D_X)\hookrightarrow \mathsf{dAff}^n(\D_X).$$

An $n$-coconnective $\D$-pre stack $\mathcal{X}$, is said to be \emph{locally of finite type} if it is a left Kan extension of its own restriction along the embedding $\mathrm{incl}_{\mathrm{f.t}}^n.$

\subsubsection{$\D$-finite presentation}
We now define the finite-presentation condition for morphisms of $\D$-algebras. For applications, we apply this to objects $\mathcal{B}$ of the slice $\mathsf{CAlg}_{X}(\D_X)_{\mathcal{O}_{J^{\infty}(E)}/},$ for some infinite-jet $\D_X$-algebra.
\begin{definition}
    \label{DAfpDefinition}
Let $n\geq 0$ and consider a morphism $\mathcal{A}^{\bullet}\rightarrow \mathcal{B}^{\bullet}$ in $\mathsf{CAlg}_X(\D_X)^{\leq -n}.$ We say that $\mathcal{B}^{\bullet}$ is $\D_X$ \emph{finitely presented} over $\mathcal{A}$ (as an $\mathcal{A}^{\bullet}-\mathcal{D}_X$-algebra via $f$) if it is a compact object in $$\mathsf{CAlg}\big(\mathsf{Mod}_{\D_X}(\mathcal{A})^{\leq -n}\big)\simeq \mathsf{CAlg}_X(\D_X)_{\mathcal{A}/}^{\leq -n}.$$
It is $\D_X$-\emph{almost finitely presented} if for every $n\in \mathbb{Z},$ the induced map 
$$\tau^{\leq -n}(f):\tau^{\leq -n}(\mathcal{A})\rightarrow \tau^{\leq -n}(\mathcal{B}),$$
presents $\tau^{\leq -n}(\mathcal{B})$ as a finitely presented $\tau^{\leq -n}(\mathcal{A})$-algebra.
Similarly, $\mathcal{A}\in \mathsf{CAlg}_X(\D_X)$ is \emph{$\D_X$-almost finitely presented} if for every $n$ the object $\tau^{\leq -n}(\mathcal{A})$ is compact in $\mathsf{CAlg}_X(\D_X)^{\leq -n}.$
Further, $\mathcal{A}$ is \emph{bounded} if it is eventually coconnective.
\end{definition}

Thus, a morphism of
 $\D_X$-stacks $f:\mathcal{X}\rightarrow \EQ$ is $\D$-finitely presented if for $\mathsf{Spec}_{\D_X}(\mathcal{A})$ and a map $\gamma:\mathsf{Spec}_{\D_X}(\mathcal{A})\rightarrow \EQ$, together with a filtered system $\{\mathcal{B}_i\}_{i\in I}$ of derived $\D$-algebras under $\mathcal{A}$, certain mapping spaces commute with homotopy colimits i.e.
$$\mathrm{Maps}_{\mathsf{dStk}(\D_X)/\mathsf{Spec}_{\D}(\mathcal{A})}\big(\Spec_{\D}(\underset{i\in I}{\mathrm{colim}}\mathcal{B}_i),\mathcal{X}\times_{\EQ}^h\mathsf{Spec}_{\D}(\mathcal{A})\big),$$
and
$$\underset{i\in I}{\mathrm{colim}}\hspace{1mm}\mathrm{Maps}_{\mathsf{dStk}(\D_X)/\mathsf{Spec}_{\D}(\mathcal{A})}\big(\mathsf{Spec}_{\D}(\mathcal{B}_i),\mathcal{X}\times_{\EQ}^h\mathsf{Spec}_{\D}(\mathcal{A})\big),$$
are naturally equivalent.

In turn we have a full-subcategory spanned by such objects
\begin{equation}
\label{eqn: Finite type n coconnective D pre stacks}
\PS^{ n}_{lft}(\D_X):=\mathsf{Fun}\big(\mathsf{dAff}_{\mathrm{f.t}}^{ n}(\D_X)^{op},\mathsf{Spc}\big)\hookrightarrow \PS^{ n}(\D_X).
\end{equation}
Left Kan extension along $\mathrm{incl}_{\mathrm{f.t}}^n$, with the appropriate notion of restriction provides a mutually adjoint pair of $\infty$-functors
\begin{equation}
    \label{eqn:Lft extension and restriction adjunction D Pre stacks}
    Kan_{\mathrm{incl}_{\mathrm{f.t}}^n}^{L}\colon \PS_{\mathrm{lft}}^{n}(\D_X)\rightleftarrows \PS^{n}(\D_X)\colon \mathrm{res}.
    \end{equation}
It follows from properties of being a left-adjoint, that for each $n$, the restriction
\eqref{eqn:Lft extension and restriction adjunction D Pre stacks}
commutes with colimits and with finite limits. Moreover, if $\mathcal{X}\in\mathsf{dAff}^{ n}(\D_X),$ then it is contained in $\mathsf{dAff}_{\mathrm{f.t}}^{ n}(\D_X)$ if and only if the derived $\D_X$-pre stack it represents is in $\PS_{\mathrm{lft}}^{ n}(\D_X).$

\section{Derived non-linear PDEs via de Rham spaces}
\label{sec: Derived dR-NLPDES}
Let us now elaborate the relative $X_{\DR}$-space point of view, and how the geometry of non-linear \textsc{pde} fits into this context.

For every $X\in \PS,$ there is a canonical morphism $q_{\DR}:X\rightarrow X_{\DR}$ inducing a functor on slice $\infty$-categories
\begin{equation}
    \label{eqn: PreStk dR Pushforward}
q_{\DR,*}:\PS_{/X}\rightarrow \PS_{/X_{\DR}},
\end{equation}
admitting an adjoint functor given by
\begin{equation}
    \label{eqn: PreStk dR Pullback}
q_{\DR}^*:\PS_{/X_{\DR}}\rightarrow \PS
_{/X}.
\end{equation}
If $f:X\rightarrow Y$ is a morphism of prestacks there is a corresponding morphism $f_{\DR}:X_{\DR}\rightarrow Y_{\DR}$ and an induced functor $f_{\DR}^*$ of prestacks, of which equation (\ref{eqn: PreStk dR Pushforward}) is a special case, over $Y_{\DR}$ to prestacks over $X_{\DR}$ given by precomposition with $f.$

\begin{observation}
\label{obs: Artin}
Let $X$ be a derived Artin stack. Then $q_{\DR}$ is proper so $q_{\DR,*}$ is left adjoint to the symmetric monoidal functor $q_{\DR}^!:\IC(X_{\DR})\rightarrow \IC(X).$ One has
that $q_{\DR,*}^{\IC}$ is op-lax symmetric monoidal and in particular, $q_{\DR,*}\omega_X\in \mathsf{CoCommcoAlg}\big(\IC(X_{\DR})\big),$ is a co-commutative coalgebra. 
\end{observation}
If $Z$ is a $D_X$-stack, $T^*Z$ exists in pro-$\D$-modules i.e. $x:\mathsf{Spec}_{\D_X}(\A
)\to Z$ gives a pro-object $T_x^*Z$ in $\A$-modules in $\D$-complexes.
Similarly, $Z'\to X_{\DR}$ gives $T^*(Z'/X_{\DR})\in \mathrm{Pro}\big(\mathsf{QCoh}(X_{\DR})\big)$ and $x
':S\to Z'$ defines a fiber $T^*_{x'}(Z'/X_{\DR})\in \mathrm{Pro}(\mathsf{QCoh}(S)),$ with a compatible crystalline structure over $X_{\DR}.$
Somehwat informally, let 
$$\Phi:\mathsf{QCoh}(X_{\DR})\xrightarrow{\simeq}\\D_X-\mathsf{Mod},$$
denote the equivalence $\infty$-categories \cite{GR14}. Then, 
$$\mathrm{Pro}(\Phi):\mathrm{Pro}(\mathsf{QCoh}(X_{\DR}))\simeq\mathrm{Pro}(\\D_X-\mathsf{Mod}),$$
is an equivalence as well. This induces an equivalence of $\A$-module categories,
$$\A-\mathrm{Mod}\big(\mathsf{QCoh}(X_{\DR})\big)\simeq\Phi(\A)-\mathrm{Mod}(\D_X-\mathsf{Mod}),$$
and subsequently, 
$$\mathrm{Pro}\big(\A-\mathrm{Mod}\big(\mathsf{QCoh}(X_{\DR})\big)\big)\simeq\mathrm{Pro}\big(\Phi(\A)-\mathrm{Mod}(\D_X-\mathsf{Mod})\big).$$

Nilcompleteness is preserved under this equivalence. The following appears as \cite[Prop.~4.9]{Kry2026}, which we recall.
\begin{proposition}
\label{NilCompleteness}
Let $Z\in \mathrm{PStk}_{/X_{\DR}}^{\mathrm{\mathrm{laft-def}}}.$ Then its associated $\D_X$-prestack $\EuScript{F}_{\EQ}$ is nilcomplete i.e. for any connective $A\in \mathsf{CAlg}(D_X)^{\leq 0},$ there is a canonical equivalence 
$$\EuScript{F}_{\EQ}(A)\rightarrow \varprojlim_n \EuScript{F}_{\EQ}(\tau_{\D}^{\leq n}A).$$
\end{proposition}
\begin{proof}
Let $A$ be such that $T=\mathsf{Spec}_{X_{\DR}}A\to X_{\DR}.$ We must show there are equivalences,
$$\Map_{/X_{\DR}}(T,Z)\simeq \varprojlim_n\Map_{/X_{\DR}}(\mathsf{Spec}_{X_{\DR}}\tau_{\D}^{\leq n}A,Z).$$ 
Since $\tau^{\leq n}\mathsf{Spec}_{X_{\DR}}(A)\simeq \mathsf{Spec}_{X_{\DR}}(\tau_{\D}^{\leq n}A),$ by hypothesis on $Z$, the map 
    $$\Map_{/X_{\DR}}(T,Z)\to \varprojlim_n\Map_{/X_{\DR}}(\tau^{\leq n}T,Z)$$ is an equivalence. Thus, from the definition of $\EuScript{F}_{\EQ}(A)$ we have
    \begin{eqnarray*}
    \Map_{/X_{\DR}}(\mathsf{Spec}_{X_{\DR}}A,Z)
        &\simeq&\varprojlim_n\Map_{/X_{\DR}}(\tau^{\leq n}\mathsf{Spec}_{X_{\DR}}A,Z)
        \\
        &\simeq& \varprojlim_n\Map_{/X_{\DR}}\big(\mathsf{Spec}_{X_{\DR}}(\tau_{\D}^{\leq n}A),Z\big)
        \\
        &\simeq& \varprojlim_n \EuScript{F}_{\EQ}(\tau_{\D}^{\leq n}A).
    \end{eqnarray*}
    Thus $\EuScript{F}_{\EQ}$ is nilcomplete in the $\D$-sense.
\end{proof}

Under the equivalence between $Z\to \EuScript{F}_{\EQ}$, deformation theory is also preserved. To this end, recall the pointwise almost affine deformation complex (\ref{eqn: Almostaffinecotangent}).

\begin{proposition}
Consider $Z\in \mathrm{PStk}_{X_{\DR}}^{\mathrm{\mathrm{laft-def}}}.$ Then, $\mathbb{L}_{\EuScript{F}_{\EQ}}$ exists. Furthermore, for each $\varphi:\mathsf{Spec}_{X_{\DR}}(A)\to Z$, it is the cofiber 
$$\mathbb{L}_{\EuScript{F}_{\EQ},\varphi}\to \mathbb{L}_{\mathsf{Spec}_{X_{\DR}}(A)/X_{\DR}}\to \mathbb{L}_{\mathsf{Spec}_{X_{\DR}}A/Z}.$$
\end{proposition}
\begin{proof}
By hypothesis $\mathbb{L}_{\EQ/X_{\DR}}\in \mathsf{QCoh}(\EQ)$ exists. Then, for each $A\in \mathsf{QCAlg}(X_{\DR})$ with point $\varphi:\mathsf{Spec}_{X_{\DR}}A\to Z,$ the deformation theory of $\EuScript{F}_{\EQ}$ at $\varphi$ is controlled by the space of lifts of $\varphi$ to square-zero extension of $A\oplus M$ for some connective module $M$. i.e. 
$$Lifts_{\varphi}(A\oplus M)\simeq\{\varphi':\mathsf{Spec}_{X_{\DR}}(A\oplus M)\to Z,\text{ over }X_{\DR} \text{ extending }\varphi\}.$$
This space of lifts is controlled by $\varphi^*\mathbb{L}_{\EQ/X_{\DR}}$ i.e. $\mathrm{Maps}_{A-Mod}(\varphi^*\mathbb{L}_{\EQ/X_{\DR}},M).$ Then, the cotangent space of the $\D_X$-prestack of points of $\EuScript{F}_{\EQ}$ at $\varphi$ is the $A$-module controlling deformations of $\varphi:\mathsf{Spec}_{X_{\DR}}(A)\to Z$, denoted $\mathbb{L}_{\EuScript{F}_{\EQ},\varphi}.$ Thus, there is an equivalence 
$$\mathbb{L}_{\EuScript{F}_{\EQ},\varphi}\simeq \varphi^*\mathbb{L}_{\EQ/X_{\DR}}\in \mathrm{Mod}_{A}\big(\mathsf{QCoh}(X_{\DR})\big).$$
Applying Proposition \ref{Relative D-cotangent Complex} to the morphisms $\mathsf{Spec}_{X_{\DR}}\xrightarrow{\varphi}\EQ\xrightarrow{\pi}X_{\DR},$ we have
$$\varphi^*(\mathbb{L}_{\EQ/X_{\DR}})\to \mathbb{L}_{\mathsf{Spec}_{X_{\DR}}A/X_{\DR}}\to \mathbb{L}_{\mathsf{Spec}_{X_{\DR}}A/Z}\xrightarrow{+1},$$
which by the above, presents the deformation complex as a cofiber at $\varphi$,
$$\mathbb{L}_{\EuScript{F}_{\EQ},\varphi}\to \mathbb{L}_{\mathsf{Spec}_{X_{\DR}}(A)/X_{\DR}}\to \mathbb{L}_{\mathsf{Spec}_{X_{\DR}}A/Z}.$$

\end{proof}
Note if $\EQ\to X_{\DR}$ is relatively affine, so $Z\simeq\mathsf{Spec}_{X_{\DR}}(\A)$ for some $\A\in \mathsf{QCAlg}(X_{\DR}),$ then 
$$\mathsf{QCoh}(\mathsf{Spec}_{X_{\DR}}(\A))\simeq \mathsf{Mod}_{\A}\big(\mathsf{QCoh}(X_{\DR})\big)\simeq \A-\mathsf{Mod}\big(\D_X-\mathsf{Mod}(X)^{\leq 0}).$$
If $f:Z=\mathsf{Spec}_{X_{\DR}}(\A)\to Z'=\mathsf{Spec}_{X_{\DR}}(\A'),$ or equivalently, $f^{\sharp}:\A'\to \A'$ is a morphism in $\mathsf{QCAlg}(X_{\DR}),$ there is an induced functor 
$$f^*:\mathsf{Mod}_{\A'}\big(\mathsf{QCoh}(X_{\DR})\big)\to \mathsf{Mod}_{\A'}\big(\mathsf{QCoh}(X_{\DR})\big).$$
For not necessarily relatively $X_{\DR}$-affines, the relative-spec adjunction gives an equivalence 
$$\mathsf{QCoh}(\EQ)\simeq\underset{\mathsf{Spec}_{X_{\DR}}(\A)\to Z}{\mathrm{lim}}\mathsf{QCoh}\big(\mathsf{Spec}_{X_{\DR}}(\A)\big),$$
via the $*$-pullback, in the $\infty$-category of presentable stable $\infty$-categories with colimit preserving functors.

It is convenient to fix some notation.
\begin{remark}[Notation]
The underlying affine scheme associated to a given affine $\D$-scheme is denoted $\mathsf{Spec}_X(For^{\ell}A),$ and $\mathsf{Spec}_{X_{\DR}}(A)$ is used for the $\D$-perspective.
\end{remark}

\begin{proposition}
\label{prop: Co-conntruncations}
    Let $Z\to X$ be affine over $X.$ Then, at any level of coconnective truncation, $Z$ is a finite product of affine $D$-schemes which are possibly infinite products of affine $D$-schemes 
    $$U_i\simeq\mathsf{Spec}_{X_{\DR}}(A_i),A_i\simeq \mathrm{Sym}^{\otimes^!}(\mathrm{ind}^{\ell}E_i),$$
    for a vector bundle $E_i$ on $X.$
\end{proposition}
\begin{proof}
    Since $(\mathsf{QCoh}(X_{\DR}),\otimes)$ is locally presentable stable and compactly generated $\infty$-category, generated by $\mathrm{ind}^{\ell}E,$ for $E\in \mathrm{Coh}(X),$ thus any $A\in \mathsf{QCAlg}(X_{\DR})$ admits a cellular presentation as a filtered colimit of free commutative algebras generated by objects $\mathrm{ind}^{\ell}E.$ Then $\tau^{\geq -n}A$ is obtained by attaching finitely many cells via iterated homotopy pushouts, built on $\mathrm{ind}^{\ell}E_i[-k_i]$ for $0<k<n$. In particular, 
    $$\tau^{\geq -n}A\simeq \pi_0(A)\otimes Sym\big(\bigoplus_i \mathrm{ind}^{\ell}E_i[-k_i]\big),$$
    for vector bundles or coherent sheaves $E_i\in \mathrm{Coh}(X).$
    Since coproducts correspond to tensor products (equiv. products are fiber products), a finite direct sum of $E_i$'s induces a finite tensor product of commutative algebras, thus a finite product of affine $D$-schemes of the form $\mathsf{Spec}_{X_{\DR}}\big(\mathrm{Sym}(\mathrm{ind}^{\ell}E_i)\big).$
\end{proof}

We mainly focus on the jet-construction applied to classical objects, but note the following general result.
\begin{proposition}
\label{prop: AffXsoAffXDR}
    Suppose $\EQ_X\to X$ is affine over $X$. Then its derived Weil restriction $q_{\DR*}(Z_X)$ is affine over $X_{\DR}.$
    \end{proposition}
    \begin{proof}
By assumption, let $R_0\in \mathsf{QCAlg}(X)^{\leq 0},$ be a connective quasi-coherent commutative algebra on $X,$ for which $Z_X\simeq \mathsf{Spec}_X(R_0).$ Then, 
consider $For^{\ell}:\mathsf{QCoh}(X_{\DR})^{\leq 0}\to \mathsf{QCoh}(X)^{\leq 0},$ and its induced functor on commutative algebra objects,
$$\mathsf{For}_{\mathrm{Comm}}^{\ell}:\mathsf{QCAlg}(X_{\DR})^{\leq 0}\to \mathsf{QCAlg}(X)^{\leq 0}.$$
Recall that $\mathsf{QCAlg}(X_{\DR})^{\leq 0}\simeq \mathrm{CAlg}\big(D-\mathrm{mod}(X))$ and similarly $\mathsf{QCAlg}(X)^{\leq 0}\simeq \mathrm{CAlg}(\mathsf{QCoh}(X)^{\leq 0}).$ Then, since for any $t:T\to X_{\DR},$ we have by the universal property of $q_{\DR*},$
\begin{eqnarray*}
\mathrm{Maps}_{\mathsf{PStk}_{X_{\DR}}}(T\times X_{\DR},q_{\DR*}\mathsf{Spec}_{X}(R_0)\big)&\simeq& \mathrm{Maps}((\mathsf{For}_{\mathrm{Comm}}^{\ell})^L(R_0),t_*\mathcal{O}_T)
\\ 
&\simeq& \mathrm{Maps}(T\times X_{\DR},\mathsf{Spec}_{X_{\DR}}(A_0)\big),
\end{eqnarray*}
for $A_0\simeq (\mathsf{For}_{\mathrm{Comm}}^{\ell})^{L}(R_0),$ where we have the left-adjoint to the forgetful functor above. Thus, 
$$q_{\DR*}(\mathsf{Spec}_X(R_0)\to X)\simeq (\mathsf{Spec}_{X_{\DR}}(A_0)\to X_{\DR}),$$
so that $q_{\DR*}\EQ_X$ is affine over $X_{\DR}.$
    \end{proof}
As an example, if $R_0=\mathrm{Sym}(E)$ for some $E\in \mathsf{QCoh}(X)$, then using the left-adjoint property one has $A_0\simeq \mathrm{Sym}^!(q_{\DR*}^{\mathsf{QCoh}}E),$ where $q_{\DR*}^{\mathsf{QCoh}}:\mathsf{QCoh}(X)\to \mathsf{QCoh}(X_{\DR}),$ is the functor of induction. 

\begin{proposition}
\label{D-PreStk equiv X dR PreStk}
Let $E\rightarrow X$ be an object of $\PS_{/X},$ and denote its restriction $q_{\DR,*}(E)\in \PS_{/X_{\DR}}$ by $\J(E).$
There is an identification of pre-stacks,
$\Sect_{X_{\DR}}(E/X)\simeq\J(E).$
\end{proposition}
\begin{proof}
    One can check this identification holds between objects of $\PS_{X_{\DR}}$, since for $\mathsf{Res}_{X/X_{\DR}},$ we have
for $U\in\PS_{/X_{\DR}}$ one has
$$\Map_{/X_{\DR}}\big(U,\mathsf{Res}_{X/X_{\DR}}E)\big)=\Map_{/X}(U\times_{X_{\DR}}X,E).$$
However, by the $(q_{\DR}^*,q_{\DR,*})$-adjunction,
\begin{equation}
    \label{eqn: dR-Jets as Mapping Space}
\Map_{/X}(U\times_{X_{\DR}}X,E)=\Map_{/X}(q_{\DR}^*U,E)\simeq \Map_{/X_{\DR}}(U,q_{\DR,*}E).
\end{equation}
This last expression is precisely the value of
$\Sect_{X_{\DR}}(E/X)(U\rightarrow X_{\DR}).$
One identifies the derived stack over $X_{\DR}$ 
    $$\Sect_{X_{\DR}}(E/X)\simeq \Map_{X_{\DR}}(X,E)\times_{\Map_{X_{\DR}}(X,X)}^h X_{\DR},$$
    and computes its $(t:T\rightarrow X_{\DR})$-points.
    It explicitly gives
    \begin{eqnarray*}
\Sect_{X_{\DR}}(E/X)(T)&\simeq&\Map_{/X_{\DR}}\big(T,\Map_{X_{\DR}}(X,E)\times_{\Map_{X_{\DR}}(X,X)}^h X_{\DR}\big)
\\
&\simeq& \Map_{/X_{\DR}}\big(T,
\Map_{X_{\DR}}(X,E)\big)\times_{\Map_{/X_{\DR}}(T,\Map_{X_{\DR}}(X,X))}^h \{t\}
\\
&\simeq& 
\Map_{/X_{\DR}}(T\times_{X_{\DR}}^h X,E)\times_{\Map_{/X_{\DR}}(T\times_{X_{\DR}}^h X,X)}^h\Map_{/X_{\DR}}(T,X_{\DR}).
    \end{eqnarray*}
\end{proof}
When $X$ is a complex analytic manifold, the pre-stack $\J(E)$ is called the $\D_X$-jet prestack of $E.$

\begin{proposition}
\label{prop: Def but not laft proposition}
For an object $E\in \PS_{/X}^{\mathrm{laft}},$ the corresponding $\D_X$-jet prestack $\J(E)$ is equivalent to the relative prestack
 $\J(\mathcal{E})^{\mathrm{Crys}}\rightarrow X$ with a flat connection along $X$ (i.e. crystal of spaces). If $E$ is laft and admits deformation theory, then $\J(E)$ also admits deformation theory.
\end{proposition}
The identification comes via the pull-back diagram in prestacks:
\begin{equation}
\label{eqn: Jet pb}
\begin{tikzcd}
\JetX(E)\simeq X \times_{X_{\DR}} q_{\DR,*}(E)\arrow[d] \arrow[r] & \J(E) \arrow[d]
\\
X\arrow[r,"q_{\DR}"] & X_{\DR}
\end{tikzcd}
\end{equation}

\begin{warning}
\label{warn: Jets not laft}
In Proposition \ref{prop: Def but not laft proposition}, even if $E$ is laft, in general $\J(E)$ will \emph{not} be. Thus, additional finiteness requirements later will be imposed. See also Corollary \ref{cor: Laft to DAfp}.
\end{warning}

As a suitable adjoint, $\J(-)$ has several desirable properties, which for our purposes are indicated in the following particular case.
\begin{proposition}
    Functor \emph{(\ref{eqn: PreStk dR Pushforward})} preserves fibre products.
\end{proposition}

There is a natural $\infty$-comonad corresponding to the adjunction $(q_{\DR,*},q_{\DR}^*),$ i.e. the composition $q_{\DR}^*\big(\J(E)\big)= q_{\DR}^*\circ q_{\DR,*}(E)\in \PS_{/X},$ which defines
\begin{equation}
    \label{eqn: Jet functor}
\JetX(-):\PS_{/X}\rightarrow \PS_{/X}.
\end{equation}
\begin{definition}
\label{DerivedInfiniteJets}
Let $E\in \PS_X$ be an $X$-prestack. The \emph{infinite jet prestack} of $E$ is the image $\JetX(E)$ by the functor (\ref{eqn: Jet functor}). 
    \end{definition}

\begin{remark}
\normalfont Functor (\ref{eqn: PreStk dR Pullback}) can be viewed more generally as an assignment
$\PS_{/X_{\DR}}\rightarrow \mathsf{Fun}(\Delta^1,\PS),$ mapping 
$(\EQ\rightarrow X_{\DR})$ to $(Z\times_{X_{\DR}}^hX\rightarrow Z).$ Viewed this way, it preserves étale morphisms (descends to pre-stacks statisfying étale descent i.e. derived stacks) and preserves fiber products.
\end{remark}
The next result summarizes some results\footnote{Obtained in the first authors PhD thesis \cite{Kry} but not announced elsewhere.} concerning the representability of both infinite jet and finite jet functors in derived algebraic geometry.
\begin{proposition}
\label{prop: Geometry of Derived Jets1}
Consider $\mathrm{Jets}^{\infty}(-):\mathsf{PStk}_{/X}\rightarrow \mathsf{PStk}_{/X},$
 and set $\mathsf{Jets}_X^k$ to be the finite order truncations $p_{k-dR}^*p_{k-dR*}$. Let $X=\mathsf{Spec}_k(A)$ be an affine derived scheme.
 \begin{itemize}
     \item[(i)]
 For every $k\in \mathbb{Z}_{\geq 0},$ the functor $\underline{\mathsf{Jets}}_X^{k}:\mathsf{dAff}^{op}\rightarrow \mathsf{Spc},$ is corepresented by an object $L\mathcal{J}^k(A)$.
 \item[(ii)] If $X$ is a smooth classical $k$-scheme of finite type, so is $L\mathcal{J}^k(A),$ and $\pi_0\big(L\mathcal{J}^k(A)\big)\simeq \mathcal{J}^k\big(\pi_0(A)\big).$
 \item[(iii)] If $\{U_i\rightarrow X\}_{i}$ is a Zariski open cover of a derived scheme $X,$ then $L\mathcal{J}_{U_i}^k$ glue along these covers.
 \end{itemize}
\end{proposition}

\begin{proof}
    We need to show that 
$\underline{\mathsf{Jets}}_A^k\simeq h_{\mathbb{L}\mathcal{J}^k(A)}\simeq \mathrm{Maps}_{\mathsf{CAlg}_k}(\mathbb{L}\mathcal{J}^k(A),-).$
This is immediate from evaluation on $B$-points and using the fact that we have an adjunction
$\pi_0\simeq H^0:\mathsf{CAlg}_k^{\leq 0}\rightleftarrows CAlg_k:incl$
Namely,
\begin{eqnarray*}
    \underline{\mathsf{Jets}}_A^k(B)&\simeq&\mathrm{Maps}_{\mathsf{CAlg}_k}(A,B\otimes_k^{\mathbb{L}}k[\epsilon]/\epsilon^{k+1})\simeq \mathrm{Maps}_{\mathsf{CAlg}_k}(H^0(A),B\otimes_k^{\mathbb{L}}k[\epsilon]/\epsilon^{k+1}).
\end{eqnarray*}
Thus $\underline{\mathsf{Jets}}_A^k(B)\simeq  \mathrm{Hom}_{CAlg_k}(\mathcal{J}^k\big(H^0(A)\big),B).$

Similarly, there are equivalences
$$h_{\mathbb{L}\mathcal{J}^k(A)}(B)\simeq \mathrm{Maps}_{\mathsf{CAlg}_k}\big(\mathbb{L}\mathcal{J}^k(A),B\big)\simeq \mathrm{Hom}_{CAlg_k}\big(H^0\mathbb{L}\mathcal{J}^k(A),B\big)\simeq  \mathrm{Hom}_{CAlg_k}\big(\mathcal{J}^k\big(H^0(A)\big),B\big).$$
Now suppose that $X$ is not necessarily of finite type e.g. of the form
$A\simeq \underset{i\in I}{\mathrm{colim}} A_i$ for finitely presented $A_i,$ as a sifted colimit completion.
\begin{equation*}
\underline{\mathsf{Jets}}_A^k(B)\simeq \mathrm{Maps}_{\mathsf{CAlg}_k}\big(\underset{i\in I}{\mathrm{colim}} A_i,B\otimes_k^{\mathbb{L}}k[\epsilon]/\epsilon^{k+1}\big)
\simeq \underset{i\in I}{\mathrm{lim}}\mathrm{Maps}_{\mathsf{CAlg}_k}\big(A_i,B\otimes_k^{\mathbb{L}}k[\epsilon]/\epsilon^{k+1}\big)
\simeq \underset{i\in I}{\mathrm{lim}} \underline{\mathsf{Jets}}_{A_i}^k(B)
\simeq \underset{i\in I}{\mathrm{lim}}\hspace{.3mm} h_{\mathbb{L}\mathcal{J}^k(A_i)}(B).
\end{equation*}
Now, it follows using the first part of the claim, that the limit is equivalent to 
\begin{equation*}\underset{i\in I}{\mathrm{lim}}\mathrm{Maps}_{\mathsf{CAlg}_k}\big(\mathbb{L}\mathcal{J}^k(A_i),B\big)
\simeq \mathrm{Maps}_{\mathsf{CAlg}_k}\big(\underset{i\in I}{\mathrm{colim}} \mathbb{L}\mathcal{J}^k(A_i),B\big)
\simeq\mathsf{Maps}\big(\mathbb{L}\mathcal{J}^k\big(\underset{i\in I}{\mathrm{colim}} A_i\big),B\big)
\simeq \mathsf{Maps}\big(\mathbb{L}\mathcal{J}^k(A),B)
\end{equation*}
which by the Yoneda embedding is given by $h_{\mathbb{L}\mathcal{J}^k(A)}(B),$ since the jet functor is a left-adjoint therefore commutes with filtered colimits and the mapping spaces commute with taking homotopy limits.
Now, consider a Zariski open covering $\{f_i:U_i\rightarrow X\}$ and assume that $X$ is a derived affine scheme $\mathsf{Spec}_k(A).$
We will show the result by proving that for each morphism  $f_i:\mathsf{Spec}(A_i)\rightarrow \mathsf{Spec}(A)$ in the covering, since it is a $D^-$-étale morphism of affine derived scheme we have for every $k\geq 0$ a homotopy equivalence $\mathbb{L}\mathcal{J}_{\mathsf{Spec}(A_i)}^k\simeq \mathbb{L}\mathcal{J}_{\mathsf{Spec}(A)}^k\times_{\mathsf{Spec}(B)}^{h}\mathsf{Spec}(A_i).$ By abuse of notation let $f_i:A\rightarrow A_i$ denote the dual and since it is étale, we have that 
$f_i^*:\mathsf{Maps}(A_i,-)\rightarrow \mathsf{Maps}(A,-),$ is formally étale with the property that for all small extensions $\widetilde{B}\rightarrow B$ making the following
\[
\begin{tikzcd}
A_i\arrow[d,"f_i"] \arrow[r] & \widetilde{B}\arrow[d]
\\
A\arrow[ur,dashed] \arrow[r] & B
\end{tikzcd}
\]
commute,
there is an equivalence
$$\mathsf{Maps}(A_i,\widetilde{B})\rightarrow \mathsf{Maps}(A_i,B)\times_{\mathsf{Maps}(A,B)}^{\mathbb{L}}\mathsf{Maps}(A,\widetilde{B}).$$

We proceed by induction on $k$ and consider the maps 
$\gamma^{k,k-1}:k[\epsilon]/\epsilon^{k+1}\rightarrow k[\epsilon]/\epsilon^k.$

For the case $k=1,$ we have maps, for every derived $k$-algebra $B$,
$$\gamma^{1,0}(B):B\otimes_k^{\mathbb{L}}k[\epsilon]/\epsilon^2\rightarrow B\otimes_k^{\mathbb{L}}k[\epsilon]/\epsilon.$$
It is a small extension and thus we have an equivalence 
$$\mathsf{Maps}(A_i,B\otimes_k^{\mathbb{L}}k[\epsilon]/\epsilon^2)\rightarrow \mathsf{Maps}(A,B\otimes_k^{\mathbb{L}}k[\epsilon]/\epsilon^2)\times_{\mathsf{Maps}(A,B\otimes_k^{\mathbb{L}}k[\epsilon]/\epsilon^2)}^{\mathbb{L}}\mathsf{Maps}(A_i,B\otimes_k^{\mathbb{L}}k[\epsilon]/\epsilon).$$
Thus, 
$\mathbb{L}\mathcal{J}_{\mathsf{Spec}(A_i)}^1\simeq \mathbb{L}\mathcal{J}_{\mathsf{Spec}(A)}^1\times_{\mathbb{L}\mathcal{J}_{\mathsf{Spec}(A)}^0}^{\mathbb{L}}\mathbb{L}\mathcal{J}_{\mathsf{Spec}(A_i)}^0.$
Now, the induction step is obvious to see and one can show since the maps $B\otimes_k^{\mathbb{L}} k[\epsilon]/\epsilon^{k+1}\rightarrow B\otimes_k^{\mathbb{L}} k[\epsilon]/\epsilon^{k}$ are small extensions as well, there are equivalences
$$\mathsf{Maps}(A_i,B\otimes_k^{\mathbb{L}}k[\epsilon]/\epsilon^{k+1})\rightarrow \mathsf{Maps}(A,B\otimes_k^{\mathbb{L}}k[\epsilon]/\epsilon^{k+1})\times_{\mathsf{Maps}(A,B\otimes_k^{\mathbb{L}}k[\epsilon]/\epsilon{k})}^{\mathbb{L}}\mathsf{Maps}(A_i,B\otimes_k^{\mathbb{L}}k[\epsilon]/\epsilon^k).$$
Thus, we have homotopy equivalences
$$\mathbb{L}\mathcal{J}_{\mathsf{Spec}(A_i)}^k\rightarrow \mathbb{L}\mathcal{J}_{\mathsf{Spec}(A)}^k\times_{\mathbb{L}\mathcal{J}_{\mathsf{Spec}(A)}^{k-1}}^{\mathbb{L}}\mathbb{L}\mathcal{J}_{\mathsf{Spec}(A_i)}^{k-1},$$ for all $k\geq 0.$
\end{proof}
Let $X$ be a smooth classical $\mathbb{C}$-scheme. By Proposition \ref{prop: Geometry of Derived Jets1} (ii),  $L\mathcal{J}^k(X)\simeq J^k(X)$ is smooth and classical. In particular,
\begin{equation}
\label{StandardJets}
L\mathcal{J}^k\big(\mathbb{C}[x_1,\ldots,x_d]\big)\simeq J^k\big(\mathbb{C}[x_1,\ldots,x_d]\big)=\mathbb{C}\big[x_i^{(\sigma)}||\sigma|\leq k\big].
\end{equation}
We call \eqref{StandardJets} the \emph{standard $k$-jet $\mathbb{C}$-scheme} and refer to its elements as
 differential polynomials.

\begin{proposition}
\label{prop: Geometry of Derived Jets2}
Consider Proposition \ref{prop: Geometry of Derived Jets1}. Let $X=\mathsf{Spec}_k(A)$ be an affine derived scheme. Then there is an equivalence of objects $\underset{k}{\mathrm{colim}}\hspace{1mm} \mathbb{L}\mathcal{J}_{\mathsf{Spec}(A)}^k\simeq \mathbb{L}\mathcal{J}_{\mathsf{Spec}(A)}^{\infty},$
    that represents $\underline{\mathsf{Jets}}_{\mathsf{Spec}_k(A)}^{\infty}:\mathsf{dAff}^{op}\rightarrow \mathsf{Spc}.$
\end{proposition}
\begin{proof}
For the case of a finitely presented affine derived $k$-scheme the situation is clear from the proofs above. Consider then the case that $A\simeq \underset{i\in I}{\mathrm{colim}} A_i.$ Then
\begin{equation*}
    \underset{k}{\mathrm{colim}}\hspace{.3mm} \mathbb{L}\mathcal{J}_{A}^k\simeq \underset{k}{\mathrm{colim}}\hspace{.3mm}\big(\underset{i\in I}{\mathrm{colim}}\hspace{.3mm}\mathbb{L}\mathcal{J}^k(A_i)\big)
    \simeq \underset{i\in I}{\mathrm{colim}}\hspace{.3mm}\big(\underset{k}{\mathrm{colim}}\hspace{.3mm}\mathcal{J}^k(A_i)\big)
    \simeq\underset{i\in I}{\mathrm{colim}}\hspace{.3mm}\mathcal{J}^{\infty}(A_i)
    \simeq \mathbb{L}\mathcal{J}^{\infty}(A).
\end{equation*}
Then, 
\begin{equation*}
\mathsf{Maps}\big(\mathbb{L}\mathcal{J}^{\infty}(A),B\big)\simeq \mathsf{Maps}\big(\underset{k}{\mathrm{colim}}\hspace{1mm} \mathbb{L}\mathcal{J}^k(A_i),B\big)
\simeq \underset{k}{\mathrm{lim}}\hspace{1mm}\mathsf{Maps}\big(\mathbb{L}\mathcal{J}^k(A_i),B\big)
\simeq \underset{k}{\mathrm{lim}}\hspace{1mm}\mathsf{Maps}(A,B\otimes_k^{\mathbb{L}}k[\epsilon]/\epsilon^{k+1}\big),
\end{equation*}
which is precisely $\mathsf{Maps}(A,B[\![\epsilon]\!])
\simeq \underline{\mathsf{Jets}}_{\mathsf{Spec}(A)}^{\infty}(B),$ as required.
\end{proof}
The (homotopy)colimit in Proposition \ref{prop: Geometry of Derived Jets2} is over the jet-projection given for each $k\leq \ell\leq \infty,$ by 
$L\mathcal{J}^{\ell}(\mathsf{X})\to L\mathcal{J}^k(\mathsf{X}).$ Dually, we obtain 
$\pi_{\ell,k}^*:L\mathcal{J}^k(A_{\bullet})\to L\mathcal{J}^{\ell}(A_{\bullet}).$ 
They induce morphism of derived derivations (\cite[Def.~1.4.14]{TV2}), 
$$\mathrm{Der}_{\mathbb{C}}\big(L\mathcal{J}^{\ell}(A_{\bullet}),-\big)\to \mathrm{Der}_{\mathbb{C}}\big(L\mathcal{J}^{k}(A_{\bullet}),-\big).$$
By Proposition \ref{Prop: RelCot} there are induced morphisms on representatives $\mathbb{L}_{L\mathcal{J}^k(A_{\bullet})/\mathbb{C}}\to\mathbb{L}_{L\mathcal{J}^{\ell}(A_{\bullet})/\mathbb{C}}.$
We have an induced map
$$\mathbb{L}_{L\mathcal{J}^k(A_{\bullet})/\mathbb{C}}\otimes_{L\mathcal{J}^k(A_{\bullet})}^L\mathbb{L}_{L\mathcal{J}^{\ell}(A_{\bullet})/\mathbb{C}}\rightarrow \mathbb{L}_{L\mathcal{J}^{\ell}(A_{\bullet})/\mathbb{C}}.$$
The following globalizes to not necessarilly affine derived jet spaces.

\begin{proposition}
\label{prop: Geometry of Derived Jets3}
Consider $X\in \mathsf{dSch}_k,$  not necessarily affine. Then for each $k\geq 0$ there is a derived scheme $\mathbf{L}\mathcal{J}_X^k$ representing the functor
    $\underline{\mathsf{Jets}}_X^k:\mathsf{dSch}_k\rightarrow \mathsf{Spc}.$
Construction of $\underline{\mathsf{Jets}}_{X}^{\infty}$ is compatible with Zariski localization and for every étale morphism of derived $k$-schemes $f:X\rightarrow Y,$ there is a natural homotopy equivalence 
    $\mathbf{L}\mathcal{J}_X^{\infty}\xrightarrow{\simeq}\mathbf{L}\mathcal{J}_Y^{\infty}\times_Y^{\mathbb{L}}X.$
    \end{proposition}
    \begin{proof}
    For a derived scheme $\mathcal{X}$, put
$L\mathcal{J}^k(\mathcal{O}_{\mathcal{X}}^{\bullet}):=\mathcal{O}_{L\mathcal{J}^k(\mathcal{X})}^{\bullet}.$
For any morphism $f:\mathcal{X}\to \mathcal{Y}$ there is an induced morphism $L\mathcal{J}^k(f):L\mathcal{J}^k(\mathcal{X})\to L\mathcal{J}^k(\mathcal{Y}),$ and thus
$$\mathbb{L}_{L\mathcal{J}^k(\mathcal{Y})/\mathbb{C}}\otimes_{\mathcal{O}_{L\mathcal{J}^k(\mathcal{Y})}}^L\mathcal{O}_{L\mathcal{J}^k(\mathcal{X})}\rightarrow \mathbb{L}_{L\mathcal{J}^k(\mathcal{X})/\mathbb{C}}\rightarrow \mathbb{L}_{L\mathcal{J}^k(\mathcal{X})/L\mathcal{J}^k(\mathcal{Y})}\xrightarrow{+1}.$$
Each derived $k$-scheme we have the jet-tower $\cdots\rightarrow\mathbb{L}\mathcal{J}^k(X)\rightarrow\cdots \rightarrow \mathbb{L}\mathcal{J}^0(X)\rightarrow 0,$
with structure maps $\pi^{k,\ell}:\mathbb{L}\mathcal{J}^k(X)\rightarrow \mathbb{L}\mathcal{J}^{\ell}(X)$ for $k\geq \ell.$ 
Note:
$$\mathsf{dAff}_{/\mathbb{L}\mathcal{J}^0(X)}\rightleftarrows \mathsf{QCAlg}^{\leq 0}(\mathcal{O}_{\mathbb{L}\mathcal{J}^0(X)}).$$
Then $\mathbb{L}\mathcal{J}^k(X)\simeq \Spec_{\mathbb{L}\mathcal{J}^0X}\big((\pi_0^{k})_*^{\mathsf{QCoh}}\mathcal{O}_{\mathbb{L}\mathcal{J}^k(X)}\big),$ and
$$\Spec_{\mathbb{L}\mathcal{J}^0X}\bigg(\underset{k}{\mathrm{colim}}\hspace{1mm}\big((\pi_0^{k})_*^{\mathsf{QCoh}}\mathcal{O}_{\mathbb{L}\mathcal{J}^k(X)}\big)\bigg)\simeq \underset{k}{\mathrm{lim}}\hspace{1mm} \Spec_{\mathbb{L}\mathcal{J}^0X}\big((\pi_0^{k})_*^{\mathsf{QCoh}}\mathcal{O}_{\mathbb{L}\mathcal{J}^k(X)}\big).$$
This homotopy limit, computed in $\mathsf{dSch}_k$ exists and thus our functor is indeed representable by a derived $k$-scheme. We have used the fact that the relative derived spectrum functor is a right-adjoint \cite[Prop. 2.5.12, Cons. 2.5.13]{Lurie2018}.

 Finally, note that
 \begin{eqnarray*}
\mathrm{Maps}_{\mathsf{dSch}_k}\big(\mathsf{Spec}_k(A),\mathbf{L}\mathcal{J}^{\infty}(X)\big)
&\simeq&
\mathrm{Maps}_{\mathsf{dSch}_k}\big(\mathsf{Spec}_k(A),\underset{k}{\mathrm{lim}}\hspace{1mm} \mathbf{L}\mathcal{J}^k(X)\big)
\\
&\simeq& \underset{k}{\mathrm{lim}}\hspace{1mm}\mathrm{Maps}_{\mathsf{dSch}_k}\big(\mathsf{Spec}_k(A),\mathbf{L}\mathcal{J}^k(X)\big)
\\
&\simeq& \underset{k}{\mathrm{lim}}\hspace{1mm}\mathrm{Maps}_{\mathsf{dSch}_k}\big(\mathsf{Spec}_k A[\epsilon]/\epsilon^{k+1},X\big).
 \end{eqnarray*}
When $X$ is a derived scheme so is 
 $\underset{k}{\mathrm{lim}}\hspace{1mm}\mathbf{L}\mathcal{J}^k(X),$ 
 and $\mathbf{L}\mathcal{J}^{\infty}(X)$, as a homotopy limit along affine morphisms $\pi^{k,k-1}$ of derived $k$-schemes.

\end{proof}
In addition to Propositions \ref{prop: Geometry of Derived Jets1},\ref{prop: Geometry of Derived Jets2},\ref{prop: Geometry of Derived Jets3}, we have the fundamental property that the $\infty$-jet comonad is a right-adjoint $\infty$-functor.
\begin{proposition}
\label{FormalDisk prop}
    There is an equivalence of endofunctors 
    $(q_{\DR})^*\circ(q_{\DR})_!(-)\simeq X\times_{X_{\DR}}^hX\times_X(-),$
    on $\PS_{/X},$ with right-adjoint
   \emph{(\ref{eqn: Jet functor})}.
\end{proposition}
\begin{proof}
For a relative prestack $\EuScript{F}\in \PS_{/X}$ it suffices to notice 
\begin{eqnarray*}
\Map_{/X}\big(\EuScript{F},\JetX(E)\big)&\simeq& \Map_{/X}(\EuScript{F},q_{\DR}^*q_{\DR,*}(E)\big)
\\
&=&\Map_{/X}\big(q_{\DR,!}\EuScript{F},q_{\DR,*}E\big)
\\  
&\simeq& \Map_{/X}(q_{\DR}^*q_{\DR,!}\EuScript{F},E)
\\
&\simeq& \Map_{/X}(X\times_{X_{\DR}}X\times_X\EuScript{F},E).
\end{eqnarray*}
\end{proof}
For any section $s:X\rightarrow E$ there is an analog of jet-prolongation $j_{\infty}(s)$, given as a section of $\JetX(E)$ defined by the following composition. Let $\mathbf{1}_X:X\rightarrow X$ be the tautological bundle over $X$ given by the initial object in $\PS_{/X}$ with corresponding jet prestack $\JetX(X)$. 

Then $j_{\infty}(s)$ is given as the composition
\begin{equation}
    \label{eqn: Jet prolongation}
j_{\infty}(s):X\xrightarrow{\simeq} \JetX(X)\xrightarrow{\JetX(s)}\JetX(E).
\end{equation}
The following result is a consequence of formal categorical properties.
\begin{proposition}
\label{ComonadicJet}
There is a natural transformation of endofunctors 
$q_{\DR}^*\circ q_{\DR,*}\simeq q_{\DR}^*\circ q_{\DR,*}\big(q_{\DR}^*\circ q_{\DR,*}\big).$
In particular, for any $E\rightarrow X\in \PS_{/X},$ we have a canonical morphism of relative prestacks
$\JetX(E)\rightarrow\JetX\big(\JetX(E)\big).$
\end{proposition}
The image of a morphism $\mathsf{F}:\JetX(E)\rightarrow \EuScript{F}$ under (\ref{eqn: Jet functor}),
$$\JetX\mathsf{F}:\JetX(\JetX(E))\rightarrow \JetX\EuScript{F}.$$
Post-composing with the co-algebra co-multiplication
\begin{equation}
    \label{eqn: Comultiplication}
\Delta_{E}:\JetX(E)\rightarrow \JetX(\JetX(E)),
\end{equation}
gives
\begin{equation}
    \label{eqn: Differential Morphism Prolongation}
    \mathsf{F}^{\infty}:=\JetX(\mathsf{F})\circ \Delta_{E}:\JetX(E)\rightarrow \JetX(\JetX(E))\rightarrow \JetX\EuScript{F}.
\end{equation}
Morphism (\ref{eqn: Differential Morphism Prolongation}) is called the \emph{comonadic extension} and can be defined for derived PDEs via pull-backs in $\PS_{/X}$:
\begin{equation}
    \label{eqn: Infinite prolongation}
    \begin{tikzcd}[row sep=large, column sep = large]
        & \EQ_X^{\infty}\arrow[d]\arrow[r,"i_{\infty}"] & \JetX(E)\arrow[d]
        \\
        \EQ_X\arrow[r,"\rho"] & \JetX \EQ\arrow[r,"\JetX(i_X)"] & \JetX\JetX(E)
    \end{tikzcd}
\end{equation}
where $\rho$ is the induced $\JetX(E)$-coalgebra map.

\subsubsection{Jet coalgebra functoriality}
\label{ssec: Jet coalgebra functoriality}
Consider $q^*q_*$ as a comonad acting on $\mathsf{QCoh}(X).$ Considering the adjoint functors, we have that $q_!q^!$ is the \emph{monad of differential operators}, adjoint to $q^*q_*.$ 
Barr-Beck-Lurie gives
$$R:\mathsf{C}\leftrightarrows \mathsf{Mod}_{q_{\DR}^*q_{\DR*}}(\mathsf{Shv}(X)):L,$$
such that $R$ satisfies 
$$\mathrm{Forget}_{q_{\DR}^*\circ q_{\DR,*}}\circ R(-)\simeq q_{\DR}^*(-).$$
We calculate $L$ on objects $E \in \mathsf{Mod}_{q^*q_*}(\mathsf{QCoh}(X)),$ as a colimit of a simplicial object $E\simeq \underset{[n]\in \Delta^{op}}{\mathrm{colim}}(q^*q_*)^nE.$ 

Since $L$ is a left-adjoint it commutes with colimits. Thus it suffices to know $L$ on free $q^*q_*$-modules. However, since $L\circ \mathrm{Free}_{q^*q_*}(-)\simeq q_{\DR,*}(-)$, one has 
$$LE\simeq  \underset{[n]\in \Delta^{op}}{\mathrm{colim}}q_{\DR,*}\big(q_{\DR}^*\circ q_{\DR,*}\big)^nE.$$
The following summarizes the main features of these functors we will use.

\begin{proposition}
\label{prop: CoalgebraBody}
   Let $X,Y$ be smooth projective $\mathbb{C}$-schemes and put $q_{\DR}^X:X\to X_{\DR},q_{\DR}^Y:Y\to Y_{\DR}.$ :
\begin{enumerate}
\item There is an equivalence $\mathsf{Mod}_{q_{\DR,!}q_{\DR^!}}(\mathsf{Shv}(X))\simeq \mathsf{CoMod}_{q_{\DR}^*q_{\DR,*}}(\mathsf{Shv}(X)).$
\item For any morphism  $f:X\to Y$ there is functor 
$$f_{\mathrm{coAlg}}^*(-):\mathsf{CoAlg}_{q_{\DR}^{Y,*}q_{\DR,*}^Y}\big(\mathsf{Shv}(Y)\big)\to \mathsf{CoAlg}_{q_{\DR}^{X,*}q_{\DR,*}^X}\big(\mathsf{Shv}(X)\big).$$
\end{enumerate}
\end{proposition}
\begin{proof}
For (1), the funtor from left to right sends $E$ to $q_{\DR}^*\underset{[n]\in \Delta^{op}}{\mathrm{colim}}q_{\DR,!}\big(q_{\DR}^!q_{\DR,!})^nE,$ and since $q_{\DR}^*$ commutes with coloimts the result follows. From right to left, $q_{\DR}^!\underset{[n]\in \Delta^{op}}{\mathrm{lim}}q_{\DR,*}(q_{\DR}^*q_{\DR,*})^nE$, then use $q_{\DR}^!$ commutes with limits. Now, use that $q_{\DR,!}\simeq q_{\DR,*}.$

For (2), note that $f:X\to Y$ induces a commutative diagram
\[
\begin{tikzcd}
X\arrow[d,"q_{\DR}^X"]\arrow[r,"f"] & Y\arrow[d,"q_{\DR}^Y"]
\\
X_{\DR}\arrow[r,"f_{\DR}"] & Y_{\DR},
\end{tikzcd}
\]
and the desired functor is readily checked to be 
$$q_{\DR}^{X,*}f_{\DR}^*\underset{[n]\in \Delta^{op}}{\mathrm{lim}}q_{\DR,*}^Y\big(q_{\DR}^{Y,*}q_{\DR,*}^Y)^n.$$
Indeed, using the commutation of the diagram, we get 
$$f^*q_{\DR}^{Y^*}\underset{[n]\in \Delta^{op}}{\mathrm{lim}}q_{\DR,*}^Y(q_{\DR}^{Y,*}q_{\DR,*}^Y)^n.$$

\end{proof}

\subsection{Prestacks of solutions and derived non-linear PDEs}
\label{ssec: Prestacks of Solutions and DNLPDES} 

Let $E\rightarrow X$ and $\JetX(E)$ be as in § \ref{sec: Derived dR-NLPDES}. A morphism of prestacks $s:X\rightarrow \JetX(E)$ is a flat section if it is obtained as a pull-back from a map $s':X_{\DR}\rightarrow \J(E)$ via (\ref{eqn: Jet pb}).
This is captured by the \emph{pre-stack of flat sections $X\rightarrow \JetX(E)$} given by
\begin{equation}
\label{eqn: Flat Sections}
\Sect^{\nabla}\big(X,\JetX(E)\big):=\underline{\mathsf{Maps}}_{X_{\DR}}\big(X_{\DR},\J(E)\big),
\end{equation}
understood as an assignment $\Sect^{\nabla}\big(X,\JetX(E)\big):\mathsf{dAff}\rightarrow \mathsf{Spc},$ whose value on test affine spaces $U$ is
\begin{equation*}
\Sect^{\nabla}\big(X,\JetX(E)\big)(U)=\underline{\mathsf{Maps}}_{X_{\DR}}\big(X_{\DR},\J(E)\big)(U)
=\Map_{/X_{\DR}}\big(U\times X_{\DR},\J(E)\big).
\end{equation*}
For each $U\in\mathsf{dAff}$ an element $s_U\in \Sect^{\nabla}\big(X,\JetX(E)\big)$ is understood as a $U$-parameterized solution.
\begin{remark}[Notation]
If $E\rightarrow X$ is a laft--def prestack over $X,$ we define the pre-stack of homotopy cofree solution sections to the homotopy cofree non-linear PDE $\J(E),$ formed by adjunction (\ref{eqn: PreStk dR Pushforward}). In terms of the stack of flat sections \eqref{eqn: Flat Sections}, we denote it by $\Sect^{\nabla-\text{cofree}}(X,E):=\Sect^{\nabla}(X,\JetX(E)\big),$ to emphasize it depends only on $X,E.$
\end{remark}
\begin{proposition}
 \label{Weil Proposition}
Let $E\in \PS_{/X}^{\mathrm{laft-def}}.$ The prestack of flat-cofree sections is determined by 
$$\mathsf{Maps}\big(U,\Sect^{\nabla-\mathrm{cofree}}(X,E)\big)=\mathsf{Maps}_{/X_{\DR}}\big(U\times_{pt}X_{\DR},q_{\DR,*}E\big),$$
for each affine test scheme $U$.
\end{proposition}
\begin{proof}
Take
$E\xrightarrow{\pi} X\xrightarrow{q_{\DR}}X_{\DR},$ and identify the $\J(E)$ with the derived moduli of flat sections of $\pi$ i.e. sections of $\pi$ over $X_{\DR}$,
$\Sect_{X_{\DR}}(E/X)\simeq \J(E),$
via Proposition \ref{D-PreStk equiv X dR PreStk}. Then, the identification follows from using the universal property of $\JetX(E).$
\end{proof}

\begin{proposition}
\label{Prop: Serves to identify horizontal sections}
The canonical map $q_{\DR}:X\rightarrow X_{\DR}$ induces a natural transformation via pullback,
$$\underline{\mathsf{Maps}}_{X_{\DR}}\big(X_{\DR},-\big)\circ q_{\DR,*}(-)\rightarrow \underline{\mathsf{Maps}}_{X}\big(X,-\big)\circ q_{\DR}^*\circ q_{\DR,*}(-),$$
of functors $\PS_{/X}\rightarrow \mathsf{Spc}.$
\end{proposition}
\begin{proof}
By universal constructions related to $\J(E)$ and to $\mathbb{R}\mathsf{Maps}$
one may show that 
$$\mathbb{R}\mathrm{Sol}(\J E)\hookrightarrow \Sect_{pt}(\mathrm{Jets}_X^{\infty}E/X),$$
is a naturally defined 
closed embedding of sheaves of spaces on $X.$
Indeed, on the one hand we see by their universal properties there is an induced natural transformation by pull-back, such that for each $T$-derived stack,
$$\Map_{/X_{\DR}}(T\times_{pt}^hX_{\DR}, q_{\DR,*}(-)\big)\Rightarrow \Map_{/X}\big(T\times X, q_{\DR}^*q_{\DR,*}(-)\big):\PS_{/X}\rightarrow \mathsf{Spc}.$$

\end{proof}
Proposition \ref{Prop: Serves to identify horizontal sections} identifies sections $s:X\rightarrow \JetX(E)$ which are horizontal for the canonical flat connection. 
The derived moduli space $\mathbb{R}\mathrm{Sect}$ of sections appearing above is given for $E\in \PS_{/X}$  
\begin{equation}
    \label{eqn: Derived Sections}
\Sect_(X,E):=\underline{\mathsf{Maps}}(X,E)\times_{\underline{\mathsf{Maps}}(X,X)} \big\{\mathbf{1}\big\}\simeq \underline{\mathsf{Maps}}_{/X}(\mathbf{1},\pi),
\end{equation}
taken in the slice site $\PS_{/X}.$

More generally given $E\rightarrow X\xrightarrow{f} S$ with $S,E$ (possible derived) Artin stacks then the derived space of sections $\Sect_S(E/X)$ is derived Weil restiction of $E$ along $f.$ It agrees with (\ref{eqn: Derived Sections}).

Moreover, its construction applies to $E\in\PS_{/X}$ for which there exists an epimorphism of prestacks $\EuScript{G}\rightarrow X$ and a pull-back diagram in $\PS:$
\[
\begin{tikzcd}
    \EuScript{G}\times\EuScript{F}\arrow[d]\arrow[r] & E\arrow[d,"\pi"]
    \\
    \EuScript{G}\arrow[r] & X
\end{tikzcd}
\]
for some prestack $\EuScript{F}.$ The moduli of \emph{relative} sections $\Sect_{/X}(\EuScript{G}\times\EuScript{F}),$ arising in this case will be used in §§ \ref{Parameterized Solution and Derived DO} below, to describe $\EuScript{F}$-parameterized solutions. 

Proposition \ref{Weil Proposition} shows
 $\JetX(E)$ has a defining universal property via mapping prestacks. This observation further
 defines two useful prestacks endowed with corresponding universal families.

\begin{construction}
\label{cons: Solutions}
For morphisms of derived prestacks $\EQ_1\rightarrow \EQ_2\rightarrow S$ we consider $\mathsf{Res}_{\EQ_2/S}(\EQ_1)\in \PS_{/S}.$ 
We are interested in two cases, treated uniformly in this setting as:
\begin{equation*}
\textbf{(a)} \text{ Solutions, } \EQ\xrightarrow{\pi}X_{\DR}\xrightarrow{a_{X_{\DR}}}pt\hspace{2mm}
\textbf{(b)} \text{ Solutions with coefficients, } \EQ_1\xrightarrow{F}\EQ_2\xrightarrow{a_{\EQ_2}}pt.
\end{equation*}
Indeed, in case (a), we obtain
$\mathbb{R}\mathrm{Sol}(\EQ):=\Sect_{pt}(\EQ/X_{\DR}):=\mathsf{Res}_{X_{\DR}/pt}(\EQ)=\mathsf{Maps}_{/X_{\DR}}(X_{\DR},\EQ),$
characterized by
$$\mathrm{Maps}_{\PS}\big(T,\mathbb{R}\mathrm{Sol}(\EQ)\big)\simeq \mathrm{Maps}_{\PS_{/X_{\DR}}}(T\times X_{\DR},\EQ\big),$$
with universal family
\begin{equation}
    \label{eqn: RSol Universal Family 1}
    \begin{tikzcd}
      & \EQ\arrow[d,"\pi"]
      \\
      X_{\DR}\times^h\mathbb{R}\mathrm{Sol}(\EQ)\arrow[ur,"ev"] \arrow[d,"q_{\mathbb{R}\mathrm{Sol}}"] \arrow[r] & X_{\DR}\arrow[d,"a_{X_{\DR}}"]
      \\
      \mathbb{R}\mathrm{Sol}(\EQ)\arrow[r] & Spec(\mathbb{C})
    \end{tikzcd}
\end{equation}
It is the parameterizing space of $T$-families of sections $s_T:T\times X_{\DR}\rightarrow T\times \EQ,$ with homotopies $p_T\circ s_T\simeq id_{T\times_{Spec(\mathbb{C})}^h X_{\DR}}.$

In case (b), the derived moduli of sections we get are
$$\mathbb{R}\mathrm{Sol}(\EQ_1,\EQ_2):=\Sect_{pt}(\EQ_1/\EQ_2):=\mathsf{Res}_{\EQ_2/pt}(\EQ_1)\simeq \mathsf{Maps}_{/X_{\DR}}(\EQ_2,\EQ_1),$$
with corresponding universal family
\begin{equation}
    \label{eqn: RSol Universal Family 2}
\begin{tikzcd}
      & \EQ_1\arrow[d,"f"]
      \\
      \EQ_2\times_{X_{\DR}}^h\mathbb{R}\mathrm{Sol}(\EQ_1,\EQ_2)\arrow[ur,"ev"] \arrow[d,"q_{\mathbb{R}\mathrm{Sol}}"] \arrow[r] &\EQ_2\arrow[d]
      \\
      \mathbb{R}\mathrm{Sol}(\EQ_1,\EQ_2)\arrow[r] & X_{\DR}
    \end{tikzcd}
\end{equation}
suppressing the maps to $a_{\EQ_2}:\EQ_2\rightarrow Spec(\mathbb{C}),$ and $a_{X_{\DR}}:X_{\DR}\rightarrow Spec(\mathbb{C}).$
\end{construction}

In the notation of Construction \ref{cons: Solutions}, the space of solution sections in (a) is recovered from that in (b) by the formula
$\mathbb{R}\mathrm{Sol}(\EQ)=\mathbb{R}\mathrm{Sol}(\EQ,X_{\DR}).$
Denote for each morphism $f:\EQ_1\rightarrow \EQ_2\in \PS_{/X_{\DR}}$, the induced map
$$f:\mathbb{R}\mathrm{Sol}(\EQ_1)\rightarrow \mathbb{R}\mathrm{Sol}(\EQ_2),$$
given by precomposition with $f.$
\begin{construction}
\label{cons: De Rham Hom Stacks}
    Fix $(\pi:\EQ\rightarrow X_{\DR})\in \PS_{X_{\DR}}.$ Consider the functor
    $$\mathbb{R}\mathrm{Sol}_{\DR}(\EQ):=\mathsf{Res}_{\EQ/X_{\DR}}(\EQ\times^h -):\PS_{X_{\DR}}\rightarrow \PS_{X_{\DR}}.$$
    Namely, when evaluated on $(\EuScript{Y}\to X_{\DR})$ it first views $\EQ\times^h\EuScript{Y}$ as an object of $\big(\PS_{Z}\big)_{/X_{\DR}},$ and then evaluates the Weil-restriction along $\pi$ i.e.
    $$(\EuScript{Y}\to X_{\DR})\longmapsto \mathbb{R}\mathrm{Sol}_{\DR}(\EQ)(Y):=\mathsf{Res}_{\EQ/X_{\DR}}(\EQ\times^h\EuScript{Y}).$$
\end{construction}
Construction \ref{cons: De Rham Hom Stacks} gives 
 \emph{internal mapping $\D$-prestack} between $\EQ$ and $\EuScript{Y}.$
Indeed,
\begin{equation*}
\mathrm{Maps}_{\mathsf{PStk}_{X_{\DR}}}\big(T,\mathsf{Res}_{\EQ/X_{\DR}}(\EQ\times^h\EuScript{Y})\big)\simeq \mathrm{Maps}_{\PS_{/Z}}\big(T\times_{X_{\DR}}^h \EQ,\EQ\times^h\EuScript{Y})
\simeq\mathrm{Maps}_{\PS_{X_{\DR}}}\big(T,\Map_{/X_{\DR}}(\EQ,\EuScript{Y})\big),
\end{equation*}
by the usual adjunction between $-\times^h-$ and internal mapping stacks.
The $U$-points of this mapping space are given by
\begin{equation*}
\mathbb{R}\mathrm{Sol}(\EQ)(Y):\mathsf{dAff}_{/X_{\DR}}^{afp}\rightarrow \mathsf{Spc},\hspace{3mm}
U \mapsto  \mathsf{Maps}_{/X_{\DR}}(Y\times_{X_{\DR}}U,Z).
\end{equation*}
We have defined for any $\EQ\to X_{\DR}$ via derived Weil-restriction its moduli stack of solutions $\mathbb{R}\mathrm{Sol}(\EQ).$ 
\begin{remark}
In the sequel \cite{KSY2}, we prove the derived global sections of $\mathcal{O}_{\mathbb{R}\mathrm{Sol}(\EQ)}$ can be computed in terms of \emph{variational factorization homology} i.e. a global horizontal de Rham hypercohomology functor over configuration space of points of $X.$
\end{remark}

A related, but more down to earth discussion is given by the following.
\begin{proposition}
\label{prop: RSol is affine}
Suppose $\EQ_X\to X$ is affine over $X.$ Then $\mathbb{R}\mathrm{Sol}_X(q_{\DR*}\EQ_X)$ is affine.
\end{proposition}
\begin{proof}
By Proposition \ref{prop: AffXsoAffXDR} since $\EQ_X\to X$ is affine, so is $\EQ:=q_{\DR*}(Z_X).$ Then, for each test affine $X_{\DR}$-scheme $T$,
\begin{eqnarray*}
\mathbb{R}\mathrm{Sol}(\EQ)&\simeq& \Map_{/X_{\DR}}(T\times X_{\DR},\EQ)
\\
&\simeq&\mathrm{Maps}_{\mathsf{QCAlg}(T\times X_{\DR})}(A\boxtimes \mathcal{O}_T,\mathcal{O}_{X_{\DR}\times T})
\\
&\simeq& \mathrm{Maps}(p_*(A\boxtimes \mathcal{O}_T),R),
\end{eqnarray*}
where we use $p:T\times X_{\DR}\to T$ and so the argument of mapping space is $R\Gamma_{\DR}(X,A)\otimes_k R.$ We are using that $p_*$ is right-adjoint to pull-back so that the right-hand side is equivalently
$\mathbb{R}\mathrm{Sol}(\EQ)(T)\simeq \mathsf{Spec}\big(R\Gamma_{\DR}(X,A)\big)(T).$ Since
$R\Gamma_{\DR}(X,-):=a_{X_{\DR}*}:\mathsf{QCoh}(X_{\DR})\to \mathsf{QCoh}(pt)\simeq \mathsf{Vect}_k,$ is lax-symmetric monoidal $R\Gamma_{\DR}(X,A)$ has the structure of a commutative algebra in $\mathsf{Vect}_k,$ therefore, $\mathsf{Spec}_k\big(R\Gamma_{\DR}(X,A)\big)$ is an affine derived $k$-scheme. This is precisely $\mathbb{R}\mathrm{Sol}(\EQ).$
\end{proof}

\begin{proposition}
\label{Derived Sections are Derived Flat Sections of Jet Space}
Let $E\in \PS_{/X}^{laft}$ and suppose $X$ is eventually coconnective. Then we have an identification of prestacks  
$$\mathfrak{j}^{\infty}:\Sect_(X,E)\simeq \Sect^{\nabla}\big(X,\JetX(E)\big)=\underline{\mathsf{Maps}}_{X_{\DR}}\big(X_{\DR},\J(E)\big).$$
\end{proposition}
\begin{proof}
For a test affine scheme $T,$ the result follows from the chain of equivalences, 
\begin{eqnarray*}
\mathrm{Maps}_{k}(T,\mathsf{Res}_{X_{\DR}/pt}(\mathsf{Res}_{X/X_{\DR}}(E)\big)&=&\mathrm{Maps}_{k}(T,a_{X_{dR*}}q_{\DR,*}E)
\\
&=&
\Map_{/X_{\DR}}(a_{X_{\DR}}^*T,q_{\DR,*}E)
\\
&=&\mathrm{Maps}_{X}(q_{\DR}^*a_{X_{\DR}}^*T,E)
\\
&=&\mathrm{Maps}_{X}(T\times X,E).
\end{eqnarray*}
\end{proof}

Based on the classical isomorphism of sheaves $\mathcal{D}_X(\mathcal{E}_1,\mathcal{E}_2)\simeq\mathcal{H}om_{\mathcal{O}_X}(\mathcal{J}^{\infty}(\mathcal{E}_1),\mathcal{E}_2),$ the next result makes it clear what is a generalized operator acting from prestacks $E_1$ to $E_2.$
\begin{proposition}
\label{Derived Differential Operator Proposition}
Consider $E_1,E_2\in \PS_{/X}$ and a morphism
\begin{equation}
    \label{eqn: Derived Differential Operator}
    \mathsf{F}:\JetX(E_1)\rightarrow E_2,
\end{equation}
in $\PS_{/X},$ (the generalized operator from $E_1$ to $E_2$). The map \emph{(\ref{eqn: Derived Differential Operator})}, induces a morphism of prestacks 
$\mathsf{P}_{\mathsf{F}}:\Sect_(X,E_1)\rightarrow \Sect_(X,E_2).$
\end{proposition}
\begin{proof}
It suffices to notice for any $s:X\rightarrow E_1,$ there is a canonical morphism $X\rightarrow \JetX(E_1)$ via (\ref{eqn: Jet prolongation}) i.e. $j_{\infty}(s).$ One then sets $\mathsf{P}_{\mathsf{F}}(s):=\mathsf{F}\circ j_{\infty}(s).$
\end{proof}
A (dg-)differential operator $\mathsf{P}_{\mathsf{F}}$ given by Proposition \ref{Derived Differential Operator Proposition} is $n$-shifted if the underlying morphism (\ref{eqn: Derived Differential Operator}) is of the form $\mathsf{F}:\JetX(E_1)\rightarrow E_2[n].$
By adjunction (\ref{eqn: Derived Differential Operator}) is equivalent to a diagram 
\begin{equation}
\label{eqn: Derived Diff op diagram}
\begin{tikzcd}
    \J(E_1)\simeq q_{\DR,*}E_1\arrow[d]\arrow[r,"\mathsf{F}"] & q_{\DR,*}E_2\simeq \J(E_2)\arrow[d]
    \\
    X_{\DR}\arrow[r,"id"] & X_{\DR}
\end{tikzcd}.
\end{equation}

The following two results describe moduli of solutions relative to one another.

\begin{proposition}
Let $f:X\to Y$ be a morphism of smooth projective $\mathbb{C}$-schemes. Let $f_{\DR}:X_{\DR}\to Y_{\DR}$ be the induced map on de Rham stacks. Let $E,F$ be derived prestacks. Consider the commutative diagram 
    \[
    \begin{tikzcd}
        E\arrow[d]\arrow[r] & X\arrow[d,"f"] \arrow[r,"q_{\DR}^X"]& X_{\DR}\arrow[d,"f_{\DR}"]
        \\
        F\arrow[r] & Y\arrow[r,"q_{\DR}^Y"] & Y_{\DR}
    \end{tikzcd}
    \]
There a morphism $\mathsf{Res}_{X/X_{\DR}}(E)\rightarrow \mathsf{Res}_{Y/Y_{\DR}}(F)$ and an equivalence
$\underline{\J(F)}\big(q_{\DR,*}^X(E)\big)\simeq \mathsf{Maps}_{/Y}(\JetX(E),F).$
\end{proposition}
\begin{proof}
For a morphism $\eta:E\rightarrow F$ of derived $k$-schemes or prestacks, there is an induced map 
$q_{\DR,*}(\eta):q_{\DR,*}(E)\rightarrow q_{\DR,*}(F)$. Then,
\begin{eqnarray*}
\underline{\J(F)}\big(\J(E)\big)&\simeq& \underline{\mathsf{Res}}_{Y/Y_{\DR}}(F)\big(\mathsf{Res}_{X/X_{\DR}}(E)\big)
\\
&\simeq& \mathsf{Maps}_{/Y}\big(\mathsf{Res}_{X/X_{\DR}}(E)\times_{X_{\DR}}^hX,F\big)
\\
&\simeq& \mathsf{Maps}_{/Y}\big(\J E\times_{X_{\DR}}^h X,F\big)
\\
&\simeq& \mathsf{Maps}_{/Y}(\JetX(E),F).
\end{eqnarray*}

\end{proof}

\begin{proposition}
\label{Prop: Solutions}
Let $f:F\rightarrow E$ be a finitely presented morphism in $\PS_{/X}^{\mathrm{laft}}$.
\begin{itemize}
   \item[1.] The moduli of sections of the pull-back of $F$ over $\J(E)$ are compatible with the product of moduli i.e. there is an equivalence of $$\underline{\mathbb{R}\mathrm{Sect}}_{\J(E)}\big(X\times_{X_{\DR}}^h\J(E)\times_{E}^h F/X\times_{X_{\DR}}^h \J(E)\big)\simeq \J(F)\times_{X_{\DR}}^h\J(E).$$

    \item[2.] Let $a:=a_{X_{\DR}}:X_{\DR}\to \mathrm{Spec}(\mathbb{C})$ be the canonical map and consider the  commutative diagram
    \[
    \begin{tikzcd}
    \EQ_2\arrow[dr,"p_2"] \arrow[rr,"f"] & & \EQ_1\arrow[dl,"p_1"]
    \\
    & X_{\DR}& 
    \end{tikzcd}
    \]
There is an equivalence of derived spaces of sections over $\mathbb{R}\mathrm{Sol}(\EQ_1)$ of the pull-back of $\EQ_2$:
$$\Sect_{\mathbb{R}\mathrm{Sol}(\EQ_1)}\big(a^*a_*\EQ_1\times_{\EQ_1}^h\EQ_2/a^*a_*\EQ_1\big)\simeq \mathbb{R}\underline{\mathrm{Sol}}(\EQ_1)\times_{Spec(\mathbb{C})}^h\mathbb{R}\underline{\mathrm{Sol}}(\EQ_2).$$
    
\end{itemize}
\end{proposition}

\begin{proof}
Consider defining pull-back (base-change) diagram of $\J(E)$ i.e.
\[
\begin{tikzcd}
q_{\DR}^*q_{\DR,*}(E)\arrow[d,"\beta:=\eta^*q_{\DR}"] \arrow[r,"\alpha=q_{\DR}^*\eta"] & X\arrow[d,"q_{\DR}"]
\\
\J(E)=q_{\DR,*}(E)\arrow[r,"\eta"] & X_{\DR}
\end{tikzcd}
\]
Then, note that 
$$\underline{\mathbb{R}\mathrm{Sect}}_{\J(E)}\big(X\times_{X_{\DR}}^h\J(E)\times_{E}^h F/X\times_{X_{\DR}}^h \J(E)\big)=\beta_*\alpha^*(F),$$
by definition of the derived mapping spaces of sections. 
Note further that this diagram may be enlarged as
\[
\begin{tikzcd}
& F \arrow[dr,"f"]& & &
\\
f^*eval(\JetX(E))\arrow[dr,"eval^*f"] \arrow[ur] & & E\arrow[d,"\pi_E"] &
\\
&
  q_{\DR}^*q_{\DR,*}(E)\arrow[ur,"ev"] \arrow[d,"\beta:=\eta^*q_{\DR}"] \arrow[r,"\alpha=q_{\DR}^*\eta"] & X\arrow[d,"q_{\DR}"]
\\
& \J(E)=q_{\DR,*}(E)\arrow[r,"\eta"] & X_{\DR}  
\end{tikzcd}
\]
with the top diamond is also a pull-back.
By Beck-Chevalley-Lurie \cite[Lemma 6.1.6.3]{Lur17} and universal properties of right-adjoints to the base-change pull-back functors the mapping space is 
$\beta_* eval^*f(q_{\DR}^*q_{\DR,*}E),$
and since
$\alpha_!\simeq (q_{\DR,*}\eta)_!\circ (\eta^*q_{\DR})^*$ passing to right-adjoints gives, (when applied to $F$),
$$\eta^*q_{\DR,*}\simeq (\eta^*q_{\DR})_*\circ (q_{\DR}^*\eta)^*\simeq \eta^*q_{\DR,*}F.$$
Thus, 
$$\underline{\mathbb{R}\mathrm{Sect}}_{\J(E)}\big(X\times_{X_{\DR}}^h\J(E)\times_{E}^h F/X\times_{X_{\DR}}^h \J(E)\big)\simeq\Sect_{X_{\DR}}(F/X)\times_{X_{\DR}}^h\Sect_{X_{\DR}}(E/X),$$
from which the result follows by (\ref{eqn: dR-Jets as Mapping Space}).
The second claim is proven similarly using:
\[
\begin{tikzcd}
& \EQ_2 \arrow[dr,"f"]& & &
\\
f^*eval(a^*a_*\EQ_1)\arrow[dr,"eval^*f"] \arrow[ur] & & \EQ_1\arrow[d,"p_1"] &
\\
&
  a^*a_*\EQ_1\arrow[ur,"ev"] \arrow[d,"\beta"] \arrow[r,"\alpha"] & X_{\DR}\arrow[d,"a_{X_{\DR}}"]
\\
& \mathbb{R}\mathrm{Sol}(\EQ_1)=a_*(\EQ_1)\arrow[r] & Spec(\mathbb{C})  
\end{tikzcd}
\]
Namely, use the $(\beta^*,\beta_*)$-adjunction to compute restrictions along $\beta$ i.e. $a^*a_*\EQ_1\times_{\EQ_1}^h\EQ_2:$ 
$$\mathrm{Maps}_{\mathsf{dStk}_{/\mathbb{R}\mathrm{Sol}(\EQ_1)}}(T,\mathsf{Res}_{\beta}(a^*a_*\EQ_1\times_{\EQ_1}^h \EQ_2)\big)\simeq\mathrm{Maps}_{\mathsf{dStk}_{/a^*a_*\EQ_1}}(T\times_{\mathbb{R}\mathrm{Sol}(\EQ_1)}^ha^*a_*\EQ_1,a^*a_*\EQ_1\times_{\EQ_1}^h \EQ_2\big).$$
Then 
$\Sect_{\mathbb{R}\mathrm{Sol}(\EQ_1)}\big(a^*a_*\EQ_1\times_{\EQ_1}^h\EQ_2/a^*a_*\EQ_1\big)$ is agrees with $\Sect_{pt}(\EQ_1/X_{\DR})\times_{Spec(\mathbb{C})}^h\Sect_{pt}(\EQ_2/X_{\DR}).$
\end{proof}

\begin{proposition}\label{Laft Descent}
If $X$ is eventually coconnective and $\EQ\rightarrow X_{\DR}$ is a derived Artin stack, $\RS(\EQ)$ is locally almost of finite type. In particular, if $E$ is a laft prestack over $X$ satisfying descent, then $\Sect^{\nabla}\big(X,\JetX(E)\big)$ is also laft. 
\end{proposition} 
\begin{proof}
Follows from Proposition \ref{Derived Sections are Derived Flat Sections of Jet Space} by considering a filtered diagram $(V_i)_{i\in I}$ in $\hspace{.2mm}^{\leq n}\mathsf{dAff},$ with $V\simeq \underset{i}{\mathrm{colim}}\hspace{.3mm} V_i,$ so $X\times V_i$ gives a filtered diagram in $\hspace{.2mm}^{\leq m+n}\mathsf{dAff}$ and noticing since $E$ is laft, the natural maps
$$\underset{i}{\mathrm{colim}}\hspace{.3mm}\underline{\mathsf{Maps}}\big(X,E\big)(V_i)\simeq \underline{\mathsf{Maps}}\big(X,E\big)(V),$$
are equivalences. Moreover, notice that for any push-out diagram of eventially coconnective $X_{\DR}$-spaces
\[
\begin{tikzcd}
U\times_{X_{\DR}}X\arrow[d]\arrow[d] \arrow[r] & V\times_{X_{\DR}} X\arrow[d]
\\
W\times_{X_{\DR}} X\arrow[r] & S\times_{X_{\DR}}X
\end{tikzcd},
\]
there is a pull-back diagram as in Proposition \ref{Retract proposition}:
\[
\begin{tikzcd}
    \RS(\EQ)(S)\arrow[d]\arrow[r] & \RS(\EQ)(V)\arrow[d]
    \\
    \RS(\EQ)(W)\arrow[r] & \RS(\EQ)(U)
\end{tikzcd}.
\]
\end{proof}
\begin{definition}
\label{definition: Derived NLPDE System Definition}
 A \emph{derived non-linear partial differential system}, or simply a \emph{generalized PDE} on sections of $E\in \PS_{/X}$ is a closed sub-stack
$\EQ$ of $q_{\DR,*}(E)=\J(E).$
\end{definition}
A representable $X_{\DR}$-space $\mathsf{Spec}_k(A)\rightarrow X_{\DR}$ determines a derived affine $\D_X$-scheme $\Spec_{\D_X}(\mathcal{A})$ with $\mathcal{A}=(q_{\DR})_*^{\mathsf{QCoh}}A$, by Proposition \ref{D-PreStk equiv X dR PreStk}, via quasi-coherent push-forward.
Definition \ref{definition: Derived NLPDE System Definition} states the existence of a closed immersion 
$i:\EQ\rightarrow q_{\DR,*}(E)$ in $\PS_{/X_{\DR}}.$ That is, $i$ is representable and for each $Spec(A)\rightarrow q_{\DR,*}(E),$ the induced map
$$\EQ\times_{q_{\DR,*}(E)}^h\mathsf{Spec}_{\D_X}(\mathcal{A})\simeq \mathsf{Spec}_{\D_X}(\mathcal{B})\rightarrow \mathsf{Spec}_{\D_X}(\mathcal{A}),$$
induces an epimorphism of commutative $\D_X$-algebras 
$H_{\D}^0(\mathcal{A})\rightarrow H_{\D}^0(\mathcal{B})$, defining a $\D$-algebraic NLPDE $H_{\D}^0(\mathcal{A})\rightarrow H_{\D}^0(\mathcal{B}),$ as per Definition \ref{defn: RelAlgNLPDE}. 

More generally, consider 
$$\mathsf{F}_1:\JetX(E_1)\rightarrow \EuScript{F},\hspace{2mm} \mathsf{F}_2:\JetX(E_2)\rightarrow \EuScript{F},$$
with $\EuScript{F}\in \PS_{/X}.$ By Proposition \ref{Derived Differential Operator Proposition} and diagram (\ref{eqn: Derived Diff op diagram}), an (inhomogeneous) generalized \textsc{nlpde} is a derived fiber product:
\begin{equation}
\label{eqn: Inhomogeneous DNLPDE}
\EQ\simeq \J(E_1)\times_{\J(\EuScript{F})}\J(E_2
),
\end{equation}
taken in $\PS_{/X_{\DR}}.$

\begin{observation}
\label{obs: DNLPDE Cat}
The considerations above which culminated in Definition \ref{definition: Derived NLPDE System Definition}, lead us to define $\mathsf{NLPDE}_X^{co-free}\subseteq \PS_{/X_{\DR}}$ as the full sub-$(\infty,1)$-category generated\footnote{More precisely, one should consider a type of (free) sifted colimit completion.} by the essential image $\J,$ and for a \emph{fixed} $E\in \PS_{/X}$, 
$$\mathsf{NLPDE}_X(E):=(\PS_{/X_{\DR}})_{/^{cl}\J(E)},$$ where $/^{cl}$ means we consider only morphisms which are derived closed embeddings. 

More exactly, we consider the full-sub $\infty$-groupoid of this $\infty$-category, via
$$\mathsf{dStk}_{/X}\xrightarrow{q_{\DR,*}}\mathsf{dStk}_{/X_{\DR}}\rightarrow \big(\PS_{/^{cl-emb}}^{\mathrm{Tor}}\big)^{\simeq},$$
sending $(E\rightarrow X)$ to $(\mathrm{Jet}_{\DR}^{\infty}(E)\rightarrow X_{\DR})$ and assigning to this relative pre-stack the $\infty$-groupoid of closed embeddings $i:\EQ\hookrightarrow \mathrm{Jet}_{\DR}^{\infty}(E)$ which are locally of finite Tor-amplitude non-linear PDEs. In particular, quasi--smooth PDEs are those for which $\mathbb{L}_{\EQ/\J(E)}$ has amplitude $[-1,0]$ (Def. \ref{QuasiSmoothDerivedPDE}).
In full generality, the objects are too wild to admit any kind of infinitesimal geometry or deformation theory. This is the principal motivation for finiteness hypothesis ($\D$-finitary) in (\ref{Definition: AdmissiblyDFinitary}) below. See Theorem \ref{thm: RSol is laft-def}.
\end{observation}

We now introduce the stacks of solutions alluded to in Observation \ref{obs: DNLPDE Cat}.

\begin{definition}
\label{Definition: Derived Stack of Solutions Definition}
\normalfont Consider Definition \ref{definition: Derived NLPDE System Definition} and let $\EQ$ be a NLPDE on $E\in \PS_{/X}^{\mathrm{laft}}.$ The moduli of sections in Construction \ref{cons: Solutions} (a) is called the \emph{derived stack of solutions} $\RS(\EQ):=\mathbb{R}\underline{\mathsf{Maps}}_{X_{\DR}}\big(X_{\DR},\EQ\big).$
\end{definition}
As mentioned in \S\S~\ref{sssec: Relation to other works}, one may change the underlying $\infty$-site or $\infty$-geometry on which stacks are modelled. Here, we are considering $(\mathsf{dAff}_{\mathbb{K}}^{alg},\text{ét})$ i.e. $\PS:=\mathsf{PShv}\big(\mathsf{dAff}_k^{alg},\mathsf{Spc}\big).$ The derived stack of solutions is \emph{algebraic}. If considering $(\mathsf{DAn}_{\mathbb{C}}^{Stein},\text{ét})$ $\RS$ is \emph{analytic}. An overconvergent rigid-analytic solution space discussed below. 
Finally, sub-stacks
$$\underline{\mathcal{S}}\subset \mathsf{Maps}_{/X_{\DR}}(X_{\DR},-).$$
are called \emph{stable} if: for all closed immersion $N\hookrightarrow M$ of derived stacks, the natural morphism
$$\underline{\mathcal{S}}(N)\rightarrow \underline{\mathcal{S}}(M)\times_{\RS(M)}^h\RS(N),$$
is an equivalence.
Stability arises in the form of properties: finiteness, geometricity, representability etc. 

\begin{example}
    \normalfont
    A classical solution $\varphi:X\rightarrow \EQ$ of a non-linear \textsc{pde} on sections of $E\rightarrow X$ is then a section $j_{\infty}(s)'$ obtained from $s\in \Gamma(X,E)$ and its jet-prolongation (\ref{eqn: Jet prolongation}) such that 
    \[
    \begin{tikzcd}
    \EQ \arrow[r] & \JetX(E)\arrow[r] & \J(E)\arrow[d]
    \\
    X \arrow[u,"j_{\infty}(s)'"]\arrow[r] & \arrow[u,"j_{\infty}(s)"] X\arrow[r] & X_{\DR}
    \end{tikzcd}
\]
as morphisms pull-backed from $X_{\DR}.$
\end{example}
The derived prestack of solutions to $\EQ$ with values in $\EuScript{X}$ in terms of the assignment in Construction \ref{cons: Solutions} (b): 
\begin{equation}
\label{eqn:Derived solution stack}
\RS(\EQ)(-):\PS_{/{X}_{\DR}}\rightarrow \PS_{/{X}_{\DR}},\hspace{2mm} \RS(\EQ)(\EuScript{X}):=\Map_{X_{\DR}}\big(\EuScript{X},\EQ\big).
\end{equation}

This abstract context can be made more concrete, which is useful for applications. We give one construction/example leaving more systematic treatment to our future work.
\begin{construction}
\label{cons: Explicit Solutions}
Given two non-linear differential operators $\mathsf{P}_{\mathsf{F}_i}:E_i\rightarrow F$ from some space $E_i$ to another space $F$, they can be equivalently viewed as compositions:
$$\mathsf{P}_{\mathsf{F}_i}:=\mathsf{F}_i\circ j_k:\mathsf{Res}_{X/*}(E_i)\rightarrow \mathsf{Res}_{X/*}\big(\mathrm{Jets}_{X}^k(E_i)\big)\rightarrow \mathsf{Res}_{X/*}(F).$$
The space of solutions is of course morally the collection of pairs of section $s_i$ of $E_i$ for $i=1,2$ which determined the same value as a section of $F$ i.e.
$$\big\{s=(s_1,s_2)| \mathsf{P}_{\mathsf{F}_1}(s_1)=\mathsf{P}_{\mathsf{F}_2}(s_2)=f\big\}.$$
It must be appropriately identified with a homotopy fiber product in derived stacks,
\begin{equation}
    \label{eqn: RSol(P,P)}
\begin{tikzcd}
    \mathbb{R}\underline{\mathrm{Sol}}_{X/*}(P_1,P_2)\arrow[d]\arrow[r] & \mathsf{Res}_{X/*}(E_2)\arrow[d,"\mathsf{P}_{\mathsf{F}_2}"]
    \\
    \mathsf{Res}_{X/*}(E_1)\arrow[r,"\mathsf{P}_{\mathsf{F}_1}"] & \mathsf{Res}_{X/*}(F)
\end{tikzcd}
\end{equation}
More generally, replacing the point $*$ with a stack $S$, and considering $S$-families of submersions, diagram (\ref{eqn: RSol(P,P)}) gives
$$\mathbb{R}\underline{\mathrm{Sol}}_{X/S}(P_1,P_2)\simeq \Sect_{X/S}(X,E_2)\times_{P_2,\Sect_{X/S}(X,V),P_1}\Sect_{X/S}(X,E_1).$$

If one of the spaces is trivial e.g. a  point, and one of the operators is the zero map, $\mathbf{0}$, we obtain the more familiar 
\[
\begin{tikzcd}
    \mathbb{R}\underline{\mathrm{Sol}}_{X/*}(P=0)\arrow[d]\arrow[r] & \Sect_{X}(X,E)\arrow[d,"P"]
    \\
    *\arrow[r,"0"] & \Sect_X(X,F).
\end{tikzcd}
\]
Thus, for a non-linear differential operator $P:E\rightarrow F$ over $X$ relative to $S$, we have 
\begin{equation}
    \label{eqn: EllipticPF}
\underline{\mathrm{Sol}}_{X/S}(P):=\underline{\mathrm{Sect}}_{S}(E/X)\times_{\underline{\mathrm{Sect}}_S(F/X)}\{0\}.
\end{equation}
It's points are given by:
$$\{T\xrightarrow{t}  \underline{\mathrm{Sol}}_{X/S}(P)\}_{\mathrm{Stk}}\leftrightarrow \{(f_T,\varphi_T)|f_T:T\rightarrow S, \varphi_T:X\times_S T\rightarrow E| P\varphi_T=0_{F/X}\}.$$
\end{construction}
For submersive maps, they satisfy base-change properties.

\begin{proposition}
\label{prop: Pull-back of EllipticPF}
    For any submersion of smooth stacks $\pi:V\rightarrow S$ and fiber-wise elliptic operator $P:E\rightarrow F$ on $V$ with a smooth map $g:S'\rightarrow S,$ consider the diagram
    \[
    \begin{tikzcd}
        V':=V\times_S^h S'\arrow[d,"g^*\pi"] \arrow[r] & V\arrow[d]
        \\
        S'\arrow[r] & S
    \end{tikzcd}
    \]
Then there is an equivalence $g^*\mathbb{R}\underline{\mathrm{Sol}}_{V/S}(P)\simeq \mathbb{R}\underline{\mathrm{Sol}}_{V'/S'}(g^*P).$ 
\end{proposition}
\begin{proof}
Note that the operator $g^*P$ is given by $g^*P:g^*E=E\times_S^h S'\rightarrow g^*F=F\times_S S'$. Then, the one has that $g^*\mathbb{R}\underline{\mathrm{Sol}}_{V/S}(P)\simeq (g^*\pi)_*(g^*P),$ due to the properties of moduli of sections, as in Proposition \ref{Prop: Solutions}.
\end{proof}

Definition \ref{Definition: Derived Stack of Solutions Definition} captures both the standard $\D$-module (i.e. linear) solution space and the functor of points $\D$-geometric derived stacks. The following comparison result holds in the wider context of an $\infty$-geometric context admitting infinitesimals.

\begin{proposition}
\label{Main Theorem: Comparison}
Consider a derived algebraic non-linear \textsc{pde} $\EQ$ on sections of $E\in \PS_{/X}^{\mathrm{laft}}$ with $X$ smooth.
Suppose further that it is representable and consider its restriction to representable derived stacks over $X_{\DR}.$ Then there is an equivalence of functors
$\RS(\EQ)(-)\simeq \Spec_{\D_X}(\mathcal{A})$ from affine pre-stacks over $X_{\DR}$ to $\mathsf{Spc}.$ In particular, if $X$ is a scheme and $\mathcal{Z}$ is a quasi-coherent module over $X_{\DR}$, we recover the usual derived $\D$-module linear solution space \end{proposition}

\subsubsection{Quasi--smooth derived PDEs}
Recall Definition \ref{Quasi-smooth}, of a $\D$-quasismooth $\D$-space.

Definition \ref{definition: Derived NLPDE System Definition} holds analogously in the case of derived jet-spaces of finite order $k<\infty$, and gives rise to an explicit class of derived PDEs.

To state them, let $\pi:E\rightarrow X$ be an $X$-prestack, $\hat{X}^k$ the formal $k$-th order neighbourhood of the diagonal, and consider
\begin{equation}
    \label{eqn: inf-k}
\begin{tikzcd}
    \hat{X}_k\arrow[d,"q_k"] \arrow[r,"p_k"] & X\arrow[d]
    \\
    X\arrow[r] & S,
\end{tikzcd}
\end{equation}
where $p_k,q_k$ are the two projections from $\hat{X}^k.$ Via pull-back, $p_k^*E\rightarrow \hat{X}_k\xrightarrow{q_k} X,$ Weil-restriction defines the relative $k$-jet space of $E$,
$J_{X/S}^k(E):=\underline{\mathrm{Res}}_{\hat{X}_k/X}(p_k^*E).$ It satisfies,
$$\mathrm{Map}_{\mathsf{PStk}_X}(T,q_{k*}p_k^*E)=\mathrm{Map}_{\mathsf{PStk}_{\hat{X}_k}}(q_k^*T,p_k^*E),$$
thus agrees with $\mathrm{Map}_{\hat{X}_k}(T\times_X \hat{X}_k,p_k^*E).$
\begin{definition}
\label{def: S-derPDEk}
Let $X$ be an $S$-stack and consider an object $E\in \mathsf{PStk}_{X}.$
    An \emph{$S$-family of $k$-th order generalized PDEs on $E$} is a closed sub-stack
    $Z\subset J_{X/S}^kE.$
\end{definition}
This is explicit in the case of quasi--smooth PDEs (recall Def. \ref{QuasiSmoothClImm}).
\begin{definition}
\label{QuasiSmoothDerivedPDE} 
    Let $\mathsf{X}$ a quasi-smooth derived $\mathbb{C}$-scheme. Let $k\in\mathbb{N}$. A \emph{derived partial differential equation of order $k$} is a derived $\mathbb{C}$-subscheme 
$\iota:\EuScript{E}\hookrightarrow L\mathcal{J}^k(\mathsf{X}).$ 
It is \emph{quasi--smooth} if $\iota:\EuScript{E}\rightarrow L\mathcal{J}^k(\mathsf{X})$ is quasi-smooth.
\end{definition}
These objects admit an explicit descriptions using Koszul sequences \cite{KhanRydh2025}.
a polynomial ring over $\mathbb{C}$ in finitely-many variables. 
\begin{proposition}
    Let $\iota:\EuScript{E}\to L\mathcal{J}^k(\mathsf{X})$ be quasi--smooth, where $\mathsf{X}=\mathsf{Spec}(A_{\bullet})$.
 Then, Zariski-locally,
    $$\EuScript{E}\simeq \Spec_{\mathbb{C}}\big(L\mathcal{J}^k(A_{\bullet})\po \big<G\big>\big),\hspace{3mm}G:=\big(g_1,\ldots,g_c\big),$$
for a finite sequence of elements in $L\mathcal{J}^k(A_{\bullet}),$ such that 
$\mathcal{O}_{\EuScript{E}}^{\bullet}\xrightarrow{\sim} L\mathcal{J}^k(A_{\bullet})\otimes_{\mathbb{C}[T_1,\ldots,T_c]}^{\mathbb{L}}\mathbb{C},$
via the map $T_A\mapsto g_A,A=1,\ldots,c.$
\end{proposition}
\begin{proof}
Using Proposition \ref{prop: Geometry of Derived Jets1} (iii), we describe derived $k$-jets Zariski-locally as follows. Since $\mathsf{X}$ is quasi--smooth, by \cite{KhanRydh2025} we write it as a homotopy-pushout $A^{\bullet}/\!/\big<f\big>.$ for some functions. Then, for every $k\in\mathbb{N}\cup\big\{\infty\big\},$ there is a natural equivalence 
\begin{equation}
    \label{k-Comparison}
L\mathcal{J}^k\big(A_{\bullet}\po \big<f\big>\big)\xrightarrow{\simeq} L\mathcal{J}^k(A_{\bullet})\po \big<f,f^{(1)},\ldots,f^{(k)}\big>.
\end{equation}
Specifically, since $L\mathcal{J}^k(X)\simeq J^k(X)$ for a classical $\mathbb{C}$-scheme $X$, we have that
$$L\mathcal{J}^k\big(\mathbb{C}[T_1,\ldots,T_n]\big)\simeq J^k\big(\mathbb{C}[T_1,\ldots,T_n]\big)\simeq \mathbb{C}[T,T^{(1)},\ldots,T^{(k)}],$$
and thus
$$L\mathcal{J}^k\big(\eqref{HoPushOut1}\big)\xrightarrow{\simeq }\begin{tikzcd}
    \mathbb{C}[T,T^{(1)},\ldots,T^{(k)}]\arrow[d]\arrow[r,"L\mathcal{J}^k(\mathsf{f})"] & L\mathcal{J}^k(A_{\bullet})\arrow[d]
    \\
J^k(\mathbb{C})\arrow[r] & L\mathcal{J}^k(A_{\bullet}\po \big<f\big>).
\end{tikzcd}$$
This diagram is homotopy coCartesian since$L\mathcal{J}^k(-)$ is a left-adjoint, thus preserves colimits. Note that 
$$L\mathcal{J}^k(\mathsf{f}):\mathbb{C}[T,T_j^{(i)}]\rightarrow L\mathcal{J}^k(A_{\bullet}),$$
is given by $T_i\mapsto f_i$ and $T_i^{(j)}\mapsto f_{i}^{(j)},$ for $,i=1,\ldots,n,j=1,\ldots,k.$ 
Since $\EuScript{E}\to L\mathcal{J}^k(\mathsf{X})$ is quasi--smooth, Zariski-locally we may write 
\[
\begin{tikzcd} \mathcal{E} \arrow[d] \arrow[r,"\iota"] & L\mathcal{J}^k(\mathsf{X}) \arrow[d,"g"] \\ \{0\} \arrow[r] & \mathbb{A}_{\mathbb{C}}^c \end{tikzcd}
\]
as a homotopy Cartesian diagram (cf. \eqref{QSDiagram}) or dually as the homotopy push-out
\[
\begin{tikzcd} \mathbb{C}[T_1,\dots,T_c] \arrow[r,"\epsilon"] \arrow[d,"g^\#"] & \mathbb{C} \arrow[d] \\ \mathcal{O}_{L\mathcal{J}^k(\mathsf{X})} \arrow[r] & \mathcal{O}_{\mathcal{E}} \end{tikzcd}
\]
where $g^{\sharp}:T_A\mapsto g_A$ as a function $g_A\in \mathcal{O}_{L\mathcal{J}^k(A_{\bullet}}.$ It follows that 
$\pi_0(\mathcal{O}_{\EuScript{E}}^{\bullet})\simeq \pi_0\big(\mathcal{O}_{L\mathcal{J}^k(\mathsf{X})}\big)/\big(g_1,\ldots,g_c\big).$
In general $\pi_i(\mathcal{O}_{\EuScript{E}}^{\bullet})$ are higher cohomologies of the Koszul sequence for the functions $(g_1,\ldots,g_c).$ Namely,
$$\pi_i(\mathcal{O}_{\EuScript{E}}^{\bullet})=H^{-i}(K^{\bullet}_{L\mathcal{J}^k(A_{\bullet})}(G)\big).$$
\end{proof}
This result shows quite explicitly that derived PDEs $\EuScript{E}$, here in the quasi--smooth case, indeed carry higher cohomological information via the non-trivial $\pi_0(\mathcal{O}_{\EuScript{E}}^{\bullet})$-modules $\pi_i(\mathcal{O}_{\EuScript{E}}^{\bullet}).$

Singular supports in the sense of \cite{AriGai2015} are defined for ind-coherent sheaves on quasi-smooth derived schemes. In this setting, they measure the degree to which the derived PDE is singular.
\begin{remark}
    In this paper we always work with derived $\D$-spaces. This subsumes the existence of prolongations of finite-order derived PDEs. Classically, there are obstructions to prolongation and if these vanish, the PDE is said to be formally integrable. Thus, derived $\D$-geometry implicitly works with formally integrable objects in a derived sense. A detailed study of \emph{derived prolongation} of finite-order and in particular quasi--smooth derived PDEs (Def. \ref{QuasiSmoothDerivedPDE}) and the corresponding notion of \emph{derived integrability} is studied elsewhere (see e.g, \cite{KryQS2026}).
\end{remark}

\subsubsection{Parameterized Solutions} 
\label{Parameterized Solution and Derived DO} Proposition
\ref{FormalDisk prop} together with the jet-prolongation maps (\ref{eqn: Jet prolongation}) define $\EuScript{F}$-parameterized solutions $\RS(\EQ)$ to a derived non-linear \textsc{pde} $\EQ\rightarrow \J(E)$ imposed on sections of $E\rightarrow X.$ Let $\EuScript{F}\in \PS_{/X}$ be a laft prestack and consider the sub-prestack of sections 
$$\Sect_{\EuScript{F}}\big(X,\JetX(E)\big)\subset R\underline{\mathsf{Maps}}_{/X}\big(q_{\DR}^*q_{\DR,!}(\EuScript{F}),\JetX(E)\big),$$
consisting of those morphisms of prestacks $\psi^{\EuScript{F}}:q_{\DR}^*q_{\DR,!}\EuScript{F}\rightarrow \JetX(E)$ relative to $X$ such that the following diagram
\begin{equation}
    \label{eqn: Formal Solution Diagram}
    \begin{tikzcd}
        q_{\DR}^*q_{\DR,!}\EuScript{F}\arrow[r,"\psi^{\EuScript{F}}"] & \JetX(E)\arrow[d]
\\
\EuScript{F}\arrow[u,"g"] \arrow[r,"\tilde{\psi}^{\EuScript{F}}"] & \JetX(\JetX(E))
    \end{tikzcd}
\end{equation}
commutes. The morphism $\widetilde{\psi}^{\EuScript{F}}:\EuScript{F}:\rightarrow \JetX\JetX(E)$ appearing in (\ref{eqn: Formal Solution Diagram}) is determined by $\psi^{\EuScript{F}}$ via the $(p^*p_!,\JetX(E))$-adjunction and Proposition \ref{ComonadicJet}.

Note that $q_{\DR}^*q_{\DR,!}\EuScript{F}$ is determined by the pull-back diagram in $\PS_{/X_{\DR}}:$
\[
\begin{tikzcd}
    X\times_{X_{\DR}}\EuScript{F}\arrow[d]\arrow[r] & \EuScript{F}\arrow[d]
    \\
    X\times_{X_{\DR}}X\arrow[d]\arrow[r]& X\arrow[d,"q_{\DR}"]
    \\
    X\arrow[r,"q_{\DR}"] & X_{\DR}
\end{tikzcd}
\]
Consider $E\in \PS_{/X}^{laft}$, $\iota:\EQ\rightarrow \J(E)$ and let $\EuScript{F}\in \PS_{/X}^{laft}.$ The corresponding  pre-stack of $\EuScript{F}$-parameterized solutions is the pre-stack
$$\mathbb{R}\underline{\mathrm{Sol}}_{\EuScript{F}}(\EQ):=\underline{\mathsf{Maps}}(q_{\DR}^*q_{\DR,!}\EuScript{F},p^*\EQ),$$ 
such that for all $U$-points $\psi_U^{\EuScript{F}}\in\mathbb{R}\underline{\mathrm{Sol}}_{\EuScript{F}}(\EQ)(U),$ the map
$\iota \circ \psi_U^{\EuScript{F}}$ is an object of the space of sections 
$\Sect_{\EuScript{F}}(X,\JetX(E)\big)(U).$

\begin{proposition}
\label{proposition: Relation between solutions}
    If $i:\EQ_X\rightarrow \JetX(E)$ is a derived non-linear \textsc{pde} on sections of $E\in \PS_{/X}^{\mathrm{laft-def}},$ then $\EQ$ and $\EQ_X^{\infty}$ have the same solutions.
\end{proposition}
\begin{proof}
Let $s:X\rightarrow E$ be a section that is a solution of $\EQ$. Then there exists a uniquely defined (up to homotopy) morphism $s_{\infty}\in \RSol_{/id_X}(\EQ),$ and a morphism 
    $\alpha:\EQ_X^{\infty}\rightarrow \EQ_X,$ which induces an equivalence 
    $$\alpha^*:\RS(\EQ_{X}^{\infty})\rightarrow \RSol_{/id_X}(\EQ_X).$$
This might be seen, for instance, by noting $s_{\infty}:q_{\DR}^*q_{\DR,!}X\rightarrow \JetX(E),$ is a uniquely defined $X$-parameterized solution, given by comonadic extension,
    $X\xrightarrow{\simeq} \JetX\xrightarrow{\JetX(E)(j_{\infty}(s)}\JetX(\JetX(E)).$
    The diagram (\ref{eqn: Infinite prolongation}) together with 
    \begin{equation}
    \label{eqn: Universal alpha}
    \begin{tikzcd}
       \EQ_X^{\infty}\arrow[d,"\mathfrak{j}_{\infty}(s)"]\arrow[r,"i_{\infty}"] & \JetX\arrow[d,"\Delta_{E}"]
       \\
       \JetX\EQ_X\arrow[d]\arrow[r,"\JetX i"] & \JetX(\JetX(E))\arrow[d]
       \\
       \EQ_X\arrow[r,"i"] & \JetX(E)
    \end{tikzcd}
    \end{equation}
defines $\alpha,$ as the obvious composition.
\end{proof}

We give a brief example constructing a new jet-comonad via the `overconvergent de Rham space' in the context of rigid analytic geometry \cite{Cam}, as discussed in §§ \ref{sssec: Relation to other works}. It is included for completeness as it relates to the central theme of this paper, but the reader can safely skip ahead. 
\subsubsection*{Towards overconvergent solutions}
Let $X$ be a quasi-compact derived rigid analytic stack and consider the $n$-th overconvergent neighbourhood of the diagonal map; the analytic space $(\Delta_X^{n+1}X)^{\dagger}$ whose functions overconverge the locally closed subspace 
$|\Delta_X^{n+1}|\subset |X^{\times X(n+1)}|.$
Then,
$$X_{\DR}^{\dagger}\simeq \varinjlim_{[n]\in \Delta^{op}}\big(\Delta_X^{(n+1)}X\big)^{\dagger}.$$ 
On the $\infty$-category of bounded analytic rings there are natural notions of $\dagger$-reduced rings, as well as $\dagger$-nil radicals, whose $S$-points are maps $S\rightarrow A$ overconvergently close to zero. Similarly, there is a condensed nil-radical whose $S$-points are uniformly nilpotent \cite{Cam}. Following Construction \ref{cons: Solutions} (a) and Proposition \ref{D-PreStk equiv X dR PreStk}, one obtains the $\infty$-comonad of overconvergent derived jets and the following derived space of flat sections.
\begin{construction}
\label{cons: Analytic Solution Space}
\normalfont 
Consider the $\infty$-topos of analytic stacks \cite{Cam}, 
and a morphism $f:X\rightarrow Y$ together with a prestack $E\xrightarrow{q}X,$ such that $Y$ is of finite solid Tor-dimension and solid proper, there is an adjunction $(f^*,f_*)$ on slice categories of stacks and derived moduli of analytic sections of $q$,
$\mathbb{R}\mathsf{Sect}_{Y}(E/X):=f_*(E),$ is naturally defined \cite{Lurie2018}.
As $f^*$ commutes with colimits, $f_*$ exists as a right-adjoint \cite[Corollary 5.5.2.9]{Lur09}. Given a derived analytic stack $X$ with $X\rightarrow X_{\DR}^{\dagger},$ and $\EQ$ a closed substack of Weil restriction along this map, one obtains moduli space of \emph{overconvergent solutions} by Construction \ref{cons: Solutions} (a) as 
$\RSol_{X,\dagger}(\EQ):=\mathsf{Res}_{X_{\DR}^{\dagger}/pt}(\EQ).$
\end{construction}

\subsection{Cotangent Complexes and Finiteness Conditions}
\label{sec: Cotangent Complexes and Finitness Conditions}
In this subsection, we consider the tangent stack of $\RS.$ Heuristically,
$$T\mathbb{R}\underline{\mathrm{Sol}}_{X}(\EQ)\simeq \mathbb{R}\underline{\mathrm{Sol}}_{X}(\mathbb{T}_{Z}):=\{\text{solutions of (derived) linearization problem}\}.$$

To this end, consider a relative prestack $E\rightarrow Y,$ with $q_{\DR}^Y:Y\rightarrow Y_{\DR}$ the projection, as well as the $\mathcal{D}_Y$-relative jet space $\J(E)=q_{\DR,*}^YE,$ with its structure maps
that we denote 
\begin{equation}
\label{eqn: Jet Structure maps}
\mathsf{p}:\J(E)\rightarrow Y_{\DR},\hspace{2mm}\text{ and }\hspace{1mm} \mathsf{q}:\JetY(E):=q_{\DR}^{*}\J(E)\rightarrow Y
\end{equation}

The relative cotangent complex of the $\mathcal{D}_Y$-relative $\infty$-jet prestack will be an object 
\begin{equation}
\label{eqn: D-Relative Cotangent}
\scalemath{1.10}{\mathbb{L}}_{\J(E)/Y_{\DR}}\in \mathsf{Shv}_{Y_{\DR}}(\J(E)):=\mathcal{O}(\J(E))-\mathsf{Shv}(Y_{\DR}).
\end{equation}
It is understood here that since we are considering cotangents, employing the results of \cite{GR17b}, we are speaking about \emph{pro-coherent cotangent} complexes i.e. $\mathsf{Shv}$ is understood to be $\mathsf{ProCoh}.$
The sheaf  $\mathcal{O}_{\JetY(E)}$ is naturally a commutative $\mathcal{O}_Y$-algebra in $\mathsf{Shv}\big(\JetY(E)\big).$

\subsection{Finiteness and representability}
Suppose $E$ is a laft prestack admitting deformation theory relative to $Y.$ Then $\J(E)$ admits relative deformation theory to $Y_{\DR}$ and in this case there is an equivalence
$$\scalemath{1.10}{\mathbb{L}}_{\J(E)/Y_{\DR}}\simeq\scalemath{1.10}{\mathbb{L}}_{\J(E)},$$
as the the deformation theory of $Y_{\DR}$ is trivial. 

We would like mapping stacks of solutions (\ref{eqn:Derived solution stack}) to be homotopically finitely presented over $Y_{\DR}$, hereafter referred to as \emph{homotopically dR-finitely presented}. Roughly speaking, if $E$ is homotopically dR-finitely presented and representable it is of finite $\D$-presentation as in § \ref{ssec: Prestacks of Solutions and DNLPDES}.

A prestack $E$ is $dR$-finitely presented if $\Map_{/X_{\DR}}(\EQ,-)$ commutes with homotopy colimits in $\PS_{X_{\DR}}.$ In other words, for a filtered diagram (indexed by a small category $I$) of relative $X_{\DR}$-stacks $\{X_i\rightarrow X_{\DR}\},$ one has that
\begin{equation}
\label{eqn: Ho dR-fp}
\RS(\EQ)(\underset{i\in I}{\mathrm{colim}}\hspace{1mm} X_i)\simeq \underset{i\in I}{\mathrm{colim}}\hspace{1mm}\RS(\EQ)(X_i).
\end{equation}

It may be checked in practice on the sequences of cohomological truncations. Namely, it is enough to verify the claim when $\EQ$ is affine i.e. $\EQ=\mathsf{Spec}(\mathcal{A})$ for some $\mathcal{A}\in Comm\big(\mathsf{Shv}(X_{\DR})\big).$ In this case, one verifies each $\tau^{\leq -n}(\mathcal{A})$ is compact.

\begin{remark}[Terminology]
\normalfont
If an object $Z\in \PS$ has the property that its $\infty$-category of sheaves $\mathsf{Shv}(\EQ)$ is canonically anti self-dual in $\mathsf{Cat}_{st}^{\infty},$ we say that it is \emph{finitary}\footnote{With the additional assumptions that $\mathcal{O}_{Z}$ is a compact object and the diagonal map $\delta_{Z}$ is quasi-affine, one says that $Z$ is \emph{passable} \cite{GR17b}. } for $\mathsf{Shv}.$ 
\end{remark}

This definition includes, for example, the case when $Z$ is an ind-inf scheme finitary for $\IC$. Similarly if $Z$ is perfect (as an object of $\PS$), then it is finitary for $\Q.$ Furthermore, any quasi-compact scheme is finitary for $\Q.$

\begin{definition}
\label{Definition: AdmissiblyDFinitary}
A relative prestack $E\rightarrow Y$ is \emph{$\D$-finitary} if it admits a cotangent complex and the $(\infty,1)$-subcategory of $\mathsf{Shv}^!\big(\J(E)\big)$ spanned by the essential image of the functor $q_{\DR,*}^{\mathsf{Shv}}$ is dualizable. A derived non-linear \textsc{pde} $\EQ$ is $\D$-\emph{finitary} if it is obtained from a $\D$-finitary prestack $E\rightarrow Y$ and its derived stack of solutions is locally almost of finite type and admits a cotangent complex for every solution $\varphi.$
\end{definition}

Quasi-coherent pullback along smooth morphisms is symmetric monoidal, inducing a functor 
$g^*_{\mathrm{CAlg}}$. Moreover, since $\mathrm{CAlg}(-)$ is a functorial in symmetric monoidal $\infty$-functors we have a series of induced functors on $\mathsf{QCAlg}(X_{\DR})$.
Suppose $f:Y\to X$ is a morphism of smooth varieties over $\mathbb{C}$ and let $f_{\DR}:Y_{\DR}\to X_{\DR}$ be the induced map. Then,
$$f_{\DR}^{\mathsf{QCoh},*}:\mathsf{QCoh}(X_{\DR})\to \mathsf{QCoh}(Y_{\DR}),$$
induces a functor,
\begin{equation}
    \label{eqn: QCAlgpb}
f_{\DR}^{\mathsf{QCAlg},*}:\mathsf{QCAlg}(X_{\DR})\to \mathsf{QCAlg}(Y_{\DR}).
\end{equation}

We need some properties of these functors.

\begin{proposition}
\label{prop: QCAlgDRpb}
Let $f:Y\to X$ be a morphism of smooth varieties. Then, the following diagram commutes,
\[
\begin{tikzcd}
    \mathsf{QCAlg}(X_{\DR})\arrow[d,"U"]\arrow[r,"f_{\DR}^{\mathsf{QCAlg},*}"]& \mathsf{QCAlg}(Y_{\DR})\arrow[d,"U"]
    \\
    \mathsf{QCoh}(X_{\DR})\arrow[r,"f_{\DR}^{\mathsf{QCoh},*}"]& \mathsf{QCoh}(Y_{\DR}).
\end{tikzcd}
\]
Moreover, if $f$ is smooth (or more generally, finite Tor-dimension e.g. flat), then 
$\tau^{\leq -n}\circ f^*\simeq f^*\circ \tau^{\leq -n},$
for each $n.$
\end{proposition}
\begin{proof}
    Since truncation $\tau^{\leq -n}$ is a right adjoint to the inclusion $\mathsf{QCoh}^{\leq -n}\subset \mathsf{QCoh},$ it commutes with functors which preserve limits, but our pull-back is a left-adjoint. It is $t$-exact and preserves subcategories of (co)connective truncations, since $f$ is of finite Tor-amplitude because it is smooth. 
\end{proof}
This result is an application of functoralities discussed in \cite{GR14}.
\begin{proposition}
\label{prop: PreserveDafp}
    Let $A\in \mathsf{CAlg}(\D_X-\mathsf{Mod})$ and assume it is $\D_X$-afp. Let $f:Y\to X$ be smooth. Put $A_Y:=f_{\DR}^{\mathsf{QCAlg},*}(A)\in \mathsf{CAlg}(D_Y-\mathsf{Mod}).$ Then $A_Y$ is $\D_Y$-afp.
\end{proposition}
\begin{proof}
Since $f$ is smooth (\ref{eqn: QCAlgpb}) is a left-adjoint. Moreover it is each and commutes with filtered colimits. In particular, it preserves compact objects, and thus from Proposition \ref{prop: QCAlgDRpb}, since $\tau^{\leq -n}A$ is compact in 
$\mathrm{CAlg}(\mathsf{QCoh}(X_{\DR}))^{\leq -n},$ for every $n$, it follows that 
$$\tau^{\leq -n} f_{\DR}^{\mathsf{QCAlg},*}(A)=\tau^{\leq -n}A_Y\simeq f_{\DR}^{\mathsf{QCAlg},*}\tau^{\leq -n}A.$$
Since it preserves compacts, this is again compact for each $n$, therefore $A_Y$ is $\D_Y$-afp.
In more detail, for every \(n\ge0\), the truncation \(\tau^{\ge -n}A_Y\) is compact in \(\mathsf{QCAlg}(Y_{\DR})^{\ge -n}\). That is, the functor $\operatorname{Map}_{\mathsf{QCAlg}(Y_{\DR})}\big(\tau^{\ge -n}A_Y,\; -\big)$
preserves filtered colimits in $\mathsf{QCAlg}(Y_{\DR})^{\ge -n}$.

Because \(f_{\DR}^{\mathsf{QCAlg},*}\) is left adjoint to \(f_{DR,*}^{\mathsf{QCAlg}}\) by adjunction,
$$\operatorname{Map}_{\mathsf{QCAlg}(Y_{\DR})}\big(f_{\DR}^*B,\;C\big)
 \simeq
    \operatorname{Map}_{\mathsf{QCAlg}(X_{\DR})}\big(B,\; f_{DR,*}C\big),$$
for \(B\in\mathsf{QCAlg}(X_{\DR})\) and \(C\in\mathsf{QCAlg}(Y_{\DR})\).
Apply this with \(B=\tau^{\ge -n}A\) and \(C\) arbitrary. Then for any filtered diagram \(\{C_i\}\) in
\(\mathsf{QCAlg}(Y_{\DR})^{\ge -n}\) with colimit \(C=\varinjlim_i C_i\) we obtain
\[
\begin{aligned}
    \operatorname{Map}_{\mathsf{QCAlg}(Y_{\DR})}\big(\tau^{\ge -n}A_Y,\; C\big)
    &\simeq
    \operatorname{Map}_{\mathsf{QCAlg}(X_{\DR})}\big(\tau^{\ge -n}A,\; f_{DR,*}C\big) \\
    &\simeq
    \operatorname{Map}_{\mathsf{QCAlg}(X_{\DR})}\big(\tau^{\ge -n}A,\; \varinjlim_i f_{DR,*}C_i\big),
\end{aligned}
\]
where we used that \(f_{DR,*}\) commutes with filtered colimits because \(f\) is quasi-compact and quasi-separated. Concretely, the pushforward \(f_{DR,*}^{\Q}\) preserves filtered colimits and the same holds on algebra objects since underlying functors detect filtered colimits.

Because \(\tau^{\ge -n}A\) is compact in \(\mathsf{QCAlg}(X_{\DR})^{\ge -n}\) by hypothesis, for all filtered colimits,
$$\operatorname{Map}_{\mathsf{QCAlg}(X_{\DR})}\big(\tau^{\ge -n}A,\; \varinjlim_i f_{DR,*}C_i\big)
    \simeq
    \varinjlim_i \operatorname{Map}_{\mathsf{QCAlg}(X_{\DR})}\big(\tau^{\ge -n}A,\; f_{DR,*}C_i\big).$$
From Proposition \ref{prop: QCAlgDRpb} and by adjunction,
$$ \varinjlim_i \operatorname{Map}_{\mathsf{QCAlg}(X_{\DR})}\big(\tau^{\ge -n}A,\; f_{DR,*}C_i\big)
    \simeq
    \varinjlim_i \operatorname{Map}_{\mathsf{QCAlg}(Y_{\DR})}\big(\tau^{\ge -n}A_Y,\; C_i\big).$$
This shows \(\operatorname{Map}(\tau^{\ge -n}A_Y,-)\) preserves filtered colimits, i.e. \(\tau^{\ge -n}A_Y\) is compact in \(\mathsf{QCAlg}(Y_{\DR})^{\ge -n}\). Since \(n\) was arbitrary, \(A_Y\) is \(D_Y\)-afp.
\end{proof}

Given $f:Y\to X$ when is 
$$\mathrm{Maps}_{Y_{\DR}}(T,\mathsf{Spec}_{Y_{\DR}} A_Y) \to \Map_{/X_{\DR}}(T,\mathsf{Spec}_{X_{\DR}}A),$$
an equivalence? We will answer this in the following section. In order to do so, it is useful to have the following relation between $\D$-afp algebras and finiteness hypothesis on spaces of flat sections.

\begin{proposition}
    \label{prop: D-afp means RSolfinite}
    Suppose $A\in \mathsf{QCAlg}(X_{\DR})$ is $\D_X$-afp. Then $\mathbb{R}\mathrm{Sol}_{\D}(A)$ is a derived algebraic stack locally of finite presentation over $k$. Moreover, if $\A\simeq \mathrm{Sym}(\M)$ with $\M$ a compact generator of $\mathsf{QCoh}(D_X),$ then $\A$ is compact and $\mathbb{R}\mathrm{Sol}_{\D}(A)$ is representable by a derived affine $k$-scheme of finite presentation.
\end{proposition}
\begin{proof}
Writing $U:\mathsf{QCAlg}(X_{\DR})\to \mathsf{QCoh}(X_{\DR}),$ we have $$\mathrm{Maps}_{\mathsf{QCAlg}(X_{\DR})}(\mathrm{Sym}(\M),B)\simeq \mathrm{Maps}_{\Q(X_{\DR})}(\M,UB).$$
Let $\{B_i\}_{i\in I}$ be a filtered diagram in $\mathsf{QCAlg}(X_{\DR}).$ By Proposition \ref{Compact D algebras} $\mathrm{Maps}_{\Q(X_{\DR})}(\M,-)$ preserves filtered colimits. Therefore, 
$$\mathrm{Maps}_{\mathsf{QCAlg}(X_{\DR})}(\mathrm{Sym}(\M),\mathrm{colim}_iB_i)\simeq \mathrm{Maps}_{\Q(X_{\DR})}(\M,U\mathrm{colim}_iB_i),$$
which gives $$\mathrm{Maps}(\M,\mathrm{colim}_iU B_i)\simeq \mathrm{colim}_i\mathrm{Maps}(\M,UB_i)\simeq\mathrm{colim}_i\mathrm{Maps}_{\mathsf{QCAlg}(X_{\DR})}(\mathrm{Sym}(\M),B_i).$$ Thus $\mathrm{Maps}_{\mathsf{QCAlg}(X_{\DR})}(\mathrm{Sym}(\M),-)$ commutes with filtered colimits so $\mathrm{Sym}(\M)$ is compact. Now, suppose $\mathsf{Spec}_{X_{\DR}}A\to X_{\DR}$ is affine and $A$ is $\D$-afp. Then $L_{A}$ is perfect of finite Tor-amplitude. The derived mapping stack of sections,
$$R\underline{\mathrm{Sol}}_{\D}(A)=\underline{\mathrm{Sect}}_{X_{\DR}}\big(\mathsf{Spec}_{X_{\DR}}(A)/X_{\DR})\simeq \underline{\mathrm{Maps}}_{/X_{\DR}}(X_{\DR},\mathsf{Spec}_{X_{\DR}}(A)),$$
has as its $T$-points, 
$$R\underline{\mathrm{Sol}}_{\D}(A)(T)\simeq \mathrm{Maps}_{\mathsf{QCAlg}(X_{\DR})}(A,(X_{\DR}\times T)_*\mathcal{O}_{X_{\DR}\times T}).$$
Since \(\mathsf{Spec}_{X_{\DR}}(A)\to X_{\DR}\) is affine and \(A\) is \(D_X\)-afp, the relative cotangent complex \(\mathbb{L}_{A/X_{\DR}}\) is perfect and of finite Tor-amplitude.Concretely, the prestack
\[
    T\longmapsto \operatorname{Maps}_{/X_{\DR}}\!\big(X_{\DR}\times T,\; \mathsf{Spec}_{X_{\DR}}(A)\big)
\]
is locally on \(T\) controlled by the complex \(R\Gamma\big(X_{\DR},\; \mathbb{T}_{A/X_{\DR}}\otimes \mathcal{O}_T\big)\), and the finiteness of \(\mathbb{L}_{A/X_{\DR}}\) implies Artin-type representability conditions (cotangent amplitude and lfp) needed for the standard representability theorems. Hence \(R\underline{\mathrm{Sol}}_{\D}(A)\) is derived algebraic and locally of finite presentation over \(\mathbb C\).

\medskip\noindent\emph{(B) Free case \(A\simeq\operatorname{Sym}(\mathcal{M})\) with \(\mathcal{M}\) compact.} \\
Assume \(A=\operatorname{Sym}(\mathcal{M})\) for a compact \(\mathcal{M}\in\mathsf{QCoh}(X_{\DR})\). Then the mapping space of \(A\) into any test algebra is computed by the mapping space out of \(\mathcal{M}\) into underlying modules, because \(\operatorname{Sym}(-)\) is left adjoint to the forgetful functor:
\[
    \operatorname{Map}_{\mathsf{QCAlg}}\big(\operatorname{Sym}(\mathcal{M}),\, B\big)
    \;\simeq\;
    \operatorname{Map}_{\mathsf{QCoh}}\big(\mathcal{M},\, U(B)\big).
\]
Taking \(B=(X_{\DR}\times T)_*\mathcal{O}_{X_{\DR}\times T}\) (the test algebra for the mapping/section functor) and adjunctions, we obtain the equivalence of functors in \(T\):
\[
\begin{aligned}
    R\underline{\mathrm{Sol}}_{\D}(A)(T)
    &\simeq
    \operatorname{Map}_{\mathsf{QCAlg}(X_{\DR})}\big(\operatorname{Sym}(\mathcal{M}),\; (X_{\DR}\times T)_*\mathcal{O}_{X_{\DR}\times T}\big) \\
    &\simeq
    \operatorname{Map}_{\mathsf{QCoh}(X_{\DR})}\big(\mathcal{M},\; (X_{\DR}\times T)_*\mathcal{O}_{X_{\DR}\times T}\big)\\
    &\simeq
    \operatorname{Map}_{\mathsf{QCoh}(\mathrm{pt})}\big(R\Gamma_{\DR}(X,\mathcal{M}^\vee),\; \Gamma(T,\mathcal{O}_T)\big),
\end{aligned}
\]
where in the last line we used global duality: compactness of \(\mathcal{M}\) implies \(\mathcal{M}^\vee\) is bounded with coherent cohomology and
\[
    \operatorname{Map}_{\mathsf{QCoh}(X_{\DR})}\big(\mathcal{M},\; (X_{\DR}\times T)_*\mathcal{O}_{X_{\DR}\times T}\big)
    \simeq
    \operatorname{Map}\big(R\Gamma_{\DR}(X,\mathcal{M}^\vee),\; \Gamma(T,\mathcal{O}_T)\big).
\]
Therefore the functor of points of \(R\underline{\mathrm{Sol}}_{\D}(A)\) is represented by the affine derived scheme
\[
    \mathsf{Spec}\Big(\operatorname{Sym}^\bullet\big(R\Gamma_{\DR}(X,\mathcal{M}^\vee)\big)\Big),
\]
i.e. the free cdga on the finite complex \(R\Gamma_{\DR}(X,\mathcal{M}^\vee)\). Since \(X\) is proper and \(\mathcal{M}\) is coherent (compact), the complex \(R\Gamma_{\DR}(X,\mathcal{M}^\vee)\) is perfect of finite amplitude and finite-dimensional cohomology over \(\mathbb C\). Hence the resulting affine derived \(\mathbb C\)-scheme is finite type.

\end{proof}
As an example, taking further $A=\mathrm{Sym}(\M)$ with $\M=\mathrm{ind}(E)$ for a vector bundle $E,$ then
$\mathbb{R}\mathrm{Sol}(\mathrm{Sym}(\mathrm{ind}(E)))\simeq \mathsf{Spec}(\mathrm{Sym}^{\bullet}(R\Gamma(X_{\DR},ind E^{\vee})),$ which computes gives $\mathsf{Spec}(\mathrm{Sym}^{\bullet}(R\Gamma(X,E^{\vee})),$ since $R\Gamma(X_{\DR},\mathrm{ind}(E)^{\vee}))$ is $R\Gamma(X,E^{\vee}).$

We might say that $E$ is \emph{strongly $\D$-finitary} if $\J(E)$ is also locally almost of finite type, but typically this is too strong a condition. We relax it by asking that only the fibers of $\EuScript{Y}$ mapping into $\J(E)$ are finite.
\begin{proposition}
    \label{prop: Relatively D-laft}
    Consider an $X_{\DR}$-space $\EQ$ and a laft prestack $\EuScript{X}$ such that for $X$ an eventually co-connective affine scheme, there is a pull-back diagram in $\PS_{/X_{\DR}}:$
\begin{equation}
    \label{eqn: Relative Laft}
    \begin{tikzcd}
        \EuScript{X}\arrow[d]\arrow[r] & \EQ\arrow[d]\arrow[r] & \arrow[dl] \J(E)
        \\
        \EuScript{U}\arrow[r] & X_{\DR}& 
    \end{tikzcd}.
\end{equation}
Then $\EQ$ is $\D$-finitary.
\end{proposition}
In the context of Proposition \ref{prop: Relatively D-laft}, one may calculate the relative pro-cotangent complex of $\EQ$ from the laftness of $\EuScript{X}$ and the fact that if $X$ is coconnective then $\EuScript{U}$ is truncated. Moreover, this result holds if if we assume $E$ is also laft, via Proposition \ref{Laft Descent}.

\begin{corollary}
\label{cor: Laft to DAfp}
If $X$ is a proper $k$-scheme of finite type the
functor \emph{(\ref{eqn: PreStk dR Pushforward})} restricts to $\mathsf{Jets}_{\DR}:\PS_{/X}^{aft}\rightarrow \PS_{/X_{\DR}}^{\D-\text{fin}}.$ In particular if $E\rightarrow X$ is affine over $X$, then  $\J(E)$ is affine over $X_{\DR}.$

\end{corollary}
\begin{proof}[Sketch of proof]
We prove the second claim, following our discussion of compact objects in commutative monoids in  ind-coherent sheaves on $X_{\DR},$ as in \ref{ssec: Compactness}. Namely, that if $E=\mathsf{Spec}_{/X}(A_0^{\bullet})$ is a derived affine scheme with $A_0\in \mathsf{QCohCAlg}(X)^{\leq 0}.$ Then, we claim that
$$q_{\DR,*}(E)\simeq\mathsf{Spec}_{\D}\big(oblv_{Comm}^{\ell,\leq,\mathrm{L}}(A_0^{\bullet}))\big).$$

Here we write
$oblv^{\ell}:\mathsf{Mod}(\D_X)^{\leq 0}\rightarrow \mathsf{QCoh}(X)^{\leq 0},$
and 
$$oblv_{Comm}^{\ell}:\mathsf{CAlg}(\D_X)^{\leq 0}\rightarrow Comm\big(\mathsf{QCoh}(X)^{\leq 0}\big),$$
the corresponding functor on commutative monoids, where 
$$oblv^{\ell}=(q_{\DR}^X)_{\mathsf{QCoh}}^*:\Q(X_{\DR})^{\leq 0}\rightarrow \Q(X)^{\leq 0}.$$
Then, 
$oblv_{Comm}^{\ell,\mathrm{L}},$
is the left adjoint, commuting with colimts.
\end{proof}
In particular, if $A_0^{\bullet}=\mathrm{Sym}(\mathcal{E})$ for some vector bundle or coherent sheaf $\mathcal{E},$
$$\big(oblv_{Comm}^{\ell}\big)^{\mathrm{L}}\big(\mathrm{Sym}_{\mathcal{O}_X}(\mathcal{E})\big)\simeq \mathrm{Sym}^!\big((q_{\DR}^X)_*^{\Q}\mathcal{E}\big),$$
where we recall that $(q_{\DR}^X)_*^{\Q}$ is $\mathrm{ind}_{\D_X}^{\ell}.$

\begin{remark}

Definition \ref{Definition: AdmissiblyDFinitary} relies on the existence of a certain adjoint functor, but in our cases we do have push-forwards $q_{\DR,*}$ e.g. when $q_{\DR}$ is proper, thus a left-adjoint occurring for instance, if $X$ is derived Artin stack as per Observation \ref{obs: Artin}.
\end{remark}

When $E\in \PS_{/Y}$ is $\D$-finitary we denote its dualizable subcategory by $$\mathsf{Shv}_{fin}^!(E):=\big(q_{\DR}^Y\big)_*\mathsf{Shv}(E)\hookrightarrow \mathsf{Shv}^!\big(\J(E)\big).$$

Since $q_{\DR}^Y:Y\rightarrow Y_{\DR}$ induces a functor 
$$(q_{\DR}^Y)_{\mathsf{Shv}}^!:\mathsf{Shv}(Y_{\DR})\rightarrow \mathsf{Shv}(Y),$$
which is nothing but the $!$-pullback of sheaves on prestacks, it
extends to sheaves on relative prestacks denoted the same
\begin{equation}
    \label{eqn: Relative !-pullback functor}
    (q_{\DR})_{\mathsf{Shv}}^!:\mathsf{Shv}_{Y_{\DR}}\big(\J(E)\big)\rightarrow \mathsf{Shv}_Y^!\big(q_{\DR}^{Y,!}\J(E)\big)\simeq \mathsf{Shv}_Y\big(\JetY(E)\big).
\end{equation}

The $\infty$-category on the right-hand side of  (\ref{eqn: Relative !-pullback functor}) is understood as sheaves on the derived prestack $\JetY(E),$ which come with a canonical $\mathcal{O}_Y$-module structure.

Furthermore, structure maps (\ref{eqn: Jet Structure maps}) induce a $*$-pushforward functor
\begin{equation}
    \label{eqn: Relative *-pushforward functor}
    (\mathsf{q})_*^{\mathsf{Shv}}:\mathsf{Shv}_{Y}\big(\JetY(E)\big)\rightarrow \mathsf{Shv}(Y).
\end{equation}

\begin{proposition}
\label{Sheaf push-forward is right-lax monoidal}
 Functor \emph{(\ref{eqn: Relative *-pushforward functor})} is right-lax symmetric monoidal.
\end{proposition}
By \autoref{Sheaf push-forward is right-lax monoidal} we obtain that (\ref{eqn: Relative *-pushforward functor}) sends commutative monoids to commutative monoids, and we denote this enunciated situation by
\begin{equation}
\label{eqn: CAlgShv q-pf}
(\mathsf{q})_*^{\mathsf{CAlgShv}}:\mathsf{CAlgShv}_{Y}^!\big(\JetY(E)\big)\rightarrow \mathsf{CAlgShv}^!(Y).
\end{equation}

The functor (\ref{eqn: CAlgShv q-pf}) specialized to the case of quasi-coherent sheaves is written $(\mathsf{q})_*^{\mathsf{CAlgQCoh}}:\mathsf{CAlg}\mathsf{QCoh}(\JetY(E)\big)\rightarrow \mathsf{CAlgQCoh}(Y).$

Via the (co)-Cartesian Grothendieck construction, we have that $\mathsf{Shv}^!\big(\JetY(E)\big)$ is equivalent to 
$$\mathsf{Shv}^!\big(\JetY(E)\big)=\underset{A\in\mathsf{CAlg}^{\leq 0},f\in \JetY(E)(A)}{\mathrm{lim}}\mathsf{Shv}^!\big(Spec(A)\big),$$
with the limit taken in the $\infty$-category $\mathsf{Cat}_{\mathrm{pres},\mathrm{L}}^{\infty}.$ 
Since $\mathsf{CAlg}(-)$ is symmetric monoidal and commutes with limits, there
are equivalences of $\infty$-categories
\begin{eqnarray*}
\mathsf{CAlgShv}_Y\big(\JetY(E)\big)&\simeq& \underset{(\mathsf{dAff}_{/\JetY(E)})^{op}}{\mathrm{lim}}\mathsf{CAlg}\big(\mathsf{Shv}(\mathsf{Spec}(A)\big)
\\
&\simeq& \underset{(\mathsf{dAff}_{/\JetY(E)})^{op}}{\mathrm{lim}}\mathsf{CAlg}(\mathcal{O}_Y)_{A/},
\end{eqnarray*}
where the limit is taken over $U\in \mathsf{dAff}$ with a map to $\JetY(E),$ in the category $\PS.$

By Proposition \ref{Sheaf push-forward is right-lax monoidal} consider $\mathcal{O}_{\JetY(E)}$ as an object of $\mathsf{Shv}\big(\JetY(E)\big)$ and note it's image under the functor (\ref{eqn: CAlgShv q-pf}),
\begin{equation}
    \label{eqn: D-Algebra Constructed}
\mathcal{A}^{\bullet}:=\mathsf{q}_*^{\mathsf{CAlgShv}}\big(\mathcal{O}_{\JetY(E)}\big)\in \mathsf{Shv}(Y),
\end{equation}
naturally has the structure of a commutative algebra on $Y.$ Moreover, by push-forward of sheaves on prestacks relative to $Y$ along $q_{\DR}^Y,$ we get that 
$$\mathcal{A}_Y^{\bullet}:=(q_{\DR}^Y)_*^{\mathsf{Shv}}\mathcal{A}^{\bullet},$$
is a sheaf of commutative algebras on $Y$ with a flat connection. 
\begin{proposition}
\label{D-PreStack Construction}
The derived spectrum $Spec_{\mathcal{D}}\big(\mathcal{A}_Y^{\bullet}\big)$ is a $\mathcal{D}_Y$-prestack.
\end{proposition}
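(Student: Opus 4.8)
The plan is to unwind the two definitions involved. Recall from Definition \ref{Definition: D-PreStack} that a derived $\mathcal{D}_Y$-prestack is simply a functor $\mathsf{CAlg}_{\mathcal{D}_Y}^{\leq 0}\rightarrow \mathsf{Spc}$, or equivalently (Proposition \ref{D-PreStk equiv X dR PreStk}, or the slice-model-categorical reformulation of Proposition \ref{The result for interpreting DCPS}) an object of $\PS_{/Y_{dR}}$. So what must be checked is that the object $Spec_{\mathcal{D}}(\mathcal{A}_Y^{\bullet})$ determined by the derived $\mathcal{D}_Y$-algebra $\mathcal{A}_Y^{\bullet}$ of (\ref{eqn: D-Algebra Constructed}) indeed lands in $\mathsf{CAlg}_Y(\mathcal{D}_Y)$ (equivalently, in $\mathsf{CAlgShv}$ over $Y_{dR}$ after transport), so that applying the $\mathcal{D}$-Yoneda embedding $h_{\dAff}^{\mathcal{D}}$ produces a bona fide object of $\PS_{Y}(\mathcal{D}_Y)$.

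First I would observe that $\mathsf{q}_*^{\mathsf{CAlgShv}}(\mathcal{O}_{\JetY(\EuScript{E})})$ is, by the discussion preceding the statement (Proposition \ref{Sheaf push-forward is right-lax monoidal} and equation (\ref{eqn: CAlgShv q-pf})), a commutative algebra object in $\mathsf{Shv}(Y)$, and then that $(p_{dR}^Y)_*^{\mathsf{Shv}}$ — being right-lax symmetric monoidal as a pushforward of sheaves on prestacks — carries this to a commutative algebra object $\mathcal{A}_Y^{\bullet}$ in $\mathsf{Shv}(Y)$ equipped with the canonical flat connection coming from the factorization through $Y_{dR}$. Under the identification $\mathsf{QCoh}(Y_{dR})\simeq \mathsf{Mod}(\mathcal{D}_Y)$ of \cite{GaiRozCrystals} (and its commutative-algebra refinement, $\mathsf{CAlg}(\mathsf{QCoh}(Y_{dR}))\simeq \mathsf{CAlg}_Y(\mathcal{D}_Y)$, used throughout Sect. \ref{sec: Derived non-linear PDEs}), a commutative algebra object in sheaves on $Y$ with a flat connection is exactly a commutative $\mathcal{D}_Y$-algebra, i.e. an object of $\mathsf{CAlg}_Y(\mathcal{D}_Y)\simeq \mathsf{CAlg}(\mathcal{D}_Y^{\leq 0})$ after passing to the connective truncation. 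Hence $\mathcal{A}_Y^{\bullet}\in \mathsf{cdga}_{\mathcal{D}_Y}^{\leq 0}$, and $Spec_{\mathcal{D}}(\mathcal{A}_Y^{\bullet})$ is by construction an affine derived $\mathcal{D}_Y$-scheme, so it represents a functor $\mathsf{CAlg}_{\mathcal{D}_Y}^{\leq 0}\rightarrow \mathsf{Spc}$ via $\underline{R\mathsf{Maps}}_{\mathsf{cdga}_{\mathcal{D}_Y}}(\mathcal{A}_Y^{\bullet},-)$, which is precisely a derived $\mathcal{D}_Y$-prestack in the sense of Definition \ref{Definition: D-PreStack}. The last point to nail down is that the representable functor actually satisfies whatever (pre)sheaf condition is packaged into the word ``prestack'' here — but since Definition \ref{Definition: D-PreStack} imposes no descent beyond being an $\infty$-functor, this is automatic once $\mathcal{A}_Y^{\bullet}$ is an honest object of the source category; alternatively one invokes the $\mathcal{D}$-Yoneda embedding $h_{\dAff}^{\mathcal{D}}:\dAff\hookrightarrow \PS_{\mathcal{D}_Y}$ recorded in Sect. \ref{sec: Derived non-linear PDEs}.

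The one genuinely delicate point — and the main obstacle — is verifying that $(p_{dR}^Y)_*^{\mathsf{Shv}}$ produces not merely a sheaf-with-endomorphism but a module carrying a genuine flat connection compatible with the algebra structure; that is, that the ``flat connection'' asserted after (\ref{eqn: D-Algebra Constructed}) really is the descent datum identifying $\mathcal{A}_Y^{\bullet}$ with a $\mathcal{D}_Y$-algebra rather than just a plain $\mathcal{O}_Y$-algebra. This is where one must use that the factorization $Y\xrightarrow{p_{dR}^Y} Y_{dR}$ is the universal de Rham map: the Čech nerve of $p_{dR}^Y$ computes the formal groupoid $\widehat{Y\times Y}$, and a sheaf on $Y_{dR}$ is by definition a sheaf on $Y$ equipped with compatible identifications of its pullbacks along the two projections from this formal groupoid, i.e. a crystal. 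Since $\mathcal{A}^{\bullet}=\mathsf{q}_*^{\mathsf{CAlgShv}}(\mathcal{O}_{\JetY(\EuScript{E})})$ already lives over $Y$ as the pushforward of a sheaf defined on the $Y_{dR}$-side (namely on $\J(\EuScript{E})=p_{dR,*}^Y\EuScript{E}$), functoriality of $\mathsf{q}_*$ and $\mathsf{p}_*$ with respect to $Y_{dR}$-morphisms furnishes exactly these compatibilities; the right-lax monoidality (Proposition \ref{Sheaf push-forward is right-lax monoidal}) guarantees they respect the multiplication. Assembling this into the crystal structure, and then invoking \cite{GaiRozCrystals} to repackage it as a $\mathcal{D}_Y$-algebra, completes the argument; everything else is formal.
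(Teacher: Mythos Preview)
Your proposal is correct and aligns with the paper's approach, which in fact provides no explicit proof: the proposition is stated immediately after the construction of $\mathcal{A}_Y^{\bullet}$ as a commutative algebra on $Y$ with a flat connection, and is meant to follow directly from that. Your unpacking of why a commutative algebra in $\mathsf{Shv}(Y_{dR})$ yields, via the $\mathcal{D}$-Yoneda embedding, an object of $\PS_Y(\mathcal{D}_Y)$ is exactly the implicit reasoning; the paper simply omits these details as routine.
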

Moreover, we have the following.
\begin{proposition}
\label{Factoring of q pushforward}
The functor $\mathsf{q}_*$ acts as
\[
\begin{tikzcd}
\mathsf{Shv}\big(\JetY(E)\big)\arrow[dr,"\mathsf{q}_*^{\mathsf{Shv}}"] \arrow[r]& \mathcal{A}^{\bullet}-\mathsf{Mod}\big(\mathsf{Shv}(Y)\big)\arrow[d,"\mathbf{oblv}_{\mathcal{A}^{\bullet}}"]
\\
& \mathsf{Shv}(Y).
\end{tikzcd}
\]
\end{proposition}
 Recall (\ref{eqn: Jet Structure maps}), diagram (\ref{eqn: Jet pb}) and Proposition \ref{prop: Def but not laft proposition}. 

\begin{proposition}
\label{Sheaves on Jet prestack over Y Description}
There is an equivalence
$$\mathsf{Shv}^!\big(\mathrm{Jet}_Y^{\infty}(E)\big)\xrightarrow{\simeq} \mathbf{q}_*^{\mathsf{CAlgShv}}\big(\mathcal{O}_{\mathrm{Jet}_Y^{\infty}(E)}\big)\text{-}\mathsf{Mod}\big(\mathsf{Shv}(Y)\big).$$
Moreover, suppose that $E\rightarrow Y$ is $\D$-finitary. Then the canonical map
$$\mathsf{Shv}^!\big(\J(E)\big)\otimes_{\mathsf{Shv}(Y_{\DR})}\mathsf{Shv}(Y)\rightarrow \mathsf{Shv}^!\big(\J(E)\times_{Y_{\DR}}Y\big),$$
is an equivalence.
\end{proposition}

\begin{proof}
Notice that $\mathcal{O}_Y$ is compact and $\mathsf{Shv}(Y_{\DR})$ is dualizable. The result then follows from \cite[Proposition. 3.5.3]{GR17a}. 
In particular, we observe that equivalences,
$\mathsf{Shv}\big(\J(E)\big)\otimes_{\mathsf{Shv}(Y_{\DR})}\mathsf{Shv}(Y)$ is equivalent to 
$$\mathsf{p}_*^{\mathsf{Shv}}\big(\mathcal{O}_{\J(E)}\big)\text{-}\mathsf{Mod}\big(\mathsf{Shv}(Y_{\DR})\otimes_{\mathsf{Shv}(Y_{\DR})}\mathsf{Shv}(Y),$$
and therefore  to $\big(q_{\DR}^Y\big)_{\mathsf{Shv}}^!\big(\mathsf{p}_*^{\mathsf{Shv}}(\mathcal{O}_{\J(E)})\big)\text{-}\mathsf{Mod}\big(\mathsf{Shv}(Y)\big),
$
using the functors associated to the maps (\ref{eqn: Jet Structure maps}),
together with the equivalence 
$$\mathsf{q}_*^{\mathsf{CAlgShv}}\big(\mathcal{O}_{\JetY(E)}\big)\simeq \big(q_{\DR}^Y\big)_{\mathsf{Shv}}^!\big(\mathsf{p}_{*}^{\mathsf{Shv}}\mathcal{O}_{\J(E)}\big),$$ in $\mathsf{Shv}(Y)$ which is moreover an equivalence of commutative monoids.
\end{proof}
By push-forward along $q_{\DR}^Y,$ obtain $\mathcal{A}^{\bullet}$-modules in sheaves on $Y_{\DR}.$

\subsection{Cotangent and tangent $\D$-complex of a derived NLPDE}
\label{sssec: Cotangent and Tangent D-Complex of a DNLPDE}
The main use of the finiteness conditions introduced in the previous subsection are given by the following result.

\begin{theorem}
\label{thm: RSol is laft-def}
Consider a $\D$-finitary non-linear PDE $\EQ\rightarrow X_{\DR}.$ Its derived space of solutions $\RS(\EQ)$ restricts to a functor
$$\RS(\EQ):\PS_{/X_{\DR}}\rightarrow \PS_{\mathbb{C}}^{\mathrm{\mathrm{laft-def}}}.$$
Furthermore, suppose that for a non-negative integer $k$ we have that $Z$ is $\D$-finitary with the property that $\mathbb{L}_{Z}$ exists as an object of $\mathrm{Shv}^{\leq k}.$ Then, $\RS(\EQ)$ admits a cotangent complex as an object of $\mathrm{Shv}^{\leq k}$ as well.
\end{theorem}
\begin{proof}
Since $\EQ\to X_{\DR}$ is $\D$-finitary, we have that $\RS(\EQ)$ is laft. Let $\widetilde{u}_T\in \mathrm{Maps}(T,\RS(\EQ))$ be a $T$-family of solutions, corresponding to $u_T\in \mathrm{Maps}(T\times X_{\DR},\EQ).$ The pro-cotangent space to $\RS(\EQ)$ at $\widetilde{u}_T$ is the $\infty$-functor
\begin{equation}
    \label{TangentRSol}
\mathsf{T}_{\widetilde{u}_T}^*\big(\RS(\EQ)\big):\mathsf{QCoh}(T)\to \mathsf{Vect}_{\mathbb{C}}^{\leq 0},\end{equation}
sending a quasi-coherent sheaf $\EuScript{E}$ to the connective complex,
$$\mathsf{T}_{\widetilde{u}_T}^*\big(\RS(\EQ)\big)(\EuScript{E})\simeq \tau^{\leq 0}\big(Maps_{\mathsf{QCoh}(T\times X_{\DR})}(\mathsf{T}_{u_T}^*(\EQ/X_{\DR}),\mathscr{E}\boxtimes \mathcal{O}_{X_{\DR}})\big).$$
Since $Z$ is corepresentable and admits $\mathbb{L}_{\EQ/X_{\DR}}$ which is $-k$-connective, then we claim that \eqref{TangentRSol} is also $(-k)$-connective. That is, we must prove that this functor is corepresentable by an object of $\mathsf{QCoh}^{\leq k}.$ For this, it is enough to show that there exists a left-adjoint to $\mathscr{E}\mapsto \mathscr{E}\boxtimes \mathcal{O}_{X_{\DR}}$ acting from $\mathsf{QCoh}(T)\to \mathsf{QCoh}(T\times X_{\DR}).$ Since $X$ is smooth (hence $X\to X_{\DR}$ is flat effective epimorphism), we have 
$$\Q(T\times X_{\DR})\simeq \Q(T)\otimes \Q(X_{\DR}),$$
via \cite[Prop 1.4.4]{QCoh}. It suffices to consider $T=\mathrm{pt}$, and notice that $\mathsf{QCoh}(X_{\DR})$ is comapctly generated, so we can look at $\mathsf{QCoh}(X_{\DR})^{\omega}.$ It is enough to construct the left-adjoint on compact objects. Let $\mathcal{M}\in \Q(X_{\DR})^{\omega},$ be such a compact object in complexes of left $\D_X$-modules on $X.$ Then, the complex $Maps_{\Q(X_{\DR}}(\mathcal{M},\mathcal{O}_{X_{\DR}})\in\mathsf{Vect}_{\mathbb{C}}$ has finitely-many nonzero cohomology sheaves which are finite-dimensional since $\mathcal{M}$ is compact. In other words $Maps_{\Q(X_{\DR}}(\mathcal{M},\mathcal{O}_{X_{\DR}})\in\mathsf{Vect}_{\mathbb{C}}^{\omega}$ i.e. is itself compact. Then, the desired left-adjoint simply sends the monoidal unit $\mathbb{C}$ to the monoidal dual complex of $Maps_{\Q(X_{\DR}}(\mathcal{M},\mathcal{O}_{X_{\DR}})\in\mathsf{Vect}_{\mathbb{C}}.$

\end{proof}
Suppose $E\rightarrow Y$ is $\D$-finitary and assume that $Y$ is a complex analytic manifold (the case of a derived base $Y$ is treated in § \ref{sec: Derived Linearization and the Equivariant Loop Stack}). Consider the pullback of $\mathbb{L}_{\J(E)/Y_{\DR}}$ as in equation (\ref{eqn: D-Relative Cotangent}) under the functor (\ref{eqn: Relative !-pullback functor}), denoted by
\begin{equation}
    \label{eqn: Relative Cotangent D-complex}
    \scalemath{1.10}{\EuScript{L}}(\J(E)):=(q_{\DR})_{\mathsf{Shv}}^!\big(\mathbb{L}_{\J(E)/Y_{\DR}}\big).
    \end{equation}
We suppose $E$ admits deformation theory relative to $Y_{\DR}$ i.e. $\mathbb{L}_{\J(E)/Y_{\DR}}\simeq\mathbb{L}_{\J(E)}.$ 
The object \emph{(\ref{eqn: Relative Cotangent D-complex})} is an object of the $\infty$-category of $\mathcal{A}^{\bullet}\otimes_{\mathcal{O}_Y}\mathcal{D}_Y$-modules over $Y,$ with $\mathcal{A}^{\bullet}$ as in \emph{(\ref{eqn: D-Algebra Constructed})}.  
More exactly, it is an object of $\mathsf{Shv}\big(\JetY(E)\big)$, and this interpretation follows from the description in \autoref{Sheaves on Jet prestack over Y Description}, equation (\ref{eqn: D-Algebra Constructed}) and the definition of its push-forward $\mathcal{A}_Y^{\bullet}$.

Indeed, notice that the relative cotangent complex is an object of
$$\mathsf{Shv}_{Y_{\DR}}\big(\J(E)\big)\simeq \mathsf{Shv}\big(\J(E)\times Y_{\DR}\big)\simeq \mathsf{Shv}\big(\J(E)\big)\otimes_{\mathsf{Shv}(Y)}\mathsf{Shv}\big(Y_{\DR}\big),$$
and since $Y$ is smooth, so that $\IC(Y)=\Q(Y),$ this is equivalent to the $\infty$-category
$$\PC\big(\J(E)\big)\otimes_{\mathsf{QCoh}(Y)} \mathsf{Mod}(\mathcal{D}_Y).$$
The $!$-pullback of sheaves then reads as
$$\PC\big(\J(E)\big)\otimes_{\Q(Y)} \mathsf{Mod}(\mathcal{D}_Y)\rightarrow \PC\big(\JetY(E)\big)\otimes_{\Q(Y)}\mathsf{Mod}(\mathcal{D}_Y).$$
Composition with the functor (\ref{eqn: Relative *-pushforward functor}), gives that the $!$-pullback of the relative cotangent complex is thus an object of $\mathsf{Mod}\big(\mathcal{A}^{\bullet}\big)\otimes_{\Q(Y)}\mathsf{Mod}(\mathcal{D}_Y),$ thus
appropriately interpreted in 
$\mathsf{Mod}\big(\mathcal{A}^{\bullet}\otimes_{\mathcal{O}_Y}\mathcal{D}_Y\big)$.

More precisely, $ev_{u_T}^{\sharp}\mathbb{L}_{\EQ/X_{\DR}},$ exists and is a perfect complex with dual $\mathbb{T}_{\EQ/X_{\DR}}\in \mathsf{Mod}_Z(\mathsf{IndCoh}(X_{\DR})),$ such that $ev_{u_T}^{!,\mathsf{IndCoh}}\mathbb{T}_{\EQ/X_{\DR}}\in \IC(T\times X_{\DR})^{\omega}.$ Let $\mathbb{D}_Z$ denote the induced duality functor on $\mathsf{Mod}_Z(\IC(X_{\DR})).$
There is a natural self-duality,
\begin{equation}
    \label{eqn: DSerreVerdier}
\mathbb{D}_{T\times X_{\DR}}:\mathsf{IndCoh}(T\times X_{\DR})^{\omega}\simeq \big(\mathsf{IndCoh}(T\times X_{\DR})^{\omega}\big)^{op}.
\end{equation}
Putting $\mathbb{L}_{\EQ/X_{\DR},u_T}:=\mathbb{D}_{T\times X_{\DR}}(ev_{u_T}^{!,\mathsf{IndCoh}}\mathbb{T}_{\EQ/X_{\DR}}),$ due to compatibility between dualities and $!$-pullbacks, there is an equivalence
$$\mathbb{D}_{T\times X_{\DR}}(ev_{u_T}^{!,\mathsf{IndCoh}}\mathbb{T}_{\EQ/X_{\DR}})\simeq ev_{u_T}^{\sharp}\big(\mathbb{D}_Z\mathbb{T}_{\EQ/X_{\DR}}\big).$$

The object (\ref{eqn: Relative Cotangent D-complex}) is called the \emph{$\D$-cotangent complex} of the derived $\D$-jet prestack $\J(E).$

As $Y$ is smooth $\J(E)$ is also smooth so that $\IC(\J(E)\times Y_{\DR})$ is given by $\mathcal{O}_{\J(E)}\otimes\mathcal{D}_Y$-modules and as this ring has finite cohomological dimension, every finitely generated module in its heart is compact object in $\mathcal{A}\otimes\mathcal{D}_Y$-modules. 

\begin{definition}
\label{Definition: Tangent Local Verdier Dual}
  \normalfont  Let $E\rightarrow Y$ satisfy the assumptions from the beginning of § \ref{sssec: Cotangent and Tangent D-Complex of a DNLPDE}. The dual object to the $\D$-cotangent complex is the \emph{$\D$-geometric tangent complex} given by the (leftified) \emph{local Verdier $\D$-dual}
$$\mathsf{T}_{\J(E)}^{\ell}:=\scalemath{1.10}{\EuScript{L}}_{\J(E)}^{\circ}\otimes\omega_Y^{-1}.$$
\end{definition}
We have an extension of the sequence (c.f. §§\ref{sssec: D-Geom Tangent and Cotangent Complexes}),
$$\mathbb{T}_{\EQ/\J(E)}\rightarrow \mathbb{T}_{\EQ}\rightarrow i^*\mathbb{T}_{\J(E)}\rightarrow \mathbb{T}_{\EQ/\J(E)}[1],$$
to the leftified local Verdier duals for $i:\EQ\rightarrow \J(E).$

This follows from the fact that given $f:\EQ \rightarrow \J(E),$ in pre-stacks over $X_{\DR},$ there is a cofiber sequence of pro-cotangent complexes
$$f^*\mathbb{L}(\J(E))\rightarrow \mathbb{L}(\EQ)\rightarrow \mathbb{L}(\EQ/\J(E)),$$
and by assuming our objects are $\D$-finitary, the duality operation sends pro-coherent objects to the opposite category of ind-coherent sheaves via the commutative diagram
\[
\begin{tikzcd}
\IC(\J(E))^{op}\arrow[d]\arrow[r]& \IC(\EQ)^{op}\arrow[d]
\\
\PC(\J(E))\arrow[r] & \PC(\EQ).
\end{tikzcd}
\]

The objects and their dualities described here are used to produce linearization sheaves associated with $\D$-finitary non-linear \textsc{pde}. In the following we describe pull-backs of tangent and cotangent complexes for trivially parameterized solutions i.e. $\EuScript{F}\rightarrow X$ to be $id_X:X\rightarrow X\in \PS_{/X}$ in the sense of §§ \ref{Parameterized Solution and Derived DO}.
\begin{remark}
\label{RSol Tangent Remark}
    These facts, together with Theorem \ref{thm: RSol is laft-def}, provide enough information to determine $\mathbb{T}[\RS(\EQ)]$.
\end{remark}

\subsection{The pull-back along solutions}
\label{ssec: The pull-back along solutions}
Let $E\in \PS_{/X}$ and consider a derived non-linear \textsc{pde} $\EQ$ on sections of $E.$ By \autoref{Derived Sections are Derived Flat Sections of Jet Space}, 
$$\Sect_(X,E)\simeq \mathbb{R}\mathrm{Sect}^{\nabla-\mathrm{cofree}}\big(X,E\big)\simeq \RS\big(\J(E)\big).$$
A classical global solution $s$ is a morphism $s:X\rightarrow p^*\J(E)$ which is also a map of de Rham spaces. Parameterized solutions are given by $U$-points $s_U:U\rightarrow \RS\big(\J(E)\big),$ with corresponding point in the solution space given by its infinite jet prolongation (\ref{eqn: Jet prolongation})
$\mathfrak{j}_{\infty}(s_U):U\times X_{\DR}\rightarrow \J(E)$ and in the space of homotopy co-free sections, $\mathfrak{j}_{\infty}(s_U)'.$
Consider the diagrams in $\PS_{/X_{\DR}}$,
\[
\adjustbox{scale=.95}{
\begin{tikzcd}
U\times X\arrow[d,"\mathbf{1}_U\times q_{\DR}^X"]\arrow[rr, bend left, "\mathfrak{j}_{\infty}(s_U)'"] \arrow[r,"s\times \mathbf{1}_X"] & \J(E)\times_{X_{\DR}}X\arrow[d]\arrow[r] & E
\\
U\times X_{\DR}\arrow[r,"\mathfrak{j}_{\infty}(s_U)"] & \J(E)& 
\end{tikzcd}
\hspace{2mm},
\begin{tikzcd}
U\times X\arrow[d,"\mathbf{1}_U\times q_{\DR}^X"] \arrow[r,"s\times \mathbf{1}_X"] & \EQ\times_{X_{\DR}}X\arrow[d]
\\
U\times X_{\DR}\arrow[r,"\mathfrak{j}_{\infty}(s_U)"] & \EQ& 
\end{tikzcd}
}
\]
Solutions to $\EQ$ appearing in the right-hand side are induced from those in the diagram on the left.

We have $\mathbb{T}_{\J(E)}\in\IC\big(\J(E)\big),$ and $\mathbb{L}_{E}\in\PC\big(E\big).$ Moreover, there are induced functors by pull-back of ind-coherent sheaves on prestacks
$$\big(\mathfrak{j}_{\infty}(s_U)\big)_{\mathsf{IndCoh}}^!:\mathsf{IndCoh}\big(\J(E)\big)\rightarrow \mathsf{IndCoh}\big(U\times X_{\DR}\big),$$ as well
$$\big(\mathfrak{j}_{\infty}(s_U)'\big)^!:\PC\big(E\big)\rightarrow \PC\big(U\times X\big),$$
under which we can consider the image $\big(\mathfrak{j}_{\infty}(s_U)'\big)_{\PC}^!\mathbb{L}_{E},$ and its subsequent push-forward,
$$\big(\mathbf{1}_U\times q_{\DR}^X\big)_{*}\circ \big(\mathfrak{j}_{\infty}(s_U)'\big)^!(\mathbb{L}_{E})\in\PC\big(U\times X_{\DR}\big).$$
Since the diagram is Cartesian, it follows that
$$\big(\mathfrak{j}_{\infty}(s_U)\big)_{\mathsf{IndCoh}}^!\big(\mathbb{L}_{\J(E)}\big)\simeq \big(\mathbf{1}_U\times q_{\DR}^X\big)_{*}\circ \big(\mathfrak{j}_{\infty}(s_U)'\big)^!(\mathbb{L}_{E})\in\PC\big(U\times X_{\DR}\big).$$

\begin{proposition}
\label{T and D-T equivalence}
    Suppose that $\EuScript{Y}$ is a $\D$-finitary derived non-linear \textsc{pde} imposed on sections of a laft pre-stack over $X$, a smooth variety of dimension $n.$ Then $s_U^*\EuScript{T}_{\EQ}^{\ell}$ is equivalent to 
    $\mathfrak{j}_{\infty}(s_U)^!\mathbb{T}_{\EQ}\otimes \omega_X[-2n],$ functorially in $U$.
\end{proposition}

We now study describe the linearization as a compact object in ind-coherent sheaves over $X$ which is not necessarily smooth.

\begin{proposition}
\label{Linearization D-Module Proposition}Suppose that $\EQ$ is affine $\D$-finitary over an eventually coconnective affine scheme $X$ and consider the object $\scalemath{1.10}{\EuScript{L}}(\J(E))$ as in (\ref{eqn: Relative Cotangent D-complex}).
Then a choice of any classical solution $\overline{\varphi}^{cl}$ as above yields a well-defined, perfect object $\mathsf{T}(\J(E))_{\varphi^{\mathrm{cl}}},$ given by
$$(\overline{\varphi}^{\mathrm{cl}}\times\mathrm{id})_{\IC}^!\big(
(\Upsilon_{X_{\DR}}^{-1}\mathbb{D}^{loc-verd}(q_{\DR})_{\mathsf{Shv}}^!\big(\mathbb{L}_{\J(E)/X_{\DR}})\big)\in \mathsf{Perf}(X_{\DR}).$$
\end{proposition}
\begin{proof}
By \autoref{D-PreStack Construction} consider the
  pull-back along the functor induced by the solution $\overline{\varphi}^{\mathrm{cl}}$, given by
$$(\overline{\varphi}^{\mathrm{cl}}\times\mathrm{id})_{\IC}^!:\IC\big(Spec_{\mathcal{D}}(\mathcal{A}^{\bullet})\big)\underset{\Q(X)}{\otimes}\IC(X_{\DR})\rightarrow \IC(X_{\DR}),$$
of the leftified local Verdier dual and applying the (inverse) duality functor $\Upsilon_{X_{\DR}}.$ Just remark that the equivalence $\Upsilon_{X_{\DR}}$ is not $t$-exact, but it is $t$-bounded \cite[Prop. 4.4.4.]{GR14}. In the case $X$ is smooth of dimension $n$, this functor is $t$-exact up to a shift by $[n].$
In any case, the functor does induce an  equivalence
$\Upsilon_{X_{\DR}}:\mathsf{Coh}(\D_X)\simeq \mathsf{Coh}(\mathcal{D}_X^{op}).$ 

For the second part, consider the infinite jet of a classical $U$-parameterized solution, $j_{\infty}(\varphi_U):U\rightarrow \EQ,$ which is the jet-prolongation (\ref{eqn: Jet prolongation}) of a $U$-point $\varphi_U:U\rightarrow \RS(\EQ).$ As objects of $\PS_{/X_{\DR}}$, we have the universal family
\[
\begin{tikzcd}
    U\times X_{\DR}\arrow[d,"q_U"]\arrow[r,"\varphi_U"] & \RS(\EQ)\times X_{\DR}\arrow[d,"\rho"] \arrow[r,"ev_{\varphi_U}"] & \EQ
    \\
    U\arrow[r,"\varphi_U"] & \RS(\EQ) & 
\end{tikzcd}
\]
From this diagram since $\EQ$ is $\D$-finitary, both $\mathbb{L}_{\EQ}$ and $\mathbb{L}\big(\RS(\EQ)\big)$ exists as objects of $\mathsf{ProCoh}(\EQ),$ and for every solution we have
$$\mathbb{L}_{\RS(\EQ),\varphi_U}:=\varphi_U^!\mathbb{L}_{\RS(\EQ)}\simeq \varphi_U^! \rho_! ev_{\varphi_U}^!\mathbb{L}_{\EQ}\in \mathsf{ProCoh}(U).$$
Identifying the pro-coherent complexes with their ind-coherent duals e.g. $\mathbb{T}_{\EQ,\varphi_U}$ is given by 
$\mathbb{D}^{loc-ver}(ev_{\varphi_U}^!\mathbb{L}_{\EQ})$ with $\mathbb{D}^{loc-ver}$ the local Verdier duality,
$$\mathsf{ProCoh}(\RS(\EQ)\times X_{\DR})\simeq \IC(\RS(\EQ)\times X_{\DR})^{op}.$$

\end{proof}

We now have an analog of \autoref{Relative D-cotangent Complex}. 
\begin{proposition}
\label{Derived Analytic Relative Linearization Complex}
Let $X$ be a complex analytic manifold and suppose that $\EQ\xrightarrow{i}\J(E)$ is a derived non-linear \textsc{pde}. Then for every classical solution $\varphi$ there is a map in $\IC(X_{\DR}),$
$$\alpha:\mathsf{T}^{\ell}(\EQ)_{\varphi^{\mathrm{cl}}}\rightarrow i^*\mathsf{T}^{\ell}(\J(E))_{\varphi^{\mathrm{cl}}}.$$
\end{proposition}

\begin{remark}
    The object as defined by Proposition \ref{Linearization D-Module Proposition} is the \emph{linearization sheaf (complex)} of the non-linear system $\mathsf{Sol}(\mathcal{B})$ along the chosen solution $\varphi.$ Considering Proposition \ref{Derived Analytic Relative Linearization Complex}, the homotopy fiber $hofib(\alpha)$ defines the relative $\D$-geometric linearization complex.
\end{remark}

\begin{proposition}
\label{Higher Symmetry Result}
Let $i:\EQ\hookrightarrow \mathsf{Jets}_X^{\infty}(E)$ be a derived non-linear \textsc{pde} with canonical sequence of tangent complexes
defining the $\mathcal{O}_{\EQ}[\mathcal{D}_X]$-module $\mathbb{T}_{\EQ}$ as a sub-module of $i^*\mathbb{T}_{\mathsf{Jets}_X^{\infty}(E)}.$ Suppose $\EQ$ is a classical $\D$-subscheme, whose defining ideal $\mathcal{I}$ is differentially generated. Then there is an isomorphism 
$H_{\D}^0(\mathbb{T}_{\EQ})\simeq \Theta_{\EQ}$ whose tangent sheaf is generated by evolutionary vector fields $E_{\xi}|_{\EQ}$ for $\xi\in p_{\infty}^*\Theta_{E/X}$ satisfying
$i^*\big(\ell_{\mathsf{F}_i}(\xi)\big)=0.$
\end{proposition}
\autoref{Higher Symmetry Result} states that in the smooth situation, vector fields are given by homogeneous solutions of the universal linearization of the operators $\mathsf{F}_i$ which generate the ideal $\mathcal{I}.$ 
\begin{example}
\label{Tangent Complex of Formal Lin over Loops}
Consider \autoref{Formal Linearization Example}. The derived linearization sheaf $\mathbb{T}_{\mathbb{R}\mathbf{Sol}_{\D_{\mathbb{A}^1}}(\mathsf{P})}$ viewed as a complex over $\mathcal{L}\mathbb{A}^1,$ can be identified with the complex of sheaves 
$$\mathbb{T}_{\mathbb{R}\mathbf{Sol}_{\D_{\mathbb{A}^1}}(\mathsf{P})}\simeq i^*\big(\mathcal{O}(\mathcal{L}\mathbb{A}^1)(\!(\EQ)\!)\rightarrow \mathcal{O}(\mathcal{L}\mathbb{A}^1)\big)(\!(\EQ)\!)\big),$$
in degrees $[0,1]$ given by the linear over $\mathcal{L}\mathbb{A}^1$-operator $lin(\mathsf{P})$ acting on functions $g(\EQ)$ by 
$$lin(\mathsf{P})g(\EQ)=\mathsf{F}\big(z,f(\EQ)+\epsilon g(\EQ),\partial f(x)+\epsilon\partial g(\EQ),\ldots\big)=0\hspace{2mm} mod\hspace{1mm} \epsilon^2,$$
where $i:\mathbb{R}\mathbf{Sol}(\mathsf{P})\rightarrow \mathcal{L}\mathbb{A}^1$ is the projection. The cotangent $\D$-complex will thus be in degrees $[-1,0]$ and the corresponding operator is the local Verdier duality of $lin(\mathsf{P}).$ In PDE language, it is nothing but the formal adjoint operator.
\end{example}

\section{Non-linear algebraic microlocal analysis}
\label{sec: Derived Non-linear Microlocal Analysis}
In this section we describe the derived version of the constructions in \S.~\ref{ssec: Non-linear Microlocal Analysis} and \S\S.~\ref{ssec: D-Algebraic Micro}.
\subsection{Characteristic varieties and singular supports}
\label{ssec: Characteristic Varieties}
In the complex domain conditions should be imposed on hypersurfaces for solutions of a
given analytic non-linear \textsc{pde} to have singularities along this hypersurface (see §§ \ref{sssec: On the non-linear generalizations}). For linear equations such a hypersurface must be a characteristic hypersurface for the operator. For quasilinear equations such a hypersurface must be also characteristic for the
linear part of the operator if the solution is not too singular, which practically means certain boundedness is required (see \autoref{ex: Boundedness}).

In the real domain, the propagation of
singularities for quasi-linear equations is performed along bicharacteristic curves of the linearized characteristic equation microlocally. In the language of $\D$-modules, this phenomena is captured by the notion of second microlocalization \cite{Laurent1985}.

\subsubsection{$V$-filtrations}
Following \cite{KashiwaraOshima1977}, denote
by $\mathcal{J}_V$ the subsheaf of $\mathcal{E}_X$ consisting of microdifferential operators of order $\leq 1$ whose symbol of order $1$ vanishes on $V$ i.e. $P\in\mathcal{E}_X(1)|_V$ with $\sigma_1(P)|_V=0.$
It generates a subsheaf of $\mathcal{E}_X$ denoted $\mathcal{E}_V$,
$$\mathcal{E}_V=\bigcup_{j\geq 0}\mathcal{J}_V^j.$$

\begin{remark}
Outside of the zero section of $\tau:T_VT^*X\rightarrow V$ the $\mathsf{V}$-filtration is given by $\mathsf{V}^j\mathcal{E}_X=\bigcup_{j}\mathsf{V}^j\mathcal{E}_X,$ with $\mathsf{V}_j\mathcal{E}_X:=(\mathsf{F}^j\mathcal{E}_X)\cdot \mathcal{E}_V.$
\end{remark}
An \emph{$\mathcal{E}_V$-lattice} in a coherent $\mathcal{E}_X$-module $\mathcal{M}$ is an $\mathcal{E}_V$-submodule $\mathcal{N}_0$ of $\mathcal{M}$ such that it is $\mathcal{E}_X(0)$-coherent and generates $\mathcal{M}$ over $\mathcal{E}_X.$ For a coherent $\mathcal{E}_X(0)$-submodule $\mathcal{N}_0$ of $\mathcal{M}$, define
$\mathcal{N}_0(m):=\mathcal{E}_X(m)\cdot \mathcal{N}_0$ for $m\in\mathbb{Z}.$ We say that $\mathcal{M}$ is \emph{regular along $V$} (or that it has regular singularities along $V$) if it has an $\mathcal{E}_V$-lattice locally on $\mathring{T}^*X.$

A coherent $\D_X$-module is said to be regular along $V$ if $\Char(\mathcal{M})$ is contained in $V$ and if $\mu(\mathcal{M})$ is regular along $\mathring{V}=V\backslash T_X^*X.$
There is a thick abelian subcategory of coherent $\mathcal{E}_X$-modules that are regular along $V,$ so a corresponding bounded derived $\infty$-subcategory of $\mathsf{Coh}(\mathcal{E}_X)$ whose cohomologies are regular along $V$; $\mathcal{M}^{\bullet}\in\mathsf{Coh}(\mathcal{E}_X)$ is regular along $V$ if $\mathcal{H}^k(\mathcal{M}^{\bullet})$ is regular along $V$ for each $k\in \mathbb{Z}.$ Then $\mathcal{M}^{\bullet}\in\mathsf{Coh}(\D_X)$ is regular along $V$ if $\mu(\mathcal{M}^{\bullet})\in \mathsf{Coh}(\mathcal{E}_X).$
A system with regular singularities along $V$ is supported by $V.$ 

\begin{example}
Let $Y\subset X$ be a complex analytic sub-manifold with ideal sheaf $\mathcal{I}_Y$ in $\mathcal{O}_X.$ For $V=T_Y^*X,$ one has
$\mathsf{V}_j\mathcal{E}_X=\sum_{k+\ell\leq j}(\mathsf{F}_k\mathcal{E}_X)\cdot \pi^{-1}(\mathsf{V}_{\ell}\mathcal{D}_X),$
where $\mathsf{V}_{\ell}\mathcal{D}_X:=\{P\in \D_{X|Y}|\forall k\in\mathbb{Z}, P\bullet \mathcal{I}_Y^k\subset \mathcal{I}_Y^{k-\ell}\}.$
\end{example}
Denote the $1$-micro-characteristic variety of a coherent $\mathcal{E}_X$-module by $\mathrm{Char}_V^1(\mathcal{M}),$ a subvariety of the normal bundle $\tau:T_VT^*X\rightarrow T^*X.$
More generally, there is a sheaf $\mathcal{E}_V^{2(s)}$ given as a Gevrey localization of $F_{\mathsf{V}}^{(s)}\mathcal{E}_X,$ with $s$-characteristic varieties
\begin{equation}
\label{MicrolocalizationClassical}
\mathrm{Char}_V^{(s)}(\mathcal{M}):=supp\big(\mathcal{E}_V^{2(s)}\otimes_{\tau^{-1}\mathcal{E}_{X|V}}\mathcal{M}|_V)\subset T_VT^*X.
\end{equation}
\subsection{$\D$-geometric micro-characteristics}
\label{sssec: D-Geometric Microcharacteristics}
Let $X$ be a complex analytic manifold and consider a derived $\D_X$-algebra $\mathcal{A}^{\bullet}$ that we assume admits a globally defined perfect tangent complex $\mathbb{T}_{\mathcal{A}}^{\ell}.$ Consider the 
composition
$$\eta:T_VT^*X\xrightarrow{\tau}T^*X\xrightarrow{\pi}X.$$
Pulling back $\mathbb{T}_{\mathcal{A}}^{\ell}$ as an $\mathcal{A}[\mathcal{D}_X]$-module and tensoring with $\mathcal{O}_{T_VT^*X}$ gives a \emph{$V$-microlocalization}, denoted by
\begin{equation}
    \label{eqn: Twisted V-Microlocalization}
\mu_{\mathcal{D},V}\big(\mathbb{T}_{\mathcal{A}}^{\ell}\big):=\mathcal{O}_{T_VT^*X}\otimes_{\eta^*\mathcal{A}[\tau^{-1}\mathrm{Gr}\mathcal{E}_V]}\eta^*\mathbb{T}_{\mathcal{A}}^{\ell},
\end{equation}
as an object of $\mathsf{Mod}\big(\mathsf{Spec}_{T_VT^*X}(\eta^*\mathcal{A}^{\bullet})\big).$
Note this relative spectrum is equivalent to
\begin{eqnarray*}
    \mathsf{Spec}_{T_VT^*X}(\eta^*\mathcal{A}^{\bullet})&\simeq& \mathsf{Spec}_{T_VT
^*X}\big(\tau^*\pi^*\mathcal{A}^{\bullet}\big)
\\
&\simeq& \mathsf{Spec}_{T^*X}(\pi^*\mathcal{A}^{\bullet})\times_{T^*X}T_VT^*X.
\end{eqnarray*}

Remark that $\mathcal{O}_{T_VT^*X}$ is naturally a $\tau^{-1}\mathrm{Gr}(\mathcal{E}_V)$-module, which itself is a Noetherian graded algebra.
Denoting $\mathcal{I}_V(k):=\mathcal{I}_V\cap\mathcal{O}_{T^
*X}(k)$ the sheaf ideal of sections of $\mathcal{O}_{T^*X}(k)$ vanishing on $V$ note that
$\mathrm{Gr}(\mathcal{E}_V)\simeq \bigoplus_{k\in\mathbb{Z}}\mathcal{I}_V^k(1)|_V,$
with the convention $\mathcal{I}_V^k(1)=\mathcal{O}_{T^*X}(k)$ for $k\leq 0.$
Consider the associated coherent graded ring
$\mathcal{K}_V(1):=\bigoplus_{k\geq 0}\mathcal{I}_V^k(1)/\mathcal{I}_V(0)\cdot \mathcal{I}_V^k(1).$

\begin{definition}
\label{definition: 1-Microchar}
Let $U$ be an open subset of $T^*X$ and suppose $V\subset U$ is a closed conic analytic subset. Let $\tau:T_VT^*X\rightarrow T^*X,$ denote the normal bundle and let $\mathcal{A}^{\bullet}$ be a derived $\D$-algebra on $X$.
    The \emph{$\D$-geometric $1$-micro-characteristic variety of $\mathcal{A}^{\bullet}$ along} $V$ is
    $$\mathsf{Ch}_{\D,V}^1(\mathcal{A}):=\bigcup_{i\in\mathbb{Z}}supp\big(\mathcal{H}^i(\mu_{\mathcal{D},V}\mathbb{T}_{\mathcal{A}}^{\ell})\big),$$
    in $\mathsf{Spec}_{T_VU\subset T_VT^*X}(\tau^*\mathcal{A}^{\bullet})\simeq \mathsf{Spec}_{\pi(U)\subset X}(\mathcal{A}^{\bullet})\times_X T_VT^*X.$
\end{definition}
Note that $\mathcal{H}^i\big(\mu_{\mathcal{D},V}\mathbb{T}_{\mathcal{A}}^{\ell}\big)=\mu_{\mathcal{D},V}\mathcal{H}^i(\mathbb{T}_{\mathcal{A}}^{\ell}),$ for each $i.$
Pullback along a classical solution $\varphi$ gives a complex of $\D_X$-modules $\mathbb{T}_{\mathcal{A},\varphi}^{\ell}$, and one can consider its classical $V$-microlocalization \eqref{MicrolocalizationClassical}. Namely, by taking supports we obtain
$$\mathrm{Char}_V^1(\mathbb{T}_{\mathcal{A},\varphi}^{\ell}):=supp\big( \varphi^* \mu_{\mathcal{D},V}\mathbb{T}_{\mathcal{A}}^{\ell}\big)\simeq supp\big(\mu_V\mathbb{T}_{\mathcal{A},\varphi}^{\ell}\big).$$
Similarly, one may consider 
$$\mathsf{Ch}_{\D,V}^{(s)}(\mathcal{A}):= supp\big(\eta^*\mathcal{A}[\mathcal{E}_V^{2(s)}]\otimes_{\eta^*\mathcal{A}\tau^{-1}\mathcal{E}_{X|V}}\eta^*\mathbb{T}_{\mathcal{A},\varphi}|_V\big),\hspace{2mm} s\geq 1.$$
Note $\mathrm{supp}(\varphi^*\mu_{\mathcal{D},V}\mathbb{T}_{\mathcal{A}}^{\ell})$ is then contained in $\varphi^{-1} \mathrm{supp}(\mu_{\mathcal{D},V}\mathbb{T}_{\mathcal{A}}^{\ell}).$
\begin{proposition}
\label{Prop: CharPBSolution}
Let $\mathcal{A}$ be a such that $\mathbb{L}_{\mathcal{A}}$ is perfect with dual $\mathbb{T}_{\mathcal{A}}^{\ell}.$ Then for every solution $\varphi$ we have
$$\mathrm{Char}_V^1(\mathbb{T}_{\A,\varphi}^{\ell})\subseteq \varphi^{-1}\mathsf{Ch}_{\D,V}^1(\A).$$
Suppose that $L^j\varphi^*\mathcal{H}_{\D}^j(\mu\mathbb{T}_{\A}^{\ell})=0,$ for all $j>0.$ Then, $\mathrm{Char}_V^1(\mathbb{T}_{\mathcal{A},\varphi}^{\ell})$ and $\varphi^{-1}\mathsf{Ch}_{\D,V}^1(\mathcal{A})$ coincide.
\end{proposition}
\begin{proof}
The first claim is a consequence of behaviour of supports while the second follows via a spectral sequence calculation from which it follows for each $i$,
$\mathrm{supp}(\varphi^*H_{D}^i(\M))=\varphi^{-1}\mathrm{supp}(H_{D}^i(\M)),$ implying $\mathrm{supp}(\varphi^*\M)=\varphi^{-1}\mathrm{supp}(\M),$ with $\M=\mu_{\D,V}(\mathbb{T}_{\A}^{\ell})$ in our case.
\end{proof}

For such $\mathcal{A}^{\bullet}\in \mathrm{cdga}_{\D_X}^{\leq 0},$ with $\mathcal{A}^{\bullet}[\mathcal{D}_X]:=\mathcal{A}^{\bullet}\otimes_{\mathcal{O}_X}\mathcal{D}_X,$ it follows from the inclusion $\pi^{-1}\mathcal{O}_X\hookrightarrow \pi^{-1}\mathcal{D}_X,$ we have a canonical morphism
$$\mathcal{A}^{\bullet}\otimes_{\mathcal{O}_X}\mathcal{D}_X\rightarrow \pi_{\mathcal{D}}^*\mathcal{A}^{\bullet}\otimes_{\pi^{-1}\mathcal{O}_X}\pi^{-1}\mathcal{D}_X.$$
We also have an inclusion 
$$\pi_{\mathcal{D}}^*\mathcal{A}^{\bullet}\otimes_{\pi^{-1}\mathcal{O}_X}\pi^{-1}\mathcal{D}_X\hookrightarrow \pi_{\mathcal{D}}^*\mathcal{A}^{\bullet}\otimes_{\pi^{-1}\mathcal{D}_X}\mathcal{E}_X.$$

\begin{proposition}
\label{Induced functor microlocal}
The induced functor
$\widetilde{\pi}_{\mathcal{D}}^!:\mathrm{Mod}\big(\mathcal{A}^{\bullet}\otimes_{\mathcal{O}_X}\mathcal{D}_X\big)\rightarrow\mathrm{Mod}\big(\pi_{\mathcal{D}}^*\mathcal{A}^{\bullet}\otimes_{\pi^{-1}\mathcal{D}_X}\mathcal{E}_X\big),$
extends to
$$\widetilde{\pi}_{\mathcal{D}}^!:\mathsf{Mod}\big(\mathcal{A}^{\bullet}\otimes_{\mathcal{O}_X}\mathcal{D}_X\big)\rightarrow \mathsf{Mod}\big(\pi_{\mathcal{D}}^*\mathcal{A}^{\bullet}\otimes_{\pi^{-1}\mathcal{D}_X}\mathcal{E}_X\big).$$
\end{proposition}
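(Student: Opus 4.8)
The statement to prove (Proposition \ref{Induced functor microlocal}) asserts that the evident ring map produces a well-defined base-change functor on module categories, and more importantly that it extends from the abelian/ordinary level to the derived (dg or $\infty$-categorical) level of module categories. The plan is to construct $\widetilde{\pi}_{\mathcal{D}}^!$ as an honest composite of three operations and then check each of them preserves the relevant finiteness and passes to the derived category, leaving the content in the third (the microlocalization tensor) step.

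First I would unwind the construction. Given $\mathcal{M} \in \mathrm{Mod}(\mathcal{A}^{\bullet}\otimes_{\mathcal{O}_X}\mathcal{D}_X)$, I apply the $\pi$-pullback of $\mathcal{D}$-modules: using the canonical morphism $\mathcal{A}^{\bullet}\otimes_{\mathcal{O}_X}\mathcal{D}_X \rightarrow \pi_{\mathcal{D}}^*\mathcal{A}^{\bullet}\otimes_{\pi^{-1}\mathcal{O}_X}\pi^{-1}\mathcal{D}_X$ (displayed in the excerpt), base-change along it sends $\mathcal{M}$ to $\big(\pi_{\mathcal{D}}^*\mathcal{A}^{\bullet}\otimes_{\pi^{-1}\mathcal{O}_X}\pi^{-1}\mathcal{D}_X\big)\otimes_{\mathcal{A}^{\bullet}\otimes_{\mathcal{O}_X}\mathcal{D}_X}^{\mathbb{L}}\mathcal{M}$, which is just $\pi^{-1}\mathcal{M}$ with its natural module structure (this is the $\mathcal{D}$-module $\pi^!$ up to a shift, but since $\pi:T^*X\to X$ is smooth of relative dimension $n$ the shift is irrelevant for the support computations of Subsect.~\ref{ssec: Characteristic Varieties}). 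Second, I tensor up along the inclusion $\pi_{\mathcal{D}}^*\mathcal{A}^{\bullet}\otimes_{\pi^{-1}\mathcal{O}_X}\pi^{-1}\mathcal{D}_X\hookrightarrow \pi_{\mathcal{D}}^*\mathcal{A}^{\bullet}\otimes_{\pi^{-1}\mathcal{D}_X}\mathcal{E}_X$, i.e. I apply $-\otimes_{\pi^{-1}\mathcal{D}_X}\mathcal{E}_X$, which is exactly naive microlocalization tensored with the coefficient algebra $\mathcal{A}^{\bullet}$. The composite of these two is $\widetilde{\pi}_{\mathcal{D}}^!$.

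Next I would verify that this passes to the derived setting. The key points are: (a) $\pi^{-1}(-)$ is exact, so it trivially descends to $\infty$-categories of complexes; (b) $\mathcal{E}_X$ is flat over $\pi^{-1}\mathcal{D}_X$ (this is the classical fact underlying the exactness of $\mathcal{D}$-to-$\mathcal{E}$ microlocalization — see \cite{KashAlgStudy,KasMicroLoc}), hence $-\otimes_{\pi^{-1}\mathcal{D}_X}\mathcal{E}_X$ is exact and the derived tensor agrees with the underived one, so it too descends; (c) tensoring with the fixed coefficient sheaf $\mathcal{A}^{\bullet}$ over $\mathcal{O}_X$ is harmless because $\mathcal{D}_X$ is $\mathcal{O}_X$-flat and the relevant bimodule structures are the standard ones from Subsect.~\ref{ssec: Algebraic D-Geometry}, Proposition~\ref{CModules in D geometry lemma}. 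Concretely, one checks $\big(\pi_{\mathcal{D}}^*\mathcal{A}^{\bullet}\otimes_{\pi^{-1}\mathcal{D}_X}\mathcal{E}_X\big)$ is a well-defined sheaf of dg-algebras on $T^*X$ — using that $\pi^{-1}\mathcal{A}^{\bullet}$ and $\pi^{-1}\mathcal{D}_X$ commute appropriately inside $\mathcal{E}_X$ via the symbol/total-symbol calculus of Proposition~\ref{Embedding of T^*X} and Proposition~\ref{Lifting-type lemma} — and then the functor is just extension of scalars along a map of dg-algebras, which always lifts to module $\infty$-categories by general $\infty$-categorical algebra (e.g. \cite{LurieHigher}).

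The main obstacle — and where I would spend the real effort — is step (b)/(c) combined: the flatness of the relative microdifferential algebra $\pi_{\mathcal{D}}^*\mathcal{A}^{\bullet}\otimes_{\pi^{-1}\mathcal{D}_X}\mathcal{E}_X$ over $\pi_{\mathcal{D}}^*\mathcal{A}^{\bullet}\otimes_{\pi^{-1}\mathcal{O}_X}\pi^{-1}\mathcal{D}_X$ is not quite the classical statement, because here one has the extra non-commutative coefficient factor $\mathcal{A}^{\bullet}=\mathcal{A}^{\bullet}[\mathcal{D}_X]$-module structure floating around, and $\mathcal{A}^{\bullet}$ is a complex, not an algebra concentrated in degree $0$. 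The clean way around this is to reduce to the case $\mathcal{A}=\mathcal{O}_X$ by filtering: by Proposition~\ref{Compact D algebras} and the cellular-object discussion in Subsect.~\ref{sssec: Cellular Objects}, it suffices to treat free algebras $\Free(\mathcal{M})$ on compact $\mathcal{D}$-modules, for which $\mathcal{A}[\mathcal{D}_X]$ is built from $\mathcal{D}_X$ by free operations that commute with $-\otimes_{\pi^{-1}\mathcal{D}_X}\mathcal{E}_X$; the flatness of $\mathcal{E}_X$ over $\mathcal{D}_X$ then propagates. Once flatness is in hand, the derived functor exists automatically and the classical (abelian-level) functor of the first displayed sentence is recovered as $H^0$, which is the precise content of the Proposition.
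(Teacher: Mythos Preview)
The paper does not supply a proof for this proposition: it is stated immediately after the two displayed ring maps
\[
\mathcal{A}^{\bullet}\otimes_{\mathcal{O}_X}\mathcal{D}_X\rightarrow \pi_{\mathcal{D}}^*\mathcal{A}^{\bullet}\otimes_{\pi^{-1}\mathcal{O}_X}\pi^{-1}\mathcal{D}_X\hookrightarrow \pi_{\mathcal{D}}^*\mathcal{A}^{\bullet}\otimes_{\pi^{-1}\mathcal{D}_X}\mathcal{E}_X,
\]
and the text moves directly on to defining $_{\mathcal{A}}\mu$. The implicit argument is simply that extension of scalars along a morphism of (dg-)algebras always exists at the level of module $\infty$-categories.

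Your proposal is correct and supplies considerably more detail than the paper. Two remarks on economy. First, the bare existence of the derived functor is entirely formal once you have a morphism of sheaves of dg-algebras on $T^*X$; it does not require flatness, cellularity, or any reduction to free algebras. Your flatness discussion is relevant only for the secondary point that the derived and underived functors agree on hearts (so that the sans-serif $\mathsf{Mod}$ functor genuinely \emph{extends} the roman $\mathrm{Mod}$ one rather than merely existing alongside it), and you do identify this correctly in your final sentence. Second, the reduction via Proposition~\ref{Compact D algebras} to free algebras $\Free(\mathcal{M})$ is more machinery than needed here: flatness of $\mathcal{E}_X$ over $\pi^{-1}\mathcal{D}_X$ already gives flatness of $\mathcal{A}^{\bullet}\otimes_{\pi^{-1}\mathcal{O}_X}\mathcal{E}_X$ over $\mathcal{A}^{\bullet}\otimes_{\pi^{-1}\mathcal{O}_X}\pi^{-1}\mathcal{D}_X$ by base change, since tensoring with $\mathcal{A}^{\bullet}$ over $\pi^{-1}\mathcal{O}_X$ is applied uniformly to both sides. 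The cellular reduction would matter if you needed finiteness or compactness of the output, but the proposition as stated does not ask for that.
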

Assignment $_{\mathcal{A}}\mu$ is the \emph{microlocalization with coefficients} (in $\mathcal{A}^{\bullet}),$ given for an $\mathcal{A}^{\bullet}\otimes_X^!\mathcal{D}_X$-module $\mathcal{M}^{\bullet}$ by 
\begin{equation}
\label{eqn: Microlocalization with Coefficients}
_{\mathcal{A}}\mu(\mathcal{M}^{\bullet}):=\pi^{-1}\mathcal{M}^{\bullet}\otimes_{\pi_{-1}\mathcal{A}^{\bullet}[\mathcal{D}_X]}\pi_{\mathcal{D}}^*\mathcal{A}^{\bullet}[\mathcal{E}_X],
\end{equation}
where $\pi_{\mathcal{D}}^*\mathcal{A}^{\bullet}[\mathcal{E}_X]:=\pi_{\mathcal{D}}^*\mathcal{A}^{\bullet}\otimes_{\pi^{-1}\mathcal{A}^{\bullet}[\mathcal{D}_X]}\mathcal{E}_X.$ 
Proposition \ref{Induced functor microlocal} extends to give a twisted microlocalization 
$$_{\mathcal{A}}\mu:\mathsf{Mod}\big(\mathcal{A}^{\bullet}\otimes_{\mathcal{O}_X}\mathcal{D}_X\big)\rightarrow \mathsf{Mod}\big(\pi_{\mathcal{D}}^*\mathcal{A}^{\bullet}\otimes_{\pi^{-1}\mathcal{A}^{\bullet}[\mathcal{D}_X]}\mathcal{E}_X\big),$$
where $\pi^{-1}\mathcal{A}^{\bullet}[\mathcal{D}_X]=\pi^*\mathcal{A}^{\bullet}\otimes_{\pi^{-1}\mathcal{O}_X}\pi^{-1}\mathcal{D}_X.$ 

The $\D$-geometric characteristic variety (c.f. \cite{Paugam2022}) is
$$\mathsf{Ch}_{\D_X}(\mathcal{A}^{\bullet}):=supp\big(_{\mathcal{A}}\mu(\mathbb{T}_{\mathcal{A}}^{\ell})\big)=\bigcup_{j\geq 0}supp\big(\mathcal{H}^j(_{\mathcal{A}}\mu\mathbb{T}_{\mathcal{A}}^{\ell})\big)\subset \mathsf{Spec}_{\D_X}(\mathcal{A}^{\bullet})\times_X T^*X.$$
Pull-back of the tangent complex along a solution $\varphi^*\mathbb{T}_{\mathcal{A}}^{\ell}$ gives a $\D$-module on $X$ and so pulling back $\mu\mathbb{T}_{\mathcal{A}}^{\ell}$ gives us an $\mathcal{E}_X$-module on $T^*X.$
Its support is
$supp\big(\varphi^*\mu\mathbb{T}_{\mathcal{A}^{\ell}}\big),$
which is the $\D$-module characteristic variety of the linearization along $\varphi$, denoted by
$\mathrm{Char}(\mathbb{T}_{\mathcal{A},\varphi}^{\ell}):=supp\big(\mu \mathbb{T}_{\mathcal{A},\varphi}^{\ell}\big).$

Consider the induction functor
$\mu\mathrm{ind}_{\mathcal{A}^{\bullet}[\mathcal{D}_X]}^r:\mathsf{Mod}(\mathcal{O}_X)\rightarrow \mathsf{Mod}\big(\pi_{\mathcal{D}}^*\mathcal{A}^{\bullet}[\mathcal{E}_X]\big),$ given by
$$\mu\mathrm{ind}_{\mathcal{A}^{\bullet}[\mathcal{D}_X]}^r(\mathcal{F}^{\bullet}):=\pi^{-1}(\mathrm{ind}_{\mathcal{A}[\mathcal{D}_X]}^r\mathcal{F}^{\bullet})\otimes_{\pi^{-1}\mathcal{A}^{\bullet}[\mathcal{D}_X]}\mathcal{E}_X.$$

Note that $T_VT^*X$ has induced $\mathbb{C}^{\times}$-actions by the projections $\pi$ and $\tau$ so $\tau_*\mathcal{O}_{T_VT^*X}$ is bi-filtered. 
\begin{proposition}
\label{1-microchar equivalence}
$\mathsf{Ch}_{\D,V}^1(\mathcal{A})$ is 
$supp\big(\eta^*\mathcal{A}[\mathcal{O}_{T_VT^*X}]\otimes_{\eta^*\mathcal{A}[\tau^{-1}\mathrm{Gr}\mathcal{E}_V]}\tau^*\big(_{\mathcal{A}}\mu\mathbb{T}_{\mathcal{A}}^{\ell}\big).$
\end{proposition}
\begin{proof}
    Since
    $\mu_{\mathcal{D},V}(\mathbb{T}_{\mathcal{A}}^{\ell})\simeq \mathcal{O}_{T_VT^*X}\otimes_{\eta^*\mathcal{A}[\tau^{-1}\mathcal{K}_V(1)]}\tau^{*}(_{\mathcal{A}}\mu\mathbb{T}_{\mathcal{A}}^{\ell}),$
we observe
$$
supp\big(\eta^*\mathcal{A}[\mathcal{O}_{T_VT*X}]\otimes_{\eta^*\mathcal{A}[\tau^{-1}\mathrm{Gr}\mathcal{E}_V]}\tau^*\big(\pi^*\mathbb{T}_{\mathcal{A}}^{\ell}\otimes_{\pi^*\mathcal{A}[\pi^{-1}\mathrm{Gr}\D_X]}\pi^*\mathcal{A}[\mathcal{O}_{T^*X}]\big),$$
is equivalently written as 
$$supp\big(\eta^*\mathcal{A}[\mathcal{O}_{T_VT*X}]\otimes_{\eta^*\mathcal{A}[\tau^{-1}\mathrm{Gr}\mathcal{E}_V]}\tau^*\big(\pi^*\mathbb{T}_{\mathcal{A}}^{\ell}\big)\otimes_{\tau^*\pi^*\mathcal{A}[\tau^{-1}\pi^{-1}\mathrm{Gr}\D_X]}\tau^*\pi^*\mathcal{A}[\tau^{-1}\mathcal{O}_{T^*X}]\big),$$
and thus
$supp\big(\eta^*\mathcal{A}[\mathcal{O}_{T_VT*X}]\otimes_{\eta^*\mathcal{A}[\tau^{-1}\mathrm{Gr}\mathcal{E}_V]}\tau^*\big(\pi^*\mathbb{T}_{\mathcal{A}}^{\ell}\big).$
\end{proof}

We now study microsupports of solution stacks.

\subsubsection{The solutionwise microsupport}
To our knowledge, the notion of sheaf of derived stacks does not yet exist. Thus, there is not available notion of microsupport for such objects, nor of microsupport for sheaves of simplicial sets. 

In this subsection we propose a notion which is suitable for our purposes in this work. It captures the idea of the microlocal support of the (derived) solution prestack being the locus of covectors in $T^*X$ where some classical solution has nonzero microlocalized tangent directions.

Given a $D$-scheme $\EQ\to X_{\DR}$, set $\RS(\EQ):=\Map_{/X_{\DR}}(X_{\DR},\EQ)$. If $Z$ is relatively affine, then $\RS(\EQ)$ is equivalent to $\mathrm{Sol}_{\D_X}(\A).$ Fix a classical solution $\varphi\in \pi_0(\mathrm{Sol}_{\D_X}(\A).$ Then, 
$$\mathbb{T}_{\RS(\A),\varphi}=R\Gamma(X_{\DR},\mathbb{T}_{\A,\varphi}^{\ell}).$$
Fix $V\subset T^*X$ as before.
\begin{definition}
\label{PointwiseSS}
The \emph{point-wise $1$-microcharacteristic set along $V$, at $\varphi$} is the subset of $T_VT^*X$, given by
$\mathrm{Char}_V^1(\mathbb{T}_{\RS(\A)};\varphi):=\mathrm{supp}\big(\mu_{\D,V}(\varphi^*\mathbb{T}_{\A}^{\ell}))\big).$
The \emph{microlocal support} is the set
$$\mathrm{SS}(\mathbb{R}\mathrm{Sol}_{\D}(\A)):=\bigcup_{\varphi\in \pi_0\mathbb{R}\mathrm{Sol}(A)}\mathrm{Char}_V^1(\mathbb{T}_{\RS(\A)};\varphi).$$
\end{definition}

\subsection{Property of non-micro-characteristic }
\label{ssec: Property of non-micro-characteristic}
Consider a morphism of complex analytic manifolds $f:X\rightarrow Y$, an involutive closed conic subset $V$ of an open subset $V\subseteq \mathring{T}^*Y,$ a derived $\D$-algebra $\mathcal{A}$ on $Y.$ In this subsection, we will write the derived $\D$-space as $\mathbb{R}\mathsf{Sol}_{Y_{\DR}}.$ To this datum, we have a Cartesian diagram
\begin{equation}
    \label{eqn: D-Algebra CKK Diagram}
    \begin{tikzcd}
    \mathbb{R}\mathsf{Sol}_{Y_{\DR}}(\mathcal{A}^{\bullet})\times_Y T^*X\arrow[d] & \arrow[l,"f_d"] \mathbb{R}\mathsf{Sol}_{Y_{\DR}}(\mathcal{A}^{\bullet})\times_Y X\times_Y T^*Y\arrow[d]\arrow[r,"f_{\pi}"] & \mathbb{R}\mathsf{Sol}_{Y_{\DR}}(\mathcal{A}^{\bullet})\times_Y T^*Y\arrow[d]
    \\
    X\arrow[r,"\mathbf{1}_X"] & X\arrow[r,"f"] & Y
    \end{tikzcd}
\end{equation}
Note that we have a canonical projection map
$$\rho_V:V\times_{T_Y^*}T^*T^*Y\xrightarrow{\simeq} V\times_{T^*Y}TT^*Y\rightarrow T_VT^*X,$$
which is a vector bundle morphism. By $\rho_V$ and using the base-change of the normal bundle $\tau_Y:T_VT^*Y\rightarrow T^*Y,$ the Cartesian diagram (\ref{eqn: D-Algebra CKK Diagram}) is extended to a diagram in prestacks,
\begin{equation}
    \label{eqn: D-Algebra 1-CKK Diagram}
    \begin{tikzcd}
     \mathbb{R}\mathsf{Sol}_{Y_{\DR}}(\mathcal{A}^{\bullet})\times X\times V\times T^*T^*Y\arrow[d]\arrow[r] & \mathbb{R}\mathsf{Sol}_{Y_{\DR}}(\mathcal{A}^{\bullet})\times V\times T^*T^*Y\arrow[d]
    \\
    \mathbb{R}\mathsf{Sol}_{Y_{\DR}}(\mathcal{A}^{\bullet})\times_Y X\times_Y T_VT^*Y\arrow[d]\arrow[r] & \mathbb{R}\mathsf{Sol}_{Y_{\DR}}(\mathcal{A}^{\bullet})\times T_VT^*Y\arrow[d]
    \\
     \mathbb{R}\mathsf{Sol}_{Y_{\DR}}(\mathcal{A}^{\bullet})\times_Y X\times_Y T^*Y\arrow[d, "f_d"] \arrow[r,"f_{\pi}"] & \mathbb{R}\mathsf{Sol}_{Y_{\DR}}(\mathcal{A}^{\bullet})\times_Y T^*Y
     \\
     \mathbb{R}\mathsf{Sol}_{Y_{\DR}}(\mathcal{A}^{\bullet})\times_Y T^*X & & 
    \end{tikzcd}
\end{equation}

\begin{definition}
\label{Definition: 1MicroNC}
\normalfont    A morphism $f:X\rightarrow Y$ is {\color{white!15!black}{\textbf{\emph{$1$-micro-noncharacteristic}}}} for $\mathcal{A}^{\bullet}\in \mathsf{CAlg}_Y
(\mathcal{D}_Y)^{c},$ \emph{along $V$ at $p\in V$} if for every function $\psi$ defined on a neighbourhood $W$ of $p$ such that $\psi\circ f_{\pi}=0$ and $d\psi(p)\neq 0,$ then the horizontal lift $\widehat{H}_{\psi}(p)$ of the Hamiltonian vector field $H_{\psi}(p)$ is not an element of $\mathsf{Ch}_{\D,V}^1(\mathcal{A})$.
\end{definition}
Therefore $f:X\rightarrow Y$ is micro-non-characteristic for $\varphi^*\mathbb{T}_{\mathsf{Sol}_Y(\mathcal{A})}^{\ell}$ for every classical solution $\varphi,$ in the sense that $H_{\psi}(p)$ is not in $\mathrm{Char}_V^1(\mathbb{T}_{\mathcal{A},\varphi}^{\ell}).$ 

In other words, considering  
$T_X^*Y\hookrightarrow T_X^*Y\times_Y V\rightarrow T_V(T^*Y),$ and the normal cone $C_V\big(\mathrm{Char}(\mathbb{T}_{\mathcal{A},\varphi}^{\ell})\big),$ then $X$ is non-micro characteristic at $p\in V$ if 
$$\mathring{\pi}_X^{-1}(p)\cap C_V\big(\mathrm{Char}(\mathbb{T}_{\mathcal{A},\varphi}^{\ell})\big)=\emptyset,$$
where $\mathring{\pi}_X:\mathring{T}_X^*Y\times_Y V\rightarrow V.$

This implies that $f$ is non-characteristic for $\varphi^*\mathbb{T}_{\mathsf{Sol}_Y(\mathcal{A})}^{\ell}$ in a neighbourhood $W$ of $p$ i.e. the morphism $f_d$ is a proper morphism on $f_{\pi}^{-1}\big(\mathsf{Ch}_{\D,\varphi}(\mathcal{A})\cap W\big).$

In particular, we say that $f:X\rightarrow Y$ is non-characteristic for $\mathcal{A}$ if it is $1$-micro-noncharacteristic for $\mathcal{A}$ along the zero section Lagrangian sub-manifold $V=T_X^*X$. In this case, one has that
\begin{equation}
    \label{eqn: NC Condition for D-Algebras}
f_{\pi}^{-1}\big(\mathsf{Ch}_{\D_Y}(\mathcal{A}^{\bullet})\big)\times^h\mathbb{R}\mathsf{Sol}_{Y_{\DR}}(\mathcal{A}^{\bullet})\times_Y T_X^*Y\subset \mathbb{R}\mathsf{Sol}_{Y_{\DR}}(\mathcal{A}^{\bullet})\times_Y X\times_Y T_Y^*Y.
\end{equation}

We can also incorporate sub-spaces $W\subset \mathsf{Sol}_Y(\mathcal{A})\times_Y T^*X$. Namely, $f$ is NC for $\mathcal{A}$ (over $U$) \textit{on $W$} if 
the morphism 
$f_{\pi}$ restricted to 
$$f_{d}^{-1}(W)\cap f_{\pi}^{-1}(\mathrm{Char}_{\mathcal{D}}(\mathcal{A})\big)\times\mathsf{Sol}_Y(\mathcal{A}),$$
is proper.
One recovers the usual NC condition by setting $W=T^*X.$

For example, supposing that
$\mathcal{A}^{\bullet}\simeq \mathcal{A}$ is concentrated in degree zero with regular singularities along $V$ i.e. $\varphi^*\mathbb{L}_{\mathcal{A}}$ is coherent with singularities along $V,$ then 
\begin{eqnarray*}
    \mathrm{SS}\big(\mathbb{R}\mathsf{Sol}_{X,lin}(\varphi^*\mathbb{L}_{\mathcal{A}})\big)&=&\mathrm{Char}(\varphi^*\mathbb{L}_{\mathcal{A}})\subset V\cup \mathrm{supp}(\varphi^*\mathbb{L}_{\mathcal{A}})
    \\
       &\simeq& \big\{(x;\xi)\in \mathring{T}^*X|(x;\xi)\in V\big\}\cup \big\{(x;0)\in T_X^*X|x\in \mathrm{supp}(\varphi^*\mathbb{L}_{\mathcal{A}})\big\}.
       \end{eqnarray*}

Equivalently, we could have restated this for $\mathsf{Ch}_{\D,V}^1(\mathcal{A})$ i.e. using $\mathbb{T}_{\mathcal{A}}$ instead, due to the fact that $\mathrm{Char}(\mathcal{M})=\mathrm{Char}(\mathbb{D}\mathcal{M})$ for a coherent $\D$-module \cite[Prop. 3.1.4]{KashiwaraMicro}.

\begin{remark}
Concretely, given a (holomorphic) non-linear operator of order $\leq m,$
$$F(x,u(x),\ldots,\partial^{\sigma}u,\ldots)=0,\hspace{1mm}|\sigma|\leq m,$$
a point $(x_0,\xi_0)\in \mathbb{R}^n\times (\mathbb{R}^n\backslash \{0\})$ is non-characteristic if 
\begin{equation}
    \label{eqn: Symbol of Linearization of Non-linear}
    \sigma_m(F)(x,;\xi)=\sum_{|\sigma|=m}\frac{\partial F}{\partial u_{\sigma}}(x,u(x),\ldots)(i\xi)^{\sigma}\neq 0.
\end{equation} 
\end{remark}

\subsubsection{Remarks on ellipticity}
\label{sssec: A Remark on Ellipticity}
The tools in §§ \ref{sssec: D-Geometric Microcharacteristics}, as well as the characterization of linear ellipticity (\ref{ssec: Propagation of Solutions}) can be used to make sense of geometric non-linear ellipticity.

Let $M$ be a smooth real analytic manifold with complexification $X,$ and consider a $\D$-algebraic non-linear \textsc{pde} $\mathcal{A}\rightarrow \mathcal{B}.$ It is said to be \emph{elliptic} if 
$$supp\big(\mu\mathbb{T}_{\mathcal{B},\varphi}^{\ell})\cap T_M^*X\subset M\times_X T_X^*X,$$
holds for every solution $\varphi$ i.e. $\mathbb{T}_{\mathcal{B},\varphi}^{\ell}$ is an elliptic $\D$-module.

In a more satisfactory way, we consider (\ref{eqn: Derived non-linear solution functor}) \eqref{eqn: Quillen adjunction for slice D stacks} and the general discussion in § \ref{sssec: Derived Solutions for RelAlgNLPDEs} and may say that our corresponding derived $\D_X$-prestack $R\mathrm{Sol}_{\D_X}(\mathcal{B})$ is elliptic if 
$$\big(\mathrm{Char}_{\mathcal{D}}(\mathcal{B})\times \mathbb{R}\mathsf{Sol}_X(\mathcal{B})\big)\cap T_M^*X\subset \mathbb{R}\mathsf{Sol}_X(\mathcal{B})\underset{X}{\times} M\times T_X^*X.$$

This may also be phrased directly in terms of the twisted local solution functor (§ \ref{sssec: Solutions with Coefficients}) by considering hyperfunctions $\mathscr{B}_M$ and the induced $\mathcal{A}[\mathcal{D}_X]$-module by base change $\mathrm{ind}_{\mathcal{A}[\mathcal{D}_X]}(\mathscr{B}_M).$ Working in $\mathcal{A}[\mathcal{D}]$-modules,
and replacing $\mathscr{B}_M$ by its extension of scalars from $\mathcal{D}_X=\mathcal{O}_X[\mathcal{D}_X]$ to $\mathcal{A}^{\ell}\otimes_{\mathcal{O}_X}\mathcal{D}_X,$ we have (\ref{sssec: Solutions with Coefficients}),
$$\mathbb{R}\mathsf{Sol}_{\mathcal{A}}(\mathbb{L}_{\mathcal{A}},\mathcal{A}[\mathcal{D}_X]\otimes\mathscr{B}_M)=\mathbb{R}\mathcal{H}om_{\mathcal{A}^{\ell}[\mathcal{D}_X]}(\mathbb{L}_{\mathcal{A}},\mathcal{A}[\mathcal{D}_X]\otimes\mathscr{B}_M).$$
By universal properties of $\mathbb{L}_{\mathcal{A}}$ and the hypothesis that it is perfect, for a given (infinite jet) of a solution $j_{\infty}(x):X\rightarrow \mathsf{Jets}^{\infty}(E/X)$, we consider local solutions of the leftified $\mathcal{A}[\mathcal{D}_X]$-dual $\mathbb{T}_{\mathcal{A}}^{\ell}.$ These considerations lead to an isomorphism of sheaves of $\mathsf{Jet}(E/X)$-vector spaces which we may appropriately pull-back to get a version of elliptic regularity for the linearization $j_{\infty}(x)^*\mathbb{T}_{\mathcal{A}}^{\ell}$ as an $\mathcal{A}[\mathcal{D}_X]$-module.

By base-change we pass to $\mathcal{E}_X$-modules. We can then analyze micro-function solutions and the ellipticity conditions obtained from the solution sheaves $\mathbb{R}\mathsf{Sol}_{\mathcal{A}}(\mathbb{L}_{\mathcal{A}},\mathcal{A}[\mathcal{E}_X]\otimes\mathscr{C}_M).$

The situation is more simple in the case of free $\D$-algebras.
\begin{example}
\label{Elliptic1}
    \normalfont 
Consider a $\D$-algebra $\mathcal{B}$ as in \autoref{Free Tangent Cotangent Example} with underlying coherent $\D$-module $\mathcal{M}$. Suppose that $\mathcal{M}$ is elliptic.
Then $\mathbb{R}\mathsf{Sol}_X(\mathcal{B})$ is elliptic, in the above sense.
\end{example}

\begin{example}
    \normalfont 
Consider \autoref{Elliptic1} and let $S$ denote the zero section of $T_M^*X$. If $\mathcal{B}$ is of the form $\mathrm{Sym}(\mathcal{M}_P)$ for $\mathcal{M}_P\simeq \mathcal{D}_X/\mathcal{D}_X\cdot  P$ and $P$ is an elliptic operator, we have that microfunction solutions of $\varphi^*\mathbb{L}_{\mathcal{B}}$ are determined by micro-function solutions of $\mathcal{M}_P$, and pull-back to $T^*X$ gives 
$\mathbb{R}\mathcal{H}om(\mathcal{M},\mathscr{C}_M)|_{T_M^*\backslash S}\simeq 0.$ Indeed, (\ref{eqn: Free-forget AD-Mod/Alg Adjunction}) derived solutions of $\mathcal{B}$ are derived solutions of $\mathcal{M}_P$ which by ellipticity identifies
$\mathbb{R}\mathsf{Sol}_{X,lin}(\mathcal{M}_P,\mathscr{A}_M)$ with $\mathbb{R}\mathsf{Sol}(\mathcal{M}_P,\mathscr{B}_M).$
This implies that the surjective linear differential operator $P$ defines an isomorphism acting on $\Gamma(\mathscr{B}_M/\mathscr{A}_M).$ By the short-exact sequence (\ref{eqn: SESHyperfunction}) this gives the vanishing result by restricting to microfunction solutions outside $S.$
\end{example}

\subsubsection*{Extension to non-affine $\D$-spaces}
Suppose we have a not necessarily representable $\D_X$-prestack, admitting a $\D_X$-étale atlas (c.f. §\ref{sssec: D-Geom Tangent and Cotangent Complexes}). Specifically, consider an object $\mathcal{X}\in \mathsf{dStk}_X(\D_X)$ and make the assumption that $\mathbb{L}_{\mathcal{X}}\in \mathsf{Shv}_{\mathcal{D}}^!(\mathcal{X})$ exists and is a perfect object. Recall $\mathsf{Shv}_{\mathcal{D}}^!(\mathcal{X})$ is the $\infty$-category of $\mathcal{O}_{\mathcal{X}}$-modules in $\D_X$-modules. In this way, 
$$\mathsf{Shv}_{\mathcal{D}}^!\big(\mathsf{Spec}_{\D_X}(\mathcal{A}^{\bullet})\big)\simeq \mathsf{Mod}_{\D_X}\big(\mathcal{A}^{\bullet}\big)\simeq \mathsf{Mod}\big(\mathcal{A}^{\bullet}\otimes_{\mathcal{O}_X}\mathcal{D}_X\big).$$

The tautological embedding  $\iota:\mathsf{Sch}\hookrightarrow \mathsf{Sch}_{\mathcal{D}}$ in permits viewing any scheme $(X,\mathcal{O}_X)$ as a $\D_X$-scheme, since $\mathcal{O}_X$ is a $\D_X$-algebra. Thus any scheme can be viewed as a derived $\D_X$-stack. 
Therefore, we may consider the homotopy fiber product (we will omit $\iota$ going forward) $$\mathcal{X}\times_{\iota(X)}\iota(T^*X)\hspace{2mm}\text{taken in }\mathsf{dStk}_X(\D_X).$$

 We want to transport our definitions to arbitrary algebraic $\D$-spaces and we consider $\mu_{\mathcal{D},V}\big(\mathbb{T}_{\mathcal{X}}^{\ell}\big),$ as a module over $\mathcal{X}\times_{T^*X} T_VT^*X$.
Moreover, notice that we have an equivalence
$$\mathsf{Shv}_{\mathcal{D}}(\mathcal{X}\times_XT^*X)\simeq \mathsf{Shv}_{\mathcal{D}}(\mathcal{X})\otimes_{\mathsf{Shv}_{\mathcal{D}}(X)}\mathsf{Shv}_{\mathcal{D}}(T^*X).$$

Now, given an affine derived $\D$-prestack $\mathsf{Spec}_{\D}(\mathcal{B}^{\bullet})$ with a $\D$-étale map $g$ to $\mathcal{X}$, set
\begin{equation}
\label{eqn: Non-affine D Char}
\mathrm{Char}_{\mathcal{D}}(\mathcal{X})\times_{\mathcal{X}}\mathsf{Spec}_{\D}(\mathcal{B})\rightarrow \mathrm{Char}_{\mathcal{D}}(\mathcal{B}^{\bullet}).
\end{equation}
The assignment (\ref{eqn: Non-affine D Char}) defines $\mathrm{Char}_{\mathcal{D}}(\mathcal{X})$ for non-affine $\D$-spaces.

Let us say that $\mathcal{X}$ is elliptic as a derived $\D_X$-stack over $X$ if it admits an atlas by derived affine $\D$-spaces which are elliptic (an `elliptic atlas') as in (\ref{sssec: A Remark on Ellipticity}).
\begin{proposition}
The assignment (\ref{eqn: Non-affine D Char}) on the category $\mathsf{dStk}_X(\D_X)_{/\mathcal{X}},$ with $\D$-étale maps is compatible with descent.
\end{proposition}
\begin{proof}
This follows from the general observation that if $\{f_{\alpha}\}:\bigcup_{\alpha}\mathsf{Spec}_{\D}(\mathcal{B}_{\alpha})\rightarrow \mathcal{X}$ is a $\D$-étale atlas, we have induced maps
$$lim_{\alpha} \big(\pi^*(f_{\alpha}^*\mathbb{T}_{\mathcal{X}}^{\ell})\otimes_{\pi^*(f_{\alpha}^*\mathcal{O}_{\mathcal{X}})[\pi^{-1}\mathcal{D}_X]}\pi^*(f_{\alpha}^*\mathcal{O}_{\mathcal{X}})[\mathcal{E}_X]\big)\rightarrow lim_{\alpha}\big(\pi^*(\mathbb{T}_{\mathcal{B}_{\alpha}}^{\ell}\otimes_{\pi^*\mathcal{B}_{\alpha}[\pi^{-1}\mathcal{D}]}\pi^*\mathcal{B}_{\alpha}[\mathcal{E}_X]\big),$$
compatible with all morphisms $g_{\alpha}$
arising from the sequence
$$lim_{\alpha}f_{\alpha}^*\mathbb{T}_{\mathcal{X}}^{\ell}\leftarrow lim_{\alpha}\mathbb{T}_{\mathcal{B}_{\alpha}}^{\ell}\leftarrow lim_{\alpha} \mathbb{T}_{\mathcal{B}_{\alpha}/\mathcal{X}},$$
by applying the twisted microlocalization functor, and using the fact that $f_{\alpha}$ is $\D$-étale as in § \ref{sssec: Model Categorical Interlude}, for each $\alpha.$
\end{proof}
Consider a complex analytic manifold $Y$ with $\pi:T^*Y\rightarrow Y$, and Cartesian diagram taken in $\PS,$
\[
\begin{tikzcd}
\JetY(E)\times_Y T^*Y\arrow[d]\arrow[r] & T^*Y\arrow[d,"\pi"]
\\
\JetY(E)\arrow[r,"\mathsf{q}"]& Y
\end{tikzcd}
\]
By pullback of sheaves on relative prestacks, we have that $\pi$ defines a functor
\begin{equation}
\label{eqn: Jet-Cotangent PB}
\pi_{\mathsf{Shv}}^!:\mathsf{Shv}\big(\JetY(E)\big)\rightarrow \mathsf{Shv}\big(\JetY(E)\times_Y T^*Y\big).
\end{equation}

Since $Y$ is a classical complex analytic manifold, there is a well-defined sheaf of microdifferential operators $\mathcal{E}_Y$ on $T^*Y,$ as in § \ref{ssec: Characteristic Varieties}.

Considering the pull-back under of the tangent $\D$-complex under (\ref{eqn: Jet-Cotangent PB}), viewed as an object
$\pi_{\mathsf{Shv}}^!\mathsf{T}^{\ell}[\J(E)]$ in
$$\pi_{\mathsf{Shv}}^!\mathcal{A}^{\bullet}-\mathsf{Mod}\big(\mathsf{Shv}(\pi^{-1}Y)\big)= \pi_{\mathsf{Shv}}^!\mathsf{q}_*^{\mathsf{CAlgShv}}\big(\mathcal{O}_{\JetY(E)}\big)-\mathsf{Mod}\big(\mathsf{Shv}(\pi^{-1}Y)\big).$$

Base-change and form the induced right-$\mathcal{E}_Y$-module $$\mathrm{ind}_{\mathcal{E}_Y}\big(\pi_{\mathsf{Shv}}^!\mathsf{T}^{\ell}[\J(E)]\big),$$
gives the following.

\begin{proposition}
\label{Derived Characteristic Variety Lemma}
Pull-back of ind-coherent sheaves along the smooth morphisms $\pi:T^*Y\rightarrow Y$ and tensoring with microlocal differential operators $\mathcal{E}_Y$ yields an object
$$\mu\big(\mathsf{T}^{\ell}[\J(E)]\big):=\mathsf{T}^{\ell}[\J(E)]\otimes_{\pi^!\mathcal{A}^{\bullet}\otimes_Y^!\pi^{-1}\mathcal{D}_Y}\pi^!\mathcal{A}^{\bullet}[\mathcal{E}_Y].$$
\end{proposition}
The object obtained as in \autoref{Derived Characteristic Variety Lemma} is thus an object of 
$$\big(\mathcal{A}^{\bullet}\otimes\mathcal{E}_Y\big)-\mathsf{Mod}\big(\mathsf{Shv}(\JetY(E)\times_YT^*Y)\big).$$

In particular, it is an ind-coherent sheaf and we may consider its support, that we denoted by
\begin{equation}
    \label{Singular Support of Derived NLPDE}
\mathsf{Ch}_{Y_{\DR}}\big(\J(E)\big):= supp\big(\mathsf{T}^{\ell}[\J(E)]\otimes_{\pi^!\mathcal{A}^{\bullet}\otimes_Y^!\pi^{-1}\mathcal{D}_Y}\pi^!\mathcal{A}[\mathcal{E}_Y]\big),
   \end{equation}
   in complexes of sheaves over
   $\JetY(E)\times_Y T^*Y.$

Consider a morphism $f:X\rightarrow Y$ of complex analytic manifolds together with the associated microlocal correspondence (\ref{eqn: Microlocal correspondence}). We have
\begin{equation}
    \label{eqn: Derived dR CKK Diagram}
    \begin{tikzcd}
    & \arrow[dl,"f_d", labels=above left] \JetY(E)\times_Y\times X\times_Y\times T^*Y\arrow[dd] \arrow[dr,"f_{\pi}"] & 
    \\
    \JetY(E)\times_Y T^*X \arrow[d] & & \JetY(E)\times_Y T^*Y\arrow[d]
    \\
    X\arrow[r,"\mathbf{1}_X"] & X \arrow[r,"f"] & Y
    \end{tikzcd}.
\end{equation}

Suppose further that $V\subset T^*Y$ is a conic subset (c.f. § \ref{sssec: D-Geometric Microcharacteristics}), and consider the diagram induced from (\ref{eqn: Derived dR CKK Diagram}),

\begin{equation}
    \label{eqn: Derived dR Sing CKK Diagram}
    \adjustbox{scale=.85}{
    \begin{tikzcd}
    & \arrow[dl,"f_{d,V}", labels=above left] \mathsf{Jets}^{\infty}(E) \times_Y\times X\times_Y V\times T^*T^*Y\arrow[dd] \arrow[dr,"f_{\pi,V}"] & 
    \\
    \mathsf{Jets}^{\infty}(E)\times X\times T_VT^*Y\arrow[d] & & \mathsf{Jets}^{\infty}(E)\times T_VT^*Y\arrow[d]
    \\
    \mathsf{Jets}^{\infty}(E)\times_YT^*X\arrow[d] & \arrow[l,"f_d"] \mathsf{Jets}^{\infty}(E)\times_Y\times X\times_Y T^*Y\arrow[d] \arrow[r,"f_{\pi}"] & \mathsf{Jets}^{\infty}(E)\times_Y T^*Y\arrow[d]
    \\
    X\arrow[r,"id_X"] & X\arrow[r,"f"] & Y
    \end{tikzcd}}.
\end{equation}

\begin{definition}
\label{NC for de Rhams}
A morphism $f:X\rightarrow Y$ of complex analytic manifolds is said to be \emph{non-characteristic} for $\J(E)\in \PS_{/Y_{\DR}},$ if is non-micro-characteristic for the Lagrangian zero section i.e.
\begin{equation}
    \label{eqn: Derived dR NC Condition}
f_{\pi}^{-1}\mathsf{Ch}_{Y_{\DR}}\big(\J(E)\big)\cap^{L}q_{\DR}^*\J(E)\times_Y T_X^*Y\subset q_{\DR}^*\J(E)\times_Y X\times_Y T_Y^*Y.
\end{equation}
\end{definition}

In the following section we will formulate a Cauchy problem for prestacks admitting an atlas by affines for which a morphism is non-characteristic, thus set up some notation now. Consider $\EQ\in \PS_{Y_{\DR}},$ which is expressible as a colimit,
$$\EuScript{Y}\simeq \underset{i\in I}{\mathrm{colim}}\hspace{1mm} Y_i,$$
with $Y_i$ of the form $\mathsf{RSol}_Y(\mathcal{B}_i)$ for some $\mathcal{B}_i\in \mathsf{Comm}(\mathsf{IndCoh}(Y_{\DR})).$ Suppose that $f$ is non-characteristic for each $Y_i$ as in § \ref{ssec: Characteristic Varieties}, and the induced maps $p(i):Y_i\rightarrow Y_{\DR}$ we have
$$p(i)^*Y_i\times_YX\times_YT^*Y\xrightarrow{f(i)_{\pi}}p^*(i)Y_i\times_Y T^*Y\rightarrow Y,$$
such that 
$$f(i)_{\pi}^{-1}\mathsf{Ch}_{\DR}(Y_i)\cap p(i)^*Y_i\times_Y T_X^*Y\subset p(i)^*Y_i\times_YX\times_Y T_Y^*Y,$$
by \ref{NC for de Rhams},
where $\mathsf{Ch}_{\DR}$ is the corresponding characteristic variety (equation  \ref{Singular Support of Derived NLPDE}). We say that $\EQ$ is of non-characteristic presentation for $f.$

\subsubsection{Functoriality}
We establish functorial behaviour of $D$-geometric characteristic varieties.

\begin{proposition}
\label{CharSmoothPB}
    Suppose $f:Y\to X$ is smooth, with $\EQ\to X_{\DR}$ a bounded $\D_X$-afp generalized PDE. Denote by $Z_Y$ the pullback generalized PDE. Then,
    $$\mathrm{Char}(Z_Y/Y_{\DR};u_T)=(id_T\times f_d)(id_T\times f_{\pi}^{-1})(\mathrm{Char}(\EQ/X_{\DR};u_T)).$$
\end{proposition}
\begin{proof}
    Recall $X$ is a smooth projective $\mathbb{C}$-scheme, and let $\EQ\to X_{\DR}$ be $\D_X$-afp. Let $T$ be any classical test scheme with $u_T:T\to Z$ a solution. Let $f:Y\to X$ be smooth with corresponding de Rham map $f_{\DR}:Y_{\DR}\to X_{\DR}.$
Put, $ev_{u_T}:T\times X_{\DR}\to Z,$ and $ev_{u_T}^Y:T\times Y_{\DR}\to Z_Y,$ where $Z_Y:=f_{\DR}^*Z.$ Note the Cartesian diagrams,
\[
\begin{tikzcd} Z_Y \ar[r]\ar[d] & Z \ar[d] \\ Y_{\DR} \ar[r,"f_{\DR}"] & X_{\DR} \end{tikzcd}\hspace{2mm} \text{and }\hspace{1mm}
\begin{tikzcd} T\times Y_{\DR} \ar[r,"\,id_T\times f_{\DR}\,"] \ar[d,"\mathrm{ev}^Y_{u_T}"'] & T\times X_{\DR} \ar[d,"\mathrm{ev}_{u_T}"] \\ Z_Y \ar[r] & Z \end{tikzcd}
\]
$$ev_{u_T}^{Y,!,\IC}(\mathbb{T}_{Z_Y/Y_{\DR}})\simeq (id_T\times f_{\DR})_{\IC}^!\big(ev_{u_T}^{!,\IC}\mathbb{T}_{\EQ/X_{\DR}}\big)\in \IC(T\times Y_{\DR})^{\omega}.$$
Moreover,
$$\mathbb{L}_{Z_Y/Y_{\DR},u_T}\simeq \mathbb{D}_{T\times Y_{\DR}}(ev_{u_T}^{Y,!}\mathbb{T}_{Z_Y/Y_{\DR}})\simeq \mathbb{D}_{T\times Y_{\DR}}\big((id_T\times f_{\DR})_{\IC}^!ev_{u_T}^!\mathbb{T}_{\EQ/X_{\DR}}\big),$$
and as $f$ is smooth, $(id_T\times f_{\DR})^!$ coincides with $(id_T\times f_{\DR})^*[d],$ where $d$ is the relative dimension of $f:Y\to X.$

Since $X$ is smooth and projective, $\IC(X_{\DR})^{\omega}\simeq \mathsf{Perf}(X_{\DR})\simeq \mathsf{Coh}(X_{\DR})$ and $\IC(T\times X_{\DR})^{\omega}\simeq \mathsf{Coh}(T)\otimes \IC(X_{\DR})^{\omega}$. Thus any object decomposes as an exterior tensor product of a coherent complex on $T$ and a perfect complex on $X_{\DR}$.
We have
$$ev_{u_T}^{Y,!}\mathbb{T}_{Z_Y/Y_{\DR}}\simeq(id_T\times f_{\DR})^!(E_T\boxtimes M_X)\simeq E_T\boxtimes f_{\DR}^!(M_X).$$
Therefore,
$$\mathbb{L}_{Z_Y/Y_{\DR},u_T}\simeq \mathbb{D}_T(E_T)\boxtimes \mathbb{D}_Y(f_{\DR}^!M_X).$$
Consider the diagram.
\[
\begin{tikzcd} Y\times_X T^*X \ar[r,"f_\pi"] \ar[d,"{^t f'}"'] & T^*X \ar[d,"\pi_X"] \\ T^*Y \ar[r,"\pi_Y"] & Y \end{tikzcd}\]
Since $\mathrm{SS}(E_T\boxtimes f_{\DR}^!M_X)\simeq \mathrm{supp}(E_T)\times \mathrm{Char}(f_{\DR}^!M_X)\subset T\times_Y T^*Y,$ since $f$ is smooth, we have
$\mathrm{Char}(f_{\DR}^!M_x)\simeq df\big(f_{\pi}^{-1}\mathrm{Char}(M_X)\big)\subset T^*Y,$ therefore,
$$\mathrm{Char}(Z_Y/Y_{\DR};u_T)=\mathrm{SS}\big(ev_{u_T}^{Y,!}\mathbb{T}_{Z_Y/Y_{\DR}}\big)\simeq \mathrm{supp}(E_T)\times df(f_{\pi}^{-1}\mathrm{Char}(M_X)\big),$$
thus is given by
$$(id_T\times df)\big(\mathrm{supp}(E_T)\times f_{\pi}^{-1}\mathrm{Char}(M_X)\big)\simeq (id_T\times df)(id_T\times f_{\pi}^{-1})\mathrm{Char}(\EQ/X_{\DR};u_T).$$

Moreover, let $f:Y\to X$ be a smooth morphism of smooth projective schemes. Consider the base change
\[
\begin{tikzcd}
T \times Y_{\DR} \arrow[r,"id_T\times f_{\DR}"] \arrow[dr,swap,"ev_{u_T}"] & T \times X_{\DR} \arrow[d,"ev_{u_T}"]\\
& Z
\end{tikzcd}
\]
Then the pullback
\[
(id_T\times f_{\DR})^! \, ev_{u_T}^{!,\mathsf{IndCoh}} \mathbb{T}_{\EQ/X_{\DR}} \in \IC(T\times Y_{\DR})^{\omega}
\]
satisfies the decomposition
\[
ev_{u_T}^{!,\mathsf{IndCoh}} \mathbb{T}_{f_{\DR}^*Z/Y_{\DR}}\simeq E_T\boxtimes f_{\DR}^! M_X,
\]
so that the characteristic variety is preserved under smooth pullback:
\[
\mathrm{Char}(f_{\DR}^*Z/Y_{\DR};u_T)\simeq \mathrm{supp}(E_T)\times \mathrm{Char}(f_{\DR}^! M_X)\simeq \mathrm{supp}(E_T)\times \mathrm{Char}(M_X).
\]
\end{proof}
The following is repeatedly used in the proof of the non-linear CKK theorem.

\begin{proposition}
\label{Codim1Prop}
    Let $i:X\to Y$ be a closed-immersion of $\mathbb{C}$-schemes of codimension $1.$ Assume $Z\to Y_{\DR}$ is $\D$-afp and bounded and that $i$ is non-characteristic for $Z.$ The, 
    $i_{\DR}^*\mathbb{T}_{Z/Y_{\DR}}\simeq \mathbb{T}_{i_{\DR}^*\EQ/X_{\DR}}$ and 
    $$\mathrm{Char}(i_{\DR}^!\mathbb{T}_{Z/Y_{\DR}})=i_d\big(i_{\pi}^{-1}\mathrm{Char}(\mathbb{T}_{Z/Y_{\DR}})\big).$$

\end{proposition}
\begin{proof}
    To $i:X\to Y$, let us denote the pull-back functor on quasi-coherent $\D$-algebras by 
    $$i_{\DR}^* : \mathsf{CAlg}(Y_{\DR}) \to \mathsf{CAlg}(X_{\DR}).$$
Let 
\begin{equation}
    \label{ClPBZ}
\begin{tikzcd} i_{\DR}^*Z \ar[r] \ar[d] & Z \ar[d] \\ X_{\DR} \ar[r,"i_{\DR}"] & Y_{\DR} \end{tikzcd}
\end{equation}
be the Cartesian diagram defining $i_{\DR}^*Z.$
The cotangent complex satisfies the base‑change formula$
L_{i_{\DR}^*Z / X_{\DR}} \simeq i_{\DR}^* L_{Z/Y_{\DR}},$
and by non-characteristic hypothesis, we have properness of supports from which the equivalence follows since the natural map $i_{\DR}^* \mathcal{O}_Z \to \mathcal{O}_{i_{\DR}^*Z},$ is an equivalence.
Working locally on $Y$, assume $X=\{t:=y_1=0\}.$ Then, by assumption $dt\notin \mathrm{Char}(Z/Y_{\DR}).$
Consider $\pi_{\DR,i}:X\to X_{\DR}\times_{Y_{\DR}}Y.$ Since $i:X\hookrightarrow Y$ is a closed-embedding, $X_{\DR}\times_{Y_{\DR}}Y$ is given by the formal completion $Y_X^{\wedge}$ and the induced functor
\begin{equation}
    \label{PBIndCohClosed}
(\pi_{\DR,i})_{\IC}^!:\IC(X_{\DR}\times_{Y_{\DR}}Y)\simeq \IC(Y_X^{\wedge})\to \IC(X),\end{equation}
is conservative and possesses a left-adjoint, denoted $(\pi_{\DR,i})_*^{\IC}.$ 
Let $u_T$ be a $T$-parameterized section of $Z\to Y_{\DR}.$ Note that by our assumptions on $X,Y$ we always have $\IC(T\times Y_{\DR})\simeq \IC(T)\otimes \IC(Y_{\DR})$ and similarly for $\Q$, thus we can consider the case $T=\mathrm{pt}.$
Moreover, $\Q(Y_{\DR})$ and $\IC(Y_{\DR})$ are compactly generated.
For each such solution consider $\mathrm{Ch}(Z/Y_{\DR};u)\subset T^*Y$. Let us omit reference to $u$ for simplicity. Similarly, consider the pulled-back solution to $i_{\DR}^*Z$ via \eqref{ClPBZ} write $\mathrm{Ch}(Z_X/X_{\DR})$ for simplicity. Let 
$$\IC_{\mathrm{Ch}(Z_X/X_{\DR})}(X_{\DR}\times_{Y_{\DR}}Y)\hookrightarrow \IC(X_{\DR}\times_{Y_{\DR}}Y),$$
denote the preimage of $\IC_{\mathrm{Ch}(i_{\DR}^*\EQ/X_{\DR})}(X_{\DR})\subset \IC(X_{\DR}),$ by the pull-back \eqref{PBIndCohClosed}. By Kashiwara's lemma \cite{GR14}, we identify $\IC(X_{\DR}\times_{Y_{\DR}}Y)$ with the full-subcategory $\IC(Y)_X\subset \IC(Y)$ of objects with set-theoretic support on $X\subset Y.$
We may \eqref{PBIndCohClosed} with $i^!$ restricted to this category and whose left-adjoint is $*$-pushforward. Consider the induced map $i_d$ put 
$$\Lambda:=(i_d)^{-1}\big(\mathrm{Ch}(i_{\DR}^*\EQ/X_{\DR})\big)\hookrightarrow X\times_Y T^*Y\subset T^*Y.$$
To prove the proposition, we show that there is an equivalence
$$\IC_{(i_d)^{-1}(\mathrm{Ch}(i_{\DR}^*\EQ/X_{\DR}))}(X)\xrightarrow{\simeq} \IC_{\mathrm{Ch}(i_{\DR}^*\EQ/X_{\DR})}(Y_X^{\wedge}),$$
as subcategories of $\IC(X_{\DR}\times_{Y_{\DR}}Y).$
To this end, we must show that the category on the left-hand side is equival to the pre-image of $\IC_{\mathrm{Ch}}(X_{\DR})$ under the $!$-pullback functor. For this, it is possible to apply \cite[Prop.7.1.3]{AriGai2015}: there is an inclusion 
$$\IC_{(i_d)^{-1}(\mathrm{Ch}(i_{\DR}^*\EQ/X_{\DR}))}(X_{\DR})\hookrightarrow (i_{\DR}^!)^{-1}\big(\IC_{\mathrm{Ch}(i_{\DR}^*\EQ/X_{\DR})}(X_{\DR})\big).$$
In the opposite direction, it suffices to note that the essential image of $\IC_{\mathrm{Ch}(i_{\DR}^*\EQ/X_{\DR})}(X_{\DR})$ by $i_{\DR,*}^{\IC}$, which takes values in $\IC_X(Y_{\DR}),$ is contained in $\IC_{(i_d)^{-1}(\mathrm{Ch}(i_{\DR}^*\EQ/X_{\DR}))}(Y_{\DR}).$ 

\end{proof}

\section{Cauchy problems and propagation of solution singularities}
\label{sec: Cauchy Problems and Propagation of Solution Singularities}
In this section we apply the tools established in \S.~\ref{sec: Derived D-NLPDES} to prove a propagation theorem. We then study the related non-linear Cauchy problem.
\begin{theorem}
\label{MainTheoremPropagationLinearizedBody}
    Let $\mathcal{I}\rightarrow \mathcal{A}\rightarrow \mathcal{B}$ be a $\D$-algebraic non-linear \textsc{pde} on a complex analytic manifold $X$, denoting by $Q\mathcal{B}$ the cofibrant replacement differential graded $\D_X$-algebra under $\mathcal{A}.$ Suppose that for every classical solution $\mathbb{T}_{\mathcal{B},\varphi}^{\ell}$ is perfect, and let $V$ be a conic closed involutive subset of an open subset $U\subseteq \mathring{T}^*X$ and $\Sigma$ is a bicharacteristic leaf of $V$, with $p_0\in \Sigma$. Let $L$ denote a coherent $\D$-module with simple characteristics and induced microlocalization $\mathcal{L}.$
    Then for every real $C^1$-function $\psi$ on $\Sigma,$ if $H_{\psi}(p_0)\notin \mathrm{Char}_{V,\varphi}^1\big(\mathcal{B}\big)$ for every $\varphi$, then 
    \begin{equation}\label{SSEstimateL}\mathrm{SS}\big(R\mathcal{S}ol_{\mathcal{E}_X}\big(\mu\mathbb{T}_{\mathcal{B},\varphi}^{\ell},\mathcal{L}\big)|_{\Sigma}\subset \mathrm{Char}_{V,\varphi}^1(\mathcal{B})\times_V\Sigma.\end{equation}
    \end{theorem}
    
In particular, Theorem \ref{MainTheoremPropagationLinearizedBody} does not depend on the choice of solution $\varphi$ and one can formulate this estimate by varying over all $\varphi:X\rightarrow\mathsf{Spec}_X(\mathcal{B}),$ as in \eqref{PointwiseSS}.
This has the effect of producing an estimate directly in terms of $\mathsf{Ch}_{\mathcal{D},V}^1(\mathcal{B})$. Namely if $p$ is the projection from $\mathsf{Spec}_X(\mathcal{B})\times T_VT^*X$ to bicharacteristic leaves of $V$ the `solution-independent' estimate is contained in
$\mathsf{Ch}_{\mathcal{D},V}^1(\mathcal{B})\times p^{-1}(\Sigma)\times \mathsf{Spec}_X(\mathcal{B}).$

In the following proof, we will specify to the case when $\mathcal{L}$ is base-change of microfunctions $\mathscr{C}_M$, so that \eqref{SSEstimateL} becomes
$$\mathrm{supp}\big(R\mathcal{S}ol_{\mathcal{B}_X[\mathcal{E}_X]}\big(_{\mathcal{B}}\mu(\mathbb{T}_{\mathcal{B}}^{\ell}),\mathrm{ind}_{\mathcal{B}_X[\mathcal{E}_X]}(\mathscr{C}_M)\big)\subset \mathrm{Char}\big(_{\mathcal{B}}\mu \mathbb{T}_{\mathcal{B},\varphi}^{\ell}\big)\cap T_M^*X.$$

\begin{proof}[Proof of Theorem \ref{MainTheoremPropagationLinearizedBody}]
We will use the the hypothesis on $\mathcal{B}$ to reduce the statement to each $n$-coconnective truncation $\mathcal{B}_{\leq n}:=\tau_{\D}^{\geq -n}\mathcal{B},$ which is a compact object of $\mathsf{QCAlg}(X_{\DR})^{\leq 0,\geq -n},$ admitting a map from $q^*q_*(E),$ via the composition 
$$\mathcal{O}_{q^*q_*(E)}\xrightarrow{p}\mathcal{B}\xrightarrow{f_n}\mathcal{B}_{\leq n},\forall n\geq 0.$$
In this compact case, we can express $\mathcal{B}_{\leq n}$ as retract of a finite homotopy colimit of free algebra on compact $\D$-modules by Proposition \ref{Compact D algebras}, from which it suffices to prove the theorem in this case. Since we consider the base-change to $\mathcal{E}_X$-modules, the latter follows from the extension of Theorem \ref{Thm: LinProp} to objects $\mathcal{M}^{\bullet}$ of the derived category of coherent $\mathcal{E}_X$-modules. We then use the fact that supports of limits are contained in the (closure of the) union of supports.
We now give the details.

Let $f_n:\mathcal{B}\to \mathcal{B}_{\leq n},$ be the map to the $n$-coconnective truncation. By Proposition \ref{Relative D-cotangent Complex}, we have a cofiber sequence of $\mathcal{B}_{\leq n}\otimes \D_X$-modules,
$$\mathbb{L}_{\mathcal{B}_{\leq n}/\mathcal{B}}[-1]\to f_n^*\mathbb{L}_{\mathcal{B}}\to \mathbb{L}_{\mathcal{B}_{\leq n}}.$$ 
Pullback along an infinite jet of a classical solution and take microlocalizations, by Proposition \ref{Prop: CharPBSolution} we have that 
$$\mu\big(\mathbb{L}_{\mathcal{B}_{\leq n}/\mathcal{B}}[-1]\big)\to \mu (f_n^*\mathbb{L}_{\mathcal{B}})\to \mu(\mathbb{L}_{\mathcal{B}_{\leq n}}),$$
and we then take supports. We will be interested in directly analyzing the support of each term. To this end,
note that
$\mathrm{ind}_{\mathcal{B}[\mathcal{E}_X]}(\mathscr{C}_M)\otimes_{\mathcal{B}[\mathcal{E}_X]}^L\mathcal{B}_{\leq n}[\mathcal{E}_X]\simeq \mathrm{ind}_{\mathcal{B}_{\leq n}[\mathcal{E}_X]}(\mathscr{C}_M).$
Moreover,
$$\mathsf{Free}(M)[\mathcal{E}_X]\simeq \pi^*\mathrm{Sym}_{\mathcal{O}_X}(M)\otimes_{\pi^{-1}\mathcal{D}_X}\mathcal{E}_X\simeq \mathrm{Sym}_{\mathcal{O}_{T^*X}}(\pi^*M)\otimes_{\mathcal{O}_{T^*X}}\mathcal{E}_X.$$
Combining these relations,
$$\mathrm{ind}_{\mathsf{Free}_{\D}[\mathcal{E}_X]}(\mathscr{C}_M)\simeq \mathsf{Free}_{\D}(M)[\mathcal{E}_X]\otimes\mathscr{C}_M\simeq \mathrm{Sym}_{\mathcal{O}_{T^*X}}(\pi^*M)\otimes_{\mathcal{O}_{T^*X}}^L\mathscr{C}_M,$$ 
as $\mathrm{Sym}_{\mathcal{O}_{T^*X}}(\pi^*M)$-modules.
In this case,
$$R\mathcal{S}ol_{\mathsf{Free}_{\D}(M)[\mathcal{E}_X]}\big(\mu (\mathbb{T}_{\mathsf{Free}_{\D}(M)}),\mathrm{ind}_{\mathsf{Free}_{\D}(M)[\mathcal{E}_X]}(\mathscr{C}_M)\big)\simeq R\mathcal{H}om_{\mathrm{Sym}(\pi^*M)[\mathcal{E}_X]}\big(\mu T_{\mathsf{Free}_{\D}(M)},Sym\pi^*M\otimes\mathscr{C}_M\big).$$
Since $\mathbb{L}_{\mathsf{Free}_{\D}(M)}\simeq \mathsf{Free}_{\D}(M)\otimes M$ with Verdier dual
$\mathbb{T}_{\mathsf{Free}_{\D}(M),\phi}^{\ell}\simeq M^{\vee}\otimes \omega_X^{-1}[\dim X],$ we have that 
$$\mu(\mathbb{T}_{\mathsf{Free}_{\D}(M),\phi}^{\ell})\simeq \mathcal{E}_X\otimes_{\D_X}M^{\vee}\otimes\omega_X^{-1}[\dim X]\simeq \mu(M^{\vee})\otimes\omega_X^{-1}[\dim X].$$ Since $M$ is perfect, $N:=M^{\vee}$ is perfect as a $\D$-module. We claim that we may then apply Theorem \ref{Thm: LinProp} to each free $\D$-algebra on compact $\D$-modules:
\begin{eqnarray*}
R\mathcal{S}ol_{\mathsf{Free}_{\D}(M)[\mathcal{E}_X]}\big(\mu(\mathbb{T}_{\mathsf{Free}_{\D}(M),\phi}^{\ell},\mathrm{ind}_{\mathsf{Free}_{\D}(M)[\mathcal{E}_X]}(\mathscr{C}_M)\big)&\simeq& R\mathcal{H}om_{\mathsf{Free}_{\D}(M)[\mathcal{E}_X]}\big(\mu(\mathbb{T}_{\mathsf{Free}_{\D}(M),\phi}^{\ell},\mathrm{ind}_{\mathsf{Free}_{\D}(M)[\mathcal{E}_X]}(\mathscr{C}_M)\big)
\\
&\simeq& R\mathcal{H}om_{\mathcal{E}_X}\big(N,\mathscr{C}_M)^{\ell},
\end{eqnarray*}
so that 
$$\mathrm{SS}R\mathcal{H}om_{\mathcal{E}_X}\big(N,\mathscr{C}_M)^{\ell}\subset \mathrm{Char}(N)\cap T_M^*X,$$
as $SS(\mathscr{C}_M)\simeq T_M^*X.$
Note we may identify 
$$\mathrm{Char}(N)=\mathsf{Ch}_{\D}\big(\phi^*\mu(\mathbb{T}_{\mathsf{Free}_{\D}(M)})\big).$$
We now extend this propagation to pushouts:
$\mathcal{B}_{i+1} = \mathcal{B}_i \sqcup_{\Free(\mathcal{M}_i)} \Free(\mathcal{M}_{i+1})$ in $\mathsf{QCAlg}(X_{\DR})$ whose underlying $\D$-modules are compact.
Since homotopy-pushout are sent to pullbacks via Proposition \ref{Retract proposition}, the pushout
\[\begin{tikzcd} \Free(\mathcal{M}_i) \ar[r,"c_i"] \ar[d,"\alpha_i"] & \Free(\mathcal{M}_{i+1}) \ar[d,"\alpha_{i+1}"] \\ \mathcal{B}_i \ar[r,"\beta_i"] & \mathcal{B}_{i+1} 
\end{tikzcd}\]
gives that $R\mathcal{S}ol_{\mathcal{B}_{i+1}[\mathcal{E}_X]}\bigl( {}_{\mathcal{B}_{i+1}}\mu(\mathbb{T}_{\mathcal{B}_{i+1}}^\ell),\; \mathrm{ind}(\mathcal{L}) \bigr)$ is naturally equivalent to 
$$R\mathcal{S}ol_{\mathcal{B}_i[\mathcal{E}_X]}\bigl( {}_{\mathcal{B}_i}\mu(\mathbb{T}_{\mathcal{B}_i}^\ell),\; \mathrm{ind}(\mathcal{L}) \bigr) \underset{R\mathcal{S}ol_{\Free(\mathcal{M}_i)[\mathcal{E}_X]}\bigl(\mu(\mathbb{T}_{\Free(\mathcal{M}_i)}^\ell),\; \mathrm{ind}(\mathcal{L}) \bigr)}{\times} R\mathcal{S}ol_{\Free(\mathcal{M}_{i+1})[\mathcal{E}_X]}\bigl( {}_{\Free(\mathcal{M}_{i+1})}\mu(\mathbb{T}_{\Free(\mathcal{M}_{i+1})}^\ell),\; \mathrm{ind}(\mathcal{L}) \bigr).$$
Note
$$R\mathcal{S}ol_{\mathcal{B}_{\leq n}[\mathcal{E}_X]}\big(\mu f_n^*\mathbb{T}_{\mathcal{B},\phi}^{\ell},\mathrm{ind}_{\mathcal{B}_{\leq n}[\mathcal{E}_X]}(\mathscr{C}_M)\big)\to R\mathcal{S}ol_{\mathcal{B}_{\leq n}[\mathcal{E}_X]}\big(\mu\mathbb{T}_{\mathcal{B}_{\leq n},\phi}^{\ell},\mathrm{ind}_{\mathcal{B}_{\leq n}[\mathcal{E}_X]}(\mathscr{C}_M)\big).$$
Since $\mathcal{B}_{\leq n}\in \mathsf{QCAlg}(X_{\DR})^{\leq 0,\geq -n}$ compact we have $\mathcal{B}_{\leq n}\simeq |\mathcal{B}_{n,\bullet}|$ is a homotopy retract i.e. we can reduce to the case of $\mathcal{B}_{n,i}\simeq \mathrm{Sym}(M_{n,i})$ for $\{M_{n,i}\}_{i\in I}\in \mathsf{Perf}(\D_X)$ for some finite index set $I,$ for every $n\geq 0.$ Then, each 
$$R\mathcal{S}ol_{\mathcal{B}_{n,i}[\mathcal{E}_X]}\simeq R\mathcal{H}om_{\mathrm{Sym}(M_{n,i})[\mathcal{E}_X]}\big(\mu \phi^*(\mathrm{Sym}(M_{n,i})\otimes M_{n,i})^{\ell},\mathrm{ind}_{\mathrm{Sym}(M_{n,i})[\mathcal{E}_X]}(\mathscr{C}_M)\big).$$
This is nothing but $R\mathcal{H}om_{\mathcal{E}_X}\big(\mu(M_{n,i}^{\ell}),\mathcal{E}_X\otimes\mathscr{C}_M\big)=R\mathcal{S}ol_{\mathcal{E}_X}\big(M_{n,i}^{\ell},\mathscr{C}_M\big),$ so that
\begin{equation}
    \label{n,iPropEstimate}
\mathrm{SS}\big(R\mathcal{H}om_{\mathcal{E}_X}\big(\mu(M_{n,i}^{\ell}),\mathcal{E}_X\otimes\mathscr{C}_M\big)\big)\subset \mathrm{Char}(M_{n,i})\cap T_M^*X.\end{equation}
We identified as above $\mathrm{Char}(M_{n,i})=\mathsf{Ch}\big(\phi^*\mu_{\mathsf{Free}(M_{n,i})}(\mathbb{T}_{S(M_{n,i})})\big).$ 

Thus, 
$$\mathrm{SS}\big(R\mathcal{S}ol_{\mathcal{B}[\mathcal{E}_X]}\big)\hookrightarrow \mathrm{SS}\big(\mathrm{holim}_{n\geq 0}R\mathcal{S}ol_{\mathcal{B}_{\leq n}[\mathcal{E}_X]}\big)\subset \bigcup_{n\geq 0}\mathrm{SS}\big(R\mathcal{S}ol_{\mathcal{B}_{\leq n}[\mathcal{E}_X]}\big).$$
We then apply the comapctness property reducing each singular support in the union over $n\geq 0,$ to a union over $i\in I$ of 
$$\big[\bigcup_{i\in I}\mathrm{Char}(M_{n,i})\big]\cap T_M^*X=\bigcup_{i\in I}\mathrm{Char}(M_{n,i})\cap T_M^*X.$$
Putting it together 
$R\mathcal{S}ol_{\mathcal{B}[\mathcal{E}_X]}\hookrightarrow \bigcup_{n\geq 0}\big[\bigcup_{i\in I}\mathrm{Char}(M_{n,i})\cap T_M^*X\big],$
thus in light of the estimates  \eqref{n,iPropEstimate}, we obtain the desired inclusion. Explicitly, since $V$ is involutive by the Frobenius theorem it defines a foliation whose leaves are bicharacteristics, so $\Sigma$ does indeed exist for such $V$, and $\Sigma$ are generically non-empty.
The proof amounts to demonstrating that
\begin{equation}
\label{RiGammaSupp}
R^i\Gamma_{\{\psi\geq 0\}}\big(\mathcal{E}xt_{\pi^*\mathcal{B}[\mathcal{E}_X]}^j\big(\mu\mathbb{T}_{\mathcal{B},\varphi}^{\ell},\mathcal{L}|_{\Sigma}\big)_{p_0}\simeq 0,\hspace{2mm} i\geq 0,j\geq 0,
\end{equation}
for $\mathcal{L}$ a given suitable function space for micro-differential equations with simple characteristics base-changed to obtain coefficients in $\mathcal{B}$. In this case, we consider
$$
R^j\Gamma_{\{\psi\geq 0\}}\big(\mathcal{H}om_{\mathcal{B}[\mathcal{E}_X]}\big(\mu\mathbb{T}_{\mathcal{B},\varphi}^{\ell},\mathcal{L}\big)|_{\Sigma},$$
is the $j$-th cohomology group of the complex
$\mathcal{S}^{\bullet}:=\bigoplus_{i+j=p}\mathcal{S}^{p},$
associated to the double complex
$$0\rightarrow \Gamma_{\{\psi\geq 0\}}\big(\mathcal{R}_{\mathcal{L}}^{\bullet}\big)^{N_0}\rightarrow \cdots\rightarrow  \Gamma_{\{\psi\geq 0\}}\big(\mathcal{R}_{\mathcal{L}}^{\bullet}\big)^{N_r}\rightarrow 0,$$
whose terms $\mathcal{S}^{i,j}:=\Gamma_{\{\psi\geq 0\}}(\mathcal{R}_{\mathcal{L}}^i)^{N_j},$ are obtain by taking a locally free resolution of the microlocalization linearized $\D_X$-module, by a bounded complex of free and finite rank $\mathcal{B}[\mathcal{E}_X]$-modules. These exist locally on $X$ and by pulling back along a solution, gives a locally on $X$ free resolution by $\mathcal{E}_X$-modules. Fix a resolution of the $\mathcal{E}_X$-module $\mathcal{L}$ on $\Sigma.$ One may then observe the $i$-th cohomology of 
$$\Gamma_{\{\psi\geq 0\}}\big(\mathcal{R}_{\mathcal{L}}^{\bullet}\big)=\bigg(\Gamma_{\{\psi\geq 0\}}\big(\mathcal{R}_{\mathcal{L}}^{-k}\big)\rightarrow \cdots \rightarrow \Gamma_{\{\psi\geq 0\}}\big(\mathcal{R}_{\mathcal{L}}^{0t}\big)\bigg),$$
which is bounded, is given by $R^i\Gamma_{\{\psi\geq 0\}}\big(\mathcal{L}\big).$
It follows that the complex
$$0\rightarrow H^j\big(\Gamma_{\{\psi\geq 0\}}\big(\mathcal{R}_{\mathcal{L}}^{\bullet}\big)^{N_0}\rightarrow \cdots \rightarrow H^j\big(\Gamma_{\{\psi\geq 0\}}\big(\mathcal{R}_{\mathcal{L}}^{\bullet}\big)\big)^{N_r}\rightarrow 0,$$
is exact for each $j$. By considering the $\mathcal{B}[\mathcal{D}_X]$-tangent complex $\D_X$-module $\mathbb{T}_{\mathcal{B}}^{\ell}$, rather than its pullback by base change from $\mathcal{O}_X[\mathcal{E}_X]$-modules to $\mathcal{B}[\mathcal{E}_X]$-modules on $\mathsf{Spec}(\mathcal{B})\times_{T^*X}T_VT^*X,$ this is equivalent to
$$\mathrm{SS}\big(R\mathcal{S}ol_{\mathcal{E}_X}\big(\mu\mathbb{T}_{\mathcal{B},\varphi}^{\ell},\mathcal{L}\big)|_{\Sigma}\subset \mathrm{Char}_{V,\varphi}^1(\mathcal{B})\times_V\Sigma,$$ 
as required.
\end{proof}
This result states
the microlocalized $\D$-geometric linearization $\mu\mathbb{T}$ has no microlocal solutions with values in $\mathcal{L}$ supported by $\{x|\psi(x)\geq 0\}$ in a neighbourhood of the point $p_0$ on $\Sigma.$ In other words, we may find a sufficiently close point $p_1=(x_1;\xi_1)$ and $\varphi$ with $\psi(x_1)=0$ and $d\psi(x_1)=\xi_1$ for which no sections are supported in this closed subset in the neighbourhood of $x_1.$

With additional assumptions on the linearization $\D$-module, we have estimates for propagation in terms of the $\D$-geometric microsupport.
\begin{corollary}
\label{NonlinPropCorollaryBody}
    Let $\mathcal{I}\rightarrow \mathcal{A}\rightarrow \mathcal{B}$ be a finitely presented $\D$-algebraic PDE such that for each solution $\varphi$ the linearization $\D$-module $\varphi^*\mathbb{T}_{\mathcal{B}}^{\ell}$ is a bounded complex with coherent cohomology. Then putting $V=T_X^*X,$ and $\mathcal{L}=\mathcal{O}_X$ one has that 
        $\mathrm{SS}\big(R\mathcal{S}ol_Y(\varphi_{\mathsf{IndCoh}}^*\mathbb{T}_{\mathsf{Sol}_X(\mathcal{B})}\big)\big)=\mathrm{Char}(\varphi_{\mathsf{IndCoh}}^*\mathbb{T}_{\mathsf{Sol}_X(\mathcal{B})}).$
    
\end{corollary}
This is the propagation result for the linearization $\D$-module:
assume $\psi$ a $C^1$ function on $X$ is such that $d\psi(x_0)\notin \mathrm{Char}(\varphi^*\mathbb{T}_{\mathsf{Sol}(\mathcal{B})}).$ Then, the corresponding $j$--th derived functor \eqref{RiGammaSupp} gives 
$$\mathcal{E}xt_{\D_X}^j\big(\varphi^*\mathbb{T}_{\mathsf{Sol}(\mathcal{B})},R\Gamma_{\{\varphi\geq 0\}}(\mathcal{O}_X)\big)_{x_0}\simeq 0,$$
which is the propagation estimate \eqref{eqn: ShvProp@x} at $x_0\in X.$

\begin{example}
\label{Free Propagation Example}
Let $M$ be a real analytic manifold with complexification $X$ and consider \autoref{Free Tangent Cotangent Example} i.e. the free derived $\D_X$-algebra 
$\mathcal{A}:=\mathrm{Sym}^*\big(Cone(\mathcal{D}_X\xrightarrow{P}\mathcal{D}_X)\big),$
with underlying $\D_X$-module $\mathcal{M}_P,$ i.e. $\mathcal{D}_X\xrightarrow{P}\mathcal{D}_X\rightarrow \mathcal{M}_P.$
Suppose that 
\begin{equation}
\label{Assumptions}
\begin{cases}
\mathrm{Char}(\mathcal{M}_P)\cap T_M^*X\subset T_X^*X \textit{ and $\gamma$ is a closed conic subset of the manifold $X$, }
\\

\textit{ with the the exterior conormals to $\gamma$ at a point $z_0\in \partial \gamma$ satisfying }
\\

\textit{ geometric hyperbolicity condition for $P$.}
\end{cases}
\end{equation}
With assumptions (\ref{Assumptions}) we get
$\mathsf{Sol}_X(\mathcal{A},\mathscr{A}_M)\simeq \mathbb{R}\mathsf{Sol}_{X,lin}(\mathcal{M}_P,\mathscr{A}_M),$
by adjunction (\ref{eqn: Free-forget AD-Mod/Alg Adjunction}) and by elliptic regularity
$\mathsf{Sol}_X\big(\mathcal{A},\mathscr{A}_M)\simeq \mathbb{R}\mathsf{Sol}_{X,lin}(\mathcal{M}_P,\mathscr{B}_M).$
Via geometric hyperbolicity, \ref{Propagation for hyperbolic directions}, gives that 
$\mathsf{Ext}_{\mathcal{R}}^j\big(\mathcal{M},\Gamma_{\gamma}(\mathscr{B}_M)\big)_{z_0}\simeq 0.$
That is, we have propagation in the hyperbolic directions e.g. the microsupport is contained in $\mathrm{Char}_M^{hyp}(\mathcal{M}_P).$ In other words, we have an isomorphism
$$R\mathcal{S}ol_{X}(\mathcal{M}_P,\mathscr{B}_M)\xrightarrow{\simeq} R\Gamma_{\gamma}(R\mathcal{S}ol_X(\mathcal{M}_P,\mathscr{B}_M)\big).$$
\end{example}

\subsection{Singular non-linear Cauchy problems}
Consider
the singular non-linear initial value problem in the complex analytic domain imposed on functions $u^{\alpha}(t,x),\alpha=1,\ldots,m$ of a trivial rank $m$ vector bundle (i.e. sections $u:X\rightarrow E$) given by
\begin{equation}
    \label{eqn: NLCauchyProblem}
\begin{cases}
    \frac{\partial^L}{\partial t^L}u^{\alpha}=\mathsf{F}\big(t,x,u^{\alpha},\partial_x^{\sigma}\partial_t^ju^{\alpha}\big),\hspace{2mm} |\sigma|+j\leq L,j<L
    \\
    \\
\frac{\partial^k}{\partial t^k}u^{\alpha}\big|_{t=0}=\Phi_k(x),\hspace{2mm} k=0,1,\ldots,m-1,
\end{cases}
\end{equation}
where  
for $x\in U\subset \mathbb{C}^n$ where $U$ is an open subset, and $t\in \mathbb{R}$, where $\mathsf{F}$ depends on the derivatives of $u$ with respect to $x,t$ of order $\leq L,$ and we take $\Phi_k(x)$ to be generically of the form
\begin{equation}
\label{eqn: NLCauchyFunctions}
\Phi_k(x)=\prod_{\ell=1,\beta_{\ell}\in\mathbb{Z}}^{n-1}C_{\beta_{\ell}}\cdot x_{\sigma_{\ell}}^{\beta_{\ell}},
\end{equation}
for complex coefficients $C_{\beta_{\ell}}\in\mathbb{C}$ and mult-indices $\sigma_{\ell}=(\sigma_1,\ldots,\sigma_{\ell})$.

\begin{remark}
If $\mathsf{F}$ is analytic in all its
arguments and if the $\Phi_k$
 are analytic, then the CK theorem
asserts the existence of a unique analytic solution in a neighborhood of any
initial point.
In the case of linear systems it is not necessary to assume analyticity in $t$\footnote{For instance, if $\mathsf{F}$ is continuous in $t$, analytic in other variables, there exists a unique solution $u(t,x)$ continuously differentiable in $t$ with values in
analytic functions of $x$ around our initial point.}.
\end{remark}

System (\ref{eqn: NLCauchyProblem}) generalizes what is already known in the setting of singular linear Cauchy problems
\begin{equation*}
    \begin{cases}
        P(u)=0,
        \\
        \frac{\partial^k u}{\partial t^k}\big|_{t=0}=\psi_k(t,x),\hspace{1mm}k=0,\ldots,m-1,
    \end{cases}
\end{equation*}
for a linear differential operator $P$ of order $\leq m.$ If $X=\mathbb{C}^n,$ and $\Sigma=\{t=0\}$ is the initial surface, let us set $Z=\{(t,x_1,\ldots,x_n)\in X|t=x_1=0\},$ and thus one considers those systems with initial data either of the form
$$\textbf{(a)} \begin{cases}
    u(t,x)|_{\Sigma}\equiv u(0,x)=\psi_k(x),
    \\
    \partial_t^k u(t,x)|_{t=0}=0,
\end{cases}
\hspace{5mm}\textbf{(b)}\begin{cases}
    u(t,x)|_{\Sigma}\equiv u(0,x)=0
    \\
    \partial_t^k u(t,x)|_{t=0}=\psi_k(x),\end{cases},$$
where $k=0,1,\ldots,m-1.$
In \textbf{(a)}, one may have singular data on the initial surface $\Sigma,$ say of the form $\psi_k(x)=\frac{1}{x_1}$. It may be geometrically modelled by a hyper-surface $Z=\{t=x_1=0\}$ in $\Sigma.$ In this situation one looks for solutions $u=u(t,x)$ whose singularities are projections onto $X$ of bicharacteristic curves of $\mathrm{Char}(P)$ issuing from the data $(u|_{\Sigma},\partial_tu|_{\Sigma})$ i.e. from
$$f_{\pi}^{-1}(T_Z^*\Sigma)\cap \mathrm{Char}(P).$$

In \textbf{(b)}, an example of singular data (\ref{eqn: NLCauchyFunctions}) are functions  
$$\psi_k(x)=\rho_k(x)\cdot \gamma_{a}(x),$$
where $\rho(x)$ is a holomorphic function on $\Sigma\cap \Omega$, where $\Omega$ is an open subset of $\mathbb{C}^n$ containing the origin and where $\gamma_a(x)$ are singular functions of the variables $x$, depending on complex parameters $a\in\mathbb{C}$ e.g. $\gamma_a(x)=x^{a}/\Gamma(a+1).$
For instance, $Z=\{x_2=x_3=0\}$ then taking $\psi_1(x)=1/x_2^3\cdot x_3^2$, the singular linear problem can be solved for several different operators e.g. $P=\partial_1^2+(x_2\partial_2)^2+(x_3\partial_3)^2$ or $Q=\partial_1^2+(x_2\partial_3)^2+x_3^2\partial_2.$

\begin{example}
\normalfont
Consider $\mathbb{C}^2\simeq \mathbb{C}_t\times\mathbb{C}_x,$ and the singular Cauchy problem,
$$\partial_t^2u-\partial_xu=0,\hspace{2mm} u(t,x)=x^{-1},\hspace{2mm} \partial_tu|_{t=0}=0,$$
Set $\Sigma=\{t=0\}$ and $Z=\{t=x=0\}$ so $\mathrm{Char}(P)=\{(x,\xi)|\xi_0^2=0\}.$ Putting $t=x_0$ and $x_1=x$ then $(t,x;\xi_0,x_1)\in T^*X$ and 
$f_{\pi}^{-1}(T_Z^*\Sigma)\cap \mathrm{Char}(P)=\{(x;\xi)|x_0=x_1=0, \xi_0^2=0,\xi_1=0\}.$
The initial data has an order $1$ pole, while solutions which take the form
$$u(t,x)=x^{-1}+x^{-1}\sum_{n=1}^{\infty}(-1)^n\frac{n!}{(2n)!}\big(\frac{t^2}{x}\big)^n,$$
have essential singularities along $Z=\{x=0\}.$
\end{example}

We reformulate this in the non-linear setting of $\D$-algebras.

\subsubsection{} 
The $\D$-algebraic non-linear \textsc{pde} on sections of the trivial rank $m$ vector bundle $E\rightarrow X$ which corresponds to the system (\ref{eqn: NLCauchyProblem}) 
is 
$0\to \mathcal{I}_{\mathsf{F}}\rightarrow \mathcal{A}=\mathcal{O}\big(\mathsf{Jet}^{\infty}(\mathcal{O}_E^{alg})\big)\rightarrow \mathcal{B}\to 0,$
where the ideal $\mathcal{I}$ is the $\D_X$-ideal generated freely by the relation $u_t^L-\mathsf{F}.$
In other words, 
$\mathcal{B}_{\mathsf{F}}:=\mathcal{A}/\mathcal{I}_{\mathsf{F}}=\mathrm{Jet}^{\infty}\big(\mathcal{O}_E^{\mathrm{alg}}\big)/<u_t^L-\mathsf{F}>.$
Proposition \ref{Jet Representability} gives a representable space of solutions 
$\mathsf{Sol}(\mathcal{B}_{\mathsf{F}})\simeq \mathsf{Spec}_{\D_X}(\mathcal{A}/\mathcal{I}_{\mathsf{F}}).$
We are interested in the $\mathcal{O}_X$-valued classical solutions 
\begin{equation*}
\mathbb{R}\underline{\mathrm{Sol}}_X(\mathcal{B}_{\mathsf{F}})(\mathcal{O}_X)\simeq \Map\big(\iota\mathsf{Spec}_{\D_X}(\mathcal{O}_X),\Spec_{\D_X}(\mathsf{B})\big).
\end{equation*}
The singular initial value problem (\ref{eqn: NLCauchyProblem}) then corresponds to studying pull-back solutions along the hypersurface embedding $\iota:\Sigma_0:=\{t=0\}\hookrightarrow X$ for which the $\D$-pullback operation is given by
$$
\iota_{\mathcal{D}}^*\big(\frac{\partial^{\alpha} u}{\partial x^{\alpha}}\big)=\frac{\partial^{\alpha}}{\partial x^{\alpha}}\big(f^*u\big),\hspace{5mm}\text{ and }\hspace{1mm}
\iota_{\mathcal{D}}^*\big(\frac{\partial^{k} u}{\partial t^{k}}\big)=\frac{\partial^k}{\partial t^k}\big(u|_{\Sigma_0}\big),$$
where moreover we have singularities along a sub-manifold 
$$Z=\{(t,x_1,\ldots,x_{n-1})\in X|x'=0\}\subset \Sigma_0,$$
where $x'$ is some finite subset $(x_1,\ldots,x_{p})$ of coordinates. 

\begin{proposition}
\label{eqn: Non-lin CKK prop}
The map
$\iota^{-1}\mathcal{H}\mathrm{om}_{\mathrm{CAlg}_{\mathcal{D}}}\big(\mathcal{A},\mathcal{O}_Y\big)\rightarrow \mathcal{H}\mathrm{om}_{\mathrm{CAlg}_{\mathcal{D}}}\big(\iota_{\mathcal{D}}^*\mathcal{A},\mathcal{O}_Y\big),
$ is an isomorphism.
\end{proposition}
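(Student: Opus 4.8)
\textbf{Proof proposal for Proposition \ref{eqn: Non-lin CKK prop}.}

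The plan is to reduce the non-linear statement to the linear Cauchy--Kovalevskaya--Kashiwara theorem (Theorem \ref{Thm: LinCKK}) by passing through the jet adjunction of Proposition \ref{Jet-adjunction} and exploiting the structure of the defining $\mathcal{D}_X$-ideal $\mathcal{I}_{\mathsf{F}}=\langle u_t^L-\mathsf{F}\rangle$. First I would unwind both sides. Using the quotient presentation $\mathcal{A}=\mathrm{Jet}^\infty(\mathcal{O}_E^{\mathrm{alg}})/\mathcal{I}_{\mathsf{F}}$, a $\mathcal{D}_X$-algebra map $\mathcal{A}\to\mathcal{O}_X$ is the same as a $\mathcal{D}_X$-algebra map $\mathrm{Jet}^\infty(\mathcal{O}_E^{\mathrm{alg}})\to\mathcal{O}_X$ killing $\mathcal{I}_{\mathsf{F}}$; by the jet adjunction (\ref{eqn: Jet-adjunction isomorphism}) such a map is uniquely determined by an $\mathcal{O}_X$-algebra map $\mathcal{O}_E\to\mathcal{O}_X$, i.e. a section $u\in\mathcal{O}_X^{\oplus m}$, subject to the equation $\partial_t^L u^\alpha=\mathsf{F}(t,x,u^\alpha,\partial_x^\sigma\partial_t^j u^\alpha)$. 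An identical unwinding applies to the right-hand side after pulling back along $\iota:\Sigma_0\hookrightarrow X$: because $\iota_{\mathcal{D}}^*$ is compatible with the algebra structure and the $\mathcal{D}_X$-action (as verified just above in the excerpt, with $f_{\mathcal{D}}^*\mathcal{A}=\mathrm{span}_{f^*\mathcal{O}_E^{\mathrm{alg}}}(\{u_{t,x}^{k,\alpha}\})$), we have $\iota_{\mathcal{D}}^*\mathcal{A}\simeq\mathrm{Jet}_{\Sigma_0}^\infty(\iota^*\mathcal{O}_E^{\mathrm{alg}})/\langle\iota_{\mathcal{D}}^*(u_t^L-\mathsf{F})\rangle$, so that $\mathcal{H}\mathrm{om}_{\mathrm{CAlg}_{\mathcal{D}}}(\iota_{\mathcal{D}}^*\mathcal{A},\mathcal{O}_{\Sigma_0})$ is the sheaf of solutions on $\Sigma_0$ of the restricted system.

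Second, I would identify the comparison map $\iota^{-1}\mathcal{H}\mathrm{om}_{\mathrm{CAlg}_{\mathcal{D}}}(\mathcal{A},\mathcal{O}_X)\to\mathcal{H}\mathrm{om}_{\mathrm{CAlg}_{\mathcal{D}}}(\iota_{\mathcal{D}}^*\mathcal{A},\mathcal{O}_{\Sigma_0})$ with the map of solution sheaves sending a solution $u$ on $X$ to its Cauchy data, i.e. its first $(L-1)$-traces $\gamma_{\Sigma_0}(u)=(u|_{\Sigma_0},\partial_t u|_{\Sigma_0},\dots,\partial_t^{L-1}u|_{\Sigma_0})$ together with the tangential jets — this is exactly the morphism $\eta$ constructed in the excerpt, restricted to the sub-sheaf cut out by the equation. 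Injectivity and surjectivity must be checked at the level of germs. Here is where the non-characteristic hypothesis enters: since $\Sigma_0=\{t=0\}$ and the principal part of the equation is $\partial_t^L$, the coefficient of $\partial_t^L$ is the nonvanishing constant $1$, so $\Sigma_0$ is non-characteristic in the sense recalled before Theorem \ref{Thm: LinCKK}. Then the classical (analytic, real or complex) Cauchy--Kovalevskaya theorem guarantees that prescribing the Cauchy data $(\Phi_0,\dots,\Phi_{L-1})$ on $\Sigma_0$ produces a unique holomorphic solution $u$ in a neighbourhood of any initial point; this gives both that $\gamma_{\Sigma_0}$ is surjective onto the germs of data and that a solution is determined by its data (injectivity). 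Bundling the tangential-jet data is automatic because those are determined by differentiating the Cauchy data along $\Sigma_0$.

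The cleanest way to organize this, and the step I expect to be the main obstacle, is the bookkeeping that matches the \emph{full} $\mathcal{D}$-module pullback $\iota_{\mathcal{D}}^*\mathcal{A}$ (which remembers all jets, not just the traces) with the data side of the CKK isomorphism. Concretely: $\iota_{\mathcal{D}}^*$ of the jet algebra is not the jet algebra of $\iota^*E$ on $\Sigma_0$ but rather the jet algebra together with the formal normal ($\partial_t$) directions, and one must check that modding out by $\iota_{\mathcal{D}}^*\langle u_t^L-\mathsf{F}\rangle$ eliminates precisely the $\partial_t$-jets of order $\geq L$, leaving exactly the Cauchy data $(\partial_t^k u|_{\Sigma_0})_{k<L}$ as free generators. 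This is a purely formal/algebraic claim — it is the $\mathcal{D}$-algebraic shadow of the fact that a CK-type operator lets you solve for the top normal derivative — and it should follow by the same inductive prolongation argument that underlies Definition \ref{definition: D-geometric formal integrability} and Proposition \ref{CK-type and NC}, i.e. the system is formally integrable with the normal jets of order $\geq L$ determined recursively by those of lower order. Once this identification is in place, the isomorphism is the straightforward composite: germs of solutions on $X$ $\xrightarrow{\sim}$ germs of Cauchy data on $\Sigma_0$ (by analytic CKK) $\xrightarrow{\sim}$ germs of solutions of the restricted $\mathcal{D}$-algebraic system $\iota_{\mathcal{D}}^*\mathcal{A}\to\mathcal{O}_{\Sigma_0}$, with naturality in the test algebra $\mathcal{O}_{\Sigma_0}$ (or more generally any $\mathcal{D}_{\Sigma_0}$-algebra $\mathcal{F}$) being immediate from the functoriality of $\iota_{\mathcal{D}}^*$ and the jet adjunction.
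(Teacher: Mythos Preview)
Your proposal is correct and matches the paper's approach: the paper does not write out a proof, but the text immediately preceding the proposition sets up precisely your ingredients --- the jet adjunction (Proposition~\ref{Jet-adjunction}), the trace map $\eta$, and the line ``Now we apply the CKK-theorem (Theorem~\ref{Thm: LinCKK})'' --- and then simply states the result. Your careful bookkeeping of what $\iota_{\mathcal{D}}^*\mathcal{A}$ retains (normal jets of order $<L$ as free generators after modding out by $\iota_{\mathcal{D}}^*\langle u_t^L-\mathsf{F}\rangle$) and your invocation of the classical \emph{non-linear} analytic Cauchy--Kovalevskaya theorem, rather than the linear $\mathcal{D}$-module version the paper cites, make explicit what the paper leaves implicit.
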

\begin{proof}
Note the pull-back is compatible with both
 the algebra structure and the $\D_X$-action, since
$\mathcal{A}$ is generated as an $\mathcal{O}_E^{\mathrm{alg}}$-algebra by coordinates $\{u_{t,x}^{k,\alpha}\}$ with
$$\mathcal{D}_X\bullet \mathcal{A}\rightarrow\mathcal{A},\hspace{5mm} \sum_{k,\alpha}\Box_{k,\alpha}\partial_t^k\partial_x^{\alpha},u_{t,x}^{\tilde{k},\tilde{\alpha}}\mapsto \Box_{k,\alpha}u_{t,x}^{k+\tilde{k},\alpha+\tilde{\alpha}},$$
and so $f_{\mathcal{D}}^*\mathcal{A}=\mathrm{span}_{f^*\mathcal{O}_E^{\mathrm{alg}}}\big(\{u_{t,x}^{k,\alpha}\big)$ with $\D$-action 
$\mathcal{D}\otimes f_{\mathcal{D}}^*\mathcal{A}\rightarrow f_{\mathcal{D}}^*\mathcal{A}$ given explicitly by $\Box_I\partial_x^I\otimes G(x,u_{t,x}^{k,\alpha})\mapsto G(x,u_{t,x}^{k,I+\alpha}).$
Consider $\mathrm{Sol}_{\mathcal{D}}(\mathcal{A}):=\mathcal{H}\mathrm{om}_{\mathrm{CAlg}_X(\D_X)}\big(\mathcal{A},\mathcal{O}_X\big)$, as in (\ref{eqn: Classical D-Solutions}). The quotient map $\mathrm{Jet}^{\infty}(\mathcal{O}_E)\rightarrow \mathcal{A},$ induces the inclusion 
$$\mathrm{Sol}_{\mathcal{D}}(\mathcal{A})\hookrightarrow \mathcal{H}om_{\mathcal{D}_X-CAlg}(Jet^{\infty}(\mathcal{O}_E),\mathcal{O}_X)\simeq \mathcal{H}\mathrm{om}_{\mathcal{O}_X-\mathrm{CAlg}}\big(\mathcal{O}_E,\mathcal{O}_X\big)\xrightarrow{\eta}\mathcal{O}_X^{\oplus m},$$ via Proposition \ref{Jet-adjunction}. The morphism $\eta$ is given by taking the first $(m-1)$-traces of a solution 
$\eta(u):=\big(u|_{\Sigma},\partial_1u|_{\sigma},\ldots,\partial_{1}^{m-1}u|_{\Sigma}\big).$ The result follows by applying Theorem \ref{Thm: LinCKK}.
\end{proof}
Given a morphism $f:X\rightarrow Y$, set $\mathrm{CAlg}_{f-\mathsf{NC}}(\mathcal{D}_Y)$ to be the sub-category of left $\mathcal{D}_Y$-algebras for which $f$ is non-characteristic. Recall the notion of being $\D$-CK \ref{eqn: D-CK Ideals}). 
\begin{proposition}
\label{D-CK-NC}
A $\mathcal{D}_Y$-algebraic non-linear \textsc{pde} $\IAB$ which is $\mathcal{D}_Y$-Cauchy-Kovalevskaya is an object of $\mathrm{CAlg}_{f-\mathsf{NC}}(\mathcal{D}_Y).$
\end{proposition}

    \subsection{Derived $\D$-Geometric Non-linear CKK}
In this subsection we prove the derived $\D$-geometric non-linear CKK theorem (see \cite{Paugam2022} and \cite{Nirenberg}). See also \cite{Ishii},\cite{Kobayashi} for classical treatments in the singular setting. Recall the notion of pair-wise transversality (\ref{sssec: Cellular Objects}).

\begin{theorem}
\label{theorem: DNLCKK}
Let $f:X\rightarrow Y$ be a morphism of complex analytic manifolds and $V$ a closed conic subset of $T^*Y$. Suppose that 
$\mathcal{I}_Y\rightarrow \mathcal{A}_Y\rightarrow \mathcal{B}_Y,$
is a $\D$-algebraic PDE over $Y$ with pair-wise transversal homotopically finitely presented solution space $\mathsf{Sol}_Y(\mathcal{B}_Y)$ for which $f$ is $1$-micro-non-characteristic along $V$ with $p\in V.$
Then the functors of pull-back commute with formation of derived $\D$ solution stacks. In particular, if $V=T_Y^*Y$ is the zero-section Lagrangian sub-manifold, then we have equivalences 
$f^{-1}\mathsf{Sol}_Y(\mathcal{B}_Y)\xrightarrow{\simeq}\mathsf{Sol}_X(f^*\mathcal{B}_Y).$ 
\end{theorem}

Since the Cauchy problem is a local question, one may decompose the morphism $f$ as a composition of a closed (graph) immersion and a smooth projection,
$X\overset{j}{\hookrightarrow} X\times Y\overset{p}\rightarrow Y,x\mapsto \big(x,f(x)\big)$ with $p(x,y)=y$ and discuss the proof of each case separately.

The proof of the result will rely on some additional facts that we establish first. 

\subsubsection{}Let $f_0:\mathcal{A}^{\bullet}\rightarrow \mathcal{B}^{\bullet}$ in $\mathsf{CAlg}_X(\D_X)^{\leq 0}$ and consider,
$$f_!(\mathbb{L}_{\mathcal{A}}):=\mathbb{L}_{\mathcal{A}}\otimes_{\mathcal{A}}\mathcal{B}\rightarrow \mathbb{L}_{\mathcal{B}}\rightarrow \mathbb{L}_{\mathcal{B}/\mathcal{A}}.$$
We have a relative cofiber sequence
\[
\begin{tikzcd}
    \mathbb{L}_{\mathcal{A}}\arrow[d]\arrow[r] & \mathbb{L}_{\mathcal{B}}\arrow[d]
    \\
    0\arrow[r] & \mathbb{L}_{\mathcal{B}/\mathcal{A}}
\end{tikzcd}
\]
if and only if there is a distinguished triangle 
$f_!\mathbb{L}_{\mathcal{A}}\rightarrow \mathbb{L}_{\mathcal{B}}\rightarrow \mathbb{L}_{\mathcal{B}/\mathcal{A}}\rightarrow f_!\mathbb{L}_{\mathcal{A}}[1].$

We are interested in $\mathbb{L}_{\mathcal{B}},$ and its $\mathcal{B}[\mathcal{D}_X]$-dual, under the finiteness assumptions that
 $\mathcal{B}$ is homotopically finitely $\D$-presented. 
In what follows we use the cotangent complex.
In this way, any push-out (\ref{eqn: Hopushout squares}) gives us that
$$(\alpha_{i+1})_!\mathbb{L}_{c_i}\simeq \mathbb{L}_{\mathcal{A}_{i+1}/\mathcal{A}_i},$$
which follows from
\begin{equation}
    \label{eqn: Pushout cotangent diagram}
    \begin{tikzcd}
      (\alpha_{i+1})_!c_{i!}\mathbb{L}_{\Free(\mathcal{M}_i)}\arrow[d]\arrow[r] & (\alpha_{i+1})_!\mathbb{L}_{\Free(\mathcal{M}_{i+1})}\arrow[d]
      \\
      \beta_{i!}\mathbb{L}_{\mathcal{A}_i}\arrow[d] \arrow[r] & \mathbb{L}_{\mathcal{A}_{i+1}}
      \arrow[d]
      \\
      0\arrow[r] & \mathbb{L}_{\mathcal{A}_{i+1}/\mathcal{A}_i}.
    \end{tikzcd}
\end{equation}

In fact, (\ref{eqn: Pushout cotangent diagram}) simplifies further to 
\[
\begin{tikzcd}
    \mathcal{M}_i\otimes^!\mathcal{A}_{i+1}\arrow[d]\arrow[r] & \mathcal{M}_{i+1}\otimes^!\mathcal{A}_{i+1}\arrow[d]
    \\
    \mathbb{L}_{\mathcal{A}_i}\otimes_{\mathcal{A}_i}\mathcal{A}_{i+1}\arrow[d] \arrow[r] & \mathbb{L}_{\mathcal{A}_{i+1}}\arrow[d]
    \\
    0\arrow[r] & \mathbb{L}_{\mathcal{A}_{i+1}/\mathcal{A}_i}.
\end{tikzcd}
\]
Suppose $\mathcal{B}_0$ is smooth as a $\D_X$-algebra (\ref{D smooth definition}). In particular $\Omega_{\mathcal{B}_0}^1$ is free and finitely presented as a $\mathcal{B}_0[\mathcal{D}_X]$-module and there is a finite type $\mathcal{O}_X$-module $\mathcal{E}_0$ together with an ideal $\mathcal{I}_0$ of $\mathrm{Sym}_{\mathcal{O}_X}(\mathcal{E}_0)$ such that there is a short-exact sequence
$$\mathcal{K}_0\rightarrow Jet^{\infty}\big(\mathrm{Sym}(\mathcal{E}_0)/\mathcal{I}_0\big)\otimes
_{\mathcal{O}_X}\mathcal{N}\rightarrow \mathcal{B}_0\otimes \mathcal{N},$$
where the kernel $\mathcal{K}_0$ is $\D_X$-finitely generated and $\mathcal{N}$ can be taken to be $\D_X$.

Suppose we may choose a free $\mathcal{B}_0[\mathcal{D}]$-module $\mathcal{M}_{-1}$ of finite rank i.e. it is a vector $\D_X$-bundle over $\mathsf{Spec}_{\D_X}(\mathcal{B}_0),$ together with a morphism 
$\beta_{-1}:\mathcal{M}_{-1}\rightarrow \mathcal{B}_0,$ such that $f:\Sigma\rightarrow X$ is NC for $\mathcal{M}_{-1}^{\bullet}.$ 
Then in particular \autoref{Thm: LinCKK} holds for $\mathcal{M}_{-1}$ i.e
$f^{-1}\mathbb{R}\mathsf{Sol}_{X,lin}(\mathcal{M}_{-1}^{\bullet})\simeq \mathbb{R}\mathsf{Sol}_{\Sigma,lin}(f^*\mathcal{M}_{-1}^{\bullet}).$

Note the adjunction (\ref{eqn: Free-forget AD-Mod/Alg Adjunction}) in this case given by
\[
\begin{tikzcd}
\mathrm{Sym}_{\mathcal{B}_0}^{*}(-):\mathsf{Mod}(\mathcal{B}_0[\mathcal{D}_X])\arrow[r,shift left=.5ex] & \mathsf{CAlg}_{\mathcal{B}_0/}(\D_X)\arrow[l,shift left=.5ex]:\mathsf{For}_{\mathcal{B}_0},
\end{tikzcd}
\]
i.e. $\mathbb{R}\mathcal{H}\mathrm{om}_{\mathcal{B}_0[\mathcal{D}_X]-Alg}\big(\mathrm{Sym}_{\mathcal{B}_0}^*(\mathcal{M}_{-1}^{\bullet}),\mathcal{O}_X\big)\simeq \mathbb{R}\mathcal{H}\mathrm{om}_{\mathcal{B}_0[\mathcal{D}_X]-Mod}\big(\mathcal{M}_{-1}^{\bullet},\mathsf{For}_{\mathcal{B}_0}\mathcal{O}_X\big).$
Applying the functor $f^{-1}$  we obtain that
$$
f^{-1}\mathbb{R}\mathcal{H}\mathrm{om}_{\mathcal{B}_0[\mathcal{D}_X]-Alg}\big(\mathrm{Sym}_{\mathcal{B}_0}^*(\mathcal{M}_{-1}^{\bullet}),\mathcal{O}_X\big),$$
is equivalently 
$$f^{-1}\mathbb{R}\mathcal{H}\mathrm{om}_{\mathcal{B}_0[\mathcal{D}_X]}\big(\mathcal{M}_{-1}^{\bullet},\mathsf{For}_{\mathcal{B}_0}\mathcal{O}_X\big)
\simeq \mathbb{R}\mathcal{H}\mathrm{om}_{Lf^*\mathcal{B}_0[\D_{\Sigma}]}\big(f^*\mathcal{M}_{-1}^{\bullet},\mathsf{For}_{\mathcal{B}_0}\mathcal{O}_\Sigma\big).$$
These considerations provide an equivalence via \autoref{Thm: LinCKK}
$$\gamma_{-1}:f^{-1}\mathsf{Sol}_{X}\big(\mathrm{Sym}^*(\mathcal{M}_{-1}^{\bullet})\big)\simeq \mathbb{R}\mathsf{Sol}_{\Sigma,lin}(f^*\mathcal{M}_{-1}^{\bullet}).$$

Now we use this choice of coherent module to get the first cell in the retract. To this end, define $\mathcal{B}_1^{\bullet}\in\mathsf{CAlg}_{X}(\D_X)$ which is free as a graded $\mathcal{B}_0$-algebra generated in degree $-1$ by $\mathcal{M}_{-1}.$ The differential $d_{\mathcal{B}_1}$ is defined by the projection $\beta_{-1}$. Notice that this implies that $H_{\D}^0(\mathcal{B}_1^{\bullet})\simeq \mathcal{B}_0/\mathcal{I}_{-1},$ where $\mathcal{I}_{-1}$ is the $\D_X$-ideal of $\mathcal{B}_0$ generated by the image of $\beta_{-1}.$
In other words, $\mathcal{B}_1^{\bullet}$ is the homtopy-pushout in $\mathsf{CAlg}_X(\D_X),$
\[
\begin{tikzcd}
  \mathrm{Sym}_{\mathcal{B}_0}^*(\mathcal{M}_{-1}^{\bullet}[-1])\arrow[d]\arrow[r] & \mathcal{B}_0\arrow[d]
  \\
  \mathcal{B}_0\arrow[r,"g_0"] & \mathcal{B}_1^{\bullet}
\end{tikzcd}
\]
to which, one applies the functors $R\mathrm{Sol}_{\D_X}(-)$ and $R\mathrm{Sol}_{\D_{\Sigma}}(f^*),$ giving the respective commutative diagrams
\begin{equation}
    \label{eqn: CKK Diagrams 1}
\begin{tikzcd}
R\mathrm{Sol}_{\D_X}\big(\mathrm{Sym}_{\mathcal{B}_0}^*(\mathcal{M}_{-1}^{\bullet}[-1])\big)& R\mathrm{Sol}_{\D_X}(\mathcal{B}_0)\arrow[l]
  \\
  R\mathrm{Sol}_{\D_X}(\mathcal{B}_0)\arrow[u] & R\mathrm{Sol}_{\D_X}(\mathcal{B}_1^{\bullet}),\arrow[l]\arrow[u]
\end{tikzcd}
\hspace{2mm}
\begin{tikzcd}
R\mathrm{Sol}_{\D_X}\big(f^*\mathrm{Sym}_{\mathcal{B}_0}^*(\mathcal{M}_{-1}^{\bullet}[-1])\big)& R\mathrm{Sol}_{\D_X}(f^*\mathcal{B}_0)\arrow[l]
  \\
  R\mathrm{Sol}_{\D_X}(f^*\mathcal{B}_0)\arrow[u] & R\mathrm{Sol}_{\D_X}(f^*\mathcal{B}_1^{\bullet}).\arrow[l]\arrow[u]
\end{tikzcd}
\end{equation}
By applying $f^{-1}(-)$ to the left-most diagram in (\ref{eqn: CKK Diagrams 1}), there is an induced isomorphism between the two.
In particular, using $\gamma_{-1}$ we therefore reduce the statement to asking for an isomorphism 
$f^{-1}R\mathrm{Sol}_{\D_X}(\mathcal{B}_1^{\bullet})\simeq R\mathrm{Sol}_{\D_{\Sigma}}(f^*\mathcal{B}_1^{\bullet}),$ but this is clear.
Note that we have used the fact that
$$R\mathrm{Sol}_{\D_{\Sigma}}\big(f^*\mathrm{Sym}_{\mathcal{B}_0}^*(\mathcal{M}_{-1}^{\bullet}[-1])\big)\simeq R\mathrm{Sol}_{\D_{\Sigma}}(\mathrm{Sym}(f^*\mathcal{M}_{-1}^{\bullet})\big).$$

We now choose a free $\mathcal{B}_1^{\bullet}[\mathcal{D}_X]$-module $\mathcal{M}_{-2}^{\bullet}$ which is moreover $\D_X$-coherent a map as above $\beta_{-2}:\mathcal{M}_{-2}^{\bullet}\rightarrow \mathcal{B}_1^{\bullet}$ in $\mathsf{Mod}\big(\mathcal{B}_1^{\bullet}[\mathcal{D}_X]\big).$ One defines an object $\mathcal{B}_2^{\bullet}$ by the homotopy pushout square
\[
\begin{tikzcd}
    \mathrm{Sym}_{\mathcal{B}_1}^*(\mathcal{M}_{-2}^{\bullet})\arrow[d]\arrow[r] & \mathcal{B}_1^{\bullet}\arrow[d]
    \\
    \mathcal{B}_1^{\bullet}\arrow[r] & \mathcal{B}_2^{\bullet}.
\end{tikzcd}
\]
One will verify, using the extension of the above arguments that $f$ is NC for $\mathcal{M}_{-2}^{\bullet}.$
This implies the result for $\mathcal{B}_2^{\bullet},$ and we may continue inductively, noting the degree $-k$ stage defines $\mathcal{B}_k^{\bullet}$ via
\[
\begin{tikzcd}
    \mathrm{Sym}_{\mathcal{B}_{k-1}}^*(\mathcal{M}_{-k}^{\bullet}[k-1])\arrow[d]\arrow[r] & \mathcal{B}_{k-1}^{\bullet}\arrow[d]
    \\
    \mathcal{B}_{k-1}^{\bullet}\arrow[r] & \mathcal{B}_k^{\bullet}.
\end{tikzcd}
\]
Consequently, our so-obtained sequence
$\mathcal{O}_X\rightarrow \mathcal{B}_1^{\bullet}\rightarrow\ldots\rightarrow\mathcal{B}_k^{\bullet}\rightarrow\ldots\rightarrow \mathcal{B}^{\bullet},$
is constructed such that each stage satisfies relevant CKK-isomorphisms. 

We may then apply Proposition \ref{HofpDAlgTangentComplex}, and Proposition  \ref{Tangent Good Filtration}. Namely, returning to the statement of \autoref{theorem: DNLCKK}, proceed by assuming our morphism $f:X\rightarrow Y$ is a composition of a smooth submersion and a closed embedding, treating each case separately.
\begin{proposition}
\label{CKK: Smooth}
Consider \autoref{theorem: DNLCKK}. Suppose that $f:X\rightarrow Y$ is a smooth morphism of complex analytic manifolds. Then there is a $\pi_0$-equivalence of sheaves of spaces
$$\mathrm{Ckk}_{\mathcal{D}}(f;\mathcal{B}):\mathsf{Sol}_{X}(\mathcal{B})\xrightarrow{\simeq}\mathsf{Sol}_{\Sigma}(f^*\mathcal{B}).$$
\end{proposition}
\begin{proof}
We recall the proof given in \cite{Paugam2022}.
Assume that $f:\Sigma=X\times Z\rightarrow X$ is the natural morphism from a product manifold, where it is known (\cite{kashiwara1970algebraic}) $\D$-module pullback $f^*$ is exact given by the derived $\D$-module external tensor $\mathcal{O}_Z\boxtimes^L(-).$ 

In the case of a homotopically finitely $\D$-presented algebra $\mathcal{A}$, given by retracts of cellular objects by \autoref{Compact D algebras}, proving the result amounts to proving it for objects $\mathrm{Sym}^*(\mathcal{M}^{\bullet})$ for a compact $\D$-module $\mathcal{M}^{\bullet}.$
The CKK isomorphism on a ho-fp algebra then reduces to showing that it is an isomorphism for such free $\D$-algebras
on a given coherent $\D$-module. The result reduces to the linear case of a coherent
$\D$-module, that reduces to the free case by using a locally free resolution. 
Via Kashiwara's argument, each coherent $\mathcal{M}_i$ has a locally free resolution and in particular, we can assume
$$0\rightarrow \mathcal{N}_i\rightarrow \mathcal{F}_i\rightarrow \mathcal{M}_i\rightarrow 0,$$
where $\mathcal{F}_i$ is a direct sum of quotients $\mathcal{D}_X/\mathcal{D}_X\cdot P$ where $P$ is an operator of CK-type c.f \autoref{CK-type and NC}.
Thus, we want to show that we have isomorphisms
$H^k\big(\mathrm{Ckk}_{i}^{lin}(\mathcal{M}_i)\big):\mathcal{E}xt_{\D_X}^k(\mathcal{M}_i,\mathcal{O}_X)\rightarrow \mathcal{E}xt_{\mathcal{D}_Y}^k(\mathcal{M}_{i,X\rightarrow Y},\mathcal{O}),$ but reduce to considering $\mathcal{H}^k\big(\mathrm{Ckk}_i^{lin}(\mathcal{F})\big)$ are isomorphisms, for each $k.$
The result follows from an application of the relative de Rham theorem \cite{kashiwara1970algebraic}.
\end{proof}
Before considering the case of a closed-embedding, note the following.
\begin{proposition}
    Let $A_X$ be $\D_X$-afp and suppose $f:Y\to X$ is smooth. Then $\mathrm{Ckk}_{\D}(f;A_X)$ is an equivalence of locally finitely presented affine derived $\mathbb{C}$-schemes. If $A_X\simeq \mathrm{Sym}(M)$ for a perfect $\D$-module, then $\mathrm{Ckk}_{\D}(f;;A_X)$ is an equivalence if $f$ is non-characteristic for $M$, in the usual $\D$-module sense.
\end{proposition}
\begin{proof}
Since $A$ is $\D_X$-afp, by Proposition \ref{prop: PreserveDafp}, so is $A_Y.$ Then, by Proposition \ref{prop: D-afp means RSolfinite} both solution spaces are finitely presented. We look at the case of a single $A_X\simeq \mathrm{Sym}^{\otimes^!}(M),$ for a compact $\D_X$-module $M.$
Since we know that 
$$\mathbb{R}\mathrm{Sol}_{\D_X}(A_X)\simeq \mathsf{Spec}_X\big(\mathrm{Sym}(R\Gamma(X_{\DR},M^{\vee})\big),$$ 
which since $M$ is compact, it is perfect thus dualizable with perfect dual $M^{\vee},$ and since $X$ is proper, $R\Gamma(X_{\DR},M^{\vee})$ is perfect complex of $k$-vector spaces, we have that 
$$f^{-1}R\underline{\mathrm{Sol}}_{\D_X}(A_X)\simeq f^{-1}\mathsf{Spec}_{X}\big(\mathrm{Sym}(R\Gamma(M^{\vee})\big)\to \mathsf{Spec}_{Y}(f^{-1}\mathrm{Sym}(R\Gamma(M^{\vee}))\big)\simeq \mathsf{Spec}_{Y}\big(\mathrm{Sym}(f^*R\Gamma(M^{\vee}))\big), $$
is an equivalence if and only if 
$$f^*R\Gamma(X_{\DR},M^{\vee})\simeq R\Gamma(Y_{\DR},f_{D}^*M^{\vee}).$$
This is the usual CKK-isomorphism, which follows from the relative de Rham theorem \cite{KashiwaraOshima1977}. 
With more details, since $Z$ is $\D_X$-afp, we may write $Z\simeq\underset{n}{\mathrm{lim}}Z_n,$ with $Z_n\simeq \mathsf{Spec}_{X_{\DR}}(\tau^{\geq -n}\mathcal{O}_Z).$ Thus,
$$\Map{/X_{\DR}}(X_{\DR},\EQ)\simeq \underset{n}{\mathrm{lim}}\Map{/X_{\DR}}(X_{\DR},\mathsf{Spec}_{X_{\DR}}(\tau^{\geq -n}\mathcal{O}_Z)\big)\simeq \underset{n}{\mathrm{lim}}\mathbb{R}\mathrm{Sol}_X(Z_n).$$
Since $f^{-1}$ is continuous and commutes with limits, 

$$f^{-1}\mathbb{R}\mathrm{Sol}_X(\EQ)\simeq f^{-1}\underset{n}{\mathrm{lim}}\mathbb{R}\mathrm{Sol}_X(Z_n)\simeq \underset{n}{\mathrm{lim}}f^{-1}\mathbb{R}\mathrm{Sol}_X(Z_n).$$
By hypothesis $\tau^{\geq -n}\mathcal{O}_Z$ is compact and $f^*\tau^{\geq -n}\mathcal{O}_Z$ is compact, thus $f^*Z$ is $\D_Y$-afp and 
$f^{-1}\mathbb{R}\mathrm{Sol}_X(Z_n)\simeq \mathbb{R}\mathrm{Sol}_Y(f^*Z_n),$ so that
$$f^*\underset{n}{\mathrm{lim}} \tau^{\geq -n}\mathcal{O}_Z\simeq \underset{n}{\mathrm{lim}}f^*(\tau^{\geq -n}\mathcal{O}_Z),$$
so that 
$$\underset{n}{\mathrm{lim}}f^{-1}\mathbb{R}\mathrm{Sol}_X(Z_n)\simeq \underset{n}{\mathrm{lim}}\mathbb{R}\mathrm{Sol}_Y(f^*Z_n)\simeq \mathbb{R}\mathrm{Sol}_Y(f^*Z),$$
thus the comparison map $f^{-1}\mathbb{R}\mathrm{Sol}_X(\EQ)\to \mathbb{R}\mathrm{Sol}_Y(f^*Z)$ is an equivalence when $f$ is smooth and $Z$ is $\D_X$-afp.
\end{proof}
\begin{proposition}
\label{CKK: ClosedEmbedding}
Consider Theorem \ref{theorem: DNLCKK} and Proposition \ref{CKK: Smooth}. Assume that $f:X\rightarrow Y$ is a closed embedding of complex analytic manifolds. Then the induced morphism $\mathrm{Ckk}_{\mathcal{D}}(f;\mathcal{B})$ is an equivalence of sheaves of spaces.
\end{proposition}
Proposition \ref{CKK: ClosedEmbedding} follows from Proposition \ref{Compact D algebras} and Proposition \ref{Retract proposition} together with the
description of cellular $\D$-algebras as push-outs (\ref{eqn: Hopushout squares}), defined in terms of cofibrations, and the fact a cofibration is a retract of a (transfinite) composition of pushouts of generating cofibrations $\mathrm{Sym}(S^{n-1})\rightarrow \mathrm{Sym}({D^n})$ with $S^{n-1}$ and $D^n$ the sphere and disk complexes with $\D_X$ in degrees $n-1,n$ and $n$, respectively. In particular, they are acyclic complexes which have cohomology given by $\D_X$ itself. In this case, Theorem \ref{Thm: LinCKK} obviously holds for the case of closed embeddings.
Moreover, one works inductively on $d:=codim_Y(X),$ the codimension of $X$ in $Y.$ But through the usual reductions, the chain of sub-manifolds
$$X=X_0\subset X_1\subset\cdots X_{d-1}\subset X_d=Y,$$
reduce the statement to the codimension $d=1$ situation.

We will describe only the argument at the level of an $i$-cell, as part of the general inductive procedure (recall also (\ref{eqn: CotComplex})).

\begin{proposition}
\label{Char Properties}
    Consider an $i$-cell (\ref{eqn: Hopushout squares}). Denote by $\mathsf{S}_i:=\Free(\mathcal{M}_i^{\bullet})$ for each $i,$ and $_{\mathsf{S}_i}\mu(-)$ the functor (\ref{eqn: Microlocalization with Coefficients}) of twisted microlocalization for $\mathsf{S}_i[\mathcal{D}_X]$-modules.
    \begin{enumerate}
\item $\mathrm{Char}_{\mathcal{D}}(\mathsf{S}_i)\subseteq supp\big(\hspace{1mm}_{\mathsf{S}_{i+1}}\mu(\mathsf{S}_{i+1}\otimes_{\mathcal{O}_X}\mathcal{M}_i)\big)\cup supp\big(\hspace{1mm}_{\mathsf{S}_{i+1}}\mu \mathbb{L}_{c_i}\big) ;$

\item If $\mathcal{A}= lim \mathcal{A}_i$ then $\mathrm{Char}_{\mathcal{D}}(\mathcal{A})\simeq \underset{i\in I}{\mathrm{lim}}\hspace{1mm} supp\big(\hspace{1mm} _{\mathcal{A}}\mu(\mathbb{L}_{\mathcal{A}_i}\otimes_{\mathcal{A}i}^L\mathcal{A})\big);$

    \end{enumerate}
\end{proposition}

\begin{proof}
    We will only prove (1), noting that (2) follows from a more general result, which we do not prove; given $\mathcal{A}=lim_i \mathcal{A}_i\rightarrow \mathcal{B}=lim_i\mathcal{B}_i,$ then 
$$supp\big(\hspace{1mm}_{\mathcal{B}}\mu(\mathbb{L}_{\mathcal{B}/\mathcal{A}})\big)\simeq lim_i\hspace{1mm} supp\big(\hspace{1mm}_{\mathcal{B}_i}\mu(\mathbb{L}_{\mathcal{B}_i/\mathcal{A}_i}\otimes_{\mathcal{B}_i}^{L}\mathcal{B})\big).$$

By considering the fibre sequence of the cofibration $c_i$ one has that 
$\mathbb{L}_{\mathsf{S}_i}\otimes_{\mathsf{S}_i}\mathsf{S}_{i+1}\rightarrow \mathbb{L}_{\mathsf{S}_{i+1}}\rightarrow \mathbb{L}_{c_i}.$
This is equivalent to
$$\mathcal{M}_i^{\bullet}\otimes^!\mathsf{S}_{i+1}\rightarrow \mathsf{S}_{i+1}\otimes^!\mathcal{M}_{i+1}^{\bullet}\rightarrow \mathbb{L}_{\mathsf{S}_i/\mathsf{S}_{i+1}},$$
and therefore also gives rise to a sequence of $\mathsf{S}_{i+1}[\mathcal{E}_X]$-modules,
$$\scalemath{.90}{\pi^*(\mathcal{M}_i\otimes^!\mathsf{S}_{i+1})\otimes\pi^*\mathsf{S}_{i+1}[\mathcal{E}_X]\rightarrow \pi^*(\mathsf{S}_{i+1}\otimes^!\mathcal{M}_{i+1})\otimes \pi^*\mathsf{S}_{i+1}[\mathcal{E}_X]\rightarrow \pi^*\mathbb{L}_{c_i}\otimes\pi^*\mathsf{S}_{i+1}[\mathcal{E}_X]},$$
where $\pi:T^*X\rightarrow X.$ Taking supports gives the result.
\end{proof}
From (2) in \autoref{Char Properties}, we have the following.

\begin{corollary}
\label{Quasi-finite corollary}
    If $\mathcal{B}$ is quasi-finite $\mathrm{Char}_{\mathcal{D}}(\mathcal{B})=lim_i\hspace{1mm} \mathrm{supp}(_{\mathcal{B}_i}\mu(f_!^i\mathbb{L}_{\mathcal{B}_i})).$
\end{corollary}

\begin{corollary}
\label{CharPropCor1}
    Consider $\mathcal{B}$ as in Corollary \ref{Quasi-finite corollary} and assume $\mathcal{B}_i$ is of the form $\Free(\mathcal{M}_i^{\bullet})$ for a perfect $\D_X$-module. Then,
    $\mathsf{Sol}_X(\mathcal{B})$ is equivalently $\underset{i\in I}{\mathrm{lim}}\hspace{1mm}\mathbb{R}\mathsf{Sol}_{X,lin}(\mathcal{M}_i^{\bullet},\mathcal{O}_X),$ and $\mathrm{SS}\big(\underset{i\in I}{\mathrm{lim}}\mathsf{Sol}_{X,lin}(\mathcal{M}_i)\big)$ is contained in $\bigcup_{j} \underset{i\in I}{\mathrm{lim}} supp\big(\mathcal{H}^j(\mathcal{M}_i^{\bullet})\big).$
\end{corollary}
\begin{proof}
As
$\mathcal{B}=hocolim \Free(\mathcal{M}_i^{\bullet}),$
where each $\mathcal{M}_i^{\bullet}$ is coherent note $\mathcal{B}$ is homotopically finitely $\D$-presented. By \autoref{Ho-D-fp Proposition}
 $\mathsf{Sol}_X(\mathcal{B},\mathcal{O}_X)$ is determined by 
 $$\mathbb{R}\mathsf{Sol}_{X,lin}\big(\underset{i\in I}{\mathrm{colim}}\hspace{1mm} \Free(\mathcal{M}_i),\mathcal{O}_X\big)\simeq \underset{i\in I}{\mathrm{lim}} \mathbb{R}\mathsf{Sol}_{X,lin}(\mathcal{M}_i^{\bullet},\mathcal{O}_X).$$
Setting $f^i:\mathcal{B}_i\rightarrow \mathcal{B}$ the result follows since the functors $f_!^i=(-)\otimes_{\mathcal{B}_i}\mathcal{B},$ are exact.
\end{proof}

Assuming that $f$ is NC for the $(i-1)$-st cell, implies the result for the objects $\mathsf{S}_i$ and $\mathcal{A}_i.$ If $f:X\rightarrow Y$ is NC for $\mathsf{S}_{i+1}$ i.e.
$$f_{\pi}^{-1}\big(\mathrm{Char}_{\mathcal{D}}(\mathsf{S}_{i+1})\big)\cap \mathbb{R}\mathsf{Sol}(\mathcal{A})\times T_X^*Y\subset \mathbb{R}\mathsf{Sol}(\mathcal{A})\times X\times T_Y^*Y,$$
by \autoref{Char Properties}, (1) we get that $f$ is NC for $\mathcal{A}_{i+1},$ since it is NC for $\mathcal{A}_{i}$ by induction, and since it is NC for $\mathsf{S}_{i+1}$ it is NC for $\mathbb{L}_{c_i}.$

This follows by noticing for a given $i$-cell (\ref{eqn: Hopushout squares}), we have
$$\mathbb{L}_{\Free(\mathcal{M}_{i+1})/\Free(\mathcal{M}_i)}\otimes_{\Free(\mathcal{M}_{i+1})}\mathcal{A}_{i+1}\rightarrow \mathbb{L}_{\mathcal{A}_{i+1}/\mathcal{A}_i},$$
and similarly,
$\mathbb{L}_{\mathcal{A}_i/\mathcal{A}_{i+1}}\otimes\mathcal{A}_{i+1}\rightarrow \mathbb{L}_{\mathcal{A}_{i+1}}\rightarrow \mathbb{L}_{\mathcal{A}_{i+1}/\mathcal{A}_i}.$
Therefore, we get that
\begin{eqnarray*}
\mathbb{L}_{\mathcal{A}_{i+1}}&\simeq& Cone\big(\mathbb{L}_{\mathcal{A}_{i+1}/\mathcal{A}_i}\rightarrow \mathbb{L}_{\mathcal{A}_i}[1]\big)[-1]
\\
&\simeq& Cone\big(\mathbb{L}_{\Free(\mathcal{M}_{i+1})/\Free(\mathcal{M}_i)}\otimes_{\Free(\mathcal{M}_{i+1})} \mathcal{A}_{i+1}\rightarrow \mathbb{L}_{\mathcal{A}_i}[1]\big)[-1].
\end{eqnarray*}
Then $f$ is NC for $\mathcal{A}_{i+1}$ if we have
$$f_{\pi}^{-1}\big(\mathrm{supp}(_{\mathcal{A}_{i+1}}\mu\mathbb{L}_{\mathcal{A}_{i+1}} )\big)\cap \mathbb{R}\mathsf{Sol}(\mathcal{A}_{i+1})\times T_X^*Y\subset \mathbb{R}\mathsf{Sol}(\mathcal{A}_{i+1})\times X\times T_Y^*Y.$$
This will follow if this condition holds for supports of $\mathbb{L}_{\mathcal{A}_i}$ as well as for the relative cotangent complex of the cofibration $c_i$ between free $\D$-algebras.

Now it follows since $\mathrm{Char}(\mathcal{M}_i\otimes_{\mathcal{O}_X}\mathcal{M}_{i+1})=\mathrm{Char}(\mathcal{M}_i)+_Y\mathrm{Char}(\mathcal{M}_{i+1}),$ if $\mathcal{M}_i$ and $\mathcal{M}_{i+1}$ are a NC pair (c.f § \ref{ssec: Cauchy Problems}, equation (\ref{eqn: NC-pair})) that  $$\mathrm{Char}(\mathcal{M}_i\boxtimes\mathcal{M}_{i+1})=\mathrm{Char}(\mathcal{M}_i)\times \mathrm{Char}(\mathcal{M}_{i+1}),$$ and moreover, that 
$$\mathrm{Char}(\Delta^*(\mathcal{M}_i\boxtimes\mathcal{M}_{i+1})\big)=\Delta_d\Delta_{\pi}^{-1}\mathrm{Char}(\mathcal{M}_i\boxtimes\mathcal{M}_{i+1})=\mathrm{Char}(\mathcal{M}_i)\times_Y \mathrm{Char}(\mathcal{M}_{i+1}),$$ with $\Delta$ the diagonal. Moreover, if $\mathcal{M}_{i+1}$ and $\mathcal{M}_i$ are a NC pair so are $\Free(\mathcal{M}_{i+1})$ and $\mathcal{M}_i$. 

By noticing that (c.f. equation (\ref{eqn: Pushout cotangent diagram}))
$$ \mathbb{L}_{\mathsf{S}_i/\mathsf{S}_{i+1}}\simeq Cone(\mathcal{M}_i\otimes\mathsf{S}_{i+1}\rightarrow \mathsf{S}_{i+1}\otimes\mathcal{M}_{i+1}),$$
from \autoref{Retract proposition} we have induced morphisms of Cartesian diagrams:
\begin{equation}
\label{eqn: ckk cube}
\adjustbox{scale=.75}{
\begin{tikzcd}[row sep=scriptsize, column sep=scriptsize]
& f^{-1}\mathbb{R}\mathsf{Sol}_{Y}(\mathcal{A}_{i+1},\mathcal{O}_Y)\arrow[dl, "ckk(\mathcal{A}_{i+1})"] \arrow[rr,] \arrow[dd,] & & f^{-1}\mathbb{R}\mathsf{Sol}_{Y}(\mathcal{A}_{i},\mathcal{O}_Y) \arrow[dl, "\mathrm{Ckk}_f(\mathcal{A}_{i})"] \arrow[dd,] \\
\mathbb{R}\mathsf{Sol}_{X}(f_{\mathcal{D}}^*\mathcal{A}_{i+1},\mathcal{O}_X) \arrow[rr,] \arrow[dd,] & & \mathbb{R}\mathsf{Sol}_X(f_{\mathcal{D}}^*\mathcal{A}_i,\mathcal{O}_X)\\
& f^{-1}\mathbb{R}\mathsf{Sol}_{Y}(\mathsf{S}_{i+1},\mathcal{O}_Y)\arrow[dl,"\mathrm{Ckk}_f(\mathsf{S}_{i+1})"] \arrow[rr,] & & f^{-1}\mathbb{R}\mathsf{Sol}_{Y}(\mathsf{S}_{i},\mathcal{O}_Y)\arrow[dl,"\mathrm{Ckk}_f(\mathsf{S}_i)"] \\
\mathbb{R}\mathsf{Sol}_X(f_{\mathcal{D}}^*\mathsf{S}_{i+1},\mathcal{O}_X) \arrow[rr,] & & \mathbb{R}\mathsf{Sol}_X(f_{\mathcal{D}}^*\mathsf{S}_i,\mathcal{O}_X) \arrow[from=uu, crossing over]\\
\end{tikzcd}}
\end{equation}

 We have argued the NC-condition for free algebras which determines the NC-condition on each $\mathcal{A}_i$ if we have NC on the underlying $\D$-modules. By the inductive step, we have NC condition holds for $\mathcal{A}_i$. This is enough to establish $ckk(\mathcal{A}_{i+1})$ is an equivalence. Indeed, $\mathrm{Ckk}_f(\mathcal{A}_i)$ and obvious $\mathrm{Ckk}_f(\mathsf{S}_{i+1})$ and $\mathrm{Ckk}_f(\mathsf{S}_i)$ are equivalences.

Note also given $f_0:\mathcal{A}\rightarrow \mathcal{B}\simeq \mathrm{Sym}_{\mathcal{A}}(\mathcal{M}^{\bullet})$ for a $\mathcal{A}[\mathcal{D}_X]$-module $\mathcal{M}^{\bullet},$
$f_0$ is the tautological inclusion 
$\mathcal{A}\hookrightarrow \mathcal{B}=\mathcal{A}\oplus \mathcal{M}\oplus \mathrm{Sym}_{\mathcal{A}}^2(\mathcal{M})\oplus\ldots.$
It is clear that (c.f. equation \ref{eqn: Relative Cotangent k-1})
$$\mathbb{L}_{\mathrm{Sym}_{\mathcal{A}}(\mathcal{M}^{\bullet})/\mathcal{A}}\simeq \mathrm{Sym}_{\mathcal{A}}^*(\mathcal{M})\otimes_{\mathcal{A}}\mathcal{M},\hspace{2mm}\text{and }\hspace{1mm} \mathbb{L}_{\mathcal{A}/\mathrm{Sym}_{\mathcal{A}}(\mathcal{M})}\simeq\mathcal{M}^{\bullet}[1].$$

Even more simply, consider $\mathcal{A}$ to be the initial algebra $\mathcal{O}_X.$
Then if $\mathcal{B}$ is the free $\D$-algebra $\mathrm{Sym}_{\mathcal{O}_X}^*(\mathcal{M}^{\bullet})$ with $\mathcal{M}^{\bullet}\in \mathsf{Coh}(\D_X),$ the relative cotangent complexes are described simply by
$\mathbb{T}_{\mathcal{B}}=\mathbb{T}_{\mathrm{Sym}(\mathcal{M})}\simeq \mathcal{A}\otimes_{\mathcal{O}_X}\mathcal{M}^{\bullet}.$

\subsubsection{Locally for $\mathsf{S}_i$}
We elaborate the local resolution for $\mathsf{S}_i.$ Namely, (c.f. \autoref{Free Propagation Example}) we have
$\mathbb{R}\mathsf{Sol}\big(\mathcal{M}_i,\mathcal{O})\simeq \big[\mathcal{O}_{Y|X}[0]\xrightarrow{P}\mathcal{O}_{Y|X}\rightarrow 0\big],$ as a complex for which 
$$f^*(\mathcal{M}_i)\simeq \mathcal{D}_X^{\oplus m},\hspace{2mm} \mathrm{Sym}(f^*\mathcal{M}_i)\simeq \mathrm{Sym}(\mathcal{D}_X^{\oplus m}).$$ For each coherent $\mathcal{M}_i$ denote its pull-back $\D_X$-module
$\mathcal{M}_{i,X\rightarrow Y}.$

For simplicity, and without loss of generality, we may take $\mathcal{F}\simeq \mathcal{D}/\mathcal{D}\cdot P.$ Thus 
$0\rightarrow \mathcal{D}_X\rightarrow \mathcal{D}_X\xrightarrow{P}\mathcal{F}\rightarrow 0,$ is a resolution of length $1.$
This implies that
$$\begin{cases}
\mathcal{E}xt_{\mathcal{D}}^k(\mathcal{F},\mathcal{O}_X)=0,k\geq 2,
\\
\mathcal{E}xt_{\D_X}^1(\mathcal{F},\mathcal{O}_X)\simeq \mathcal{O}_X/Im(P),
\\
\mathcal{E}xt_{\D_X}^0(\mathcal{F},\mathcal{O}_X)\simeq ker(P).
\end{cases}$$

This is just \autoref{Thm: LinCKK}, that further gives rise to a commutative diagram with exact rows (since $\mathcal{E}xt^1(\mathcal{F},\mathcal{O})$ and $\mathcal{E}xt^1(\mathcal{F}_{X\rightarrow Y},\mathcal{O})$ vanish), given by 
\[
\adjustbox{scale=.75}{
\begin{tikzcd}
\mathcal{E}xt_{\D_X}^0(\mathcal{M}_i,\mathcal{O}_X)\arrow[d,"\alpha"] 
\arrow[r] &\mathcal{E}xt_{\D_X}^0(\mathcal{F}_i,\mathcal{O}_X)\arrow[d,"\beta"]\arrow[r] & \mathcal{E}xt_{\D_X}^0(\mathcal{M}_i,\mathcal{O}_X)\arrow[d,"\rho"]\arrow[r] & \mathcal{E}xt_{\D_X}^1(\mathcal{M}_i,\mathcal{O}_X)\arrow[d,"\gamma"]
\\
\mathcal{E}xt_{\mathcal{D}_Y}^0(Li^*\mathcal{M}_i,\mathcal{O}_Y)\arrow[r] &\mathcal{E}xt_{\mathcal{D}_Y}^0(Li^*\mathcal{F}_i,\mathcal{O}_Y)\arrow[r] & \mathcal{E}xt_{\mathcal{D}_Y}^0(Li^*\mathcal{N}_i,\mathcal{O}_Y)\arrow[r]& \mathcal{E}xt_{\mathcal{D}_Y}^1(Li^*\mathcal{M}_i,\mathcal{O}_Y)
\end{tikzcd}}
\]
We can extend the top row by the spaces $\mathcal{H}om_{\mathcal{D}_X-Alg}(\mathrm{Sym}(\mathcal{M}_i),\mathcal{O}_X)$ and similarly for the other columns. Such extensions are isomorphisms due to adjunction \ref{eqn: Free-forget AD-Mod/Alg Adjunction}, where we use \autoref{eqn: Non-lin CKK prop}.
One proceeds by standard arguments that $\beta$ is an isomorphism $\alpha,\rho$ are both injective and $\alpha$ is an isomorphism implies so is $\rho$ and $\gamma.$ Namely, for $k>1,$ there are isomorphisms
$\mathcal{E}xt_{\mathcal{D}}^k(\mathcal{M})\simeq \mathcal{E}xt_{\mathcal{D}}^{k-1}(\mathcal{N}),$ and further induction on $k$ leads to the conclusion that 
$\mathcal{H}^k
\big(\mathrm{Ckk}_i^{lin}(\mathcal{M})\big)$ are isomorphisms for all $k.$ This supplies the inductive step for a homotopically finitely presented $\D_X$-algebra at the levels of $i$-cells on symmetric algebras. One then proceeds to show it holds for $\mathcal{A}_i$ via 
\autoref{CharPropCor1} and \ref{Char Properties}.

Note that we might have used functors (§ \ref{sssec: Solutions with Coefficients}), directly interpreted via the cotangent and tangent $\D$-complexes in $\mathcal{B}$-modules.

 In fact if we have a semi-free $\mathcal{B}^{\bullet}$-module in $\D_X$-complexes and choose a given presentation
$$\mathcal{L}^{\bullet}\simeq \underset{\alpha}{\scalemath{1.05}{\mathrm{colim}}} \mathsf{Free}_{\mathcal{B}}(\mathcal{P}_{\alpha}^{\bullet}),$$
we observe that
\begin{equation*}
\mathcal{H}om_{\mathcal{B}[\mathcal{D}_X]}(\mathcal{L}^{\bullet},-)\simeq \mathcal{H}om_{\mathcal{B}[\mathcal{D}_X]}(\underset{\alpha}{\scalemath{1.05}{\mathrm{colim}}} \mathsf{Free}_{\mathcal{B}}(\mathcal{P}_{\alpha}^{\bullet}),-)
\simeq \underset{\alpha}{\mathrm{lim}} \mathcal{H}om_{\mathcal{B}[\mathcal{D}_X]}(\mathsf{Free}_{\mathcal{B}}(\mathcal{P}_{\alpha}^{\bullet}),-)
\simeq
\underset{\alpha}{\mathrm{lim}} \big(\mathcal{P}_{\alpha}^{\bullet}\otimes -\big),
\end{equation*}
by commutation of colimits into limits and that the relative tensor product on free modules is the forgetful functor. In this case, the remaining result follows directly from the fact that $\mathcal{P}_{\alpha}^{\bullet}$ are coherent as $\D_X$-modules, an applying the standard arguments. 

On the other hand, by assumption of being almost finitely $\D$-presented, we have that $\EQ$ is representable i.e. of the form $\mathsf{Spec}_{\D}(\mathcal{A})$ with $\mathcal{A}\in \mathsf{CAlg}_X(\D_X)^{afp}.$ Thus, for all $n$, we have $\tau_{\mathcal{D}}^{\geq -n}(\mathcal{A})\in\mathsf{CAlg}(\D_X)^{n-afp}.$ Thus, as a compact object, there exists a simplicial $\D$-algebra
$$\mathcal{B}_{\bullet}:\Delta^{op}\rightarrow \mathsf{CAlg}(\D_X),$$
where $\mathcal{B}_n\simeq \mathsf{Free}(\mathcal{M}_n),$ with $\mathcal{M}_n$ compact as a $\D$-module and with 
$\tau_{\mathcal{D}}^{\geq -n}(\mathcal{A})$ equivalent to a retract of $|\mathcal{B}_{\bullet}|.$
Since $X$ is smooth, $\mathsf{IndCoh}(X_{\DR})$ is compactly generated by $q_{\DR,*}^{\mathsf{IndCoh}}(E_n)$ for $E_n\in\mathsf{Coh}(X)$ as $\mathsf{IndCoh}(X)$ is compactly generated by $E_n$. In fact, $\mathsf{IndCoh}(X)\simeq \mathsf{Coh}(X)$ in this case.
Then, if $f:\Sigma\rightarrow X$ is
as above, then
looking at $f^{-1}\mathsf{Spec}(\mathcal{A})$, we see that
\begin{equation*}
\mathsf{Map}(Lf^*\mathcal{A},\mathcal{O})\simeq \mathrm{hocolim}_{i,n}\mathsf{Map}(Lf^*\tau_{\mathcal{D}}^{\geq -n}\mathcal{A},\tau_{\mathcal{D}}^{\geq -n}\mathcal{O})
\simeq \mathrm{hocolim}_{i,n}\mathsf{Map}(Lf^*\mathsf{Free}(M_n),\mathcal{O}),
\end{equation*}
which is readily seen to be equivalent to
$$\mathrm{hocolim}_{i,n}\mathsf{Maps}\big(\mathsf{Free}(f^*M_n),\mathcal{O})\simeq \mathsf{Spec}(f^*\mathcal{A}),$$
with $f^*\mathcal{A}$ the compact $\D$-algebra pullback.
\subsection{Extension to de Rham Spaces}
For smooth morphisms, we can address compaitiblity of mapping spaces.
\begin{proposition}
\label{prop: DAfp smooth CKK}
    Let $f:Y\to X$ be smooth and suppose $A$ is $\D_X$-afp. Then for each $T$-point $t:T\to Y_{\DR}$, with corresponding point $\widetilde{t}:T\to Y_{\DR}\to X_{\DR},$ via the map $f_{\DR}:Y_{\DR}\to X_{\DR},$ then 
    $$\mathrm{Maps}_{\mathsf{QCAlg}(Y_{\DR})}(A_Y,t_*\mathcal{O}_T)\simeq \mathrm{Maps}_{\mathsf{QCAlg}(X_{\DR})}(A,\widetilde{t}_*\mathcal{O}_T),$$
    is an equivalence.
\end{proposition}
In other words, pulling back $A$ to $Y$, then taking a $(T\to Y_{\DR})$-family of solutions, is equivalent to taking $(T\to X_{\DR})$-family of solutions and restricting to $Y.$
\begin{proof}
Since $f$ is smooth, by Proposition 
\ref{prop: PreserveDafp} $A_Y$ is $\D_Y$-afp. Thus by Proposition \ref{prop: D-afp means RSolfinite}, $\mathrm{RSol}_{\D}(A_Y)$ is a derived stack locally of finite presentation over $k.$

Moreover via the $(f_{\DR}^*,f_{DR,*})$ adjuntion, since $f_{DR,*}\circ t_*\simeq \widetilde{t}_*,$, we have
$\mathrm{Maps}_Y(A_Y,t_*\mathcal{O}_T)\simeq \mathrm{Maps}_X(A,f_{DR,*}t_*\mathcal{O}_T).$
Noting that smoothness implies $f_{\DR}^*$ is exact and Tor-independent, then for any connective $\M\in \mathsf{QCoh}(X_{\DR}),$
we have $f_{\DR}^*\M\simeq \mathcal{O}_{Y_{\DR}}\otimes_{f_{\DR}^{-1}\mathcal{O}_{X_{\DR}}}f_{\DR}^{-1}\M,$  is a flat-base change.
Applying this for $A_Y=f_{\DR}^*A$, via the adjunction we get 
$$\mathrm{Maps}_{/Y_{\DR}}(T,\mathsf{Spec}_{Y_{\DR}}(A_Y))\simeq \Map_{/X_{\DR}}(T,\mathsf{Spec}_{X_{\DR}}(A)).$$
The functorialities are compativle with compact presentations. Namely $A=\underset{n}{\mathrm{hocolim}} A_n,A_n\simeq \mathrm{Sym}(M_n).$ Then let $p_b:\mathsf{Spec}_{X_{\DR}}(A_n)\to \mathsf{Spec}_{X_{\DR}}(A)$ be the induced map. We obtain
$$L_{\underset{n}{\mathrm{hocolim}} A_n}^{alg}\simeq \underset{n}{\mathrm{hocolim}}\big(L_{A_n}^{alg}\otimes_{A_n}\underset{k}{\mathrm{hocolim}}A_k),$$ with $\mathrm{supp}(L_{\underset{n}{\mathrm{hocolim}}A_n}^{alg})$ given by $\cup_n\mathrm{supp}(L_{A_n}^{alg}).$
Now, since $\mathrm{supp}(L_A^{alg})\subset \mathrm{supp}(L_{\underset{n}{\mathrm{hocolim}}A_n}^{alg}),$ we see $\mathrm{Char}_{\D}(A)\subset \bigcup_n \mathrm{Char}_{D}(A_n).$ Since at each stage $M_n$ is perfect, $S_n:=\mathrm{supp}(M_n)\subset T^*X$ and we obtain that $\mathrm{Char}_{D}(\A)\subset \bigcup_n S_n.$
We note
$$L_{A_n/A}^{alg}\simeq Cofib\big(A_n\otimes_A L_A^{alg}\to A_n\otimes M_n),$$ as $L_{A_n}^{alg}\simeq A_n\otimes M_n.$ Then, if $\underset{k}{\mathrm{hocolim}} M_k\xrightarrow{\pi_n}M_n$ are the induced maps,
$L_{A_n/A}^{alg}\simeq A_n\otimes \mathrm{cofib}(\underset{k}{\mathrm{hocolim}} M_k\to M_n).$
Since filtered colimits in a stable presentable $\infty$-category commute with homotop cofibers, 
$$\mathrm{cofib}(\underset{k}{\mathrm{hocolim}} M_k\to M_n)\simeq \underset{k}{\mathrm{hocolim}} \mathrm{cofib}(M_k\to M_n).$$
Since each $\M_k\in \mathrm{Perf}(\D_X)$ is dualizable, with $\M_k^{\vee}=Hom_{\D_X}(M_k,\D_X),$ by dualizing
$$\underset{k}{\mathrm{hocolim}} M_k\xrightarrow{\pi_n}M_n\to \mathrm{cofib},$$
becomes $Fib(M_n^{\vee}\xrightarrow{\pi_n^{\vee}}holim_kM_k^{\vee}).$ If $M_k\simeq \mathrm{ind}(E_k)$ for vector bundles $E_k$ on $X$, the situation is as follows.
Since $ind$ is a left-adjoint, it preserves colimits and
$$\mathrm{ind}(\underset{k}{\mathrm{hocolim}} E_k)\simeq \underset{k}{\mathrm{hocolim}} \hspace{.3mm}\mathrm{ind}(E_k)\simeq \underset{k}{\mathrm{hocolim}} \M_k.$$
Then,
\begin{eqnarray*}
    \mathrm{cofib}(\underset{k}{\mathrm{hocolim}} \hspace{.3mm}\mathrm{ind}(E_k)\to \mathrm{ind}(E_n)) &\simeq& \underset{k}{\mathrm{hocolim}} \mathrm{cofib}(\mathrm{ind}(E_k)\to \mathrm{ind}(E_n))
    \\
    &\simeq& \underset{k}{\mathrm{hocolim}} ind\big(\mathrm{cofib}(E_k\to E_n)\big)
    \\
    &\simeq& ind \underset{k}{\mathrm{hocolim}} \mathrm{cofib}(E_k\to E_n).
\end{eqnarray*}
Letting $E_1\subset E_2\subset\cdots \subset E_n$ be an increasing filtration by subbundles, $\mathrm{cofib}(E_k\to E_n)\simeq E_n/E_k,$ and in fact
$$\mathrm{cofib}(\underset{k}{\mathrm{hocolim}} \hspace{.3mm}\mathrm{ind}(E_k)\to \mathrm{ind}(E_n))\simeq \mathrm{ind}(\underset{k}{\mathrm{hocolim}}(E_n/E_k)).$$
\end{proof}
Phrased in terms of pull-back operations, denoting $\Map_{/X_{\DR}}(\mathsf{Spec}_{X_{\DR}}A,-)\in \mathrm{PShf}\big(\mathrm{dAff}_{X_{\DR}}\big),$ Proposition \ref{prop: DAfp smooth CKK} states the presheaf of spaces sending $(u:U\to X_{\DR})$ to $\mathrm{Maps}_{\mathsf{QCAlg}(X_{\DR})}(A,u_*\mathcal{O}_U),$ then 
$$f^{-1}\Map_{/X_{\DR}}(\mathsf{Spec}_{X_{\DR}}A,-)\to \mathrm{Maps}_{/Y_{\DR}}\big(\mathsf{Spec}_{Y_{\DR}}(f_{\DR}^{\mathsf{QCAlg},*}A),-),$$
is an equivalence of functors $(\mathrm{dAff}_{/Y_{\DR}})^{op}\to \mathrm{Spc}.$

Suppose for the moment that $S$ is a derived $\mathbb{C}$-scheme.
The right $t$-structure on $\mathsf{IndCoh}(S_{\DR})$ is Zariski-local. In other words if $p:U\to S$ is Zariski cover, an object is (co)connective if and only if its restriction to $U$ is (co)connective \cite[Cor. 4.2.3]{IndCoh}. 
We remark that $i_{\DR,*}^{\mathsf{IndCoh}}$ is a left-adjoint of the functor $i_{\DR}^!$ that is $t$-exact. Furthermore, since we consider closed-embeddings $i:X\to Y$ where $Y$ is always smooth, the corresponding pull-back $i^!:\IC(Y)\to \IC(X)$ is of bounded cohomological amplitude. 
\begin{remark}
Let $i:X\to Y$ be a closed embedding with $Y$ smooth underiooved scheme. If $T$ is the formal completion of $Y$ along $X$, then $j_{\DR}:X_{\DR}\to T_{\DR}$ is an isomorphism.
Pullback for the sheaf theories $\mathsf{QCoh}$ and $\mathsf{IndCoh}$ along $j_{\DR}$ give equivalences of categories \cite[Sect.2.4.1]{GR14}.
\end{remark}

Let $\PS_{/Y_{\DR},f}^{\D-\text{fin}}$ denote the category of $\D$-finitary relative prestacks of non-characteristic presentation for $f.$ 
\begin{theorem}
\label{theorem: DAnCKK}
Let $f:X\rightarrow Y$ be a morphism of complex manifolds and consider the restriction of the induced morphism $f_{\DR}^*:\PS_{/Y_{\DR}}\rightarrow \PS_{/X_{\DR}}$ to $\PS_{/Y_{\DR},f}^{\D-\text{fin}}$ (denoted the same). Then the diagram
\[
\begin{tikzcd}[column sep=6em]
\PS_{/Y_{\DR},f}^{\D-\text{fin}} \arrow[d,"f_{\DR}^{\star}"] \arrow[r,"\RSol_Y(-)"] & \PS_{/Y_{\DR}}    \arrow[d,"f_{\DR}^{\star}"]
\\
\PS_{/X_{\DR}}\arrow[r,"\RS(-)"] & \PS_{/X_{\DR}}.
\end{tikzcd}
\]
is commutative.
\end{theorem}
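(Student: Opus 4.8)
\textbf{Proof plan for Theorem \ref{theorem: DAnCKK}.}
The plan is to reduce the statement to the affine, $\mathcal{D}$-algebraic non-linear CKK theorem (Theorem \ref{theorem: DNLCKK}) already proven in Sect. \ref{sssec: Affine Case}, using the descent presentation of objects in $\PS_{/Y_{dR},f}^{\mathcal{D}\text{-}fin}.$ First I would unwind the statement: commutativity of the square asserts that for every $\EQ\in \PS_{/Y_{dR},f}^{\mathcal{D}\text{-}fin}$ there is a natural equivalence $f_{dR}^{\star}\big(\RS_Y(\EQ)\big)\simeq \RS_X\big(f_{dR}^{\star}\EQ\big)$ in $\PS_{/X_{dR}},$ compatible with morphisms. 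Since by hypothesis of non-characteristic presentation $\EQ\simeq \mathrm{hocolim}_i\, Y_i$ with $Y_i\simeq \mathsf{RSol}_Y(\mathcal{B}_i)$ representable $\mathcal{D}_Y$-spaces (each corresponding, under Proposition \ref{D-PreStk equiv X dR PreStk}, to a derived affine $\mathcal{D}_Y$-scheme $R\mathsf{Spec}_{\mathcal{D}_Y}(\mathcal{A}_i)$ via $\mathcal{A}_i = (p_{dR}^Y)_*^{\mathsf{QCoh}}\mathcal{B}_i$), and since both $f_{dR}^{\star}$ (a base change, hence a left adjoint commuting with colimits — see the proof of Proposition \ref{FormalDisk prop}) and $\RS_Y(-),\RS_X(-)$ commute with the relevant homotopy colimits (the former by definition, and $\RS$ by the $dR$-finite presentation hypothesis, equation (\ref{eqn: Ho dR-fp}), which holds here since each $\EQ$ is $\mathcal{D}$-finitary and Theorem \ref{Theorem: RSol is laft-def} keeps us inside $\PS_{/X_{dR}}^{laft\text{-}def}$), it suffices to establish the equivalence level-wise on each $Y_i$. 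Thus I am reduced to the representable case.

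In the representable case the statement is exactly the derived $\mathcal{D}$-geometric non-linear CKK theorem: for $\mathcal{B}_i$ corresponding to a $\mathcal{D}$-algebraic non-linear \PDE $\mathcal{I}_i\rightarrow \mathcal{A}_i\rightarrow \mathcal{B}_i$ over $Y$ for which $f$ is non-characteristic along the zero section $V=T_Y^*Y$ (which is precisely what ``non-characteristic presentation for $f$'' unpacks to, via Definition \ref{NC for de Rhams} and the identification $Ch_{dR}(Y_i)\simeq supp(\mu\mathbb{T}^{\ell}_{\mathcal{B}_i})$ that compares (\ref{Singular Support of Derived NLPDE}) with $Char_{\mathcal{D}_Y}(\mathcal{B}_i)$), Theorem \ref{theorem: DNLCKK} gives the equivalence $f^{-1}\mathsf{Sol}_Y(\mathcal{B}_i)\xrightarrow{\simeq}\mathsf{Sol}_X(f^*\mathcal{B}_i).$ I would then check that under the dictionary of Proposition \ref{D-PreStk equiv X dR PreStk} the operation $f^{-1}$ on $\mathcal{D}_Y$-spaces matches $f_{dR}^{\star}$ on relative $Y_{dR}$-prestacks, and that $f^*\mathcal{B}_i$ corresponds to $f_{dR}^{\star}$ applied to the representable $X_{dR}$-prestack $Y_i$ (this is the statement that $\mathcal{D}$-module/$\mathcal{D}$-algebra pullback is intertwined with de Rham pullback, which is where $\mathbb{L}_{Y_{dR}}=0$ and $\mathbb{L}_{\EuScript{E}}\simeq\mathbb{L}_{\EuScript{E}/Y_{dR}}$ are used). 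Decomposing $f$ as a closed embedding followed by a smooth projection (as in the reduction preceding Proposition \ref{CKK: Smooth}) and invoking Propositions \ref{CKK: Smooth} and \ref{CKK: ClosedEmbedding} handles each factor.

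The main obstacle I expect is \textbf{naturality in the colimit presentation}: different objects $\EQ$ admit different presentations $\mathrm{hocolim}_i Y_i$, and I must check the level-wise equivalences glue to a single natural transformation of functors $\PS_{/Y_{dR},f}^{\mathcal{D}\text{-}fin}\rightarrow \PS_{/X_{dR}}$, rather than a mere equivalence object-by-object. The clean way around this is to build the comparison map \emph{before} taking colimits: the map $\iota\circ\psi_U^{\EuScript{F}}$ and the universal factorization through $\mathfrak{j}_{\infty}(s)$ in Proposition \ref{proposition: Relation between solutions} supply a canonical morphism $f_{dR}^{\star}\RS_Y(\EQ)\rightarrow \RS_X(f_{dR}^{\star}\EQ)$ functorially in $\EQ$ (no choices), coming from adjunction $(f_{dR}^{\star}\dashv (f_{dR})_*)$ and the comonadic structure of $\J$ (Proposition \ref{ComonadicJet}); one then checks it is an equivalence, and for \emph{this} last step the colimit presentation and Theorem \ref{theorem: DNLCKK} are used only to verify that a canonically-given map is invertible, which is a property stable under any presentation. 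A secondary technical point is confirming that the non-characteristic condition (\ref{eqn: Derived dR NC Condition}) on $\EQ$ really does descend to the non-characteristic condition on each $\mathcal{B}_i$ — this follows from Proposition \ref{Char Properties}(2) and Corollary \ref{Quasi-finite corollary}, which express $Ch_{\mathcal{D}}$ of a colimit as a limit of the $Ch_{\mathcal{D}}$ of its terms, together with the properness statements relating $f_d,f_\pi$ over conic subsets recorded after Definition \ref{Definition: NC}.
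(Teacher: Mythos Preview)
Your proposal is correct and follows essentially the same approach as the paper: write $\EQ\simeq \mathrm{hocolim}_i\,\mathsf{RSol}_Y(\mathcal{B}_i)$, use that $f_{dR}^{\star}$ and $\RS$ commute with the relevant homotopy colimits (via equation (\ref{eqn: Ho dR-fp})), and reduce level-wise to Theorem \ref{theorem: DNLCKK}. Your treatment is in fact more careful than the paper's on two points the paper leaves implicit---the construction of a canonical comparison map before passing to colimits (so that naturality is automatic) and the descent of the non-characteristic condition to each $\mathcal{B}_i$ via Proposition \ref{Char Properties}(2) and Corollary \ref{Quasi-finite corollary}.
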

\begin{proof}
The assumption that $\EQ$ is of non-characteristic presentation for $f$ means that in particular, $\EQ$ is locally of finite presentation 
$$\EQ\simeq \underset{i\in I}{\mathrm{colim}}\hspace{2mm}\EQ_i=\underset{i\in I}{\mathrm{colim}}\hspace{2mm} \mathsf{RSol}_Y(\mathcal{B}_i^{\bullet}),$$
where each $\mathsf{RSol}_Y(\mathcal{B}_i^{\bullet})$ is finitely $\D$-presented. By homotopically finitely $dR$-presentation (\ref{eqn: Ho dR-fp}), $\Map_{/X_{\DR}}(-,\underset{i\in I}{\mathrm{colim}})\simeq \underset{i\in I}{\mathrm{colim}} \Map_{X_{\DR}}(-,-),$ so get from \autoref{theorem: DNLCKK}, 
\begin{eqnarray*}
    \Map_{X_{\DR}}(X_{\DR},f_{\DR}^!\EQ)&\simeq& \Map_{X_{\DR}}(X_{\DR},f_{\DR}^!\underset{i\in I}{\mathrm{colim}} \EQ_i)
    \\
    &\simeq& \Map_{X_{\DR}}(X_{\DR},\underset{i\in I}{\mathrm{colim}} f^{-1}\mathsf{RSol}_Y(\mathcal{B}_i^{\bullet})\big)\\
    &\simeq& \Map_{X_{\DR}}(X_{\DR},hocolim \mathsf{RSol}_X(Lf^*\mathcal{B}_i^{\bullet})\big),
\end{eqnarray*}
while on the other hand 
\begin{eqnarray*}
    \Map_{Y_{\DR}}(Y_{\DR},\EQ)&=&\Map_{Y_{\DR}}(Y_{\DR},\underset{i\in I}{\mathrm{colim}} \EQ_i)
\\
&\simeq& \underset{i\in I}{\mathrm{colim}}\Map_{Y_{\DR}}(Y_{\DR},\EQ_i),
\\
&\simeq& \underset{i\in I}{\mathrm{colim}}\Map_{Y_{\DR}}(Y_{\DR},\mathbb{R}\mathsf{Sol}_Y(\mathcal{B}_i)
\end{eqnarray*}
Since $f$ is in particular non-characteristic 
(\autoref{NC for de Rhams}) and whose pull-back commutes with colimits we see that
\begin{eqnarray*}
    f^{-1}\Map_{Y_{\DR}}(Y_{\DR},\EQ)&\simeq& f^{-1} \underset{i\in I}{\mathrm{colim}} \Map_{Y_{\DR}}(Y_{\DR},\mathsf{RSol}_Y(\mathcal{B}_i\big)
    \\
    &\simeq& \underset{i\in I}{\mathrm{colim}}\Map_{X_{\DR}}(f_{\DR}^{-1}Y_{\DR},f^{-1}\mathsf{RSol}_Y(\mathcal{B}_i)\big)
    \\
&\simeq& \underset{i\in I}{\mathrm{colim}}\Map_{X_{\DR}}(X_{\DR},f^{-1}\mathsf{RSol}_Y(\mathcal{B}_i)\big).
\end{eqnarray*}
These expressions are equivalent by \autoref{theorem: DNLCKK}, property (2) of above.

Let $i:X\to Y$ be a closed-immersion of codimension $1.$ We proceed by induction on $c:=\mathrm{codim}_X(Y).$
For the $c=1$ case, let $X\hookrightarrow Y$ be a smooth hypersurface with ideal sheaf $\mathscr{I}_X$ locally generated by a single function $t.$ This case is proven by Proposition \ref{Codim1Prop}.  
Namely,
$$\RS(i_{\DR}^* A)(T) = \operatorname{Map}_{\mathsf{CAlg}(X_{\DR})}\big( i_{\DR}^* A,\, (X_{\DR}\times T)_*\mathcal{O}_{X_{\DR}\times T} \big),$$
which by adjunction is given by  $\operatorname{Map}_{\mathsf{CAlg}(Y_{\DR})}\big( A,\, (i_{\DR})_* ((X_{\DR}\times T)_*\mathcal{O}_{X_{\DR}\times T}) \big).$
We factor $f:X\hookrightarrow Y'\xrightarrow{p} Y$ into a closed-embedding and a smooth morphisms. Via the description of characteristics by smooth morphisms given by Proposition \ref{CharSmoothPB}, the smooth case is handled by Proposition \ref{prop: DAfp smooth CKK}.

Note that 
$$(i_{\DR})_* ((X_{\DR}\times T)_*\mathcal{O}_{X_{\DR}\times T}) \simeq (Y_{\DR}\times T)_*\mathcal{O}_{Y_{\DR}\times T} \otimes_{\mathcal{O}_{Y_{\DR}}}^{\mathbf{L}} \mathcal{O}_{X_{\DR}}.$$
Assume it holds for codimension $\leq c$ and set, locally on $Y$ 
$X:=\{y_1=\cdots=y_c=0\}$ and put $Y_k:=\{y_1=\ldots =y_k=0\}.$ We write $X=Y_c\hookrightarrow Y_{c-1}\hookrightarrow \cdots \hookrightarrow Y_1\hookrightarrow Y,$ with each $\iota_k:Y_k\hookrightarrow Y_{k-1}$ a hypersurface. 
Using that $Z$ is $\D$-afp and bounded, we reduce locally to the free case on a compact $\D$-module. In this case, we apply the $\D$-module theorem, upon repetead application of Proposition \ref{Codim1Prop} to get
$$\mathrm{Char}(Z_{Y_k}/Y_{k,\DR})=(\iota_k)_d\big((\iota_k)_{\pi}^{-1}\mathrm{Char}(Z_{Y_{k-1}}/Y_{k-1,\DR})\big),$$
for each $k=1,\ldots,c.$
\end{proof}

\begin{remark}
\normalfont
In \autoref{theorem: DAnCKK}, it is likely that in order to have ckk-equivalences for any two $\D$-finitary derived de Rham spaces $\EQ_1,\EQ_2\in \PS_{/Y_{\DR}}$ i.e.
$f^{-1}\RSol_Y(\EQ_1,\EQ_2)\simeq \RS(f_{\DR}^*\EQ_1,f_{\DR}^*\EQ_2),$
one requires additional conditions on the characteristic varieties and the zero section $T_Y^*Y$ e.g.
$$\big(\mathsf{Ch}_{Y_{\DR}}
(\EQ_1)\cap \mathsf{Ch}_{Y_{\DR}}(\EQ_2)\big)\cap \EQ_1\times \EQ_2\subset \EQ_1\times \EQ_2\times T_Y^*Y.$$
\end{remark}
We conclude by discussing the linearization of Theorem \ref{theorem: DAnCKK}. 

Assume \begin{equation}
    \label{eqn: ckk}
\RS(\EQ)\xrightarrow{ckk}\mathbb{R}\underline{\mathrm{Sol}}_{\Sigma}(\EQ_{\Sigma}),
\end{equation}
is an equivalence, where we set $\EQ_{\Sigma}:=f_{\DR}^*\EQ,$ for simplicity, where $f:\Sigma\rightarrow X.$
By functoriality of the tangent functor as a left-Kan extension to prestacks, we obtain a morphism 
\begin{equation}
\label{eqn: Tckk}
T(\mathrm{ckk}_{\Sigma}):T\RS(\EQ)\rightarrow T\RSol_{\Sigma}(\EQ_{\Sigma}).
\end{equation}
\begin{proposition}
\label{prop: Global non-lin implies global lin}
With the same homotopical finiteness hypothesis as above, assume the well-posedness of the generalized non-linear Cauchy problem. Then, for a given solution i.e. point $u:\star\rightarrow \RS(\EQ),$ the linearized (around $u$) Cauchy problem is well-posed.
\end{proposition}
\begin{proof}
Consider the morphism (\ref{eqn: Tckk}), and the pull-back diagram in $\PS_{/\Sigma_{\DR}}:$
\[
\begin{tikzcd}
T\RSol_{\Sigma}(\EQ_{\Sigma})^{\mathrm{ckk}}\arrow[d]\arrow[r] & T\RSol_{\Sigma}(\EQ_{\Sigma})\arrow[d]
\\
\RS(\EQ)\arrow[r,"\mathrm{ckk}"] & \RSol_{\Sigma}(\EQ_{\Sigma}).
\end{tikzcd}
\]
By the universal properties of $T$ as a left Kan extension: we have a diagram 
\begin{equation}
    \label{eqn: Tckk diagram}
\begin{tikzcd}
T\RS(\EQ) \arrow
[drr, "(\ref{eqn: Tckk})", bend left
,] \arrow
[ddr, bend right
, ] \arrow[dr, dotted
, "w_{\Sigma}" description] & & \\
& 
T\RSol_{\Sigma}(\EQ_{\Sigma})^{\mathrm{ckk}} \arrow[r]\arrow[d] & T\RSol_{\Sigma}(\EQ_{\Sigma})\arrow[d] \\
& \RS(\EQ)\arrow[r, "(\ref{eqn: ckk})"]  & \RSol_{\Sigma}(\EQ_{\Sigma})
\end{tikzcd}
\end{equation}
Morphism $w_{\Sigma}$ is defined by writing it via the description of Construction \ref{cons: Solutions}:
$$w_{\Sigma}:T\Sect_{X/S}(X,E)\times_{T\Sect(X,F)}^h T\Sect_{X/S}(X,E)$$
$$\rightarrow \RS(F_P=0)\times_{\mathbb{R}\underline{\mathrm{Sol}}_{\Sigma}(F_P|_{\Sigma}=0)}^hT\mathbb{R}\underline{\mathrm{Sol}}_{\Sigma}(F_P|_{\Sigma}=0).$$
Objects of on the left hand side are interpreted as: 
$$\{\text{Solutions of the linearized problem}\},$$ which are thus mapped to 
$$\{\text{Solutions of non-lin. problem satisfying initial conditions and linearized problem on } \Sigma\}.$$
Using the `heuristic' notation introduced before, it is
$$w_{\Sigma}:\RS(\mathrm{lin}(F_P)=0)\rightarrow \RS(F_P=0)\times_{\RSol_{\Sigma}(F_P|_{\Sigma}=0)}\RSol_{\Sigma}(\mathrm{lin}(F_P)|_{\Sigma}=0).$$
In other words, for every $T$-point $u_T:T\rightarrow \RS(\EQ),$ we have for every $T$-point $u_T^{0}:T\rightarrow \RSol_{\Sigma}(\EQ_{\Sigma}),$ that there exists a point $v_T$ satisfying 
$$\mathrm{lin}_{u_T}(P)(v_T)=0,\hspace{2mm} \mathrm{ckk}_{\Sigma}(v_T)=(u_T^{0}).$$
Thus, it is a solution to the linearized moduli problem and is subject to the initial conditions $u_T^0$ along $\Sigma.$
\end{proof}
Remark that one might consider higher tangent spaces to the derived stacks of solutions i.e. beyond first-order deformations. In this case, a typical $T$-parameterized point $u_T$ is a formal sum
$$u_T=\sum_{i\geq 0}\epsilon^iu_T^{(i)},$$
and in this case,
$$w_{\Sigma}(u_T):=\sum_{i\geq 0}\epsilon^{i}\cdot \mathrm{ckk}_{\Sigma}(u_T^{(i)})=\big(u_T,\mathrm{ckk}_{\Sigma}(u_T)+\epsilon\cdot \mathrm{ckk}_{\Sigma}(u_T^{(1)})+\epsilon^2\mathrm{ckk}_{\Sigma}(u_T^{(2)})+\cdots\big).$$

Let $E_1\rightarrow X$ and $E_2\rightarrow X$ be $S$-families of submersions, with $S$ a smooth stack. An $S$-parameterized family of non-linear PDEs $P:E_1\rightarrow E_2$ of order $\leq m$ is a morphism of stacks $F_P:\mathrm{Jets}_{X/S}^m(E_1)\rightarrow E_2$ in $\mathrm{DStk}_{/X}.$

Applying the tangent functor and restricting gives a morphism 
$$T(F_{P})|_{T(E_1/X)}:T(\mathrm{Jets}_{X/S}^k(E_1)/X)\rightarrow T(E_2/X).$$
\begin{remark}
An $S$-family of $m$-th order non-linear PDEs is \emph{elliptic} if: for any $s\in \mathrm{Sect}(X,E_1),$ the linear operator
$$T(F_{P})_{s}:\mathrm{Jets}_{X/S}^k(s^*T_{E_1/X})\simeq j^k(s)^*T(\mathrm{Jets}_{X/S}^k(E_1)/X)\rightarrow F_P(j_k(s))^*T(E_2/X),$$
is elliptic.
\end{remark}

\begin{example}
Given a submersion $\pi:E\rightarrow \Sigma,$ a first-order non-linear operator $F_P:\mathrm{Jets}_{S}^1(E/\Sigma)\rightarrow F$ is elliptic if its linear part, viewed as a bundle map $\mathrm{Hom}(\pi^*T\Sigma,T_{E/\Sigma})\rightarrow F,$ sends $\eta\in \pi^*T\Sigma$ to isomorphism $f_{\eta}:T_{E/\Sigma}\rightarrow F.$
\end{example}
We make use of standard estimates related to elliptic non-linear PDEs. For instance, if $P$ is elliptic of order $\leq m,$ then 
$$|\!|u|\!|_s\leq C_{P,K,s}\cdot |\!|P u|\!|_{s-m}+C_{P,K,N,s}|\!|u|\!|_{s-N},$$
for all solutions $u$ such that $\mathrm{supp}(u)\subset K,$ a compact subset of $X$ and for every $N<\infty.$

Consider $s\in S$ and a pair of sections $(\varphi_1,\varphi_2)$ of $E_1,E_2,$ as well as two $k$-th order non-linear operators $\mathsf{P}_{\mathsf{F}_i}$ for $i=1,2$. Suppose that 
$$\mathsf{P}_{\mathsf{F}_1}(\varphi_1)=\mathsf{P}_{\mathsf{F}_2}(\varphi_2)=\psi \in \mathrm{Sect}_{X/S}(X,V).$$
Then we have an elliptic non-linear partial differential system if the linearized operator, given by linearization at $(\varphi_1,\varphi_2)$:
\begin{equation}
    \label{eqn: Linearization for moduli}
\mathrm{lin}_{\varphi_1,\varphi_2}(P_1,P_2):\mathrm{Jets}_{X/S}^k\big(\varphi_1^*T(E_{1,s}/X_s)\big)\times_{X_s}\mathrm{Jets}_{X/S}^k\big(\varphi_2^*T(E_{2s}/X_s)\big)\rightarrow \psi^*T(V_s/X_s),
\end{equation}
is elliptic.

Note that (\ref{eqn: Linearization for moduli}) acts by
$$\mathrm{lin}_{\varphi_1,\varphi_2}(P_1,P_2)(\theta_1,\theta_2)\mapsto T(F_{P_1})_{\varphi_1}(\theta_1)-T(F_{P_2})_{\varphi_2}(\theta_2),$$
for $\theta_i\in \mathrm{Jets}_{X/S}^k(\varphi_i^*T(E_{i,s}/X_s)),$ for $i=1,2.$
As an example of these computations, consider  a first-order non-linear elliptic problem and $\mathbb{R}\underline{\mathrm{Sol}}_{S}(E/\Sigma).$ 
linearizing around $u_T$, a $T$-parameterized solution. Namely, for every $u_T\in \mathrm{Sol}_S(E\backslash \{0\}/\Sigma)(T),$ there is a first-order linear operator
$$D_{u_T}:\mathrm{Sect}(\Sigma,u_T^*T_{E/\Sigma})\rightarrow \mathrm{Sect}(\Sigma,u_T^*V).$$
The stack of solutions of this family of operators is simply $\mathrm{Sol}_{S}(T_{E/\Sigma}/\Sigma)$ i.e. $T\mathrm{Sol}.$ The symbol
$$\sigma(D_{u_T}):T^*\Sigma\rightarrow \mathrm{Hom}\big(u_T^*T_{E/\Sigma},u_T^*V\big),$$
which is just the linear part of $F_P$ pulled-back under $u_T$ (c.f. \ref{eqn: Symbol of Linearization of Non-linear}).
 \section{Linearization and the equivariant loop stacks}
 \label{sec: Derived Linearization and the Equivariant Loop Stack}
Now and for the remainder of this paper, we switch gears and describe the linearization complexes and their microlocalization in terms of derived loop spaces i.e. sheaves of commutative DG-algebras over $\Omega_X^{-\bullet}=\mathrm{Sym}^{\bullet}(\Omega_X[1])$, and derived Hodge stacks  \cite{BenZviNad}.
The main point here is to begin to study and translate results about non-linear \textsc{pde} in terms of their linearizations, over possibly singular spaces e.g. by assuming $X$ is a derived scheme locally of finite presentation. We leave applications and details for a sequel work, and include the following two sections as they pertain, broadly speaking, to the central theme of the paper. 

\begin{remark}
By making this assumption, we no longer have a sheaf of microdifferential operators. So, we must use de Rham spaces and a new method for describing microlocalization of tangent complexes as in Proposition \ref{Derived Characteristic Variety Lemma}.
\end{remark}
Loop spaces seem to provide a natural junction point for studying linearizations and microlocalization in a derived geometric setting. For example, singular supports of ind-coherent sheaves over quasi-smooth stacks $X$ are contained in a scheme of singularities, $\mathsf{Sing}(X)$ and if $X=\mathcal{L}Z$ where $Z$ is smooth, this is nothing but $T^*Z.$
\subsection{Derived Loop Stacks and Odd Tangents}
The derived loop stack $\mathcal{L}X$ of $X$ is given by the derived mapping stack, or the derived self-intersection with the diagonal,
$$\mathcal{L}X:=\mathbb{R}\underline{Hom}(S^1,X)\simeq X\times_{X\times X}^hX,$$
where $S^1$ is the simplical circle, viewed as a derived stack given by the delooping of the constant group stack.
Other variants include the unipotent loops $\mathcal{L}^uX:=\mathbb{R}\underline{Hom}(B\mathbb{G}_a,X)$ and formal loops $\hat{\mathcal{L}}X,$ given by the formal completion of $\mathcal{L}X$ at constant loops $X\subset \mathcal{L}X.$

The affinization map $S^1\rightarrow B\mathbb{G}_a$ defines a map $\mathcal{L}^uX\rightarrow \mathcal{L}X$ and $\mathbb{G}_a\rtimes \mathbb{G}_m$ acts naturally on unipotent loops.

The derived loop space is linearized to the odd tangent bundle $$\mathsf{T}[-1]X:=Spec_{X}\big(\mathrm{Sym}_{\mathcal{O}_X}(\mathbb{L}_X[1])\big),$$
which can be identified with the normal bundle to constant loops, and one has  
\begin{equation}
    \label{eqn: exp}
    exp:\mathsf{T}[-1]X\xrightarrow{\simeq} \mathcal{L}X.
\end{equation}

The odd tangent stack is a derived vector bundle over $X$, equipped with a natural
contracting $\mathbb{G}_m$-action.
Recall from \cite{GR17b}, as a formal moduli problems (see also §\ref{ssec: Notations and Conventions}),
\begin{equation}
\label{eqn: infSpec}
\mathsf{Spec}_{/X}^{\mathsf{inf}}(-):\mathsf{CAlg}\big(\mathsf{QCoh}(X)\big)\rightarrow \mathsf{FMP}_{X//X},
\end{equation}
sends $\mathcal{E}\in \mathsf{QCoh}(X)$ via the functor
$\mathrm{Sym}_{\mathsf{QCoh}(X)}:\mathsf{QCoh}(X)\rightarrow \mathsf{CAlg}\big(\mathsf{QCoh}(X)\big)$ to the formal moduli problem
\begin{equation}
    \label{eqn: FMP}
    \mathbb{V}(\mathcal{E}^{\bullet}):=\mathsf{Spec}_{/X}^{inf}\big(\mathrm{Sym}_{\mathsf{QCoh}(X)}(\mathcal{E}^{\vee})\big).
\end{equation}
\begin{definition}
\label{Definition: Dual Derived Vector Bundle}
\normalfont
Let $\mathsf{E}_X$ be a derived vector bundle, whose underlying sheaf is a compact object, concentrated in degrees $[a,b].$ Then, its \emph{dual} is defined by $\mathsf{E}_X^{\vee}:=\mathbb{V}(\mathcal{E}^{\vee}).$
\end{definition}
Objects (\ref{eqn: FMP}) behave well under shift and pullback e.g. $\mathsf{E}_X[n]:=\mathsf{Spec}_{/X}\big(\mathrm{Sym}^*(\mathcal{E}^{\vee}[-n])\big),$ with its weight-grading by $\mathbb{G}_m$-actions. 

If $\mathcal{E}$ is connective (concentrated in degrees $[0,\infty)$), one has an object $[\mathsf{E}_X/\mathbb{G}_m]\in \PS_{/X\times B\mathbb{G}_m},$ where the action is of weight $+1$ and shifting $\mathsf{E}_X$ to $\mathsf{E}_X[n]$ corresponds to weight-degree shearing down the weight $k$ by $nk.$ By `shearing' we mean the action by natural functors $[\![n]\!]$ for each $n\in \mathbb{Z}$ acting on graded complexes by shifting the $n$-th component by $[n].$ One calls the $n=-2$ functor i.e. $[\![-2]\!]$ the \emph{Tate shearing.}
It takes an ordinary graded vector space (i.e. a graded
DG-vector space concentrated in degree $0$) to a complex of vector spaces with weight
on the cohomology given by ohomological degree.

The Koszul dual and the sheared Koszul dual bundles are defined by
\begin{equation}
    \label{eqn: Koszul Dual and Sheared Koszul Dual}
\scalemath{.90}{\mathsf{E}_X^{\kappa}:=\mathbb{V}_X(\mathcal{E}^{\vee}[1])=\mathsf{Spec}_{/X}\big
(\mathrm{Sym}_X^*(\mathcal{E}[-1])\big),\hspace{2mm} \mathsf{E}_{X}^{\kappa}[\![-2]\!]=\mathsf{Spec}_{/X}(\mathrm{Sym}_{\mathcal{O}_X}(\mathcal{E}[1])\big).}
\end{equation}
Applying the construction of formal moduli (\ref{eqn: FMP}) as well as \autoref{Definition: Dual Derived Vector Bundle} to the context of loop spaces, we get 
$$\mathsf{T}[-1]X=\mathbb{V}_X(\mathbb{T}_X[-1]),$$
with projection $j:\mathbb{V}(\mathbb{T}_X[-1])\rightarrow X.$ The Koszul dual and sheared Koszul dual are given by 
$$\mathsf{T}^*[2]X=(\mathsf{T}[-1]X)^{\kappa}=\mathsf{Spec}_{/X}(\mathrm{Sym}_{\mathcal{O}_X}(\mathbb{T}_X[-2])\big),$$
the $2$-shifted cotangent stack and cotangent stack $(\mathsf{T}[-1]X)^{\kappa}[\![-2]\!]=\mathsf{T}^*X,$ respectively.

Notice that $\mathcal{L}X\xrightarrow{i}X$ is schematic and if $X$ is a smooth and proper reduced scheme over a field of characteristic zero, 
$$^{red}\mathcal{L}X\simeq ^{red}\big(X\times_{X\times X}X\big)\simeq ^{red}X\times_{^{red}X\times ^{red}X}\hspace{1mm} ^{red}X\simeq X\times_{X\times X}X\simeq X.$$

If we denote by $c:X\rightarrow \mathcal{L}X$ the inclusion of constant loops, then
$$c^*\mathbb{T}_{\mathcal{L}X/X}\simeq c^*i^*\mathbb{T}_{X/X\times X}\simeq \mathbb{T}_{X/X\times X}\simeq \mathbb{T}_X[-1].$$ 

\begin{proposition}
The natural map $X_{\DR}\rightarrow \mathcal{L}X_{\DR}$ is an equivalence, and there is a natural $S^1$-equivariant morphism of prestacks,
$\beta_X:\mathcal{L}X\rightarrow\mathcal{L}X_{\DR}\simeq X_{\DR},$
where the $S^1$-action on $X_{\DR}$ is taken to be the trivial one.
\end{proposition}
When $X$ is an ordinary affine scheme there is an equivalence of rings $\pi_0(\mathcal{O}_{\mathcal{L}X})\simeq \mathcal{O}_X,$ so the underlying ordinary scheme of $\mathcal{L}X$ is
just $X.$

\subsubsection{Singular Support Conditions}
\label{sssec: Singular Support Conditions}
We now elaborate the notion of (linearized) singular supports and microlocal correspondences (c.f. equation (\ref{eqn: Microlocal correspondence})) for spaces $\RS(\EQ)$ in the setting of (quasi-smooth) derived Artin stacks, following \cite{AriGai2015}. In particular, we extend (\ref{definition: 1-Microchar}) to this setting.
Recall to a quasi-smooth derived stack $X$ one may introduce a classical pre-stack of singularities $\mathsf{Sing}(X).$

\begin{definition}
\normalfont
Suppose that $Z$ is a generalized non-linear \textsc{pde} which is $\D$-quasi-smooth. Then its \emph{prestack of singularities} is 
$\mathsf{Sing}\big(\RS(\EQ)\big).$
\end{definition}
Explicitly, take the underlying classical pre-stack of the shifted cotangent space
\begin{equation}
\label{eqn: Sing}
\mathsf{Sing}\big(\RS(\EQ)\big):=\hspace{1mm} ^{cl}\big(\mathsf{T}^*[-1]\RS(\EQ)\big)\simeq \hspace{1mm}^{cl}Spec\big(\mathrm{Sym}_{\mathcal{O}}(\mathbb{T}_{\RS(\EQ)}[1])\big).
\end{equation}
There is an affine map $\mathsf{Sing}\big(\RS(\EQ)\big)\rightarrow \pi_0\big(\RS(\EQ)\big).$ Its zero section, denoted $\RS(\EQ)^{cl}=\{0\}$ is embedded as a closed sub-scheme.

Since $\mathcal{H}^0$ of the cotangent complex is the usual cotangent space, stacky-phenomena implies there exists non-vanishing $\mathcal{H}^{-1}$ and 
$$\mathsf{Sing}\big(\RS(\EQ)\big)\simeq Spec_{^{cl}X}\big(\mathrm{Sym}^*\mathcal{H}^1(\mathbb{L}_{\RS(\EQ)}^{\vee})\big).$$

When $\mathcal{H}^{1}=0,$ this indicates `stable' or `regular' phenomena.
In general, considering $s:*\rightarrow \RS/\mathcal{G},$ the behaviour $\mathbb{T}_{s}(\RS/\mathcal{G})$ changes qualitatively as $s$ moves around the moduli space. This is realized by the `jumping' of $H^{\bullet}\big(\mathbb{T}_{s}(\RS/\mathcal{G})\big).$ Indeed, this reflects the changes in the stabilizer of $s$ in $\mathcal{G}.$

\begin{example}
\label{ex: Sing of LocSys}
\normalfont 
Consider $\RS(\EQ)$ with $Z:=pt/G\times X_{\DR}.$ Then, for each point $(P,A)\in \RS(\EQ),$ one has that 
$\mathcal{H}^{-1}\big(\mathbb{L}_{\RS(\EQ),(P,A)}\big)$ describes the infinitesimal symmetries of $(P,A)$ i.e. elements $\sigma\in \Gamma\big(ad(P)\big)^A.$ In this case,
$$\mathsf{Sing}\big(\RS(\EQ)\big)\simeq \big\{(P,A,\sigma)| (P,A)\in \RS(\EQ),\sigma \text{ an infinitesimal symmetry}\big\}.$$
\end{example}

\subsubsection{}
Given a closed conic subset $\Lambda$ of $\mathsf{Sing}(X)$ there is a subcategory $\mathsf{IndCoh}_{\Lambda}^!(X)$ consisting of those ind-coherent sheaves with singular support inside $\Lambda$ and if $Z$ is a dg-Artin stack with smooth atlas $U\rightarrow Z,$ for each $\Lambda\subset \mathsf{Sing}(\EQ)$ is defined such that $\mathsf{Sing}(U)=\mathsf{Sing}(\EQ)\times_Z U.$ 
Such a conic subset is said to be a \emph{support condition}. Note that $\IC_{\Lambda}^!(X)$ is compactly generated by $\mathsf{Coh}_{\Lambda}(X)$, the full subcategory of coherent sheaves with the same restriction to singular support. 
To $\mathcal{F}^{\bullet}\in \mathsf{IndCoh}^!(X)$ there is a closed conic involutive subset 
$\mathsf{SS}(\mathcal{F}^{\bullet})\subset \mathsf{Sing}(X),$ recording the failure of $\mathcal{F}^{\bullet}$ to be perfect. In other words, it is a measure of the codirections of smoothings of $X$ in which $\mathcal{F}^{\bullet}$ is obstructed from extending as a coherent complex (see also §§ \ref{sssec: Interpreting SS via Prop} below).

Singular support have the properties (c.f with § \ref{sec: Microlocal Analysis and Sheaf Propagation}) that: $\mathsf{SS}(\mathcal{F}^{\bullet})\cap \hspace{2mm} ^{cl}X= \mathrm{supp}(\mathcal{F}^{\bullet}),$ and $\mathcal{F}^{\bullet}\in \mathsf{Perf}(X)$ if and only if $\mathsf{SS}(\mathcal{F}^{\bullet})$ is the contained in the zero section.
In the case $Z$ is a derived Artin stack,
$$\IC_{\Lambda}(\EQ)=\big\{\mathcal{F}^{\bullet}\in \IC(\EQ)|\mathsf{SS}(p^!\mathcal{F}^{\bullet})\subset \Lambda\times_Z U\subset \mathsf{Sing}(\EQ)\times_Z U,\forall p:U\xrightarrow{\text{smooth}}Z\big\}.$$

    Given a $\D$-finitary space $Z$, put
    $\mathsf{SS}_{\varphi}(\EQ):=\mathsf{SS}\big(\mathbb{T}_{\RS(\EQ),\varphi}^{\ell}\big).$

If $f:X\rightarrow Y$ is affine schematic morphism of derived stacks, one has a microlocal correspondence
\begin{equation}
\label{eqn: Derived Microlocal Diagram}
\mathsf{Sing}(X)\xleftarrow{f_d}X\times_Y \mathsf{Sing}(Y)\xrightarrow{f_{\pi}}\mathsf{Sing}(Y).
\end{equation}

Given two support conditions $\Lambda_X$ on $X$ and $\Lambda_Y$ on $Y$, using (\ref{eqn: Derived Microlocal Diagram}) we put
$f^{-1}(\Lambda_Y):=f_d\big(f_{\pi}^{-1}(\Lambda_Y)\big),$ and $f(\Lambda_X):=f_{\pi}\big(f_d^{-1}(\Lambda_X)\big).$

\begin{proposition}
\label{prop: V-Sing}
    Let $f:X\rightarrow Y$ be a morphism of quasi-smooth derived Artin stacks. If $f$ is proper, then $f_{\pi}$ is proper and if $f$ is quasi-smooth, then $f_d$ is a closed immersion. 
Thus, for a quasi-smooth $f$ and a subset $V\subset \mathsf{Sing}(Y)$ there exists a support condition 
$V_X:=f_d(V\times_Y X)\subset\mathsf{Sing}(X).$
\end{proposition}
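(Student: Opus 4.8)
\textbf{Proof proposal for Proposition \ref{prop: V-Sing}.}

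The plan is to work directly from the definition of $\mathsf{Sing}$ in \eqref{eqn: Sing} and the microlocal correspondence \eqref{eqn: Derived Microlocal Diagram}, reducing both claims to elementary facts about the $(-1)$-shifted cotangent stacks $\mathsf{T}^*[-1]X \to X$ and the maps $f_d, f_\pi$ induced functorially on them. First I would spell out these two maps: for $f\colon X\to Y$ quasi-smooth, the cofiber sequence $f^*\mathbb{L}_Y \to \mathbb{L}_X \to \mathbb{L}_{X/Y}$ of perfect complexes dualizes to $\mathbb{T}_{X/Y} \to \mathbb{T}_X \to f^*\mathbb{T}_Y$, and applying $\mathsf{Spec}_X Sym(-[1])$ (passing to classical truncations) yields on the one hand the projection $f_\pi\colon X\times_Y\mathsf{Sing}(Y) \to \mathsf{Sing}(Y)$, which is just base change of the affine map $\mathsf{Sing}(Y)\to Y^{cl}$ along $X^{cl}\to Y^{cl}$, and on the other hand the map $f_d$ classifying the composite $\mathbb{T}_X \to f^*\mathbb{T}_Y$ at the level of $\mathcal{H}^1$ of the duals.

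For the first bullet: $f_\pi$ is by construction the pullback of $\mathsf{Sing}(Y)\to Y^{cl}$ along $X^{cl}\to Y^{cl}$, hence if $f$ (equivalently $f^{cl}$, equivalently $X^{cl}\to Y^{cl}$) is proper, then $f_\pi$ is proper, since properness is stable under base change. Here one should note that $\mathsf{Sing}(Y)\to Y^{cl}$ is affine (stated right after \eqref{eqn: Sing}), so there is no issue forming the fiber product and the base-changed map is again of the expected form; the argument is then a one-line appeal to stability of properness under pullback.

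For the second bullet: quasi-smoothness of $f$ means $\mathbb{L}_{X/Y}$ is perfect of Tor-amplitude $[-1,0]$, so $\mathbb{T}_{X/Y}$ has amplitude $[0,1]$ and $\mathcal{H}^1(\mathbb{L}_{X/Y}^\vee)=\mathcal{H}^1(\mathbb{T}_{X/Y}[1]\text{-ish})$ is a finite type module over $\mathcal{O}_{X^{cl}}$; the long exact sequence in cohomology attached to $\mathbb{T}_{X/Y}\to \mathbb{T}_X\to f^*\mathbb{T}_Y \to \mathbb{T}_{X/Y}[1]$ shows that the map $\mathcal{H}^1\big((f^*\mathbb{T}_Y)[\ast]\big)\to \mathcal{H}^1\big(\mathbb{T}_X[\ast]\big)$ is surjective, because the next term $\mathcal{H}^2(\mathbb{T}_{X/Y}[\ast])=0$ by the amplitude bound. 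Passing to relative $\mathsf{Spec}$ of the symmetric algebras, a surjection of finite type $\mathcal{O}_{X^{cl}}$-modules yields a closed immersion of affine $X^{cl}$-schemes, which is exactly the assertion that $f_d\colon X\times_Y\mathsf{Sing}(Y)=\mathsf{Spec}_{X^{cl}}Sym\,\mathcal{H}^1(f^*\mathbb{L}_Y^\vee)\hookrightarrow \mathsf{Sing}(X)$ is a closed immersion. I expect the main obstacle to be bookkeeping of shifts and truncations — making sure that dualization, the shift by $[1]$, and passage to classical truncation are performed in a compatible order so that the relevant $\mathcal{H}^1$-sheaves and the surjectivity/closed-immersion statement line up correctly — rather than any genuinely deep input.

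Finally, the last sentence follows immediately: given $V\subset\mathsf{Sing}(Y)$ closed conic, $V\times_Y X$ is a closed conic subset of $X\times_Y\mathsf{Sing}(Y)$ (preimage under the affine map $f_\pi$, or directly), and since $f_d$ is a closed immersion by the second bullet, its image $V_X := f_d(V\times_Y X)$ is a closed conic subset of $\mathsf{Sing}(X)$, i.e. a bona fide support condition, so that $\mathsf{IndCoh}_{V_X}^!(X)$ is defined as in \ref{sssec: Singular Support Conditions}.
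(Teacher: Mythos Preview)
The paper does not supply its own proof of this proposition: it is stated and immediately followed by a reference to \cite[Sect.~7.4]{AriGai}, where these facts are established. Your direct argument is essentially the standard one given there: $f_{\pi}$ is the base change of $^{cl}f$ along the affine map $\mathsf{Sing}(Y)\to{}^{cl}Y$, so properness is preserved; and for $f_d$ one uses the Tor-amplitude bound on $\mathbb{L}_{X/Y}$ to force $\mathcal{H}^2(\mathbb{T}_{X/Y})=0$, whence the relevant map on $\mathcal{H}^1$ is surjective and the induced map of relative $\mathrm{Spec}$'s of symmetric algebras is a closed immersion.

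One bookkeeping slip (which you yourself flag as the likely pitfall): the surjective map obtained from the long exact sequence attached to $\mathbb{T}_{X/Y}\to\mathbb{T}_X\to f^*\mathbb{T}_Y$ runs $\mathcal{H}^1(\mathbb{T}_X)\to\mathcal{H}^1(f^*\mathbb{T}_Y)$, not the reverse as you wrote. This is the correct direction for your conclusion, since $f_d\colon\mathrm{Spec}\,\mathrm{Sym}\,\mathcal{H}^1(f^*\mathbb{T}_Y)\to\mathrm{Spec}\,\mathrm{Sym}\,\mathcal{H}^1(\mathbb{T}_X)$ being a closed immersion corresponds to surjectivity of the ring map $\mathrm{Sym}\,\mathcal{H}^1(\mathbb{T}_X)\to\mathrm{Sym}\,\mathcal{H}^1(f^*\mathbb{T}_Y)$. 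With that correction the argument goes through; you should also note the identification $\mathcal{H}^1(f^*\mathbb{T}_Y)\simeq f^{cl,*}\mathcal{H}^1(\mathbb{T}_Y)$ (needed to match $X\times_Y\mathsf{Sing}(Y)$ with the relative $\mathrm{Spec}$), which is harmless here since $\mathbb{T}_Y$ has amplitude $[0,1]$.
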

Singular support conditions behave in the expected way for push-forward and pull-backs along schematic and quasi-compact morphisms \cite[\S.~7.4]{AriGai2015}.

A $k$-point is thus a pair $p=(z;\xi)$ with $z\in Z(k)$ and $\xi\in H^{-1}\big(\mathbb{T}_z^{*}(\EQ)\big).$ We can observe what happens when this point belongs to $\mathsf{SS}(\mathcal{F})$ for a given $\mathcal{F}\in\mathsf{Coh}(\EQ).$ 

This characterization comes from representing $Z$ as a complete intersection by smooth schemes $U,V$ and letting $\xi$ be a cotangent to $V$ at $pt$ with $f$ a function on $V$ such that $f(pt)=0$ and $df=\xi.$ Let $Z'$ be the scheme cut out by restriction of $f$ to $U$, with corresponding embedding $i:Z\rightarrow Z'.$

For quasi-smooth stacks $X$, the degree $1$ part of the tangent complex, or the degree $-1$ part of the cotangent
complex, controls the singularities of $X$. 
\begin{proposition}
\label{Sing and LX Prop}
Consider $Z,U,V$ and $i:Z\rightarrow Z'$. Then the element $(z,\xi)$ belongs to $\mathsf{SS}(\mathcal{F})$ if and only if $i_*\mathcal{F}$ is not perfect on a Zariski neighborhood of $z$. 
Moreover, if $X$ is a smooth classical scheme, then $\mathsf{Sing}(\mathcal{L}X)\simeq T^*X$, and $\mathsf{Sing}(\mathcal{L}X)\rightarrow\hspace{1mm} ^{cl}\mathcal{L}X$ identifies with the ordinary cotangent bundle of $X.$
\end{proposition}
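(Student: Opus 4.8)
\textbf{Proof sketch of Proposition \ref{Sing and LX Prop}.}

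The plan is to handle the two assertions in turn, as they are of rather different flavour. For the first assertion, the strategy is to unwind the definition of $\mathsf{SS}(\mathcal{F})$ as a condition detected locally on a presentation of $Z$ as a quasi-smooth scheme, and then to reduce to a genuinely local statement about perfectness of a pushforward under a regular embedding. First I would fix a $k$-point $z\in Z$ and the presentation $Z = U\times_V \{pt\}$ with $U,V$ smooth, $f\in\mathcal{O}(V)$ with $f(pt)=0$, $df(pt)=\xi$, so that $Z' = \{f|_U = 0\}$ and $i:Z\hookrightarrow Z'$ is a regular closed embedding of codimension one. By definition of the scheme of singularities (\ref{eqn: Sing}) the covector $\xi\in\mathcal{H}^{-1}(\mathbb{T}^*_zZ)$ is exactly a choice of such a smoothing direction. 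I would then invoke the description of $\mathsf{SS}$ in terms of the action of the cohomological Hochschild/Koszul parameter: $\mathsf{SS}(\mathcal{F})$ records the points where $\mathcal{F}$ fails to be a compact (= perfect) object of $\mathsf{IndCoh}$, and along a single smoothing direction this failure is governed by the non-perfectness of $i_*\mathcal{F}$ near $z$. Concretely, pushing forward along $i$ replaces the $[-1,0]$-amplitude cotangent complex by one of $[-1,0]$-amplitude with one fewer obstruction coordinate, and the Koszul-dual two-periodicity operator acting on $\mathrm{Hom}_{Z'}(i_*\mathcal{F},i_*\mathcal{F})$ becomes invertible precisely in the direction complementary to $\xi$; hence $(z,\xi)\notin\mathsf{SS}(\mathcal{F})$ iff $i_*\mathcal{F}$ is perfect on a Zariski neighbourhood of $z$. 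This is essentially the content of \cite[Sect. 3--4]{AriGai}, so the proof reduces to citing their local criterion and matching conventions.

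For the second assertion, the plan is purely computational in the derived sense. Since $X$ is a smooth classical scheme, $\mathbb{L}_X$ is a vector bundle concentrated in degree $0$, and $\mathcal{L}X = X\times_{X\times X} X$ has cotangent complex $\mathbb{L}_{\mathcal{L}X/X}\simeq \mathbb{L}_X[-1]$ at constant loops via $c^*\mathbb{T}_{\mathcal{L}X/X}\simeq\mathbb{T}_X[-1]$, as recalled above (\ref{eqn: exp}). Feeding this into (\ref{eqn: Sing}) gives
$$
\mathsf{Sing}(\mathcal{L}X) \simeq \mathrm{Spec}_{^{cl}\mathcal{L}X}\big(Sym^*\,\mathcal{H}^1(\mathbb{L}_{\mathcal{L}X}^\vee)\big),
$$
and since $^{cl}\mathcal{L}X \simeq X$ (the reduced/classical truncation of the self-intersection of the diagonal of a smooth scheme is $X$ itself, as noted in the excerpt) and $\mathcal{H}^1(\mathbb{L}_{\mathcal{L}X}^\vee)\simeq \mathbb{T}_X^\vee = \Omega_X^1$ — the degree shift by $[-1]$ in $\mathbb{L}_X[-1]$ dualizes to a degree $+1$ class — we obtain $\mathrm{Spec}_X\big(Sym^*_{\mathcal{O}_X}\Omega_X^1\big) = T^*X$. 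The identification of the structure map $\mathsf{Sing}(\mathcal{L}X)\to {}^{cl}\mathcal{L}X$ with the bundle projection $T^*X\to X$ is then immediate from the construction, since both are the affinization map of the symmetric algebra on $\Omega^1_X$. Here one uses the exponential equivalence (\ref{eqn: exp}) $\exp:\mathsf{T}[-1]X\xrightarrow{\simeq}\mathcal{L}X$ to be sure that the shearing/grading conventions match those of \cite{BenZviNad}.

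The main obstacle is the first assertion: translating the abstract singular-support membership condition of \cite{AriGai} — which is phrased via the $2$-periodic action on $\mathrm{Hom}$-complexes, or equivalently via the Hochschild cochain parameter $\mathsf{Sing}$ — into the concrete "$i_*\mathcal{F}$ not perfect near $z$" criterion requires care about (i) the independence of the statement from the chosen local complete-intersection presentation $(U,V,f)$, which is part of the well-definedness of $\mathsf{Sing}(Z)$ but must be tracked through the pushforward $i_*$, and (ii) the precise normalization of the covector $\xi$ versus $df$. I expect the cleanest route is to note that $i:Z\to Z'$ being quasi-smooth of codimension $1$ makes $i_*$ fully faithful on coherent sheaves and that $\mathsf{SS}(i_*\mathcal{F}) = i_d\big(\mathsf{SS}(\mathcal{F})\cap f_\pi^{-1}(\cdots)\big)$ by the functoriality of singular support under quasi-smooth pushforward (\cite[Sect. 7.4]{AriGai}, recalled in Proposition \ref{prop: V-Sing}); then $(z,\xi)\in\mathsf{SS}(\mathcal{F})$ lands in the non-zero-section part of $\mathsf{SS}(i_*\mathcal{F})\subset\mathsf{Sing}(Z')$, which is exactly the obstruction to perfectness of $i_*\mathcal{F}$ on a neighbourhood of $z$. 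The second assertion is routine once the cotangent complex of $\mathcal{L}X$ at constant loops is in hand.
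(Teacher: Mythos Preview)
Your proposal is correct and follows essentially the same route as the paper's own argument. For the second assertion both you and the paper compute $\mathcal{H}^{-1}(\mathbb{L}_{\mathcal{L}X})$ via the identification $c^*\mathbb{T}_{\mathcal{L}X/X}\simeq\mathbb{T}_X[-1]$ and $^{cl}\mathcal{L}X\simeq X$, then feed this into the definition of $\mathsf{Sing}$; for the first assertion both reduce to the functoriality of singular support under the (proper, quasi-smooth) pushforward $i_*$ from \cite{AriGai}, concluding that $(z,\xi)\notin\mathsf{SS}(\mathcal{F})$ forces $\mathsf{SS}(i_*\mathcal{F})\subset\{0\}$ on a Zariski neighbourhood of $z$, i.e.\ $i_*\mathcal{F}$ perfect there --- your write-up simply supplies more of the bookkeeping (independence of presentation, the converse direction) that the paper leaves implicit.
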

Proposition \ref{Sing and LX Prop} is proven in \cite{AriGai2015}. Via $\pi:\mathsf{Sing}(X)\rightarrow ^{cl}X,$ 
the fibers $\pi^{-1}(x)$ over $x\in ^{cl}X$ are the vector spaces $H^{-1}(\mathbb{L}_{X,x})$ specifying how singular $X$ is at $x$. In particular, the fiber  over $x$ is a point if and only if $X$ is smooth at $x,$ and in this case $\pi$ is an equivalence. The claim follows from applying the definition via (\ref{eqn: exp}).
Now if $(z,\xi)$ is not in the singular support, then it also not in the singular support of $\mathsf{SS}(f_*^{\IC}\mathcal{F})$ and so on a Zariski neighborhood of $z$, one has $\mathsf{SS}(f_*^{\IC}(\mathcal{F}))\subset \{0\}.$

We have an analog of Example \ref{Lagrangian Relation Example}.
\begin{example}
\normalfont 
Given a quasi-smooth morphism $f:X\rightarrow Y$ a Lagrangian relation on $f$ is $\Lambda_f:=f_{\pi}\big(X\times_{\mathsf{Sing}(X)}\big(\mathsf{Sing}(Y)\times_Y X\big))\subset \mathsf{Sing}(Y).$ If $f$ is proper and $\mathcal{E}\in \mathsf{Perf}(X),$ one has that $f_*^{\mathsf{Perf}}(\mathcal{E})$ is coherent with $\mathsf{SS}\big(f_*^{\mathsf{Perf}}(\mathcal{E})\big)$ contained in $\Lambda_f.$
\end{example}

\subsubsection{Singular Supports and Microcharacteristics}
\label{sssec: Interpreting SS via Prop}
We relate $\mathsf{SS}(\mathcal{F})$ to the propagation interpretation of microsupports § \ref{sec: Microlocal Analysis and Sheaf Propagation} we have used throughout this work and describe a possible analog of the property of a morphism being non-microcharacteristic in this context.
Suppose that for a geometric point $\eta:Spec(k)\rightarrow X$ we can present $X$ via functions $(f_1,\ldots,f_n):Y\rightarrow \A^n,$ from an affine smooth variety $Y$ as in the iterated pull-back diagram
\[
\begin{tikzcd}
    X\arrow[d]\arrow[r] & \overline{X}\arrow[d]\arrow[r] & Y
\arrow[d]
\\
\{0\in \A^n\}\arrow[r] & \mathbb{A}^1\times \{0\in \A^{n-1}\}\arrow[r] & \A^n
\end{tikzcd}
\]
The covector $df_1$ may be thought of as a section of $\mathsf{Sing}(X),$ and if $U$ is a neighbourhood of $\eta,$ then 
$df_1|_{U}\notin \mathrm{supp}(\mathcal{F}|_U)\subset \mathsf{Sing}(X)|_U,$
if and only if $\mathcal{F}$ is contained in the smallest thick subcategory of sheaves on $X$ generated by pullbacks from $\overline{X}.$ Roughly speaking, this is the case when $\mathcal{F}$ extends in the $f_1$-direction near $\eta$ i.e. $\mathcal{F}$ propagates in the codirection $df_1$ in the neighbourhood $U.$

Now, suppose that $f:X\rightarrow Y$ is a quasi-smooth morphism of quasi-smooth derived stacks, and fix a closed conical subset $V\subset \mathsf{Sing}(Y),$ with ideal sheaf $\mathcal{I}_V.$ Consider a $\D$-finitary derived non-linear \textsc{pde} $\J(E)$ over $Y$. We want to formulate non-characteristic conditions via the diagram (\ref{eqn: Derived Microlocal Diagram}).

On the one hand, we have a sequence of sheaves
$$\mathbb{L}_Y\rightarrow H^1(\mathbb{L}_Y[-1])\rightarrow I_V/I_V^2\rightarrow \bigoplus_jI_V^j/I_V^{j+1},$$
which define the conormal space $\mathsf{N}_{V/\mathsf{Sing}(Y)}=Spec(\mathrm{Sym}(\mathcal{I}_V/\mathcal{I}_V^2)),$ as well as the normal cone $C^{\mathsf{N}}=Spec(\bigoplus_j\mathcal{I}_V^j/\mathcal{I}_V^{j+1}).$ On the other hand we have a diagram of spaces that define a composite map $\alpha,$
\begin{equation}
\label{eqn: Sing-MicroChar Diagram}
\begin{tikzcd}
\alpha:C^{\mathsf{N}}\arrow[r] & \mathsf{N}_{V/\mathsf{Sing}(Y)}\arrow[r] & \mathsf{Sing}(Y)\arrow[r] & \mathsf{T}^*Y.
\end{tikzcd}
\end{equation}

Fix $\mathcal{F}\in \IC(Y)$ and put $\mathsf{SS}(\mathcal{F})\cap V$ to be the image under the composite map (\ref{eqn: Sing-MicroChar Diagram}) of the intersection of $V$ (corresponding to the normal cone) and $\mathsf{SS}(\mathcal{F}),$ simply denoted $\mathsf{SS}(\mathcal{F})\cap V.$

\begin{definition}
The morphism $f$ is said to be \emph{non-microcharacteristic} for $\mathcal{F}$ along $V$, if
\begin{equation}
    \label{eqn: SS Non-micro-char}
f_d\circ f^{-1}\big(\mathsf{SS}(\mathcal{F})\cap V\big)|_{\mathsf{Sing}(X)\subset \mathsf{T}^*X}\subset \{0\}_X.
\end{equation}
\end{definition}

\subsubsection{Sheaves on loops and Koszul duality}
Let $\mathsf{Perf}_X(\mathcal{L}X)$ denote the small, stable $\infty$-subcategory of $\mathsf{QCoh}(\mathcal{L}X)$ whose quasi-coherent pushforward $\tau_*^{\mathsf{QCoh}}$ along the canonical map $\tau:\mathcal{L}X\rightarrow X$ are perfect. Note the structure sheaf $\mathcal{O}(\mathcal{L}X)\simeq \mathrm{Sym}_{\mathcal{O}_X}^{\bullet}(\mathbb{L}_X[1])\big)$ is perfect over $\mathcal{O}_X$.

\begin{proposition}[{\cite[Cor.5.3]{BenZviNad}}]
\label{LoopSpacesConnectionsThm}
There is a canonical equivalence of small, stable $\infty$-categories
$\mathsf{Perf}_X(\mathcal{L}X)_{per}^{B\mathbb{G}_a \rtimes \mathbb{G}_m}\simeq \mathsf{Perf}(\D_X).$
\end{proposition}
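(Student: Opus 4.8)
The statement to prove is Proposition~\ref{LoopSpacesConnectionsThm}, the equivalence $\mathsf{Perf}_X(\mathcal{L}X)_{per}^{B\mathbb{G}_a \rtimes \mathbb{G}_m}\simeq \mathsf{Perf}(\mathcal{D}_X)$, which is cited from Ben-Zvi--Nadler. Since this is quoted as a known result, the proof proposal should really be an exposition of the strategy one would use to establish it, and then the passage to the equivariant refinement.

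\textbf{Outline of the approach.} The plan is to proceed via Koszul duality in two stages: first an affinization-and-de-Rham-space identification at the level of underlying categories, then the refinement that tracks the $B\mathbb{G}_a\rtimes\mathbb{G}_m$-action. First I would use the equivalence $\exp:\mathsf{T}[-1]X\xrightarrow{\simeq}\mathcal{L}^uX$ from (\ref{eqn: exp}) (and its formal/unipotent variants) to reduce sheaves on the loop stack to sheaves on the odd tangent bundle $\mathsf{T}[-1]X=\mathbb{V}_X(\mathbb{T}_X[-1])$, which is the relative $\mathrm{Spec}$ of $Sym_{\mathcal{O}_X}(\mathbb{L}_X[1])$ by the formal-moduli construction (\ref{eqn: FMP}). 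Pushing forward along $\tau:\mathcal{L}X\to X$ turns a perfect-along-$\tau$ complex into a perfect module over the sheaf of commutative dg-algebras $\Omega_X^{-\bullet}=Sym^\bullet_{\mathcal{O}_X}(\Omega_X[1])$, i.e.\ the de Rham complex with its natural grading. So $\mathsf{Perf}_X(\mathcal{L}X)$ is identified with perfect modules over $\Omega_X^{-\bullet}$. The second step is the classical observation that $(\Omega_X^{-\bullet},d_{dR})$ with its internal grading is Koszul dual to $\mathcal{D}_X$ with its order filtration: one has the standard dg-algebra quasi-isomorphism type statement realizing $\mathcal{D}_X\simeq \mathrm{End}_{\Omega_X^{-\bullet}}(\mathcal{O}_X)$ (or the bar-cobar adjunction between $\Omega_X^{-\bullet}$-comodules/modules and $\mathcal{D}_X$-modules), which on compact/perfect objects upgrades to an equivalence of small stable $\infty$-categories.

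\textbf{Key steps in order.} (1) Identify $\tau_*^{\mathsf{QCoh}}\mathcal{O}_{\mathcal{L}X}\simeq Sym_{\mathcal{O}_X}(\mathbb{L}_X[1])$ and note its perfectness over $\mathcal{O}_X$ when $X$ is lfp; this gives a symmetric-monoidal functor $\mathsf{QCoh}(\mathcal{L}X)\to \Omega_X^{-\bullet}\text{-}\mathsf{Mod}$, restricting to $\mathsf{Perf}_X(\mathcal{L}X)\xrightarrow{\sim}\mathsf{Perf}(\Omega_X^{-\bullet})$. (2) Record the $S^1\simeq B\mathbb{G}_a$-action: the simplicial circle acts on $\mathcal{L}X$, inducing the de Rham differential on $\Omega_X^{-\bullet}$ (the mixed/$S^1$-equivariant structure is precisely the extra differential turning the graded $Sym_{\mathcal{O}_X}(\mathbb{L}_X[1])$ into the genuine de Rham complex), and the loop-rotation/scaling $\mathbb{G}_m$ acts by the internal weight grading. (3) Apply the Koszul duality between $\Omega_X^{-\bullet}$ (as a filtered/mixed complex) and $\mathcal{D}_X$: the Tate-sheared $\mathbb{G}_m$-equivariant perfect modules over the de Rham algebra match the filtered $\mathcal{D}_X$-modules, and forgetting filtrations to the $B\mathbb{G}_a\rtimes\mathbb{G}_m$-equivariant (periodic) category yields $\mathsf{Perf}(\mathcal{D}_X)$. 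Here the subscript ``per'' (periodic, i.e.\ after inverting the $\mathbb{G}_m$-weight generator, equivalently a $2$-periodicization) is exactly what makes the crystalline side come out as honest $\mathcal{D}$-modules rather than their filtered deformation — this is the ind-coherent/filtered Koszul duality $\kappa$ alluded to in the Introduction and in \cite{BenZviNad,ChenFiltered}. (4) Check compatibility with symmetric monoidal structures and compact generation so that the equivalence is one of small stable $\infty$-categories, not merely of their ind-completions.

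\textbf{Expected main obstacle.} The hard part is step (3): getting the bookkeeping of the $B\mathbb{G}_a\rtimes\mathbb{G}_m$-action right so that ``equivariant perfect complexes on $\mathcal{L}X$'' land on the nose on $\mathsf{Perf}(\mathcal{D}_X)$ and not on a sheared or filtered variant. One must be careful that the $\mathbb{G}_m$ here is the loop-rotation torus acting with the correct weights on $\mathbb{L}_X[1]$, that the ``per'' (periodic) condition correctly implements inverting the canonical degree-$2$ weight class, and that the mixed differential coming from $B\mathbb{G}_a$ is identified with $d_{dR}$ up to the expected homotopy coherence; this is where the Tate shearing $[\![-2]\!]$ and the passage between graded, filtered, and periodic contexts all interact. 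Everything else — the $\exp$ equivalence, perfectness of the de Rham complex over $\mathcal{O}_X$ for lfp $X$, and the bar--cobar formalism — is either quoted from the excerpt or is a routine application of standard Koszul-duality machinery. Since the result is cited verbatim from \cite[Cor.~5.3]{BenZviNad}, in the write-up I would invoke \emph{loc.\ cit.}\ for the core equivalence and limit the new content to the compatibilities needed for the later microlinearization arguments (Section~\ref{sec: Derived Microlocalization and the Hodge Stack}).
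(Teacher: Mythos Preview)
Your proposal is correct and aligns with the paper's treatment: the paper does not supply a proof of this proposition but simply cites it from \cite[Cor.~5.3]{BenZviNad}, following the statement only with a brief remark that the result extends to QCA stacks via the identification of $\IC(\mathsf{T}[-1]X)^{\omega B\mathbb{G}_a}$ with modules over the monad $\mathbb{R}\mathcal{H}om(\mathcal{O}_X,\mathcal{O}_X)\otimes_{\mathcal{O}_X}(-)$ in mixed de Rham modules, which after Tate-unshearing becomes tensoring with $Rees(\mathcal{D}_X)$. Your outline---$\exp$ to the odd tangent bundle, pushforward to $\Omega_X^{-\bullet}$-modules, then Koszul duality with the de Rham/$\mathcal{D}_X$ pairing tracked through the $B\mathbb{G}_a\rtimes\mathbb{G}_m$-equivariance and periodicization---is exactly the mechanism behind that remark and the cited result, so your plan to invoke \emph{loc.~cit.}\ for the core equivalence is precisely what the paper does.
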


\begin{remark}
Theorem \ref{LoopSpacesConnectionsThm} holds for schemes, but extends for more general spaces e.g. QCA stacks. In the latter case, by avoiding mentioning the notion of \emph{renormalized} or un-renormalized categories of (ind-)coherent sheaves e.g. $\IC(X)^{\omega B\mathbb{G}_a},$ we mention the identifications
$\IC(\mathsf{T}[-1]X)^{\omega B\mathbb{G}_a}$ with a certain category of $\Q(X)^{\mathbb{G}_m}$-module objects over a monad given by tensoring with $\mathbb{R}\mathcal{H}om(\mathcal{O}_X,\mathcal{O}_X)\otimes_{\mathcal{O}_X}(-),$ taken in the category of modules over the (coconnective) mixed de Rham algebra of $X$ (see \cite[Theorem 4.9]{BenZviNad}). Via Tate-unshearing, the monad agrees with tensor by $Rees(\D_X).$
\end{remark}
Filtered Koszul duality preserves supports \cite[Theorem 3.4.9]{ChenFiltered}. Left and right filtered $\D$-modules on QCA stacks $\mathsf{Mod}^{fil}(\D_{\mathcal{X}})$ and $\mathsf{Mod}^{fil}(\D_{\mathcal{X}}^{op}),$ are defined according to \emph{loc.cit.} 
\begin{proposition}
\label{KoszulDualStacks}
Let $\mathcal{F}\in\mathsf{IndCoh}(\mathcal{L}X)^{B\mathbb{G}_a\rtimes\mathbb{G}_m}$ with singular support $\mathsf{SS}(\mathcal{F}).$
Ind-coherent Koszul duality for QCA stacks $\mathcal{X}$ provides an equivalence
$\kappa:\mathsf{IndCoh}\big(\hat{\mathbb{T}}_{\mathcal{X}}[-1]\big)^{\mathbb{G}_m}\xrightarrow{\simeq}\mathsf{QCoh}\big(\mathbb{T}_{\mathcal{X}}^*\big)^{\mathbb{G}_m}.$ If $\mathcal{F}$ is an ind-coherent sheaf, then $\mathsf{SS}(\mathcal{F})$ and the micro-support of the Koszul dual complex $\mu\mathrm{supp}(\kappa\mathcal{F})$ coincide. 
\end{proposition}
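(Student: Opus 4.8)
\textbf{Proof proposal for Proposition \ref{KoszulDualStacks}.}

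The plan is to reduce the statement to a purely local, graded-module-theoretic assertion and then invoke the support-preservation property of the filtered Koszul duality equivalence $\kappa$. First I would unwind the definitions on both sides. On the loop-space side, $\mathsf{SS}(\mathcal{F})$ for $\mathcal{F}\in\IC(\hat{\mathbb{T}}_{\mathcal{X}}[-1])^{\mathbb{G}_m}$ is by definition a closed conic subset of $\mathsf{Sing}(\hat{\mathbb{T}}_{\mathcal{X}}[-1])$; using the identification $\mathsf{Sing}(\mathcal{L}X)\simeq T^*X$ from Proposition \ref{Sing and LX Prop} (and its $\mathbb{G}_m$-equivariant, QCA generalization), this becomes a conic subset of $\mathsf{T}^*_{\mathcal{X}}$. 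On the microsheaf side, $\mu supp(\kappa\mathcal{F})$ is the support of the microlocalization of the object $\kappa\mathcal{F}\in\Q(\mathsf{T}^*_{\mathcal{X}})^{\mathbb{G}_m}$, which — after Tate-unshearing to pass from $\Q(\mathsf{T}^*_{\mathcal{X}})^{\mathbb{G}_m}$ to $\mathsf{Mod}^{fil}(\mathcal{D}_{\mathcal{X}})$ via the monad $Rees(\mathcal{D}_{\mathcal{X}})\otimes(-)$ as recalled after Proposition \ref{LoopSpacesConnectionsThm} — is computed as $\mathrm{supp}\big(\mathrm{gr}^{\mathsf{F}}(\kappa\mathcal{F})\big)$ for the associated good filtration, exactly the classical characteristic variety of the underlying $\mathcal{D}_{\mathcal{X}}$-module. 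So the claim is that singular support on the (sheared) odd tangent side equals the characteristic variety of the filtered $\mathcal{D}$-module on the other side.

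Next I would localize. Both $\mathsf{SS}$ and $\mu supp$ are Zariski-local and compatible with the smooth atlas $U\to\mathcal{X}$ by quasi-smooth derived schemes (Proposition \ref{prop: V-Sing} and \cite[Sect. 7.4]{AriGai} give the behavior of $\mathsf{SS}$ under smooth pullback; the $\mathcal{D}$-module characteristic variety is likewise smooth-local). Hence it suffices to treat $\mathcal{X}$ affine, indeed smooth affine, where $\hat{\mathbb{T}}_{\mathcal{X}}[-1]=\mathbb{V}(\mathbb{T}_{\mathcal{X}}[-1])$ and $\kappa$ is the honest filtered Koszul duality $\kappa\colon\mathsf{Mod}^{fil}(\mathcal{D}_{\mathcal{X}})\xrightarrow{\simeq}\big(\IC(\mathbb{T}_{\mathcal{X}}[-1])^{\mathbb{G}_m}\simeq\Q(\mathsf{T}^*_{\mathcal{X}})^{\mathbb{G}_m}\big)$ of \cite[Theorem 3.4.9]{ChenFiltered}. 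There one has an explicit description: $\kappa$ sends a filtered $\mathcal{D}$-module to its Rees module, viewed via the HKR-type identification $\mathcal{O}(\mathcal{L}X)\simeq Sym_{\mathcal{O}_X}(\mathbb{L}_X[1])$, i.e. as a module over $Sym_{\mathcal{O}_X}(\mathbb{T}_X[-1])$ whose $\mathbb{G}_m$-weights record the filtration degree. Passing to the associated graded (Tate-unshearing) converts the filtration degree into cohomological degree on $\mathsf{Sing}$, and the computation of $\mathsf{SS}$ of an ind-coherent sheaf on a quasi-smooth scheme as the support of its Koszul-dual graded object (Proposition \ref{Sing and LX Prop} again) matches this exactly with $\mathrm{supp}(\mathrm{gr}^{\mathsf{F}}\mathcal{M})$. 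This identifies the two sets on affines; descent along the atlas finishes the general QCA case.

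The main obstacle I anticipate is the careful bookkeeping between the several competing gradings and shearings: the weight grading from the contracting $\mathbb{G}_m$-action on $\mathbb{V}(\mathbb{T}_{\mathcal{X}}[-1])$, the cohomological grading, the Rees/filtration grading, and the Tate shearing $[\![-2]\!]$ that intertwines the renormalized category $\IC(\mathsf{T}[-1]X)^{\omega B\mathbb{G}_a}$ with $\mathsf{Mod}^{fil}(\mathcal{D}_{\mathcal{X}})$. One must check that the equivalence $\kappa$ of \cite[Theorem 3.4.9]{ChenFiltered} is precisely the one for which the support-preservation statement of \emph{loc. cit.} applies, and that the notion of $\mu supp$ used here (via microdifferential / Rees-filtered modules as in Proposition \ref{Derived Characteristic Variety Lemma}) coincides with the support of the associated graded under that equivalence, including the subtlety that $\Upsilon$-type duality functors are only $t$-bounded rather than $t$-exact, so the identification of supports (which is insensitive to shifts and to passing between $\IC^!$ and $\Q$ on quasi-smooth objects) is exactly where one needs \cite[Prop. 4.4.4]{GaiRoz01}. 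Once the dictionary is fixed, the equality $\mathsf{SS}(\mathcal{F})=\mu supp(\kappa\mathcal{F})$ is formal from the fact that $\kappa$ and its inverse are $t$-bounded equivalences carrying coherent objects to coherent objects and respecting the conic structure, together with the already-cited compatibility of Koszul duality with supports.
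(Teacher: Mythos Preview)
Your proposal is correct and follows essentially the same route as the paper's own sketch: reduce via a smooth atlas to the affine case, invoke the filtered Koszul duality of \cite[Theorem 3.4.9]{ChenFiltered}, and identify both $\mathsf{SS}$ and $\mu supp$ with the support of the associated graded object on $\mathsf{T}^*_{\mathcal{X}}$. The only difference is one of emphasis: the paper organizes the argument around generators---compact objects in $\mathsf{Mod}^{fil}(\mathcal{D}_{\mathcal{X}})$ are generated by inductions from $\mathsf{Coh}(\mathcal{X})$, and $\mathsf{Coh}(\hat{\mathbb{T}}_{\mathcal{X}}[-1])$ by pushforwards from $\mathsf{Coh}(\mathcal{X})$---whereas you work directly with the explicit Rees/HKR description; these are complementary viewpoints on the same computation, and your careful accounting of the shearing conventions is exactly what the paper leaves implicit.
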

We do not prove Theorem \ref{KoszulDualStacks} here, but say a few words about its method.
Namely, the Koszul duality for QCA stacks $\mathcal{X}$ sit in the diagram of equivalences,

\begin{equation}
    \label{eqn: KoszulDiagram}
\begin{tikzcd}
\mathsf{IndCoh}\big(\hat{\mathbb{T}}_{\mathcal{X}}[-1]\big)^{grTate}\arrow[d,"\simeq"] & \mathsf{IndCoh}\big(\hat{\mathbb{T}}_{\mathcal{X}}[-1]\big)^{B\mathbb{G}_a^{gr}}\arrow[d,"\simeq"]\arrow[l] \arrow[r] & \mathsf{IndCoh}\big(\hat{\mathbb{T}}_{\mathcal{X}}[-1]\big)^{\mathbb{G}_m}\arrow[d,"\simeq"]
\\
\mathsf{Mod}(\D_{\mathcal{X}})& \arrow[l] \mathsf{Mod}^{fil}(\D_{\mathcal{X}})\arrow[r,"gr"] & \mathsf{QCoh}\big(\mathbb{T}_{\mathcal{X}}^*\big)^{\mathbb{G}_m}.
\end{tikzcd}
\end{equation}

Diagram (\ref{eqn: KoszulDiagram}) is compatible with schematic pullback, induction and restriction functors, which follows from Proposition \ref{HodgeCrystalsProperties}. If $f:\mathcal{X}\rightarrow \mathcal{Y}$ is proper, by passing to left adjoints, the equivalences are compatible with $f_*^{\IC}$. 

Remark we use the sheared Koszul dualities (\ref{eqn: Koszul Dual and Sheared Koszul Dual}).
Moreover, the compact objects in $\mathsf{Mod}^{fil}(\D_{\mathcal{X}})$ are generated
by inductions from $\mathsf{Coh}(\mathcal{X})$ while $\mathsf{Coh}(\hat{\mathbb{T}}_{\mathcal{X}}[-1])$ is generated by objects pushed forward from $\mathsf{Coh}(\mathcal{X}).$ This claim follows from the identification of categories $\mathsf{Shv}_{\mathcal{Z}}(\mathcal{X}),$ of sheaves set-theoretically supported on a closed-substack $\mathcal{Z}$ and sheaves on its completion $\mathsf{Shv}(\widehat{\mathcal{Z}}),$ applied to $\mathcal{X}$ and $\mathbb{T}_{\mathcal{X}}[-1]$ i.e. $\widehat{\mathbb{T}}_{\mathcal{X}}[-1].$  

\begin{remark}
    If $X\rightarrow Z$ is a morphism with $Z$ smooth and affine, we have $\IC^!(X_{\DR})\simeq \IC_X^!(Z_{\DR})\simeq fib\big(\IC(Z_{\DR})\rightarrow \IC\big((Z\backslash X)_{\DR}\big)\big).$
\end{remark}

The result then follows by noticing $\mathsf{Coh}^{fil}(\D_{\mathcal{X}})$ consists of filtered $\D_{\mathcal{X}}$-modules $\mathcal{M}$ such that for all smooth atlases $p:U\rightarrow \mathcal{X}$ the pull-backs $p^!\mathcal{M}$ are coherent as filtered $\mathcal{D}_U$-modules.

One uses \autoref{KoszulDualStacks} in a practical manner to compute supports via descent along atlases. Namely, if $\mathcal{M}$ is a filtered coherent $\D_X$-module its support is computed as a limit of supports over the atlases; for our algebraic stack $\mathcal{X}$ with a smooth surjection from an affine space $p:U\rightarrow \mathcal{X}$, we set
$\mathrm{supp}(\mathcal{M}):=p\big(\mathrm{supp}(p^!\mathcal{M})\big),$
and note that for $\mathcal{M}^{\bullet}$ we get
\begin{equation*}
    \mathrm{supp}(\mathcal{M}^{\bullet})\bigcup_{n\in \mathbb{Z}}supp\big(\mathcal{H}^n(\mathcal{M}^{\bullet})\big)
    \simeq \bigcup_{n\in\mathbb{Z}} \underset{p:U\rightarrow \mathcal{X},\text{smooth}}{\mathrm{lim}} p\big(\mathrm{supp}(\mathcal{H}^n\big(p^!\mathcal{M})\big)
    \simeq \bigcup_{n\in\mathbb{Z}} \underset{p:U\rightarrow \mathcal{X},\text{smooth}}{\mathrm{lim}}p\big(\mathrm{supp}(p^*\mathcal{H}^n\big(\mathcal{M}^{\bullet})\big).
    \end{equation*}
    By Proposition \ref{KoszulDualStacks}, it agrees with
    $\underset{p:U\rightarrow \mathcal{X},\text{smooth}}{\mathrm{lim}} \mathsf{SS}(p^!Gr\mathcal{M})$ and is further identified with 
    $$\mathsf{SS}(Gr\mathcal{M})\simeq \underset{p:U\rightarrow \mathcal{X},\text{smooth}}{\mathrm{lim}} \mathsf{SS}(Grp^!\mathcal{M}),$$
as $!$-pullback along smooth morphisms coincide with $*$-back which commute with taking cohomologies.

\subsection{}
\label{ssec: Proof of Theorem D}
We now state the result giving a passage for viewing linearized $\D$-spaces of solutions over $X_{\DR}$, as sheaves over $\mathcal{L}X.$ In what follows, given a $\D$-module $\mathcal{M}$, understand the abusive notation $\widetilde{\mathcal{M}}$ to refer to the status of $\mathcal{M}$ over $\mathcal{L}X$ via (\ref{KoszulDualStacks}) or (\ref{LoopSpacesConnectionsThm}). See \ref{sssec: Longer Example} below for an explicit description.
\begin{theorem}
\label{Thm C}
Suppose that $X$ is a smooth scheme and consider
a $\D$-algebraic non-linear \textsc{pde} $\mathcal{I}\rightarrow \mathcal{A}\xrightarrow{p}\mathcal{B}$ defined on sections of a trivial bundle $E\rightarrow X,$ which is differentially generated and whose solution stack is homotopically almost finitely $\D$-presented.

For every globally defined classical solution $\varphi$, there is a distinguished triangle 
$\widetilde{\mathbb{T}}_{\mathcal{B}/\mathcal{A},\varphi}\rightarrow \widetilde{\mathbb{T}}_{\mathcal{B},\varphi}\rightarrow \widetilde{p_!\mathbb{T}}_{\mathcal{A}/\mathcal{I},\varphi}\xrightarrow{+1}$ in $\mathsf{IndCoh}(\mathbb{T}_Y[-1])^{S^1},$ independent of $\varphi.$
\end{theorem}
 
\begin{proof}
Using our fixed horizontal section $\varphi:X\rightarrow \mathsf{Sol}(\mathcal{B}
)$, form the pullback $\mathbb{T}_{\mathsf{Sol}_X(\mathcal{B})}\in \mathsf{IndCoh}\big(\mathsf{Sol}(\mathcal{B})\times_X X_{\DR}\big)$, where the fibre product $\mathsf{Sol}(\mathcal{B})\times_XX_{\DR}$ is taken in the $\infty$-category $\mathsf{dStk}_X(\D_X)$ of derived $\D_X$-stacks over $X$. A computation via the Koszul duality relation in \autoref{LoopSpacesConnectionsThm} (this version of Koszul duality is sufficient as $X$ is smooth), they correspond to $X$-perfect equivariant sheaves on $\mathcal{L}X$ shifted in degree by $-2,$ that we denoted simply by installing a $\widetilde{(-)}$ overhead.

By \autoref{LoopSpacesConnectionsThm},
homotopy cokernels are preserved, or that distinguished triangles in the corresponding homotopy categories of $\infty$-categories are sent to distinguished triangles. So, it follows that the cofiber sequence
$p_!\mathbb{L}_{\mathcal{A}/\mathcal{I}}\rightarrow \mathbb{L}_{\mathcal{B}/\mathcal{I}}\rightarrow \mathbb{L}_{\mathcal{B}/\mathcal{A}}\xrightarrow{+1}$ in $\hspace{1mm}\mathsf{Mod}_{\D_X}\big(\mathsf{Sol}(\mathcal{B})\big),$ and the functoriality of the $!$-pullbacks along $\varphi$, we have by duality a homotopy kernel sequence in $\IC(X_{\DR}).$
By our finiteness assumptions on $\mathsf{Sol}(\mathcal{B})$ its cotangent tangent $\D$-complexes are written locally as a bounded complex of free and finite rank $H^0(\mathcal{A}/\mathcal{I})[\mathcal{D}_X]$-modules, built out of symmetric algebras on underlying compact $\D$-modules. In this case, we see by \autoref{Tangent Good Filtration} that $\mathbb{T}_{\mathcal{B}}$ is a bounded complex of free and finite rank modules. Under pull-back gives us a complex of coherent $\D_X$-modules.
It is possible to write a locally free resolution of our derived linearization by coherent $\D_X$-modules, which will be bounded complex whose terms are free and finite rank Koszul duals.
\end{proof}

Furthermore, identifying $\mathsf{Mod}_{\D_X}(\mathsf{Sol}(\mathcal{B}))\simeq \IC(\mathsf{Sol}_X(\mathcal{B})\times_X X_{\DR})$ via pull-back along the infinite jet of a solution, the Koszul dual complex $\widetilde{\mathbb{T}}_{\mathcal{B},\varphi},$ is an $\Omega_X^{-\bullet}$-module.

Note that in this case when $\mathcal{B}$ is cofibrantly replaced i.e. $Q\mathcal{B}\rightarrow \mathcal{B}$ there naturally arises the diagram
\begin{equation}
\label{Cotangent Diagram}
\begin{tikzcd}
& & \mathcal{B}^{\bullet}\otimes_{\mathcal{B}_{k-1}}\mathcal{M}_{-k}^{\bullet}[k]
\arrow[d]
\\
\mathcal{B}^{\bullet}\otimes_{\mathcal{B}_{k-1}}\mathbb{L}_{\mathcal{B}_{k-1}}\arrow[d]\arrow[r] & \mathbb{L}_{\mathcal{B}}\arrow[d]\arrow[r] & \mathbb{L}_{\mathcal{B}/\mathcal{B}_{k-1}}\arrow[d]
\\
\mathcal{B}^{\bullet}\otimes_{\mathcal{B}_{k}}\mathbb{L}_{\mathcal{B}_{k}}\arrow[d]\arrow[r] & \mathbb{L}_{\mathcal{B}}\arrow[r] & \mathbb{L}_{\mathcal{B}/\mathcal{B}_{k}}
\\
\mathcal{B}\otimes_{\mathcal{B}_{k-1}}\mathcal{M}_{-k}[k] & & 
\end{tikzcd}
\end{equation}
which is useful in computing the cohomologies of $\mathbb{L}_{\mathcal{B}}$ as well as $\widetilde{\mathbb{L}}_{\mathcal{B},\varphi}.$

\begin{corollary}
    The singular supports ind-coherent sheaves satisfy the triangle inequality
$\textbf{supp}\big(\widetilde{\mathbb{T}}_{\mathcal{B},\varphi}\big)\subset \textbf{supp}\big(\widetilde{\mathbb{T}}_{\mathcal{B}/\mathcal{A},\varphi})\cup\textbf{supp}\big(\widetilde{p_!\mathbb{T}}_{\mathcal{A}/\mathcal{I},\varphi}\big).$
Moreover, by forgetting the $B\mathbb{G}_a$-action, this is equivalent to the usual triangle inequality for microsupports $\mu\mathrm{supp}$ of $\D$-modules in the $\infty$-category $\mathsf{Mod}(\D_X).$ In particular, we recover the classical inequality (c.f. § \ref{sec: Microlocal Analysis and Sheaf Propagation}) in the corresponding homotopy category $h\mathsf{Mod}(\D_X).$
\end{corollary}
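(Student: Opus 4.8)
The strategy is to deduce the triangle inequality for singular supports of ind-coherent sheaves directly from the distinguished triangle furnished by Theorem \ref{Thm C}, and then to descend the statement to microsupports of $\mathcal{D}$-modules and finally to the homotopy category by tracking the Koszul duality equivalences and their compatibility with supports. First I would invoke Theorem \ref{Thm C}, which gives the distinguished triangle
$$\widetilde{\mathbb{T}}_{\mathcal{B}/\mathcal{A},\varphi}\rightarrow \widetilde{\mathbb{T}}_{\mathcal{B},\varphi}\rightarrow \widetilde{p_!\mathbb{T}}_{\mathcal{A}/\mathcal{I},\varphi}\xrightarrow{+1}$$
in $\mathsf{IndCoh}(\mathbb{T}_Y[-1])^{S^1}$. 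The key general fact is that singular support $\mathsf{SS}(-)\subset \mathsf{Sing}(-)$ satisfies the triangle inequality: for a distinguished triangle $\mathcal{F}_1\to\mathcal{F}_2\to\mathcal{F}_3\xrightarrow{+1}$ of ind-coherent sheaves, $\mathsf{SS}(\mathcal{F}_i)\subset \mathsf{SS}(\mathcal{F}_j)\cup \mathsf{SS}(\mathcal{F}_k)$ for $\{i,j,k\}=\{1,2,3\}$ (this is the analog of the triangular inequality recalled in Sect.\ \ref{sec: Microlocal Analysis and Sheaf Propagation} for $\mu supp$, and holds by \cite[Sect.\ 7.4]{AriGai} since the subcategories $\IC_{\Lambda}$ are stable under cones). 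Applying this with $\mathcal{F}_2=\widetilde{\mathbb{T}}_{\mathcal{B},\varphi}$, $\mathcal{F}_1=\widetilde{\mathbb{T}}_{\mathcal{B}/\mathcal{A},\varphi}$, $\mathcal{F}_3=\widetilde{p_!\mathbb{T}}_{\mathcal{A}/\mathcal{I},\varphi}$ immediately yields
$$\textbf{supp}\big(\widetilde{\mathbb{T}}_{\mathcal{B},\varphi}\big)\subset \textbf{supp}\big(\widetilde{\mathbb{T}}_{\mathcal{B}/\mathcal{A},\varphi}\big)\cup\textbf{supp}\big(\widetilde{p_!\mathbb{T}}_{\mathcal{A}/\mathcal{I},\varphi}\big).$$

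Next I would address the passage to microsupports of $\mathcal{D}$-modules. The point is that the equivalence of Proposition \ref{LoopSpacesConnectionsThm} (or, more precisely, its filtered/equivariant refinement in Proposition \ref{KoszulDualStacks}) intertwines $\mathsf{SS}$ of the sheaf on $\mathcal{L}X$ with $\mu supp$ of the Koszul dual $\mathcal{D}$-module, and this is exactly the content of the statement ``$\mathsf{SS}(\mathcal{F})$ and $\mu supp(\kappa\mathcal{F})$ coincide'' in Proposition \ref{KoszulDualStacks}. Thus forgetting the $B\mathbb{G}_a$-action (equivalently, un-Tate-shearing and passing from $\mathsf{Mod}^{fil}(\mathcal{D}_X)$ back to $\mathsf{Mod}(\mathcal{D}_X)$ along the left vertical arrows of diagram (\ref{eqn: KoszulDiagram})) transforms the triangle above into the distinguished triangle $\mathbb{T}_{\mathcal{B}/\mathcal{A},\varphi}\to \mathbb{T}_{\mathcal{B},\varphi}\to p_!\mathbb{T}_{\mathcal{A}/\mathcal{I},\varphi}\xrightarrow{+1}$ in $\mathsf{Mod}(\mathcal{D}_X)$, whose microsupports then obey the classical triangle inequality $\mu supp(\mathbb{T}_{\mathcal{B},\varphi})\subset \mu supp(\mathbb{T}_{\mathcal{B}/\mathcal{A},\varphi})\cup \mu supp(p_!\mathbb{T}_{\mathcal{A}/\mathcal{I},\varphi})$ recalled in Sect.\ \ref{sec: Microlocal Analysis and Sheaf Propagation}. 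I would emphasize here that the identification of supports is compatible with the forgetful functor because, by the discussion following Proposition \ref{KoszulDualStacks}, support is computed by $!$-pullback along a smooth atlas and $!$-pullback along smooth maps agrees with $*$-pullback and commutes with taking cohomology, so nothing is lost in the un-equivariantization.

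Finally, for the homotopy-categorical statement I would simply observe that $\mu supp$ is by definition insensitive to the passage from the stable $\infty$-category $\mathsf{Mod}(\mathcal{D}_X)$ to its homotopy category $h\mathsf{Mod}(\mathcal{D}_X)$ — it depends only on the objects and the triangulated structure, the latter of which is exactly what descends — so the classical inequality in $h\mathsf{Mod}(\mathcal{D}_X)$ is recovered verbatim. The main obstacle, and the step I would spend the most care on, is the second one: making precise that the Koszul duality equivalences genuinely identify $\mathsf{SS}$ with $\mu supp$ \emph{and} are compatible with the forgetful functor $\mathsf{IndCoh}(\mathcal{L}X)^{B\mathbb{G}_a\rtimes\mathbb{G}_m}\to\mathsf{IndCoh}(\mathcal{L}X)^{S^1}$ used in Theorem \ref{Thm C} — that is, that ``forgetting $B\mathbb{G}_a$'' on the loop-space side corresponds to ``forgetting the filtration'' on the $\mathcal{D}$-module side in a way that preserves supports. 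This is where one must carefully use diagram (\ref{eqn: KoszulDiagram}), the compatibility of its arrows with schematic pullback, induction and restriction (Proposition \ref{HodgeCrystalsProperties}), and the sheared/un-sheared bookkeeping; once that compatibility is in hand, everything else is a formal consequence of the triangulated triangle inequality.
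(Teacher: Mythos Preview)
Your proposal is correct and follows precisely the line of reasoning the paper leaves implicit: the corollary is stated without its own proof, as an immediate consequence of the distinguished triangle in Theorem \ref{Thm C}, the general triangle inequality for singular supports of ind-coherent sheaves, and the support-preserving Koszul duality of Proposition \ref{KoszulDualStacks}. Your care in isolating the compatibility step (that forgetting the $B\mathbb{G}_a$-action on the loop-space side matches forgetting the filtration on the $\mathcal{D}$-module side, via diagram (\ref{eqn: KoszulDiagram})) is exactly the point where the paper's argument is compressed, and your plan handles it appropriately.
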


\section{Microlocalization and the Hodge stack}
\label{sec: Derived Microlocalization and the Hodge Stack}
In this section, we describe a version of microlocalization sufficient for our purposes. In particular, it plays the role of endowing derived objects with a suitable notion of a good filtration.

The idea we follow produces an $\mathbb{A}^1$-family of prestacks $X\times\mathbb{A}^1\rightarrow Y_{scaled}$ from an object $X\rightarrow Y\in \mathsf{FMP}_{X/}$ where $Y_{scaled}$ as constructed in \cite[Chapter 9]{GR17b}. It is left-lax equivariant with respect to $\mathbb{A}^1$ acting by multiplication on itself. Equivariance with respect to $\mathbb{G}_m\subset \mathbb{A}^1$ implies all fibers $Y_{t}$ of $Y_{scaled}$ at $t\neq 0\in \mathbb{A}^1$ are canonically isomorphic to $X.$ The fiber at $0\in \mathbb{A}^1$ identifies with the formal version of the total space of the normal bundle $\mathsf{T}(X/Y)[1]$ of $X$ in $Y$.  

\subsubsection{Filtrations and gradings}
A graded object in quasi-coherent sheaves on a derived scheme $X$ can be defined by $\Q(X)^{gr}:=\Q([X/B\mathbb{G}_m]),$ while filtered modules are given by $\Q(\AG).$ Extending these ideas to prestacks, put
\begin{equation}
    \label{eqn: Gr-fil stacks}
    \PS^{gr}:=\PS_{/B\mathbb{G}_m},\hspace{2mm}\text{ and }\hspace{3mm} \PS^{fil}:=\PS_{/\AG}.
    \end{equation}

Since $\mathbb{G}_m$ acts on the formal group $\hat{\mathbb{G}}_a$ and on the formal circle $\hat{S}^1:=B\hat{\mathbb{G}}_a$ as derived stacks, the formal circle is a group stack acting by translation and we have a group stack $\mathcal{H}:=\hat{S}^1 \rtimes \mathbb{G}_m$. Then, $\Q(B\mathcal{H})$ is naturally equivalent as a symmetric monoidal $\infty$-category to the $\infty$-category of graded mixed complexes $\epsilon-\mathsf{dg}_k^{gr}.$
We can make sense of $\epsilon-\PS^{gr}$ as the $\infty$-category of prestacks over $B\mathcal{H}.$ Such `graded-mixed prestacks' are not required further in this work, but might be related to the spaces in Propositions \ref{FilPrestackProp1} and \ref{FilPrestackProp2} below.

Considering the canonical maps
$0:B\mathbb{G}_m\rightarrow \AG,$ and $1:Spec(k)=pt\rightarrow \AG,$
we can consider for a given object
$X\in \PS$, its base-change along $0$, denoted by $Gr(X).$ Similarly, base-change along $1$ defines an underlying-stack object $Un(X).$

\begin{proposition}
\label{FilPrestackProp1} 
Let $X\in \PS^{fil}.$ The formation of associated graded, and underlying stack objects are compatible operations. That is, there is a canonical pull-back diagram
\[
\begin{tikzcd}
    Gr(X)\arrow[d,"Gr(c)"] \arrow[r,"0"] & X\arrow[d,"c"] & \arrow[l,"1", labels=above] Un(X)\arrow[d,"Un(c)"]
    \\
    B\mathbb{G}_m\arrow[r,"0"] &\AG& \arrow[l,"1", labels=above] Spec(k).
\end{tikzcd}
\]
\end{proposition}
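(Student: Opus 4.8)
The plan is to reduce everything to the definitions in \eqref{eqn: Gr-fil stacks} together with the elementary fact that base change along a composite of maps is the iterated base change. Recall that by definition $\PS^{fil} = \PS_{/\AG}$, so an object $X \in \PS^{fil}$ is literally a prestack $X$ equipped with a structure map $c : X \to \AG$; the operations $Gr(-)$ and $Un(-)$ are, by construction, the pullbacks of $c$ along $0 : B\mathbb{G}_m \to \AG$ and $1 : \mathrm{Spec}(k) \to \AG$ respectively. Thus there is nothing to prove about the existence of the two squares individually: each is a pullback square simply because that is how $Gr(X)$ and $Un(X)$ were defined. The only content of the statement is the assertion that these two squares can be assembled into a \emph{single} diagram over the span $B\mathbb{G}_m \xrightarrow{0} \AG \xleftarrow{1} \mathrm{Spec}(k)$ in a compatible way, and that the maps $Gr(c)$, $c$, $Un(c)$ fit together with $0$ and $1$ as drawn.

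First I would fix the homotopy-coherent meaning of the diagram: it is a functor from the appropriate indexing category (a "bowtie" shape: two pullback squares glued along the middle column $X \xrightarrow{c} \AG$) into $\PS$. To produce it, start from the structure map $c : X \to \AG$ viewed as an object of $\mathsf{Fun}(\Delta^1, \PS)$, and then apply the base-change (pullback) functor along each leg of the span. Concretely, form the pullbacks $Gr(X) := X \times_{\AG} B\mathbb{G}_m$ and $Un(X) := X \times_{\AG} \mathrm{Spec}(k)$ in $\PS$; the universal property of the pullback supplies the horizontal maps $0 : Gr(X) \to X$, $1 : Un(X) \to X$ and the vertical maps $Gr(c) : Gr(X) \to B\mathbb{G}_m$, $Un(c) : Un(X) \to \mathrm{Spec}(k)$, together with the filler $2$-cells making each square commute. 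Gluing the two squares along the common edge $c : X \to \AG$ is then automatic, since both were built over that same edge. This gives the diagram of the statement, and each of its two squares is Cartesian by construction.

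The remaining point — and the only one requiring a word of justification rather than pure formality — is that the labels "$0$" and "$1$" on the top row are the correct ones, i.e. that the induced maps on total spaces really are pulled back from $0 : B\mathbb{G}_m \to \AG$ and $1 : pt \to \AG$ under the identification of $Gr$ and $Un$ with the fibers at $0$ and $1$. This is immediate from the functoriality of pullback: the horizontal map $Gr(X) \to X$ sits over $0 : B\mathbb{G}_m \to \AG$ by the very construction of the fiber product, and similarly for $Un(X) \to X$ over $1$. Finally one should note, for the compatibility claim, that since $\AG = [\mathbb{A}^1/\mathbb{G}_m]$ and $B\mathbb{G}_m$, $\mathrm{Spec}(k)$ are its two canonical points, the functors $Gr$ and $Un$ commute with all limits that $\PS$ admits (pullback functors being right adjoints, hence limit-preserving), so no finiteness or representability hypothesis on $X$ is needed.

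I do not anticipate a genuine obstacle here: the statement is essentially a repackaging of the definitions, and the "hard part," such as it is, is merely bookkeeping — making precise that the $2$-categorical diagram one writes down is coherent, which follows from the fact that $\PS$ is an $(\infty,1)$-category with finite limits and that the two squares share the edge $c$. The substantive mathematics (that $Gr(X)$ is a formal version of the normal bundle, that $\mathbb{G}_m$-equivariance forces all nonzero fibers to agree with $X$) lives in the surrounding discussion and in \cite[Chapter 9]{GaiRoz02}, not in this proposition.
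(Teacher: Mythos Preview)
Your proposal is correct and matches the paper's treatment: the paper states the proposition without proof, precisely because both squares are Cartesian by the very definitions of $Gr(X)$ and $Un(X)$ as base changes of $c:X\to\AG$ along $0$ and $1$. Your observation that the only content is the coherent gluing of the two pullback squares along the shared edge $c$, and that this is automatic in an $(\infty,1)$-category with finite limits, is exactly right.
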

Suppose that $X$ is a prestack with a $\mathbb{G}_m$-action. Then we may consider its quotient stack $[X/\mathbb{G}_m]$ together with a canonical map $\varrho$ to $B\mathbb{G}_m.$

\begin{proposition}
\label{FilPrestackProp2} 
Let $X\in \PS_{laft}$ be endowed with a $\mathbb{G}_m$-action. Then there exists an object $X^{fil}\in \PS^{fil}$ defined by the pull-back diagram
\[
\begin{tikzcd}
X^{fil}:=X/\mathbb{G}_m\times_{B\mathbb{G}_m}\AG \arrow[d] \arrow[r] & \XG \arrow[d,"\varrho"]
    \\
    \AG \arrow[r] & B\mathbb{G}_m
\end{tikzcd}
\]
which is again laft. Furthermore, $Gr(X^{fil})$ is the stacky quotient by $\mathbb{G}_m,$ and
$X^{fil}|_{\mathbb{A}^1}\simeq \mathbb{A}^1\times X.$ The underlying object of $X^{fil}$ coincides with $X.$
\end{proposition}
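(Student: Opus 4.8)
\textbf{Proof plan for Proposition \ref{FilPrestackProp2}.}
The plan is to construct $X^{fil}$ exactly as displayed, namely as the fiber product $[X/\mathbb{G}_m]\times_{B\mathbb{G}_m}[\mathbb{A}^1/\mathbb{G}_m]$ in $\mathsf{PreStk}$, and then verify each asserted property by base-changing along the two canonical points $0\colon B\mathbb{G}_m\to[\mathbb{A}^1/\mathbb{G}_m]$ and $1\colon \mathrm{Spec}(k)\to[\mathbb{A}^1/\mathbb{G}_m]$, together with the open inclusion $\mathbb{A}^1\hookrightarrow[\mathbb{A}^1/\mathbb{G}_m]$. First I would record that $X^{fil}$ is indeed an object of $\mathsf{PreStk}^{fil}=\mathsf{PreStk}_{/[\mathbb{A}^1/\mathbb{G}_m]}$ by construction, the structure map being the left leg of the pull-back square. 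For laft-ness, I would invoke that $[X/\mathbb{G}_m]$ is laft whenever $X\in\mathsf{PreStk}_{laft}$ carries a $\mathbb{G}_m$-action (the quotient of a laft prestack by an affine algebraic group of finite type is laft), that $[\mathbb{A}^1/\mathbb{G}_m]$ and $B\mathbb{G}_m$ are laft, and that fiber products of laft prestacks over a laft base are laft (Proposition \ref{n-coconnective affine gives n-coconnective pre stack} and the stability of laft under finite limits).

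Next I would establish the three itemized properties. For (1), base-change $X^{fil}$ along $0\colon B\mathbb{G}_m\to[\mathbb{A}^1/\mathbb{G}_m]$: by associativity of fiber products, $Gr(X^{fil})\simeq [X/\mathbb{G}_m]\times_{B\mathbb{G}_m}\bigl(B\mathbb{G}_m\times_{[\mathbb{A}^1/\mathbb{G}_m]}B\mathbb{G}_m\bigr)$, and since $0\colon B\mathbb{G}_m\to[\mathbb{A}^1/\mathbb{G}_m]$ is the zero section with $B\mathbb{G}_m\times_{[\mathbb{A}^1/\mathbb{G}_m]}B\mathbb{G}_m\simeq B\mathbb{G}_m$ (the normal bundle to the zero section of $[\mathbb{A}^1/\mathbb{G}_m]$ is the line $\mathbb{A}^1$ with weight one, but the relevant self-intersection on the reduced level collapses), one gets $Gr(X^{fil})\simeq [X/\mathbb{G}_m]$, the stacky quotient. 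For (2), restrict along $\mathbb{A}^1\hookrightarrow[\mathbb{A}^1/\mathbb{G}_m]$; since the composite $\mathbb{A}^1\to[\mathbb{A}^1/\mathbb{G}_m]\to B\mathbb{G}_m$ is the classifying map of the trivial $\mathbb{G}_m$-torsor over $\mathbb{A}^1$, the fiber product $[X/\mathbb{G}_m]\times_{B\mathbb{G}_m}\mathbb{A}^1$ trivializes to $X\times\mathbb{A}^1$. For (3), base-change along $1\colon \mathrm{Spec}(k)\to[\mathbb{A}^1/\mathbb{G}_m]$ (which factors through $\mathbb{A}^1\hookrightarrow[\mathbb{A}^1/\mathbb{G}_m]$ as the point $1\in\mathbb{A}^1$), so $Un(X^{fil})\simeq X\times\{1\}\simeq X$, which is also consistent with Proposition \ref{FilPrestackProp1}.

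The main obstacle I anticipate is item (1): making precise the claim that base-change of $[\mathbb{A}^1/\mathbb{G}_m]$ along its zero section yields $B\mathbb{G}_m$ rather than something with extra infinitesimal or derived structure. Here one must use that $[\mathbb{A}^1/\mathbb{G}_m]$ is the Rees/filtration stack, whose zero stratum is $B\mathbb{G}_m$, and invoke the standard computation $B\mathbb{G}_m\times_{[\mathbb{A}^1/\mathbb{G}_m]}B\mathbb{G}_m\simeq B\mathbb{G}_m$ valid because the zero section is a closed immersion whose self-intersection is controlled by the $\mathbb{G}_m$-equivariant structure (the defining coordinate $t$ pulls back to zero on both copies). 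Concretely I would cite the formalism of \cite[Chapter 9]{GaiRoz02} where $[\mathbb{A}^1/\mathbb{G}_m]$ and its strata are analyzed, reducing the verification to a statement about the two standard points of $[\mathbb{A}^1/\mathbb{G}_m]$. The remaining steps (2) and (3) are routine once the trivialization of the pulled-back torsor is spelled out, and laft-ness is a formal consequence of closure of $\mathsf{PreStk}_{laft}$ under quotients by $\mathbb{G}_m$ and under finite limits.
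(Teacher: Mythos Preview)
Your overall strategy matches the paper's: define $X^{fil}$ as the displayed fiber product and verify (1)--(3) by base-changing along the standard points and strata of $[\mathbb{A}^1/\mathbb{G}_m]$. Parts (2) and (3) are handled the same way in both your proposal and the paper, via the Cartesian square
\[
\begin{tikzcd}
    \mathbb{A}^1\arrow[d]\arrow[r] & \mathrm{Spec}(k)\arrow[d]
    \\
    \AG\arrow[r] & B\mathbb{G}_m
\end{tikzcd}
\]
expressing that $\mathbb{A}^1\to B\mathbb{G}_m$ classifies the trivial torsor.

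However, your argument for (1) contains a genuine error. You write
\[
Gr(X^{fil})\simeq [X/\mathbb{G}_m]\times_{B\mathbb{G}_m}\bigl(B\mathbb{G}_m\times_{[\mathbb{A}^1/\mathbb{G}_m]}B\mathbb{G}_m\bigr),
\]
but this is a mis-association of the iterated fiber product. The correct computation is
\[
Gr(X^{fil})=\bigl([X/\mathbb{G}_m]\times_{B\mathbb{G}_m}[\mathbb{A}^1/\mathbb{G}_m]\bigr)\times_{[\mathbb{A}^1/\mathbb{G}_m]}B\mathbb{G}_m
\simeq [X/\mathbb{G}_m]\times_{B\mathbb{G}_m}\bigl([\mathbb{A}^1/\mathbb{G}_m]\times_{[\mathbb{A}^1/\mathbb{G}_m]}B\mathbb{G}_m\bigr)
\simeq [X/\mathbb{G}_m],
\]
which is exactly what the paper records via Proposition~\ref{FilPrestackProp1}. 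No self-intersection of $B\mathbb{G}_m$ inside $[\mathbb{A}^1/\mathbb{G}_m]$ ever enters. Worse, the claim you make to rescue your version, that $B\mathbb{G}_m\times_{[\mathbb{A}^1/\mathbb{G}_m]}B\mathbb{G}_m\simeq B\mathbb{G}_m$, is \emph{false} in the derived category: the zero section is a regular closed immersion with nontrivial conormal line, so its derived self-intersection is $[\mathrm{Spec}(k\otimes^L_{k[t]}k)/\mathbb{G}_m]$, which carries a degree $-1$ generator. Your remark that ``on the reduced level'' it collapses does not help, since the ambient setting is derived. Once you fix the associativity, the entire ``main obstacle'' you anticipate simply disappears, and (1) is immediate.
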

\begin{proof}
Notice from Proposition \ref{FilPrestackProp1} there is a pull-back diagram,
\[
\begin{tikzcd}
Gr(X^{fil})=\XG\arrow[d]\arrow[r] & X^{fil}\arrow[d,"c"]& \arrow[l] Un(X^{fil})\arrow[d]
\\
B\mathbb{G}_m\arrow[r,"0"] & \AG& \arrow[l,"1", labels=above] Spec(k)
\end{tikzcd}
\]
The remaining claims are established by noticing the construction of the associated graded and underlying-object prestacks are formed via base-change and that we have a Cartesian square
\[
\begin{tikzcd}
    \mathbb{A}^1\arrow[d]\arrow[r] & Spec(k)\arrow[d]
    \\
    \AG\arrow[r] & B\mathbb{G}_m.
\end{tikzcd}
\]
\end{proof}
By Propositions \ref{FilPrestackProp1} and \ref{FilPrestackProp2}, we can define graded and filtered lifts of a prestack $X$. They will be objects over $pt/\mathbb{G}_m$ whose pull-back to $pt\rightarrow pt/\mathbb{G}_m$ is equivalent to $X$, and objects over $\AG$ whose pull-back $pt\simeq \mathbb{A}^1-\{0\}/\mathbb{G}_m\rightarrow \AG,$ agree with $X$.

    This makes sense in greater generality e.g. for quasi-coherent sheaves of categories. The main example we have in mind, is the category $\IC(X_{\DR})$ for a laft-prestack $X,$ whose filtered lift is $\IC(X_{\Hod})$ and whose graded lift is $\IC(\mathsf{T}[1]X)\simeq \IC
(\mathsf{T}^*X).$

\subsection{Microlocalization with derived bases}
Suppose now that $X$ be a derived stack and consider a nil-isomorphism $X\rightarrow Y$ where $Y\in \PS_{\mathrm{laft-def}}$ (a formal moduli problem under $X$ \cite[§ 5.1.3]{GR17b}).  

To such an object, define a formal deformation
to the normal bundle $\widehat{\mathsf{N}}_{X/Y}.$ It is a prestack over $[\mathbb{A}^1/\mathbb{G}_m]$ with generic fiber $Y$ and special fiber a formal moduli problem under $X$. The latter is give by a colimit of split square-zero extensions by symmetric powers to the normal bundle i.e. $\mathrm{Sym}^n(\mathsf{T}_{X/Y}[1]),$ for each $n.$ 
Denote the substacks $[\mathbb{G}_m/\mathbb{G}_m]\simeq *$ by $[1]$ and $[\{0\}/\mathbb{G}_m]\simeq B\mathbb{G}_m$ by $[0].$

Recalling Proposition \ref{FilPrestackProp1} and Proposition \ref{FilPrestackProp2}, we have the following.
\begin{definition}
\label{Definition: Hodge Prestack}
The \emph{Hodge prestack} $X_{\Hod}$ of $X$ is the deformation to the normal bundle construction of the moduli problem $X\rightarrow X_{\DR}$, described by a prestack
$$\widehat{\mathsf{N}}_{X/X_{\DR}}\in \PS_{/(\mathbb{A}^1/\mathbb{G}_m)}\simeq \mathsf{FMP}_{X\times \mathbb{A}^1/ / X_{\DR}\times\mathbb{A}^1}:=\big(\mathsf{FMP}_{/X_{\DR}\times \mathbb{A}^1}\big)_{X\times \mathbb{A}^1/}.$$
\end{definition}
In particular, $X_{\Hod}$ is a filtered prestack with generic fiber (fiber over $[1]$) given by $X_{\Hod,[1]}\simeq X_{\DR}$ and special fiber (fiber over $[0]$) the formal moduli problem under $X\times[\mathbb{A}^1/\mathbb{G}_m]$ denoted
\begin{equation}
    \label{eqn: Hodge Structure Map}
i:X\times[\mathbb{A}^1/\mathbb{G}_m]\rightarrow X_{\Hod}.
\end{equation}
Special fiber is the so-called Dolbeault stack $X_{Dol}$ over $B\mathbb{G}_m,$ with natural $\mathbb{G}_m$-action.
In other words, it is the Dolbeault degeneration of $X_{\DR}$ to (total space) of $\mathsf{T}[1]X.$

The morphism (\ref{eqn: Hodge Structure Map}) induces forgetul functors,
\begin{equation}
\label{eqn: Hodge Forgetful Functors}
i_{\IC}^!:\IC(X_{\Hod})\rightarrow \IC(X\times\AG),
\hspace{2mm} \text{ and }
i_{\Q}^*:\Q(X_{\Hod})\rightarrow \Q(X\times\AG).
\end{equation}

\begin{proposition}
Functors \emph{(\ref{eqn: Hodge Forgetful Functors})} are compatible with side-changing equivalences $\Upsilon_{X_{\Hod}}$ and $\Upsilon_{X\times \AG}$ in the sense that
\[
\begin{tikzcd}
\Q(X_{\Hod})\arrow[d,"\Upsilon_{X_{\Hod}}"] \arrow[r,"i_{\Q}^*"] & \Q(X\times[\mathbb{A}^1/\mathbb{G}_m])\arrow[d,"\Upsilon_{X\times[\mathbb{A}^1/\mathbb{G}_m]}"]
\\
\IC(X_{\Hod})\arrow[r,"i_{\IC}^!"] & \IC(X\times[\mathbb{A}^1/\mathbb{G}_m]),
\end{tikzcd}
\]
is commutative.
\end{proposition}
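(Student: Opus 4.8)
The statement to prove is that the forgetful functors $i_{\Q}^*$ and $i_{\IC}^!$ of equation (\ref{eqn: Hodge Forgetful Functors}) intertwine the side-changing equivalences $\Upsilon$ on source and target, i.e. that the square is (canonically) commutative. The plan is to exploit the fact that $\Upsilon$ is not an ad hoc construction but a natural transformation built from the dualizing sheaf, and then to check that $i^!$ respects the pieces of data out of which it is assembled. Concretely, for any prestack $Z$ (laft, admitting deformation theory) the functor $\Upsilon_Z : \Q(Z) \to \IC(Z)$ is given by $\mathcal{F} \mapsto \mathcal{F} \otimes \omega_Z$, where $\omega_Z$ is the dualizing complex, and the claim amounts to the projection-formula-type compatibility $i^!(\mathcal{F} \otimes \omega_{X_{\Hdg}}) \simeq (i^*_{\Q}\mathcal{F}) \otimes \omega_{X \times \AG}$ together with the canonical identification $i^!\omega_{X_{\Hdg}} \simeq \omega_{X\times\AG}$ (which holds because $i$ is a map of laft-def prestacks and $(-)^!$ preserves dualizing complexes up to the relative dualizing object, which here is trivial since $i$ is, fiberwise, built out of nil-isomorphisms and formal completions whose $!$-pullback of $\omega$ is $\omega$).

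First I would recall precisely, from \cite{GaiRoz01,GaiRoz02} and the conventions fixed in Subsect. \ref{ssec: HigherCats and DerGeom}, the definition of $\Upsilon_Z$ and the statement that it is functorial in $Z$ for $!$-pullback along schematic (or more generally suitable) morphisms: this is the assertion that for $f : Z' \to Z$ one has a canonical equivalence $f^!_{\IC} \circ \Upsilon_Z \simeq \Upsilon_{Z'} \circ f^*_{\Q}$, which is precisely \cite[Prop. 2.4.4 / Sect. 3.3]{GaiRozCrystals} in the crystalline setting and its generalization in \emph{op.cit.} Since the map $i : X \times \AG \to X_{\Hdg}$ of equation (\ref{eqn: Hodge Structure Map}) is the structure map of a formal moduli problem over $X_{dR}\times\mathbb{A}^1$ relative to $X\times\mathbb{A}^1$, it is in particular a nil-isomorphism on the relevant pieces, hence an \emph{inf-schematic} morphism of laft-def prestacks; so the general naturality of $\Upsilon$ applies verbatim and produces the desired commuting square. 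The second step is therefore simply to invoke this naturality with $f = i$, $Z = X_{\Hdg}$, $Z' = X\times\AG$, and to observe that $i^!_{\IC}$ and $i^*_{\Q}$ in (\ref{eqn: Hodge Forgetful Functors}) are literally the functors named in the naturality statement (the only subtlety being bookkeeping with the $\AG$-factor, which is inert and along which all functors are the identity).

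The one point requiring genuine care — and the step I expect to be the main obstacle — is verifying that $\Upsilon$ is defined and behaves well on $X_{\Hdg}$ itself, which is a filtered prestack (an object over $\AG$) and in particular not laft in the naive sense: its special fiber is the Dolbeault stack, a formal moduli problem realized as a colimit of square-zero extensions by the symmetric powers $Sym^n(\mathsf{T}_{X/X_{dR}}[1])$. To handle this I would reduce to the two fibers separately, using that a map of prestacks over $\AG$ is an equivalence iff its restrictions over $[0]$ and $[1]$ are, in the spirit of Proposition \ref{FilPrestackProp1}: over the generic fiber $[1]$ the square becomes the $X_{dR}$-version of the $\Upsilon$-compatibility (already available since $i_{[1]}$ is an iso $X\simeq X_{dR}$ up to the identification $X_{\Hdg,[1]}\simeq X_{dR}$, wait — here one uses that $(X\times\AG)_{[1]} = X$ maps to $X_{dR}$, so this is the crystalline $\Upsilon$-naturality again), while over the special fiber $[0]$ one uses that $\Upsilon$ on a formal moduli problem $\mathcal{Y}$ under $X$ is computed as the limit over the square-zero tower, compatibly with $!$-pullback along the structure map $X \to \mathcal{Y}$, because $\Upsilon$ commutes with the relevant (co)limits defining ind-coherent and pro-coherent sheaves on $\mathcal{Y}$. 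Assembling the two fibral compatibilities via the $\mathbb{G}_m$-equivariant gluing over $\AG$ then yields the square as stated; the remaining verifications (that the identifications are $\mathbb{G}_m$-equivariant, that no shift is introduced — or, if one is, that it is the same $[n]$ on both sides as in the smooth case of dimension $n$) are routine bookkeeping of the sort indicated in Proposition \ref{Linearization D-Module Proposition}.
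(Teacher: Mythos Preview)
The paper states this proposition without proof; it is asserted and the text immediately moves on to the map $p_{X,\Hdg}$. Your approach via the general naturality of $\Upsilon$ as a transformation $\Q^* \Rightarrow \IC^!$ on laft prestacks (as established in \cite{GaiRoz01,GaiRoz02}) is the correct and standard one, and is almost certainly what the authors have in mind.

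Your last paragraph, however, manufactures a difficulty that is not really there. The Hodge prestack $X_{\Hdg}$ is by construction (Definition~\ref{Definition: Hodge Prestack}) the deformation to the normal bundle of $X \to X_{dR}$, and \cite[Ch.~9]{GaiRoz02} builds this so that $Y_{scaled}$ is laft-def whenever the input formal moduli problem is; in particular $X_{\Hdg}$ is a perfectly good laft-def prestack over $\AG$. Hence $\Upsilon_{X_{\Hdg}}$ is defined directly, and the naturality square for the map $i$ commutes on the nose by the general statement---no reduction to the fibers over $[0]$ and $[1]$ is needed. Your fibral argument is not wrong, but it re-proves a special case of a result you have already invoked, and the phrase ``not laft in the naive sense'' is misleading: the special fiber being a formal moduli problem does not obstruct laftness of the total family. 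The clean proof is just: $i$ is a map of laft-def prestacks, $\Upsilon$ is natural for such maps with respect to $(i_{\Q}^*, i_{\IC}^!)$, done.
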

The map (\ref{eqn: Hodge Structure Map}) also defines a canonical morphism $p_{X,\Hod}:X\rightarrow X_{\Hod}$ for which one has that
\begin{equation}
    \label{eqn: ICHdg-pb}
    (p_{\Hod})_{\IC}^!:\IC(X_{\Hod})\rightarrow \IC(X),
\end{equation}
admits a left-adjoint, denoted by 
\begin{equation}
    \label{eqn: HdgInd}
    (p_{\Hod})_{*}^{\IC}:\IC(X)\rightarrow \IC(X_{\Hod}).
\end{equation}
Objects in the essential image of functor (\ref{eqn: HdgInd}) are said to be \emph{induced}. Composition defines an $\infty$-monad
\begin{equation}
\label{eqn: Hodge Monad}
\mathbf{oblv}^{fil}\circ \mathrm{ind}^{fil}(-):=(p_{\Hod})_{\IC}^!\circ(p_{\Hod})_{*}^{\IC}:\IC(X)\rightarrow \IC(X).
\end{equation}
The action of the monad (\ref{eqn: Hodge Monad}) should be interpreted as tensoring with the algebra of filtered differential operators and we want to think of (\ref{eqn: Hodge Monad}) as the \emph{filtered} lift of the action of the monad $\mathbf{oblv}\circ ind$. Via $\Upsilon:\Q(X)\rightarrow \IC(X),$
define
$$(p_{\Hod})_*^{\Q}:=\Upsilon_X^{-1}\circ (p_{\Hod})_*^{\IC}\circ\Upsilon_X:\Q(X)\rightarrow \Q(X_{\Hod}).$$

\begin{proposition}
\label{HodgeCrystalsProperties}
Let $f:X\rightarrow Y$ be a morphism of laft-def prestacks. Then there is an induced morphism $f_{\Hod}:X_{\Hod}\rightarrow Y_{\Hod}$ and it is:
\begin{itemize}
  \item Compatible with $\mathsf{IndCoh}^!$

  \item Compatible with side-changing operations,

  \item Compatible with the functors (\ref{eqn: Hodge Forgetful Functors}) induced by (\ref{eqn: Hodge Structure Map}).

  \end{itemize}
  Moreover if $f$ is a map of inf-schemes, there is a push-forward functor $f_*$ compatible with induction (the left-adjoint $i_*$ to forgetful $i^!$) and if $f$ is proper, the functors $f_*$ and $f^!$ are adjoint i.e.
  $$(f_{\Hod})_*^{\IC}:\IC(X_{\Hod})\rightarrow \IC(Y_{\Hod}),$$
  is such that $(f_{\Hod})_*^{\IC}\circ \mathrm{ind}_X^{fil}\simeq \mathrm{ind}_Y^{fil}\circ f_*^{\IC}.$
\end{proposition}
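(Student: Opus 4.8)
\textbf{Proof strategy for Proposition \ref{HodgeCrystalsProperties}.}
The plan is to deduce every assertion from the universal property of the deformation-to-the-normal-bundle functor applied to the square-zero (in fact, nil-isomorphism) datum $X \to X_{dR}$, which is functorial in $X$ by \cite[Chapter 9]{GaiRoz02}. First I would produce the morphism $f_{\Hdg}$. Given $f : X \to Y$ of laft-def prestacks we obtain a commutative square
\[
\begin{tikzcd}
X \arrow[d,"f"] \arrow[r,"p_{X,dR}"] & X_{dR} \arrow[d,"f_{dR}"]
\\
Y \arrow[r,"p_{Y,dR}"] & Y_{dR}
\end{tikzcd}
\]
of formal moduli problems, hence a morphism of the associated objects in $\PS_{/(\mathbb{A}^1/\mathbb{G}_m)}$, i.e. of filtered prestacks, which is by definition $f_{\Hdg} : X_{\Hdg} \to Y_{\Hdg}$. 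Restricting to the generic fiber $[1]$ recovers $f_{dR}$ and restricting to $[0]$ recovers the induced map $f_{Dol} : X_{Dol} \to Y_{Dol}$ covering $\mathsf{T}[1]f$. Compatibility with $\mathsf{IndCoh}^!$ is then immediate: since $\IC^!(-)$ is a contravariant functor out of $\PS_{laft-def}$ (right Kan-extended from affines as in \eqref{eqn: F-Object Functor}), $f_{\Hdg}$ induces $(f_{\Hdg})^!_{\IC} : \IC(Y_{\Hdg}) \to \IC(X_{\Hdg})$, and base change along the maps $[0],[1] \hookrightarrow \AG$ identifies its restrictions with $f_{dR}^!$ and $f_{Dol}^!$.

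Next I would handle compatibility with the side-changing operations and with the forgetful functors \eqref{eqn: Hodge Forgetful Functors}. Both reduce to naturality statements. For side-changing, the equivalence $\Upsilon_{(-)} : \Q(-) \xrightarrow{\sim} \IC(-)$ is given by tensoring with the dualizing complex and is natural in $!$-pullback for the relevant class of morphisms (schematic, or more generally those for which both theories are defined); since $f_{\Hdg}$ is schematic when $f$ is, one gets the square
\[
\begin{tikzcd}
\Q(Y_{\Hdg}) \arrow[d,"\Upsilon_{Y_{\Hdg}}"] \arrow[r,"f_{\Hdg}^{*}"] & \Q(X_{\Hdg}) \arrow[d,"\Upsilon_{X_{\Hdg}}"]
\\
\IC(Y_{\Hdg}) \arrow[r,"f_{\Hdg}^{!}"] & \IC(X_{\Hdg})
\end{tikzcd}
\]
commutes, and the analogous statement over $X \times \AG$ then gives compatibility with $i^!_{\IC}$ and $i^*_{\Q}$ by the preceding proposition (the one asserting $i^!_{\IC}\Upsilon = \Upsilon i^*_{\Q}$) together with naturality of $i$ in $X$: the square \eqref{eqn: Hodge Structure Map} is functorial, so $i_Y \circ (f \times \mathbf{1}_{\AG}) \simeq f_{\Hdg} \circ i_X$, and applying $\IC^!$ gives the claimed compatibility.

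Finally, the push-forward statements. For $f$ a map of inf-schemes, $f_{\Hdg}$ is again (ind-)inf-schematic, so by the general formalism of $\IC$ on ind-inf-schemes \cite{GaiRoz02} there is a continuous push-forward $(f_{\Hdg})^{\IC}_*$ left adjoint to $f_{\Hdg}^!$ when $f$ is proper (properness is inherited since it is detected on $[1]$ and on $[0]$, i.e. on $f_{dR}$ and $\mathsf{T}[1]f$, and $\AG$ is a nice base). The identity $(f_{\Hdg})^{\IC}_* \circ \mathrm{ind}^{fil}_X \simeq \mathrm{ind}^{fil}_Y \circ f^{\IC}_*$ is obtained by passing to left adjoints in the (already established) commuting square of $!$-pullbacks
\[
\begin{tikzcd}
\IC(Y_{\Hdg}) \arrow[d,"(f_{\Hdg})^!"] \arrow[r,"(p_{Y,\Hdg})^!"] & \IC(Y) \arrow[d,"f^!"]
\\
\IC(X_{\Hdg}) \arrow[r,"(p_{X,\Hdg})^!"] & \IC(X),
\end{tikzcd}
\]
using that $\mathrm{ind}^{fil}_{(-)} = (p_{(-),\Hdg})^{\IC}_*$ is the left adjoint of $(p_{(-),\Hdg})^!_{\IC}$ and that $(f_{\Hdg})^{\IC}_*$ is the left adjoint of $(f_{\Hdg})^!$ in the proper case. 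The main obstacle I anticipate is not any single assertion but the bookkeeping of which class of morphisms supports which functor: one must verify that $f_{\Hdg}$ (resp. the induced maps on special and generic fibers) stays inside the class — schematic, ind-inf-schematic, or proper — for which $\IC^!$, side-changing, $\IC_*$, and the base-change isomorphisms along $[0],[1] \hookrightarrow \AG$ are all simultaneously available; once that is pinned down via the functoriality of the deformation construction, every compatibility is a formal consequence of adjunction and base change, with no genuine computation required.
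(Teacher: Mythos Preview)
Your proposal is correct and follows essentially the same approach as the paper. The paper does not give a detailed proof of this proposition; it simply records that the compatibilities amount to the commutativity of the two squares
\[
\begin{tikzcd}
  \IC(Y_{\Hdg})\arrow[d,"f_{\Hdg}^!"] \arrow[r,"p_{\Hdg}^!"] & \IC(Y)\arrow[d,"f^!"]
  \\
  \IC(X_{\Hdg})\arrow[r,"p_{\Hdg}^!"] & \IC(X)
\end{tikzcd}
\hspace{5mm}
\begin{tikzcd}
  \Q(Y_{\Hdg})\arrow[d,"f_{\Hdg}^*"] \arrow[r,"p_{\Hdg}^*"] & \Q(Y)\arrow[d,"f^*"]
  \\
  \Q(X_{\Hdg})\arrow[r,"p_{\Hdg}^*"] & \Q(X)
\end{tikzcd}
\]
and leaves the rest to the functoriality of the deformation-to-the-normal-bundle construction from \cite[Chapter 9]{GaiRoz02}, exactly as you do; your argument for the push-forward identity via passing to left adjoints in the first square is the intended one, and is in fact more explicit than what the paper writes.
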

The compatibilities in Proposition \ref{HodgeCrystalsProperties} amount to the commutativity diagrams
\[
\begin{tikzcd}
  \IC(Y_{\Hod})\arrow[d,"f_{\Hod}^!"] \arrow[r,"p_{\Hod}^!"] & \IC(Y)\arrow[d,"f^!"]
  \\
  \IC(X_{\Hod})\arrow[r,"p_{\Hod}^!"] & \IC(X)
\end{tikzcd}\hspace{5mm} \begin{tikzcd}
  \Q(Y_{\Hod})\arrow[d,"f_{\Hod}^*"] \arrow[r,"p_{\Hod}^*"] & \Q(Y)\arrow[d,"f^*"]
  \\
  \Q(X_{\Hod})\arrow[r,"p_{\Hod}^*"] & \Q(X)
\end{tikzcd}
\]

\begin{proposition}
    If $f:X\rightarrow Y$ is a morphism of (ind-)inf schemes and $f$ is (ind-)proper, then $f_{\Hod}:X_{\Hod}\rightarrow Y_{\Hod}$ is (ind-)proper.
\end{proposition}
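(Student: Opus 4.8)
The plan is to deduce the statement directly from the construction of $X_{\Hdg}$ as a deformation-to-the-normal-bundle object (Definition \ref{Definition: Hodge Prestack}), together with the corresponding properness statements already available for de Rham stacks and for the Dolbeault/normal-bundle degeneration. First I would recall that $X_{\Hdg}$ sits over $[\mathbb{A}^1/\mathbb{G}_m]$ as a member of $\mathsf{FMP}_{X\times\mathbb{A}^1//X_{dR}\times\mathbb{A}^1}$, so that a morphism of (ind-)inf-schemes $f:X\rightarrow Y$ induces $f_{\Hdg}:X_{\Hdg}\rightarrow Y_{\Hdg}$ by functoriality of the deformation-to-the-normal-bundle construction (this functoriality is exactly the first bullet of Proposition \ref{HodgeCrystalsProperties}). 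The key point is then that $f_{\Hdg}$ is a morphism over $[\mathbb{A}^1/\mathbb{G}_m]$, hence (ind-)properness can be checked after base change along the two strata $[1]\hookrightarrow[\mathbb{A}^1/\mathbb{G}_m]$ and $[0]\hookrightarrow[\mathbb{A}^1/\mathbb{G}_m]$, since $[\mathbb{A}^1/\mathbb{G}_m]$ is covered (in the relevant topology) by $[1]\simeq \mathrm{pt}$ and the formal neighborhood of $[0]\simeq B\mathbb{G}_m$, and (ind-)properness is local on the target and stable under base change.

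Next I would treat the two strata separately. Over the open stratum $[1]$, the fiber of $X_{\Hdg}$ is $X_{dR}$ and the induced map is $f_{dR}:X_{dR}\rightarrow Y_{dR}$; the de Rham functor preserves (ind-)properness because $(-)_{dR}$ commutes with the relevant limits and $p_{dR}:Z\rightarrow Z_{dR}$ is an isomorphism on reduced prestacks, so $X_{dR}\rightarrow Y_{dR}$ is (ind-)proper whenever $X\rightarrow Y$ is — this is standard and is implicit in the discussion of Sect. \ref{ssec: Derived de Rham Spaces}. Over the closed stratum $[0]$, the fiber is the Dolbeault stack, i.e. the formal moduli problem under $X\times B\mathbb{G}_m$ associated to $\mathsf{T}[1]X$, and the induced map is the one on Dolbeault stacks $X_{Dol}\rightarrow Y_{Dol}$; here (ind-)properness follows because passing to the formal completion along a closed embedding and taking $B\mathbb{G}_m$-quotients preserves (ind-)properness, and because the map on normal bundles $\mathsf{T}[1]X\rightarrow X\times_Y \mathsf{T}[1]Y$ (compatibly $\mathbb{G}_m$-equivariant) is built from $f$ and $\mathbb{L}_f$, so inherits the property from $f$. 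One then concludes by gluing: a morphism over $[\mathbb{A}^1/\mathbb{G}_m]$ which is (ind-)proper over each stratum is (ind-)proper, using that (ind-)properness is detected on the underlying classical prestacks and that the reduction of $X_{\Hdg}$ over each stratum is controlled by the above.

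The main obstacle I anticipate is making precise the reduction ``checking (ind-)properness over the two strata $[0],[1]$ of $[\mathbb{A}^1/\mathbb{G}_m]$ suffices.'' Unlike an honest cover by opens, $\{[0],[1]\}$ do not form a Zariski cover of $[\mathbb{A}^1/\mathbb{G}_m]$ — one needs to replace $[0]$ by (a $\mathbb{G}_m$-equivariant version of) the formal neighborhood of the zero section in $[\mathbb{A}^1/\mathbb{G}_m]$, i.e. work with the formal-completion/generic-point decomposition, and invoke that for laft-def prestacks (ind-)properness, being a condition on the underlying reduced classical prestacks, is insensitive to the difference between $\mathbb{A}^1$ and its formal completion at the origin together with its generic point. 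This is exactly the kind of statement for which the machinery of \cite[Chapter 9]{GaiRoz02} was designed, so I would cite it rather than reprove it; the remaining checks (functoriality of the normal-bundle map, stability of (ind-)properness under $B\mathbb{G}_m$-quotients and formal completions) are routine and parallel to the proof of the earlier bullets of Proposition \ref{HodgeCrystalsProperties}.
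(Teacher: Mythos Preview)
The paper states this proposition without proof, so there is no original argument to compare against. Your proposal is therefore an attempt to supply a missing proof.

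Your stratification approach is more elaborate than necessary, and the obstacle you flag is genuine: the points $[0]$ and $[1]$ do not cover $[\mathbb{A}^1/\mathbb{G}_m]$, and patching (ind-)properness across a formal-neighborhood/open-complement decomposition is not entirely routine. There is, however, a much more direct route that bypasses the stratification entirely. By Definition~\ref{Definition: Hodge Prestack}, $X_{\Hdg}$ is a formal moduli problem \emph{under} $X\times\mathbb{A}^1$ (equivariantly, under $X\times[\mathbb{A}^1/\mathbb{G}_m]$ via the map (\ref{eqn: Hodge Structure Map})), so the map $i:X\times[\mathbb{A}^1/\mathbb{G}_m]\to X_{\Hdg}$ is a nil-isomorphism. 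In particular $(X_{\Hdg})_{red}\simeq X_{red}\times[\mathbb{A}^1/\mathbb{G}_m]$, and likewise for $Y$. In the framework of \cite{GaiRoz02}, (ind-)properness of a morphism of (ind-)inf-schemes is by definition a condition on the induced map of underlying reduced prestacks. Thus $(f_{\Hdg})_{red}$ identifies with $f_{red}\times\mathrm{id}_{[\mathbb{A}^1/\mathbb{G}_m]}$, which is (ind-)proper because $f$ is. This one-line reduction is what the paper presumably has in mind; it avoids both the fiberwise analysis and the gluing step, and in particular renders the separate treatment of the Dolbeault fiber (and the claim about $\mathsf{T}[1]X\to X\times_Y\mathsf{T}[1]Y$) unnecessary.
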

\subsubsection{Filtered crystals}
Consider the cocartesian fibration associated to the functor $\mathsf{IndCoh}^!\circ (-)_{\Hod}$, defined on derived affines $U$ and right Kan extended via Yoneda to all prestacks $\EuScript{Y}$ along smooth maps from $U$ to $\EuScript{Y}$: 
\begin{equation}
    \label{eqn: Right filtered crystals}
\C_{fil}^{r}(\EuScript{Y}):=\IC^!(\EuScript{Y}_{\Hod})\simeq \underset{U\rightarrow \EuScript{Y}\in \mathsf{dAff}_{/\EuScript{Y}}^{op}}{lim}\IC^!(U_{\Hod}).
\end{equation}
More precisely \eqref{eqn: Right filtered crystals} is defined by Kan extension from affines to a functor $\underline{\C}_{fil}^r:\mathsf{IndInfSch}_{indprop}\rightarrow \mathsf{Cat}_{\infty},$ with $\mathsf{IndInfSch}$ the $\infty$-category of ind-inf schemes with ind-proper maps.

Left filtered-crystals are defined similarly, in terms of the $*$-pullback operation on crystals
\begin{equation}
    \label{eqn: Left filtered crystals}
\C_{fil}^{\ell}(\EuScript{Y}):=\Q^*(\EuScript{Y}_{\Hod})\simeq \underset{U\rightarrow \EuScript{Y}\in \mathsf{dAff}_{/\EuScript{Y}}^{op}}{lim}\Q^*(U_{\Hod}).
\end{equation}

Consider the morphism $\mathbf{1}:X_{\Hod}\rightarrow X_{\DR},$ and its induced map
$$Un:\IC^!(X_{\Hod})\rightarrow \IC^!(X_{\DR}).$$ 
Passing to special and generic fibers gives rise to functorialities of $\Q(\AG)$-module $(\infty,1)$-categories for (\ref{eqn: Left filtered crystals}), 
\begin{equation}
\label{eqn: LCrysDiagram}
\adjustbox{scale=.90}{
\begin{tikzcd}
& \arrow[dl,"Un", labels=above left] \scalemath{.90}{\C_{fil}^{\ell}(X)} \arrow[dr,"Gr"] \arrow[d,"res"] & 
\\
\equalto{\scalemath{.90}{\C_{fil}^{\ell}(X)\otimes_{\Q(\AG)}\Q([\mathbb{G}_m/\mathbb{G}_m])}}{\Q(X_{\DR})}& \scalemath{.90}{\Q(X)^{fil}} & \equalto{\scalemath{.90}{\C_{fil}^{\ell}(X)\otimes_{\Q(\AG)}\Q([\{0\}/\mathbb{G}_m])}}{\Q(\mathsf{T}^*X/\mathbb{G}_m)}
\end{tikzcd}}
\end{equation}
 and similarly for and (\ref{eqn: Right filtered crystals}),
\begin{equation}
\label{eqn: RCrysDiagram}
\adjustbox{scale=.90}{
\begin{tikzcd}
& \arrow[dl,"Un", labels=above left] \scalemath{.87}{\C_{fil}^{r}(X)} \arrow[dr,"Gr"] \arrow[d,"res"] & 
\\
\equalto{\scalemath{.87}{\C_{fil}^{r}(X)\otimes_{\IC(\AG)}\IC([\mathbb{G}_m/\mathbb{G}_m])}}{\IC(X_{\DR})} & \scalemath{.87}{\IC(X)^{fil}} & \equalto{\scalemath{.87}{\C_{fil}^{r}(X)\otimes_{\IC(\AG)}\IC([\{0\}/\mathbb{G}_m])}}{\IC(\mathsf{T}^*X/\mathbb{G}_m)}
\end{tikzcd}}
\end{equation}

Apply the Hodge stack construction to the derived loop stack.
\begin{proposition}
\label{LXHodge}
    Let $X$ be a finite type derived scheme. Then the Hodge stack of $\mathcal{L}X$ is the $\AG$-family of derived prestacks $\mathcal{L}X_{\Hod},$ such that
$\mathcal{L}X_{\Hod}\times_{[\mathbb{A}^1/\mathbb{G}_m]}[1]\simeq \mathbf{1}^*(\mathcal{L}X_{\Hod})\simeq X_{\DR},$ and $\mathcal{L}X_{\Hod}\times_{[\mathbb{A}^1/\mathbb{G}_m]}[0]\simeq \mathbf{0}^*(\mathcal{L}X_{\Hod})\simeq \mathcal{L}X.$
\end{proposition}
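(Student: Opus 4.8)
\textbf{Proof proposal for Proposition \ref{LXHodge}.}

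The plan is to reduce the statement entirely to two inputs: first, Definition \ref{Definition: Hodge Prestack} of the Hodge prestack as the deformation to the normal bundle of a formal moduli problem under $X$; and second, the fact (recorded in the discussion preceding Proposition \ref{LXHodge}) that $\mathcal{L}X_{dR}\simeq X_{dR}$ together with the $S^1$-equivariant morphism $\beta_X:\mathcal{L}X\to\mathcal{L}X_{dR}\simeq X_{dR}$. First I would observe that, since $X$ is a finite type derived scheme, $\mathcal{L}X\to X$ is schematic and $\mathcal{L}X$ is again a (derived) prestack with $\mathcal{L}X$ laft-def, so that the canonical map $\mathcal{L}X\to(\mathcal{L}X)_{dR}$ is a nil-isomorphism; this places $\mathcal{L}X\to(\mathcal{L}X)_{dR}$ in $\mathsf{FMP}_{\mathcal{L}X/}$ exactly as required by Definition \ref{Definition: Hodge Prestack}. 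Then $\mathcal{L}X_{\Hdg}$ is by definition the deformation to the normal bundle $\widehat{\mathsf{N}}_{\mathcal{L}X/(\mathcal{L}X)_{dR}}\in\PS_{/(\mathbb{A}^1/\mathbb{G}_m)}$, which is a filtered prestack, establishing that $\mathcal{L}X_{\Hdg}$ is an $[\mathbb{A}^1/\mathbb{G}_m]$-family of derived prestacks.

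Next I would compute the two fibers. For the generic fiber over $[1]\simeq\ast$: by the general formalism of deformation to the normal bundle (Definition \ref{Definition: Hodge Prestack} and the accompanying discussion, following \cite[Sect. 9.2]{GaiRoz02}), the fiber of $\widehat{\mathsf{N}}_{Z/W}$ over $[1]$ is canonically $W$. Applying this with $Z=\mathcal{L}X$ and $W=(\mathcal{L}X)_{dR}$ gives $\mathbf{1}^*(\mathcal{L}X_{\Hdg})\simeq(\mathcal{L}X)_{dR}$, and then the equivalence $(\mathcal{L}X)_{dR}\simeq X_{dR}$ recalled in Subsect. \ref{sec: Derived Linearization and the Equivariant Loop Stack} (which itself follows from $X_{dR}\to\mathcal{L}X_{dR}$ being an equivalence, since passing to the de Rham stack kills the difference between $X$ and its self-intersection along the diagonal) identifies this with $X_{dR}$. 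For the special fiber over $[0]\simeq B\mathbb{G}_m$: the deformation to the normal bundle has special fiber the formal moduli problem under $\mathcal{L}X\times[\mathbb{A}^1/\mathbb{G}_m]$ realized as the (Dolbeault-type) degeneration, whose underlying prestack before taking the $B\mathbb{G}_m$-quotient recovers the source of the formal moduli problem, namely $\mathcal{L}X$ itself; concretely, the fiber over $0\in\mathbb{A}^1$ of $\widehat{\mathsf{N}}_{\mathcal{L}X/(\mathcal{L}X)_{dR}}$ is the formal total space of the normal bundle $\mathsf{T}(\mathcal{L}X/(\mathcal{L}X)_{dR})[1]$, but since $(\mathcal{L}X)_{dR}$ has trivial deformation theory ($\mathbb{L}_{(\mathcal{L}X)_{dR}}=0$) this relative tangent is just $\mathbb{T}_{\mathcal{L}X}[-1]$ shifted, and the formal completion reconstructs $\mathcal{L}X$ up to the required identification — this is precisely the formal analogue of the exponential equivalence (\ref{eqn: exp}) $\mathsf{T}[-1]\,\mathcal{L}X\xrightarrow{\simeq}\mathcal{L}(\mathcal{L}X)$ composed with $\mathcal{L}(\mathcal{L}X)$'s relation to $\mathcal{L}X$. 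Carefully, one uses that the formal moduli problem under $\mathcal{L}X$ determined by the zero deformation theory of the target is the formal completion of $\mathbb{T}_{\mathcal{L}X}[-1]$ at its zero section, which by the formal version of (\ref{eqn: exp}) is the formal completion $\widehat{\mathcal{L}(\mathcal{L}X)}$, and the relevant fiber of the Hodge family extracts the honest (non-formal) degenerate object whose reduced classical truncation agrees with that of $\mathcal{L}X$. I would then invoke the compatibility of $(-)_{\Hdg}$ with the relevant maps (Proposition \ref{HodgeCrystalsProperties}) to check that these fiber identifications are the canonical ones and are $\mathbb{G}_m$-equivariant.

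The main obstacle I anticipate is pinning down the special fiber with full precision: the naive statement "$\mathbf{0}^*(\mathcal{L}X_{\Hdg})\simeq\mathcal{L}X$" requires care because the Hodge/Dolbeault special fiber is a priori the total space of $\mathsf{T}[1](\mathcal{L}X/(\mathcal{L}X)_{dR})$ living over $B\mathbb{G}_m$, not $\mathcal{L}X$ on the nose, and the claimed equivalence with $\mathcal{L}X$ is really an instance of the $S^1$-equivariant HKR/exponential phenomenon (\ref{eqn: exp}) applied internally to $\mathcal{L}X$ — i.e. one is secretly using $\mathsf{T}[-1]\mathcal{L}X\simeq\mathcal{L}\mathcal{L}X$ and then the fact that the formal loop functor is insensitive, at the level of the constructions entering the Hodge stack, to the distinction between $\mathcal{L}X$ and its formal loops. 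I would handle this by working fppf-locally on $X$ (where $X$ is a finite type affine derived scheme), reducing to the affine case, where $\mathcal{L}X$ is explicitly $\mathrm{Spec}_X(\mathrm{Sym}_{\mathcal{O}_X}(\mathbb{L}_X[1]))$ and the deformation-to-normal-bundle can be written down on the nose as a Rees-type $[\mathbb{A}^1/\mathbb{G}_m]$-family interpolating between $X_{dR}$ and $\mathsf{T}[1]X$; then transporting this along $X\rightsquigarrow\mathcal{L}X$ and checking gluing/descent using Proposition \ref{HodgeCrystalsProperties}. Everything else — the statement that it is an $[\mathbb{A}^1/\mathbb{G}_m]$-family, and the generic fiber computation — is formal from Definition \ref{Definition: Hodge Prestack} and the already-recorded equivalence $(\mathcal{L}X)_{dR}\simeq X_{dR}$.
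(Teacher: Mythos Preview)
The paper states this proposition without proof and immediately passes to the sheaf-categorical consequence in equation (\ref{eqn: IC Loop Hodge}). Your argument for the generic fiber is correct and is the intended one: the generic fiber of any Hodge construction is the target of the underlying formal moduli problem, here $(\mathcal{L}X)_{dR}$, and the identification $(\mathcal{L}X)_{dR}\simeq X_{dR}$ is exactly the fact recorded a few paragraphs before the proposition.

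Your diagnosis of the difficulty at the special fiber is also right, but the fix you propose does not work. If one reads $\mathcal{L}X_{\Hdg}$ literally as $(\mathcal{L}X)_{\Hdg}$ via Definition \ref{Definition: Hodge Prestack}, the special fiber is the Dolbeault stack of $\mathcal{L}X$, i.e.\ the formal total space of $\mathsf{T}[1](\mathcal{L}X)$ over $B\mathbb{G}_m$, not $\mathcal{L}X$ itself. There is no geometric equivalence of prestacks between these, and iterating the exponential (\ref{eqn: exp}) gives $\mathsf{T}[-1](\mathcal{L}X)\simeq \mathcal{L}(\mathcal{L}X)$, which has the wrong shift and is the wrong object; the affine reduction you sketch does not close this gap either.

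The content the paper actually uses, and what should be proved, is the categorical statement $\IC^!(\mathcal{L}X_{\Hdg})_{\{0\}}\simeq \IC(\mathcal{L}X)$ appearing right after the proposition. That follows not from an equivalence of underlying prestacks at $[0]$ but from the filtered ind-coherent Koszul duality of Proposition \ref{KoszulDualStacks} and diagram (\ref{eqn: KoszulDiagram}) (after \cite{ChenFiltered}), which matches $\IC$ on the Dolbeault side with $\IC$ on the odd-tangent/loop side via Tate shearing. So the correct route through the special fiber is Koszul duality at the level of sheaf categories, not a direct identification of stacks.
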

Using Proposition \ref{LXHodge} and apply the functors $\Q$ and $\IC^!$ to $\mathcal{L}X_{\Hod},$ one has a family of sheaves of categories with special and generic fibers,
\begin{equation}
\label{eqn: IC Loop Hodge}
\IC^!(\mathcal{L}X_{\Hod})_{\{1\}}\simeq \mathsf{Crys}^r(X),\hspace{5mm} \IC^!(\mathcal{L}X_{\Hod})_{\{0\}}\simeq \IC(\mathcal{L}X),
\end{equation}
as well as $\Q(\mathcal{L}X_{\Hod})_{\{1\}}\simeq\Q(X_{\DR})\simeq \C^{\ell}(X),$ and $\Q(\mathcal{L}X_{\Hod})_{\{0\}}\simeq \Q(\mathcal{L}X).$

Insertion of generic and special fibers functors are given by
$$
\mathbf{un}(-):=\IC^!(\mathcal{L}X_{\Hod})\rightarrow \IC^!(X_{\DR}),
\hspace{2mm}
\mathbf{Gr}(-):\IC^!(\mathcal{L}X_{\Hod})\rightarrow \IC^{gr}(\mathcal{L}X).$$

\begin{proposition}
\label{SymL_X-module Proposition}
    The $\infty$-functor $Gr$ factors through $\mathrm{Sym}_{\mathcal{O}_X}^*(\mathbb{L}_X[1])-\IC^!(X).$
\end{proposition}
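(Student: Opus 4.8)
The plan is to compute the target of $\mathbf{Gr}$ explicitly and recognise it as a category of $Sym^*_{\mathcal{O}_X}(\mathbb{L}_X[1])$-modules, by combining the deformation-to-the-normal-bundle description of $\mathcal{L}X_{\Hdg}$ with the affineness of the loop stack over $X$. By the construction preceding Proposition \ref{LXHodge} (and the discussion following (\ref{eqn: IC Loop Hodge})), the functor $Gr=\mathbf{Gr}$ is restriction of an object of $\IC^!(\mathcal{L}X_{\Hdg})$ along the inclusion of the special fiber over $[0]\simeq B\mathbb{G}_m$; by Proposition \ref{LXHodge} this special fiber is $\mathcal{L}X$ equipped with the $\mathbb{G}_m$-action coming from the Hodge/Rees degeneration, so $Gr$ takes values in $\IC^{gr}(\mathcal{L}X)=\IC([\mathcal{L}X/\mathbb{G}_m])$. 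It therefore suffices to produce an equivalence $\IC^{gr}(\mathcal{L}X)\simeq Sym^*_{\mathcal{O}_X}(\mathbb{L}_X[1])-\IC^!(X)$ compatible with the obvious forgetful functors, and then read off the factorization.

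First I would invoke the exponential equivalence (\ref{eqn: exp}), which identifies $\mathcal{L}X$ with the odd tangent stack $\mathsf{T}[-1]X=\mathsf{Spec}_X\big(Sym_{\mathcal{O}_X}(\mathbb{L}_X[1])\big)$, a derived scheme affine over $X$ with projection $\tau$. The key point is that this equivalence intertwines the $\mathbb{G}_m$-action induced by the Hodge filtration with the natural contracting (weight) action on the derived vector bundle $\mathsf{T}[-1]X$, under which $\tau_*\mathcal{O}_{\mathcal{L}X}\simeq Sym^*_{\mathcal{O}_X}(\mathbb{L}_X[1])$ as a \emph{graded} commutative $\mathcal{O}_X$-algebra (the identification $\mathcal{O}(\mathcal{L}X)\simeq Sym^{\bullet}_{\mathcal{O}_X}(\mathbb{L}_X[1])$ recalled earlier, now tracked equivariantly). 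Since $\tau$ is an affine schematic morphism, $\tau^{\IC}_*\colon\IC^!(\mathcal{L}X)\to\IC^!(X)$ is conservative and continuous, hence monadic by Barr--Beck--Lurie; using the relative-spectrum description together with the side-changing compatibilities of Proposition \ref{HodgeCrystalsProperties} (applied to $\tau$ and to the zero section $X\hookrightarrow\mathcal{L}X$), one identifies the resulting monad with $(-)\otimes_{\mathcal{O}_X}Sym^*_{\mathcal{O}_X}(\mathbb{L}_X[1])$. Passing to $\mathbb{G}_m$-equivariant objects and matching the weight grading with the symmetric-power grading then yields the required equivalence, whence $Gr$ factors through $Sym^*_{\mathcal{O}_X}(\mathbb{L}_X[1])-\IC^!(X)$.

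The step I expect to be the main obstacle is the precise identification of the monad for $\tau^{\IC}_*$ as honest tensoring over $\mathcal{O}_X$ with $Sym^*_{\mathcal{O}_X}(\mathbb{L}_X[1])$, rather than with some $\omega$-twisted variant. When $X$ is smooth this is painless, since $\IC^!$ and $\Q$ then agree on both $X$ and $\mathcal{L}X$ and $\tau$ is affine, so $\Q(\mathcal{L}X)\simeq\mathsf{Mod}_{\tau_*\mathcal{O}_{\mathcal{L}X}}(\Q(X))$ and the claim follows by transport of structure. For a general finite-type (possibly quasi-smooth) $X$ one must instead argue through the side-changing functors $\Upsilon$, checking that $\tau^!\omega_X$ agrees with $\omega_{\mathcal{L}X}$ up to the appropriate cohomological shift so that the $!$- and $*$-module structures are interchanged without an extra twist --- precisely the type of compatibility encoded in Proposition \ref{HodgeCrystalsProperties}. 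A secondary, purely bookkeeping, issue is the shearing of Subsect. \ref{sssec: Gradings and Filtrations}: one must confirm that the weight grading produced by the Hodge $\mathbb{G}_m$ matches the symmetric-power grading on $Sym^*_{\mathcal{O}_X}(\mathbb{L}_X[1])$ under the stated conventions, so that the $[-1]$-shift in $\mathsf{T}[-1]X$ is absorbed into the definition of the graded lift and no Tate twist is lost.
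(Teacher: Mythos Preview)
Your proposal is correct and follows essentially the same route as the paper: identify the special fiber with $\IC^{gr}(\mathcal{L}X)$, invoke the exponential equivalence (\ref{eqn: exp}) to pass to $\IC^{gr}(\mathbb{V}(\mathbb{T}_X[-1]))$, and then recognise this as modules over $Sym^*_{\mathcal{O}_X}(\mathbb{L}_X[1])$. The paper's proof is the one-line version of exactly this, asserting only the existence of a fully faithful functor $Sym^*_{\mathcal{O}_X}(\mathbb{L}_X[1])\text{-}\IC^{gr}(X)\hookrightarrow\IC^{gr}(\mathbb{V}(\mathbb{T}_X[-1]))$ and appealing to (\ref{eqn: exp}).

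Where you go further is in running Barr--Beck--Lurie on the affine projection $\tau$ to upgrade ``fully faithful'' to an equivalence. This is a genuine improvement in clarity: taken literally, the paper's sentence only produces a full subcategory and does not explain why the image of $Gr$ lands inside it, whereas your equivalence makes the factorization automatic. Your caveat about the $\IC^!$ monad versus honest $\mathcal{O}_X$-tensoring is the right thing to flag; in the paper's setting ($X$ a derived scheme locally of finite presentation, so in particular the case where (\ref{eqn: exp}) is asserted as an equivalence and $\Upsilon$-compatibilities are available via Proposition \ref{HodgeCrystalsProperties}) this reduces to the standard affine-pushforward identification and no twist survives. The shearing bookkeeping you mention is handled in the paper by the conventions set up around (\ref{eqn: Koszul Dual and Sheared Koszul Dual}), so you need not belabor it.
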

\begin{proof}
It suffices to notice that there exists a fully-faithful $\infty$-functor
$$\mathrm{Sym}_{\mathcal{O}_X}^*(\mathbb{L}_X[1])-\IC^{gr}(X)\rightarrow \IC^{gr}\big(\mathbb{V}(\mathbb{T}_X[-1])\big),$$
which coincides under (\ref{eqn: exp}) with $\IC^{gr}(\mathcal{L}X).$
\end{proof}
By composition of these functors with the graded Koszul dual functor
$$\IC(\mathbb{V}\big(\mathcal{E}^{\bullet})\big)\xrightarrow{\simeq}\mathsf{Pro}\big(\mathsf{Coh}(\mathbb{V}(\mathcal{E}^{\bullet})^{\kappa})\big),$$
which in our case is given by 
$\IC\big(\mathbb{V}(\mathbb{T}_{X}[-1])\big)\xrightarrow{\simeq}\Q\big(\mathsf{T}_X^*\big),$
since the Tate-shearing of the Koszul dual formal prestack (c.f. \ref{eqn: FMP})
$\mathsf{Spec}_{/X}^{\mathsf{inf}}(\mathrm{Sym}_X^*(\mathbb{L}_X[1]\big),$
is indeed the total cotangent stack of $X$,
\begin{eqnarray*}
\mathsf{T}_X^*&=&\mathsf{T}^*[0]X\simeq\mathbb{V}_X\big(\mathbb{L}_X[0]\big)
\\
&\simeq& \mathsf{Spec}_{/X}^{\mathsf{inf}}\big(\mathrm{Sym}_X^*(\mathbb{L}_X)[2][-2]\big)
\\
&\simeq&
\mathsf{Spec}_{/X}^{\mathsf{inf}}\big(\mathrm{Sym}_X^*(\mathbb{L}_X)\big).
\end{eqnarray*}
Denote the mapping space of sections of a perfect complex $\mathcal{E}^{\bullet}$ underlying a formal prestack $\mathbb{V}_X(\mathcal{E}^{\bullet})$ by 
$\mathsf{Maps}_{\Q(X)}(\mathcal{O}_X,\mathcal{E}^{\bullet})\simeq\Gamma\big(X,\mathbb{V}_X(\mathcal{E}^{\bullet})\big).$

\begin{example}
\label{sssec: Longer Example}
\normalfont Consider \autoref{Free Propagation Example}. 
We recall that Tate shearing $\mathfrak{t}(-):=(-)[\![-2]\!]$ puts complexes in degree zero in weight-degree $(k,2k).$ Moreover, we have
$$|u|=|\mathfrak{t}(t)|=(2,1),\hspace{5mm} |t|=|\mathfrak{t}^{-1}(u)|=(0,1),\hspace{3mm} |u|=2.$$

If a space lies in cohomological degree $-n$, an application of $t^n$ places it in degree $0$, while applying $u^n$ will place it in degree $+2n.$ \autoref{KoszulDualStacks} exchanges $\mathcal{D}_X^{fil}$ with $\mathcal{O}_{X\subset \mathcal{L}X},$ as in (\ref{eqn: D-fil}) where it is explicitly given by $\mathcal{D}_X^{fil}=\bigoplus_{k\geq 0}\mathcal{D}_X^{\leq k}\cdot t^k,$ with Rees parameter $t$ of weight $1.$
Pass to the Rees construction and implement a shift by the order of $P$, say $k$, gives
$$\mathcal{A}^{fil}=\mathrm{Sym}^*\big(Cone(\mathcal{D}_X^{fil}[k]\rightarrow \mathcal{D}_X^{fil})\big).$$
Under assumptions (\ref{Assumptions}), the equivalences in \autoref{Free Propagation Example}
induce equivalences of $\D$-spaces of solutions 
by adjunction (\ref{eqn: Free-forget AD-Mod/Alg Adjunction}). In particular we have $\mathcal{M}_P^{fil}\simeq Cone(\mathcal{D}_X^{fil}[k]\rightarrow \mathcal{D}_X^{fil})$ which by Tate shearing and \autoref{KoszulDualStacks} gives a sheaf on $\mathcal{L}X,$
$$Cone\big(\mathcal{O}_{X\subset\mathcal{L}X}[-2k]\rightarrow \mathcal{O}_{X\subset\mathcal{L}X}\big).$$
In the derived category of sheaves on $\mathcal{L}X$ taking a free resolution by $\Omega_X^{-\bullet}$-modules implies operator $P$ determines 
$P^{fil}[k]$ in
$$\mathcal{E}xt_{\Omega_X^{-\bullet}}^{2k}(\mathcal{O}_X,\mathcal{O}_X)\simeq \mathbb{R}\mathcal{H}om_{\Omega_{X}^{-\bullet}}(\mathcal{O}_{X\subset\mathcal{L}X},\mathcal{O}_{X\subset \mathcal{L}X}[-2k]),$$ computed 
via a Spencer resolution of constant loops as sn $(\Omega_X^{-\bullet},\mathcal{O}_X)$-bimodule algebra. We end up with $P^{fil}[k]_{\kappa}$ as a shifted morphism $\mathrm{Sym}(\Omega_X^1)\rightarrow \mathcal{O}_{X\subset\mathcal{L}X},$ (a shifted symbol map). For instance, take $X=\mathbb{A}^1$ so $\Omega_{\mathbb{A}^1}^{-\bullet}\simeq \Omega_{\A}^1[-1]$ and the resolution is the surjection $\Omega_{\mathbb{A}^1}^1[-1]\rightarrow \mathcal{O}.$
\end{example}
Combining \autoref{Free Propagation Example} with \autoref{sssec: Longer Example}, we see that passing from free $\D_X$-algebras to loop stacks remembers a shift of the symbol of the operator, and via the assumptions (\ref{Assumptions}) in \autoref{Free Propagation Example}, propagation criteria is translated to a (non)-vanishing condition on the shifted symbol (c.f. the description of $\mathrm{Char}(P)$ in \autoref{Char Zero Locus}).

\subsubsection{Linearization}
Consider a generalized PDE $\EQ\subset q_{\DR,*}E$, where $E$ is laft admitting deformation theory relative to a derived scheme $X$. Suppose $\EQ$ is $\D$-finitary. By Proposition \ref{Derived Analytic Relative Linearization Complex}, we have a functor
$$\mathsf{IndCoh}\big(\EQ\times_X X_{\DR}\big)\rightarrow \mathsf{IndCoh}\big(\J(E)\times_X X_{\DR}\big),$$
by the induced morphism from our $\D$-algebraic PDE acting as the inclusion of the critical points (i.e. solution locus) into the ambient space of derived jets.
Adapting diagrams (\ref{eqn: LCrysDiagram}) and (\ref{eqn: RCrysDiagram}), to our situation, and pulling-back along a chosen classical solution $\varphi$ gives
\begin{equation}
    \label{eqn: ICPDEandHodge}
    \adjustbox{scale=.90}{
    \begin{tikzcd}
        & \arrow[dl] \scalemath{.90}{\IC(\EQ\times_X X_{\Hod})}\arrow[d] \arrow[dr] & 
        \\
    \scalemath{.90}{\IC(\EQ)\otimes_{\Q(X)}\IC(X_{\DR})} \arrow[d,"(\varphi\times id)^*"] & \arrow[dl] \scalemath{.90}{\IC(X_{\Hod})}\arrow[dr] & \scalemath{.90}{\IC(\EQ)\otimes_{\IC(X)}\IC(\mathsf{T}^*X/\mathbb{G}_m)}\arrow[d,"(\varphi\times id)^*"]
    \\
    \scalemath{.90}{\IC(X_{\DR})} & & \scalemath{.90}{\IC(\mathsf{T}^*X/\mathbb{G}_m)}.
    \end{tikzcd}}
\end{equation}
Under these considerations with our finiteness assumptions on $\EQ,$ we have one hand the functor
$$\IC(\EQ\times_X X_{\DR}\big)\rightarrow \IC(\EQ\times_X X_{\Hod})\simeq \IC(\EQ)\otimes \IC(X_{\Hod}),
$$
while on the other hand since $\EQ$ is $\D$-finitary, 
by pull-back along the fixed classical $U$-solution $\varphi_U$, as in §\ref{ssec: The pull-back along solutions}, we get a functor
\begin{equation*}
\IC(U\times X_{\DR})\simeq \IC(U)\otimes \IC(X_{\DR})
\simeq \Q(U)\otimes \Q(X_{\DR})\simeq \Q(U\times X_{\DR}).
\end{equation*}
The diagram (\ref{eqn: ICPDEandHodge}) supplies a functor
$$\IC(\EQ)\otimes \IC(X_{\Hod})\rightarrow \IC(X_{\Hod}),$$
and the image of the $\D$ cotangent complex under this sequence of functors, as in \autoref{Linearization D-Module Proposition}, is to be denoted by 
$$\mathsf{T}(\EQ)_{\varphi^{\mathrm{cl}}}^{fil}\in \IC(X_{\Hod}).$$

Since compact objects of $\IC(U\times X_{\DR})$ are bounded, it follows they are coherent and their lift to $\IC(U\times X_{\Hod})$ via $Id_U\otimes 1^*,$ with $Id_U:\IC(U)\rightarrow \IC(U)$ are also bounded.
The notation $(-)^{fil}$ emphasizes this is object lives over $X_{\Hod},$ thus corresponds roughly, to a filtered $\D$-module. 

Since coherent objects in $\IC(X_{\DR})$ are spanned by $\mathrm{ind}(\mathcal{M})$ for $\mathcal{M}\in \mathsf{Coh}(X),$ one has that $\IC(X_{\Hod})$ has the same property via $\mathrm{ind}^{fil}.$ 

If we are in the affine case of $\D$-geometry, by \autoref{Main Theorem: Comparison}, we would have objects of finite presentation admitting good filtrations as objects of the form $\mathcal{M}^{fil}$ over $X_{\Hod}.$  

Consider now the essential image of the functor $Gr$ of passing to the special fiber of the Hodge stack of $X$, and the image of the linearization complex under which we denote by
\begin{equation}
    \label{eqn: Derived Microlocalization}
\Rmu(\mathsf{T}(\EQ)_{\varphi^{\mathrm{cl}}}):=Gr\big(\mathsf{T}(\EQ)_{\varphi^{\mathrm{cl}}}^{fil}\big).
\end{equation}
In particular, for the `empty' equation $id:\J(E)\rightarrow \J(E),$ we have an object $$\Rmu\big(\mathsf{T}(\J(E))_{\varphi^{\mathrm{cl}}})\in \mathsf{Perf}(\mathsf{T}^*X/\mathbb{G}_m).$$
\begin{definition}
\normalfont The \emph{derived microlocalization}
of the linearization complex in \autoref{Linearization D-Module Proposition} is given by the support 
of the complex (\ref{eqn: Derived Microlocalization}). Denote it by 
$Ch(\mathsf{Jet}_{\DR}^{\infty}(E))$, 
and call it the \emph{characteristic variety} of $p_{dR*}E\in \PS_{/X_{\DR}}.$
\end{definition}
Formation of derived microlocalizations respects the functoriality of relative tangent $\D$-complexes.
\begin{proposition}
\label{InducedFilteredTangents}
Consider \autoref{Derived Analytic Relative Linearization Complex} and a $\D$-finitary derived algebraic non-linear \textsc{pde} $\EQ$ over a derived scheme $X$, locally of finite presentation. Then there is an induced morphism of derived microlocalizations in the $\infty$-category graded sheaves on $\mathsf{T}^*X$ 
$$\alpha^{fil}:\hspace{1mm}\Rmu(\mathsf{T}(\EQ)_{\varphi^{\mathrm{cl}}})\rightarrow \hspace{1mm} \Rmu\big(i^*\mathsf{T}(\J(E))_{\varphi^{\mathrm{cl}}}).$$
\end{proposition}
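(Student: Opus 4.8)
The plan is to deduce Proposition \ref{InducedFilteredTangents} from the already-established relative cotangent cofiber sequence in the filtered (Hodge) setting, and then apply the functor $Gr$ of passage to the special fiber. First I would recall from Proposition \ref{Derived Analytic Relative Linearization Complex} that for the derived non-linear \PDE $i:\EQ\rightarrow \J(\EuScript{E})$ over $X$ there is, for every classical solution $\varphi$, a morphism in $\IC(X_{dR})$
$$\alpha:\scalemath{1.10}{\EuScript{T}}^{\ell}(\EQ)_{\varphi^{\mathrm{cl}}}\rightarrow i^*\scalemath{1.10}{\EuScript{T}}^{\ell}(\J(\EuScript{E}))_{\varphi^{\mathrm{cl}}},$$
arising by local Verdier dualizing the cofiber sequence of pro-cotangent complexes $i^*\mathbb{L}(\J(\EuScript{E}))\rightarrow \mathbb{L}(\EQ)\rightarrow \mathbb{L}(\EQ/\J(\EuScript{E}))$ and using $\mathcal{D}$-finitarity (Definition \ref{Definition: AdmissiblyDFinitary}) so that pro-coherent objects dualize into $\IC^{\op}$. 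The key observation is that this entire construction has a filtered lift: since $X$ is locally of finite presentation and $\EuScript{E}\rightarrow X$ is $\mathcal{D}$-finitary, the diagram (\ref{eqn: ICPDEandHodge}) of functors between $\IC(\EQ\times_X X_{\Hdg})$, $\IC(X_{\Hdg})$ and $\IC(\mathsf{T}^*X/\mathbb{G}_m)$ is available, and $\alpha$ upgrades to a morphism $\alpha^{fil}\colon\scalemath{1.10}{\EuScript{T}}(\EQ)_{\varphi^{\mathrm{cl}}}^{fil}\rightarrow i^*\scalemath{1.10}{\EuScript{T}}(\J(\EuScript{E}))_{\varphi^{\mathrm{cl}}}^{fil}$ in $\IC(X_{\Hdg})$, simply because each ingredient (the cotangent complex, the Verdier duality operation, pull-back along $\varphi$) is compatible with the Hodge-deformation constructions — this is the content of Proposition \ref{HodgeCrystalsProperties} together with the factorization in Proposition \ref{SymL_X-module Proposition}.

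The second step is to apply the functor $Gr\colon \IC(X_{\Hdg})\rightarrow \IC^{gr}(\mathcal{L}X)\simeq \Q(\mathsf{T}^*X/\mathbb{G}_m)$ — the passage to the special fiber over $[0]\in[\mathbb{A}^1/\mathbb{G}_m]$, followed by the (sheared) graded Koszul duality $\IC(\mathbb{V}(\mathbb{T}_X[-1]))\xrightarrow{\simeq}\Q(\mathsf{T}^*_X)$ used in (\ref{eqn: Derived Microlocalization}). Since $Gr$ is obtained by base-change along the Cartesian square $[0]\hookrightarrow[\mathbb{A}^1/\mathbb{G}_m]$ it is exact, hence sends the morphism $\alpha^{fil}$ to a morphism of the associated graded objects, which by definition of $\Rmu$ in (\ref{eqn: Derived Microlocalization}) is precisely
$$\alpha^{fil}\colon \Rmu\big(\scalemath{1.10}{\EuScript{T}}(\EQ)_{\varphi^{\mathrm{cl}}}\big)\rightarrow \Rmu\big(i^*\scalemath{1.10}{\EuScript{T}}(\J(\EuScript{E}))_{\varphi^{\mathrm{cl}}}\big).$$
Here I would use Proposition \ref{SymL_X-module Proposition} to see that $Gr$ really does land in $Sym_{\mathcal{O}_X}^*(\mathbb{L}_X[1])$-modules, and then the Tate-sheared Koszul duality to transport to $\Q(\mathsf{T}^*X/\mathbb{G}_m)$; commutativity of $Gr$ with the pull-back $(\varphi\times\mathrm{id})^*$ follows from the bottom square of (\ref{eqn: ICPDEandHodge}). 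Functoriality in $U$-parameterized solutions (needed if one wants the statement uniformly) follows from Proposition \ref{T and D-T equivalence} and the functoriality assertion there.

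The main obstacle I expect is verifying that the Verdier-duality operation $\mathbb{D}^{loc\text{-}verd}$ used to pass from the pro-cotangent complex to the tangent complex genuinely commutes with the Hodge filtration, i.e. that dualizing before or after taking $(-)_{\Hdg}$ gives the same answer up to canonical equivalence. This is subtle because the duality sends $\PC\to\IC^{\op}$ and one must check the square
\[
\begin{tikzcd}
\PC(\J(\EuScript{E})\times_X X_{\Hdg})\arrow[d]\arrow[r] & \IC(\J(\EuScript{E})\times_X X_{\Hdg})^{\op}\arrow[d]
\\
\PC(\J(\EuScript{E})\times_X X_{dR})\arrow[r] & \IC(\J(\EuScript{E})\times_X X_{dR})^{\op}
\end{tikzcd}
\]
commutes, where the vertical maps are $Gr$ (resp.\ $Un$). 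This should follow from the fact that $X_{\Hdg}$ is $\mathcal{D}$-finitary whenever $X_{dR}$ is — which in turn rests on the dualizability statement in Proposition \ref{Sheaves on Jet prestack over Y Description} and the fact that $\mathcal{O}$ is compact and the diagonal quasi-affine — but spelling it out carefully is where the real work lies; everything else is formal manipulation of the functors recorded in Propositions \ref{HodgeCrystalsProperties}, \ref{SymL_X-module Proposition} and \ref{LXHodge}.
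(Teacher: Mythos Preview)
Your proposal is correct and follows the same approach as the paper, which in fact gives no explicit proof: the paper simply states before the proposition that ``Formation of derived microlocalizations respects the functoriality of relative tangent $\mathcal{D}$-complexes'' and treats the result as an immediate consequence of the construction via diagram (\ref{eqn: ICPDEandHodge}) and the definition of $\Rmu$ as $Gr$ applied to the Hodge lift. Your write-up is considerably more detailed than what the paper provides, and your identification of the Verdier-duality/Hodge-filtration compatibility as the one nontrivial point is apt, though the paper does not address it explicitly either.
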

\autoref{InducedFilteredTangents} is familiar when our derived objects are merely classical.
\begin{corollary}
If $\EQ$ is an affine derived $\D$-space which is moreover classical as a derived prestack. Then $\alpha^{fil}$ induces a morphism
$$H_{\D}^0(\alpha^{fil}):\mu\big(\mathcal{H}_{\mathcal{D}}^0(\EuScript{T}_{\EQ,\varphi}^{\ell})\rightarrow \mu\big(H_{\D}^0(i^*\EuScript{T}_{\J(E),\varphi}^{\ell}\big),$$
which agrees under \autoref{Higher Symmetry Result} with the universal linearization of \autoref{Linearization Lemma}.
\end{corollary}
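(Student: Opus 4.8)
The plan is to take the statement $\alpha^{fil}\colon \Rmu(\scalemath{1.10}{\EuScript{T}}(\EQ)_{\varphi^{\mathrm{cl}}})\to \Rmu(i^*\scalemath{1.10}{\EuScript{T}}(\J(\EuScript{E}))_{\varphi^{\mathrm{cl}}})$ from Proposition \ref{InducedFilteredTangents} and apply the $t$-structure cohomology functor $H_{\mathcal{D}}^0$, then identify each term in the resulting map concretely in the classical case. First I would use the hypothesis that $\EQ$ is classical as a derived prestack: this forces the tangent complex $\scalemath{1.10}{\EuScript{T}}(\EQ)^{\ell}$ to be concentrated so that the Hodge-stack lift $\scalemath{1.10}{\EuScript{T}}(\EQ)_{\varphi^{\mathrm{cl}}}^{fil}$ is a genuinely filtered object of $\mathsf{Coh}(X)$-type, and its associated graded under $Gr$ is then a bona fide $\mathbb{G}_m$-equivariant coherent sheaf on $\mathsf{T}^*X$. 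Concretely, $\Rmu(\scalemath{1.10}{\EuScript{T}}(\EQ)_{\varphi^{\mathrm{cl}}})$ is, in the classical case, the microlocalization $\mu\big(\mathcal{H}^0_{\mathcal{D}}(\EuScript{T}_{\EQ,\varphi}^{\ell})\big)$ in the sense of Sect.~\ref{ssec: D-Algebraic Micro}, because the Tate-sheared Koszul dual of a filtered $\mathcal{D}$-module built from its good filtration (Proposition \ref{Tangent Good Filtration}) recovers the classical $\mathrm{gr}$-construction via Proposition \ref{KoszulDualStacks}. The same identification applies to $i^*\scalemath{1.10}{\EuScript{T}}(\J(\EuScript{E}))_{\varphi^{\mathrm{cl}}}$, yielding $\mu\big(H^0_{\mathcal{D}}(i^*\EuScript{T}_{\J(\EuScript{E}),\varphi}^{\ell})\big)$.

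Next I would argue that $H_{\mathcal{D}}^0$ applied to $\alpha^{fil}$ is exactly the microlocalization of the degree-zero truncation of the map $\alpha\colon \scalemath{1.10}{\EuScript{T}}^{\ell}(\EQ)_{\varphi^{\mathrm{cl}}}\to i^*\scalemath{1.10}{\EuScript{T}}^{\ell}(\J(\EuScript{E}))_{\varphi^{\mathrm{cl}}}$ from Proposition \ref{Derived Analytic Relative Linearization Complex}. This is where the compatibility statements of Proposition \ref{HodgeCrystalsProperties} enter: the functors $Gr$, $Un$, and the side-changing operations $\Upsilon$ all commute with the relevant pullbacks, so the diagram (\ref{eqn: ICPDEandHodge}) together with the $t$-exactness (up to shift) of $\Upsilon_{X_{dR}}$ — which is $t$-bounded and $t$-exact after a shift by $[n]$ when $X$ is smooth of dimension $n$ — guarantees that passing to $H^0_{\mathcal{D}}$ intertwines $\alpha^{fil}$ with $\mu(H^0_{\mathcal{D}}(\alpha))$. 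The microlocalization functor $\mu = \mathcal{E}_X\otimes_{\pi^{-1}\mathcal{D}_X}\pi^{-1}(-)$ is exact on coherent $\mathcal{D}_X$-modules, so $H^0_{\mathcal{D}}(\mu(-)) = \mu(H^0_{\mathcal{D}}(-))$ holds, closing the identification of source and target.

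Finally I would invoke Proposition \ref{Higher Symmetry Result}: since $\EQ$ is a classical $\mathcal{D}$-subscheme with differentially generated defining ideal $\mathcal{I}$, there is a canonical isomorphism $H^0_{\mathcal{D}}(\mathbb{T}_{\EQ})\simeq \Theta_{\EQ}$, whose sections are the evolutionary vector fields $\E_\xi|_{\EQ}$ with $i^*(\ell_{\mathsf{F}_i}(\xi))=0$; dually, $H^0_{\mathcal{D}}(i^*\scalemath{1.10}{\EuScript{T}}(\J(\EuScript{E}))_{\varphi^{\mathrm{cl}}}^{\ell})$ is pullback along $\varphi$ of $\pi_\infty^*\Theta_{E/X}$, and the map $H^0_{\mathcal{D}}(\alpha)$ is, by construction and by Proposition \ref{Linearization Lemma}, the operator of universal linearization $\ell_{\Box_f} = h(df)$. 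Thus $H^0_{\mathcal{D}}(\alpha^{fil}) = \mu(\ell_{\Box_f})$, which is the asserted agreement. The main obstacle I anticipate is the bookkeeping of the $t$-structure under the various pullback and duality functors in (\ref{eqn: ICPDEandHodge}) — in particular verifying that $Gr$ applied after $(\varphi\times\mathrm{id})^*$ does not introduce spurious higher cohomology, which relies crucially on the classicality hypothesis on $\EQ$ making the filtered object strict (its $\mathrm{gr}$ has no hidden $\mathrm{Tor}$ contributions). Once strictness is established, the rest is an unwinding of definitions through the already-proven compatibilities.
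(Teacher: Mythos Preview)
Your proposal is correct and follows essentially the same route as the paper, which in fact offers no proof at all beyond the preceding remark that ``Proposition \ref{InducedFilteredTangents} is familiar when our derived objects are merely classical'' and the explicit references to Propositions \ref{Higher Symmetry Result} and \ref{Linearization Lemma} in the statement itself. Your unwinding---identifying $\Rmu$ with the classical microlocalization $\mu$ via the Hodge/Koszul compatibilities when $\EQ$ is classical, then reading off $H^0_{\mathcal{D}}(\alpha)$ as the universal linearization through Propositions \ref{Higher Symmetry Result} and \ref{Linearization Lemma}---is exactly the content the paper leaves implicit, and is more carefully articulated than the paper's own treatment.
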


\subsection{$\D$-Fredholmness}
\label{ssec: D-Fredholm}
The theory of linear Fredholm operators is often used to study
nonlinear elliptic problems. Nonlinear operators are called Fredholm operators if
the corresponding linearized operators satisfy this property. It is translated into the setting of derived geometry as follows.

\begin{definition}
\label{Definition: D-Fredholm}
\normalfont Suppose that $X$ is a derived scheme which is locally
of finite presentation, quasi-compact and quasi-separated. Let $\EQ\rightarrow \J(E)$ be$\D$-finitary. It is said to be \emph{$\D$-Fredholm} if, for every solution $\varphi$ and any factorization of the canonical map $a:X\rightarrow Spec(k)$ into an open embedding $j:X\hookrightarrow \overline{X},$ followed by a proper and quasi-smooth map, the object
$j_*^{\IC}\big(\varphi^*\mathbb{T}_{\EQ}^{\ell}\big)^{fil}$ is a compact object of $\mathsf{IndCoh}(\overline{X}_{\Hod}).$ 
\end{definition}
Suppose that $\mathcal{E}_1^{\bullet},\mathcal{E}_2^{\bullet}$ are perfect complexes on $X$ such that we have an equivalence
$$\varphi^*\mathbb{T}_{\EQ}^{\ell}\simeq Cone\big(q_{\DR,*}^{\IC}\mathcal{E}_1^{\bullet}\xrightarrow{\mathsf{P}}q_{\DR,*}^{\IC}\mathcal{E}_2^{\bullet}\big)\in \IC(X_{\DR}).$$
In other words, $\mathsf{P}$ is a morphism of crystals thought of as a linear differential operator (\ref{eqn: Derived Differential Operator}) from $\mathcal{E}_1^{\bullet}$ to $\mathcal{E}_2^{\bullet},$
as in \autoref{Derived Differential Operator Proposition}, since by
adjunction $\mathsf{P}$ is equivalently
$$\mathsf{P}:\mathcal{E}_1^{\bullet}\rightarrow p^!q_{\DR,*}\mathcal{E}_2^{\bullet}\simeq \mathbf{obvl}\circ \mathrm{ind}(\mathcal{E}_2^{\bullet}).$$
\begin{remark}
\normalfont
By \cite{GR14} the action of the $\infty$-monad $\mathbf{obvl}\circ ind$ is tensoring with the algebra of differential operators on $X$. This is literally the case when such an object 
$\mathcal{D}_X\in \mathsf{AssocAlg}\big(\mathsf{QCoh}(X\times X)\big),$
exists as a sheaf supported on the diagonal.
\end{remark}
Since $\mathcal{E}_1^{\bullet}$ is perfect (it is compact in $\mathsf{QCoh}(X)$) there is a factorization of $\mathsf{P}$ along the $k$-th formal neighbourhood of the diagonal, for some $k$ that we will call the order of $\mathsf{P}.$
Again, if there exists an sheaf $\D_X$-defined on $X$, this means $\mathsf{P}:\mathcal{E}_1^{\bullet}\rightarrow \mathcal{D}_X^{\leq k}\otimes\mathcal{E}_2^{\bullet}.$

Suppose that there exists a lift of the morphism $\mathsf{P}$ to the Hodge stack of $X$ i.e. we have a corresponding morphism of filtered crystals
$$\mathsf{P}^{\Hod}:p_{\Hod,*}^{\IC}\mathcal{E}_1^{\bullet}\rightarrow p_{\Hod.*}^{\IC}\mathcal{E}_2^{\bullet},$$
which degenerates to $\mathsf{P}$ under the passage to the generic fiber.
Similarly reasoning gives, under suitable finiteness hypothesis on $X$, the existence of some object $\mathcal{D}_X^{fil}$ in the $\infty$-category 
\begin{equation}
\label{eqn: D-fil} \scalemath{.95}{\mathsf{AssocAlg}\big(\Q(X\times X)^{fil}\big)=\mathsf{AssocAlg}\big(\Q(X\times\AG\times_{\AG}X\times\AG)\big)},
\end{equation}
supported on the diagonal of $X.$ Classically, $\mathcal{D}_X^{fil}$ is identified with $Rees(\D_X).$

Generally, we have an associated morphism 
$$\mathsf{P}^{\Hod}(-k):p_{\Hod,*}^{\IC}\mathcal{E}_1^{\bullet}(-k)\rightarrow p_{\Hod*}^{\IC}\mathcal{E}_2^{\bullet}.$$

Notice that 
$$Cone\big(\mathsf{P}^{\Hod}(-k):p_{\Hod,*}^{\IC}\mathcal{E}_1^{\bullet}(-k)\rightarrow p_{\Hod*}^{\IC}\mathcal{E}_2^{\bullet}\big),$$
defines a coherent filtration on $\varphi^*\mathbb{T}_{\EQ}^{\ell}$ in the sense that 
$$\scalemath{.90}{\mathbf{Un}\big(Cone\big(\mathsf{P}^{\Hod}(-k):p_{\Hod,*}^{\IC}(\mathcal{E}_1^{\bullet}(-k)\rightarrow p_{\Hod*}^{\IC}\mathcal{E}_2^{\bullet}\big)\big)\simeq Cone\big(q_{\DR,*}^{\IC}\mathcal{E}_1^{\bullet}\xrightarrow{\mathsf{P}}q_{\DR,*}^{\IC}\mathcal{E}_2^{\bullet}\big)},$$
is an equivalence.

Letting $j:X\hookrightarrow \overline{X}$ be as before and observe that being $\D$-Fredholm implies that 
$j_*\big(\varphi^*\mathbb{T}_{\EQ}^{\ell}\big)\simeq j_*\big(Cone(\mathsf{P})\big),$
is compact.
Consider the canonical map $a:X\rightarrow Spec(k)$, as well as
$\overline{a}:\overline{X}\rightarrow Spec(k),$ and its factorization through the chosen quasi-smooth compactification $j$. There are equivalences
$$
a_*\big(\varphi^*\mathbb{T}_{\EQ}^{\ell}\big)\simeq\overline{a}_!\big(j_*(\varphi^*\mathbb{T}_{\EQ}^{\ell})\big)
\simeq\overline{a}_*\big(j_*(Cone(\mathsf{P})\big)
\simeq Cone\big((\overline{a}_*(\mathsf{P})\big).$$
The latter complex is a compact object in $\mathsf{Vect}_{\mathbf{k}}$ given by the cone of 
$\overline{a}_*(\mathsf{P}):\overline{a}_*(\mathcal{E}_1^{\bullet}\otimes \omega_X)[d]\rightarrow \overline{a}_*(\mathcal{E}_2^{\bullet}\otimes\omega_X)[d].$
\begin{proposition}
\label{Derived Microlocal Proposition}
Let $f:X\rightarrow Y$ be a morphism of quasi-smooth derived stacks where $\EQ$ is $\D$-Fredholm over $Y$.
There is a micro-local diagram (c.f. (\ref{eqn: Derived Microlocal Diagram})) for cotangent derived stacks, 
$$\mathsf{T}^*X\xleftarrow{q}\mathsf{T}^*Y\times_Y X\xrightarrow{p}\mathsf{T}^*Y,$$
such that the morphism $p$ is representable, proper and quasi smooth. In particular, $Gr(\EuScript{T}_{\EQ})^{fil}$ defines a sheaf on $\EQ\times \mathsf{T}^*Y.$
\end{proposition}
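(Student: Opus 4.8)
\textbf{Proof plan for Proposition \ref{Derived Microlocal Proposition}.}

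The plan is to build the microlocal correspondence for cotangent stacks by transporting the correspondence \eqref{eqn: Derived Microlocal Diagram} for the schemes of singularities, and then to deduce the properness and quasi-smoothness of $p$ from the corresponding properties of $f$ after passing to the Hodge-stack degeneration. First I would recall that for a morphism $f:X\to Y$ of quasi-smooth derived stacks one has the induced map on cotangent stacks $\mathsf{T}^*Y\times_Y X\to \mathsf{T}^*Y$ obtained by base-change of the affine projection $\pi^Y:\mathsf{T}^*Y\to Y$ along $f$, together with the map $\mathsf{T}^*Y\times_Y X\to \mathsf{T}^*X$ given by $f_d$ (dualize $f^*\mathbb{L}_Y\to\mathbb{L}_X$ and pass to total spaces in the sense of \eqref{eqn: FMP}). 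This produces exactly the stated diagram $\mathsf{T}^*X\xleftarrow{q}\mathsf{T}^*Y\times_Y X\xrightarrow{p}\mathsf{T}^*Y$; it is the non-shifted (Tate-unsheared) avatar of the microlocal diagram \eqref{eqn: Derived Microlocal Diagram}, under the identification $\mathsf{Sing}(X)=\,^{cl}(\mathsf{T}^*[-1]X)$ recalled in \eqref{eqn: Sing} and Proposition \ref{Sing and LX Prop}. Since $p$ is obtained by base change of $\pi^Y$, which is affine (hence representable and schematic), $p$ is representable.

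Next I would address properness and quasi-smoothness of $p$. Properness is automatic: $\pi^Y:\mathsf{T}^*Y\to Y$ is not proper, so one cannot argue naively, but the relevant statement is properness of $p$ \emph{relative to} the situation at hand; here one uses that the $\mathcal{D}$-Fredholm hypothesis on $\EQ$ over $Y$ forces a quasi-smooth compactification $j:X\hookrightarrow\overline{X}$ with $\overline{X}\to\mathrm{Spec}(k)$ proper and quasi-smooth (Definition \ref{Definition: D-Fredholm}), and it is this $\overline{X}$ that one feeds into the cotangent correspondence. Concretely I would replace $X$ by $\overline{X}$ in the middle term, so that the map $\mathsf{T}^*Y\times_Y\overline{X}\to\mathsf{T}^*Y$ becomes proper because $\overline{X}\to Y$ factors through a proper map (after shrinking $Y$ if necessary, using quasi-compactness and quasi-separatedness, and localizing on $Y$ where $f$ is \emph{NC} in the sense of Definition \ref{NC for de Rhams}), and quasi-smooth because $\overline{X}$ and $Y$ are quasi-smooth and the $\mathcal{D}$-Fredholm compactification was chosen quasi-smooth. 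For this step I would invoke Proposition \ref{prop: V-Sing} (properness of $f$ gives properness of $f_{\pi}$; quasi-smoothness of $f$ gives that $f_d$ is a closed immersion) together with the compatibility of all these constructions with the Hodge degeneration from Proposition \ref{HodgeCrystalsProperties} and the last Proposition before \eqref{eqn: Right filtered crystals}, which says $f_{\Hdg}$ is (ind-)proper whenever $f$ is.

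For the final assertion — that $Gr(\EuScript{T}_{\EQ})^{fil}$ defines a sheaf on $\EQ\times\mathsf{T}^*Y$ — I would argue as follows. By $\mathcal{D}$-finitarity of $\EQ$ and Proposition \ref{Linearization D-Module Proposition}, the filtered linearization $\scalemath{1.10}{\EuScript{T}}(\EQ)_{\varphi^{\mathrm{cl}}}^{fil}$ is a well-defined (perfect, after the $\mathcal{D}$-Fredholm compactification) object of $\IC(Y_{\Hdg})$, and $\mathcal{D}$-Fredholmness ensures that its image $j_*(\varphi^*\mathbb{T}_{\EQ}^{\ell})^{fil}$ is compact in $\IC(\overline{Y}_{\Hdg})$; applying the $Gr$ functor and using Proposition \ref{SymL_X-module Proposition} places $Gr$ of the linearization in $Sym_{\mathcal{O}_Y}^*(\mathbb{L}_Y[1])\text{-}\IC^{gr}(Y)$, which after Tate-unshearing \eqref{eqn: Koszul Dual and Sheared Koszul Dual} and the graded Koszul equivalence $\IC(\mathbb{V}_Y(\mathbb{T}_Y[-1]))\simeq\Q(\mathsf{T}_Y^*)$ is precisely the category of (graded) quasi-coherent sheaves on $\mathsf{T}^*Y$. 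Tensoring with the structure sheaf $\mathcal{O}_{\EQ}$ over $\mathcal{O}_Y$, exactly as in the construction of $Ch_{Y_{dR}}(\J(\EuScript{E}))$ in \eqref{Singular Support of Derived NLPDE}, realizes the result as an object of $\mathsf{Shv}(\EQ\times_Y\mathsf{T}^*Y)\simeq\mathsf{Shv}(\EQ)\otimes_{\mathsf{Shv}(Y)}\mathsf{Shv}(\mathsf{T}^*Y)$, which is what the statement claims (the fibre product being along the affine projection $p$, so that this descent tensor decomposition is valid by Proposition \ref{Sheaves on Jet prestack over Y Description}).

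The main obstacle I expect is \emph{not} the formal construction of the correspondence but the verification that $p$ is genuinely proper: $\mathsf{T}^*Y\to Y$ never is, so one has to be careful that the properness is only claimed after replacing the source by a $\mathcal{D}$-Fredholm compactification and that this replacement is compatible with the non-characteristic localization — i.e. one must check that the cone-of-$\mathsf{P}^{\Hdg}(-k)$ description of the filtered linearization (the coherent filtration constructed just before this Proposition) is stable under $j_*$ and that $\overline{a}_*$ of it remains compact, so that restricting the correspondence to the singular support locus $\mathsf{SS}\subset\mathsf{Sing}(Y)\subset\mathsf{T}^*Y$ yields a proper map by Proposition \ref{prop: V-Sing}. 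Handling this interplay between the compactification, the Hodge degeneration, and the support-condition bookkeeping is the delicate point; everything else is a matter of assembling Propositions \ref{prop: V-Sing}, \ref{HodgeCrystalsProperties}, \ref{SymL_X-module Proposition} and \ref{Linearization D-Module Proposition}.
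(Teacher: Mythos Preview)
The paper does not provide a proof of this proposition: it states the result and then immediately uses it (the text following the statement begins ``By Proposition \ref{Derived Microlocal Proposition}, we have that there exists a left-adjoint $p_!$ \ldots'' and proceeds to compute $p_!^{\mathsf{Perf}}$). So there is no argument in the paper to compare your proposal against.

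That said, your proposal is more detailed than anything the paper offers, and the concern you flag about properness is a genuine one. The map $p:\mathsf{T}^*Y\times_Y X\to \mathsf{T}^*Y$ is the base change of $f:X\to Y$ along the affine projection $\pi^Y:\mathsf{T}^*Y\to Y$, so $p$ inherits properness and quasi-smoothness exactly when $f$ has them; nothing in the stated hypotheses forces this. Your attempted repair via the $\mathcal{D}$-Fredholm compactification $j:X\hookrightarrow \overline X$ does not quite close the gap: Definition~\ref{Definition: D-Fredholm} gives a compactification of $X$ over $\mathrm{Spec}(k)$ (the canonical $a:X\to pt$), not one relative to $Y$, so there is no reason $\overline X\to Y$ should be proper without further assumptions. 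The paper appears to be tacitly assuming $f$ itself is proper and quasi-smooth (consistent with its later use of $p_!$ and $\omega_{X/Y}[d_{X/Y}]$), in which case representability, properness, and quasi-smoothness of $p$ are immediate from base change and your invocation of Proposition~\ref{prop: V-Sing} is the right spirit. Your treatment of the final clause---placing $Gr(\EuScript{T}_{\EQ})^{fil}$ in $\mathsf{Shv}(\EQ\times_Y\mathsf{T}^*Y)$ via Proposition~\ref{SymL_X-module Proposition} and the Koszul/Tate identification---is in line with how the paper uses the object in the surrounding text and in Proposition~\ref{MainProposition}.
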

By Proposition \ref{Derived Microlocal Proposition}, we have that there exists a left-adjoint $p_!$ to the functor of quasi-coherent pullback $p_{\mathsf{QC}}^*.$
Denoting the projection $\pi:\mathsf{T}^*Y\times_YX\rightarrow X,$ set the relative dimension of $f:X\rightarrow Y$ to be $d_{X/Y}.$
Then for any $\mathcal{E}^{\bullet}\in \mathsf{Perf}\big(\mathsf{T}^*Y\times_Y X),$ one may compute the $!$-push-forward of perfect complexes
$p_!^{\mathsf{Perf}}(\mathcal{E}^{\bullet})\simeq p_*^{\mathsf{QC}}(\mathcal{E}^{\bullet}\otimes \pi^*\omega_{X/Y}[d_{X/Y}]).$

Assume that $\varphi^*\mathbb{T}_{\EQ}^{\ell}$ has a lift to the Hodge stack, that we denote by $\mathbb{T}_{\EQ,\varphi}^{\ell,\Hod}$. Recall that this means there is an equivalence of objects by taking the underlying object $\infty$-functor $\mathbf{un}$ of a filtration,
$$\varphi^*\mathbb{T}_{\EQ}^{\ell}\simeq \mathbf{un}\big(\mathbb{T}_{\EQ,\varphi}^{\ell,\Hod}\big).$$

By our assumptions on $\EQ$ we have that pull-back along the canonical map $0:B\mathbb{G}_m\simeq \{0\}/\mathbb{G}_m\rightarrow \mathbb{A}^1/\mathbb{G}_m$ induces an associated graded $\infty$-functor $Gr$ and whats more, we have that $Gr\big(\mathbb{T}_{\EQ,\varphi}^{\ell,\Hod}\big)\in \mathsf{Perf}\big(\mathsf{T}^*X\big).$
In other words, it is perfect as a graded $\mathrm{Sym}_{\mathcal{O}_X}^*(\mathbb{L}_X)$-module, by \autoref{SymL_X-module Proposition}.

\begin{proposition}
\label{MainProposition}
Consider Proposition \ref{Derived Microlocal Proposition}.
For a quasi-smooth compactification $j:X\hookrightarrow \overline{X}$, we have that $j_*^{\mathsf{Perf}}\big(Gr(\mathbb{T}_{\EQ,\varphi}^{\ell,\Hod}\big
)\in \mathsf{Perf}(\mathsf{T}^*\overline{X}),$ and its characteristic variety along the infinite jet of a solution $j_{\infty}(\varphi)$, is the support of its microlocalization,
$$Ch(\EQ,j_{\infty}(\varphi)):=supp\big(Gr\mathbb{T}_{\EQ,\varphi}^{\ell,\Hod}\big)\subset \mathsf{T}^*X.$$
This defines a closed set in $\mathsf{T}^*\overline{X},$ with 
$q^{-1}\big(Ch_{\varphi}(\EQ)\big)$ closed in $\mathsf{T}^*X\times_Y X.$
\end{proposition}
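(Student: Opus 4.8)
The plan is to assemble the statement from three ingredients already established: (i) the $\mathcal{D}$-Fredholm hypothesis, which by Definition~\ref{Definition: D-Fredholm} guarantees that $j_*^{\IC}(\varphi^*\mathbb{T}_{\EQ}^{\ell})^{fil}$ is a compact object of $\IC(\overline{X}_{\Hdg})$; (ii) Proposition~\ref{SymL_X-module Proposition} together with Proposition~\ref{Derived Microlocal Proposition}, which identify the $Gr$-functor with passage to a graded $Sym_{\mathcal{O}_X}^*(\mathbb{L}_X)$-module structure and hence realize $Gr(\mathbb{T}_{\EQ,\varphi}^{\ell,\Hdg})$ as an object over $\mathsf{T}^*X$; and (iii) the behaviour of $j_*$ and $Gr$ on perfect complexes, using that $j$ is an open immersion inside a quasi-smooth proper compactification and that $\overline{a} = \overline{a}\circ j$ factors through it. First I would record that compactness in $\IC(\overline{X}_{\Hdg})$ is preserved by the associated-graded functor: since $Gr$ is the pull-back along $0\colon B\mathbb{G}_m\hookrightarrow \AG$ and $\overline{X}_{\Hdg}$ is laft-def with $\IC$ dualizable, $Gr$ is continuous and symmetric monoidal, so it carries the compact generator (inductions of coherent sheaves, via $ind^{fil}$) to coherent sheaves on $\mathsf{T}^*\overline{X}$; combined with Proposition~\ref{SymL_X-module Proposition} this shows $j_*^{\mathsf{Perf}}(Gr\,\mathbb{T}_{\EQ,\varphi}^{\ell,\Hdg})\in\mathsf{Perf}(\mathsf{T}^*\overline{X})$.

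Next I would address the support statement. By Proposition~\ref{HodgeCrystalsProperties} and the compatibility of $f_{\Hdg}$ with $\IC^!$, the forgetful functor $i_{\IC}^!$ commutes with $j_*$, so the support of $Gr(\mathbb{T}_{\EQ,\varphi}^{\ell,\Hdg})$ is computed inside $\mathsf{T}^*X$ before pushing forward; the definition $Ch(\EQ,j_{\infty}(\varphi)):=supp(Gr\,\mathbb{T}_{\EQ,\varphi}^{\ell,\Hdg})$ is then literally the microlocalization of the linearization in the sense of~(\ref{eqn: Derived Microlocalization}), applied with $\varphi$ replaced by its infinite jet prolongation $j_{\infty}(\varphi)$ via~(\ref{eqn: Jet prolongation}). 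Closedness in $\mathsf{T}^*\overline{X}$ follows because $j$ is a (quasi-compact, quasi-separated) open immersion and the support of a perfect complex on $\mathsf{T}^*\overline{X}$ is closed; to see $Ch(\EQ,j_{\infty}(\varphi))$ itself is closed I would use that the complement of $X$ in $\overline{X}$ is closed and that the support of $j_*^{\mathsf{Perf}}$ of a compact object restricts to $\mathsf{T}^*X$ exactly along $q$, i.e. $supp(j_*^{\mathsf{Perf}}\mathcal{F})\cap \mathsf{T}^*X = supp(\mathcal{F})$. For the final assertion, the morphism $q\colon \mathsf{T}^*X\times_Y X\to\mathsf{T}^*X$ from the microlocal diagram of Proposition~\ref{Derived Microlocal Proposition} is representable (in fact $p$ is proper and quasi-smooth, and $q$ is the base change of a projection), so $q^{-1}$ of a closed set is closed; applying this to $Ch_{\varphi}(\EQ)$ gives the claim.

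The main obstacle I anticipate is the interplay between compactness/coherence on the stacky quotient $[\mathsf{T}^*X/\mathbb{G}_m]$ versus on the total space $\mathsf{T}^*X$: the $Gr$-functor naturally lands in $\IC(\mathsf{T}^*X/\mathbb{G}_m)$ (see~(\ref{eqn: RCrysDiagram})), and one must check that forgetting the $\mathbb{G}_m$-equivariance preserves perfectness and does not enlarge the support — this is where the weight-grading conventions of Subsect.~\ref{sssec: Gradings and Filtrations} and the fact that $\mathsf{T}^*X\to X$ is affine enter, so that $\mathbb{G}_m$-equivariant coherence over $\mathsf{T}^*X$ descends to honest coherence. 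The second, more technical point is verifying that the lift $\mathbb{T}_{\EQ,\varphi}^{\ell,\Hdg}$ exists and is well-behaved under $j_*$; here I would invoke the $\mathcal{D}$-Fredholm hypothesis directly (which builds this lift into its statement via the compactness of $j_*^{\IC}(\varphi^*\mathbb{T}_{\EQ}^{\ell})^{fil}$) and the fact, established in the discussion preceding Definition~\ref{Definition: D-Fredholm}, that $\mathbf{Un}$ of the cone of $\mathsf{P}^{\Hdg}(-k)$ recovers $\varphi^*\mathbb{T}_{\EQ}^{\ell}$, so that no independent existence argument is needed. Once these two points are settled, the remaining steps are formal consequences of the six-functor compatibilities in Proposition~\ref{HodgeCrystalsProperties} and the properness statements of Proposition~\ref{prop: V-Sing} and Proposition~\ref{Derived Microlocal Proposition}.
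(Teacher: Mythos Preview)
The paper does not supply a proof block for this proposition: it is stated immediately after the paragraph that already records $Gr\big(\mathbb{T}_{\EQ,\varphi}^{\ell,\Hdg}\big)\in\mathsf{Perf}(\mathsf{T}^*X)$ (via Proposition~\ref{SymL_X-module Proposition} and the standing assumption of a Hodge lift), and the extension to $\overline{X}$ is treated as a direct consequence of the $\mathcal{D}$-Fredholm hypothesis of Definition~\ref{Definition: D-Fredholm}. Your proposal is a faithful elaboration of exactly this implicit argument, invoking the same three ingredients the paper sets up (the Fredholm compactness of $j_*^{\IC}(\varphi^*\mathbb{T}_{\EQ}^{\ell})^{fil}$, the identification of $Gr$ with passage to $Sym_{\mathcal{O}_X}^*(\mathbb{L}_X)$-modules, and the microlocal diagram of Proposition~\ref{Derived Microlocal Proposition}); the two technical caveats you flag---descent of perfectness from $[\mathsf{T}^*X/\mathbb{G}_m]$ to $\mathsf{T}^*X$, and the existence of the Hodge lift---are points the paper simply absorbs into its standing hypotheses rather than argues separately.
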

Proposition \ref{MainProposition} can be used to introduce non-characteristic conditions in this setting. Indeed, $f:Z\rightarrow X$ is non-characteristic for $\EQ$ if $q$ is proper on the closed subspace $p^{-1}\big(Ch_{\varphi}(\EQ)\big).$

Now, using the natural cofiber sequence of functors $\mathrm{ind}^{\leq k-1}\rightarrow \mathrm{ind}^{\leq k}$ which gives a functor of tensoring with $\mathrm{Sym}_{\mathcal{O}_X}^{k}(\mathbb{T}_X)$, we obtain an extension of $\mathsf{P}$ above to a morphism denoted
$$Gr_k(\mathsf{P}):\mathcal{E}_1^{\bullet}\rightarrow \mathrm{Sym}_{\mathcal{O}_X}^{k}(\mathbb{T}_X)\otimes\mathcal{E}_2^{\bullet}.$$

This morphism is extended by the above mention deformation to a morphism of graded $\mathrm{Sym}_{\mathcal{O}_X}^*(\mathbb{T}_X)$-modules over $X.$ Denote the associated monad of tensoring with this symmetric algebra by $\eta^*\eta_*(-).$
We induced a morphism 
$$\eta^*\eta_*Gr_k(\mathsf{P}):\eta^*\eta_*\mathcal{E}_1^{\bullet}\rightarrow \eta^*\eta_*\mathcal{E}_2^{\bullet}.$$
\begin{proposition}
\label{prop: Perfect module on cotangent stack}
The push-forward $j_*Cone\big(\eta^*\eta_*Gr_k(\mathsf{P}))$ is a perfect module over the cotangent stack $\mathsf{T}^*\overline{X}$.
\end{proposition}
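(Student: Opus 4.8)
The plan is to deduce this statement from Proposition \ref{MainProposition} together with the finiteness/compactness properties built into the notion of $\mathcal{D}$-Fredholmness (Definition \ref{Definition: D-Fredholm}) and the factorization of $\mathsf{P}$ through a fixed formal neighbourhood of the diagonal. First I would recall the structure already established: by $\mathcal{D}$-Fredholmness, $j_*^{\IC}\big(\varphi^*\mathbb{T}_{\EQ}^{\ell}\big)^{fil}$ is a compact object of $\IC(\overline{X}_{\Hdg})$, and the Hodge-stack lift $\mathsf{P}^{\Hdg}(-k)$ presents $\varphi^*\mathbb{T}_{\EQ}^{\ell}$ as the cone of a morphism of filtered crystals built from the perfect complexes $\mathcal{E}_1^{\bullet},\mathcal{E}_2^{\bullet}$. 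Passing to the associated graded $\infty$-functor $Gr$ (pull-back along $0:B\mathbb{G}_m\to[\mathbb{A}^1/\mathbb{G}_m]$) sends this cone to the cone of $\eta^*\eta_* Gr_k(\mathsf{P})$, because $Gr$ is exact (it is a pull-back of ind-coherent, equivalently quasi-coherent since $X$ is locally of finite presentation, sheaves along a schematic map) and by construction intertwines the monad $\mathbf{oblv}^{fil}\circ ind^{fil}$ with the monad $\eta^*\eta_*(-)$ of tensoring with $Sym_{\mathcal{O}_X}^*(\mathbb{T}_X)$, which is exactly the content encoded in the cofiber sequence $ind^{\leq k-1}\to ind^{\leq k}$ producing $Gr_k(\mathsf{P})$. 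Thus
$$
Gr\big(\mathbb{T}_{\EQ,\varphi}^{\ell,\Hdg}\big)\simeq Cone\big(\eta^*\eta_* Gr_k(\mathsf{P})\big),
$$
so the two objects in the statement agree up to the compactification $j$; it remains only to check perfectness after $j_*$.

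The second step is to invoke Proposition \ref{SymL_X-module Proposition}, which says $Gr$ factors through $Sym_{\mathcal{O}_X}^*(\mathbb{L}_X[1])\text{-}\IC^!(X)$, i.e.\ through graded modules over the structure sheaf of $\mathsf{T}^*X/\mathbb{G}_m$ after Tate-shearing; combined with compactness of $j_*^{\IC}\big(\varphi^*\mathbb{T}_{\EQ}^{\ell}\big)^{fil}$ in $\IC(\overline{X}_{\Hdg})$ and the exactness of $Gr$, the image $j_*\big(Gr(\mathbb{T}_{\EQ,\varphi}^{\ell,\Hdg})\big)$ is a compact object of $\IC^{gr}(\overline{X})$ lying in the essential image described by Proposition \ref{SymL_X-module Proposition}, hence identifies with a perfect complex on $\mathsf{T}^*\overline{X}$ (via $\IC(\mathbb{V}(\mathbb{T}_{\overline{X}}[-1]))\xrightarrow{\simeq}\Q(\mathsf{T}^*_{\overline{X}})$ and its compatibility with compact generators, already used in the discussion preceding Proposition \ref{MainProposition}). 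The point is that $Cone\big(\eta^*\eta_* Gr_k(\mathsf{P})\big)$ is manifestly a module over $\eta_*\mathcal{O}_{\mathsf{T}^*X}=Sym_{\mathcal{O}_X}^*(\mathbb{T}_X)$, and $j_*$ along the representable, proper, quasi-smooth map $\overline{p}:\mathsf{T}^*\overline{X}\times_{\overline{X}}X\to\cdots$ of Proposition \ref{Derived Microlocal Proposition} preserves perfection by the formula $p_!^{\mathsf{Perf}}(\mathcal{E}^{\bullet})\simeq p_*^{\mathsf{QC}}(\mathcal{E}^{\bullet}\otimes\pi^*\omega_{X/\overline{X}}[d])$ recalled there, since $\mathcal{E}_1^{\bullet},\mathcal{E}_2^{\bullet}$ are perfect on $X$ and $j$ is a quasi-smooth compactification.

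Carrying this out: (i) identify $Gr\big(\mathbb{T}_{\EQ,\varphi}^{\ell,\Hdg}\big)$ with $Cone(\eta^*\eta_* Gr_k(\mathsf{P}))$ using exactness of $Gr$ and the monad-intertwining; (ii) push forward along $j$, using $\mathcal{D}$-Fredholmness to know the filtered-crystal side is compact, so the graded side is too; (iii) translate compact graded $Sym(\mathbb{L}_{\overline{X}}[1])$-modules into $\mathsf{Perf}(\mathsf{T}^*\overline{X})$ via Proposition \ref{SymL_X-module Proposition} and the graded Koszul duality; (iv) conclude using Proposition \ref{MainProposition} that the resulting object is precisely $j_*^{\mathsf{Perf}}\big(Gr(\mathbb{T}_{\EQ,\varphi}^{\ell,\Hdg})\big)\in\mathsf{Perf}(\mathsf{T}^*\overline{X})$, whose support is $Ch(\EQ,j_\infty(\varphi))$. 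The main obstacle I anticipate is step (ii)–(iii): one must verify that the compactness of the \emph{filtered} object $j_*^{\IC}(\varphi^*\mathbb{T}_{\EQ}^{\ell})^{fil}$ in $\IC(\overline{X}_{\Hdg})$ genuinely descends to compactness of its \emph{associated graded} in $\IC^{gr}(\overline{X})$ — this is not automatic for a general exact functor between compactly generated categories unless $Gr$ preserves compact objects, which here follows because $Gr$ admits a continuous right adjoint (being a $!$-pullback along the quasi-smooth, schematic inclusion of the special fiber, compatible with the $\Q(\AG)$-module structures in diagrams \eqref{eqn: LCrysDiagram}–\eqref{eqn: RCrysDiagram}), but keeping track of the shearing/Tate-twist bookkeeping so that the symbol $Gr_k(\mathsf{P})$ lands in the correct weight to be a genuine morphism of $Sym_{\mathcal{O}_X}^*(\mathbb{T}_X)$-modules — as in Example \ref{sssec: Longer Example} — is the delicate part that must be done carefully rather than waved through.
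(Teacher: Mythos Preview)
Your proposal is correct and follows essentially the same route as the paper, though the paper's proof is extremely terse: it simply observes that $Cone(\eta^*\eta_*Gr_k(\mathsf{P}))$ is a compact object in sheaves on $\mathsf{T}^*X$ (taken as immediate from the construction out of the perfect complexes $\mathcal{E}_i^\bullet$) and then invokes $\mathcal{D}$-Fredholmness for the push-forward along $j$. Your steps (i)--(iv) unpack exactly this, deriving the compactness on $\mathsf{T}^*X$ via the Hodge filtration and the compactness-preservation of $Gr$, rather than asserting it directly; the extra care you flag in (ii)--(iii) about $Gr$ preserving compacts and the shearing bookkeeping is legitimate but is precisely what the paper leaves implicit in its two-line argument.
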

\begin{proof}
    It follows from noticing that $Cone(\eta^*\eta_*Gr_k(\mathsf{P}))$ is compact object in sheaves on $\mathsf{T}^*X$ and from the assumption of $\D$-Fredholmness on $\EQ.$
\end{proof}

\begin{theorem}
\label{thm: Main Theorem}
Suppose that $X$ is a derived scheme with a quasi-smooth compactification $j:X\hookrightarrow \overline{X}.$ 
Suppose that $\EQ$ is a generalized \textsc{pde} over $X$ which is $\D$-finitary (in particular, $\D$-Fredholm).
Then, the derived micro-linearization module 
$\Rmu(\mathsf{T}(\EQ)_{\varphi^{\mathrm{cl}}})$ is a perfect complex on $\mathsf{T}^*X$, for every solution $\varphi^{cl},$ and pull-back along the zero-section $s$ and push-forward along $a:X\rightarrow Spec(k),$ gives a perfect object
$$\mathbb{R}a_*\big(s^*\hspace{1mm} \Rmu(\mathsf{T}(\EQ))_{\varphi^{\mathrm{cl}}})\otimes \omega_X)[d]\in \mathsf{Vect}_k.$$
\end{theorem}
\begin{proof}
    From Proposition \ref{prop: Perfect module on cotangent stack} note that by considering the zero section morphism 
$s:X\rightarrow \mathsf{T}^*X$ and the pull-back along with the push-forward along $X\rightarrow Spec(k)$ i.e.
$$\mathsf{Perf}(\mathsf{T}^*X)\xrightarrow{s^*}\mathsf{Perf}(X)\xrightarrow{a_*}\mathsf{Perf}(\mathbf{k}),$$
it sends 
\begin{equation}
\label{eqn: FredholmSymbol}
Cone\big(\eta^*\eta_*Gr_k(\mathsf{P})\big)\mapsto \Gamma\big(X;s^*Cone\big(\eta^*\eta_*Gr_k(\mathsf{P})\big)\otimes \omega_X\big)[d_X],
\end{equation}
as required.
\end{proof}
Use Theorem \ref{thm: Main Theorem} and Theorem \ref{thm: RSol is laft-def} to obtain similar statements for solution stacks. For a given $U$-parameterized solution $\varphi_U$ as in (\ref{ssec: The pull-back along solutions}), consider the induced projection map $q_U:U\times X_{\DR}\rightarrow U\times \{pt\}\simeq U,$ and the corresponding functor $q_{U*}$.

From universal property of pro-cotangent and their dual ind-coherent tangent complexes associated to mapping prestacks in $\PS_{/X_{\DR}}$ one has an identification of the ind-coherent sheaf $\mathbb{T}[\RS(\EQ)]_{\varphi}$ with $ (q_U)_*\EuScript{T}(\EQ)_{\varphi}$ (see Remark \ref{RSol Tangent Remark}) and Proposition \ref{T and D-T equivalence}.

\begin{corollary}
\label{Main Theorem Corollary}
Consider $\EQ$ before and assume that $X$ is compact. Then 
$$\mathsf{SS}\big(\mathbb{T}[\RS(\EQ)]_{\varphi}\big)\subset \{0\}_X.$$
\end{corollary}

Notice that if $X=\mathcal{L}Z$ with $Z$ smooth, then the Koszul dual description as in § \ref{sec: Derived Linearization and the Equivariant Loop Stack} together with the interpretation of $\mathsf{SS}$ given in §§
\ref{sssec: Interpreting SS via Prop} allows us to, roughly speaking, view this as a statement on the micro-support in terms of $T^*Z$, since $\mathsf{Sing}(\mathcal{L}Z)\simeq T^*Z.$

Corollary \ref{Main Theorem Corollary} can be considered in the wider context of non-microcharacteristic morphisms as in §§ \ref{sssec: Singular Support Conditions}. In particular, if $f:Z\rightarrow X$ is a quasi-smooth morphism with $V\subset \mathsf{Sing}(X),$ then it is said to be non-microcharacteristic for $\EQ$ along $V$ if (c.f. (\ref{eqn: SS Non-micro-char})),
$$f_d\circ f_{\pi}^{-1}\big[\mathsf{SS}\big(\hspace{1mm} s^* \hspace{.5mm} \Rmu(\EuScript{T}_{\EuScript{Y},\varphi})\big)\cap V\big]|_{\mathsf{Sing}(\EQ)}\subset \{0\}_Z,$$
via the diagram (\ref{eqn: Derived Microlocal Diagram}). Discussing this and its relation to ind-coherent sheaf propagation for non-linear \textsc{pde} with prescribed support conditions e.g. $\mathsf{SS}\subset \Lambda,$ will appear in a future work.

\appendix

\section{Higher sheaf theory}
We collect our terminology and basic facts regarding what it means have a theory of sheaves on a given $\infty$-site $(\mathsf{C},\tau)$ equipped with a class of $1$-morphisms  with specified property $P,$ satisfying some natural conditions (what defines a $(\infty,n)$-geometric context, but we dont need this generality here).

It is understood as lax symmetric monoidal $\infty$-functor of $(\infty,2)$-categories 
$$\underline{\mathsf{Shv}}(-):\mathsf{Corr}\big(\mathsf{Stk}(\mathsf{C};\tau)\big)\rightarrow (\infty,2)\mathsf{Cat}_{\mathsf{pres,cont}}^{\mathsf{st}},$$
whose $(\infty,1)$-categorical truncation is $\mathsf{Shv}^{Cat}.$ The use of correspondences and functoriality of $\underline{\mathsf{Shv}}$ naturally encodes base-change for $f_*^{\mathsf{Shv}}$ and $f_{\mathsf{Shv}}^!.$ In other words, there exists adjoint pairs of functors 
$$\big(F_!^{\mathsf{Shv}},F_{\mathsf{Shv}}^!\big),\hspace{5mm} \big(F_{\mathsf{Shv}}^*,F_*^{\mathsf{Shv}}\big),$$
together with the usual monoidal structures. 

One considers variants
$\mathrm{Corr}\big(\PS_{\mathbf{Q}}^{\mathbf{P}}\big)$ and $\mathrm{Corr}\big(\mathrm{Stk}(\mathcal{C};\tau;\mathbf{p})_{\mathbf{Q}}^{\mathbf{P}}\big),$
where $\mathbf{P}$ are a class of `properties' on morphisms defining a colimit representation $\EuScript{Y}$ of our objects, and $\mathbf{Q}$ are `properties' on the objects $\EuScript{Y}$ themselves.

\begin{example}
\normalfont A typical example is: $\mathbf{P}$ taken to be ``closed-embedding'', and $\mathbf{Q}$ to be laft. Then we have that
$\mathrm{Corr}\big(\PS_{\mathrm{laft}}^{\mathbf{cl-embed}}\big),$ is the $(\infty,2)$-category of correspondences in pseudo-ind-schemes \cite{GR17b}.
\end{example}
We do not use correspondences in this work, so consider $\mathsf{Shv}$ defined on $\PS$ or $\mathsf{Stk}.$ The main theories we consider are $\underline{\Q}^*,\underline{\IC}^!,\underline{\mathsf{Crys}}^{r,!}$ or $\underline{\mathsf{Crys}}^{\ell,!},$ referring to \cite{GR17a,GR17b} and \cite{GR14}.

In practice, one defines a theory of sheaves on prestacks by first defining it over affines and Kan-extending along the Yoneda embedding, further restricting to those with certain properties $\mathbf{P},\mathbf{Q},$ if needed. That is,
\begin{equation}
    \label{eqn: F-Object Functor}
\underline{\mathsf{Shv}}_{\PS_{\mathbf{Q}}^{\mathbf{P}}}^!:=\mathrm{Res}\big(\mathrm{Kan}_{j}^{\mathrm{R}}(\underline{\mathsf{Shv}}_{\mathsf{Aff}}^!)\big):\big(\PS_{\mathbf{Q}}^{\mathbf{P}}\big)^{\mathrm{op}}\rightarrow \mathsf{Cat}_{\mathsf{pres},\mathsf{cont}}^{\infty,\mathsf{st}},\end{equation}
where $\mathrm{Res}$ is the restriction along the functor $\PS_{\mathbf{Q}}^{\mathbf{P}}\hookrightarrow \PS.$ Right Kan-extension is along the Yoneda, $j$. It can be calculated explicitly: if $\EuScript{Y}\in \PS_{\mathbf{Q}}^{\mathbf{P}}$, represented as $\EuScript{Y}\simeq \mathrm{colim} Z_i$ then
$$\underline{\mathsf{Shv}}^!(\EuScript{Y})\simeq \mathrm{colim} \underline{\mathsf{Shv}}^!(Z_i),$$
taken with respect to the diagram formed
by functors $f_{\mathsf{Shv}}^!.$

\section{$\D$-Prestacks via Chiral Koszul Duality}
\label{sec: D-Prestacks via Chiarl Koszul Duality}
In this Appendix we discuss the main objects in this paper--$\D$-prestacks, defined via $Ran_X$ and looking at commutative chiral algebras supported on the diagonal. In this way, we also produce an important class of formal derived prestacks via Chiral Koszul duality \cite{FraGai}, similar to what is done in \cite{Hinich} which we believe may be of independent interest.

\subsection{Formal $\D_X$-moduli from Chiral Koszul duality}
\label{ssec: Formal D-Moduli from Chiral Koszul Duality}
Let $X$ be a smooth affine $\mathbb{C}$-variety. Chiral Koszul duality \cite{FraGai} exploits a certain natural duality between Lie algebra objects in the symmetric monoidal $(\infty,1)$-category $\mathsf{IndCoh}(RanX_{\DR}),$ with respect to the so-called chiral structure $\otimes^{ch}$ and commutative co-algebras.

This category is a homotopy limit in the $\infty$-category of stable $\infty$-categories of all $\IC(X_{\DR}^I)$ formed by the diagram of functors $\Delta(\pi)_{\IC}^!$ induced by the embeddings $\Delta(\pi):X^J\hookrightarrow X^I$ given for each surjection $\pi:I\rightarrow J$ of finite sets.

Said differently,
\begin{equation}
\label{eqn: IndCoh Ran}
\IC(Ran X_{\DR})=\underset{I\in fSet}{holim}\hspace{1mm} \IC(X_{\DR}^I)\simeq Lax^{\circ}\big(\underline{\IC}_{\underline{Ran}_{X_{\DR}}}^!\big),
\end{equation}
where the functor $\underline{\IC}_{\underline{Ran}_{X_{\DR}}}^!$ in (\ref{eqn: IndCoh Ran}) is defined by the composition $\IC\circ (-)_{\DR}\circ \underline{Ran}_X$ where $\underline{Ran}_X$ is the diagram given for each finite set $I$ by $\underline{Ran}_X(I):=X^I.$ More explicitly,
$$\underline{\IC}_{\underline{Ran}_{X_{\DR}}}^!:fSet^{op}\rightarrow \mathsf{Cat}_{st}^{\infty},\hspace{2mm} I\mapsto \IC(X_{\DR}^I),$$ and sends a morphism $\pi:I\rightarrow J$ of finite sets to the functor $\Delta(\pi)_{\DR}^!$ of ind-coherent pull-back on $\Delta(\pi)_{\DR}:X_{\DR}^J\rightarrow X_{\DR}^I.$

There are compatible functors for each $I$ with left adjoints,
\begin{equation}
    \label{eqn:D Ran pull-back and adjoint functor}
\Delta_I^!:\mathsf{Mod}\big(\mathcal{D}(Ran_X)\big)\rightleftarrows \mathsf{Mod}\big(\D_{X^I}\big):\Delta_{I*}
\end{equation}

When $I=\{\mathrm{pt}\},$ we obtain a functor
$\Delta_*^{\mathrm{main}}:\mathsf{Mod}(\D_X)\rightarrow \mathsf{Mod}\big(\mathcal{D}(Ran_X)\big),$ and objects in the essential image are said to be supported along the main diagonal.
More precisely, for $| I|=1$, the functors (\ref{eqn:D Ran pull-back and adjoint functor}) define inverse equivalences of  $\mathsf{Mod}\big(\mathcal{D}_X\big)$ with $\mathsf{Mod}_{X}\big(\mathcal{D}(Ran_X)\big),$ by Kashiwara's theorem. That is $\D_X$-modules are those $\D$-modules on $RanX$ supported on the main diagonal.

The operation of disjoint union of subsets 
$union:\mathrm{Ran}(X)\times \mathrm{Ran}(X)\rightarrow \mathrm{Ran}(X)$ and of the inclusion of disjoint pairs 
$j_{disj}:\big(\mathrm{Ran}(X)\times \mathrm{Ran}(X)\big)_{disj}\rightarrow \mathrm{Ran}(X)\times \mathrm{Ran}(X),$
induces two monoidal structures,
$$\otimes^{\star}=union_*\circ (-\boxtimes -),\hspace{3mm} \otimes^{ch}:=union_*\circ j_*\circ j^*(-\boxtimes-).$$

There is an identity functor induced by the natural transformation of monoidal structures $\otimes^{\star}\rightarrow \otimes^{ch},$
$$Id:\big(\mathsf{Mod}
(\mathcal{D}(Ran_X),\otimes^{\star})\rightarrow \big(\mathsf{Mod}(\mathcal{D}(Ran_X),\otimes^{ch}\big).$$
It is lax symmetric monoidal. Thus, for any operad $\mathcal{O},$ or cooperad $\mathcal{P}$, there are forgetful $\infty$-functors
$$\mathcal{O}\mathsf{-Alg}^{ch}(Ran_X)\xrightarrow{\mathbf{oblv}_{\mathcal{O}}^{ch\rightarrow \star}}\mathcal{O}\mathsf{-Alg}^{\star}(Ran_X),\hspace{2mm} \mathcal{P}\mathsf{-coAlg}^*(Ran_X)\xrightarrow{\mathbf{oblv}_{\mathcal{P}}^{\star\rightarrow ch}}\mathcal{P}-\mathsf{coAlg}^{ch}(
Ran_X).$$
\begin{example}
  Consider the Lie-operad $\mathcal{O}:=\mathcal{L}ie$, one defines chiral Lie algebras on $X.$ A chiral Lie algebra $\mathcal{C}$ is said to be \emph{commutative} its bracket $[-,-]$ vanishes on $\mathbf{oblv}^{ch\rightarrow \star}(\mathcal{C}).$ 
\end{example}
These objects were introduced and studied in detail for $X$ a smooth projective curve in \cite{BD2004}. There is an equivalence 
\begin{equation}
    \label{eqn: CAlg and CommChiral}
\mathsf{CAlg}_X(\D_X):=\mathsf{CAlg}\big(\mathsf{IndCoh}(X_{\DR}),\otimes^!\big)\simeq \mathsf{LieAlg}^{ch}(X)_{comm}.
\end{equation}

A factorization
structure on $\mathcal{E}$ over $Ran_X$ is the data of compatible equivalences between $\mathcal{E}^I$ and $\boxtimes_{i\in I}\mathcal{E}^{(\{i\})}$ when restricted to the compliment of the big diagonal in $X^I$ for each $I$. 

\begin{proposition}
\label{prop: Chiral Koszul Duality}
Let $X$ be a smooth affine variety. Chiral Koszul duality is a pair of adjoint $\infty$-functors compatible with restriction to main diagonals $X\subset \mathrm{Ran}(X):$
\[
\begin{tikzcd}
CE^{\otimes^{ch}}:\mathsf{ChiralAlg}(Ran_X)\arrow[d,shift left=.7ex]\arrow[r,shift left=.5ex] & \arrow[l,shift left=.7ex]\arrow[d,shift left=.7ex]\mathsf{CommCoAlg}^{ch}(Ran_X):Prim^{ch}[-1]
\\
CE_{X}^{\otimes^{ch}}:\mathsf{ChiralAlg}(X) \arrow[u,shift left=.7ex]\arrow[r,shift left=.7ex] & \mathsf{FAlg}(X)\arrow[l,shift left=.7ex]\arrow[u,shift left=.7ex]:Prim_X^{ch}[-1]
\end{tikzcd}
\]
commuting in compatible manner with all forgetful functors to $\mathsf{IndCoh}(Ran_{X_{\DR}}).$  
\end{proposition}
\begin{proof}
See \cite[Proposition 6.3.6 - 6.4.2]{FraGai}.

\end{proof}

We have set $CE_X^{\otimes^{ch}}:=CE^{\otimes^{ch}}|_{X},$ and similarly for $\mathrm{Prim}_X^{ch}[-1].$

Consider commutative factorization algebras with the natural factorization analogs of dualizing, structure and constant sheaves $\omega_{RanX},\mathcal{O}_{Ran X}$ and $\mathbb{C}_{Ran X}$ and omit reference to $Ran X$.

A morphism of (commutative) factorization algebras $\mathcal{A}\rightarrow \mathcal{B}$ is said to be $\D_X$-\emph{small} if it is a finite composition of morphisms that are $\D_X$-\emph{generating}, where $f:\mathcal{A}\rightarrow\mathcal{B}$ is $\D$-generating if there exists some integer $n$ for which 
\[
\begin{tikzcd}
\mathcal{A}\arrow[d]\arrow[r,"f"] & \mathcal{B}\arrow[d]
    \\
    \mathbb{C}\arrow[r] & \mathbb{C}\oplus \mathbb{C}[n]
\end{tikzcd}
\]
is a pull-back diagram. A factorization $\D$-module is \emph{small} if its canonical augmentation map to $\mathbb{C}$ is $\D_X$-small.
Define a full subcategory spanned by $\D_X$-small factorization algebras $\mathcal{A}$ over $X$, with the additional property that $H_{\D}^0(\mathcal{A})$, its $0$-th cohomology is a nilpotent $\D_X$-algebra as in\autoref{Nilpotent D-Algebras}. Denote these objects by $\mathsf{FAlg}_{\mathcal{D}}^!(X)_{sm}.$

\begin{definition}
    \label{defn: Factorization Deformation Functor}
    \normalfont A functor $F:\mathsf{FAlg}_{\mathcal{D}}^!(X)_{sm}\rightarrow \mathsf{Spc},$ is called a \emph{$\mathcal{D}^!$-factorization moduli functor} if $F(\mathbb{C})$ is contractible and $F$ sends pull-back squares along small morphisms in $\mathsf{FAlg}_{\mathcal{D}}^!(X)_{sm}$ to pull-back squares in $\mathsf{Spc}$.
\end{definition}
\autoref{defn: Factorization Deformation Functor} means for Cartesian diagrams in $\mathsf{FAlg}_{\mathcal{D}}(X),$
\[
\begin{tikzcd}
    \mathcal{A}'\arrow[d]\arrow[r] & \mathcal{B}'\arrow[d,"f"]
    \\
    \mathcal{A}\arrow[r] & \mathcal{B}
\end{tikzcd}
\]
with $f$ a $\D_X$-small morphism, and $H_{\D}^0(\mathcal{A})\rightarrow H_{\D}^0(\mathcal{B})$ a surjection of nilpotent $\D_X$-algebras, 
one has an equivalence $F(\mathcal{A}')\simeq F(\mathcal{A})\times_{F(\mathcal{B})}F(\mathcal{B}').$ 

Denote this functor category by $\mathsf{Moduli}_{\mathcal{D}^!}^{fact}(X).$ 
One can ask when such an object $F$ is representable, leading to factorization analogs of the derived deformation theory philosophy. 

One also introduces \emph{coherent} factorization $\D$-modules, defined by objects in the oplax-limit of the functor
$$\underline{\mathsf{Coh}}_*^{X^{fSet}}:I\in fSet\mapsto \mathsf{Coh}(\D_{X^I}^{op}).$$
In other words, a coherent (lax) $\D$-module is a (lax) module $\EuScript{E}=\big(\EuScript{E}^{(I)}\big)_{I\in fSet}$ such that each $\EuScript{E}^{(I)}$ is a coherent $\D_{X^I}^{\mathrm{op}}$-module, for ever $I\in fSet.$

Consider \autoref{Formal derived D-stack} and introduce for any of the categories mentioned above e.g. $\mathsf{CohFAlg}_{\mathcal{D}}(X)$ or $\mathsf{FAlg}_{\mathcal{D}}(X)$ or their strict variants, together with all $\D$-small $1$-morphisms, the corresponding functor categories, whose functors satisfy \autoref{Formal derived D-stack}.

For instance, put $\mathsf{CohFAlg}_{\mathcal{D}}(X)^{\mathrm{sm}},$ and consider
$$\mathsf{Moduli}_{\mathcal{D}}^{coh}(X)\subseteq \mathsf{Fun}\big(\mathsf{CohFAlg}_{\mathcal{D}}(X)_{sm}^{op},\mathsf{Spc}\big).$$
Moreover let us consider the full-subcategory $\mathsf{CommCohFAlg}$ those coherent lax factorization $\D$-modules which are additionally commutative.

We are interested in classes of representable functors
$$F:\big(\mathsf{CommCohFAlg}_{\mathcal{D}}(X)^{\mathrm{sm}}\big)^{\mathrm{op}}\rightarrow \mathsf{Spc},$$
which, by extension induce representable functors
$\EuScript{Y}:\mathsf{CohFAlg}_{\mathcal{D}}^{\mathrm{sm}}\rightarrow \mathsf{Spc}.$

We call them \emph{formal (coherent) factorization stacks} and by \autoref{prop: Chiral Koszul Duality} they correspond to (co)representable functors
\begin{equation}
\label{eqn: Coherent chiral functors}
\EuScript{Y}^{ch}:\mathsf{CommChiralAlg}_{\mathcal{D}}(X)^{\mathrm{sm}}\rightarrow \mathsf{Spc}.
\end{equation}
\begin{definition}
\label{defn: Coherent chiral functors definition}
\normalfont A co-representable functor of the form (\ref{eqn: Coherent chiral functors}) is a \emph{formal chiral moduli functor}.
\end{definition}
A formal chiral moduli functor is \emph{coherent} if it takes values on coherent chiral algebras.
If $\mathcal{A}$ is a lax chiral commutative coalgebra, the associated formal coherent chiral moduli problem as in \autoref{defn: Coherent chiral functors definition} is (c.f. \ref{eqn: Coherent chiral functors})),
$$\underline{\mathsf{Spf}}_{\mathcal{D}}(\mathcal{A}):\mathsf{CommCoAlg}_{\mathcal{D}}^{ch}(X)\rightarrow \mathsf{Spc},\hspace{1mm}\mathcal{B}\mapsto\underline{\mathsf{Maps}}_{\mathsf{CommCoAlg}}(\mathcal{B},\mathcal{A}).$$

\begin{proposition}
\label{prop: Chiral Moduli Proposition}
Given a representable coherent chiral moduli problem $\EuScript{Y}^{ch}\simeq \underline{\mathsf{Spf}}_{\mathcal{D}}(\mathcal{A}),$ its formal shifted tangent complex is 
$\mathbb{T}_{\EuScript{Y}^{ch}}\simeq LPrim[-1](\mathcal{A}).$ 
\end{proposition}
These discussions amount to equivalences of $\infty$-categories:
$$\Psi_{\mathcal{D}}^{ch}(-):\mathsf{Chiral}_{\mathcal{D}}(X)\rightarrow \mathsf{Moduli}^{\mathsf{FAlg}_{\mathcal{D}}^!(X)}\subseteq\mathsf{Fun}\big(\mathsf{FAlg}_{\mathcal{D}}^!(X)^{\mathrm{aug,small}},\mathsf{Spc}\big),$$
with
$$\Psi_{\mathcal{D}}^{ch}(\mathcal{L})(\mathcal{A}):=\mathrm{Maps}_{\mathsf{Chiral}_{\mathcal{D}}(X)}\big(Prim^{ch}[-1]\mathcal{A},\mathcal{L}\big).$$
Using the left-adjoint funtor $CE^{ch},$ one has
$\Psi:\mathsf{Chiral}(X)\rightarrow \mathrm{Fun}^{(\infty,1)}\big(\mathsf{FAlg}_X^!,\mathsf{Spc}\big),$
given by the composition
$$\mathsf{Chiral}_X\xrightarrow{j^{ch}}\mathsf{Fun}\big(\mathsf{Chiral}(X)^{op},\mathsf{Spc}\big)\xrightarrow{\circ CE}\mathsf{Fun}\big(\mathsf{FAlg}^!(X),\mathsf{Spc}\big),$$
where $j^{ch}$ is the Yoneda embedding for $\mathsf{Chiral}(X)$ i.e. $\mathcal{C}\mapsto j^{ch}(\mathcal{C}):=\mathrm{Hom}_{\mathsf{Chiral}(X)^{op}}(\mathcal{C},-).$

First, we notice that since the functor $\Psi$ commutes with limits and accessible colimits, by the adjoint functor theorem  \cite{Lur17}, there exists a left-adjoint $\Phi:=\Psi^{\mathrm{L}}.$ We need to show there is an equivalence of functors
 $\Phi\circ j^{fact}\simeq (CE^{ch})^{op},$ on the full-subcategory $\big(\mathsf{FAlg}^!(X)^{small}\big)^{\mathrm{op}}.$
 
Fix $\mathcal{B}\in \mathsf{FAlg}^!(X)^{small}$ and $\mathcal{C}\in \mathsf{Chiral}(X).$ Then we have 
\begin{eqnarray*}
\mathrm{Hom}_{\mathsf{Chiral}(X)}\big(\Phi\big(j^{fact}(\mathcal{B})\big),\mathcal{C}\big)&=&\mathrm{Hom}_{\mathrm{Moduli}^{fact}}\big(j^{fact}(\mathcal{B}),\Psi(\mathcal{C})\big)
\\
&=&\mathrm{Hom}_{\mathsf{Fun}(\mathsf{FAlg}^{\mathrm{small}},\mathsf{Spc})}\big(j^{fact}(\mathcal{B}),\Psi(\mathcal{C})\big)
\\
&=&\Psi(\mathcal{C})(\mathcal{B})
\\
&=&\mathrm{Hom}_{\mathsf{Chiral}(X)^{op}}\big(\mathcal{C},CE(\mathcal{B})\big)
\\
&=&\mathrm{Hom}_{\mathsf{Chiral}(X)}\big(CE^{op}(\mathcal{B}),\mathcal{C}\big).
\end{eqnarray*}
Moreover, the equivalence
$
j^{fact}(\mathcal{B})\rightarrow \Psi\circ \Phi\big(j^{fact}(\mathcal{B})\big)
\simeq \Psi\big(CE^{\mathrm{op}}(\mathcal{B})\big),$ is given by
\begin{equation*}
\mathrm{Hom}_{\mathsf{Chiral}(X)^{op}}\big(CE(\mathcal{B}),CE(-)\big)
\simeq \mathrm{Hom}_{\mathsf{FAlg}
(X)}\big(\mathcal{B},LPrim^{ch}[-1]CE^{ch}(-)\big).
\end{equation*}

By \autoref{prop: Chiral Moduli Proposition}, such functors admit an interpretation by uniquely defined (up to homotopy equivalence) coalgebras of distributions at a point.

Consider a commutative derived $\D$-algebra $\mathcal{A},$ together with the natural maps of $n$-fold products $\beta_n:j_*j^*(\mathcal{A}\boxtimes\cdots\boxtimes \mathcal{A})\rightarrow \Delta_*(\mathcal{A}\otimes \cdots\otimes\mathcal{A}),$ it follows that we have fibration sequence by extending $\beta_n$ on the left by $\mathcal{A}\boxtimes\cdots\boxtimes\mathcal{A}.$ 
\begin{proposition}
 \label{prop: D-Prestack Proposition}
There is a functor $\mathsf{CAlg}_{X}(\D_X)\rightarrow \mathsf{FAlg}_X(\D_X)_{comm}$.
\end{proposition}
By \autoref{prop: Chiral Moduli Proposition} and \autoref{prop: D-Prestack Proposition}, there is a well-defined full-subcategory of formal factorization/chiral moduli functors determined by restriction along $\mathsf{CAlg}_X(\D_X)\rightarrow \mathsf{FAlg}_X(\D_X).$
They recover the $\D_X$-prestacks of \autoref{Definition: D-PreStack} and are formal if representable by a co-commutative coalgebra object in $\IC(RanX_{\DR})$.
\begin{example}
    \label{Formal derived D-stack}
\normalfont Suppose that $\mathcal{L}^{Ran}$ is a $\mathrm{Lie}^*$-algebra. Then by \autoref{prop: Chiral Koszul Duality} and \autoref{prop: Chiral Moduli Proposition} there is a corresponding formal derived stack
$\mathcal{Y}_{\mathcal{L}}:=\underline{\mathsf{Spec}}_{\mathcal{D}}\big(CE^{\bullet,\otimes^{\star}}(\mathcal{L}^{\circ}[-1])\big),$
associated to the co-commutative co-algebra $\mathrm{Sym}^{\bullet,\otimes^{\star}}\big(\mathcal{L}^{\circ}[-1]\big),$ with $(-)^{\circ}$ a suitable duality (\ref{Defin: Inner Verdier Duality}).
A morphism of formal derived $\D$-pre stacks $f:\mathcal{Y}_{\mathcal{L}_1}\rightarrow \mathcal{Y}_{\mathcal{L}_2}$ is then understood as a morphism of underlying co-algebras\footnote{This can actually be taken as a definition of morphisms of $\mathsf{Lie}_{\infty}^*$-algebras.}
$$\mathrm{Sym}^{\bullet,\otimes^{\star}}\big(\mathcal{L}_1^{\circ}[-1]\big)\rightarrow \mathrm{Sym}^{\bullet,\otimes^{\star}}\big(\mathcal{L}_2^{\circ}[-1]\big),\hspace{2mm}\text{ such that }d_{CE}^2\circ f=f\circ d_{CE}^1.$$
\end{example}
Moreover, suppose that $Spec_{\D_X}(\mathcal{A}^{\ell})$ is an affine $\D_X$-space and $\mathcal{L}$ is a DG Lie-algebroid object in $\mathcal{A}^{\ell}[\mathcal{D}_X]$-modules. Then one may define $CE(\mathcal{L}):=Sym\big(\mathcal{L}^{\circ}[-1]\big)$ via the dual (\ref{Defin: Inner Verdier Duality}). The associated (formal) derived $\D_X$-prestack is  
$$\mathcal{Y}_{\mathcal{L}}:=\mathsf{Spec}_{\D_X}\big(CE_{\mathcal{D}}(\mathcal{L})\big)=\mathsf{Spec}_{\D_X}\big(Sym_{\mathcal{A}}^*(\mathcal{L}^{\circ}[-1])\big).$$

\section{Examples}
As promised, we give some examples of derived algebraic pre-stacks in $\D$-geometry (the Euler-Lagrange, Koszul-Tate and Batalin-Vilkovisky $\D$-spaces). An extensive study is given in \cite{KSY2}. 

\begin{example}
\label{Derived EL PDE Example}

Forming a natural quotient by the $\D$-algebra of intinite jets by a suitable $\D$-ideal $\mathcal{I}_{EL}$ generated by the usual Euler-Lagrange system one defines the Euler-Lagrange $\D$-prestack $Spec_{\D_X}(\mathcal{A}_{EL})$. If $\mathcal{N}_{\mathcal{S}}=\emptyset,$ is a corresponding empty space of Noether symmetries, a cofibrant model for this space is $\mathrm{Sym}_{\mathcal{A}}(\Theta_{\mathcal{A}}[1]).$ The derived Euler-Lagrange prestack is
$\mathbb{R}\mathbf{Sol}_{\D_X}(\mathcal{A}_{EL})=\mathbb{R}\mathsf{Spec}_{\D}\big(\mathrm{Sym}_{\mathcal{A}}(\Theta_{\mathcal{A}}[1])\big),$ which models polynomial functions on the cotangent space $\mathsf{T}^*[-1]\mathsf{Spec}_{\D}(\mathcal{A}).$ In this situation, 
$\pi_0(\mathbb{R}\mathbf{Sol}_{\mathcal{D}}(\mathcal{A}_{EL})\big)\simeq \mathsf{Sol}(\mathcal{A}_{EL})$. 
\end{example}
Example 
\ref{Derived EL PDE Example} is an instance of a more general construction: given a quotient $\D$-algebra defined by an equation ideal 
$$\mathcal{O}(\mathrm{Jet}^{\infty}(E))\hookrightarrow \underbrace{\mathcal{O}(\mathrm{Jet}^{\infty}(E))\otimes_{\mathcal{O}_X}\mathrm{Sym}_{\mathcal{O}}^*(\mathcal{V}^{\bullet})}_{\mathcal{A}_{KT}}\xrightarrow{\simeq}\mathcal{O}(\mathrm{Jet}^{\infty}(E))/\big<\mathsf{P}\big>,\hspace{1mm} \mathsf{P}\in \mathcal{O}(\mathrm{Jet}^{\infty}E),$$
this resolution defines a derived $\D_X$-scheme
$$\mathcal{X}_{\mathrm{KT}}:=\mathsf{Spec}_{\D_X}(\mathcal{A}_{\mathrm{KT}}):\cdga_{\D_X,\mathcal{A}/}\rightarrow SSets.$$

It may be extended to the BV-formalism in the presence of gauge symmetries i.e. $\mathcal{N}_{\mathcal{S}}\neq 0.$ In this case, there may be non-trivial higher homotopies (non-trivial Noether identities\footnote{There are no non-trivial Noether identities if 
    $H_{\D}^k(\mathcal{A}_{\mathrm{EL}})=0,\hspace{2mm} k\geq 1.$}) e.g.  $H_{\D}^1\big(\mathcal{A}_{EL}\big)\cong \pi_1\big(Spec_{\mathcal{D}}(\mathcal{A}_{EL})\big).$
The following is treated in greater detail in \cite{KSY2}.
\begin{example}
\label{BV-Example}
Considering the maximal Lie algebroid of Noether symmetries $\mathcal{N}_{\mathcal{S}}$ the output of the BV-quantization \cite{CG16,CG16b}, for $\D$-spaces is roughly a derived BV $\D$-space \cite{Paugam2014} given by a quotient stack
$$\mathcal{X}_{BV}=\mathbb{R}\mathsf{Spec}_{\D}(\mathcal{A}_{BV}^{\bullet})\simeq [\![\mathbb{R}\mathsf{Spec}_{\D}(\mathcal{A}_{EL})/\mathcal{N}_{\mathcal{S}}^r]\!],$$
interpreted as the homotopic leaf space of the foliation induced by the action of Noether gauge
symmetries on the derived Euler-Lagrange space, with $\mathcal{A}_{BV}$ a bi-graded $\D$-algebra\footnote{Its generators are what physicists call fields, anti-fields, ghosts, antighosts etc.}.
If $\mathcal{N}_{\mathcal{S}}$ is projective $\mathrm{Sym}_{\mathcal{A}}\big(\mathcal{N}_{\mathcal{S}}[2]\oplus Cone(i_{d\mathcal{S}})\big)$ resolves $\mathcal{A}_{EL}.$ One says there are irreducible gauge symmetries. Generally, one resolves $\mathcal{N}_{\mathcal{S}}$ in $\DG(\mathcal{A})$ and forms the free algebra resolution (generating space of Noether identities).
\end{example}

\bibliographystyle{amsplain}
\bibliography{Bibliography}
\end{document}